\numberwithin{equation}{section}
\numberwithin{figure}{section}
\theoremstyle{plain}
\newtheorem{thm}{\protect\theoremname}[section]
  \theoremstyle{plain}
  \newtheorem{prop}[thm]{\protect\propositionname}
  \theoremstyle{plain}
  \newtheorem{cor}[thm]{\protect\corollaryname}
  \theoremstyle{definition}
  \newtheorem{defn}[thm]{\protect\definitionname}
  \theoremstyle{plain}
  \newtheorem{lem}[thm]{\protect\lemmaname}
  \theoremstyle{definition}
  \newtheorem{example}[thm]{\protect\examplename}
  \theoremstyle{remark}
  \newtheorem{rem}[thm]{\protect\remarkname}
  \theoremstyle{remark}
  \newtheorem{claim}[thm]{\protect\claimname}
  \providecommand{\claimname}{Claim}
  \providecommand{\corollaryname}{Corollary}
  \providecommand{\definitionname}{Definition}
  \providecommand{\examplename}{Example}
  \providecommand{\lemmaname}{Lemma}
  \providecommand{\propositionname}{Proposition}
  \providecommand{\remarkname}{Remark}
\providecommand{\theoremname}{Theorem}
\begin{document}

\title{Witt Differential Operators}

\author{Christopher Dodd}
\begin{abstract}
For a smooth scheme $X$ over a perfect field $k$ of positive characteristic,
we define (for each $m\in\mathbb{Z}$) a sheaf of rings $\mathcal{\widehat{D}}_{W(X)}^{(m)}$
of differential operators (of level $m$) over the Witt vectors of
$X$. If $\mathfrak{X}$ is a lift of $X$ to a smooth formal scheme
over $W(k)$, then for $m\geq0$ modules over $\mathcal{\widehat{D}}_{W(X)}^{(m)}$
are closely related to modules over Berthelot's ring $\widehat{\mathcal{D}}_{\mathfrak{X}}^{(m)}$
of differential operators of level $m$ on $\mathfrak{X}$. Our construction
therefore gives an description of suitable categories of modules over
these algebras, which depends only on the special fibre $X$. There
is an embedding of the category of crystals on $X$ (over $W_{r}(k)$)
into modules over $\mathcal{\widehat{D}}_{W(X)}^{(0)}/p^{r}$; and
so we obtain an alternate description of this category as well. For
a map $\varphi:X\to Y$ we develop the formalism of pullback and pushforward
of $\mathcal{\widehat{D}}_{W(X)}^{(m)}$-modules and show all of the
expected properties. When working mod $p^{r}$, this includes compatibility
with the corresponding formalism for crystals, assuming $\varphi$
is smooth. In this case we also show that there is a ``relative de
Rham Witt resolution'' (analogous to the usual relative de Rham resolution
in $\mathcal{D}$-module theory) and therefore that the pushforward
of (a quite general subcategory of) modules over $\mathcal{\widehat{D}}_{W(X)}^{(0)}/p^{r}$
can be computed via the reduction mod $p^{r}$ of Langer-Zink's relative
de Rham Witt complex. Finally we explain a generalization of Bloch's
theorem relating integrable de Rham-Witt connections to crystals.
\end{abstract}

\maketitle
\tableofcontents{}

\section{Introduction}

Let $X$ be a smooth algebraic variety over $\mathbb{C}$. Then one
may attach to $X$ its (algebraic) de Rham cohomology, $\mathbb{H}_{\text{dR}}^{\cdot}(X)$,
which is defined as the hypercohomology of the algebraic de Rham complex
of $X$. It is a fundamental theorem of Grothendieck that this cohomology
is isomorphic to the singular cohomology of the analytic space attached
to $X$, $X^{\text{an}}$. Guided by the philosophy that any cohomology
theory should come with a good theory of local coefficients, Grothendieck
defined a site, the infinitesimal site of $X$, which has the property
that the de Rham cohomology of $X$ is isomorphic to the cohomology
of the structure sheaf of $X$ in the infinitesimal site. He further
showed that the data of a sheaf on the infinitesimal site is equivalent
to the data of an $\mathcal{O}_{X}$-module $\mathcal{M}$ (in the
usual Zariski topology of $X$), equipped with a flat connection.
Let us recall that this is a morphism
\[
\nabla:\mathcal{M}\to\mathcal{M}\otimes\Omega_{X}^{1}
\]
which satisfies the Leibniz rule $\nabla(f\cdot m)=f\cdot\nabla(m)+m\otimes df$,
as well as the following: we can extend $\nabla$ to a morphism 
\[
\nabla:\mathcal{M}\otimes\Omega_{X}^{i}\to\mathcal{M}\otimes\Omega_{X}^{i+1}
\]
via the rule $\nabla(m\otimes\phi)=\nabla(m)\cdot\phi+m\otimes d\phi$,
where the multiplication in the first term denotes the map $(\mathcal{M}\otimes\Omega_{X}^{1})\otimes\Omega_{X}^{i}\to\mathcal{M}\otimes\Omega_{X}^{i+1}$
coming from multiplication in the de Rham complex. Then we demand
that $\nabla\circ\nabla=0$ (this is what it means for a connection
to be flat). 

Therefore, to any such $(\mathcal{M},\nabla)$ we can associated a
complex of sheaves\\
$(\mathcal{M}\otimes\Omega_{X}^{\cdot},\nabla)$, whose hypercohomology
$\mathbb{H}_{\text{dR}}^{\cdot}(\mathcal{M})$ is the de Rham cohomology
of $\mathcal{M}$. If $\tilde{\mathcal{M}}$ is the sheaf in the infinitesimal
site associated to $(\mathcal{M},\nabla)$, then there is an isomorphism
\[
\mathbb{H}_{\text{dR}}^{\cdot}(\mathcal{M})\tilde{\to}\mathbb{H}_{\text{inf}}^{\cdot}(\tilde{\mathcal{M}})
\]
where the cohomology group on the right is the cohomology of $\tilde{\mathcal{M}}$
in the infinitesimal site (c.f. \cite{key-57}). 

At around the same time, the theory of $\mathcal{O}_{X}$-modules
with flat connection, in a rather different guise, was beginning to
be developed, by both M. Sato and his collaborators in Kyoto and by
J. Bernstein and his collaborators in Moscow. This different guise
goes under the name of $\mathcal{D}$-modules\footnote{To be completely precise, Sato and his school worked with complex
analytic spaces, while Bernstein worked with algebraic varieties.
The two theories are similar enough, and had enough influence on each
other, that we will mention them both in this introduction}, where $\mathcal{D}$ stands for the sheaf of (finite order) differential
operators on $X$. In fact, a sheaf of modules over $\mathcal{D}$
is equivalent to an $\mathcal{O}_{X}$-module with flat connection.
Thus $\mathcal{D}$-modules (i.e. sheaves of modules over $\mathcal{D}$)
provide another (equivalent) answer to the question of how to find
a good theory of local coefficients for the de Rham cohomology. Furthermore,
the fact that $\mathcal{D}$ is itself a locally noetherian sheaf
of algebras with a nice presentation allows one to develop the theory
in new directions. For instance, there is a well behaved notion of
coherence for $\mathcal{D}$-modules, and to a coherent $\mathcal{D}$-module
one may attach an invariant called the singular support, which is
a closed subvariety of $T^{*}X$, the cotangent bundle on $X$.

Furthermore, if $\varphi:X\to Y$ is a morphism of smooth algebraic
varieties over $\mathbb{C}$, then there is formalism of push-forward
and pull-back. As there is, in general, no morphism of sheaves of
rings between $\varphi^{-1}(\mathcal{D}_{Y})$ and $\mathcal{D}_{X}$,
this formalism is a bit non-trivial. Let us take a moment to review
it. First, the pullback: as a $\mathcal{D}$-module is equivalent
to a module with flat connection; if one has a flat connection on
some $\mathcal{O}_{Y}$-module $\mathcal{M}$, then the composed map
\[
\varphi^{*}\mathcal{M}\xrightarrow{\varphi^{*}\nabla}\varphi^{*}\mathcal{M}\otimes\varphi^{*}\Omega_{Y}^{1}\to\varphi^{*}\mathcal{M}\otimes\Omega_{X}^{1}
\]
is a flat connection on $\varphi^{*}\mathcal{M}$. Applying this to
$\mathcal{D}_{Y}$ itself, one endows $\varphi^{*}\mathcal{D}_{Y}$
with the structure of a left $\mathcal{D}_{X}$-module. As it is also
a right $\varphi^{-1}(\mathcal{D}_{Y})$-module via the right action
of $\mathcal{D}_{Y}$ on itself, we see that $\varphi^{*}\mathcal{D}_{Y}$
acquires the structure of a $(\mathcal{D}_{X},\varphi^{-1}(\mathcal{D}_{Y}))$
bimodule; this object is denoted $\mathcal{D}_{X\to Y}$. Thus we
may define a functor on derived categories 
\[
L\varphi^{*}:D(\mathcal{D}_{Y}-\text{mod})\to D(\mathcal{D}_{X}-\text{mod})
\]
via 
\[
L\varphi^{*}\mathcal{M}^{\cdot}:=\mathcal{D}_{X\to Y}\otimes_{\varphi^{-1}(\mathcal{D}_{Y})}^{L}\varphi^{-1}(\mathcal{M}^{\cdot})
\]
If $\mathcal{M}$ is an object in $\mathcal{D}_{Y}-\text{mod}$, then
$\mathcal{H}^{0}(L\varphi^{*}\mathcal{M})=\varphi^{*}\mathcal{M}$,
equipped with the flat connection defined above. 

Now let us consider the push-forward. The existence of the bimodule
$\mathcal{D}_{X\to Y}$ allows us to construct a push-forward on \emph{right
}$\mathcal{D}$-modules as follows: if $\mathcal{N}^{\cdot}$ is a
complex of right $\mathcal{D}$-modules on $X$, then 
\[
\mathcal{N}^{\cdot}\otimes_{\mathcal{D}_{X}}^{L}\mathcal{D}_{X\to Y}
\]
is a complex of \emph{right $\varphi^{-1}(\mathcal{D}_{Y})$-}modules.
Therefore we obtain a functor 
\[
\int_{\varphi}:D(\text{mod}-\mathcal{D}_{X})\to D(\text{mod}-\mathcal{D}_{Y})
\]
(where $\text{mod}-\mathcal{D}$ stands for the category of right
$\mathcal{D}$-modules) defined as 
\[
\int_{\varphi}\mathcal{N}^{\cdot}:=R\varphi_{*}(\mathcal{N}^{\cdot}\otimes_{\mathcal{D}_{X}}^{L}\mathcal{D}_{X\to Y})
\]
Of course, as our aim is to study left $\mathcal{D}$-modules, and
as $\mathcal{D}$ is a non-commutative sheaf of rings, this does not
satisfy our original goal. However, the situation is salvaged by the
following: 
\begin{prop}
(c.f. \cite{key-49}, proposition 1.2.12) There is an equivalence
of categories $\mathcal{D}_{X}-\text{mod}\to\text{mod}-\mathcal{D}_{X}$.
On the underlying $\mathcal{O}_{X}$-modules, this functor is given
by $\mathcal{M}\to\mathcal{M}\otimes_{\mathcal{O}_{X}}\omega_{X}$.
This is referred to as the left-right interchange.
\end{prop}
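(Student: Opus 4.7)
The plan is to construct the functor $\mathcal{M}\mapsto\mathcal{M}\otimes_{\mathcal{O}_{X}}\omega_{X}$ concretely by first endowing $\omega_{X}$ itself with a canonical right $\mathcal{D}_{X}$-module structure and then transporting this to the tensor product. For a local vector field $\theta$ and a local top form $\omega$, define
\[
\omega\cdot\theta:=-\mathcal{L}_{\theta}(\omega),
\]
where $\mathcal{L}_{\theta}$ is the Lie derivative. In characteristic zero, $\mathcal{D}_{X}$ is generated over $\mathcal{O}_{X}$ by vector fields subject only to the relations $[\theta,f]=\theta(f)$ and the Lie bracket on vector fields; using the standard Cartan identities $\mathcal{L}_{\theta}(f\omega)=\theta(f)\omega+f\mathcal{L}_{\theta}(\omega)$ and $\mathcal{L}_{[\theta_{1},\theta_{2}]}=[\mathcal{L}_{\theta_{1}},\mathcal{L}_{\theta_{2}}]$, one checks directly that the formula above respects both, so it extends to a right action of $\mathcal{D}_{X}$.

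Given a left $\mathcal{D}_{X}$-module $\mathcal{M}$, I then equip $\mathcal{M}\otimes_{\mathcal{O}_{X}}\omega_{X}$ with the right action
\[
(m\otimes\omega)\cdot\theta:=m\otimes(\omega\cdot\theta)-(\theta\cdot m)\otimes\omega,
\]
extended $\mathcal{O}_{X}$-linearly. After verifying that this descends across the relation $fm\otimes\omega=m\otimes f\omega$ (using the Leibniz rule for $\theta$ on $\mathcal{M}$ together with the formula for $\mathcal{L}_{\theta}(f\omega)$) and satisfies the bracket relation (a routine cancellation of cross terms), one obtains a functor $\Phi:\mathcal{D}_{X}-\text{mod}\to\text{mod}-\mathcal{D}_{X}$. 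For the inverse, I would use invertibility of $\omega_{X}$: the $\mathcal{O}_{X}$-dual $\omega_{X}^{-1}:=\mathcal{H}om_{\mathcal{O}_{X}}(\omega_{X},\mathcal{O}_{X})$ inherits a left $\mathcal{D}_{X}$-module structure uniquely determined by the requirement that the canonical pairing $\omega_{X}^{-1}\otimes_{\mathcal{O}_{X}}\omega_{X}\to\mathcal{O}_{X}$ be $\mathcal{D}_{X}$-equivariant, and the functor $\Psi(\mathcal{N}):=\omega_{X}^{-1}\otimes_{\mathcal{O}_{X}}\mathcal{N}$ is a two-sided inverse to $\Phi$, as the natural $\mathcal{O}_{X}$-isomorphism $\omega_{X}^{-1}\otimes_{\mathcal{O}_{X}}\omega_{X}\cong\mathcal{O}_{X}$ then upgrades to an isomorphism of $\mathcal{D}_{X}$-modules.

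The main obstacle --- really the only nontrivial point --- is the sign and variable bookkeeping needed to verify that the proposed right action on $\mathcal{M}\otimes_{\mathcal{O}_{X}}\omega_{X}$ is well-defined on the tensor product and satisfies the bracket relation. These are formal consequences of the Leibniz rule and the standard identity for $[\mathcal{L}_{\theta_{1}},\mathcal{L}_{\theta_{2}}]$, but the signs must be tracked with care; once done, the equivalence follows at once from the fact that $\omega_{X}$ is an invertible $\mathcal{O}_{X}$-module carrying a compatible right $\mathcal{D}_{X}$-action.
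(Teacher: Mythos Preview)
Your proposal is correct and is essentially the standard argument one finds in the cited reference. Note, however, that the paper itself does not supply a proof of this proposition: it is stated as background, with the citation to \cite{key-49}, proposition~1.2.12, standing in lieu of a proof. There is therefore nothing in the paper to compare your argument against; your write-up simply fills in what the reference would provide.
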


Therefore, one may obtain a push-forward functor on left $\mathcal{D}$-modules
by applying the equivalence to right modules, applying the above functor
${\displaystyle \int_{\varphi}}$, and then applying the equivalence
from right to left $\mathcal{D}_{Y}$-modules. More efficiently, one
may use the left-right interchange to directly define from $\mathcal{D}_{X\to Y}$
a $(\varphi^{-1}(\mathcal{D}_{Y}),\mathcal{D}_{X})$-bimodule, denoted
$\mathcal{D}_{Y\leftarrow X}$ (c.f. the beginning of \prettyref{subsec:Operations-on-modules:Push!}
below for details) and then define 
\[
\int_{\varphi}:D(\mathcal{D}_{X}-\text{mod})\to D(\mathcal{D}_{Y}-\text{mod})
\]
via 
\[
\int_{\varphi}\mathcal{M}^{\cdot}:=R\varphi_{*}(\mathcal{D}_{Y\leftarrow X}\otimes_{\mathcal{D}_{X}}^{L}\mathcal{M}^{\cdot})
\]
Though one may easily prove many basic properties of the pushforward
from this formula, it does not seem to lend itself to any obvious
evaluation. For instance, if $Y=\text{Spec}(\mathbb{C})$ is a point,
and $\mathcal{M}^{\cdot}=\mathcal{M}$ is a left $\mathcal{D}_{X}$-module,
we would hope that the pushforward of $\mathcal{N}$ agrees with the
de Rham cohomology. That this is indeed the case (up to a harmless
homological shift) is contained in the following fundamental 
\begin{thm}
\label{thm:Basic-de-Rham-resolution}(c.f. \cite{key-49}, section
1.5) Suppose $Y$ is a point. Then $\mathcal{D}_{Y\leftarrow X}=\omega_{X}$
(with its natural right $\mathcal{D}_{X}$-module structure). Consider
the de Rham complex of $\mathcal{D}_{X}$, $(\mathcal{D}_{X}\otimes_{\mathcal{O}_{X}}\Omega_{X}^{\cdot},\nabla)$.
Giving each term $\mathcal{D}_{X}\otimes\Omega_{X}^{i}$ the structure
of a right $\mathcal{D}_{X}$-module via the right multiplication
of $\mathcal{D}_{X}$ on itself makes this complex into a complex
of right $\mathcal{D}_{X}$-modules. This complex is exact except
at the rightmost term (where $i=\text{dim}(X)$), and we have that
the action map 
\[
\mathcal{D}_{X}\otimes_{\mathcal{O}_{X}}\omega_{X}\to\omega_{X}
\]
induces an isomorphism $\mathcal{H}^{\text{dim}(X)}(\mathcal{D}_{X}\otimes_{\mathcal{O}_{X}}\Omega_{X}^{\cdot})\tilde{\to}\omega_{X}$. 

In other words, $(\mathcal{D}_{X}\otimes_{\mathcal{O}_{X}}\Omega_{X}^{\cdot},\nabla)$
is a locally free resolution of $\omega_{X}$ in the category of right
$\mathcal{D}_{X}$-modules. 

Plugging this resolution into the definition of the pushforward yields
\[
\int_{\varphi}\mathcal{M}\tilde{\to}\mathbb{H}_{\text{dR}}^{\cdot}(\mathcal{M})[\text{dim}(X)]
\]
\end{thm}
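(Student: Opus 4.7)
The plan is to prove the three assertions of the theorem in the stated order. For the first, observe that since $Y=\text{Spec}(\mathbb{C})$ we have $\mathcal{D}_Y=\mathbb{C}$, so $\mathcal{D}_{X\to Y}=\varphi^{*}\mathcal{D}_Y=\mathcal{O}_X$. Applying the left-right interchange recalled just above the theorem (which twists by $\omega_X$ on the left and by $\varphi^{-1}(\omega_Y^{-1})$ on the right, the latter being trivial since $\omega_Y=\mathcal{O}_Y$) produces $\mathcal{D}_{Y\leftarrow X}=\omega_X$, with its right $\mathcal{D}_X$-action given by the usual Lie-derivative action of vector fields on top forms. This step is essentially a bookkeeping verification.

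The substantive content is the exactness of the de Rham complex $(\mathcal{D}_X\otimes_{\mathcal{O}_X}\Omega_X^{\cdot},\nabla)$ outside its rightmost term. Since exactness is local, I would pass to an \'etale coordinate patch with coordinates $x_1,\dots,x_n$ and dual derivations $\partial_1,\dots,\partial_n$, and consider the order filtration $F_\bullet\mathcal{D}_X$ whose associated graded ring is the symmetric algebra $\mathrm{Sym}_{\mathcal{O}_X}(T_X)=\mathcal{O}_X[\xi_1,\dots,\xi_n]$ with $\xi_i$ the symbol of $\partial_i$. This induces a filtration on the de Rham complex (each $\Omega_X^{i}$ sitting in filtration degree zero). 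Because $\nabla(P\otimes\omega)=\sum_i\partial_iP\otimes dx_i\wedge\omega+P\otimes d\omega$ and only the first summand raises the order of $P$ by the maximum amount, the associated graded differential is $P\otimes\omega\mapsto\sum_i\xi_iP\otimes dx_i\wedge\omega$. This is exactly the Koszul complex for multiplication by the regular sequence $\xi_1,\dots,\xi_n$ on $\mathrm{Sym}(T_X)$, tensored with the local coframe; it is therefore acyclic in degrees $<n$ and its top cohomology is $(\mathrm{Sym}(T_X)/(\xi_1,\dots,\xi_n))\otimes_{\mathcal{O}_X}\omega_X=\omega_X$. A filtered spectral sequence argument, convergent because the filtration is exhaustive and bounded below on each fixed piece, transports these conclusions to the unfiltered complex, and the action map $\mathcal{D}_X\otimes_{\mathcal{O}_X}\omega_X\to\omega_X$ visibly induces the asserted isomorphism on $\mathcal{H}^{n}$.

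The main obstacle is this middle step: one must identify the associated graded differential with the Koszul one (paying attention to signs), check that the order filtration is well-behaved after tensoring with the locally free sheaves $\Omega_X^{i}$, and confirm convergence of the spectral sequence. All of these are standard but require care. Once the resolution is established, the final assertion is formal. In the derived category of right $\mathcal{D}_X$-modules we have $\omega_X\simeq(\mathcal{D}_X\otimes_{\mathcal{O}_X}\Omega_X^{\cdot})[\dim X]$, so
\[
\int_{\varphi}\mathcal{M}=R\varphi_{*}(\omega_X\otimes_{\mathcal{D}_X}^{L}\mathcal{M})\simeq R\varphi_{*}\bigl((\mathcal{D}_X\otimes_{\mathcal{O}_X}\Omega_X^{\cdot})\otimes_{\mathcal{D}_X}\mathcal{M}\bigr)[\dim X].
\]
Each $\mathcal{D}_X\otimes_{\mathcal{O}_X}\Omega_X^{i}$ is locally free as a right $\mathcal{D}_X$-module and $(\mathcal{D}_X\otimes_{\mathcal{O}_X}\Omega_X^{i})\otimes_{\mathcal{D}_X}\mathcal{M}=\Omega_X^{i}\otimes_{\mathcal{O}_X}\mathcal{M}$, so the tensor product collapses to the ordinary de Rham complex of $\mathcal{M}$; since $\varphi$ is the structure map to a point, $R\varphi_{*}=R\Gamma(X,-)$, yielding $\int_{\varphi}\mathcal{M}\simeq\mathbb{H}_{\text{dR}}^{\cdot}(\mathcal{M})[\dim X]$ as claimed.
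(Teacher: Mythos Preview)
The paper does not prove this theorem; it is stated in the introduction as classical background with a citation to \cite{key-49}, section 1.5, and the paper's own results concern the Witt-vector analogue. Your outline is the standard textbook argument (order filtration reducing the de Rham complex of $\mathcal{D}_X$ to the Koszul complex on symbols), and it is correct as written.
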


Along with the resolution of $\omega_{X}$ as a right $\mathcal{D}_{X}$-module,
there is (via the left-right-interchange) a corresponding resolution
of $\mathcal{O}_{X}$ as a left $\mathcal{D}_{X}$-module, called
the Spencer resolution. It allows one to prove the other basic interpretation
of the de Rham cohomology in terms of the category of modules with
flat connection:
\begin{cor}
\label{cor:Spencer-Res}There is a locally free resolution of $\mathcal{O}_{X}$,
in the category of left $\mathcal{D}_{X}$-modules, whose terms are
$\mathcal{D}_{X}\otimes_{\mathcal{O}_{X}}\mathcal{T}_{X}^{i}$ (here,
$\mathcal{T}_{X}^{i}$ is the $i$th exterior power of the tangent
sheaf); and the differential is obtained from the differential in
the de Rham complex of $\mathcal{D}_{X}$ via the left-right interchange.
Therefore, we have for any $\mathcal{M}\in\mathcal{D}_{X}-\text{mod}$
an isomorphism
\[
\text{Ext}{}_{\mathcal{D}_{X}}^{i}(\mathcal{O}_{X},\mathcal{M})\tilde{\to}\mathbb{H}_{dR}^{i}(\mathcal{M})
\]
\end{cor}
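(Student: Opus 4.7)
The plan is to produce the Spencer resolution by applying the left--right interchange term by term to the resolution of $\omega_{X}$ in Theorem \ref{thm:Basic-de-Rham-resolution}, and then to use it to compute $R\text{Hom}_{\mathcal{D}_{X}}(\mathcal{O}_{X},\mathcal{M})$.

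First, recall that the inverse of the left--right interchange sends a right $\mathcal{D}_{X}$-module $\mathcal{N}$ to the left $\mathcal{D}_{X}$-module $\mathcal{N}\otimes_{\mathcal{O}_{X}}\omega_{X}^{-1}$. Applied termwise to the right-$\mathcal{D}_{X}$-module resolution $(\mathcal{D}_{X}\otimes_{\mathcal{O}_{X}}\Omega_{X}^{\cdot},\nabla)\to\omega_{X}$, this produces a complex of left $\mathcal{D}_{X}$-modules with terms
\[
(\mathcal{D}_{X}\otimes_{\mathcal{O}_{X}}\Omega_{X}^{i})\otimes_{\mathcal{O}_{X}}\omega_{X}^{-1}\simeq\mathcal{D}_{X}\otimes_{\mathcal{O}_{X}}(\Omega_{X}^{i}\otimes_{\mathcal{O}_{X}}\omega_{X}^{-1})\simeq\mathcal{D}_{X}\otimes_{\mathcal{O}_{X}}\mathcal{T}_{X}^{n-i},
\]
augmented over $\omega_{X}\otimes_{\mathcal{O}_{X}}\omega_{X}^{-1}\simeq\mathcal{O}_{X}$, where $n=\text{dim}(X)$ and I have used the standard duality $\Omega_{X}^{i}\otimes_{\mathcal{O}_{X}}\omega_{X}^{-1}\simeq\mathcal{T}_{X}^{n-i}$. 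Reindexing by $j=n-i$ gives a complex with terms $\mathcal{D}_{X}\otimes_{\mathcal{O}_{X}}\mathcal{T}_{X}^{j}$ augmented over $\mathcal{O}_{X}$; exactness is preserved because the left--right interchange is an equivalence of categories, and the differentials are, by definition, those inherited from $\nabla$.

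For the Ext statement, I would apply $R\text{Hom}_{\mathcal{D}_{X}}(-,\mathcal{M})$ using the Spencer resolution just constructed. Each term is a locally free left $\mathcal{D}_{X}$-module, so the (local) $\text{Hom}$'s compute $R\text{Hom}_{\mathcal{D}_{X}}(\mathcal{O}_{X},\mathcal{M})$ after taking $R\Gamma$. Termwise adjunction gives
\[
\mathcal{H}om_{\mathcal{D}_{X}}(\mathcal{D}_{X}\otimes_{\mathcal{O}_{X}}\mathcal{T}_{X}^{j},\mathcal{M})\simeq\mathcal{H}om_{\mathcal{O}_{X}}(\mathcal{T}_{X}^{j},\mathcal{M})\simeq\mathcal{M}\otimes_{\mathcal{O}_{X}}\Omega_{X}^{j},
\]
where the last isomorphism uses local freeness of $\mathcal{T}_{X}^{j}$ and its duality with $\Omega_{X}^{j}$. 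A direct check shows the induced differentials agree with those of the de Rham complex of $\mathcal{M}$, so $R\text{Hom}_{\mathcal{D}_{X}}(\mathcal{O}_{X},\mathcal{M})$ is represented by $(\mathcal{M}\otimes_{\mathcal{O}_{X}}\Omega_{X}^{\cdot},\nabla)$; passing to global hypercohomology gives the desired isomorphism $\text{Ext}^{i}_{\mathcal{D}_{X}}(\mathcal{O}_{X},\mathcal{M})\simeq\mathbb{H}^{i}_{\text{dR}}(\mathcal{M})$.

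The main obstacle is purely bookkeeping: tracking bimodule structures and sign conventions through the left--right interchange to confirm that the Spencer differential obtained by transporting $\nabla$ matches, under the termwise identification $\mathcal{H}om_{\mathcal{O}_{X}}(\mathcal{T}_{X}^{j},\mathcal{M})\simeq\mathcal{M}\otimes\Omega_{X}^{j}$, the usual de Rham differential on $\mathcal{M}\otimes\Omega_{X}^{\cdot}$. In local coordinates $x_{1},\ldots,x_{n}$ with dual vector fields $\partial_{1},\ldots,\partial_{n}$, both differentials have explicit Koszul-type expressions and the match is a routine verification; everything else in the argument is formal.
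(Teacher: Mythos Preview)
Your proposal is correct and follows exactly the route the paper indicates: the corollary is stated without a detailed proof, but the paper says the Spencer resolution is obtained from the de Rham resolution of $\omega_{X}$ via the left--right interchange, which is precisely what you do, and the Ext computation via the resulting locally free resolution is the standard consequence.
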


After identifying $\mathcal{D}$-modules with sheaves on the infinitesimal
site, this yields another proof of the fact that de Rham cohomology
of a connection agrees with its infinitesimal cohomology. 

In fact there is a generalization of this to any smooth morphism $\varphi:X\to Y$.
In that case, one may consider the relative connection $\nabla:\mathcal{D}_{X}\to\mathcal{D}_{X}\otimes_{\mathcal{O}_{X}}\Omega_{X/Y}^{1}$
and the associated relative de Rham complex 
\[
(\mathcal{D}_{X}\otimes_{\mathcal{O}_{X}}\Omega_{X/Y}^{\cdot},\nabla)
\]
This is again a locally free complex of right $\mathcal{D}_{X}$-modules,
which is a resolution of $\mathcal{D}_{Y\leftarrow X}$. Using this,
one may go on to show 
\begin{thm}
Let $\varphi:X\to Y$ be smooth of relative dimension $d$. Let $\mathcal{M}\in\mathcal{D}_{X}-\text{mod}$,
and $(\mathcal{M}\otimes_{\mathcal{O}_{X}}\Omega_{X/Y}^{\cdot},\nabla)$
the corresponding relative de Rham complex. Then 

1) There is an isomorphism 
\[
\int_{\varphi}\mathcal{M}[-d]\tilde{=}R\varphi_{*}(\mathcal{M}\otimes_{\mathcal{O}_{X}}\Omega_{X/Y}^{\cdot},\nabla)
\]
in the category $D(\mathcal{O}_{Y})-\text{mod}$. The resulting $\mathcal{D}_{Y}$-module
structure on the sheaf $R^{i}\varphi_{*}(\mathcal{M}\otimes_{\mathcal{O}_{X}}\Omega_{X/Y}^{\cdot},\nabla)$
corresponds to the Gauss-Manin connection. 

2) Let $\varphi^{!}=L\varphi^{*}[d]$. Then there is an isomorphism
\[
R\varphi_{*}R\mathcal{H}om_{\mathcal{D}_{X}}(\varphi^{!}\mathcal{N}^{\cdot},\mathcal{M}^{\cdot})\tilde{\to}R\mathcal{H}om{}_{\mathcal{D}_{Y}}(\mathcal{N}^{\cdot},\int_{\varphi}\mathcal{M}^{\cdot})
\]
for any $\mathcal{N}^{\cdot}\in D(\mathcal{D}_{Y}-\text{mod})$ and
any $\mathcal{M}^{\cdot}\in D(\mathcal{D}_{X}-\text{mod})$. In particular,
${\displaystyle \int_{\varphi}}$ is the right adjoint to $\varphi^{!}$. 
\end{thm}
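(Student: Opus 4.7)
The plan is to first establish a relative analogue of Theorem~\ref{thm:Basic-de-Rham-resolution}: namely, that the relative de Rham complex
\[
(\mathcal{D}_{X}\otimes_{\mathcal{O}_{X}}\Omega_{X/Y}^{\cdot},\nabla),
\]
viewed with its natural right $\mathcal{D}_{X}$-module structure and placed in degrees $0$ through $d$, is a locally free resolution of $\mathcal{D}_{Y\leftarrow X}$ (which sits in degree $d$). Once this is in hand, part (1) is essentially immediate: plugging the resolution into the definition of $\int_{\varphi}$ and using the identification $(\mathcal{D}_{X}\otimes\Omega_{X/Y}^{i})\otimes_{\mathcal{D}_{X}}\mathcal{M}\cong\mathcal{M}\otimes_{\mathcal{O}_{X}}\Omega_{X/Y}^{i}$ produces the claimed formula in $D(\mathcal{O}_{Y})$, with the shift $[-d]$ accounting for the degree in which the cohomology of the resolution is concentrated.

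To establish the relative resolution, I would argue locally via étale coordinates. Since $\varphi$ is smooth, one may choose (étale-locally) coordinates $y_{1},\ldots,y_{n}$ on $Y$ and relative coordinates $x_{1},\ldots,x_{d}$ on $X$ so that $(y_{1},\ldots,y_{n},x_{1},\ldots,x_{d})$ form étale coordinates on $X$. With respect to these, $\mathcal{D}_{X}$ is free over $\varphi^{-1}\mathcal{D}_{Y}\otimes_{\varphi^{-1}\mathcal{O}_{Y}}\mathcal{O}_{X}$ on monomials in the relative derivations $\partial_{x_{i}}$, and the relative de Rham complex decouples as the tensor product of $\varphi^{-1}\mathcal{D}_{Y}$ with a Koszul-type complex in the $\partial_{x_{i}}$. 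Exactness away from the top degree then reduces to the absolute computation underlying Theorem~\ref{thm:Basic-de-Rham-resolution}, applied fibrewise over $Y$; the identification of the top cohomology with $\mathcal{D}_{Y\leftarrow X}$ comes from the isomorphism $\omega_{X/Y}\otimes_{\mathcal{O}_{X}}\mathcal{D}_{X}\cong\mathcal{D}_{Y\leftarrow X}$ built into the relative left-right interchange. For the Gauss-Manin assertion, I would verify locally that the $\varphi^{-1}\mathcal{D}_{Y}$-module structure on the resolution, transferred to the cohomology $R^{i}\varphi_{*}(\mathcal{M}\otimes\Omega_{X/Y}^{\cdot})$, coincides with the classical Katz-Oda description via the coordinate lift of vector fields from $Y$ to $X$.

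For part (2), with the resolution of (1) available, the adjunction reduces to standard manipulations. Writing $\varphi^{!}\mathcal{N}=\mathcal{D}_{X\to Y}\otimes_{\varphi^{-1}\mathcal{D}_{Y}}^{L}\varphi^{-1}\mathcal{N}[d]$ and applying tensor-hom adjunction yields
\[
R\mathcal{H}om_{\mathcal{D}_{X}}(\varphi^{!}\mathcal{N},\mathcal{M})\cong R\mathcal{H}om_{\varphi^{-1}\mathcal{D}_{Y}}(\varphi^{-1}\mathcal{N},R\mathcal{H}om_{\mathcal{D}_{X}}(\mathcal{D}_{X\to Y},\mathcal{M})[-d]).
\]
The crucial step is the local-duality identification
\[
R\mathcal{H}om_{\mathcal{D}_{X}}(\mathcal{D}_{X\to Y},\mathcal{M})[-d]\cong\mathcal{D}_{Y\leftarrow X}\otimes_{\mathcal{D}_{X}}^{L}\mathcal{M}
\]
in $D(\varphi^{-1}\mathcal{D}_{Y})$, which I would prove via the dual relative Spencer resolution of $\mathcal{D}_{X\to Y}$ obtained by applying the left-right interchange to the relative de Rham resolution, combined with the local coordinate description above. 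Applying $R\varphi_{*}$ and invoking the sheaf-theoretic $(\varphi^{-1},R\varphi_{*})$ adjunction then delivers the asserted formula; the final claim that $\int_{\varphi}$ is right adjoint to $\varphi^{!}$ is its global sections.

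The main obstacle throughout is tracking bimodule structures. The duality above requires matching $\varphi^{-1}\mathcal{D}_{Y}$-actions that arise from two genuinely different sides of the tensor and Hom, a compatibility between the left-right interchange on $X$ and the one on $Y$ which is not formal. The Gauss-Manin identification in (1) is similarly the most delicate point: the $\mathcal{D}_{Y}$-structure on $R^{i}\varphi_{*}$ enters only through the $\varphi^{-1}\mathcal{D}_{Y}$-action on $\mathcal{D}_{Y\leftarrow X}$, and verifying that this matches the classical connection requires an explicit computation of connection matrices in local coordinates rather than a formal argument.
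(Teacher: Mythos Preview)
The paper does not actually prove this theorem: it appears in the introduction as classical background and is immediately followed by ``For a nice exposition of this, we refer the reader to \cite{key-4}, chapter 4.'' Your sketch is essentially the standard argument from that reference, and it is correct in outline.

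It is worth noting that your approach to part (2) is also precisely the strategy the paper adopts when it proves the Witt-vector analogue (\prettyref{cor:smooth-adjunction}): the key input there is \prettyref{prop:X-Y-switch-for-smooth}, the isomorphism $R\mathcal{H}om_{\mathcal{D}_{X}}(\mathcal{D}_{X\to Y},\mathcal{D}_{X})\cong\mathcal{D}_{Y\leftarrow X}[-d]$, which the paper explicitly says ``can be proved via the de Rham resolution'' in the classical $m=0$ case. The rest of that proof unwinds exactly as you describe --- hom-tensor adjunction to pull $\varphi^{-1}\mathcal{N}$ outside, the duality identification to convert $R\mathcal{H}om(\mathcal{D}_{X\to Y},-)$ into $\mathcal{D}_{Y\leftarrow X}\otimes^{L}-$, and then $(\varphi^{-1},R\varphi_{*})$ adjunction. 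So your proposal matches both the cited reference and the paper's own treatment of the parallel statement.
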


For a nice exposition of this, we refer the reader to \cite{key-4},
chapter 4. Without going into any more detail, let us mention that
the theory of $\mathcal{D}$-modules allows one to study many other
interesting phenomena related to the topology of algebraic varieties.
For instance, there is a very rich theory of nearby and vanishing
cycles for $\mathcal{D}$-modules, there is a $\mathcal{D}$-module
duality which is generalization of Poincare duality, and that there
are deep connections with Hodge theory. 

Now let us turn to the main interest of this paper, the theory of
varieties in characteristic $p>0$. Fix now a perfect field $k$ of
characteristic $p>0$; letters $X$ and $Y$ will now denote smooth
varieties over $k$. Let $W(k)$ denote the $p$-typical Witt vectors
of $k$, and set $W_{r}(k)=W(k)/p^{r}$. In this case it was predicted
by Grothendieck that there should be a cohomology theory, the crystalline
cohomology, denoted $\mathbb{H}_{\text{crys}}^{\cdot}(X;W_{r}(k))$,
with coefficients in modules over $W_{r}(k)$ (for each $r$). The
crystalline cohomology is supposed to have the following property:
if $\mathfrak{X}_{r}$ is a flat lift of $X$ to a scheme over $W_{r}(k)$,
then there is an isomorphism 
\begin{equation}
\mathbb{H}_{\text{dR}}^{\cdot}(\mathfrak{X}_{r})\tilde{\to}\mathbb{H}_{\text{crys}}^{\cdot}(X;W_{r}(k))\label{eq:basic-crystalline-iso}
\end{equation}
Furthermore, the crystalline cohomology should be the cohomology of
the structure sheaf in a suitable site, the crystalline site of $X$.
One should then be able to take the limit over $r$ and obtain a cohomology
$\mathbb{H}_{\text{crys}}^{\cdot}(X)$ with values in $W(k)$. 

This prediction was borne out in Berthelot's monumental work \cite{key-60}.
In that work, he constructs the crystalline site of $X$ (it is modeled
after the infinitesimal site) and defines crystalline cohomology as
the cohomology of the structure sheaf in the crystalline site; and
proves, among many other things, the isomorphism \ref{eq:basic-crystalline-iso}.
As the crystalline cohomology is defined for any $X$, without using
a lift, the theory shows that the de Rham cohomology of such a lift
depends only on special fibre $X$. 

There is, in some cases, an interpretation of sheaves of $\mathcal{O}$-modules
on the crystalline site over $W_{r}(k)$ (we shall refer to them as
\emph{$W_{r}(k)$-crystals }from now on) in terms of flat connections.
Namely, if $\mathfrak{X}_{r}$ is a lift of $X$ as above, then there
is an equivalence from $W_{r}(k)$-crystals on $X$ to $\mathcal{O}_{\mathfrak{X}_{r}}$-modules
with flat connection (satisfying an additional condition called nilpotence).
This means that crystals always have such a description locally on
$X$. In general, however, $X$ admits no such lift, and so there
is no ``concrete'' description of the category of crystals analogous
to the theory of $\mathcal{D}_{X}$-modules sketched above for complex
varieties. 

In order to find such a description, the first step is to identify
the analogue of the de Rham complex for crystals. This task was accomplished
in the monumental work \cite{key-6} of Illusie (following ideas of
Bloch \cite{key-61} and Deligne). The required complex is called
the de Rham-Witt complex, denoted $W\Omega_{X}^{i}$. This is a complex
of sheaves on the formal scheme of Witt vectors of $X$, $W(X)$.
As $W(X)$ is a functorially defined lift of $X$ to mixed characteristic,
it had been suspected (since at least \cite{key-62}) that it might
be a good place to look for cohomology theories which take values
in $W(k)$, and the de Rham-Witt complex realizes this vision. The
theory of this complex is extensively developed in \cite{key-6},
but for now let us only mention the fact that there is a canonical
isomorphism
\[
\mathbb{H}_{\text{crys}}^{\cdot}(X)\tilde{\to}\mathbb{H}^{\cdot}(W\Omega_{X}^{i})
\]
Furthermore, Etesse (in \cite{key-56}) has constructed a functor
from crystals to modules equipped with a de Rham-Witt connection,
and has shown that, when the underlying $\mathcal{O}$-module $\mathcal{M}$
is a vector bundle, there is a comparison
\[
\mathbb{H}_{\text{crys}}^{\cdot}(\mathcal{M})\tilde{\to}\mathbb{H}_{dRW}^{\cdot}(\mathcal{M})
\]
where on the right we have the hypercohomology of the de Rham-Witt
complex with values in $\mathcal{M}$. 

It therefore seems natural to look for the analogue of the above theory
of $\mathcal{D}_{X}$-modules and try to interpret crystals in terms
of some kind of $\mathcal{D}$-modules on $W(X)$; one would hope
also for the analogues of the basic functorialities discussed above,
as well as suitable comparisons between crystalline cohomology, the
$\mathcal{D}$-module pushforward, and the de Rham-Witt cohomology. 

This paper accomplishes exactly such a construction. In the remainder
of this introduction, we will explain the idea behind our theory,
as well as stating the main theorems of the paper and giving an outline
of the contents. 

Let us begin by supposing that $A$ admits local coordinates $\{T_{1},\dots T_{n}\}$,
i.e., there is an etale map $\text{Spec}(A)\to\mathbb{A}_{k}^{n}$
. Then one has a concrete description of the ring of ($k$-linear)
differential operators $\mathcal{D}_{A}$ as the subring of $\text{End}_{k}(A)$
generated by multiplication by elements of $A$ and operators of the
form $\partial_{i}^{[j]}$ where $\partial_{i}^{[j]}$ is the unique
differential operator on $A$ satisfying
\[
\partial_{i}^{[j]}(T_{i}^{m})={m \choose j}T_{i}^{m-j}
\]
and $\partial_{i}^{[j]}(T_{l}^{m})=0$ for $l\neq i$. The collection
of operators $\partial_{i}^{[j]}$ is very well behaved; for instance,
we have for all $j$ the relation 
\[
\partial_{i}^{[pj]}\circ F=F\circ\partial_{i}^{[j]}
\]
where $F:A\to A$ is the Frobenius. The subring of $\mathcal{D}_{A}$
generated by $A$ and the operators $\{\partial_{i}^{[j]}\}_{j\leq p^{m}}$
is denoted\footnote{The notation in this area is not entirely standardized; see directly
below for our conventions on rings of differential operators} $\mathcal{\overline{D}}_{A}^{(m)}$, and the above relation is the
key to showing that the functor of pullback by Frobenius can be upgraded
to a functor 
\[
F^{*}:\mathcal{\overline{D}}_{A}^{(m)}-\text{mod}\to\mathcal{\overline{D}}_{A}^{(m+1)}-\text{mod}
\]
In fact, this functor is an equivalence of categories. 

Now consider the ring $W(A)$. It is a $W(k)$ algebra, equipped with
a lift of Frobenius $F$ and an operator $V$ which acts as $p\cdot F^{-1}$.
The filtration $V^{i}(W(A))$ is a decreasing filtration by ideals,
with respect to which $W(A)$ is complete. One has that $p^{i}\in V^{i}(W(A))$,
and $W(A)$ is also $p$-adically complete. There is a canonical isomorphism
\begin{equation}
W(A)/V(W(A))\tilde{\to}A\label{eq:Witt-lifts-A}
\end{equation}
On this algebra we construct a family of operators, denoted $\{\partial_{i}\}_{\lambda}$,
where $\lambda\in\mathbb{Q}^{+}$ is a positive rational number of
the form $j/p^{r}$ with $j$ a positive integer and $r\geq0$. For
all $\lambda$ the operators $\{\partial_{i}\}_{\lambda}$ preserve
the filtration $\{V^{i}(W(A))\}$. If $\lambda\in\mathbb{Z}$ then
$\{\partial_{i}\}_{\lambda}$ is a lift of $\partial_{i}^{[\lambda]}$,
via the isomorphism \prettyref{eq:Witt-lifts-A}. If $\text{val}_{p}(\lambda)=r<0$
then $\{\partial_{i}\}_{\lambda}(W(A))\subset V^{r}(W(A))$; this
means that for any such $\lambda$ and any $\alpha\in W(A)$ we can
define the operator $F^{-r}(\alpha)\cdot\{\partial_{i}\}_{\lambda}:W(A)\to W(A)$
(this is because, while $F^{-r}(\alpha)\notin W(A)$ in general, its
action on $V^{r}(W(A))$ is well-defined). 

Most importantly, the operators satisfy the relations 
\[
\{\partial_{i}\}_{\lambda}F=F\{\partial_{i}\}_{\lambda/p}
\]
and therefore
\[
\{\partial_{i}\}_{\lambda}V=V\{\partial_{i}\}_{p\lambda}
\]
As in the case of $\partial_{i}^{[j]}$, there is a formula for the
action of $\{\partial_{i}\}_{\lambda}$ on $W(A)$ in terms of coordinates;
more important than this is the fact there is an intrinsic construction
of these operators in terms of Hasse-Schmidt derivations on $W(A)$
(this is detailed in \prettyref{thm:Canonical-H-S} below). In particular,
the algebra of $W(k)$-linear endomorphisms of $W(A)$ generated\footnote{strictly speaking, we need to work with a kind of completion of this
algebra} by $W(A)$ itself and terms of the form $F^{-v}(\alpha)\cdot\{\partial_{i}\}_{\lambda}$
(where $v=\text{max}\{0,-\text{val}_{p}(\lambda)\}$ can be defined
independently of any choice of coordinates. We term this algebra the
Witt-differential operators of $W(A)$, denoted $\widehat{\mathcal{D}}_{W(A)}$.
For each integer $m$ (positive or negative), we have also the differential
operators of level $\leq m$; this is the subalgebra of $\widehat{\mathcal{D}}_{W(A)}$
generated\footnote{as above, generated in the topological sense}
by operators $F^{-v}(\alpha)\cdot\{\partial_{i}\}_{\lambda}$ for
which $\text{val}_{p}(\lambda)\leq m$. This algebra, denoted $\widehat{\mathcal{D}}_{W(A)}^{(m)}$
can be defined independently of the choice of coordinates (all of
this is detailed in \prettyref{subsec:Local-Coordinates} below). 

At this point the reader might (quite reasonably) ask: what is the
justification for this definition? And what is the relationship between
modules over this ring and crystals (or flat connections)? Both of
these questions are answered by the following construction: as $A$
is affine, it admits a lift to a $p$-adically complete, $W(k)$-flat
algebra $\mathcal{A}$. We may choose an endomorphism $F:\mathcal{A}\to\mathcal{A}$
which lifts the Frobenius map (in fact, as $A$ admits local coordinates,
we can even demand that $\mathcal{A}$ admits local coordinates $\{T_{1},\dots T_{n}\}$
for which $F(T_{i})=T_{i}^{p}$; we call such lifts \emph{coordinatized}).
By a basic property of the Witt vectors, such a lift induces a morphism
\[
\Phi:\mathcal{A}\to W(A)
\]
which intertwines the Frobenius lift $F$ with the Witt vector Frobenius
on $W(A)$. On $\mathcal{A}$ we have, for each $m\geq0$, Berthelot's
ring of arithmetic differential operators $\mathcal{\widehat{D}}_{\mathcal{A}}^{(m)}$.
For the purpose of this introduction we focus on the case $m=0$,
in which case $\mathcal{\widehat{D}}_{\mathcal{A}}^{(0)}$ is simply
the $p$-adic completion of subalgebra of $\text{End}_{W(k)}(\mathcal{A})$
generated by $\mathcal{A}$ and the continuous, $W(k)$-linear derivations
of $\mathcal{A}$. This algebra acts faithfully on $\mathcal{A}$
itself. 

Then we have 
\begin{thm}
\label{thm:Consider-the--bimodule}Consider the $(\widehat{\mathcal{D}}_{W(A)}^{(0)},\mathcal{\widehat{D}}_{\mathcal{A}}^{(0)})$-bimodule
$\text{Hom}_{W(k)}(\mathcal{A},W(A))$-this is a bimodule via the
obvious actions of $\widehat{\mathcal{D}}_{W(A)}^{(0)}$ on $W(A)$
and $\widehat{\mathcal{D}}_{\mathcal{A}}^{(0)}$ on $\mathcal{A}$.
Let $\Phi^{*}\mathcal{\widehat{D}}_{\mathcal{A}}^{(0)}$ denote the
completion of $W(A)\otimes_{\mathcal{A}}\mathcal{\widehat{D}}_{\mathcal{A}}^{(0)}$
along the filtration $V^{i}(W(A))\otimes_{\mathcal{A}}\mathcal{\widehat{D}}_{\mathcal{A}}^{(0)}$.
Then there is an embedding 
\[
\Phi^{*}\mathcal{\widehat{D}}_{\mathcal{A}}^{(0)}\to\text{Hom}_{W(k)}(\mathcal{A},W(A))
\]
which takes a tensor $\alpha\otimes P$ to the map $a\to\alpha\cdot\Phi(P(a))$.
The image of this embedding is exactly the $(\widehat{\mathcal{D}}_{W(A)}^{(0)},\mathcal{\widehat{D}}_{\mathcal{A}}^{(0)})$-bisubmodule
generated by $\Phi$. The object $\Phi^{*}\mathcal{\widehat{D}}_{\mathcal{A}}^{(0)}$
is faithfully flat as a right $\mathcal{\widehat{D}}_{\mathcal{A}}^{(0)}$-module,
and projective as a left $\widehat{\mathcal{D}}_{W(A)}^{(0)}$-module;
in fact it is a summand of $\widehat{\mathcal{D}}_{W(A)}^{(0)}$ itself. 
\end{thm}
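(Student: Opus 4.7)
My strategy is to work locally in coordinates. Pick coordinates $T_1,\ldots,T_n$ on $\mathcal{A}$ compatible with the Frobenius lift, so that $F(T_i)=T_i^p$ and hence $\Phi(T_i)=[T_i]$ is the Teichm\"uller lift. In these coordinates $\widehat{\mathcal{D}}_{\mathcal{A}}^{(0)}$ is topologically free over $\mathcal{A}$ on the basis $\{\partial^{\underline{k}}\}_{\underline{k}\in\mathbb{N}^{n}}$, while $\widehat{\mathcal{D}}_{W(A)}^{(0)}$ has the coordinate description in terms of the operators $F^{-v(\lambda)}(\alpha)\{\partial_{i}\}_{\lambda}$ recalled in the introduction. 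The first step is well-definedness of the assignment $\alpha\otimes P\mapsto(a\mapsto\alpha\cdot\Phi(P(a)))$: the $\mathcal{A}$-balancing is immediate from $\Phi$ being a ring homomorphism, and continuity with respect to the filtration $V^{\bullet}(W(A))\otimes_{\mathcal{A}}\widehat{\mathcal{D}}_{\mathcal{A}}^{(0)}$ follows from each $V^{i}(W(A))$ being an ideal. Hence the map extends to the completion $\Phi^{*}\widehat{\mathcal{D}}_{\mathcal{A}}^{(0)}$, and the bimodule structure claimed in the theorem can be checked by inspection on pure tensors.

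For injectivity and to pin down the image, I would evaluate on monomials. Setting $\psi_{\underline{k}}(a):=\Phi(\partial^{\underline{k}}(a))$, any element of $\Phi^{*}\widehat{\mathcal{D}}_{\mathcal{A}}^{(0)}$ is sent to a convergent sum $\sum_{\underline{k}}\alpha_{\underline{k}}\psi_{\underline{k}}$, and a direct computation gives $\psi_{\underline{k}}(T^{\underline{m}})=\underline{k}!\binom{\underline{m}}{\underline{k}}[T]^{\underline{m}-\underline{k}}$. Testing against the $T^{\underline{m}}$ in order of increasing $|\underline{m}|$ produces a triangular system whose diagonal entries $\underline{k}!$ are non-zero-divisors in $W(A)$, and since the monomials $[T]^{\underline{l}}$ are $W(k)$-linearly independent in $W(A)$ this forces all $\alpha_{\underline{k}}=0$, giving injectivity. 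This identifies $\Phi^{*}\widehat{\mathcal{D}}_{\mathcal{A}}^{(0)}$ with the $V$-adically closed $W(A)$-submodule of $\mathrm{Hom}_{W(k)}(\mathcal{A},W(A))$ spanned by $\{\psi_{\underline{k}}\}$. To match this with the bisubmodule generated by $\Phi=\psi_{0}$, the right action of $\partial^{\underline{k}}$ produces $\psi_{\underline{k}}$ by construction, and for the left action one verifies that post-composition by the integer-$\lambda$ Witt operator $\{\partial\}^{\underline{k}}$ applied to $\Phi$ recovers $\psi_{\underline{k}}$ up to lower-order terms already known to be in the image; the commutation relations $\{\partial_{i}\}_{\lambda}F=F\{\partial_{i}\}_{\lambda/p}$ together with the intrinsic Hasse-Schmidt description keep this calculation within the $W(A)$-span of the $\psi_{\underline{k}}$.

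The summand and flatness clauses are the most delicate. In coordinates, $\widehat{\mathcal{D}}_{W(A)}^{(0)}$ is topologically free on monomials in the $\{\partial_{i}\}_{\lambda}$ with coefficients $F^{-v(\lambda)}(\alpha)$, and those monomials whose indices involve only integer $\lambda$ (with $W(A)$-coefficients) span a $W(A)$-submodule which the embedding above identifies with $\Phi^{*}\widehat{\mathcal{D}}_{\mathcal{A}}^{(0)}$; the remaining monomials span a complementary $W(A)$-submodule. The main obstacle is to show that this coordinate splitting is left-$\widehat{\mathcal{D}}_{W(A)}^{(0)}$-equivariant: one must verify that left-multiplication by a generator $\{\partial_{j}\}_{\mu}$, including those with $\mathrm{val}_{p}(\mu)<0$, preserves both summands. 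This reduces, via the commutation relations established in the preceding subsection, to tracking the $p$-adic valuation of the indices that appear, and constitutes the central calculation of the proof. Once the summand description is in place, projectivity as a left $\widehat{\mathcal{D}}_{W(A)}^{(0)}$-module is immediate; faithful flatness as a right $\widehat{\mathcal{D}}_{\mathcal{A}}^{(0)}$-module follows from the identification $\Phi^{*}\widehat{\mathcal{D}}_{\mathcal{A}}^{(0)}=W(A)\widehat{\otimes}_{\mathcal{A}}\widehat{\mathcal{D}}_{\mathcal{A}}^{(0)}$ together with the faithful flatness of $\Phi:\mathcal{A}\to W(A)$, which is visible from the coordinate decomposition of $W(A)$ as a free $\mathcal{A}$-module via the $V$-filtration.
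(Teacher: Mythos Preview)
Your treatment of the embedding and of faithful flatness is essentially what the paper does (the paper phrases injectivity via the decomposition $W(A)=\widehat{\bigoplus}V^{r}(T^{I})\Phi(\mathcal{A})$ rather than your triangular argument, and reduces flatness mod $p$, but the content is the same). The two substantive issues are elsewhere.

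First, the identification of the image with the bisubmodule generated by $\Phi$: the nontrivial containment is that the left action of $\widehat{\mathcal{D}}_{W(A)}^{(0)}$ preserves the image, i.e.\ that $\{\partial\}_{J/p^{r}}\circ\Phi$ lies in $\Phi^{*}\widehat{\mathcal{D}}_{\mathcal{A}}^{(0)}$ for fractional $J/p^{r}$. Your appeal to the commutation relation $\{\partial_{i}\}_{\lambda}F=F\{\partial_{i}\}_{\lambda/p}$ does not settle this, since $\Phi$ is not a power of $F$. In the paper this is the content of Proposition~3.2, which computes $\{\partial\}_{J/p^{r}}\circ\Phi$ on monomials and shows that the resulting function of the exponents is a genuine polynomial with $\mathbb{Z}_{p}$-coefficients, hence realized by an operator in $\widehat{\mathcal{D}}_{\mathcal{A}}^{(0)}$. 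This computation is not formal.

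Second, and more seriously, your proposed splitting cannot work. You want the $W(A)$-span of the integer-$\lambda$ monomials $\alpha\{\partial\}^{I}$ inside $\widehat{\mathcal{D}}_{W(A)}^{(0)}$ to be a left submodule isomorphic to $\Phi^{*}\widehat{\mathcal{D}}_{\mathcal{A}}^{(0)}$. But it is not a left submodule at all: left multiplication by $\{\partial_{j}\}_{1/p}$ sends the element $1$ (which lies in your ``integer'' summand) to $\{\partial_{j}\}_{1/p}$, which by the uniqueness in the basis theorem lies in the complementary ``fractional'' summand. What is true is that the surjection $\widehat{\mathcal{D}}_{W(A)}^{(0)}\to\Phi^{*}\widehat{\mathcal{D}}_{\mathcal{A}}^{(0)}$, $Q\mapsto Q\cdot\Phi$, restricts to a $W(A)$-module isomorphism on the integer-$\lambda$ span; but that $W(A)$-linear section is not $\widehat{\mathcal{D}}_{W(A)}^{(0)}$-equivariant. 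The paper obtains the splitting by a completely different route: it constructs an explicit idempotent $\pi\in\widehat{\mathcal{D}}_{W(A)}^{(0)}$ projecting $W(A)$ onto $\Phi(\mathcal{A})$, built first for $A=k[T]$ from the differential-operator realization of the projectors $\pi_{(i,p^{r})}$ and a ``polynomial in arithmetic progressions'' analysis, and then extended by \'etale localization. One then has $\widehat{\mathcal{D}}_{W(A)}^{(0)}\cdot\pi\cong\Phi^{*}\widehat{\mathcal{D}}_{\mathcal{A}}^{(0)}$, which gives the summand and hence projectivity. There is no shortcut here via a coordinate decomposition of $\widehat{\mathcal{D}}_{W(A)}^{(0)}$.
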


This is a combination of \prettyref{prop:Construction-of-bimodule}
and \prettyref{thm:Projective!} below. This bimodule allows us to
closely relate modules over $\mathcal{\widehat{D}}_{\mathcal{A}}^{(0)}$
(which are, essentially, $\mathcal{A}$-modules with continuous flat
connection over $\mathcal{A}$) to modules over $\widehat{\mathcal{D}}_{W(A)}^{(0)}$.
Before giving the exact statement, let us mention that these results
are directly inspired by Berthelot's Frobenius descent. To state it,
recall that, for each $m\geq0$, Berthelot puts the structure of a
$(\mathcal{\widehat{D}}_{\mathcal{A}}^{(m+1)},\mathcal{\widehat{D}}_{\mathcal{A}}^{(m)})$
bimodule on $F^{*}\mathcal{\widehat{D}}_{\mathcal{A}}^{(m)}$. This
bimodule induces an equivalence of categories 
\[
\mathcal{M}\to F^{*}\mathcal{\widehat{D}}_{\mathcal{A}}^{(m)}\otimes_{\mathcal{\widehat{D}}_{\mathcal{A}}^{(m)}}\mathcal{M}:=F^{*}\mathcal{M}
\]
from $\mathcal{\widehat{D}}_{\mathcal{A}}^{(m)}-\text{mod}$ to $\mathcal{\widehat{D}}_{\mathcal{A}}^{(m+1)}-\text{mod}$.
The bimodule $F^{*}\mathcal{\widehat{D}}_{\mathcal{A}}^{(m)}$ can
be identified with the $(\mathcal{\widehat{D}}_{\mathcal{A}}^{(m+1)},\mathcal{\widehat{D}}_{\mathcal{A}}^{(m)})$
bi-submodule of $\text{End}_{W(k)}(\mathcal{A})$ generated by $F$.
So this theory shows that taking an $m$th power Frobenius map corresponds
to increasing the order of the differential operators by $m$. Roughly
speaking, the map $\Phi:\mathcal{A}\to W(A)$ is something like a
perfection map, or, an infinite-power Frobenius map. The Witt differential
operators, then, are exactly the (infinite order) differential operators
that correspond to Berthelot's arithmetic differential operators after
taking the pullback by $\Phi$. This is, in fact, how the definition
of these operators was originally arrived at. 

Now let us explain some important consequences of \prettyref{thm:Consider-the--bimodule}.
Reducing everything mod $p$, we obtain a $(\widehat{\mathcal{D}}_{W(A)}^{(0)}/p,\mathcal{D}_{A}^{(0)})$-bimodule
$\Phi^{*}\mathcal{\widehat{D}}_{\mathcal{A}}^{(0)}/p$ (here $\mathcal{D}_{A}^{(0)}$
are the pd-differential operators on $A$, which by definition are
the reduction mod $p$ of the arithmetic differential operators of
level $0$). We have 
\begin{thm}
\label{thm:Bimodule-unique!}Let $\Phi_{1},\Phi_{2}$ be two coordinatized
lifts of Frobenius on $\mathcal{A}$. 

1) There is a canonical isomorphism of $(\widehat{\mathcal{D}}_{W(A)}^{(0)}/p,\mathcal{D}_{A}^{(0)})$-bimodules
\[
\epsilon_{12}:\Phi_{1}^{*}\mathcal{\widehat{D}}_{\mathcal{A}}^{(0)}/p\tilde{\to}\Phi_{2}^{*}\mathcal{\widehat{D}}_{\mathcal{A}}^{(0)}/p
\]
If $\Phi_{3}$ is a third such lift we have the cocycle condition
$\epsilon_{23}\circ\epsilon_{12}=\epsilon_{13}$. 

2) Suppose $p>2$. Then there in fact a canonical isomorphism of $(\widehat{\mathcal{D}}_{W(A)}^{(0)},\mathcal{\widehat{D}}_{\mathcal{A}}^{(0)})$-bimodules
$\epsilon_{12}:\Phi_{1}^{*}\mathcal{\widehat{D}}_{\mathcal{A}}^{(0)}\tilde{\to}\Phi_{2}^{*}\mathcal{\widehat{D}}_{\mathcal{A}}^{(0)}$.
For a third lift $\Phi_{3}$ we have the cocycle condition $\epsilon_{23}\circ\epsilon_{12}=\epsilon_{13}$. 
\end{thm}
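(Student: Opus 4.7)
The strategy is to use the embedding of \prettyref{thm:Consider-the--bimodule} to realize both $\Phi_{1}^{*}\widehat{\mathcal{D}}_{\mathcal{A}}^{(0)}$ and $\Phi_{2}^{*}\widehat{\mathcal{D}}_{\mathcal{A}}^{(0)}$ as bisubmodules of the common $(\widehat{\mathcal{D}}_{W(A)}^{(0)},\widehat{\mathcal{D}}_{\mathcal{A}}^{(0)})$-bimodule $\text{Hom}_{W(k)}(\mathcal{A},W(A))$, and then to prove that these two bisubmodules in fact coincide. Once this is established, the isomorphism $\epsilon_{12}$ is the tautological identification inside the ambient module; it is automatically bimodule-linear, and the cocycle identity $\epsilon_{23}\circ\epsilon_{12}=\epsilon_{13}$ is immediate because all three bisubmodules simply agree in the ambient.

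The concrete input is a pd-Taylor formula. I would fix the coordinates $\{T_{1},\dots,T_{n}\}$ of the coordinatized lift $\mathcal{A}$ and set $\xi_{i}:=\Phi_{2}(T_{i})-\Phi_{1}(T_{i})$; since $\Phi_{1}$ and $\Phi_{2}$ both lift the canonical map $\mathcal{A}\to A\hookrightarrow W(A)/V(W(A))$, the differences $\xi_{i}$ lie in $V(W(A))$. Using the standard pd-differential operators $\partial^{[\underline{n}]}$ attached to these coordinates (which topologically generate $\widehat{\mathcal{D}}_{\mathcal{A}}^{(0)}$), the goal is to establish the identity
\[
\Phi_{2}(a)=\sum_{\underline{n}}\xi^{[\underline{n}]}\cdot\Phi_{1}\bigl(\partial^{[\underline{n}]}(a)\bigr),\qquad a\in\mathcal{A},
\]
where $\xi^{[\underline{n}]}:=\prod_{i}\xi_{i}^{[n_{i}]}$. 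Once this is verified in $\text{Hom}_{W(k)}(\mathcal{A},W(A))$, it exhibits $\Phi_{2}$ as an element of the completed bisubmodule generated by $\Phi_{1}$, and the symmetric argument shows the two bisubmodules agree.

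The main obstacle is to make sense of, and verify convergence of, this pd-Taylor series in the completion defining $\Phi_{1}^{*}\widehat{\mathcal{D}}_{\mathcal{A}}^{(0)}$. For part (1) one reduces modulo $p$ and uses the pd-structure on $V(W(A))/p$, noting that the pd-powers $\xi^{[\underline{n}]}$ lie in higher and higher steps of the induced $V$-adic filtration as $|\underline{n}|\to\infty$; convergence in the $V$-adic completion is then automatic. For part (2) one needs honest divided powers of $\xi_{i}$ in $W(A)$ itself. Starting from $V(x)^{n}=p^{n-1}V(x^{n})$, one checks that $V(x)^{[n]}=p^{n-1}V(x^{n})/n!$ lies in $W(A)$ with $p$-adic valuation at least $(n-1)-v_{p}(n!)$, and this quantity tends to infinity with $n$ precisely when $p>2$. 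This is the source of the hypothesis on $p$; with divided powers under control, the convergence of the integral pd-Taylor series proceeds exactly as in part (1).

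A routine but necessary check, which I expect to be straightforward, is that the resulting infinite sum defines an honest element of the completed tensor product underlying $\Phi_{1}^{*}\widehat{\mathcal{D}}_{\mathcal{A}}^{(0)}$ --- that is, that the coefficients $\xi^{[\underline{n}]}\in W(A)$ and the operators $\partial^{[\underline{n}]}\in\widehat{\mathcal{D}}_{\mathcal{A}}^{(0)}$ assemble compatibly with the bimodule structure. This reduces to the known compatibilities of the Hasse--Schmidt derivations on $W(A)$ with the Verschiebung filtration, which are established in earlier sections. With these pieces in hand the proof is complete: $\epsilon_{12}$ exists, is bimodule-linear because it is induced by inclusion into the ambient bimodule, and the cocycle condition follows formally.
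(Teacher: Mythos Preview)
Your overall strategy---realizing both bimodules inside $\text{Hom}_{W(k)}(\mathcal{A},W(A))$ and showing they coincide---is exactly the paper's strategy for part~2). There is, however, a genuine slip in your Taylor identity. The correct formula is
\[
\Phi_{2}(a)=\sum_{\underline{n}}\xi^{\underline{n}}\cdot\Phi_{1}\bigl(\partial^{[\underline{n}]}(a)\bigr),
\]
with ordinary powers of $\xi$ and divided powers of $\partial$ (check it on $a=T^{2}$). As written, your formula has a spurious extra factor of $1/\underline{n}!$; relatedly, the divided-power operators $\partial^{[\underline{n}]}$ do \emph{not} lie in $\widehat{\mathcal{D}}_{\mathcal{A}}^{(0)}$, contrary to what you assert. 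The fix is to rewrite the correct formula as $\sum\xi^{[\underline{n}]}\cdot\Phi_{1}(\partial^{\underline{n}}(a))$, placing the single $1/\underline{n}!$ on the $\xi$-side; then your valuation estimate $(n-1)-v_{p}(n!)\to\infty$ for $p>2$ is precisely the right convergence check, and part~2) goes through. The paper carries this out slightly differently: rather than a closed Taylor series, it shows the difference $\delta=\Phi_{1}-\Phi_{2}$ is a differential operator of order $\le n$ modulo $p^{n}$, then builds successive approximations $\gamma_{n}\in\Phi_{1}^{*}\widehat{\mathcal{D}}_{\mathcal{A}}^{(0)}$ converging to $\delta$, with the identical factorial estimate controlling convergence.

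For part~1), your description does not match what actually happens, and the argument as you state it would fail for $p=2$. With the corrected formula, reducing mod $p$ kills every term with $|\underline{n}|\ge 2$ (since $V(W(A))/p$ is a square-zero ideal), so the series is simply the finite sum $\Phi_{2}\equiv\Phi_{1}+\sum_{i}\xi_{i}\cdot(\Phi_{1}\circ\partial_{i})$, which visibly lies in $\Phi_{1}^{*}\mathcal{D}_{A}^{(0)}$ for every $p$---there is no infinite series and no need to invoke pd-powers tending to zero in the $V$-filtration. (Indeed that last claim is false when $p=2$: one computes $\gamma_{2^{k}}(V(x))\equiv(\text{unit})\cdot V(x^{2^{k}})$ mod $2$, which stays in $V^{1}$ and does not go deeper.) The paper's proof of part~1) packages this same first-order identity inside an auxiliary enveloping-type algebra $\widehat{\mathcal{U}}'_{W(A)/p}$ built from derivations preserving the $V$-filtration, where the isomorphism is realised by right multiplication by the unit $1+\tilde{\iota}_{12}$ with $\iota_{12}=\Phi_{1}-\Phi_{2}$ a derivation; this is the same content, framed so as to make the independence from coordinates manifest.
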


The second statement of this theorem is proved in \prettyref{thm:Uniqueness-of-bimodule},
by showing that $\Phi_{1}-\Phi_{2}$ is itself a differential operator
of the required type; this requires $p>2$. To prove the first statement
(for all $p$) we use a totally different technique, which relies
on the fact that $W(A)/p\to A$ is a square zero infinitesimal extension.
Such extensions are ubiquitous in the theory of flat connections,
because, as observed by Grothendieck, giving a flat connection on
a sheaf is essentially the same as giving a canonical extension of
that sheaf to every square zero extension (c.f. \cite{key-10}, intro,
for a detailed discussion of this). Our proof of part $1)$ of the
above theorem is essentially a rephrasing of this argument (see \prettyref{cor:Global-bimodule-for-U}
and \prettyref{cor:Global-bimodule-mod-p} for details). 

Now let us suppose that $X$ is an arbitrary smooth scheme over $k$.
We have the formal scheme $W(X)$, as well as the formal scheme $W(X)_{p=0}$,
whose structure sheaf is given by $\mathcal{O}_{W(X)}/p$ (in all
cases the topological space is $X$ itself). From the construction,
there is a sheaf of $\widehat{\mathcal{D}}_{W(X)}^{(0)}$, whose sections
over an open affine $\text{Spec}(A)$ which admits local coordinates
agree with $\widehat{\mathcal{D}}_{W(A)}^{(0)}$. In that case where
$X=\text{Spec}(A)$ is as above, we let $\mathfrak{X}=\text{Specf}(\mathcal{A})$.
Then $\Phi^{*}\mathcal{\widehat{D}}_{\mathcal{A}}^{(0)}$ sheafifies
to $\Phi^{*}\mathcal{\widehat{D}}_{\mathfrak{X}}^{(0)}$, a sheaf
of $(\widehat{\mathcal{D}}_{W(X)}^{(0)},\widehat{\mathcal{D}}_{\mathfrak{X}}^{(0)})$
bimodules. 

So, globalizing all of the above yields 
\begin{cor}
\label{cor:Bimodule-properties!}1) There is a sheaf of $(\widehat{\mathcal{D}}_{W(X)}^{(0)}/p,\mathcal{D}_{X}^{(0)})$
bimodules on $X$, denoted $\mathcal{B}_{X}^{(0)}$, whose sections
over an open affine $\text{Spec}(A)$ which admits local coordinates
are equal to $\Phi^{*}\mathcal{D}_{A}^{(0)}$. The functor $\mathcal{M}\to\mathcal{B}_{X}^{(0)}\otimes_{\mathcal{D}_{X}^{(0)}}\mathcal{M}$
from $\mathcal{D}_{X}^{(0)}-\text{mod}$ to $\mathcal{\widehat{D}}_{W(X)}^{(0)}/p-\text{mod}$
is exact and fully faithful, and the derived functor 
\[
\mathcal{B}_{X}^{(0)}\otimes_{\mathcal{D}_{X}^{(0)}}^{L}:D(\mathcal{D}_{X}^{(0)}-\text{mod})\to D(\mathcal{\widehat{D}}_{W(X)}^{(0)}/p-\text{mod})
\]
is fully faithful as well. A complex $\mathcal{M}^{\cdot}\in D(\mathcal{\widehat{D}}_{W(X)}^{(0)}/p-\text{mod})$
is called \textbf{accessible} if it is of the form $\mathcal{B}_{X}^{(0)}\otimes_{\mathcal{D}_{X}^{(0)}}^{L}\mathcal{N}^{\cdot}$
for some $\mathcal{N}^{\cdot}\in D(\mathcal{D}_{X}^{(0)}-\text{mod})$. 

2) Let $\mathcal{M}^{\cdot}\in D(\mathcal{\widehat{D}}_{W(X)}^{(0)}-\text{mod})$
be a cohomologically complete\footnote{This is a technical condition, which can be considered the analogue
for complexes of being $p$-adically complete. It ensures that the
functor $\otimes_{W(k)}^{L}k$ is well behaved } complex. Then $\mathcal{M}^{\cdot}$ is said to be \textbf{accessible
}if $\mathcal{M}^{\cdot}\otimes_{W(k)}^{L}k$ is accessible inside
$D(\mathcal{\widehat{D}}_{W(X)}^{(0)}/p-\text{mod})$. The complex
$\mathcal{M}^{\cdot}$ is accessible iff, for any open affine $\text{Spec}(A)\subset X$,
which admits local coordinates, and any coordinatized lift of Frobenius
$\Phi$, we have 
\[
\mathcal{M}^{\cdot}\tilde{\to}\Phi^{*}\mathcal{\widehat{D}}_{\mathfrak{X}}^{(0)}\widehat{\otimes}_{\widehat{\mathcal{D}}_{\mathfrak{X}}^{(0)}}^{L}\mathcal{N}^{\cdot}
\]
for a cohomologically complete complex $\mathcal{N}^{\cdot}\in D(\mathcal{\widehat{D}}_{\mathfrak{X}}^{(0)}-\text{mod})$
(here, the symbol $\Phi^{*}\mathcal{\widehat{D}}_{\mathfrak{X}}^{(0)}\widehat{\otimes}_{\widehat{\mathcal{D}}_{\mathfrak{X}}^{(0)}}^{L}$
denotes the derived tensor product, followed by the cohomological
completion). In particular, the latter condition is independent of
the choice of $\Phi$. The inclusion functor $D_{acc}(\mathcal{\widehat{D}}_{W(X)}^{(0)}-\text{mod})\to D_{cc}(\mathcal{\widehat{D}}_{W(X)}^{(0)}-\text{mod})$
admits a right adjoint. 

3) Suppose $p>2$, and suppose that $\mathfrak{X}$ is a smooth formal
scheme over $W(k)$ whose special fibre is $X$ (it might not exist
in general). Then there is an equivalence of categories 
\[
D_{cc}(\mathcal{\widehat{D}}_{\mathfrak{X}}^{(0)}-\text{mod})\to D_{\text{acc}}(\mathcal{\widehat{D}}_{W(X)}^{(0)}-\text{mod})
\]
where on the left, $D_{cc}$ stands for cohomologically complete complexes,
and on the right we have the accessible complexes inside $D(\mathcal{\widehat{D}}_{W(X)}^{(0)}-\text{mod})$. 
\end{cor}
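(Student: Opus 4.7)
The plan is to deduce all three parts from the local results of \prettyref{thm:Consider-the--bimodule} and \prettyref{thm:Bimodule-unique!}, combined with descent/gluing and the cohomological completion formalism.

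For part (1), on each open affine $\text{Spec}(A) \subset X$ that admits local coordinates we pick a coordinatized Frobenius lift $\Phi$ on some smooth lift $\mathcal{A}$, and consider the local $(\widehat{\mathcal{D}}_{W(A)}^{(0)}/p, \mathcal{D}_{A}^{(0)})$-bimodule $\Phi^{*}\mathcal{\widehat{D}}_{\mathcal{A}}^{(0)}/p$. The canonical isomorphisms $\epsilon_{12}$ of \prettyref{thm:Bimodule-unique!}, together with their cocycle condition, supply descent data that glue these local objects into a well-defined sheaf $\mathcal{B}_{X}^{(0)}$, manifestly independent of all choices. Exactness of $\mathcal{B}_{X}^{(0)} \otimes_{\mathcal{D}_{X}^{(0)}} -$ is local and follows from the faithful flatness of $\Phi^{*}\mathcal{\widehat{D}}_{\mathcal{A}}^{(0)}$ as a right module, reduced mod $p$. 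Full faithfulness (both abelian and derived) reduces to $R\mathcal{H}om_{\widehat{\mathcal{D}}_{W(X)}^{(0)}/p}(\mathcal{B}_{X}^{(0)}, \mathcal{B}_{X}^{(0)}) \simeq \mathcal{D}_{X}^{(0)}$ in degree zero; this is local, and the direct-summand structure of $\mathcal{B}_{X}^{(0)}$ inside $\widehat{\mathcal{D}}_{W(X)}^{(0)}/p$ provided by \prettyref{thm:Consider-the--bimodule} kills higher $\text{Ext}$ and identifies the degree-zero piece with the right action of $\mathcal{D}_{A}^{(0)}$.

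For part (2), the key step is the local characterization. Given a cohomologically complete $\mathcal{M}^{\cdot}$ whose reduction $\mathcal{M}^{\cdot} \otimes_{W(k)}^{L} k$ is accessible, we work affine-locally with a coordinatized lift $\Phi$. Since $\Phi^{*}\mathcal{\widehat{D}}_{\mathcal{A}}^{(0)}$ is a projective summand of $\widehat{\mathcal{D}}_{W(A)}^{(0)}$ whose reduction is $\mathcal{B}_{X}^{(0)}$, we set $\mathcal{N}^{\cdot} := R\mathcal{H}om_{\widehat{\mathcal{D}}_{W(A)}^{(0)}}(\Phi^{*}\mathcal{\widehat{D}}_{\mathcal{A}}^{(0)}, \mathcal{M}^{\cdot})$; the resulting adjunction map $\Phi^{*}\mathcal{\widehat{D}}_{\mathcal{A}}^{(0)} \widehat{\otimes}^{L} \mathcal{N}^{\cdot} \to \mathcal{M}^{\cdot}$ becomes an isomorphism after $\otimes^{L} k$ by part (1) applied to the mod-$p$ presentation, and is therefore already an isomorphism by cohomological completeness. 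Independence of $\Phi$ comes from \prettyref{thm:Bimodule-unique!}(1) applied mod $p$, then promoted to integral level by uniqueness of cohomologically complete lifts. The right adjoint to $D_{\text{acc}} \hookrightarrow D_{cc}$ is manufactured by the same recipe applied locally and patched along overlaps using the mod-$p$ cocycle.

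For part (3), the assumptions $p > 2$ and the existence of a global smooth lift $\mathfrak{X}$ let us invoke \prettyref{thm:Bimodule-unique!}(2), which furnishes canonical \emph{integral} (not merely mod-$p$) isomorphisms between the local bimodules $\Phi_{i}^{*}\mathcal{\widehat{D}}_{\mathcal{A}_{i}}^{(0)}$ with cocycle condition. Descending, we obtain a global sheaf $\Phi^{*}\mathcal{\widehat{D}}_{\mathfrak{X}}^{(0)}$ of $(\widehat{\mathcal{D}}_{W(X)}^{(0)}, \widehat{\mathcal{D}}_{\mathfrak{X}}^{(0)})$-bimodules. The adjoint pair $\mathcal{N}^{\cdot} \mapsto \Phi^{*}\mathcal{\widehat{D}}_{\mathfrak{X}}^{(0)} \widehat{\otimes}^{L} \mathcal{N}^{\cdot}$ and $\mathcal{M}^{\cdot} \mapsto R\mathcal{H}om_{\widehat{\mathcal{D}}_{W(X)}^{(0)}}(\Phi^{*}\mathcal{\widehat{D}}_{\mathfrak{X}}^{(0)}, \mathcal{M}^{\cdot})$ between $D_{cc}(\mathcal{\widehat{D}}_{\mathfrak{X}}^{(0)}-\text{mod})$ and $D_{\text{acc}}(\mathcal{\widehat{D}}_{W(X)}^{(0)}-\text{mod})$ has unit and counit that become isomorphisms after $\otimes^{L} k$ by part (1), hence are isomorphisms on cohomologically complete complexes. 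The principal obstacle throughout is the local characterization in part (2): since accessibility is a priori only a mod-$p$ notion, and since one lacks an integral canonical isomorphism of bimodules for general $p$, one cannot globalize $\Phi^{*}\mathcal{\widehat{D}}_{\mathfrak{X}}^{(0)}$ before reduction. All of the lifting must be performed at the level of the individual complex $\mathcal{M}^{\cdot}$, with cohomological completeness as the sole mechanism; checking that the local right-adjoint recipes patch coherently along overlaps --- not merely objectwise but functorially --- is where the interplay between the mod-$p$ cocycle of \prettyref{thm:Bimodule-unique!}(1) and the uniqueness of cohomologically complete lifts is most delicate.
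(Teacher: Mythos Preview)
Your proposal is essentially correct and follows the paper's overall strategy, but there are two places where the paper's route is cleaner than yours and one phrase that is not quite right.

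First, for full faithfulness in part~(1), the paper does not compute $R\mathcal{H}om(\mathcal{B}_X^{(0)},\mathcal{B}_X^{(0)})$ directly. Instead it uses the explicit idempotent $\pi$ from \prettyref{thm:Consider-the--bimodule} to split $\mathcal{B}_X^{(0)}=(1-\pi)\mathcal{B}_X^{(0)}\oplus\mathcal{D}_X^{(0)}$ as a right $\mathcal{D}_X^{(0)}$-module, so that $\mathcal{B}_X^{(0)}\otimes\mathcal{M}=((1-\pi)\mathcal{B}_X^{(0)}\otimes\mathcal{M})\oplus\mathcal{M}$; since $\mathcal{H}om_{\widehat{\mathcal{D}}_{W(X)}^{(0)}/p}(\mathcal{B}_X^{(0)},-)$ is just $\{n:(1-\pi)n=0\}$, the adjunction unit $\mathcal{M}\to\mathcal{H}om(\mathcal{B}_X^{(0)},\mathcal{B}_X^{(0)}\otimes\mathcal{M})$ is visibly an isomorphism for every $\mathcal{M}$. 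This is more direct than your endomorphism computation.

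Second, and more substantively, your construction of the right adjoint in part~(2) by ``patching local recipes along overlaps using the mod-$p$ cocycle'' is more laborious than necessary. The paper instead applies the local right adjoint to $\widehat{\mathcal{D}}_{W(X)}^{(0)}$ itself, obtaining locally $\Phi^{*}\widehat{\mathcal{D}}_{\mathfrak{X}}^{(0)}\widehat{\otimes}^{L}\Phi^{!}\widehat{\mathcal{D}}_{\mathfrak{X}}^{(0)}$ (concentrated in degree~$0$ by $p$-torsion-freeness). Because right adjoints are unique, these local sheaves carry \emph{canonical} isomorphisms on overlaps satisfying the cocycle condition automatically, with no appeal to the mod-$p$ data; they glue to a global bimodule $\widehat{\mathcal{D}}_{W(X),\text{acc}}^{(0)}$, and the global right adjoint is $\widehat{\mathcal{D}}_{W(X),\text{acc}}^{(0)}\widehat{\otimes}^{L}_{\widehat{\mathcal{D}}_{W(X)}^{(0)}}(-)$. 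This bypasses entirely the ``delicate interplay'' you flag at the end.

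Finally, the phrase ``uniqueness of cohomologically complete lifts'' is misleading: the functor $\otimes^{L}_{W(k)}k$ on $D_{cc}$ is conservative (Nakayama) but not fully faithful, so two cohomologically complete complexes with isomorphic reductions need not be isomorphic. Fortunately you do not need this. Independence of $\Phi$ in the local characterization is immediate: accessibility is \emph{defined} as a property of $\mathcal{M}^{\cdot}\otimes^{L}k$, which makes no reference to $\Phi$, and your Nakayama argument proves the ``iff'' for each $\Phi$ separately; hence the right-hand condition is $\Phi$-independent because the left-hand one is.
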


These facts are proved below in \prettyref{subsec:Accessible-modules}
(and for the existence of the right adjoint see \prettyref{def:D-acc}).
There, one will also find analogues of the above for abelian categories,
at least under certain circumstances. For instance, one may work with
$\mathcal{\widehat{D}}_{W(X)}^{(0)}/p^{r}-\text{mod}$ (for some $r\geq1$)
and in this situation there is an abelian category of accessible modules;
denoted $\mathcal{\widehat{D}}_{W(X)}^{(0)}/p^{r}-\text{mod}_{\text{acc}}$,
and inside that there are categories of quasicoherent\footnote{We warn the reader that this is not quite the usual notion of quasicoherence
over a sheaf of rings. The same goes for coherent.} and coherent modules. There is also an abelian category of coherent
accessible modules over $\mathcal{\widehat{D}}_{W(X)}^{(0)}$ (c.f.
\prettyref{cor:D-b-coh}). These results have some other natural variants
as well. Most importantly, one can replace $\mathcal{\widehat{D}}_{W(X)}^{(0)}$
with a certain completion, $\mathcal{\widehat{D}}_{W(X),\text{crys}}^{(0)}$.
We have also the algebras $\mathcal{D}_{X,\text{crys}}^{(0)}$ and
$\mathcal{\widehat{D}}_{\mathfrak{X},\text{crys}}^{(0)}$, modules
over which correspond to sheaves with topologically nilpotent flat
connection. The analogue of \prettyref{cor:Bimodule-properties!}
holds in this context, and in fact the restriction $p>2$ is unnecessary
in part $3)$ (c.f. \prettyref{thm:Uniqueness-of-bimodule} below).
In particular, we obtain an embedding of the category of quasi-coherent
crystals on $X$ (over $W_{r}(k)$) into the category of accessible
$\mathcal{\widehat{D}}_{W(X)}^{(0)}/p^{r}$-modules, and the same
holds at the derived level (this is detailed in \prettyref{subsec:The-crystalline-version}
below). 

Another useful variant is the category of complete accessible modules
over \\
$\mathcal{\widehat{D}}_{W(X)}^{(0)}/p^{r}$; these are modules which
are given as the completion (along the topology defined by $V^{i}(\mathcal{O}_{W(X)})$)
of an accessible module; we show (c.f. \prettyref{lem:two-filtrations})
that this filtration is extremely well-behaved on accessible modules,
and therefore that the resulting completion functor is exact and conservative;
there is therefore a version in the derived category as well. 

With the basic categories defined, the rest of the paper is devoted
to the construction of the basic operations (the left-right interchange,
pullback and pushforward), the analogue of the de Rham resolution
(which we call the de Rham-Witt resolution), and the functor from
accessible modules to de Rham-Witt connections. We give an overview
of these sections now. 

The first basic operation we discuss is the left-right interchange
on accessible modules. To do so, we need to define the notion of an
accessible \emph{right }module over $\widehat{\mathcal{D}}_{W(X)}^{(0)}$.
Starting in the local case, we need the right handed version of the
bimodule $\Phi^{*}\mathcal{\widehat{D}}_{\mathcal{A}}^{(0)}$; one
may swiftly define it as $\text{Hom}_{\widehat{\mathcal{D}}_{W(X)}^{(0)}}(\Phi^{*}\mathcal{\widehat{D}}_{\mathcal{A}}^{(0)},\widehat{\mathcal{D}}_{W(A)}^{(0)}):=\Phi^{!}\mathcal{\widehat{D}}_{\mathcal{A}}^{(0)}$.
There is also a characterization of it as a certain submodule of $\text{Hom}_{\mathcal{A}}(W(A),\mathcal{\widehat{D}}_{\mathcal{A}}^{(0)})$
(in line with one would predict from Grothendieck duality; c.f. \prettyref{prop:Construction-of-Phi-!}
below). It is a $(\mathcal{\widehat{D}}_{\mathcal{A}}^{(0)},\widehat{\mathcal{D}}_{W(A)}^{(0)})$
bimodule and the analogues of \prettyref{thm:Bimodule-unique!} and
\prettyref{cor:Bimodule-properties!} hold without any essential change.
This already shows (via the left-right interchange on $\mathcal{\widehat{D}}_{\mathcal{A}}^{(0)}$-modules)
that there is \emph{local }left-right interchange; to obtain the global
interchange we need a little more. 

First of all, we have 
\[
\omega_{\mathcal{A}}\widehat{\otimes}_{\mathcal{\widehat{D}}_{\mathcal{A}}^{(0)}}^{L}\Phi^{!}\mathcal{\widehat{D}}_{\mathcal{A}}^{(0)}\tilde{=}\omega_{\mathcal{A}}\otimes{}_{\mathcal{\widehat{D}}_{\mathcal{A}}^{(0)}}\Phi^{!}\mathcal{\widehat{D}}_{\mathcal{A}}^{(0)}\tilde{\to}W\omega_{A}
\]
where $W\omega_{A}$ is the highest nonzero entry in the de Rham-Witt
complex. In particular $W\omega_{A}$ inherits the structure of a
right $\widehat{\mathcal{D}}_{W(A)}^{(0)}$-module, which can be shown
to be independent of all the choices involved. 

Next, we make the important 
\begin{defn}
\prettyref{def:D-acc} Let $\widehat{\mathcal{D}}_{W(X),\text{acc}}^{(0)}$
be the image of $\mathcal{\widehat{D}}_{W(X)}^{(0)}$ under the right
adjoint to the inclusion functor $D_{acc}(\mathcal{\widehat{D}}_{W(X)}^{(0)}-\text{mod})\to D_{cc}(\mathcal{\widehat{D}}_{W(X)}^{(0)}-\text{mod})$.
This sheaf admits the structure of a $(\mathcal{\widehat{D}}_{W(X)}^{(0)},\mathcal{\widehat{D}}_{W(X)}^{(0)})$
bimodule.
\end{defn}

Then, when $X=\text{Spec}(A)$ admits local coordinates, we have 
\[
\widehat{\mathcal{D}}_{W(X),\text{acc}}^{(0)}=\Phi^{*}\mathcal{\widehat{D}}_{\mathfrak{X}}^{(0)}\widehat{\otimes}_{\mathcal{\widehat{D}}_{\mathfrak{X}}^{(0)}}\Phi^{!}\mathcal{\widehat{D}}_{\mathfrak{X}}^{(0)}
\]
(here $\widehat{\otimes}$ denotes the $p$-adic completion). This
formula was inspired by Berthelot's isomorphism $\widehat{\mathcal{D}}_{\mathfrak{X}}^{(m+1)}\tilde{=}F^{!}F^{*}\mathcal{\widehat{D}}_{\mathfrak{X}}^{(m)}\tilde{=}F^{*}F^{!}\mathcal{\widehat{D}}_{\mathfrak{X}}^{(m)}$
in his Frobenius descent theory. So, one may say that, this bimodule,
rather than $\mathcal{\widehat{D}}_{W(X)}^{(0)}$ itself, can be considered
the fundamental object of the accessible theory\footnote{There are interesting aspects to Witt-differential operator theory
beyond the accessible realm; however, we don't discuss them much in
this paper, as they relate mostly to the case where the level of differential
operator is negative}. Now, using this fact, one may put two commuting structures of a
right $\mathcal{\widehat{D}}_{W(X)}^{(0)}$-module on the object 
\[
W\omega_{X}\widehat{\otimes}_{\mathcal{O}_{W(X)}}\widehat{\mathcal{D}}_{W(X),\text{acc}}^{(0)}
\]
(where the hat denotes suitable completion, c.f. \prettyref{prop:L-R-bimodule}
for details). Then we have 
\begin{cor}
The functor $\mathcal{M}^{\cdot}\to(W\omega_{X}\widehat{\otimes}_{\mathcal{O}_{W(X)}}\widehat{\mathcal{D}}_{W(X),\text{acc}}^{(0)})\otimes_{\widehat{\mathcal{D}}_{W(X)}^{(0)}}^{L}\mathcal{M}^{\cdot}$
yields an equivalence from left accessible to right accessible modules
over $\widehat{\mathcal{D}}_{W(X)}^{(0)}$. If $X=\text{Spec}(A)$
admits local coordinates, then this equivalence is compatible with
the classical equivalence from left to right $\widehat{\mathcal{D}}_{\mathfrak{X}}^{(0)}$-modules. 
\end{cor}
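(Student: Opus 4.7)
The plan is to work locally and reduce to the classical left-right interchange on $\widehat{\mathcal{D}}_{\mathfrak{X}}^{(0)}$-modules. Fix an affine open $\text{Spec}(A) \subset X$ admitting local coordinates and a coordinatized lift of Frobenius $\Phi$. By \prettyref{cor:Bimodule-properties!}(3) and its right-handed analogue, both sides of the claimed equivalence are identified locally with $D_{cc}(\widehat{\mathcal{D}}_{\mathfrak{X}}^{(0)}-\text{mod})$ via the bimodules $\Phi^{*}\widehat{\mathcal{D}}_{\mathfrak{X}}^{(0)}$ and $\Phi^{!}\widehat{\mathcal{D}}_{\mathfrak{X}}^{(0)}$. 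It therefore suffices to trace through the bimodule calculation on such an affine.

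The key computation is to evaluate, for an accessible complex $\mathcal{M}^{\cdot} \cong \Phi^{*}\widehat{\mathcal{D}}_{\mathfrak{X}}^{(0)} \widehat{\otimes}_{\widehat{\mathcal{D}}_{\mathfrak{X}}^{(0)}}^{L} \mathcal{N}^{\cdot}$, the derived tensor product
\[
(W\omega_X \widehat{\otimes}_{\mathcal{O}_{W(X)}} \widehat{\mathcal{D}}_{W(X),\text{acc}}^{(0)}) \widehat{\otimes}_{\widehat{\mathcal{D}}_{W(X)}^{(0)}}^{L} \mathcal{M}^{\cdot}
\]
by substituting $\widehat{\mathcal{D}}_{W(X),\text{acc}}^{(0)} \cong \Phi^{*}\widehat{\mathcal{D}}_{\mathfrak{X}}^{(0)} \widehat{\otimes}_{\widehat{\mathcal{D}}_{\mathfrak{X}}^{(0)}} \Phi^{!}\widehat{\mathcal{D}}_{\mathfrak{X}}^{(0)}$ and $W\omega_A \cong \omega_{\mathcal{A}} \widehat{\otimes}_{\widehat{\mathcal{D}}_{\mathcal{A}}^{(0)}} \Phi^{!}\widehat{\mathcal{D}}_{\mathcal{A}}^{(0)}$. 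After collapsing the resulting iterated tensor products, the answer should take the form $\Phi^{!}\widehat{\mathcal{D}}_{\mathfrak{X}}^{(0)} \widehat{\otimes}_{\widehat{\mathcal{D}}_{\mathfrak{X}}^{(0)}}^{L} (\omega_{\mathcal{A}} \otimes_{\mathcal{O}_{\mathfrak{X}}} \mathcal{N}^{\cdot})$, which is the right-accessible complex corresponding to the classical interchange applied to $\mathcal{N}^{\cdot}$. A single calculation therefore shows both that the functor lands in right-accessible modules and that it is compatible with the classical interchange.

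The inverse functor is obtained by replacing $W\omega_X$ with the analogous bimodule built from the top exterior power of the Witt tangent sheaf; the same local manipulation translates the round-trip composition into $\omega_{\mathcal{A}} \otimes_{\mathcal{O}_{\mathfrak{X}}} (\tau_{\mathcal{A}} \otimes_{\mathcal{O}_{\mathfrak{X}}} -)$ on $\widehat{\mathcal{D}}_{\mathfrak{X}}^{(0)}$-modules, which is the identity. To patch these local equivalences globally, the $\Phi$-independence supplied by \prettyref{thm:Bimodule-unique!} and the canonicity of \prettyref{def:D-acc} provide the necessary cocycle data. The main technical obstacle is the careful handling of the two commuting right $\widehat{\mathcal{D}}_{W(X)}^{(0)}$-structures on the interchange bimodule from \prettyref{prop:L-R-bimodule}: one must use the \emph{outer} right action to form the derived tensor product and retain the \emph{inner} action as the output, then verify that after $\Phi$-pullback this inner action matches the classical Lie-derivative right action on $\omega_{\mathcal{A}} \otimes_{\mathcal{O}_{\mathfrak{X}}} \mathcal{N}^{\cdot}$. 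Ensuring that the various $\widehat{\otimes}$ completions commute with these manipulations rests on the good behavior of the $V$-adic filtration on accessible modules established in \prettyref{lem:two-filtrations}.
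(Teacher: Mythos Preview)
Your local computation is essentially the paper's: substitute the identifications $\widehat{\mathcal{D}}_{W(X),\text{acc}}^{(0)}\cong\Phi^{*}\widehat{\mathcal{D}}_{\mathfrak{X}}^{(0)}\widehat{\otimes}_{\widehat{\mathcal{D}}_{\mathfrak{X}}^{(0)}}\Phi^{!}\widehat{\mathcal{D}}_{\mathfrak{X}}^{(0)}$ and $W\omega_{A}\cong\Phi^{!}\omega_{\mathcal{A}}$, collapse, and recognize $\Phi^{!}(\omega_{\mathcal{A}}\otimes_{\mathcal{O}_{\mathfrak{X}}}\mathcal{N}^{\cdot})$. That step is fine and matches \prettyref{cor:Left-right!} exactly.

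The gap is in your treatment of the inverse. You propose building it from ``the top exterior power of the Witt tangent sheaf'', but no such object is available: the de Rham--Witt complex is not the exterior algebra of a sheaf of K\"ahler differentials on $W(X)$, and $W\omega_{X}$ is not an invertible $\mathcal{O}_{W(X)}$-module, so there is no obvious dual line bundle to tensor back with. The paper sidesteps this entirely by taking the \emph{right adjoint}
\[
R\mathcal{H}om_{\widehat{\mathcal{D}}_{W(X)}^{(0),\text{op}}}\bigl(W\omega_{X}\widehat{\otimes}_{\mathcal{O}_{W(X)}}\widehat{\mathcal{D}}_{W(X),\text{acc}}^{(0)},\,-\bigr),
\]
which exists automatically and globally. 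One then reruns the same local calculation to see that this adjoint preserves accessibility and corresponds to $\omega_{\mathcal{A}}^{-1}\otimes_{\mathcal{O}_{\mathfrak{X}}}-$ on the $\widehat{\mathcal{D}}_{\mathfrak{X}}^{(0)}$ side; since the unit and counit are isomorphisms locally, they are isomorphisms globally. (Equivalently, the paper names the explicit inverse bimodule $\tilde{\mathcal{H}om}_{\mathcal{O}_{W(X)}}(W\omega_{X},\widehat{\mathcal{D}}_{W(X),\text{acc}}^{(0)})$ from \prettyref{prop:L-R-bimodule}(2).)

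This also dissolves your globalization worry. Both the interchange bimodule and its adjoint are defined on all of $X$ from the outset, so there is nothing to patch and no need to invoke \prettyref{thm:Bimodule-unique!} for cocycle data; the only local input is checking that the adjunction maps are isomorphisms on a coordinate chart, which is your computation.
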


This is proved in \prettyref{cor:Left-right!} below. 

Now we turn to pullback and pushforward. Let $\varphi:X\to Y$ be
a morphism of smooth varieties over $k$. There is, by the functoriality
of the Witt vectors, a canonical morphism $W\varphi:W(X)\to W(Y)$,
which lifts $\varphi$. As in the classical case, the operations are
defined by suitable bimodules; we in fact have both a $(\widehat{\mathcal{D}}_{W(X)}^{(0)},W\varphi^{-1}(\widehat{\mathcal{D}}_{W(Y)}^{(0)}))$-bimodule
$\widehat{\mathcal{D}}_{W(X)\to W(Y),\text{acc}}^{(0)}$ and a $(W\varphi^{-1}(\widehat{\mathcal{D}}_{W(Y)}^{(0)}),\widehat{\mathcal{D}}_{W(X)}^{(0)})$
bimodule, $\widehat{\mathcal{D}}_{W(Y)\leftarrow W(X),\text{acc}}^{(0)}$
(which is obtained via the left right swap). The definitions are slightly
involved, however, one can say that they are closely modeled on the
analogous definitions in the classical case (c.f. \prettyref{prop:Construction-of-bimodule}
below). We have the following 
\begin{defn}
The functor $LW\varphi^{*}:D(\widehat{\mathcal{D}}_{W(Y)}^{(0)}-\text{mod})\to D(\widehat{\mathcal{D}}_{W(X)}^{(0)}-\text{mod})$
is defined as 
\[
\mathcal{M}^{\cdot}\to\widehat{\mathcal{D}}_{W(X)\to W(Y),\text{acc}}^{(0)}\otimes_{W\varphi^{-1}(\widehat{\mathcal{D}}_{W(Y)}^{(0)})}^{L}W\varphi^{-1}(\mathcal{M}^{\cdot})
\]
\end{defn}

The basic properties of this functor are given in the 
\begin{thm}
\label{thm:Basic-Pullback-properties}1) $LW\varphi^{*}$ takes $D_{\text{acc}}(\widehat{\mathcal{D}}_{W(Y)}^{(0)}-\text{mod})$
to $D_{\text{acc}}(\widehat{\mathcal{D}}_{W(X)}^{(0)}-\text{mod})$. 

2) Let $LW\varphi^{*}:D(\widehat{\mathcal{D}}_{W(Y)}^{(0)}/p-\text{mod})\to D(\widehat{\mathcal{D}}_{W(X)}^{(0)}/p-\text{mod})$
denote the functor defined by the bimodule $\widehat{\mathcal{D}}_{W(X)\to W(Y),\text{acc}}^{(0)}/p$.
Then, under the equivalence of categories 
\[
D(\mathcal{D}_{X}^{(0)}-\text{mod})\tilde{\to}D_{\text{acc}}(\widehat{\mathcal{D}}_{W(X)}^{(0)}/p-\text{mod})
\]
(and the analogous one for $Y$, c.f. \prettyref{cor:Bimodule-properties!},
part $1)$ above), the functor $LW\varphi^{*}$ corresponds to $L\varphi^{*}$,
the usual $\mathcal{D}^{(0)}$-module pullback. 

3) Let $p>2$, and suppose that $\mathfrak{X}$ and $\mathfrak{Y}$
are smooth formal schemes (over $W(k)$), lifting $X$ and $Y$, respectively,
and let $\varphi:\mathfrak{X}\to\mathfrak{Y}$ be a morphism lifting
$\varphi$. Then, under the equivalence of categories 
\[
D_{cc}(\mathcal{D}_{\mathfrak{X}}^{(0)}-\text{mod})\tilde{\to}D_{\text{acc}}(\widehat{\mathcal{D}}_{W(X)}^{(0)}-\text{mod})
\]
(and the analogous one for $Y$, c.f. \prettyref{cor:Bimodule-properties!},
part $3)$ above), the functor $LW\varphi_{\text{}}^{*}$ corresponds
to $L\varphi^{*}$, the usual $\mathcal{\widehat{D}}^{(0)}$-module
pullback. 
\end{thm}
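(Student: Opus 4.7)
The plan is to reduce all three statements to a local computation, which proceeds entirely from the construction of the bimodule $\widehat{\mathcal{D}}_{W(X)\to W(Y),\text{acc}}^{(0)}$ together with the general framework of $\Phi^{*}$ and $\Phi^{!}$ set up in \prettyref{cor:Bimodule-properties!} and \prettyref{def:D-acc}. First I would cover $X$ by open affines $\text{Spec}(A)\subset X$ admitting local coordinates which map into open affines $\text{Spec}(B)\subset Y$ also admitting local coordinates, and pass to coordinatized lifts $\mathfrak{X}=\text{Specf}(\mathcal{A})$, $\mathfrak{Y}=\text{Specf}(\mathcal{B})$ with compatible Frobenius lifts $\Phi_X,\Phi_Y$. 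For part $3)$ I fix the given lift $\tilde{\varphi}:\mathfrak{X}\to\mathfrak{Y}$ of $\varphi$; for parts $1)$ and $2)$ I choose any such lift on each affine patch.

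The technical heart of the argument is a local identification of the transfer bimodule as a composite
\[
\widehat{\mathcal{D}}_{W(X)\to W(Y),\text{acc}}^{(0)}\tilde{=}\Phi_X^{*}\mathcal{\widehat{D}}_{\mathfrak{X}}^{(0)}\widehat{\otimes}_{\mathcal{\widehat{D}}_{\mathfrak{X}}^{(0)}}\mathcal{\widehat{D}}_{\mathfrak{X}\to\mathfrak{Y}}^{(0)}\widehat{\otimes}_{\tilde{\varphi}^{-1}\mathcal{\widehat{D}}_{\mathfrak{Y}}^{(0)}}\tilde{\varphi}^{-1}\Phi_Y^{!}\mathcal{\widehat{D}}_{\mathfrak{Y}}^{(0)},
\]
i.e. the classical Berthelot transfer bimodule $\mathcal{\widehat{D}}_{\mathfrak{X}\to\mathfrak{Y}}^{(0)}$ sandwiched between the $\Phi_X^{*}$ on the $X$ side and $\Phi_Y^{!}$ on the $Y$ side. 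This is the direct analogue of Berthelot's Frobenius descent identity $\widehat{\mathcal{D}}_{\mathfrak{X}}^{(m+1)}\tilde{=}F^{*}F^{!}\mathcal{\widehat{D}}_{\mathfrak{X}}^{(m)}$, here applied to the ``infinite Frobenius'' $\Phi$; it is proved by unwinding the definition of the transfer bimodule in terms of Hom sheaves on $W(A)$ and $W(B)$ and matching it against the definition of $\widehat{\mathcal{D}}_{W(X),\text{acc}}^{(0)}$ recalled in \prettyref{def:D-acc}.

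Granting this factorization, all three parts become formal. If $\mathcal{M}^{\cdot}$ is accessible on $W(Y)$, write locally $\mathcal{M}^{\cdot}\tilde{=}\Phi_Y^{*}\mathcal{\widehat{D}}_{\mathfrak{Y}}^{(0)}\widehat{\otimes}_{\mathcal{\widehat{D}}_{\mathfrak{Y}}^{(0)}}^{L}\mathcal{N}^{\cdot}$ for some cohomologically complete $\mathcal{N}^{\cdot}$. Tensoring the factorized bimodule against $W\varphi^{-1}\mathcal{M}^{\cdot}$, the two adjacent factors $\tilde{\varphi}^{-1}\Phi_Y^{!}\mathcal{\widehat{D}}_{\mathfrak{Y}}^{(0)}$ and $W\varphi^{-1}\Phi_Y^{*}\mathcal{\widehat{D}}_{\mathfrak{Y}}^{(0)}$ combine via the ``$\Phi^{!}\Phi^{*}=\text{id}$'' relation to give $\tilde{\varphi}^{-1}\mathcal{\widehat{D}}_{\mathfrak{Y}}^{(0)}$, leaving
\[
LW\varphi^{*}\mathcal{M}^{\cdot}\tilde{=}\Phi_X^{*}\mathcal{\widehat{D}}_{\mathfrak{X}}^{(0)}\widehat{\otimes}_{\mathcal{\widehat{D}}_{\mathfrak{X}}^{(0)}}^{L}L\tilde{\varphi}^{*}\mathcal{N}^{\cdot},
\]
which is manifestly accessible. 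This gives part $1)$ and simultaneously delivers parts $2)$ and $3)$, since under the equivalence of \prettyref{cor:Bimodule-properties!} the functor $LW\varphi^{*}$ is carried precisely to the inner $L\tilde{\varphi}^{*}$: the classical $\mathcal{\widehat{D}}^{(0)}$-module pullback in part $3)$, and its mod $p$ reduction, the classical $\mathcal{D}^{(0)}$-module pullback, in part $2)$.

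The main obstacle is twofold. First, proving the local factorization itself: this is essentially combinatorial but requires careful bookkeeping of the interaction between the $V$-adic completion defining $\Phi^{*}\mathcal{\widehat{D}}$ and the $p$-adic completion of Berthelot's algebras. Second, globalization: the factorization depends on chosen Frobenius and map lifts, so to deduce a functor on global categories I invoke the canonical isomorphisms and cocycle conditions of \prettyref{thm:Bimodule-unique!}. In part $3)$ the hypothesis $p>2$ lets us work integrally and absorbs the ambiguity in $\tilde{\varphi}$; in part $2)$ the mod $p$ statement of \prettyref{thm:Bimodule-unique!} is characteristic-free, which is exactly why the construction descends to a functor of accessible categories on all of $X$ even though no global lift $\tilde{\varphi}$ need exist. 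The remaining cohomological-completeness technicalities in the derived tensor products are handled by the standard properties of $D_{cc}$ already used in \prettyref{cor:Bimodule-properties!}.
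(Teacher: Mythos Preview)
Your overall strategy---reduce to an affine patch, identify the transfer bimodule as $\Phi_X^*$ of Berthelot's transfer bimodule followed by $\Phi_Y^!$, then read off all three parts from the formal tensor calculus---is exactly the paper's approach, and your identification of the key local factorization is correct. However, there is a genuine gap in how you set up the local situation.

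You write that on each affine patch you pass to coordinatized lifts ``with compatible Frobenius lifts $\Phi_X,\Phi_Y$'' and then invoke the factorization with the \emph{standard} Berthelot bimodule $\widehat{\mathcal{D}}_{\mathfrak{X}\to\mathfrak{Y}}^{(0)}$ in the middle. But that factorization (this is \prettyref{prop:Pullbback-and-transfer} in the paper) only produces the standard $\widehat{\mathcal{D}}_{\mathfrak{X}\to\mathfrak{Y}}^{(0)}$ under the compatibility condition $\tilde\varphi\circ\Phi_X=\Phi_Y\circ\tilde\varphi$; without it one obtains instead a twisted bimodule $\widehat{\mathcal{D}}_{\mathfrak{X}\to\mathfrak{Y}}^{(0),\Phi_X,\Phi_Y}$ generated by $\pi_X\circ W\varphi^\#\circ\Phi_Y$ rather than by $\tilde\varphi^\#$. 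The compatibility condition is \emph{not} automatic for a general morphism: given coordinatized Frobenius lifts on both sides, a Frobenius-equivariant lift of $\varphi$ need not exist even locally (for instance, try $T\mapsto S+S^p$ between polynomial rings). The paper singles this out as a hypothesis (``locally compatible with a Frobenius lift'') and observes it holds for closed immersions and for smooth morphisms, but not in general.

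The paper closes this gap in two steps. First, it shows (\prettyref{cor:Composition-of-pullbacks}, part~1) that the twisted bimodule $\widehat{\mathcal{D}}_{\mathfrak{X}\to\mathfrak{Y}}^{(0),\Phi_X,\Phi_Y}$ always reduces mod $p$ to the usual $\mathcal{D}_{X\to Y}^{(0)}$, regardless of compatibility; combined with \prettyref{lem:pull-and-reduce} this gives parts~1) and~2) directly for morphisms which \emph{are} locally Frobenius-compatible. Second, for an arbitrary $\varphi$ it factors $\varphi$ as a closed immersion followed by a smooth morphism, proves the composition law for $LW\varphi^*$ (\prettyref{cor:Composition-of-pullbacks}, parts~2 and~3), and then deduces all three parts from the two special cases (\prettyref{cor:Pull-and-compose}). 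Your argument becomes correct once you either (a) restrict the direct factorization to the Frobenius-compatible case and add this composition step, or (b) work with the twisted bimodule throughout and separately argue why it agrees with the standard one integrally when $p>2$ and a global lift $\tilde\varphi$ is given.
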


These facts are proved in \prettyref{subsec:Operations-Pull} below.
Various standard consequences, such as the fact that pullback commutes
with composition, are also derived there; there is also a version
of part $3)$ for lifts of $X$ and $Y$ to $W_{r}(k)$ for some $r>1$.
If $X$ and $Y$ are both affine, and we have coordinatized lifts
of Frobenius on both of them, there is an ``explicit'' description
of $\widehat{\mathcal{D}}_{W(X)\to W(Y),\text{acc}}^{(0)}$, which
is \prettyref{prop:Pullbback-and-transfer} (this is the key to proving
the above theorem). 

Now let us turn to pushforward, where the situation is (mostly) similar. 
\begin{defn}
The functor ${\displaystyle \int_{W\varphi}:}D_{\text{acc}}(\widehat{\mathcal{D}}_{W(X)}^{(0)}-\text{mod})\to D_{\text{acc}}(\widehat{\mathcal{D}}_{W(Y)}^{(0)}-\text{mod})$
is defined by 
\[
\int_{W\varphi}\mathcal{M}^{\cdot}:=R(W\varphi)_{*}(\widehat{\mathcal{D}}_{W(Y)\leftarrow W(X),\text{acc}}^{(0)}\otimes_{\widehat{\mathcal{D}}_{W(X)}^{(0)}}^{L}\mathcal{M}^{\cdot})_{\text{acc}}
\]
where $()_{\text{acc}}$ denotes the right adjoint to the inclusion
from accessible $\widehat{\mathcal{D}}_{W(Y)}^{(0)}$-modules to all
$\widehat{\mathcal{D}}_{W(Y)}^{(0)}$-modules. There is also, for
$r\geq1$ the analogous functor 
\[
\int_{W\varphi}\mathcal{M}^{\cdot}:=R(W\varphi)_{*}(\widehat{\mathcal{D}}_{W(Y)\leftarrow W(X),\text{acc}}^{(m)}/p^{r}\otimes_{\widehat{\mathcal{D}}_{W(X)}^{(0)}/p^{r}}^{L}\mathcal{M}^{\cdot})_{\text{acc}}
\]
\end{defn}

Unlike in the case of pullback, I don't have a proof that the object
\[
R(W\varphi)_{*}(\widehat{\mathcal{D}}_{W(Y)\leftarrow W(X),\text{acc}}^{(m)}\otimes_{\widehat{\mathcal{D}}_{W(X)}^{(0)}}^{L}\mathcal{M}^{\cdot})
\]
is accessible when $\mathcal{M}^{\cdot}$ is- this question involves
tricky issues related to when the projection formula holds for some
extremely large objects. At any rate, with this definition we have
the following analogue of \prettyref{thm:Basic-Pullback-properties}:
\begin{thm}
1) Under the equivalence of categories 
\[
D(\mathcal{D}_{X}^{(0)}-\text{mod})\tilde{\to}D_{\text{acc}}(\widehat{\mathcal{D}}_{W(X)}^{(0)}/p-\text{mod})
\]
(and the analogous one for $Y$, c.f. \prettyref{cor:Bimodule-properties!},
part $1)$ above), the functor ${\displaystyle \int_{W\varphi}}$
corresponds to ${\displaystyle \int_{\varphi}}$, the usual $\mathcal{D}^{(0)}$-module
pushforward.

2) Let $p>2$, and suppose that $\mathfrak{X}$ and $\mathfrak{Y}$
are smooth formal schemes (over $W(k)$), lifting $X$ and $Y$, respectively,
and let $\varphi:\mathfrak{X}\to\mathfrak{Y}$ be a morphism lifting
$\varphi$. Then, under the equivalence of categories 
\[
D_{cc}(\mathcal{D}_{\mathfrak{X}}^{(0)}-\text{mod})\tilde{\to}D_{\text{acc}}(\widehat{\mathcal{D}}_{W(X)}^{(0)}-\text{mod})
\]
(and the analogous one for $Y$, c.f. \prettyref{cor:Bimodule-properties!},
part $3)$ above), the functor ${\displaystyle \int_{W\varphi}}$
corresponds to ${\displaystyle \int_{\varphi}}$, the usual $\mathcal{\widehat{D}}^{(0)}$-module
pushforward.

3) Fix $r\geq1$ and consider the functor 
\[
\widehat{\int}_{W\varphi}\mathcal{M}^{\cdot}:=R(W\varphi)_{*}(\widehat{\mathcal{D}}_{W(Y)\leftarrow W(X),\text{\ensuremath{c-}acc}}^{(0)}/p^{r}\widehat{\otimes}_{\widehat{\mathcal{D}}_{W(X)}^{(0)}/p^{r}}^{L}\mathcal{\widehat{M}}^{\cdot})
\]
where on the right we have the (derived) completion of $\mathcal{M}^{\cdot}$
with respect to $V^{i}(\mathcal{O}_{W(X)}/p^{r})$, and $\widehat{\mathcal{D}}_{W(Y)\leftarrow W(X),\text{\ensuremath{c-}acc}}^{(0)}/p^{r}$
is a suitable completion of $\widehat{\mathcal{D}}_{W(Y)\leftarrow W(X),\text{acc}}^{(0)}/p^{r}$.
Then ${\displaystyle \widehat{\int}_{W\varphi}\mathcal{M}^{\cdot}}$
is isomorphic to the (derived) completion of ${\displaystyle \int_{W\varphi}\mathcal{M}^{\cdot}}$
with respect to $V^{i}(\mathcal{O}_{W(Y)}/p^{r})$. In other words,
the completed pushforward is given by the ``naive'' formula. 
\end{thm}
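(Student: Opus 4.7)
My strategy reduces parts (1) and (2) to a bimodule base-change identity, proved locally on affines with coordinatized Frobenius lifts, and then derives part (3) by applying derived $V$-adic completion to the resulting isomorphism. Parts (1) and (2) run in parallel, with (1) being the mod-$p$ shadow of (2); I focus on (2), where global lifts $\mathfrak{X},\mathfrak{Y}$ and $\varphi:\mathfrak{X}\to\mathfrak{Y}$ are available.

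On a Zariski cover where $\mathfrak{X}$ and $\mathfrak{Y}$ admit coordinates and compatible coordinatized Frobenius lifts $\Phi_{X},\Phi_{Y}$, the key local claim is the $(W\varphi^{-1}\widehat{\mathcal{D}}_{W(Y)}^{(0)},\widehat{\mathcal{D}}_{\mathfrak{X}}^{(0)})$-bimodule isomorphism
$$\widehat{\mathcal{D}}_{W(Y)\leftarrow W(X),\text{acc}}^{(0)}\;\widehat{\otimes}^{L}_{\widehat{\mathcal{D}}_{W(X)}^{(0)}}\;\Phi_{X}^{*}\widehat{\mathcal{D}}_{\mathfrak{X}}^{(0)}\;\cong\;W\varphi^{-1}\bigl(\Phi_{Y}^{*}\widehat{\mathcal{D}}_{\mathfrak{Y}}^{(0)}\bigr)\;\widehat{\otimes}^{L}_{W\varphi^{-1}(\widehat{\mathcal{D}}_{\mathfrak{Y}}^{(0)})}\;\widehat{\mathcal{D}}_{\mathfrak{Y}\leftarrow\mathfrak{X}}^{(0)}.$$
This is the right-handed companion of the pullback base-change of \prettyref{thm:Basic-Pullback-properties}(3); I would derive it by combining an explicit affine description of the Witt transfer bimodule (the right-handed version of the formula in \prettyref{prop:Pullbback-and-transfer}), the left-right interchange, and the identification $W\omega_{X}\cong\omega_{\mathfrak{X}}\otimes\Phi_{X}^{!}\widehat{\mathcal{D}}_{\mathfrak{X}}^{(0)}$. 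Now set $\mathcal{M}^{\cdot}=\Phi_{X}^{*}\widehat{\mathcal{D}}_{\mathfrak{X}}^{(0)}\widehat{\otimes}^{L}\mathcal{N}^{\cdot}$ for $\mathcal{N}^{\cdot}\in D_{cc}(\widehat{\mathcal{D}}_{\mathfrak{X}}^{(0)})$, tensor the above on the right with $\mathcal{N}^{\cdot}$, apply $R(W\varphi)_{*}=R\varphi_{*}$ (equal because $W\varphi$ and $\varphi$ have the same underlying map of topological spaces), and use the projection formula in the outer factor $W\varphi^{-1}(\Phi_{Y}^{*}\widehat{\mathcal{D}}_{\mathfrak{Y}}^{(0)})$: the right-hand side becomes $\Phi_{Y}^{*}\widehat{\mathcal{D}}_{\mathfrak{Y}}^{(0)}\widehat{\otimes}^{L}_{\widehat{\mathcal{D}}_{\mathfrak{Y}}^{(0)}}\int_{\varphi}\mathcal{N}^{\cdot}$, which is accessible by construction, so the ``accessibilization'' $(-)_{\text{acc}}$ in the definition of $\int_{W\varphi}$ acts as the identity. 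Globalization over arbitrary $X,Y$ is handled by the cocycle isomorphisms of \prettyref{thm:Bimodule-unique!} and their right-handed analogue, which glue the local pictures canonically; part (1) follows by reducing mod $p$, where no global lift is needed.

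The main obstacle I anticipate is the projection formula step, because $\Phi_{Y}^{*}\widehat{\mathcal{D}}_{\mathfrak{Y}}^{(0)}$ is enormous---far from coherent over $\mathcal{O}_{W(Y)}$---and the standard quasi-coherent statement does not apply directly. My plan is to exploit the fact (\prettyref{thm:Consider-the--bimodule}) that $\Phi_{X}^{*}\widehat{\mathcal{D}}_{\mathfrak{X}}^{(0)}$ is a direct summand of $\widehat{\mathcal{D}}_{W(X)}^{(0)}$, together with $V$-adic completeness, to reduce, filtration step by filtration step, to the classical projection formula for coherent sheaves along the affine morphism $\varphi$. Part (3) is then formal: the completed pushforward is obtained from $\int_{W\varphi}$ by replacing both the transfer bimodule and the input module by their derived $V$-adic completions, and since derived $V$-adic completion commutes with $R(W\varphi)_{*}$ (same underlying topology) and with the derived tensor products in play between cohomologically complete objects, $\widehat{\int}_{W\varphi}\mathcal{M}^{\cdot}$ is identified with the $V$-adic completion of $\int_{W\varphi}\mathcal{M}^{\cdot}$.
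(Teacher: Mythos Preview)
Your local bimodule identity is correct and is essentially \prettyref{prop:local-bimodule-for-push} in the paper. The gap is precisely where you flag it: the projection formula step. You want
\[
R(W\varphi)_{*}\bigl(W\varphi^{-1}(\Phi_{Y}^{*}\widehat{\mathcal{D}}_{\mathfrak{Y}}^{(0)})\widehat{\otimes}^{L}_{W\varphi^{-1}(\widehat{\mathcal{D}}_{\mathfrak{Y}}^{(0)})}(-)\bigr)\;\tilde{\to}\;\Phi_{Y}^{*}\widehat{\mathcal{D}}_{\mathfrak{Y}}^{(0)}\widehat{\otimes}^{L}_{\widehat{\mathcal{D}}_{\mathfrak{Y}}^{(0)}}R\varphi_{*}(-),
\]
and your proposed reduction ``filtration step by filtration step'' to the coherent projection formula is not obviously valid: the associated graded pieces of $\Phi_{Y}^{*}\widehat{\mathcal{D}}_{\mathfrak{Y}}^{(0)}$ along the $V$-filtration are still infinite direct sums of copies of $\widehat{\mathcal{D}}_{\mathfrak{Y}}^{(0)}$, and commuting $R\varphi_{*}$ with such infinite sums (or with the inverse limit reassembling them) for unbounded complexes is exactly the ``tricky issue'' the paper explicitly declines to resolve in the remark preceding the definition of $\int_{W\varphi}$. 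The paper sidesteps this entirely. For part (1) (\prettyref{thm:Push-and-transfer}) it first writes down the general comparison map $R(W\varphi)_{*}(W\varphi^{-1}(\mathcal{B}_{Y})\otimes^{L}\mathcal{M}^{\cdot})\to\mathcal{B}_{Y}\otimes^{L}R\varphi_{*}(\mathcal{M}^{\cdot})$, applies $(\,)_{\text{acc}}$, and then checks it is an isomorphism by applying $R\mathcal{H}om_{\widehat{\mathcal{D}}_{W(Y)}^{(0)}/p}(\mathcal{B}_{Y},-)$ to both sides. The point is that $R\mathcal{H}om(\mathcal{B}_{Y},-)$ commutes with $R\varphi_{*}$ by the $(W\varphi^{-1},R\varphi_{*})$ adjunction, and $R\mathcal{H}om(\mathcal{B}_{Y},\mathcal{B}_{Y}\otimes^{L}-)\tilde{\to}\text{id}$ because $\mathcal{B}_{Y}$ is a projective summand of $\widehat{\mathcal{D}}_{W(Y)}^{(0)}/p$. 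This Hom-side argument replaces your tensor-side projection formula and needs no size hypotheses. Part (2) is then obtained from part (1) by \prettyref{lem:Push-mod-p} ($\int_{W\varphi}$ commutes with $\otimes^{L}_{W(k)}k$), Nakayama for cohomologically complete complexes, and factoring $\varphi$ as smooth followed by a closed immersion so that compatible Frobenius lifts exist locally.

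Your treatment of part (3) is too optimistic. It is not true that derived $V$-adic completion formally commutes with $R(W\varphi)_{*}$ and with the relevant tensor products; the paper's proof (\prettyref{thm:complete-iso}) works locally on $Y$, uses the explicit description $\widehat{\mathcal{D}}_{W(Y)\leftarrow W(X),c\text{-acc}}^{(0)}/p^{r}\tilde{=}\varphi^{-1}(\Phi^{*}\mathcal{D}_{\mathfrak{Y}_{r}}^{(0)})\widehat{\otimes}^{L}\mathcal{D}_{\mathfrak{Y}_{r}\leftarrow\mathfrak{X}_{r}}^{(0)}\otimes^{L}\Phi^{!}\mathcal{D}_{\mathfrak{X}_{r}}^{(0)}$, and then argues that after completion the complex $\widehat{\mathcal{D}}_{W(Y)\leftarrow W(X),c\text{-acc}}^{(0)}/p^{r}\widehat{\otimes}^{L}\widehat{\mathcal{M}}^{\cdot}$ is term-by-term an infinite \emph{product} of copies of a bounded complex of $\varphi^{-1}(\mathcal{D}_{\mathfrak{Y}_{r}}^{(0)})$-modules. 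For a bounded complex, $R\varphi_{*}$ does commute with such a product (induction on cohomological length), and this is what produces the identification $\widehat{\int}_{W\varphi}\mathcal{M}^{\cdot}\tilde{\to}\widehat{\Phi}^{*}\mathcal{K}^{\cdot}$ with $\Phi^{*}\mathcal{K}^{\cdot}\tilde{\to}\int_{W\varphi}\mathcal{M}^{\cdot}$. The boundedness hypothesis and the passage to infinite products (rather than sums) are doing real work here and cannot be absorbed into a one-line ``derived completion is formal'' claim.
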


As in the case of pullback, we can deduce many of the basic properties
of push-forward from this theorem (this is all done in \prettyref{subsec:Operations-on-modules:Push!}
below). 

Now, let us discuss how these functors are related. The main result
is
\begin{thm}
\label{thm:Smooth-Adunction}1) Let $\varphi:X\to Y$ be smooth of
relative dimension $d$, and let $W\varphi^{!}=LW\varphi^{*}[d]$.
Then there is an isomorphism 
\[
R\varphi_{*}R\mathcal{H}om_{\widehat{\mathcal{D}}_{W(X)}^{(0)}}(W\varphi^{!}\mathcal{N}^{\cdot},\mathcal{M}^{\cdot})\tilde{\to}R\mathcal{H}om{}_{\widehat{\mathcal{D}}_{W(Y)}^{(0)}}(\mathcal{N}^{\cdot},\int_{W\varphi}\mathcal{M}^{\cdot})
\]
for any $\mathcal{N}^{\cdot}\in D_{\text{acc}}(\widehat{\mathcal{D}}_{W(Y)}^{(0)}-\text{mod})$
and any $\mathcal{M}^{\cdot}\in D_{\text{acc}}(\widehat{\mathcal{D}}_{W(X)}^{(0)}-\text{mod})$.
In particular, ${\displaystyle \int_{W\varphi}}$ is the right adjoint
to $\varphi^{!}$. 

2) Fix $r\geq1$, and suppose $\mathcal{M}\in\widehat{\mathcal{D}}_{W(X)}^{(0)}-\text{mod}_{\text{qcoh}}$
is an accessible quasicoherent $\widehat{\mathcal{D}}_{W(X)}^{(0)}$-module
which is nilpotent. Let $\mathcal{\tilde{M}}$ denote the associated
crystal on $X$ over $W_{r}(k)$. Then, for each $i$ ${\displaystyle \mathcal{H}^{i}(\int_{W\varphi}\mathcal{M})}$
is an accessible quasicoherent $\widehat{\mathcal{D}}_{W(Y)}^{(0)}$-module
which is nilpotent, and there is a functorial isomorphism 
\[
\mathcal{H}^{i-d}\tilde{(\int_{W\varphi}\mathcal{M})}\tilde{\to}R^{i}\varphi_{*,\text{crys}}(\tilde{\mathcal{M}})
\]
 
\end{thm}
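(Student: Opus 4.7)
The overall approach is to imitate the classical $\mathcal{D}$-module proof of smooth adjunction (c.f.\ \cite{key-4}, Ch.~4), but with the accessible bimodules replacing the usual ones. Both statements are local on $Y$, so I would reduce to the situation $X=\text{Spec}(A)$ and $Y=\text{Spec}(B)$ equipped with compatible local coordinates and coordinatized lifts of Frobenius $\Phi_{X},\Phi_{Y}$ intertwined by a smooth lift $\tilde{\varphi}:\mathcal{B}\to\mathcal{A}$. The two key ingredients I would need are (a) a \emph{relative de Rham--Witt resolution} of the pushforward bimodule, and (b) the pullback/pushforward comparison theorems \prettyref{thm:Basic-Pullback-properties} together with their evident mod $p^{r}$ analogues, which let us transport classical arguments through the equivalence with $\widehat{\mathcal{D}}_{\mathfrak{X}}^{(0)}$-modules.

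For part 1, the plan is first to construct, for smooth $\varphi$ of relative dimension $d$, a locally free resolution of $\widehat{\mathcal{D}}_{W(Y)\leftarrow W(X),\text{acc}}^{(0)}$ by terms of the form $W\Omega_{X/Y}^{i}\widehat{\otimes}_{\mathcal{O}_{W(X)}}\widehat{\mathcal{D}}_{W(X)\to W(Y),\text{acc}}^{(0)}$ placed in degrees $[-d,0]$, whose differential is induced by the relative Witt connection. To see that this is a resolution, I would reduce via \prettyref{thm:Basic-Pullback-properties}(3) (applied after choosing local lifts) to Berthelot's classical relative de Rham resolution of $\widehat{\mathcal{D}}_{\mathfrak{Y}\leftarrow\mathfrak{X}}^{(0)}$, using the faithful flatness of $\Phi_{X}^{*}\widehat{\mathcal{D}}_{\mathfrak{X}}^{(0)}$ over $\widehat{\mathcal{D}}_{\mathfrak{X}}^{(0)}$ (\prettyref{thm:Consider-the--bimodule}) to conclude exactness. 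Once this resolution is in hand, the adjunction follows from the standard tensor--Hom manipulation: plug it into the definitions of $\int_{W\varphi}$ and $LW\varphi^{*}$, rewrite both sides using the projection/adjunction between $R(W\varphi)_{*}$ and $(W\varphi)^{-1}$, and the shift $[d]$ is absorbed into $W\varphi^{!}=LW\varphi^{*}[d]$. One must also verify that the right adjoint $(-)_{\text{acc}}$ appearing in the definition of $\int_{W\varphi}$ is transparent here, which is automatic because $\mathcal{N}^{\cdot}$ is already assumed accessible and $R\mathcal{H}om$ against an accessible object lands in the accessible subcategory.

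For part 2, I would argue locally and then glue. A nilpotent accessible $\widehat{\mathcal{D}}_{W(X)}^{(0)}/p^{r}$-module $\mathcal{M}$ corresponds under the crystal--module equivalence (\prettyref{subsec:The-crystalline-version}) to the crystal $\tilde{\mathcal{M}}$; on a local coordinate patch with lift $\mathfrak{X}_{r}$ it is the $\mathcal{O}_{\mathfrak{X}_{r}}$-module with topologically nilpotent flat connection obtained from \prettyref{cor:Bimodule-properties!}(3). By the mod $p^{r}$ analogue of \prettyref{thm:Basic-Pullback-properties}(3) extended to pushforward (which I would prove by the same resolution argument as above, applied to $\widehat{\mathcal{D}}_{W(Y)\leftarrow W(X),\text{acc}}^{(0)}/p^{r}$), the functor $\int_{W\varphi}$ corresponds on accessible modules to the classical arithmetic $\mathcal{\widehat{D}}^{(0)}/p^{r}$-module pushforward $\int_{\tilde{\varphi}}$. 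The latter is computed by the relative de Rham complex $R\tilde{\varphi}_{*}(\mathcal{N}\otimes\Omega_{\mathfrak{X}_{r}/\mathfrak{Y}_{r}}^{\cdot})[d]$ by the classical relative version of \prettyref{thm:Basic-de-Rham-resolution}, and this computes $R\varphi_{*,\text{crys}}\tilde{\mathcal{M}}$ by Berthelot's comparison between crystalline cohomology and the de Rham cohomology of a lift. The shift by $-d$ on the cohomology side matches. Independence of the lift, and hence the globalization, follows from the cocycle data supplied by \prettyref{thm:Bimodule-unique!}; nilpotence and accessibility of $\mathcal{H}^{i}(\int_{W\varphi}\mathcal{M})$ are inherited from the corresponding properties on the classical arithmetic side.

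The main obstacle will be establishing the relative de Rham--Witt resolution of $\widehat{\mathcal{D}}_{W(Y)\leftarrow W(X),\text{acc}}^{(0)}$ in the correct form, in particular verifying exactness and matching it, after applying \prettyref{thm:Basic-Pullback-properties}, with Berthelot's classical relative de Rham resolution so that the comparison theorems bootstrap from pullback to pushforward. A secondary obstacle is the completion and accessibility bookkeeping flagged just before the theorem: one must ensure the right adjoint $(-)_{\text{acc}}$ interacts cleanly with $R\mathcal{H}om$ and with the completions appearing in $\widehat{\otimes}$, which is precisely where the hypothesis that both arguments are accessible is essential and where the full projection formula (unavailable in general) can be avoided.
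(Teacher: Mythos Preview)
Your outline is workable in spirit, but it diverges from the paper's route in a way that creates extra work and a potential gap. For Part~1, the paper does \emph{not} first build a de Rham--Witt resolution of $\widehat{\mathcal{D}}_{W(Y)\leftarrow W(X),\text{acc}}^{(0)}$; instead it proves the abstract bimodule identity
\[
R\mathcal{H}om_{\widehat{\mathcal{D}}_{W(X)}^{(0)}}(\widehat{\mathcal{D}}_{W(X)\to W(Y),\text{acc}}^{(0)},\widehat{\mathcal{D}}_{W(X),\text{acc}}^{(0)})_{Y\text{-acc}}\;\tilde{=}\;\widehat{\mathcal{D}}_{W(Y)\leftarrow W(X),\text{acc}}^{(0)}[-d]
\]
directly (this is \prettyref{prop:X-Y-switch-for-smooth}), by unwinding the local description of the transfer bimodule via $\Phi^{*}$ and $\Phi^{!}$ and reducing to the classical isomorphism $R\mathcal{H}om_{\widehat{\mathcal{D}}_{\mathfrak{X}}^{(0)}}(\widehat{\mathcal{D}}_{\mathfrak{X}\to\mathfrak{Y}}^{(0)},\widehat{\mathcal{D}}_{\mathfrak{X}}^{(0)})\tilde{\to}\widehat{\mathcal{D}}_{\mathfrak{Y}\leftarrow\mathfrak{X}}^{(0)}[-d]$. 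The adjunction then follows from the standard tensor--Hom manipulation (as in \cite{key-4}, Theorem~4.40), with the isomorphism verified by applying $\otimes_{W(k)}^{L}k$ and invoking the known $\mathcal{D}^{(0)}$-module adjunction. The de Rham--Witt resolution (\prettyref{thm:drW-resolution}) is established \emph{separately}, only mod $p^{r}$ and after completion along $V^{i}$, and is used for the comparison with de Rham--Witt cohomology rather than for the adjunction. Your proposed integral resolution with terms $W\Omega_{X/Y}^{i}\widehat{\otimes}\widehat{\mathcal{D}}_{W(X)\to W(Y),\text{acc}}^{(0)}$ is not constructed anywhere in the paper, and making it precise would force you to confront exactly the completion issues the paper sidesteps by working with the $R\mathcal{H}om$ identity.

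For Part~2, your plan to reduce locally to the classical $\widehat{\mathcal{D}}^{(0)}$-module pushforward and then to Berthelot's de Rham--crystalline comparison is sound, but the paper takes a shorter path: since $(L\varphi_{\text{crys}}^{*},R\varphi_{*,\text{crys}})$ is already an adjoint pair and $\epsilon$ is compatible with pullback (\prettyref{prop:pullback-and-crystals}), the identification follows formally from Part~1 once one checks that $\int_{W\varphi}$ preserves nilpotence. That check is done after $\otimes_{W(k)}^{L}k$, where it becomes Katz's theorem on the Gauss--Manin connection. This avoids the explicit local de Rham computation you outline and the gluing step via \prettyref{thm:Bimodule-unique!}.
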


This is theorems \prettyref{cor:smooth-adjunction} and \prettyref{cor:Crystalline-Push}
below. 

To finish things off, let us discuss the de Rham-Witt theory. By a
theorem of Etesse, there is a natural functor from crystals over $W_{r}(k)$
to de Rham-Witt connections over $W_{r}\Omega_{X}^{1}$; and one may
easily show that it extends to a functor to continuous de Rham-Witt
connections over $W\Omega_{X}^{1}/p^{r}$ (this object, being flat
over $W_{r}(k)$, is better behaved). Using this theory we put a canonical
continuous de Rham-Witt connection on the object $\widehat{\mathcal{D}}_{W(X),c-\text{acc}}^{(0)}/p^{r}$
(this is the completion of $\widehat{\mathcal{D}}_{W(X),\text{acc}}^{(0)}/p^{r}$
along $V^{i}(\mathcal{O}_{W(X)}/p^{i})$). From this we deduce
\begin{thm}
\label{thm:Accessible-to-DRW} There is an equivalence of categories
from the (abelian) category of accessible quasicoherent $\widehat{\mathcal{D}}_{W(X)}^{(0)}/p^{r}$
modules to the category of quasicoherent modules over $W(X)_{p^{r}=0}$
with continuous flat connection over $W\Omega_{X}^{1}/p^{r}$. 
\end{thm}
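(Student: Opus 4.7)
My strategy is to construct quasi-inverse functors. In one direction I use the canonical continuous de Rham--Witt connection on the bimodule $\widehat{\mathcal{D}}_{W(X),c\text{-acc}}^{(0)}/p^{r}$ constructed just before this theorem; in the other, I work Zariski-locally, restricting along a coordinatized Frobenius lift $\Phi:\mathcal{A}\to W(A)$, and then glue using the uniqueness mechanism provided by the bimodule-uniqueness theorem (part (1), which is valid for all $p$ since we are working mod $p^{r}$).

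\textbf{Forward functor.} For an accessible quasicoherent $\widehat{\mathcal{D}}_{W(X)}^{(0)}/p^{r}$-module $\mathcal{M}$, put
\[
F(\mathcal{M}) := \widehat{\mathcal{D}}_{W(X),c\text{-acc}}^{(0)}/p^{r}\,\widehat{\otimes}_{\widehat{\mathcal{D}}_{W(X)}^{(0)}/p^{r}}\mathcal{M}.
\]
Since tensoring with the accessible bimodule is the identity on accessibles and the further hat is the $V$-adic completion, the underlying $\mathcal{O}_{W(X)_{p^{r}=0}}$-module of $F(\mathcal{M})$ is the completion $\widehat{\mathcal{M}}$, and the continuous flat DRW connection is inherited from that of the bimodule. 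The two-filtrations lemma referred to in the introduction makes the completion functor exact and conservative on accessibles, while the bimodule encodes the full $\widehat{\mathcal{D}}_{W(X)}^{(0)}/p^{r}$-action from the connection; together these yield full faithfulness of $F$.

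\textbf{Inverse functor.} I construct a quasi-inverse $G$ locally and glue. On an affine open $U=\operatorname{Spec}(A)$ admitting local coordinates, fix a coordinatized $W(k)$-flat lift $\mathcal{A}$ and a coordinatized Frobenius lift $\Phi:\mathcal{A}\to W(A)$. By the universal property of Langer--Zink, $\Phi$ extends canonically to a morphism of complete DGAs $\Phi^{\bullet}:\Omega_{\mathcal{A}}^{\bullet}/p^{r}\to W\Omega_{A}^{\bullet}/p^{r}$. Given a qcoh $W(A)/p^{r}$-module $\mathcal{N}$ with continuous flat DRW connection $\nabla$, restrict scalars along $\Phi$ and compose $\nabla$ with $\Phi^{1}$ to obtain an $\mathcal{A}/p^{r}$-module with continuous flat connection over $\Omega_{\mathcal{A}}^{1}/p^{r}$, i.e., a $\widehat{\mathcal{D}}_{\mathcal{A}}^{(0)}/p^{r}$-module. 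By the mod $p^{r}$ analogue of part (3) of the bimodule-properties corollary, this corresponds to an accessible $\widehat{\mathcal{D}}_{W(U)}^{(0)}/p^{r}$-module $G_{U,\Phi}(\mathcal{N})$, and one checks $F\circ G_{U,\Phi}\simeq\mathrm{id}$ essentially tautologically from the construction of the DRW connection on the bimodule. To pass to arbitrary $X$, I invoke the canonical cocycle isomorphisms $\epsilon_{12}$ to identify $G_{U,\Phi_{1}}$ and $G_{U,\Phi_{2}}$ on overlaps, producing a global accessible module $G(\mathcal{N})$.

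\textbf{Main obstacle.} The technically hardest step is showing that the cocycle $\{\epsilon_{ij}\}$ is compatible with the continuous DRW connection --- equivalently, that the bimodule identifications of the uniqueness theorem are horizontal with respect to the canonical DRW-connection structures coming from different Frobenius lifts. This reduces to the intrinsic Hasse--Schmidt description of Witt differential operators (used earlier to make $\widehat{\mathcal{D}}_{W(A)}^{(m)}$ independent of coordinates) together with the naturality of the Langer--Zink DGA comparison in the Frobenius lift. Once this compatibility is established, essential surjectivity and the quasi-inverse property descend from the local case, completing the equivalence.
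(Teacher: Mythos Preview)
Your forward functor is correct and matches the paper. The genuine gap is in your inverse functor.

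When you ``restrict scalars along $\Phi$'' and project the connection to $\Omega^{1}_{\mathcal{A}}/p^{r}$, you get a $\mathcal{D}^{(0)}_{\mathcal{A}_{r}}$-module structure on \emph{all} of $\mathcal{N}$ regarded as an $\mathcal{A}/p^{r}$-module. Applying the equivalence then yields $\Phi^{*}\mathcal{N}$, whose underlying $\mathcal{O}_{W(X)}/p^{r}$-module is the completion of $\mathcal{O}_{W(X)}/p^{r}\otimes_{\mathcal{A}/p^{r}}\mathcal{N}$. Since $\mathcal{N}$ is already a $W(A)/p^{r}$-module, this is strictly larger than $\mathcal{N}$; in particular $F\circ G_{U,\Phi}$ is not the identity and your ``tautological'' check cannot go through. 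You are applying $\Phi^{*}$ to the wrong object.

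What the paper does --- and what you are missing --- is a mechanism to extract from $\mathcal{N}$ the correct $\mathcal{D}^{(0)}_{\mathcal{A}_{r}}$-module, namely the quotient $\mathcal{N}/F^{1}$ (with $F^{1}=F^{1}(\mathcal{O}_{W(X)}/p^{r})\cdot\mathcal{N}$), before applying $\Phi^{*}$. The nontrivial point is then to show that $\widehat{\Phi}^{*}(\mathcal{N}/F^{1})\to\mathcal{N}$ is an isomorphism of de Rham--Witt connections. This is precisely where Bloch's lemma enters: the splitting $(W\Omega^{1}_{X}/p)/\mathcal{I}\cong d(\mathcal{I})\oplus\Omega^{1}_{X}$, with $d:\mathcal{I}\to d(\mathcal{I})$ an isomorphism, lets one build (for $r=1$) an idempotent $\pi=1-\delta$ in $\widehat{\mathcal{U}}'_{W(X)_{p=0}}(1)$ acting on any $\mathcal{N}\in\text{MIC}$, giving a canonical splitting $\mathcal{N}=\mathcal{N}^{\pi}\oplus\mathcal{I}\mathcal{N}^{\pi}$ with $\mathcal{N}^{\pi}\cong\mathcal{N}/\mathcal{I}$; injectivity of $\widehat{\Phi}^{*}(\mathcal{N}/\mathcal{I})\to\mathcal{N}$ is then forced by the injectivity of $\mathcal{I}\to d(\mathcal{I})$. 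For $r>1$ one bootstraps via Nakayama from the $r=1$ case after constructing an analogous splitting $\iota:\mathcal{N}/F^{1}\to\mathcal{N}$. Your proposal does not touch any of this, and the ``main obstacle'' you identify (cocycle compatibility with the DRW connection) is a side issue by comparison --- once the local inverse is correctly constructed, the gluing is handled by full faithfulness.
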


The fact that this functor is fully faithful is actually quite straightforward;
the essential surjectivity is trickier. In fact, the case of vector
bundles is handled by the paper of Bloch \cite{key-24}, whose central
lemma also forms the core of our proof (c.f. \prettyref{lem:Basic-Bloch-lemma}
below). 

Finally, let us present the de Rham-Witt resolution. If $\varphi:X\to Y$
is a smooth morphism of relative dimension $d$, we have the relative
de Rham-Witt complex of Langer-Zink, as developed in \cite{key-27},
denoted $W\Omega_{X/Y}^{\cdot}$. The de Rham-Witt connection on $\widehat{\mathcal{D}}_{W(X),c-\text{acc}}^{(0)}/p^{r}$
yields also a continuous connection map
\[
\nabla:\widehat{\mathcal{D}}_{W(X),c-\text{acc}}^{(0)}/p^{r}\to\widehat{\mathcal{D}}_{W(X),c-\text{acc}}^{(0)}/p^{r}\widehat{\otimes}_{\mathcal{O}_{W(X)}/p^{r}}W\Omega_{X/Y}^{1}/p^{r}
\]
(as above the hat denotes completion with respect to $V^{i}(\mathcal{O}_{W(X)}/p^{r})$),
and therefore an associated de Rham-Witt complex $(\widehat{\mathcal{D}}_{W(X),c-\text{acc}}^{(0)}/p^{r}\widehat{\otimes}_{\mathcal{O}_{W(X)}/p^{r}}W\Omega_{X/Y}^{\cdot}/p^{r},\nabla)$.
We have 
\begin{thm}
The complex $(\widehat{\mathcal{D}}_{W(X),c-\text{acc}}^{(0)}/p^{r}\widehat{\otimes}_{\mathcal{O}_{W(X)}/p^{r}}W\Omega_{X/Y}^{\cdot}/p^{r},\nabla)$
is exact except at the right-most term (which is the $d$th) and we
have
\[
\mathcal{H}^{d}((\widehat{\mathcal{D}}_{W(X),c-\text{acc}}^{(0)}/p^{r}\widehat{\otimes}_{\mathcal{O}_{W(X)}/p^{r}}W\Omega_{X/Y}^{\cdot}/p^{r},\nabla))\tilde{\to}\widehat{\mathcal{D}}_{W(Y)\leftarrow W(X),\text{c-acc}}^{(0)}/p^{r}
\]
where $\widehat{\mathcal{D}}_{W(Y)\leftarrow W(X),\text{c-acc}}^{(0)}/p^{r}$
is the completion of $\widehat{\mathcal{D}}_{W(Y)\leftarrow W(X),\text{acc}}^{(0)}/p^{r}$
along $V^{i}(\mathcal{O}_{W(X)}/p^{r})$; when $Y$ is a point it
is simply $W\omega_{X}/p^{r}$. It follows that, for any $\mathcal{M}\in\widehat{\mathcal{D}}_{W(X)}^{(0)}/p^{r}-\text{mod}_{\text{acc}}$
there is an isomorphism 
\[
\widehat{\int}_{W\varphi_{p^{r}=0}}\mathcal{M}\tilde{\to}R\varphi_{*}(\widehat{\mathcal{M}}\widehat{\otimes}_{\mathcal{O}_{W(X)}/p^{r}}W\Omega_{X/Y}^{\cdot}/p^{r})
\]
of sheaves over $\mathcal{O}_{W(Y)}/p^{r}$; as above all of the completions
are along $V^{i}(\mathcal{O}_{W(X)}/p^{r})$. 
\end{thm}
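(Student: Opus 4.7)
The plan is to reduce to the classical relative de Rham resolution for $\widehat{\mathcal{D}}^{(0)}_{\mathfrak{X}}/p^r$-modules and transfer it to the Witt side via the bimodule equivalence of \prettyref{cor:Bimodule-properties!}, then identify the result with the relative de Rham-Witt complex using \prettyref{thm:Accessible-to-DRW}. Since both assertions are local on $X$ and $Y$, I may assume $X=\mathrm{Spec}(A)$ and $Y=\mathrm{Spec}(B)$ are affine opens admitting coordinates, and choose coordinatized lifts of Frobenius $\Phi_X : \mathcal{A} \to W(A)$, $\Phi_Y : \mathcal{B} \to W(B)$ together with a lift $\varphi : \mathfrak{X} \to \mathfrak{Y}$ of the given morphism.

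The classical input is that the relative de Rham complex
\[
(\widehat{\mathcal{D}}^{(0)}_{\mathfrak{X}}/p^r \otimes_{\mathcal{O}_{\mathfrak{X}}/p^r} \Omega^{\cdot}_{\mathfrak{X}/\mathfrak{Y}}/p^r, \nabla)
\]
is a locally free resolution of $\widehat{\mathcal{D}}^{(0)}_{\mathfrak{Y}\leftarrow\mathfrak{X}}/p^r$ in right $\widehat{\mathcal{D}}^{(0)}_{\mathfrak{X}}/p^r$-modules (the standard proof of \prettyref{thm:Basic-de-Rham-resolution} globalizes to the relative setting mod $p^r$ via a local Koszul splitting in coordinates adapted to $\varphi$). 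I then transfer via the right-handed analogue of \prettyref{thm:Consider-the--bimodule}: tensoring on the left by the flat bimodule $\Phi^! \widehat{\mathcal{D}}^{(0)}_{\mathfrak{X}}/p^r$ and completing along $V^{\bullet}(\mathcal{O}_{W(X)}/p^r)$. Since that bimodule is flat and $V^{\bullet}$-completion is exact on accessible modules (by \prettyref{lem:two-filtrations}), exactness is preserved, and the top cohomology becomes $\widehat{\mathcal{D}}^{(0)}_{W(Y)\leftarrow W(X), \text{c-acc}}/p^r$ essentially by the definition of the latter.

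The heart of the proof is identifying the transferred complex with the relative de Rham-Witt complex in the statement. On terms, this uses the formula $\widehat{\mathcal{D}}^{(0)}_{W(X),\text{acc}} \cong \Phi_X^* \widehat{\mathcal{D}}^{(0)}_{\mathfrak{X}} \widehat{\otimes}_{\widehat{\mathcal{D}}^{(0)}_{\mathfrak{X}}} \Phi_X^! \widehat{\mathcal{D}}^{(0)}_{\mathfrak{X}}$ together with the natural map $\Phi_X^* \Omega^1_{\mathfrak{X}/\mathfrak{Y}} \to W\Omega^1_{X/Y}$ induced by $\Phi_X, \Phi_Y$, which becomes an isomorphism after $V^{\bullet}$-completion by Langer--Zink's universal property for the relative de Rham-Witt complex. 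On differentials, I must verify that the canonical de Rham-Witt connection on $\widehat{\mathcal{D}}^{(0)}_{W(X),\text{c-acc}}/p^r$ supplied by \prettyref{thm:Accessible-to-DRW} matches the connection obtained by transferring $\nabla$ from the lifted side. This is the main obstacle: it requires chasing the canonical connection of \prettyref{thm:Accessible-to-DRW} through the bimodule equivalence, and reduces, by functoriality and flatness of the equivalence, to a compatibility check on the generator $1 \otimes 1$ and on the generating operators $\{\partial_i\}_{\lambda}$, using the commutation relations $\{\partial_i\}_{\lambda} F = F \{\partial_i\}_{\lambda/p}$ recorded in the introduction.

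Finally, the pushforward formula follows by plugging the resolution into the definition of $\widehat{\int}_{W\varphi}$. Each term $\widehat{\mathcal{D}}^{(0)}_{W(X),\text{c-acc}}/p^r \widehat{\otimes}_{\mathcal{O}_{W(X)}/p^r} W\Omega^i_{X/Y}/p^r$ is locally a summand of a completed free right $\widehat{\mathcal{D}}^{(0)}_{W(X)}/p^r$-module, so the completed derived tensor product with $\widehat{\mathcal{M}}$ is computed term-by-term and gives the complex $\widehat{\mathcal{M}} \widehat{\otimes}_{\mathcal{O}_{W(X)}/p^r} W\Omega^{\cdot}_{X/Y}/p^r$. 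Applying $R(W\varphi)_{*} = R\varphi_{*}$ yields the claimed isomorphism, and no further reflection into the accessible subcategory is needed since the output is manifestly c-accessible by construction.
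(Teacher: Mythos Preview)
Your strategy has a genuine gap at the step you call ``the heart of the proof'': the identification of the transferred relative de Rham complex with the relative de Rham--Witt complex fails already at the level of terms. The map $\Phi_X^*\Omega^1_{\mathfrak{X}/\mathfrak{Y}}\to W\Omega^1_{X/Y}$ does \emph{not} become an isomorphism after $V^{\bullet}$-completion. Concretely, $\Phi_X^*\Omega^1_{\mathfrak{X}/\mathfrak{Y}}$ is the $\mathcal{O}_{W(X)}$-span of $dT_1,\dots,dT_d$, while $W\Omega^1_{X/Y}$ contains elements such as $dV^s(T^I)$ which are not in this span (see \prettyref{lem:Basic-Bloch-lemma}, where even modulo $\mathcal{I}$ one has the extra summand $d(\mathcal{I})$). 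Equivalently, $\Omega^i_{\mathfrak{X}/\mathfrak{Y}}/p^r$ is locally free of rank $\binom{d}{i}$ over $\mathcal{O}_{\mathfrak{X}}/p^r$, whereas the lemma at the end of \prettyref{subsec:The-de-Rham-Witt} shows $W_j\Omega^i_{X/Y}/p$ is finite flat over $\mathcal{O}_X$ of strictly larger rank for $j\ge 2$. So the two complexes you are trying to match do not even have isomorphic terms, and no appeal to the Langer--Zink universal property will repair this.

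The paper's proof of \prettyref{thm:drW-resolution} avoids any such term-by-term identification. Instead it reduces mod $p$ (using flatness of each term over $W_r(k)$), writes the complex locally as $\Phi_m^!(\mathcal{D}_X^{(0)})\otimes_{\mathcal{O}_X}W_j\Omega^{\cdot}_{X/Y}/p$, and then exploits the known computation of de Rham--Witt cohomology for vector bundles with \emph{nilpotent} connection (via Cartier descent from the case $\mathcal{O}_X$) applied to the quotients $\mathcal{D}_X^{(0)}/\mathcal{I}^s$. Passing to the inverse limit handles $\mathcal{D}_{X,\text{crys}}^{(0)}$, and the remaining obstruction---that the kernel and cokernel are supported away from the zero section in $T^*X^{(1)}$---is killed by twisting by the automorphisms $\chi$ of $\mathcal{D}_X^{(0)}$ that translate the zero section to an arbitrary point. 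This last automorphism trick is what lets one escape the nilpotent regime, and there is no analogue of it in your approach. For the pushforward statement, the paper likewise does not simply ``plug in'': it needs the separate \prettyref{thm:complete-iso} and a compact-generation argument over $r=1$ (see \prettyref{cor:Completed-push-is-DRW}).
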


This is \prettyref{thm:drW-resolution} below. The second part of
the theorem therefore implies that crystalline cohomology for an arbitrary
quasicoherent crystal can be computed via the de Rham-Witt complex
mod $p^{r}$. This result has antecedents in the work of Etesse \cite{key-56}
and Berthelot \cite{key-26}, who worked instead with the de Rham-Witt
complex $W_{r}\Omega_{X}^{\cdot}$. To get a comparison theorem when
working with that complex forces one to consider crystals which are
flat over $W_{r}(k)$ (Etesse considered only vector bundles). Despite
these differences, the proofs (including the one in this paper) all
rely essentially on the basic fact that the de-Rham-Witt complex mod
$p$ is quasi-isomorphic to the de Rham complex of $X$. In our strategy,
though, evaluating the complex on a single object (namely $\widehat{\mathcal{D}}_{W(X),c-\text{acc}}^{(0)}/p^{r}$)
allows one to deduce the result for the entire category, including
for those accessible modules which are not nilpotent. This appears
to be new.

\subsection{Notations and conventions}

Throughout the entire paper a perfect field $k$ of characteristic
$p>0$ is fixed; as indicated in the introduction, some of the results
depend on weather $p=2$, we will indicate this when it occurs. We
let $W(k)$ denote the $p$-typical Witt vectors of $k$. 

Letters $X,Y,Z$ denote a smooth varieties over $k$. Letters $A,B,C$
denote smooth $k$-algebras, i.e., finite type $k$-algebras such
that the induced morphism $\text{Spec}(A)\to\text{Spec}(k)$ is smooth.
Gothic letters $\mathfrak{X}$, $\mathfrak{Y}$,$\mathfrak{Z}$ denote
smooth formal schemes over $W(k)$, whose special fibres are denoted
by the corresponding standard letters. We use $\mathcal{A}$, $\mathcal{B}$,
$\mathcal{C}$ to denote smooth algebra over $W(k)$; we use $\mathfrak{X}_{r}$,
$\mathfrak{Y}_{r}$, $\mathfrak{Z}_{r}$ to denote smooth schemes
over $W_{r}(k)$ and $\mathcal{A}_{r}$, $\mathcal{B}_{r}$, $\mathcal{C}_{r}$
smooth algebras over $W_{r}(k)$. 

We shall make essential use of Berthelot's arithmetic differential
operators; c.f. \cite{key-1}. In particular, for $m\geq0$, $\widehat{\mathcal{D}}_{\mathfrak{X}}^{(m)}$
denotes the arithmetic differential operators of level $m$, a $W(k)$-flat
and $p$-adically complete sheaf of algebras whose reduction mod $p$
is denoted $\mathcal{D}_{X}^{(m)}$. When $m=0$ this algebra is the
usual PD differential operators of \cite{key-10}; for a quick definition
and exposition of the basic properties of $\mathcal{D}_{X}^{(0)}$
we recommend \cite{key-3}, chapter $2$. In addition, the construction
has been extended by Shiho to define algebras $\widehat{\mathcal{D}}_{\mathfrak{X}}^{(m)}$
for $m<0$; c.f. \cite{key-65} for details.

For all such $X$ over $k$ we have the formal scheme of $p$-typical
Witt vectors $W(X)$, whose underlying topological space is $X$ and
whose topology is defined by a collection of ideals $V^{i}(\mathcal{O}_{W(X)})$
(c.f. \cite{key-66},\cite{key-67} for a detailed account). None
of these are locally finitely generated, but we have the finite type
scheme $W_{m}(X)$ whose structure sheaf is $\mathcal{O}_{W(X)}/V^{m}(\mathcal{O}_{W(X)})$.
We also have the formal schemes $W(X)_{p^{r}=0}$ for any $r\geq1$,
whose underlying sheaf of rings are given by $\mathcal{O}_{W(X)}/p^{r}$.
By a quasicoherent sheaf on this scheme we mean an inverse limit of
quasicoherent sheaves on the finite type schemes $W_{m}(X)_{p^{r}=0}$.
The morphisms in this category are continuous maps of $\mathcal{O}_{W(X)}/p^{r}$-modules.
This category is additive, though not abelian and otherwise terribly
behaved. 

Finally, let us take a moment to discuss the notion of cohomological
completeness. This notion, which also goes under the name derived
completeness, has been developed extensively in, e.g., \cite{key-8}
\cite{key-43}, Tag 091N, \cite{key-41}. We will use as a reference
\cite{key-8}, chapter $1$ as it develops the notion in the context
we need- a noncommutative sheaf of rings $\mathcal{R}$ over a topological
space $X$, so that $\mathcal{R}$ possesses a global central element
$p$ (called $h$ in \cite{key-8}) and such that $\mathcal{R}$ has
no nonzero $p$-torsion. For us $\mathcal{R}$ will in fact be a flat
$W(k)$-module. In this case a complex $\mathcal{M}^{\cdot}\in D(\mathcal{R}-\text{mod})$
is called cohomologically complete if 
\[
R\mathcal{H}om_{\mathcal{R}}(\mathcal{R}[p^{-1}],\mathcal{M}^{\cdot})=R\mathcal{H}om_{W(k)}(W(k)[p^{-1}],\mathcal{M}^{\cdot})=0
\]
A $p$-torsion-free sheaf of modules is cohomologically complete iff
it is $p$-adically complete in the usual sense (c.f. \cite{key-8},
lemma 1.5.4). Further, any complex in the image of the natural functor
$D(\mathcal{R}/p^{n}-\text{mod})\to D(\mathcal{R}-\text{mod})$ is
cohomologically complete (for any $n\geq1$). The collection of such
complexes forms a thick triangulated subcategory, which we denote
$D_{cc}(\mathcal{R}-\text{mod})$. The main facts about $D_{cc}(\mathcal{R}-\text{mod})$
that we need are, firstly, the Nakayama lemma: if $\mathcal{M}^{\cdot}\in D_{cc}(\mathcal{R}-\text{mod})$,
then $\mathcal{M}^{\cdot}\otimes_{W(k)}^{L}k=0$ iff $\mathcal{M}^{\cdot}=0$
(this is \cite{key-8}, proposition 1.5.8.). Secondly, we shall use
the fact that the inclusion functor $D_{cc}(\mathcal{R}-\text{mod})\to D(\mathcal{R}-\text{mod})$
admits a right adjoint, the derived completion functor (c.f. \cite{key-8},
proposition 1.5.6), concretely, this functor is given by 
\[
\mathcal{M}^{\cdot}\to R\mathcal{H}om_{\mathcal{R}}(\mathcal{R}[p^{-1}]/\mathcal{R}[-1],\mathcal{M}^{\cdot}):=\widehat{\mathcal{M}^{\cdot}}
\]
For complexes over the various $W(k)$-torsion-free rings appearing
in this paper, the symbol $\widehat{?}$ will denote cohomological
completion. We should mention that this symbol is also sometimes used
for complexes which are annihilated by $p^{r}$, where it stands for
the (derived) completion with respect to $V^{i}(\mathcal{O}_{W(X)}/p^{r})$
(c.f. \prettyref{prop:completion} and the discussion directly above).
As complexes annihilated by $p^{r}$ are automatically cohomologically
complete (in the sense used in this paper), the two notions are essentially
disjoint, and hopefully this will not cause undo confusion.

\subsection{Acknowledgements}

I would like to thank Michel Gros for inviting me to speak about this
work at Rennes, and for helpful conversations afterwards. I would
also like to that Bernard Le Stum for friendly conversations about
related topics. I would like to gratefully acknowledge the support
of the NSF. 

\section{Higher Derivations on Witt Vectors}

In this section we define, for each $m\in\mathbb{Z}$, the sheaf $\mathcal{E}_{W(X)}^{(m)}$
of higher derivations on the formal scheme $W(X)$ (of level $m$),
as well as the algebra $\mathcal{\widehat{D}}_{W(X)}^{(m)}$ of Witt-differential
operators, which is (the completion of) the sheaf of algebras generated
by $\mathcal{O}_{W(X)}$ and $\mathcal{E}_{W(X)}^{(m)}$. The key
to defining $\mathcal{E}_{W(X)}^{(m)}$ is the observation that Hasse-Schmidt
derivations on $X$ can be lifted canonically to the Witt-vectors. 

In order to make all this precise, we recall the 
\begin{defn}
\label{def:H-S}Let $R$ be a commutative ring, and let $D_{0}:B\to A$
be a morphism of commutative $R$-algebras. A Hasse-Schmidt derivation
(or higher derivation) of length $n$ from $B$ to $A$ over $R$
is a sequence of $R$-linear operators $D=(D_{0},\dots,D_{n})$ (we
allow $n=\infty$ for such a sequence indexed by $\mathbb{N}$) such
that 
\[
D_{l}(xy)=\sum_{i+j=l}D_{i}(x)D_{j}(y)
\]

If $A=B$, we shall suppose that $D_{0}=Id$ unless otherwise specified.
In that case we simply refer to a Hasse-Schmidt derivation of $A$
(over $R$). The operator $D_{i}$ is referred to as the $i$th component
of $D$. 
\end{defn}

We refer the reader to \cite{key-22} and \cite{key-23} for more
details and some interesting applications (e..g to jet spaces in algebraic
geometry). At any rate it follows immediately from the definition
that $D_{1}\in\text{Der}_{R}(A,B)$, and then by a quick induction
it follows that each $D_{i}$ is an $R$-linear differential operator
from $B$ to $A$ of order $\leq i$. 

In the sequel, it will be convenient to use the elementary fact that
a Hasse-Schmidt derivation is equivalent to a map of $R$-algebras
\[
\varphi_{D}:B\to A[t]/t^{n+1}
\]
where the sequence $D=(D_{0},\dots,D_{n})$ corresponds to 
\[
\varphi_{D}(b)=\sum_{i=0}^{n}D_{i}(b)t^{i}
\]
(here, if $n=\infty$, then the target is taken to be the power series
ring $A[[t]]$). For instance, from this characterization and the
infinitesimal lifting property (c.f. \cite{key-15}, ch. 2, proposition
6), we see immediately that if $B\to B'$ is an etale morphism of
$R$-algebras such that the structure morphism $D_{0}:B\to A$ extends
to a morphism $D'_{0}:B'\to A$, then any Hasse-Schmidt derivation
$D=(D_{0},\dots,D_{n})$ from $B$ to $A$ over $R$ extends uniquely
to $D'=(D'_{0},\dots,D'_{n})$ from $B'$ to $A$ over $R$. In particular,
a Hasse-Schmidt derivation of $A$ over $R$ extends uniquely to any
localization $A_{g}$ (for $g\in A$). 

It turns out that, after applying appropriate Frobenius twists, higher
derivations are very well-behaved on Witt vectors. To explain this,
we start with the following basic result: 
\begin{lem}
\label{lem:Basic-Presentation-of-Witt}Let $A$ be a smooth $k$-algebra.
For any $r\geq0$, let $\mathcal{A}_{r+1}$ be a flat $W_{r+1}(k)$-algebra
such that $\mathcal{A}_{r+1}/p\tilde{=}A$. Then there is an embedding
\[
W_{r+1}(A)\to\mathcal{A}_{r+1}
\]
which is $F^{r}$-semilinear over $W_{r+1}(k)$. The map is given
as follows: if $(f_{1},f_{2}\dots,f_{r+1})\in W_{r+1}(A)$ we choose
any lifts $\tilde{f}_{i}$ of $f_{i}$ in $\mathcal{A}_{r+1}$ and
send
\[
(f_{1},f_{2}\dots,f_{r+1})\to\tilde{f}_{1}^{p^{r}}+p\tilde{f}_{2}^{p^{r-1}}+\dots+p^{r}\tilde{f}_{r+1}
\]
Further, if $F$ is any lift of the absolute Frobenius map to $\mathcal{A}_{r+1}$,
then the restriction of $F$ to the image of $W_{r+1}(A)$ is isomorphic
to the Witt-vector Frobenius; in particular, we have 
\[
F(\tilde{f}_{1}^{p^{r}}+p\tilde{f}_{2}^{p^{r-1}}+\dots+p^{r}\tilde{f}_{r+1})=\tilde{f}_{1}^{p^{r+1}}+p\tilde{f}_{2}^{p^{r}}+\dots+p^{r}\tilde{f}_{r+1}^{p}
\]
For each $i\geq0$, the ideal $V^{i}(W_{r+1}(A))$ is given by the
intersection of $p^{i}\mathcal{A}_{r+1}$ with $W_{r+1}(A)$. 

If $\varphi^{\#}:B\to A$ is a morphism of smooth $k$-algebras, and
we consider any lift to $\varphi_{r+1}^{\#}:\mathcal{B}_{r+1}\to\mathcal{A}_{r+1}$,
then $\varphi_{r+1}^{\#}(W_{r+1}(B))\subset W_{r+1}(A)$, and the
induced map 
\[
\varphi_{r+1}^{\#}:W_{r+1}(B)\to W_{r+1}(A)
\]
agrees with the functorial map $W\varphi^{\#}$ coming from Witt vector
theory. 
\end{lem}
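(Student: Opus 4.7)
The plan is to recognise the formula
\[
\psi\colon(f_{1},\dots,f_{r+1})\mapsto\tilde f_{1}^{p^{r}}+p\tilde f_{2}^{p^{r-1}}+\cdots+p^{r}\tilde f_{r+1}
\]
as the composition $W_{r+1}(A)\xrightarrow{(\widetilde{\;\;})}W_{r+1}(\mathcal{A}_{r+1})\xrightarrow{w_{r}}\mathcal{A}_{r+1}$, where the first arrow is any component-wise set-theoretic lift and $w_{r}$ is the $r$-th (universal) Witt ghost map, which is a ring homomorphism by the very definition of Witt vectors. The entire lemma then reduces to showing that this composition is independent of the chosen lifts. The heart of the matter is the standard estimate: if $x\equiv y\pmod{p}$ in any ring, then $x^{p^{n}}\equiv y^{p^{n}}\pmod{p^{n+1}}$, a straightforward binomial expansion that does not require $p$-torsion freeness. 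Applied to $x=\tilde f_{i}$, $x'=\tilde f_{i}'$, $n=r+1-i$, this shows that each summand $p^{i-1}\tilde f_{i}^{p^{r+1-i}}$ is well-defined modulo $p^{r+1}=0$ in $\mathcal{A}_{r+1}$.

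Once well-definedness is in hand, the ring-homomorphism property is automatic. Given $x,y\in W_{r+1}(A)$ with chosen lifts $(\tilde x_{i}),(\tilde y_{i})$, the universal Witt polynomials applied to these lifts in $W_{r+1}(\mathcal{A}_{r+1})$ give Witt vectors whose components reduce modulo $p$ to the components of $x+y$ and $xy$ (since reduction is a ring homomorphism); these are therefore valid choices of lifts, and
\[
\psi(x+y)=w_{r}(\tilde x+\tilde y)=w_{r}(\tilde x)+w_{r}(\tilde y)=\psi(x)+\psi(y),
\]
and similarly for products.

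Injectivity is an induction on $r$. If $\psi(x)=0$, reducing modulo $p$ gives $f_{1}^{p^{r}}=0$ in the reduced ring $A$, hence $f_{1}=0$; taking $\tilde f_{1}=0$ leaves $p\cdot(\tilde f_{2}^{p^{r-1}}+p\tilde f_{3}^{p^{r-2}}+\cdots)=0$ in $\mathcal{A}_{r+1}$. Because $\mathcal{A}_{r+1}$ is $W_{r+1}(k)$-flat, the annihilator of $p$ is precisely $p^{r}\mathcal{A}_{r+1}$, so we may divide and continue inductively. The identification $V^{i}(W_{r+1}(A))=p^{i}\mathcal{A}_{r+1}\cap W_{r+1}(A)$ is proved by the same argument: the inclusion $\subseteq$ is visible directly from the formula, and the converse is exactly the inductive step above applied $i$ times.

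Finally, for any lift of Frobenius $F$ on $\mathcal{A}_{r+1}$, the element $F(\tilde f_{i})$ is again a lift of $f_{i}^{p}$; applying $F$ term by term and invoking independence of lifts yields $F\circ\psi=\psi\circ F_{W}$, and choosing $\tilde f_{i}^{p}$ as the lift of $f_{i}^{p}$ produces the displayed explicit formula. The $F^{r}$-semilinearity over $W_{r+1}(k)$ reduces, via the ring-homomorphism property, to the identity $\psi(\lambda)=F^{r}(\lambda)$ for $\lambda\in W_{r+1}(k)\subset W_{r+1}(A)$, which follows from the formula combined with $FV=p$ (so $F^{r}V^{j}=p^{j}F^{r-j}$) and the fact that the Teichm\"uller representatives $[\lambda_{i}]\in W_{r+1}(k)\subset\mathcal{A}_{r+1}$ may be taken as the lifts. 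Functoriality is immediate since any lift $\varphi_{r+1}^{\#}$ carries lifts in $\mathcal{B}_{r+1}$ to lifts in $\mathcal{A}_{r+1}$, so the formula for $\psi$ intertwines $\varphi_{r+1}^{\#}$ with $W\varphi^{\#}$. The only genuine obstacle is the careful organisation of well-definedness in the first paragraph; everything else is a formal consequence.
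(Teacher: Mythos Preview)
Your proof is correct and follows essentially the same route as the paper: both recognise $\psi$ as the ghost component $w_{r}$, use flatness plus the reducedness of $A$ for injectivity by induction on $r$, and read off the $V^{i}$-identification and functoriality directly from the formula. The only point of divergence is the $F^{r}$-semilinearity: the paper invokes the universal property of Witt vectors (a Frobenius lift on $\mathcal{A}_{r+1}$ compatible with $W_{r+1}(k)$ yields a section $s$ with $w_{r}\circ s=F^{r}$), whereas you compute $\psi(\lambda)=F^{r}(\lambda)$ by hand via Teichm\"uller lifts and $F^{r}V^{j}=p^{j}F^{r-j}$; both arguments are short and valid.
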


\begin{proof}
By definition, the ghost map
\[
w_{r}:W_{r+1}(\mathcal{A}_{r+1})\to\mathcal{A}_{r+1}
\]
given by 
\[
(\tilde{f}_{1},\tilde{f}_{2}\dots,\tilde{f}_{r+1})\to\tilde{f}_{1}^{p^{r}}+p\tilde{f}_{2}^{p^{r-1}}+\dots+p^{r}\tilde{f}_{r+1}
\]
is a ring homomorphism, and since $p^{r+1}$ annihilates $\mathcal{A}_{r+1}$,
we see that $w_{r}$ factors through $W_{r+1}(\mathcal{A}_{r+1}/p)=W_{r+1}(A)$.
Call the image of this map $W'$. I claim that the surjection $W_{r+1}(A)\to W'$
is also injective. For, if
\[
\tilde{f}_{1}^{p^{r}}+p\tilde{f}_{2}^{p^{r-1}}+\dots+p^{r}\tilde{f}_{r+1}=0
\]
then, taking the image in $A$, we see that $f_{1}^{p^{r}}=0$ which
implies $f_{1}=0$ and so $\tilde{f}_{1}=p\tilde{g}_{1}$; but then
$\tilde{f}_{1}^{p^{r}}=0$. Therefore $p\tilde{f}_{2}^{p^{r-1}}+\dots+p^{r}\tilde{f}_{r+1}=0$
in $pA_{r+1}$ but since $A_{r+1}$ is flat over $W_{r+1}(k)$ this
implies $\tilde{f}_{2}^{p^{r-1}}+\dots+p^{r-1}\tilde{f}_{r+1}=0$
in $A_{r}=A_{r+1}/p^{r}$. Thus an induction on $r$ shows that each
$f_{i}=0$ (where as above $f_{i}$ is the image of $\tilde{f}_{i}$
in $A$) as required. 

Now let us show that the map $w_{r}$ is $F^{r}$-semilinear. Choose
a lift of Frobenius on $\mathcal{A}_{r+1}$ whose restriction to $W_{r+1}(k)$
is the Witt-vector Frobenius. By a universal property of Witt-vectors
(c.f., e.g., \cite{key-17} proposition 1.1.23) there is a unique
map 
\[
s:\mathcal{A}_{r+1}\to W_{r+1}(\mathcal{A}_{r+1})
\]
so that $w_{r}\circ S=F^{r}$. By our choice of $F$ the map $s$
is a morphism of $W_{r+1}(k)$-algebras; therefore $w_{r}$ is $F^{r}$-semilinear
over $W_{r+1}(k)$ as claimed. 

Next let $F$ be any lift of Frobenius on $A_{r+1}$. As $F$ is a
map of algebras we have 
\[
F(\tilde{f}_{1}^{p^{r}}+p\tilde{f}_{2}^{p^{r-1}}+\dots+p^{r}\tilde{f}_{r+1})=F(\tilde{f}_{1})^{p^{r}}+pF(\tilde{f}_{2})^{p^{r-1}}+\dots+p^{r}F(\tilde{f}_{r+1})
\]
so that $F(W')\subset W'$; further, by what we have just proved the
term $F(\tilde{f}_{1})^{p^{r}}+pF(\tilde{f}_{2})^{p^{r-1}}+\dots+p^{r}F(\tilde{f}_{r+1})$
depends only on the images $F(f_{i})=f_{i}^{p}$ in $A$; whence the
statement. The fact that 
\[
V^{i}(W_{r+1}(A))=p^{i}\mathcal{A}_{r+1}\cap W_{r+1}(A)
\]
follows immediately from the definition of $V^{i}(W_{r+1}(A))$. 

Finally, consider a map $\varphi^{\#}:B\to A$, which, by the infinitesimal
lifting property, can be lifted to a map $\varphi_{r+1}^{\#}:\mathcal{B}_{r+1}\to\mathcal{A}_{r+1}$.
Then 
\[
\varphi^{\#}(\tilde{f}_{1}^{p^{r}}+p\tilde{f}_{2}^{p^{r-1}}+\dots+p^{r}\tilde{f}_{r+1})=\varphi^{\#}(\tilde{f}_{1})^{p^{r}}+\dots+p^{r}\varphi^{\#}(\tilde{f}_{r+1})
\]
from which the result follows immediately. 
\end{proof}
In the sequel, we shall make extensive use of this map, writing $W_{r+1}(A^{(r)})\subset\mathcal{A}_{r+1}$
for the image. 

An important special case of this lemma is given by the 
\begin{example}
\label{exa:Affine-Space}Suppose $A=k[T_{1},\dots,T_{n}]$, and $\mathcal{A}_{r+1}=W_{r+1}(k)[T_{1},\dots,T_{n}]$.
Then the image of ${\displaystyle W_{r+1}(A^{(r)})\to\mathcal{A}_{r+1}}$
is the $W_{r+1}(k)$-submdule spanned by 
\[
\{p^{j}\prod_{i=1}^{n}T_{i}^{a_{i}p^{r-j}}\}
\]
where the $\{a_{i}\}$ are any natural numbers. Indeed, it is clear
that all of the displayed monomials are contained in $W_{r+1}(A^{(r)})$.
To see the converse, we use induction on $r$, the case $r=0$ being
trivial. Consider a term of the form $p^{j}f^{p^{r-j}}\in W_{r+1}(A^{(r)})$,
for $j<r$ (when $j=r$ the result is obvious). By induction we have
an expression 
\[
p^{j}f^{p^{r-1-j}}=\sum_{j=0}^{r-1}\sum_{i}b_{i}p^{j}T_{i}^{a_{i}p^{r-1-j}}\text{mod}(p^{r})
\]
We may consider the lift of Frobenius which takes $T_{i}\to T_{i}^{p}$.
Applying this map to the displayed equality yields 
\[
p^{j}f^{p^{r-j}}=\sum_{j=0}^{r-1}\sum_{i}b_{i}p^{j}T_{i}^{a_{i}p^{r-j}}\text{mod}(p^{r})
\]
which implies 
\[
p^{j}f^{p^{r-j}}=\sum_{j=0}^{r-1}\sum_{i}b_{i}p^{j}T_{i}^{a_{i}p^{r-j}}+\sum_{i}b_{i}p^{r}T^{a_{i}}
\]
as claimed. 
\end{example}

We will construct our canonical lifts of Hasse-Schmidt derivations
using the above construction of $W_{r+1}(A^{(r)})$; to apply it,
we make use of the following straightforward application of the infinitesimal
lifting property:
\begin{lem}
For any $r\geq0$, let $\mathcal{A}_{r+1},\mathcal{B}_{r+1}$ be flat
$W_{r+1}(k)$-algebras such that $\mathcal{A}_{r+1}/p\tilde{=}A$
and $\mathcal{B}_{r+1}/p\tilde{=}B$. Let $D=(D_{0},\dots,D_{n})$
be any Hasse-Schmidt derivation from $B$ to $A$, over $k$. Then
there is a lift to a Hasse-Schmidt derivation $\tilde{D}=(\tilde{D}_{0},\dots,\tilde{D}_{n})$,
over $W_{r+1}(k)$, from $\mathcal{B}_{r+1}$ to $\mathcal{A}_{r+1}$,
\end{lem}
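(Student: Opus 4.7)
The plan is to translate Hasse--Schmidt derivations into ordinary algebra homomorphisms and then apply the infinitesimal lifting property, exactly as suggested by the discussion preceding \prettyref{def:H-S}. Recall that a Hasse--Schmidt derivation $D=(D_0,\dots,D_n)$ from $B$ to $A$ over $k$ is the same datum as a $k$-algebra homomorphism $\varphi_D:B\to A[t]/t^{n+1}$ (with $A[[t]]$ replacing the truncation when $n=\infty$), via $\varphi_D(b)=\sum_{i=0}^n D_i(b)t^i$. Similarly, a Hasse--Schmidt lift $\tilde D$ over $W_{r+1}(k)$ is the same as a $W_{r+1}(k)$-algebra homomorphism $\tilde\varphi:\mathcal{B}_{r+1}\to\mathcal{A}_{r+1}[t]/t^{n+1}$ whose reduction mod $p$ recovers $\varphi_D$. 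So the task is to produce such a $\tilde\varphi$.

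First I would upgrade flatness of $\mathcal{B}_{r+1}$ to smoothness. Since $B$ is smooth over $k$ by the standing conventions of the paper, and $\mathcal{B}_{r+1}$ is a flat $W_{r+1}(k)$-algebra with $\mathcal{B}_{r+1}/p\cong B$, standard deformation theory (the local criterion of smoothness applied along the nilpotent surjection $W_{r+1}(k)\twoheadrightarrow k$) implies that $\mathcal{B}_{r+1}$ is smooth, hence formally smooth, over $W_{r+1}(k)$. Next I would consider the surjection of $W_{r+1}(k)$-algebras
\[
\pi:\mathcal{A}_{r+1}[t]/t^{n+1}\twoheadrightarrow A[t]/t^{n+1}
\]
obtained by reducing coefficients mod $p$. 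Its kernel $p\mathcal{A}_{r+1}[t]/t^{n+1}$ satisfies $(\ker\pi)^{r+1}=0$, since $p^{r+1}=0$ in $\mathcal{A}_{r+1}$, so $\pi$ is a nilpotent thickening. The composition $\mathcal{B}_{r+1}\twoheadrightarrow B\xrightarrow{\varphi_D}A[t]/t^{n+1}$ is then a $W_{r+1}(k)$-algebra homomorphism into the target of $\pi$, and by formal smoothness of $\mathcal{B}_{r+1}$ it lifts across $\pi$ to the desired $\tilde\varphi$. Extracting the coefficient of $t^i$ gives $\tilde D_i$, and $\tilde D=(\tilde D_0,\dots,\tilde D_n)$ is the required Hasse--Schmidt derivation. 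The case $n=\infty$ is handled identically using $\mathcal{A}_{r+1}[[t]]$ in place of the truncation; the kernel $p\mathcal{A}_{r+1}[[t]]$ still has nilpotence index $\leq r+1$, so formal smoothness again suffices (alternatively, one lifts inductively at each finite truncation and glues).

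Essentially nothing here is hard; the one technical point worth flagging is the passage from flatness to smoothness of $\mathcal{B}_{r+1}$, which is the only place where the standing smoothness hypothesis on $B$ is used. Once this is noted, the lifting is purely formal and the result follows.
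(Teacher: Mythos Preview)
Your argument is correct and is precisely the ``straightforward application of the infinitesimal lifting property'' that the paper alludes to in lieu of a proof: identify the Hasse--Schmidt derivation with an algebra map to $A[t]/t^{n+1}$, and lift across the nilpotent thickening $\mathcal{A}_{r+1}[t]/t^{n+1}\twoheadrightarrow A[t]/t^{n+1}$ using formal smoothness of $\mathcal{B}_{r+1}$ over $W_{r+1}(k)$. The paper gives no further detail, so your write-up is in fact more explicit than the original.
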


In general, there will be many lifts of a given Hasse-Schmidt derivation.
However, upon restricting to Witt vectors we have the following essential
result: 
\begin{thm}
\label{thm:Canonical-H-S}Let $B,A$ be smooth $k$-algebras, and
let $D=(D_{0},D_{1},\dots):B\to A$ be a Hasse-Schmidt derivation
(of any length). Let $\tilde{D}=(\tilde{D}_{0},\tilde{D}_{1},\dots)$
be a lift of $D$ to a Hasse-Schmidt derivation from $\mathcal{B}_{r+1}$
to $\mathcal{A}_{r+1}$ (where, as above, these are flat lifts of
$B$ and $A$, respectively). Then $\tilde{D}$ takes the subalgebra
$W_{r+1}(B^{(r)})$ into $W_{r+1}(A^{(r)})$. Further, the induced
map $\tilde{D}:W_{r+1}(B^{(r)})\to W_{r+1}(A^{(r)})$ is independent
of the choice of lift $\tilde{D}$. 
\end{thm}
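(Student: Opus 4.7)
The plan is to exploit the equivalence between a Hasse--Schmidt derivation $\tilde{D}$ of length $n$ and its associated ring homomorphism $\varphi_{\tilde{D}} : \mathcal{B}_{r+1} \to \mathcal{A}_{r+1}[t]/t^{n+1}$, $b \mapsto \sum_i \tilde{D}_i(b)\, t^i$. The theorem amounts to showing that $\varphi_{\tilde{D}}$ maps $W_{r+1}(B^{(r)})$ into the set of polynomials in $t$ whose $t^l$-coefficients all lie in $W_{r+1}(A^{(r)})$, and that this restricted map is independent of the choice of lift.

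For the first claim, since $\varphi_{\tilde{D}}$ is a ring homomorphism and any $x \in W_{r+1}(B^{(r)})$ has by \prettyref{lem:Basic-Presentation-of-Witt} the form $x = \sum_{j=0}^r p^j \tilde{f}_{j+1}^{p^{r-j}}$, we obtain $\varphi_{\tilde{D}}(x) = \sum_{j=0}^r p^j \varphi_{\tilde{D}}(\tilde{f}_{j+1})^{p^{r-j}}$. The problem thus reduces to: for any $\tilde{h} \in \mathcal{A}_{r+1}[t]/t^{n+1}$ and any $0 \leq j \leq r$, every coefficient of $t^l$ in $p^j \tilde{h}^{p^{r-j}}$ lies in $W_{r+1}(A^{(r)})$. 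Writing $\tilde{h} = \sum_k \beta_k t^k$ and expanding by the multinomial theorem, this coefficient becomes a sum of terms $p^j \binom{p^{r-j}}{(m_k)} \prod_k \beta_k^{m_k}$ indexed by tuples $(m_k)$ with $\sum m_k = p^{r-j}$ and $\sum k m_k = l$, so it suffices to analyse each such term.

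For a fixed tuple, set $s = \min\{v_p(m_k) : m_k \neq 0\}$ and factor $m_k = p^s m'_k$, so $\prod_k \beta_k^{m_k} = (\prod_k \beta_k^{m'_k})^{p^s}$; by the maximality of $s$, at least one $m'_k$ is not divisible by $p$. Legendre's formula yields $v_p(\binom{p^{r-j}}{(m_k)}) = (\sum_k s(m'_k) - 1)/(p-1)$, where $s(\cdot)$ denotes the base-$p$ digit sum. The heart of the proof is the elementary minimization: subject to $\sum m'_k = p^{r-j-s}$ together with the divisibility constraint, one has $\sum s(m'_k) \geq (p-1)(r-j-s) + 1$, the minimum being forced by the observation that $D_0 := \sum_k d_{k,0}$ (where $d_{k,0}$ is the $0$-th base-$p$ digit of $m'_k$) must be a positive multiple of $p$, after which a base-$p$ computation of the remainder gives the stated bound. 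Consequently $v_p(p^j \binom{p^{r-j}}{(m_k)}) \geq r-s$, so one may write $p^j \binom{p^{r-j}}{(m_k)} = p^{r-s} M$ with $M \in W_{r+1}(k)$, and the full term becomes $M \cdot p^{r-s} (\prod_k \beta_k^{m'_k})^{p^s}$. This lies in $W_{r+1}(A^{(r)})$ because $p^{r-s}(\prod_k \beta_k^{m'_k})^{p^s}$ is manifestly of ghost form, and $W_{r+1}(A^{(r)})$ is a subring of $\mathcal{A}_{r+1}$ (by \prettyref{lem:Basic-Presentation-of-Witt}) containing $W_{r+1}(k)$; the latter inclusion follows from the surjectivity of $w_r : W_{r+1}(k) \to W_{r+1}(k)$ guaranteed by the perfectness of $k$. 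The digit-sum minimization is the principal obstacle; everything else is bookkeeping.

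For the independence statement, let $\tilde{D}'$ be a second lift. Then $\tilde{D}_i - \tilde{D}'_i$ takes values in $p\mathcal{A}_{r+1}$ for each $i$, so writing $\varphi_{\tilde{D}}(\tilde{f}_{j+1}) = \varphi_{\tilde{D}'}(\tilde{f}_{j+1}) + p e_{j+1}$ and expanding,
\[
p^j \bigl( \varphi_{\tilde{D}}(\tilde{f}_{j+1})^{p^{r-j}} - \varphi_{\tilde{D}'}(\tilde{f}_{j+1})^{p^{r-j}} \bigr) = \sum_{k \geq 1} p^{j+k} \binom{p^{r-j}}{k} \varphi_{\tilde{D}'}(\tilde{f}_{j+1})^{p^{r-j}-k} e_{j+1}^k.
\]
Each term has $p$-adic valuation $j + k + (r-j-v_p(k)) = r + (k - v_p(k)) \geq r + 1$, hence vanishes modulo $p^{r+1}$. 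Summing over $j$ gives $\varphi_{\tilde{D}}(x) = \varphi_{\tilde{D}'}(x)$ in $\mathcal{A}_{r+1}[t]/t^{n+1}$, so each $\tilde{D}_i(x)$ is independent of the chosen lift.
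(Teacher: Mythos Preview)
Your proof is correct. The overall architecture---pass to the ring homomorphism $\varphi_{\tilde D}$, expand $p^j\varphi_{\tilde D}(\tilde f_{j+1})^{p^{r-j}}$ by the multinomial theorem, and verify that each resulting term lies in $W_{r+1}(A^{(r)})$---matches the paper's. The difference lies in how the key technical step is carried out.

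For membership in $W_{r+1}(A^{(r)})$, the paper avoids any explicit valuation estimate: it substitutes $T_i\mapsto\tilde D_{j_i}(f)$ to reduce to the polynomial ring, then observes that the whole expression $p^l(T_1+\dots+T_m)^{p^{r-l}}$ is visibly in $W_{r+1}(k[T_1,\dots,T_m]^{(r)})$, and appeals to the monomial basis of \prettyref{exa:Affine-Space} to conclude that each multinomial summand lies there as well. Your route instead computes $v_p$ of the multinomial coefficient via Legendre's formula and proves the digit-sum inequality $\sum_k s(m'_k)\geq (p-1)N+1$ directly; this is more laborious but entirely self-contained, and in effect re-proves from scratch exactly the valuation content that the paper extracts implicitly from \prettyref{exa:Affine-Space}. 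Both arguments land on the same ghost-form conclusion $p^{r-s}(\cdot)^{p^s}$.

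For independence of the lift, the two arguments are genuinely different. The paper's proof is structural: having already exhibited each summand in the explicit form $b\cdot p^\alpha(\tilde D_{j_1}(f))^{c'_1 p^{r-\alpha}}\cdots(\tilde D_{j_m}(f))^{c'_m p^{r-\alpha}}$, it invokes \prettyref{lem:Basic-Presentation-of-Witt} to see that such an expression depends only on the images $D_{j_i}(f)\in A$. Your argument is a direct cancellation: expand $p^j\bigl((\varphi_{\tilde D'}+pe)^{p^{r-j}}-\varphi_{\tilde D'}^{p^{r-j}}\bigr)$ binomially and check each term has valuation $\geq r+1$ using $v_p\binom{p^{r-j}}{k}=r-j-v_p(k)$. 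This is shorter and more elementary, but note that the paper's structural version is what feeds directly into the explicit formula of \prettyref{cor:Formula}, which is used heavily downstream; your argument establishes independence without producing that formula.
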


\begin{proof}
We begin by showing that $\tilde{D}$ takes $W_{r+1}(B^{(r)})$ into
$W_{r+1}(A^{(r)})$. We recall the following formula for the action
of Hasse-Schmidt derivations on powers, which is easily checked by
induction: 
\[
\tilde{D}_{j}(f^{n})=\sum_{i_{1}+\dots+i_{n}=j}\tilde{D}_{i_{1}}(f)\cdots\tilde{D}_{i_{n}}(f)
\]
so that we have 
\begin{equation}
\tilde{D}_{j}(p^{l}f^{p^{r-l}})=p^{l}\sum_{i_{1}+\dots+i_{p^{r-l}}=j}\tilde{D}_{i_{1}}(f)\cdots\tilde{D}_{i_{p^{r-l}}}(f)\label{eq:HS-action}
\end{equation}
Now, the set 
\[
\{(i_{1},\dots,i_{p^{r-l}})\in\mathbb{N}^{p^{r-l}}|i_{1}+\dots+i_{p^{r-l}}=j\}
\]
is acted upon by the symmetric group $S_{p^{r-l}}$, and, after grouping
like terms together, the coefficient of a term $\tilde{D}_{i_{1}}(f)\cdots\tilde{D}_{i_{p^{r-l}}}(f)$
in \prettyref{eq:HS-action} is precisely the size of the $S_{p^{r-l}}$
orbit of $(i_{1},\dots,i_{p^{r-l}})$. 

To calculate this size, suppose that there are $m$ distinct numbers
occurring in $(i_{1},\dots,i_{p^{r-l}})$, call them $\{j_{1},\dots,j_{m}\}$.
Let $\{C_{n}\}_{n=1}^{m}$ denote the corresponding $m$ subsets of
$\{1,\dots,p^{r-l}\}$, so that 
\[
C_{n}=\{t|i_{t}=j_{n}\}
\]
 The stabilizer of the action of $S_{p^{r-l}}$ on $(i_{1},\dots,i_{p^{r-l}})$
is then the group 
\[
S_{C_{1}}\times\cdots\times S_{C_{m}}
\]
of permutations which preserve each $C_{i}$. If we let $c_{i}=|C_{i}|$
then we deduce that 
\[
|\mathcal{O}_{(i_{1},\dots,i_{p^{r-l}})}|=\frac{(p^{r-l})!}{c_{1}!\cdots c_{m}!}
\]
where the left hand side denotes the size of the orbit. Summing up,
we deduce that 
\begin{equation}
\tilde{D}_{j}(p^{l}f^{p^{r-l}})=\sum_{\mathcal{O}}p^{l}\frac{(p^{r-l})!}{c_{1}!\cdots c_{m}!}(\tilde{D}_{j_{1}}(f))^{c_{1}}\cdots(\tilde{D}_{j_{m}}(f))^{c_{m}}\label{eq:Canonical-Lift}
\end{equation}
where $\mathcal{O}$ ranges over the set of orbits of $S_{p^{r-l}}$
on $\{(i_{1},\dots,i_{p^{r-l}})\in\mathbb{N}^{p^{r-l}}|i_{1}+\dots+i_{p^{r-l}}=j\}$. 

So, we must show that 
\[
p^{l}\frac{(p^{r-l})!}{c_{1}!\cdots c_{m}!}(\tilde{D}_{j_{1}}(f))^{c_{1}}\cdots(\tilde{D}_{j_{m}}(f))^{c_{m}}\in W_{r+1}(A^{(r)})
\]
Consider the map $W_{r+1}(k)[T_{1},\dots,T_{m}]\to A_{r+1}$ given
by sending $T_{i}\to\tilde{D}_{j_{i}}(f)$. This map takes $W_{r+1}(k[T_{1},\dots,T_{m}]^{(r)})$
to $W_{r+1}(A^{(r)})$, and so it suffices to prove that 
\[
p^{l}\frac{(p^{r-l})!}{c_{1}!\cdots c_{m}!}(T_{1})^{c_{1}}\cdots(T_{m})^{c_{m}}\in W_{r+1}(k[T_{1},\dots,T_{m}]^{(r)})
\]
On the other hand, the multinomial formula tells us that 
\[
p^{l}(T_{1}+\dots+T_{m})^{p^{r-l}}=\sum_{k_{1}+\dots k_{m}=p^{r-l}}p^{l}\frac{(p^{r-l})!}{k_{1}!\cdots k_{m}!}T_{1}^{k_{1}}\cdots T_{m}^{k_{m}}
\]
and by \prettyref{exa:Affine-Space} all of the terms of this sum
are in $W_{r+1}(k[T_{1},\dots,T_{m}]^{(r)})$, since the left hand
side is. So putting $k_{i}=c_{i}$ implies the result. 

To see the uniqueness, we note that we have just proved an equality
\[
p^{l}\frac{(p^{r-l})!}{c_{1}!\cdots c_{m}!}(\tilde{D}_{j_{1}}(f))^{c_{1}}\cdots(\tilde{D}_{j_{m}}(f))^{c_{m}}=b_{c_{1},\dots,c_{m}}p^{\alpha}(\tilde{D}_{j_{1}}(f))^{c'_{1}p^{r-\alpha}}\cdots(\tilde{D}_{j_{m}}(f))^{c'_{m}p^{r-\alpha}}
\]
where 
\[
\alpha=\text{val}(p^{l}\frac{(p^{r-l})!}{c_{1}!\cdots c_{m}!})
\]
and $b_{c_{1},\dots,c_{m}}$ is the image of ${\displaystyle p^{l}\frac{(p^{r-l})!}{c_{1}!\cdots c_{m}!}p^{-\alpha}}$
in $\mathbb{Z}/p^{r+1}$, and $c_{i}=c'_{i}p^{r-\alpha}$ for all
$i$. But by \prettyref{lem:Basic-Presentation-of-Witt}, such a term
depends only on the image of $\{\tilde{D}_{j_{1}}(f),\dots,\tilde{D}_{j_{m}}(f)\}$
in $A$, i.e., it depends only on the Hasse-Schmidt derivation $D$.
Since $\tilde{D}_{j}(p^{l}f^{p^{r-l}})$ is a sum of such terms, we
see that it is independent of the choice of lift $\tilde{D}$ as claimed. 
\end{proof}
If we unpack the proof of the theorem, we in fact obtain a formula
for the Hasse-Schmidt derivation as a sequence of maps $W_{r+1}(B^{(r)})\to W_{r+1}(A^{(r)})$;
if we then apply the isomorphisms $A\tilde{=}A^{(r)}$ and $B\tilde{=}B^{(r)}$
we obtain a Hasse-Schmidt derivation $W_{r+1}(B)\to W_{r+1}(A)$.
It will be useful to record this explicitly: 
\begin{cor}
\label{cor:Formula}Let $A,B$ be smooth $k$-algebras and $D$ a
Hasse-Schmidt derivation from $B$ to $A$ as above. Let $j\geq1$.
For $l\in\{0,\dots,r\}$, we have the set $\mathcal{S}_{l}=\{(i_{1},\dots,i_{p^{r-l}})\in\mathbb{N}^{p^{r-l}}|i_{1}+\dots+i_{p^{r-l}}=j\}$.
To each element $v\in\mathcal{S}_{l}$, we may associate a partition
$\{C_{i}\}_{i=1}^{m}$ of $\{1,\dots,p^{r-l}\}$. Let $c_{i}=|C_{i}|$.
The number $c_{i}$ depends only on the associated orbit $\mathcal{O}$
of the action of the symmetric group $S_{p^{r-l}}$; as does the set
$\{j_{1},\dots,j_{m}\}$ of distinct numbers\footnote{We index so that each element of $C_{i}$ is $j_{i}$}
occurring in $v$. Then we have

1) $\alpha_{\mathcal{O}}:={\displaystyle \text{val}(p^{l}\frac{(p^{r-l})!}{c_{1}!\cdots c_{m}!})}\geq l$
and if $\alpha_{\mathcal{O}}\leq r$ we have $p^{r-\alpha_{\mathcal{O}}}|c_{i}$
for each $i\in\{1,\dots m\}$. 

2) Let $c_{i}=c'_{i}p^{r-\alpha_{\mathcal{O}}}$, and let $b_{\mathcal{O}}$
be the image of ${\displaystyle p^{l}\frac{(p^{r-l})!}{c_{1}!\cdots c_{m}!}p^{-\alpha}}$
in $\mathbb{F}_{p}$. Define 
\[
\tilde{D}_{j}(0,\dots f_{l+1},\dots0)=\sum_{\mathcal{O}}(0,\dots,b_{\mathcal{O}}\prod_{i=1}^{m}D_{j_{i}}(f_{l+1})^{c'_{i}},\dots,0)
\]
where on the left hand side the term $f_{l+1}$ is in the $l+1$ position,
and on the right the term $b_{\mathcal{O}}\prod_{i=1}^{m}D_{j_{i}}(f_{l+1})$
occurs in the $\alpha_{\mathcal{O}}+1$ position (the term is taken
to be $0$ if $\alpha_{\mathcal{O}}>r$). If we extend this map to
all of $W_{r+1}(A)$ by addition of components, then $\tilde{D}$
is a $W_{r+1}(k)$-linear Hasse-Schmidt derivation from $W_{r+1}(B)$
to $W_{r+1}(A)$. 
\end{cor}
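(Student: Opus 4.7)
The plan is to deduce this corollary by unpacking the main computation in the proof of \prettyref{thm:Canonical-H-S} through the explicit description of $W_{r+1}(A^{(r)})\subset\mathcal{A}_{r+1}$ furnished by \prettyref{lem:Basic-Presentation-of-Witt} and \prettyref{exa:Affine-Space}.

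For part $1)$, the inequality $\alpha_{\mathcal{O}}\geq l$ is immediate since $p^{l}\binom{p^{r-l}}{c_{1},\dots,c_{m}}$ is $p^{l}$ times a positive integer. For the divisibility $p^{r-\alpha_{\mathcal{O}}}\mid c_{i}$ when $\alpha_{\mathcal{O}}\leq r$, I would apply \prettyref{exa:Affine-Space} to the element $p^{l}(T_{1}+\cdots+T_{m})^{p^{r-l}}\in W_{r+1}(k[T_{1},\dots,T_{m}]^{(r)})$. Expanding via the multinomial theorem, each monomial $T_{1}^{c_{1}}\cdots T_{m}^{c_{m}}$ appears with coefficient $p^{l}\binom{p^{r-l}}{c_{1},\dots,c_{m}}$, and membership in the $W_{r+1}(k)$-span of the elementary monomials $\{p^{j}\prod T_{i}^{a_{i}p^{r-j}}\}$ forces the coefficient of each monomial to be a $W_{r+1}(k)$-combination of coefficients coming from monomials that match $T_{1}^{c_{1}}\cdots T_{m}^{c_{m}}$. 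The only such contributing elementary monomials are those with $c_{i}=a_{i}p^{r-j}$, requiring $p^{r-j}\mid\gcd(c_{i})$; so the minimum available valuation is $\max(0,r-v_{p}(\gcd c_{i}))$, which forces $\alpha_{\mathcal{O}}\geq r-v_{p}(\gcd c_{i})$, i.e. $p^{r-\alpha_{\mathcal{O}}}\mid c_{i}$, exactly as claimed.

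For part $2)$, I would start from the explicit expression derived in the proof of \prettyref{thm:Canonical-H-S}, namely
\[
\tilde{D}_{j}(p^{l}\tilde{f}^{p^{r-l}})=\sum_{\mathcal{O}}p^{l}\tfrac{(p^{r-l})!}{c_{1}!\cdots c_{m}!}\prod_{i}\tilde{D}_{j_{i}}(\tilde{f})^{c_{i}}\in\mathcal{A}_{r+1},
\]
and rewrite each orbit summand, using part $1)$ and $c_{i}=c'_{i}p^{r-\alpha_{\mathcal{O}}}$, as $p^{\alpha_{\mathcal{O}}}u_{\mathcal{O}}\bigl(\prod_{i}\tilde{D}_{j_{i}}(\tilde{f})^{c'_{i}}\bigr)^{p^{r-\alpha_{\mathcal{O}}}}$, where $u_{\mathcal{O}}\in(\mathbb{Z}/p^{r+1-\alpha_{\mathcal{O}}})^{\times}$ reduces to $b_{\mathcal{O}}\in\mathbb{F}_{p}$. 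By the shape described in \prettyref{lem:Basic-Presentation-of-Witt}, a term of the form $p^{\alpha}u\tilde{g}^{p^{r-\alpha}}$ in $W_{r+1}(A^{(r)})$ is the image of a Witt vector concentrated in position $\alpha+1$, and the $F^{r}$-semilinearity of that identification combined with Fermat's little theorem lets one absorb the unit twist so that the entry reduces to $b_{\mathcal{O}}\prod_{i}D_{j_{i}}(f_{l+1})^{c'_{i}}\in A$. Summing across orbits in the Witt-vector sense then yields the displayed formula for $\tilde{D}_{j}(0,\dots,f_{l+1},\dots,0)$; extending $\mathbb{Z}/p^{r+1}$-linearly over general Witt vectors defines $\tilde{D}_{j}$ everywhere, and the Hasse--Schmidt (Leibniz) identity is inherited directly from that of the original lift $\tilde{D}$ on $\mathcal{A}_{r+1}$ since \prettyref{lem:Basic-Presentation-of-Witt} intertwines the ring structure on $W_{r+1}(A)$ with the subring $W_{r+1}(A^{(r)})\subset\mathcal{A}_{r+1}$.

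The main obstacle will be the bookkeeping in part $2)$: one must carefully track how the unit $u_{\mathcal{O}}\in(\mathbb{Z}/p^{r+1-\alpha_{\mathcal{O}}})^{\times}$ is distributed across the Witt-vector positions when multiple orbits contribute different values of $\alpha_{\mathcal{O}}$, and verify that the non-linearity of Witt-vector addition (carrying over from lower to higher positions under the sum) interacts correctly with the Teichm\"uller-like identification of $p^{\alpha}u\tilde{g}^{p^{r-\alpha}}$ with a single-entry Witt vector. Everything else is formal manipulation built on part $1)$ and \prettyref{thm:Canonical-H-S}.
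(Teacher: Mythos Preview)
Your approach is the paper's: read everything off the computation inside the proof of \prettyref{thm:Canonical-H-S}. The paper's own argument is a single sentence doing exactly that, plus an explicit check of $W_{r+1}(k)$-linearity (namely $\tilde D_j(\lambda)=0$ for $\lambda\in W_{r+1}(k)$ and $j\geq 1$, since some $j_i\geq 1$ forces $D_{j_i}(\lambda)=0$). You omit this last check, but it is straightforward. Your derivation of part~(1) via \prettyref{exa:Affine-Space} applied to $p^l(T_1+\cdots+T_m)^{p^{r-l}}$ is precisely the mechanism already used inside the proof of \prettyref{thm:Canonical-H-S}.

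The obstacle you flag in part~(2) is real, and your proposed fix via Fermat's little theorem does not close it. Write $u_{\mathcal O}=p^{-\alpha_{\mathcal O}}p^{l}\binom{p^{r-l}}{c_1,\dots,c_m}\in(\mathbb Z/p^{r+1-\alpha_{\mathcal O}})^{\times}$ and $h=\prod_i\tilde D_{j_i}(\tilde f)^{c'_i}$. The orbit term $u_{\mathcal O}\,p^{\alpha_{\mathcal O}}h^{p^{r-\alpha_{\mathcal O}}}\in\mathcal A_{r+1}$ is the image of the Witt-vector product $u_{\mathcal O}\cdot(0,\dots,\bar h,\dots,0)$, whose $(\alpha_{\mathcal O}{+}1)$-entry is indeed $b_{\mathcal O}\bar h$, but which is \emph{not} a single-slot vector when $u_{\mathcal O}$ is not the Teichm\"uller lift of $b_{\mathcal O}$. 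For instance with $p=2$, $r=2$, $\alpha=1$, $u=3$ one finds $6h^2\leftrightarrow(0,\bar h,\bar h^{2})$ in $W_3(A)$, not $(0,\bar h,0)$; taking $D=d^{[\,\cdot\,]}$ on $\mathbb F_2[T]$ and $f=T^2$, the sole contributing orbit for $j=4$, $l=0$ is $(2,2,0,0)$ with $u=3$, and $\tilde D_4(T^8)=6T^4$ corresponds to $(0,T^2,T^4)$, whereas the displayed single-slot formula gives $(0,T^2,0)$. What \emph{does} follow from \prettyref{thm:Canonical-H-S} is that each orbit contributes a Witt vector supported in positions $\geq\alpha_{\mathcal O}+1$ with leading entry $b_{\mathcal O}\prod_i D_{j_i}(f_{l+1})^{c'_i}$; this weaker statement is what the subsequent arguments (\prettyref{prop:HS-and-F}, \prettyref{prop:Move-Down}) actually need. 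The paper's one-line proof does not engage with this point either.
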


\begin{proof}
Everything is immediate from the proof of the previous theorem except
(perhaps) the $W_{r+1}(k)$-linearity; for a Hasse-Schmidt derivation
this is equivalent to demanding that $\tilde{D}_{j}(W(k))=0$ for
all $j\geq1$. But this is immediate from the formula; by definition
we have ${\displaystyle \sum_{i=1}^{m}c_{i}j_{i}=j}$, so that if
$j\geq1$ then at least one $j_{i}$ is $\geq1$ as well; and we have
$D_{j_{i}}(\lambda)=0$ for $\lambda\in k$ and $j_{i}\geq1$. 
\end{proof}
\begin{rem}
I an unaware of a \emph{direct }proof that the output of the formula
recorded above actually is a Hasse-Schmidt derivation; one deduces
this implicitly in the theorem from the fact that the lift $\tilde{D}$
is a Hasse-Schmidt derivation. In fact, it seems non-obvious that
the maps written above are even additive. 
\end{rem}

\begin{defn}
The Hasse-Schmidt derivations occurring in \prettyref{cor:Formula}
are called \emph{canonical} Hasse-Schmidt derivations. 
\end{defn}

\begin{rem}
In fact, the construction can be extended to the case where $A$ and
$B$ are finite type over a noetherian $k$-algebra $R$; in this
case, one obtains (by the same formula), $W_{r+1}(R)$-linear HS derivations
$W_{r+1}(B)\to W_{r+1}(A)$. We won't use this generalization in this
work.
\end{rem}

Until further notice we fix a morphism $B\to A$ of algebras, which
are each smooth over $k$. Let us show that the canonical Hasse-Schmidt
derivations enjoy many favorable properties. 
\begin{prop}
\label{prop:HS-and-F} Let $(D_{0},\dots D_{n})$ be a Hasse-Schmidt
derivation from $B$ to $A$ (over $R$). Then for any $j\leq n$
we have 
\[
\tilde{D}_{j}F=F\tilde{D}_{j/p}
\]
where $\tilde{D}$ denotes the canonical lift of $D$, $F$ is the
Witt-vector Frobenius (on $W_{r+1}(B)$ and $W_{r+1}(A)$, respectively),
and the right hand side is defined to be $0$ if $p$ does not divide
$j$. 
\end{prop}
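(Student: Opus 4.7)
The plan is to verify the identity on the additive generators $w = (0,\dots,0,f_{l+1},0,\dots,0)$ of $W_{r+1}(B)$ using the explicit formula of \prettyref{cor:Formula} together with the uniqueness clause of \prettyref{thm:Canonical-H-S}. Choose flat lifts $\mathcal{B}_{r+1}, \mathcal{A}_{r+1}$, a Frobenius lift $F_\mathcal{B}$ on $\mathcal{B}_{r+1}$, and an HS lift $\tilde{D}$ of $D$ from $\mathcal{B}_{r+1}$ to $\mathcal{A}_{r+1}$. By \prettyref{lem:Basic-Presentation-of-Witt} the Witt Frobenius $F$ on $W_{r+1}(B)$ is the restriction of $F_\mathcal{B}$, so $\tilde{D}_j \circ F$ on $W_{r+1}(B^{(r)})$ agrees with the $j$-th component of the HS derivation $\tilde{D}\circ F_\mathcal{B}: \mathcal{B}_{r+1}\to\mathcal{A}_{r+1}$. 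This composition lifts the HS derivation $D\circ F_B$ (where $F_B$ is absolute Frobenius on $B$), so by the uniqueness statement of \prettyref{thm:Canonical-H-S} its restriction to $W_{r+1}(B^{(r)})$ is the canonical lift of $D\circ F_B$, which is given by applying the formula of \prettyref{cor:Formula} with $D$ replaced by $D\circ F_B$.

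The next step is to compute $D\circ F_B$ in characteristic $p$. Expanding $D_j(f^p) = \sum_{i_1+\dots+i_p = j} D_{i_1}(f)\cdots D_{i_p}(f)$ and grouping by the $S_p$-action, every orbit of non-constant tuples has size divisible by $p$; hence when $p\nmid j$ the whole sum vanishes mod $p$, and when $j = ps$ only the constant tuple $(s,\dots,s)$ survives, giving $(D\circ F_B)_{ps} = F_A\circ D_s$, where $F_A$ is absolute Frobenius on $A$.

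Applying \prettyref{cor:Formula} to $D\circ F_B$ on $w$ then expresses $\tilde{D}_j F(w)$ as a sum over orbits of compositions of $j$ of length $p^{r-l}$. If $p\nmid j$, any such composition has a part not divisible by $p$, so a factor $(D\circ F_B)_{j_i}(f_{l+1})$ vanishes and the whole sum is zero, matching the stated convention. If $j = ps$, only compositions with all parts divisible by $p$ contribute, and the substitution $i_k\mapsto i_k/p$ gives a bijection onto orbits of compositions of $s$ which preserves the multiplicities $c_i = c_i' p^{r-\alpha}$ and hence preserves both $\alpha_\mathcal{O}$ and $b_\mathcal{O}$. Each surviving factor becomes $(F_A D_{j_i/p}(f_{l+1}))^{c_i'} = D_{j_i/p}(f_{l+1})^{pc_i'}$, and since $b_\mathcal{O}^p = b_\mathcal{O}$ in $\mathbb{F}_p$, the resulting sum is precisely what one obtains by raising each Witt-vector component of $\tilde{D}_s(w)$ to its $p$-th power, i.e., by applying $F$ to $\tilde{D}_s(w)$. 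Additivity then extends the identity from generators to all of $W_{r+1}(B)$.

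The main obstacle will be the combinatorial bookkeeping in the $j = ps$ case: one must verify that the bijection $i_k\mapsto i_k/p$ respects the $S_{p^{r-l}}$-orbit structure, the numbers $c_i'$, and the valuation $\alpha_\mathcal{O}$. This is straightforward once one observes that dividing all parts by $p$ merely relabels the distinct values in the composition without altering the induced partition of $\{1,\dots,p^{r-l}\}$ into level sets.
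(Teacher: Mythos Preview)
Your proof is correct and follows essentially the same route as the paper: both reduce to the characteristic-$p$ identity $D_j(f^p)=D_{j/p}(f)^p$, then plug into the explicit formula of \prettyref{cor:Formula} and match orbits via the bijection $i_k\mapsto i_k/p$, checking that $c_i$, $\alpha_{\mathcal O}$, and $b_{\mathcal O}$ are preserved. Your invocation of the uniqueness clause of \prettyref{thm:Canonical-H-S} to identify $\tilde D_j\circ F$ with the canonical lift of $D\circ F_B$ is a clean way to phrase what the paper does by direct expansion of $\tilde D_j$ on $(0,\dots,f_{l+1}^p,\dots,0)$, but the underlying computation is identical.
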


\begin{proof}
Let us first show this result when $r=0$. In this case we have 
\[
D_{j}(F(f))=D_{j}(f^{p})=\sum_{i_{1}+\dots i_{p}=j}D_{i_{1}}(f)\cdots D_{i_{p}}(f)
\]
For each term in the sum, either $i_{1}=i_{2}=\cdots=i_{p}$, or there
are at least two distinct indices. In the latter case, grouping like
terms together and arguing as in the proof of \prettyref{thm:Canonical-H-S}
gives a coefficient of the form 
\[
\frac{p!}{c_{1}!\cdots c_{m}!}D_{j_{1}}(f)^{c_{1}}\dots D_{j_{m}}(f)^{c_{m}}=0
\]
since each $c_{i}<p$ and $p=0$ in $A$. Thus we obtain 
\[
D_{j}(f^{p})=D_{j/p}(f){}^{p}
\]
as required (where the right hand side is taken to be $0$ if $p$
does not divide $j$). 

Now let us consider the case of the action of $\tilde{D}_{j}$ from
$W_{r+1}(B)$ to $W_{r+1}(A)$. From \prettyref{cor:Formula}, we
obtain the formula 
\[
\tilde{D}_{j}(0,\dots F(f_{l+1}),\dots0)=\sum_{\mathcal{O}}(0,\dots,b_{\mathcal{O}}\prod_{i=1}^{m}D_{j_{i}}(F(f_{l+1}))^{c'_{i}},\dots,0)
\]
where the notation is as above. In particular, with $c_{i}=c'_{i}p^{r-\alpha_{\mathcal{O}}}$
we have ${\displaystyle \sum_{i=1}^{m}c_{i}j_{i}=j}$. From the case
$r=0$ of the proposition we therefore obtain an equality 
\[
\sum_{\mathcal{O}}(0,\dots,b_{\mathcal{O}}\prod_{i=1}^{m}D_{j_{i}}(F(f_{l+1}))^{c'_{i}},\dots,0)=\sum_{\mathcal{O}}(0,\dots,b_{\mathcal{O}}\prod_{i=1}^{m}F(D_{j_{i}/p}(f_{l+1}))^{c'_{i}},\dots,0)
\]
\[
=F(\sum_{\mathcal{O}}(0,\dots,b_{\mathcal{O}}\prod_{i=1}^{m}D_{j_{i}/p}(f_{l+1})^{c'_{i}},\dots,0))
\]
By definition any term in which $p$ does not divide all the $j_{i}$
is zero. So, if $p$ does not divide $j$, then from the equality
${\displaystyle \sum_{i=1}^{m}c_{i}j_{i}=j}$ we see that $p$ cannot
divide all the $j_{i}$, so that the sum is zero. 

If $p$ does divide $j$, then multiplication by $p$ induces a bijection
between $\{(i_{1},\dots,i_{p^{r-l}})\in\mathbb{N}^{p^{r-l}}|i_{1}+\dots+i_{p^{r-l}}=j/p\}$
and the subset of $\{(i_{1},\dots,i_{p^{r-l}})\in\mathbb{N}^{p^{r-l}}|i_{1}+\dots+i_{p^{r-l}}=j\}$
in which $p$ divides every term; and this bijection preserves the
numbers $\{c_{1},\dots,c_{m}\}$ and $\{c'_{1},\dots,c'_{m}\}$ associated
to each orbit. Thus from the displayed formula we see that $\tilde{D}_{j}F=F\tilde{D}_{j/p}$
as desired.
\end{proof}
We would like to consider the interaction of the canonical Hasse-Schmidt
derivations with the natural quotient maps on Witt vectors. To that
end we let $R:W_{r+1}(A)\to W_{r}(A)$ denote the natural projection.

Then we have
\begin{prop}
\label{prop:Move-Down}Let $D=(D_{0},\dots,D_{n})$ be a Hasse-Schmidt
derivation from $B$ to $A$, and let $\tilde{D}$ be the associated
canonical Hasse-Schmidt derivation from $W_{r+1}(B)$ to $W_{r+1}(A)$.
Then $\tilde{D}$ takes $V^{i}(W_{r+1}(B))$ into $V^{i}(W_{r+1}(A))$.
Furthermore, 

a) Under the isomorphisms
\[
R:W_{r+1}(A)/V^{r}(W_{r+1}(A))\tilde{\to}W_{r}(A)
\]
 and $R:W_{r+1}(B)/V^{r}(W_{r+1}(B))\tilde{\to}W_{r}(B)$ we have
$R\circ\tilde{D}_{j}=\tilde{D}_{j/p}\circ R$, where the right hand
side is zero if $p$ does not divide $j$. 

b) Now suppose $\text{val}(j)=a\leq r$. Then 
\[
\tilde{D}_{j}(W_{r+1}(B))\subset V^{r-a}(W_{r+1}(A))
\]
\end{prop}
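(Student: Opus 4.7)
The plan is to tackle the three assertions in the order: $V$-filtration preservation, then part $b)$, then part $a)$, since the direct combinatorial proof of $b)$ can then be reused when handling $a)$. Throughout I will work with the explicit formula of \prettyref{cor:Formula} and the lift $\tilde{D}$ to the flat $W_{r+1}(k)$-algebras $\mathcal{B}_{r+1},\mathcal{A}_{r+1}$.

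For the preservation of the $V$-filtration: since $\tilde{D}_j$ is $W_{r+1}(k)$-linear (in particular $\mathbb{Z}$-linear), if $z = p^i y \in V^i(W_{r+1}(B))$ viewed inside $\mathcal{B}_{r+1}$, then $\tilde{D}_j(z) = p^i \tilde{D}_j(y) \in p^i \mathcal{A}_{r+1}$. By \prettyref{thm:Canonical-H-S} this element lies in $W_{r+1}(A)$, so it lies in $p^i \mathcal{A}_{r+1} \cap W_{r+1}(A) = V^i(W_{r+1}(A))$ by the characterization of \prettyref{lem:Basic-Presentation-of-Witt}.

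For part $b)$: Kummer's theorem gives $\alpha_{\mathcal{O}} = l + (\sum_i s_p(c_i) - 1)/(p-1)$, i.e.\ $l$ plus the number of carries in computing $c_1+\cdots+c_m$ in base $p$. Since $\sum_i c_i j_i = j$ with $\text{val}_p(j) = a$, we have $v_{\min} := \min_i \text{val}_p(c_i) \leq a$; otherwise $p^{a+1}$ would divide the sum. Because the final sum $\sum_i c_i = p^{r-l}$ has base-$p$ digits all zero in positions $0,\dots,r-l-1$, every position from $v_{\min}$ through $r-l-1$ must produce a carry: at the first nonzero position $v_{\min}$ the carry-in is $0$ and the digit sum is nonzero, hence by the digit-$0$ constraint is a multiple of $p$ and so generates a carry; this carry then propagates since at any subsequent position in the range the sum plus carry-in is at least $1$ and congruent to $0\bmod p$. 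Thus there are at least $r-l-v_{\min} \geq r-l-a$ carries, giving $\alpha_{\mathcal{O}} \geq r - a$. Every summand in the formula for $\tilde{D}_j$ therefore sits in position $\geq r-a+1$, i.e.\ in $V^{r-a}(W_{r+1}(A))$.

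For part $a)$: both sides are $W_{r+1}(k)$-linear, so it suffices to check on elements $(0,\dots,f_{l+1},\dots,0)$. If $p \nmid j$, apply $b)$ with $a=0$: every contribution lies in $V^r(W_{r+1}(A)) = \ker R$, so $R \circ \tilde{D}_j = 0 = \tilde{D}_{j/p} \circ R$. If $p \mid j$, I will exhibit a bijection between orbits $\mathcal{O}'$ of $S_{p^{r-1-l}}$ on tuples summing to $j/p$ and orbits $\mathcal{O}$ of $S_{p^{r-l}}$ on tuples summing to $j$ with $p \mid c_i$ for every $i$, given by $c_i = p d_i$ and $j_i = j'_i$. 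The divisibility constraint on $\mathcal{O}$ exhausts precisely those orbits with $\alpha_{\mathcal{O}} \leq r-1$: indeed $c_i = c'_i p^{r-\alpha_{\mathcal{O}}}$ forces $p \mid c_i$ whenever $r-\alpha_{\mathcal{O}} \geq 1$. Using $s_p(pd) = s_p(d)$ one checks $\alpha_{\mathcal{O}} = \alpha'_{\mathcal{O}'}$ and that the reduced sizes $c'_i$ and $d'_i$ coincide, so the variable parts $\prod_i D_{j_i}(f_{l+1})^{c'_i}$ agree. The last step is to verify that the scalar factors $b_{\mathcal{O}}, b'_{\mathcal{O}'} \in \mathbb{F}_p$ agree; their ratio equals $u / \prod_i u_i$ where $u = (p^{r-l})!/((p^{r-1-l})!\,p^{p^{r-1-l}})$ and $u_i = (pd_i)!/(d_i!\,p^{d_i})$ are units. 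By grouping residues modulo $p$ and applying Wilson's theorem, each $u_i \equiv (-1)^{d_i}\pmod p$ and $u \equiv (-1)^{p^{r-1-l}}\pmod p$; since $\sum_i d_i = p^{r-1-l}$, the ratio is $1\bmod p$. The main obstacle is this final Wilson-type computation; once it is verified, everything else is bookkeeping with the formula of \prettyref{cor:Formula}.
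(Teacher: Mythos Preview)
Your proof is correct, and the overall architecture is close to the paper's, but you reorganize the logic and replace two of the paper's devices with more elementary combinatorics.

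The paper proves $a)$ first and then obtains $b)$ by induction on $r$; you do the opposite, proving $b)$ directly via Kummer/Legendre and then feeding the $a=0$ case back into $a)$ for $p\nmid j$. Your carry-propagation argument (every position from $v_{\min}$ through $r-l-1$ must produce a carry because the target $p^{r-l}$ has zero digits there) is a clean self-contained proof of the bound $\alpha_{\mathcal{O}}\geq r-a$; the paper never isolates this estimate. For the $V$-filtration preservation you use the characterization $V^i=p^i\mathcal{A}_{r+1}\cap W_{r+1}(A)$ from \prettyref{lem:Basic-Presentation-of-Witt} together with $W_{r+1}(k)$-linearity of the lifted $\tilde{D}_j$, which is slicker than reading it off the explicit formula as the paper does. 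Finally, for the equality $b_{\mathcal{O}}=b'_{\mathcal{O}'}$ the paper proves the stronger congruence of \prettyref{claim:Multinomial-reduction-claim} (mod $p^r$) via a Frobenius-lift trick on $(T_1+\cdots+T_m)^{p^{r-l}}$, whereas your Wilson computation gives only the mod-$p$ statement; since $b_{\mathcal{O}}$ lives in $\mathbb{F}_p$ this is exactly what is needed here, and your argument is more direct.

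One phrasing to tighten: you write that the divisibility constraint ``exhausts precisely those orbits with $\alpha_{\mathcal{O}}\leq r-1$'', but you only argue (and only need) the implication $\alpha_{\mathcal{O}}\leq r-1\Rightarrow p\mid c_i$ for all $i$. The converse can fail, but it does not matter: orbits $\mathcal{O}$ with all $p\mid c_i$ and $\alpha_{\mathcal{O}}\geq r$ are killed by $R$ on the left, and their images $\mathcal{O}'$ have $\alpha'_{\mathcal{O}'}=\alpha_{\mathcal{O}}\geq r$, hence contribute zero in $W_r(A)$ on the right. So the bijection matches surviving terms on both sides, which is what you want.
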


\begin{proof}
The fact that $\tilde{D}$ takes $V^{i}(W_{r+1}(B))$ into $V^{i}(W_{r+1}(A))$
is immediate from \prettyref{cor:Formula}. 

Next, to prove $a)$, we first prove that if $p$ does not divide
$j$, then $R\circ\tilde{D}_{j}=0$. To see this, we use the formula
\[
\tilde{D}_{j}(0,\dots f_{l+1},\dots0)=\sum_{\mathcal{O}}(0,\dots,b_{\mathcal{O}}\prod_{i=1}^{m}D_{j_{i}}(f_{l+1})^{c'_{i}},\dots,0)
\]
Since, in this formula, ${\displaystyle \sum c_{i}j_{i}=j}$, the
assumption that $p$ does not divide $j$ implies that, for each term
in the sum, at least one $c_{i}$ is not divisible by $p$. Therefore
the fact that $p^{r-\alpha_{\mathcal{O}}}|c_{i}$ implies $\alpha_{\mathcal{O}}\geq r$
for each orbit $\mathcal{O}$, which shows that on the right hand
side each nonzero term is in the $r+1$ position. Therefore $R$ annihilates
this term. 

Now suppose $p$ divides $j$. Let $r>l$. For an $S_{p^{r-l}}$ orbit
inside $\mathcal{S}_{l}=\{(i_{1},\dots,i_{p^{r-l}})\in\mathbb{N}^{p^{r-l}}|i_{1}+\dots+i_{p^{r-l}}=j\}$,
we write $p|\mathcal{O}$ if each of the numbers $c_{i}$ is divisible
by $p$. If we choose a member of $\mathcal{O}$ for which each of
the sets $C_{i}$ is an interval in $\{1,\dots,p^{r-l}\}$ then $p|\mathcal{O}$
implies that each $C_{i}$ is an interval of the form $\{pm_{i}+1,\dots,pm_{i+1}\}$.
Thus to the orbit $\mathcal{O}$ we may associate the partition of
$\{1,\dots,p^{r-l-1}\}$ where $C'_{i}=\{m_{i}+1,\dots,m_{i+1}\}$.
Further, to each $C_{i}$ is associated the number $j_{i}$ such that
${\displaystyle \sum_{i=1}^{m}c_{i}j_{i}=j}$; this implies ${\displaystyle \sum_{i=1}^{m}(c_{i}/p)j_{i}=j/p}$,
so if we associate $j_{i}$ to each $C_{i}'$ we obtain an element
of $\mathcal{\tilde{S}}_{l}=\{(i_{1},\dots,i_{p^{r-l-1}})\in\mathbb{N}^{p^{r-l-1}}|i_{1}+\dots+i_{p^{r-l-1}}=j/p\}$.
Clearly, if we choose a different member of $\mathcal{O}$ the resulting
elements of $\mathcal{\tilde{S}}_{l}$ will be conjugate. Thus we
obtain a map 
\[
\{S_{p^{r-l}}\phantom{i}\text{orbits on}\phantom{i}\mathcal{S}_{l}\phantom{i}\text{with}\phantom{i}p|\mathcal{O}\}\to\{S_{p^{r-l-1}}\phantom{i}\text{orbits on}\phantom{i}\mathcal{\tilde{S}}_{l}\}
\]
and this map is a bijection. Indeed, given an orbit $\mathcal{O}'$
in $\tilde{S}_{l}$ we may choose an element for which the members
of the partition $C_{i}'$ consists of intervals $\{m_{i}+1,\dots,m_{i+1}\}$;
and to this partition we attach the partition $\{pm_{i}+1,\dots,pm_{i+1}\}$
of $\{1,\dots,p^{r-l}\}$, with the same associated number $j_{i}$
as before. 

Further, by the claim directly below, we have 
\[
p^{l}\frac{(p^{r-l})!}{c_{1}!\cdots c_{m}!}\equiv p^{l}\frac{(p^{r-l-1})!}{(c_{1}/p)!\cdots(c_{m}/p)!}\phantom{i}\text{mod}\phantom{i}p^{r}
\]
which tells us that $\alpha_{\mathcal{O}}=\alpha_{\mathcal{O}'}$
if $\alpha_{O}<r$, and in this case, $b_{\mathcal{O}}=b_{\mathcal{O}'}$.
Recall that the numbers $\{c_{i}\}$ satisfy $c_{i}=c'_{i}p^{r-\alpha_{\mathcal{O}}}$;
thus in this case we also have $c_{i}/p=c_{i}'p^{r-1-\alpha_{\mathcal{O}'}}$. 

Now consider $\tilde{D}_{j}$ acting on $(0,\dots f_{l+1},\dots0)$;
if $l=r$ then $R$ annihilates this term so suppose $r>l$. As above,
if $\mathcal{O}$ is an orbit such that $p$ does not divide $c_{i}$
for some $i$, then 
\[
R\circ(0,\dots,b_{\mathcal{O}}\prod_{i=1}^{m}D_{j_{i}}(f_{l+1})^{c'_{i}},\dots,0)=0
\]
and the same is true for any orbit $\mathcal{O}$ in which $\alpha_{\mathcal{O}}\geq r$.
Thus we have 
\[
R\circ\tilde{D}_{j}(0,\dots f_{l+1},\dots0)=\sum_{p|\mathcal{O}}R\circ(0,\dots,b_{\mathcal{O}}\prod_{i=1}^{m}D_{j_{i}}(f_{l+1})^{c'_{i}},\dots,0)
\]
\[
=\sum_{\{p|\mathcal{O},\alpha_{\mathcal{O}}<r\}}R\circ(0,\dots,b_{\mathcal{O}}\prod_{i=1}^{m}D_{j_{i}}(f_{l+1})^{c'_{i}},\dots,0)
\]
Now, we have 
\[
\sum_{\{p|\mathcal{O},\alpha_{\mathcal{O}}<r\}}R\circ(0,\dots,b_{\mathcal{O}}\prod_{i=1}^{m}D_{j_{i}}(f_{l+1})^{c'_{i}},\dots,0)=\sum_{\{\mathcal{O}',\alpha_{\mathcal{O}'}<r\}}(0,\dots,b_{\mathcal{O}'}\prod_{i=1}^{m}D_{j_{i}}(f_{l+1})^{c'_{i}},\dots,0)
\]
\[
=\sum_{\mathcal{O}'}(0,\dots,b_{\mathcal{O}'}\prod_{i=1}^{m}D_{j_{i}}(f_{l+1})^{c'_{i}},\dots,0)
\]
where the last equality follows because $\alpha_{\mathcal{O}'}\geq r$
implies that the term\linebreak{}
 $(0,\dots,b_{\mathcal{O}'}\prod_{i=1}^{m}D_{j_{i}}(f_{l+1})^{c'_{i}},\dots,0)$
is zero in $W_{r}(A)$. But the above discussion implies that this
last sum is $\tilde{D}_{j/p}\circ R(0,\dots f_{l+1},\dots0)$ as claimed. 

Finally, part $b)$ follows immediately from $a)$ and induction on
$r$. 
\end{proof}
To finish the proof we need the 
\begin{claim}
\label{claim:Multinomial-reduction-claim}Let $r>l$ and let $\{c_{1},\dots,c_{m}\}$
be positive integers such that $\sum c_{i}=p^{r-l}$. Suppose $c_{i}$
is divisible by $p$ for each $i$. We have 
\[
p^{l}\frac{(p^{r-l})!}{c_{1}!\cdots c_{m}!}\equiv p^{l}\frac{(p^{r-l-1})!}{(c_{1}/p)!\cdots(c_{m}/p)!}\phantom{i}\text{mod}\phantom{i}p^{r}
\]
\end{claim}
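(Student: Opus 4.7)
The plan is to deduce the congruence from a polynomial identity in $\mathbb{Z}[x_1,\ldots,x_m]$, using a Frobenius-type expansion together with a $p$-adic valuation estimate on certain binomial coefficients. Set $n=p^{r-l-1}$, $s=r-l-1$, and $d_i=c_i/p$, so that $\sum d_i = n$. After dividing by $p^l$, the claim becomes
\[
\binom{pn}{pd_1,\ldots,pd_m} \;\equiv\; \binom{n}{d_1,\ldots,d_m} \pmod{p^{s+1}}.
\]
I would realize both sides as coefficients of $\prod_i x_i^{pd_i}$ in two polynomials that are congruent mod $p^{s+1}$.

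The first ingredient is the Frobenius-type identity
\[
(1+x_1+\cdots+x_m)^p \;=\; (1+x_1^p+\cdots+x_m^p) + p\,E(x_1,\ldots,x_m)
\]
for some $E\in\mathbb{Z}[x_1,\ldots,x_m]$; this holds because every multinomial coefficient $\binom{p}{k_0,\ldots,k_m}$ in which no $k_i$ equals $p$ is divisible by $p$. Setting $L = 1+x_1^p+\cdots+x_m^p$ and raising to the $n$-th power gives, by the binomial theorem,
\[
(1+x_1+\cdots+x_m)^{pn} \;=\; L^n + \sum_{j=1}^{n}\binom{n}{j}p^j L^{n-j} E^j.
\]

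The technical heart of the argument is then the estimate
\[
v_p\!\left(p^j\binom{p^s}{j}\right) \;\geq\; s+1 \qquad\text{for all } 1\leq j \leq p^s.
\]
I would establish this via Kummer's theorem applied to $j+(p^s-j)=p^s$: writing $t=v_p(j)$, the first nonzero digits of $j$ and of $p^s-j$ both occur in position $t$ of the base-$p$ expansion and must sum to $p$, producing a carry there; this carry propagates through all higher positions up to $s-1$, yielding exactly $s-t$ carries. Thus $v_p\binom{p^s}{j}=s-t$, and the estimate reduces to $j - v_p(j)\geq 1$, which is immediate from $p^{v_p(j)}\leq j$ for $j\geq 1$.

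Combining the estimate with the expansion yields $(1+x_1+\cdots+x_m)^{pn}\equiv L^n \pmod{p^{s+1}}$. Extracting the coefficient of $\prod_i x_i^{pd_i}$ from each side produces $\binom{pn}{pd_1,\ldots,pd_m}$ on the left (since $\sum_i pd_i=pn$ forces the trivial power of the constant term) and $\binom{n}{d_1,\ldots,d_m}$ on the right (the unique expansion of $L^n$ whose exponents of the $x_i^p$ match, using $\sum d_i=n$). Multiplying by $p^l$ recovers the claim. The only step that is not routine bookkeeping is the valuation estimate; this is where the hypothesis that $\sum c_i$ is a power of $p$ enters essentially, and indeed the analogous congruence fails for general $n$ (e.g. $\binom{12}{6}-\binom{4}{2}=918$ is not divisible by $pn=12$ for $p=3$).
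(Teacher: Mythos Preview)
Your proof is correct. Both you and the paper reduce to the same polynomial congruence
\[
(T_1+\cdots+T_m)^{p^{r-l}}\;\equiv\;(T_1^p+\cdots+T_m^p)^{p^{r-l-1}}\pmod{p^{r-l}}
\]
and then read off the multinomial coefficients; the difference lies in how that congruence is obtained. The paper appeals to its earlier Witt-vector lemma (\prettyref{lem:Basic-Presentation-of-Witt}): since $p^{l}(T_1+\cdots+T_m)^{p^{r-1-l}}$ lies in the image of $W_r(\mathbb{F}_p[T]^{(r-1)})$, any lift of Frobenius acts on it in the same way, and one can pass from a Frobenius for which the identity is tautological to the coordinate Frobenius $T_i\mapsto T_i^p$. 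You instead prove the congruence from scratch by expanding $\bigl((1+x_1^p+\cdots+x_m^p)+pE\bigr)^{p^s}$ and killing the error terms with the estimate $v_p\!\bigl(p^{j}\binom{p^s}{j}\bigr)\geq s+1$, which you obtain cleanly from Kummer's theorem (equivalently, from $\binom{p^s}{j}=\frac{p^s}{j}\binom{p^s-1}{j-1}$ and Lucas). Your route is more elementary and entirely self-contained; the paper's route keeps the argument inside the Witt-vector formalism it has already set up, which is stylistically consistent but relies on that earlier machinery. Your closing remark that the power-of-$p$ hypothesis is essential is a nice sanity check, though note that what matters is the $p$-adic modulus rather than $pn$ itself.
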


\begin{proof}
Consider the polynomial ring $\mathbb{Z}/p^{r+1}[T_{1},\dots,T_{m}]$.
The multinomial formula yields 
\begin{equation}
p^{l}(T_{1}+\dots+T_{m})^{p^{r-l}}=\sum_{k_{1}+\dots k_{m}=p^{r-l}}p^{l}\frac{(p^{r-l})!}{k_{1}!\cdots k_{m}!}T_{1}^{k_{1}}\cdots T_{m}^{k_{m}}\label{eq:first-multi}
\end{equation}
If we take the image of $p^{l}(T_{1}+\dots+T_{m})^{p^{r-l}}$ in $\mathbb{Z}/p^{r}[T_{1},\dots,T_{m}]$,
then we have 
\[
p^{l}(T_{1}+\dots+T_{m})^{p^{r-l}}=p^{l}((T_{1}+\dots+T_{m})^{p})^{p^{r-1-l}}=F_{1}(p^{l}(T_{1}+\dots+T_{m})^{p^{r-1-l}})
\]
where $F_{1}$ is an appropriately chosen lift of Frobenius. Since
$p^{l}(T_{1}+\dots+T_{m})^{p^{r-1-l}}\in W_{r}(\mathbb{F}_{p}[T_{1},\dots T_{m}]^{(r-1)})\subset\mathbb{Z}/p^{r}[T_{1},\dots,T_{m}]$,
the same equation holds if we replace $F_{1}$ with any lift of Frobenius,
by \prettyref{lem:Basic-Presentation-of-Witt}. Now, applying the
multinomial formula again yields 
\[
p^{l}(T_{1}+\dots+T_{m})^{p^{r-1-l}}=\sum_{j_{1}+\dots j_{m}=p^{r-l-1}}p^{l}\frac{(p^{r-l-1})!}{j_{1}!\cdots j_{m}!}T_{1}^{j_{1}}\cdots T_{m}^{j_{m}}
\]
and so if we apply the map $F$ for which $F(T_{i})=T_{i}^{p}$ we
obtain 
\[
F(p^{l}(T_{1}+\dots+T_{m})^{p^{r-1-l}})=\sum_{j_{1}+\dots j_{m}=p^{r-l-1}}p^{l}\frac{(p^{r-l-1})!}{j_{1}!\cdots j_{m}!}T_{1}^{pj_{1}}\cdots T_{m}^{pj_{m}}
\]
and if we consider the term where $j_{i}=c_{i}/p$ and compare with
\prettyref{eq:first-multi} we obtain the result. 
\end{proof}
With these preliminaries in hand, we can give the definition of Witt-differential
operators over the algebra $A$. We shall be working with the space
\\
$\text{Hom}_{W(k)}^{V}(W(B),W(A))$ of continuous,  $W(k)$-linear
endomorphisms of $W(A)$ which preserve $V^{i}(W(A))$ for each $i\geq0$.
We note that $W(A)\to\text{Hom}_{W(k)}^{V}(W(B),W(A))$ via the map
which takes $a\in W(A)$ to $a\cdot\varphi^{\#}$, and $W(B)\to\text{Hom}_{W(k)}^{V}(W(B),W(A))$
by the map which takes $b$ to $\varphi^{\#}(b\cdot)$. 

For each $r\geq0$, we define 
\[
\text{Hom}_{W(k)}^{V}(W_{r+1}(B),W_{r+1}(A))\subset\text{Hom}_{W(k)}(W_{r+1}(B),W_{r+1}(A))
\]
 to be the subspace of $W(k)$-linear endomorphisms of $W_{r+1}(A)$
which preserve the filtration $V^{i}(W_{r+1}(A))$. 

We have the natural restriction map
\[
R:\text{Hom}_{W(k)}^{V}(W_{r+1}(B),W_{r+1}(A))\to\text{Hom}_{W(k)}^{V}(W_{r}(B),W_{r}(A))
\]
and from the definitions it follows directly that 
\[
\text{Hom}_{W(k)}^{V}(W(B),W(A))\tilde{=}\lim_{r}\text{Hom}_{W(k)}^{V}(W_{r+1}(B),W_{r+1}(A))
\]

We have the following elementary observation: 
\begin{lem}
For each $j\geq0$, and each $r\geq0$ (including $r=\infty$) there
is a well-defined operation 
\[
W_{r+1}(A)\times V^{j}(W_{r+1}(A))\to V^{j}(W_{r+1}(A))
\]
given by 
\[
(a,V^{j}(b))\to V^{j}(a\cdot b):=F^{-j}(a)\cdot V^{j}(b)
\]
and this operation is $F^{-j}$-semilinear over $W_{r+1}(k)$, and
preserves the filtration $V^{i}(W_{r+1}(A))$. 
\end{lem}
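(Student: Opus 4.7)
The plan is to deduce the lemma from the iterated Witt-vector projection formula
\[
a \cdot V^j(b) \;=\; V^j(F^j(a) \cdot b),
\]
which follows at once from the standard identity $a \cdot V(b) = V(F(a) \cdot b)$, combined with the kernel calculation $\ker(V^j\colon W_{r+1}(A) \to W_{r+1}(A)) = V^{r+1-j}(W_{r+1}(A))$, immediate from inspection of ghost components (or equivalently from $V^{r+1}=0$ on $W_{r+1}(A)$ together with the fact that $V^j$ is injective on the image of $W_{r+1-j}(A)$). The case $r = \infty$ will follow by passing to the inverse limit, since the proposed operation is compatible with the restriction maps $R$; I therefore treat finite $r$.

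Well-definedness: if $V^j(b) = V^j(b')$ then $b - b' \in \ker V^j = V^{r+1-j}(W_{r+1}(A))$, which is an ideal, so $a(b-b')$ lies in the same ideal and hence $V^j(ab) = V^j(ab')$. This makes the assignment $(a,V^j(b)) \mapsto V^j(ab)$ unambiguous. The suggestive notation $F^{-j}(a) \cdot V^j(b)$ is then justified because, although $F^{-j}(a)$ does not exist in $W_{r+1}(A)$, the projection formula ``formally'' yields $F^{-j}(a) \cdot V^j(b) = V^j(F^j(F^{-j}(a)) \cdot b) = V^j(ab)$, matching the well-defined expression just constructed.

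For $F^{-j}$-semilinearity over $W_{r+1}(k)$: since $k$ is perfect, Frobenius is bijective on $W_{r+1}(k)$, so $F^{-j}(\lambda) \in W_{r+1}(k)$ literally exists for any $\lambda \in W_{r+1}(k)$, and the projection formula gives $V^j(\lambda b) = F^{-j}(\lambda) \cdot V^j(b)$, which is exactly the claim. Additivity and $W_{r+1}(A)$-bilinearity are immediate from the additivity of $V^j$ and the distributivity of Witt-vector multiplication. For preservation of the filtration $V^i(W_{r+1}(A))$: if $i \le j$ this is automatic since $V^j(W_{r+1}(A)) \subset V^i(W_{r+1}(A))$; if $i > j$ and $V^j(b) = V^i(c)$, write $b = V^{i-j}(c) + k$ with $k \in \ker V^j$, whereupon $a \cdot V^{i-j}(c) = V^{i-j}(F^{i-j}(a) \cdot c)$ and $ak \in \ker V^j$, so $V^j(ab) = V^i(F^{i-j}(a) \cdot c) \in V^i(W_{r+1}(A))$.

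The only substantive input is the kernel computation $\ker V^j = V^{r+1-j}(W_{r+1}(A))$ and the recognition that it is an ideal; every other assertion is a formal consequence of the projection formula, so there is no genuine obstacle to overcome.
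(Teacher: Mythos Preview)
The paper does not prove this lemma: it is introduced as ``the following elementary observation'' and stated without proof, followed only by the remark that for $r=\infty$ the product $F^{-j}(x)\cdot y$ genuinely exists in $W(A)[p^{-1}]$ and agrees with the given formula. Your argument via the projection formula $a\cdot V^{j}(b)=V^{j}(F^{j}(a)\cdot b)$ together with the kernel computation $\ker(V^{j})=V^{r+1-j}(W_{r+1}(A))$ is a correct and natural way to supply the missing details, and is presumably what the author had in mind. One small quibble: the phrase ``$W_{r+1}(A)$-bilinearity'' is not quite right, since the operation is only biadditive and $F^{-j}$-semilinear over $W_{r+1}(k)$; but this is a wording issue, not a mathematical one.
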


To justify the notation, note that when $r=\infty$, the product $F^{-j}(x)\cdot y$
actually makes sense inside $W(A)[p^{-1}]$, and agrees with the operation
as defined above. 
\begin{defn}
\label{def:Elementary-Witt-Ring} Let $m\in\mathbb{Z}$, and $r\geq-m$.
An element 
\[
\varphi\in\text{Hom}_{W(k)}^{V}(W_{r+1}(B),W_{r+1}(A))
\]
is an elementary Witt-differential operator of level $\leq m$ if
there is a canonical Hasse-Schmidt derivation $D=(\tilde{D}_{0},\dots,\tilde{D}_{i})$
of length $0<i\leq p^{r+m}$ on $W_{r+1}(A)$ so that $\varphi=F^{\text{val}(i)-r}(a)\cdot\tilde{D}_{i}$
for some $a\in W_{r+1}(A)$ (this is a well-defined operator by \prettyref{prop:Move-Down}). 

We define $\mathcal{E}W_{r+1}^{(m)}(B;A)$ to be the $W_{r+1}(A)$-submodule
of \\
$\text{Hom}_{W(k)}^{V}(W_{r+1}(B),W_{r+1}(A))$ generated by the elementary
Witt-differential operators of level $\leq m$. By \prettyref{prop:Move-Down},
we have that the map 
\[
R:\mathcal{E}W_{r+1}^{(m)}(B;A)\to\text{Hom}_{W(k)}^{V}(W_{r}(B),W_{r}(A))
\]
has image contained in $\mathcal{E}W_{r}(A)$; thus we may define
\[
\mathcal{E}W^{(m)}(B;A):=\lim_{r}\mathcal{E}W_{r+1}^{(m)}(B;A)\subset\text{Hom}_{W(k)}^{V}(W(B),W(A))
\]
When $A=B$ and $\varphi^{\#}$ is the identity map we write $\mathcal{E}W_{r+1}^{(m)}(A)$
and $\mathcal{E}W^{(m)}(A)$ for $\mathcal{E}W_{r+1}^{(m)}(B;A)$
and $\mathcal{E}W^{(m)}(B;A)$, respectively. In this case, The $W(A)$-module
$\mathcal{E}W^{(m)}(A)$ is naturally filtered by 
\[
\mathcal{E}W^{(m),r}(A)=\{\varphi\in\mathcal{E}W^{(m)}(A)|\varphi(W(A))\subset V^{r}(W(A))\}
\]
\end{defn}

Now we give the 
\begin{defn}
\label{def:WDO-Affine}The ring $\mathcal{D}_{W(A)}^{(m)}$ of (uncompleted)
Witt-differential operators of level $\leq m$ on $A$ is defined
to be the subalgebra of continuous $W(k)$-linear endomorphisms of
$W(A)$ generated by multiplication by elements of $W(A)$ and the
elements of $\mathcal{E}W_{A}^{(m)}$. This ring possesses a filtration
by two sided ideals 
\[
I^{(r)}:=(V^{r}(W(A)),\mathcal{E}W^{(m),r}(A))
\]
and we define $\mathcal{D}_{W_{r}(A)}^{(m)}:=\mathcal{D}_{W(A)}^{(m)}/I^{(r)}$. 

We define $\widehat{\mathcal{D}}_{W(A)}^{(m)}$, the ring of Witt-differential
operators on $A$, of level $\leq m$, to be the completion of $\mathcal{D}_{W(A)}^{(m)}$
along $\{I^{(r)}\}$; i.e., the inverse limit of the $D_{W_{r}(A)}^{(m)}$.
Since each element of $I^{(r)}$ takes $W(A)$ to $V^{r}(W(A))$,
this ring also acts on $W(A)$.

Finally, we let $\widehat{\mathcal{D}}_{W(A)}$ be the inductive limit
of $\widehat{\mathcal{D}}_{W(A)}^{(m)}$ under the obvious maps $\widehat{\mathcal{D}}_{W(A)}^{(m)}\to\widehat{\mathcal{D}}_{W(A)}^{(m+1)}$;
similarly $\mathcal{D}_{W(A)}$ is the inductive limit of $\mathcal{D}_{W(A)}^{(m)}$. 
\end{defn}

This definition is inspired by Berthelot's definition of arithmetic
differential operators of level $\leq m$ (c.f. \cite{key-1},\cite{key-2},
as well as \cite{key-16} for a more operator-theoretic point of view)
and, as we shall see below, yields closely related categories of modules,
at least when $m\geq0$. 

\subsection{\label{subsec:Local-Coordinates}Local Coordinates }

In this subsection we shall give a presentation of the modules $\mathcal{E}W^{(m)}(B;A)$
and the algebra $\widehat{\mathcal{D}}_{W(A)}^{(m)}$ for $m\in\mathbb{Z}$
in terms of local coordinates on $B$ (or $A$ in the latter case)
and deduce some basic structural results. Therefore, throughout this
subsection, we shall suppose that that there is an etale map $k[T_{1},\dots,T_{n}]\to B$.
This map yields Hasse-Schmidt derivations $B\to A$, which we shall
denote by $\partial_{l}^{[i]}$; these are defined on $k[T_{1},\dots T_{n}]$
as 
\[
\partial_{l}^{[i]}(T_{l}^{j})={j \choose i}\varphi^{\#}(T_{l}^{j-i})
\]
and 
\[
\partial_{l}^{[i]}(T_{l'}^{j})=0
\]
for $l\neq l'$ and $i>0$. By \cite{key-22} these operators extend
uniquely to Hasse-Schmidt derivations on $B$. When $B=A$ and $\varphi^{\#}=Id$
these Hasse-Schmidt derivations are iterative; i.e, we have
\[
\partial_{l}^{[i]}\partial_{l}^{[j]}={i+j \choose i}\partial_{l}^{[i+j]}
\]
Further, one sees immediately from the definition of these operators
that they all mutually commute; i.e., 
\[
\partial_{l}^{[i]}\partial_{l'}^{[j]}=\partial_{l'}^{[j]}\partial_{l}^{[i]}
\]
for all $l,l',i,j$. 

If $I=(i_{1},\dots,i_{n})$ is a multi-index, we shall also use the
notation $\partial^{[I]}=\partial_{1}^{[i_{1}]}\cdots\partial_{n}^{[i_{n}]}$.
If $J$ is another multi-index, then we also have $\partial^{J}=\partial_{1}^{j_{1}}\cdots\partial_{n}^{j_{n}}$
and $(\partial^{[I]})^{J}=(\partial_{1}^{[i_{1}]})^{j_{1}}\cdots(\partial_{n}^{[i_{n}]})^{j_{n}}$. 

Now we give the 
\begin{defn}
\label{def:basic-ops}Let $\alpha\in W(A)$ and $r,j\in\mathbb{Z}$
such that $j>0$. We define the operator $\{\partial_{l}\}_{jp^{r}}$
as follows: it is equal to $\varphi^{\#}(\partial_{l}^{[jp^{r'+r}]}):W_{r'+1}(B)\to W_{r'+1}(A)$
where $\partial_{l}^{[jp^{r'+r}]}$ is the canonical lift of the HS
derivation $\partial_{l}^{[jp^{r'+r}]}$ to $W_{r'+1}(B)$. The inverse
limit of these operators is well defined on $W(B)$ by \prettyref{prop:Move-Down}. 

Now suppose $r>0$ and let $v_{j}:=\text{val}(j)$. Then we define
the operator 
\[
F^{v_{j}-r}(\alpha)\cdot\{\partial_{l}\}_{j/p^{r}}
\]
as the operator $F^{v_{j}-r}(\alpha)\cdot\varphi^{\#}(\partial_{l}^{[jp^{r'-r}]})$
from $W_{r'+1}(B)\to W_{r'+1}(A)$; here, we denote by $\partial_{l}^{[jp^{r'-r}]}$
the canonical lift of $\partial_{l}^{[jp^{r'-r}]}$ to $W_{r'+1}(B)$
for any $r'$ such that $r'\geq r-\text{val}(j)$, for $r'<r-\text{val}(j)$
this operator is taken to be $0$. By \prettyref{prop:Move-Down},
we have 
\[
\partial_{l}^{[jp^{r'-r}]}(W_{r'+1}(B))\subset V^{r-v_{j}}(W_{r'+1}(A))
\]
so that this operator is well-defined on $W_{r'+1}(B)$. 

Further, we note that by \prettyref{prop:Move-Down}, $a)$, we have;
for $R:W_{r'+1}(A)\to W_{r'}(A)$
\[
R(F^{v_{j}}(\alpha)\partial_{l}^{[jp^{r'-r}]})=F^{v_{j}}(\alpha)\partial_{l}^{[jp^{r'-1-r}]}R
\]
so that the inverse limit of these operators is in fact well-defined
on $W(B)$. We note that when $1\leq j\leq p^{r}$, we have $F^{v_{j}-r}(\alpha)\cdot\{\partial_{l}\}_{j/p^{r}}\in\mathcal{E}W(A)$.
We may extend this definition to the case $j=0$ by defining $v_{0}=r$;
then we have 
\[
F^{v_{j}-r}(\alpha)\cdot\{\partial_{l}\}_{j/p^{r}}=\alpha
\]
for $j=0$ and any $r\geq0$. 

If $J=(j_{1},\dots,j_{n})$ denotes a multi-index in $\mathbb{N}$
for each $j_{l}$, let $v_{J}=\text{min}_{1\leq l\leq n}\text{val}\{j_{l}\}$.
Suppose this minimum is realized at index $l'$. Then we define
\[
F^{v_{J}-r}(\alpha)\cdot\{\partial\}_{J/p^{r}}:=F^{v_{J}-r}(\alpha)\prod_{l=1}^{n}\{\partial_{l}\}_{j_{l}/p^{r}}=F^{v_{l'}-r}(\alpha)\{\partial_{l'}\}_{j_{l'}/p^{r}}\cdot\prod_{l=1,l\neq l'}^{n}\{\partial_{l}\}_{j_{l}/p^{r}}
\]
and we note that this product is in $\mathcal{D}_{W(A)}$ whenever
each $j_{i}\leq p^{r}$. If $J=(0,\dots,0)$ then we set $v_{J}=r$,
this gives ${\displaystyle F^{v_{J}-r}(\alpha)\cdot\{\partial\}_{J/p^{r}}=\alpha}$
as above. 
\end{defn}

We shall show that (products of) such operators form a basis $\widehat{\mathcal{D}}_{W(A)}^{(0)}$,
in a suitable sense. Namely, 
\begin{thm}
\label{thm:Basis}Let $m\in\mathbb{Z}$. Let $Q\in\widehat{\mathcal{D}}_{W(A)}^{(m)}$.
Then we may write 
\[
Q=\sum_{I}(\sum_{r=1}^{\infty}\sum_{J_{I}}F^{-r}(\alpha_{J_{I}})\{\partial\}_{J_{I}/p^{r}}+\sum_{K_{I}}\alpha_{K_{I}}\{\partial\}_{K_{I}})\{\partial\}_{p^{m}}^{I}
\]
where $\alpha_{J_{I}},\alpha_{K_{I}}\in W(A)$, $I=(i_{1},\dots,i_{n})\in\mathbb{N}^{n}$,
$\{\partial\}_{p^{m}}^{I}:=\{\partial_{1}\}_{p^{m}}^{i_{1}}\cdots\{\partial_{n}\}_{p^{m}}^{i_{n}}$,
$J_{I}=(j_{1},\dots,j_{n})$ satisfies $0\leq j_{l}<p^{r}$ for each
$j_{l}$ and $(p,j_{l})=1$ for at least one $l$; unless $r=1$ where
we allow $J_{I}=0$, and $K_{I}=(k_{1},\dots,k_{n})$ satisfies $0\leq k_{i}<p^{m}$
with at least one $k_{i}\neq0$ (if $m\leq0$ this sum is taken to
be empty). We demand that the terms 
\[
\sum_{r=1}^{\infty}\sum_{J_{I}}F^{-r}(\alpha_{J_{I}})\{\partial\}_{J_{I}/p^{r}}+\sum_{K_{I}}\alpha_{K_{I}}\{\partial\}_{K_{I}}
\]
approach $0$ (in the topology of $\widehat{\mathcal{D}}_{W(A)}^{(m)}$)
as $I\to\infty$. Furthermore, the elements $\alpha_{I_{J}}$ are
unique. 
\end{thm}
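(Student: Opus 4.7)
The plan is to first establish existence by producing the required decomposition of any product of elementary Witt-differential operators, and then to derive uniqueness from the faithful action on $W(A)$ together with the explicit formula of \prettyref{cor:Formula}.

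For existence, I would first reduce bookkeeping to the polynomial algebra $k[T_1,\dots,T_n]$ via the etale extension, which exists by assumption; since Hasse-Schmidt derivations extend uniquely under etale maps, the operators $\{\partial_l\}_\lambda$ on $W(A)$ are determined by their values on $W(k[T_1,\dots,T_n])$ (up to $W(A)$-linearity). Next, I would check the following elementary relations among the generators listed in \prettyref{def:basic-ops}: (i) commutation $\{\partial_l\}_\lambda \{\partial_{l'}\}_\mu = \{\partial_{l'}\}_\mu \{\partial_l\}_\lambda$ for $l\neq l'$, which follows from the analogous relation on $k[T_1,\dots,T_n]$ together with the uniqueness part of \prettyref{thm:Canonical-H-S}; (ii) the composition rule $\{\partial_l\}_\lambda \{\partial_l\}_\mu = c_{\lambda,\mu}\{\partial_l\}_{\lambda+\mu}$ for appropriate binomial-type coefficients $c_{\lambda,\mu}\in W(k)$, which follows again by checking on Teichmüller monomials using the iterativity $\partial_l^{[i]}\partial_l^{[j]} = \binom{i+j}{i}\partial_l^{[i+j]}$ and \prettyref{prop:HS-and-F}; (iii) the Leibniz rule, allowing one to commute $\{\partial_l\}_\lambda$ past multiplication by $\alpha\in W(A)$ at the cost of lower-order terms (that is, elements of $\mathcal{E}W^{(m),r}(A)$ for $r$ growing with the order of $\lambda$); and (iv) the Frobenius-semilinearity $\{\partial_l\}_\lambda F = F\{\partial_l\}_{\lambda/p}$ from \prettyref{prop:HS-and-F}.

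Using these, I would take an arbitrary $Q\in\widehat{\mathcal{D}}_{W(A)}^{(m)}$, express it as a convergent $W(A)$-linear combination of ordered monomials in the generators, and rearrange using (i)--(iv): commute all $W(A)$-multiplications to the far left, group derivatives by variable, then apply the composition rule to split each factor $\{\partial_l\}_{j/p^r}^{i_l}$ as a $W(A)$-linear (in fact Frobenius-twisted) combination of $\{\partial_l\}_{j'/p^{r'}}$ with $0\le j'<p^{r'}$ together with a ``top-level'' factor $\{\partial_l\}_{p^m}^{i_l}$. The result is precisely the claimed normal form. Convergence of the outer sum over $I$ in the topology of $\widehat{\mathcal{D}}_{W(A)}^{(m)}$ follows because each $I^{(r)}$ absorbs all but finitely many $I$ — the order of the inner parenthesized operator is bounded and, increasing $|I|$, the monomial $\{\partial\}_{p^m}^I$ eventually pushes the image into $V^r(W(A))$ for any fixed $r$.

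For uniqueness, I would exploit that $\widehat{\mathcal{D}}_{W(A)}^{(m)}$ acts faithfully on $W(A)$ (this is built into \prettyref{def:WDO-Affine}). Suppose a normal-form sum equals zero as an operator; I want to deduce that every $\alpha_{J_I}$ vanishes. The strategy is to evaluate on Teichmüller lifts of monomials $T^{A}=T_1^{a_1}\cdots T_n^{a_n}$ (and their $V$-shifted versions $V^s(T^A)$) and read off the coefficients inductively. Using \prettyref{cor:Formula}, the action of $\{\partial_l\}_{j/p^r}\{\partial\}_{p^m}^I$ on $V^s(T^A)$ is given by an explicit closed formula whose leading term (with respect to the $V$-adic filtration and the multi-exponent $A$) isolates a single pair $(J_I,I)$; a double induction on $r$ (using \prettyref{prop:Move-Down}(a) to reduce to smaller $r$) and on $|A|$ lets one solve for each $\alpha_{J_I}$. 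The main obstacle will be bookkeeping here: the Frobenius-semilinear factors $F^{-r}(\alpha_{J_I})$ interact nontrivially with the $V$-adic filtration, and one must carefully separate the contributions of the low-order ``$\{\partial\}_{K_I}$'' terms (which live in the classical level-$m$ differential operators on $A$ and whose independence reduces to the standard basis theorem for Berthelot's $\widehat{\mathcal{D}}^{(m)}$) from the ``$\{\partial\}_{J_I/p^r}$'' terms, which genuinely require the Witt-vector structure to distinguish. Once this isolation is done the rest is linear algebra over $W(A)$.
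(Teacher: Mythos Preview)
Your overall strategy matches the paper's: establish relations (commutation, iterativity, Leibniz, Frobenius-semilinearity), use them to push an arbitrary element into normal form, and prove uniqueness by evaluating on carefully chosen elements of $W(A)$. However, there are two genuine gaps in your existence argument.

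First, your convergence reasoning is backwards. You write that the outer sum over $I$ converges because ``$\{\partial\}_{p^m}^I$ eventually pushes the image into $V^r(W(A))$.'' But landing in $V^r(W(A))$ under the action map is not the same as lying in the ideal $\widehat{I^{(r)}}$ of the abstract completion $\widehat{\mathcal{D}}_{W(A)}^{(m)}$; identifying these is precisely the content of \prettyref{cor:existence-of-basis}, which in turn rests on the filtration-tracking in \prettyref{lem:big-products}. You cannot invoke this before proving it.

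Second, and more seriously, your step (iii) (``commute $\{\partial_l\}_\lambda$ past multiplication by $\alpha$ at the cost of lower-order terms'') and step (ii) hide the real difficulty. When you compute a product such as $F^{-r}(\alpha)\{\partial_l\}_{j/p^r}\cdot F^{-s}(\beta)\{\partial_t\}_{j'/p^s}$, the commutator produces an \emph{infinite} sum indexed by $r'\ge r-\mathrm{val}(j)$, and when $l=t$ you must further collapse $\{\partial_l\}_{i/p^s}\{\partial_l\}_{j'/p^s}$ using the iterativity constant $\binom{i+j'}{i}$. The crucial point, which your outline omits, is the $p$-adic estimate $\mathrm{val}_p\binom{i+j'}{i}\ge \mathrm{val}_p(i+j')$ when $(i,p)=1$; without it you cannot show the resulting term lies in $\widehat{\mathcal{D}}^{(m)}_{W(A),b,s}$ rather than merely in $\widehat{\mathcal{D}}^{(m)}_{W(A),b}$, and hence you cannot establish that the infinite rearrangement converges or that the filtration is preserved. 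This estimate and the associated bookkeeping are exactly \prettyref{lem:big-products} in the paper, and they are the technical heart of the existence proof.

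Your uniqueness sketch is essentially correct and parallels the paper's \prettyref{lem:Injectivity-Lemma}. One clarification: you cannot simply quote faithfulness of the $\widehat{\mathcal{D}}_{W(A)}^{(m)}$-action as ``built into \prettyref{def:WDO-Affine}''---only the uncompleted algebra is defined as a subalgebra of endomorphisms, and faithfulness of the completed action is a \emph{corollary} of the theorem. What you actually need (and what the paper does) is to show that a normal-form sum acting trivially on each $W_{r'+1}(A)$ forces the coefficients into arbitrarily high $V$-filtration, hence to zero. The paper implements your ``double induction'' by lexicographic ordering on the index set $\{p^{r'-r}J_I+p^{r'+m}I\}$ and evaluation at $p^rT^{p^{r'-r}J_{I_1}+p^{r'+m}I_1}$; you should expect to need a similarly explicit choice to make the induction go through.
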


We shall break the proof of this up into several steps, beginning
by showing that we can write the elementary Witt-differential operators
(of level $0$) in the required form. 
\begin{lem}
\label{lem:Reps-of-HS}Let $\tilde{D}$ be a canonical Hasse-Schmidt
derivation of length $\leq p^{r}$ from $W_{r+1}(B)$ to $W_{r+1}(A)$.
Then, for any $0<j\leq p^{r}$ the associated canonical Hasse-Schmidt
derivation $\tilde{D}_{j}$ on $W_{r+1}(A)$ is equal to 
\[
\sum_{I}F^{v_{I}-r}(\beta_{I})\cdot\{\partial\}_{I/p^{r}}
\]
where $I=(i_{1},\dots,i_{n})\backslash\{(0,\cdots,0)\in\mathbb{N}^{n}$
satisfies ${\displaystyle \sum_{l=1}^{n}i_{l}\leq p^{r}}$, and $\beta_{I}\in W_{r+1}(A)$.
Furthermore, if $v_{I}\geq\text{val}(j)$, then we have $\beta_{I}\in V^{v_{I}-\text{val}(j)}(W_{r+1}(A))$. 
\end{lem}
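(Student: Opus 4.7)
The plan is to reduce to the polynomial ring $B=k[T_{1},\dots,T_{n}]$ and then apply the explicit formula of Corollary~\ref{cor:Formula} to match the output of $\tilde{D}_{j}$ with products of the basic canonical lifts $\{\partial\}_{I/p^{r}}$. For the reduction, note that since $k[T_{1},\dots,T_{n}]\to B$ is \'etale, any Hasse--Schmidt derivation from $B$ to $A$ is uniquely determined by its restriction to the polynomial ring, and the same holds on Witt vectors because $W_{r+1}(k[T_{1},\dots,T_{n}])\to W_{r+1}(B)$ is \'etale. The operators $\{\partial\}_{I/p^{r}}$, being built from canonical lifts, also extend uniquely along this \'etale map, so it suffices to establish the desired expansion when $B=k[T_{1},\dots,T_{n}]$.

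For the polynomial ring, a Hasse--Schmidt derivation $D$ of length $\leq p^{r}$ corresponds to a ring homomorphism $\varphi_{D}\colon B\to A[t]/t^{p^{r}+1}$ determined by $\varphi_{D}(T_{l})=\varphi^{\#}(T_{l})+\sum_{i\geq 1}b_{l,i}t^{i}$. Expanding $\varphi_{D}(T^{K})$ by the multinomial theorem writes each component $D_{j}\colon B\to A$ as an $A$-linear combination $D_{j}=\sum_{|I|\leq j}\gamma_{I}\,\partial^{[I]}$, with $\gamma_{I}\in A$ an explicit polynomial in the $b_{l,i}$.

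I would then plug this into Corollary~\ref{cor:Formula}, which gives the expansion
\[
\tilde{D}_{j}(V^{l}(f_{l+1}))=\sum_{\mathcal{O}}V^{\alpha_{\mathcal{O}}}\!\Bigl(b_{\mathcal{O}}\prod_{i=1}^{m}D_{j_{i}}(f_{l+1})^{c'_{i}}\Bigr).
\]
Substituting the decomposition of each $D_{j_{i}}$ and expanding the resulting products, every term $\prod_{i}(\partial^{[I_{i}]}(f_{l+1}))^{c'_{i}}$ can be recognized as arising from the same formula applied to a canonical lift of a product of basic operators $\partial^{[I]}$; invoking the uniqueness statement of Theorem~\ref{thm:Canonical-H-S} then allows one to regroup the entire sum orbit-by-orbit as $\sum_{I}F^{v_{I}-r}(\beta_{I})\{\partial\}_{I/p^{r}}(V^{l}(f_{l+1}))$ for explicit $\beta_{I}\in W_{r+1}(A)$ constructed from the $b_{l,i}$'s and the relevant multinomial factors. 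Since both sides are $W_{r+1}(k)$-linear and agree on the additive generators $V^{l}(f_{l+1})$, this yields the required decomposition.

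For the filtration claim, the constraint $\sum c_{i}j_{i}=j$ forces $v_{j}\leq\mathrm{val}(c_{i}j_{i})$ for at least one $i$, and combined with the divisibility $p^{r-\alpha_{\mathcal{O}}}\mid c_{i}$ this yields $\alpha_{\mathcal{O}}\geq v_{j}$ for every orbit contributing nontrivially. When $v_{I}\geq v_{j}$, the factor $p^{\alpha_{\mathcal{O}}-v_{j}}=p^{v_{I}-v_{j}}$ carried by $b_{\mathcal{O}}$ translates, after stripping the Frobenius twist $F^{v_{I}-r}$, into $\beta_{I}\in V^{v_{I}-v_{j}}(W_{r+1}(A))$. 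The main obstacle is exactly this last piece of combinatorial bookkeeping: checking that the orbit expansion from Corollary~\ref{cor:Formula} for a general $D$ regroups cleanly into the orbit expansions for the basic operators $\partial^{[I]}$, with the $p$-adic valuations of the multinomial coefficients $b_{\mathcal{O}}$ aligning exactly with the Frobenius twists $F^{v_{I}-r}$ and the Verschiebung filtration on the $\beta_{I}$ as predicted.
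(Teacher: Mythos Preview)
Your approach diverges from the paper's and has a real gap in the place you yourself flag.  The paper does \emph{not} try to match orbits in the formula of Corollary~\ref{cor:Formula} against orbits for the basic operators $\partial^{[I]}$.  Instead it works entirely inside the lifted polynomial ring $\mathcal{A}_{r+1}$: one chooses any lift $\tilde{D}_{j}=\sum_{I}\tilde{\alpha}_{I}\partial^{[I]}$ with $\tilde{\alpha}_{I}\in\mathcal{A}_{r+1}$ (this expansion is ordinary differential-operator theory on $\mathcal{A}_{r+1}$), and then constrains the coefficients by evaluating on the test elements $p^{r-v_{I}}T^{I}\in W_{r+1}(A^{(r)})$.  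Running through the indices in lexicographic order, one forces first $\tilde{\alpha}_{I}\in p^{v_{I}-\mathrm{val}(j)}\mathcal{A}_{r+1}$ whenever $v_{I}>\mathrm{val}(j)$, and then, after a harmless modification of $\tilde{\alpha}_{I}$ not affecting the action on $W_{r+1}(A^{(r)})$, forces $F^{r-v_{I}}(\tilde{\alpha}_{I})\in W_{r+1}(A^{(r)})$.  Translating back to $W_{r+1}(A)$ gives exactly the claimed form $\sum_{I}F^{v_{I}-r}(\beta_{I})\{\partial\}_{I/p^{r}}$ with the filtration condition on $\beta_{I}$.

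By contrast, your proposed regrouping of the expansion $\tilde{D}_{j}(V^{l}(f))=\sum_{\mathcal{O}}V^{\alpha_{\mathcal{O}}}(b_{\mathcal{O}}\prod D_{j_{i}}(f)^{c'_{i}})$ after substituting $D_{j_{i}}=\sum_{I'}\gamma_{I',j_{i}}\partial^{[I']}$ is not carried out and would be genuinely hard: expanding the products $\prod_{i}(\sum_{I'}\gamma_{I',j_{i}}\partial^{[I']}(f))^{c'_{i}}$ and reorganizing the result into the orbit expansions of the individual $\{\partial\}_{I/p^{r}}$ is exactly the combinatorics the paper's test-element method is designed to avoid.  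Your filtration argument also slips: from $\sum c_{i}j_{i}=j$ one gets $\mathrm{val}(c_{i}j_{i})\leq v_{j}$ for some $i$, not the reverse inequality you state, and $b_{\mathcal{O}}$ is by definition an element of $\mathbb{F}_{p}$ (the $p$-free part of the multinomial coefficient), so it carries no factor $p^{\alpha_{\mathcal{O}}-v_{j}}$.  The paper's lexicographic test-element argument handles both the existence of the $\beta_{I}$ and their Verschiebung filtration in one uniform sweep.
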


\begin{proof}
We recall that the operator $\tilde{D}_{j}$ is obtained by choosing
any lift of the Hasse-Schmidt derivation $D$ to a Hasse-Schmidt derivation
$\mathcal{B}_{r+1}\to\mathcal{A}_{r+1}$ and restricting to $W_{r+1}(B^{(r)})$.
If we write 
\[
D_{j}=\sum_{I}\alpha_{I}\partial^{[I]}
\]
for $\alpha_{I}\in A$ (and $\partial^{[I]}=\varphi^{\#}(\partial_{1}^{[i_{1}]}\cdots\partial_{n}^{[i_{n}]})$)
then since $D_{j}$ is a differential operator of order $j\leq p^{r}$
we obtain $|I|\leq p^{r}$ for all $I$ such that $\alpha_{I}\neq0$. 

Now, write
\[
\tilde{D}_{j}=\sum_{I}\tilde{\alpha}_{I}\partial^{[I]}
\]
for $\tilde{\alpha}_{I}\in A_{r+1}$ lifting $\alpha_{I}$. To prove
the lemma, we must show that, after possibly altering $\tilde{\alpha}_{I}$
in a way that does not change the action on $W_{r+1}(A^{(r)})$, we
have $F^{r-v_{I}}(\tilde{\alpha}_{I})\in W_{r+1}(A^{(r)})$ (where
$F:\mathcal{A}_{r+1}\to\mathcal{A}_{r+1}$ is a lift of Frobenius);
and that $\tilde{\alpha}_{I}\in p^{v_{I}-\text{val(j)}}\mathcal{A}_{r+1}$
if $v_{I}>\text{val}(j)$.

Let $r\geq0$. We'll first show that $\tilde{\alpha}_{I}\in p^{v_{I}-\text{val(j)}}\mathcal{A}_{r+1}$
if $v_{I}>\text{val}(j)$. If this does not hold, let $I_{0}$ be
the least element (in the lexicographic ordering) amongst $I$ such
that $\tilde{\alpha}_{I}\notin p^{v_{I}-\text{val(j)}}\mathcal{A}_{r+1}$
and $v_{I}>\text{val}(j)$. 

Then we have $\partial^{[I_{0}]}(p^{r-v_{I_{0}}}T^{I_{0}})=p^{r-v_{I_{0}}}$,
while $\partial^{[J]}(p^{r-v_{I}}T^{I_{0}})=0$ for all other $J$
with $\tilde{\alpha}_{J}\notin p^{v_{J}-\text{val(j)}}\mathcal{A}_{r+1}$
and $v_{J}>\text{val}(j)$. For $K$ such that $v_{K}\leq\text{val}(j)$
we have $\partial^{[K]}(p^{r-v_{I}}T^{I_{0}})\subset V^{r-v_{K}}(W_{r+1}(A^{(r)}))\subset V^{r-\text{val}(j)}(W_{r+1}(A^{(r)}))$
(by \prettyref{prop:Move-Down} applied to the HS derivations $\partial_{m}^{[i]}$).
So, in order for $\tilde{D}_{j}(p^{r-v_{I_{0}}}T^{I_{0}})\subset V^{r-\text{val}(j)}(W_{r+1}(A^{(r)}))$
we must have 
\[
p^{r-v_{I_{0}}}\tilde{\alpha}_{I_{0}}\in p^{r-\text{val}(j)}\mathcal{A}_{r+1}
\]
which yields $\tilde{\alpha}_{I_{0}}\in p^{v_{I_{0}}-\text{val}(j)}\mathcal{A}_{r+1}$
after all. 

Now we check that, after possibly altering $\tilde{\alpha}_{I}$ in
a way that does not change the action on $W_{r+1}(A^{(r)})$, we have
$F^{r-v_{I}}(\tilde{\alpha}_{I})\in W_{r+1}(A^{(r)})$ for each $I$
such that $v_{I}\leq r$. Note that every term of the form $\tilde{\alpha}_{I}\partial^{[I]}$,
for which $F^{r-v_{I}}(\tilde{\alpha}_{I})\in W_{r+1}(A^{(r)})$,
takes $W_{r+1}(B^{(r)})$ to $W_{r+1}(A^{(r)})$ (as explained above
in \prettyref{def:basic-ops}). So let $I_{0}$ be the least element
(in the lexicographic ordering) amongst $I$ such that $F^{r-v_{I}}(\tilde{\alpha}_{I})\notin W_{r+1}(A^{(r)})$.
Then 
\[
\tilde{D}_{j}(p^{r-v_{I_{0}}}T^{I_{0}})=\sum_{I}\tilde{\alpha}_{I}\partial^{[I]}(p^{r-v_{I_{0}}}T^{I_{0}})
\]
\[
=\sum_{\{I|F^{r-v_{I}}(\tilde{\alpha}_{I})\in W_{r+1}(A^{(r)})\}}\tilde{\alpha}_{I}\partial^{[I]}(p^{r-v_{I_{0}}}T^{I_{0}})+\sum_{\{I|F^{r-v_{I}}(\tilde{\alpha}_{I})\notin W_{r+1}(A^{(r)})\}}\tilde{\alpha}_{I}\partial^{[I]}(p^{r-v_{I_{0}}}T^{I_{0}})
\]
\[
=\beta+p^{r-v_{I_{0}}}\tilde{\alpha}_{I_{0}}
\]
where $\beta\in W_{r+1}(A^{(r)})$; the last equality comes from noting
that each $\tilde{\alpha}_{I}\partial^{[I]}(p^{r-v_{I_{0}}}T^{I_{0}})$
in the first sum is contained in $W_{r+1}(A^{(r)})$, and in the second
sum each term is zero except the one coming from $I_{0}$. Since $\tilde{D}_{j}$
itself takes $W_{r+1}(B^{(r)})$ to $W_{r+1}(A^{(r)})$, we see that
$\beta+p^{r-v_{I_{0}}}\tilde{\alpha}_{I_{0}}\in W_{r+1}(A^{(r)})$,
which forces $p^{r-v_{I_{0}}}\tilde{\alpha}_{I_{0}}\in W_{r+1}(A^{(r)})$.
Writing 
\[
p^{r-v_{I_{0}}}\tilde{\alpha}_{I_{0}}=\sum_{i=r-v_{I_{0}}}^{r}p^{i}a^{p^{r-i}}
\]
we obtain 
\[
\tilde{\alpha}_{I_{0}}=\sum_{i=0}^{v_{I_{0}}}p^{i}a^{p^{v_{I_{0}}-i}}+p^{v_{I_{0}}+1}\gamma
\]
for some $\gamma\in\mathcal{A}_{r+1}$. Therefore
\[
F^{r-v_{I_{0}}}(\tilde{\alpha}_{I_{0}})=\sum_{i=0}^{v_{I_{0}}}p^{i}a^{p^{r-i}}+p^{v_{I_{0}}+1}F^{r-v_{I_{0}}}(\gamma)
\]
And since $\partial^{[I_{0}]}(W_{r+1}(B^{(r)}))\subset V^{r-v_{I_{0}}}(W_{r+1}(A^{(r)}))$,
we see that $p^{v_{I_{0}}+1}\gamma\partial^{[I_{0}]}$ annihilates
$W_{r+1}(B^{(r)})$. Therefore, altering $\tilde{\alpha}_{I_{0}}$
to $\tilde{\alpha}_{I_{0}}-p^{v_{I_{0}}+1}\gamma$, we have that $F^{r-v_{I_{0}}}(\tilde{\alpha}_{I_{0}})\in W_{r+1}(A^{(r)})$.
Continuing up the lexicographic ordering in this way, we obtain the
result. 
\end{proof}
Next we have 
\begin{lem}
\label{lem:Reps-of-phi}Every element $\phi\in\mathcal{E}W_{r+1}^{(0)}(B;A)$
admits a representation 
\[
\phi=\sum_{I}F^{v_{I}-r}(\alpha_{I})\{\partial\}_{I/p^{r}}
\]
where $\alpha_{I}\in W_{r+1}(A)$, $I=(i_{1},\dots,i_{n})\in\mathbb{N}^{n}\backslash\{(0,\dots,0)\}$,
with ${\displaystyle \sum_{l=1}^{n}i_{l}\leq p^{r}}$; and, as above,
$v_{I}=\text{min}\{\text{val}(i_{l})\}$. Furthermore, 
\[
\sum_{I}F^{v_{I}-r}(\alpha_{I})\{\partial\}_{I/p^{r}}=\sum_{I}F^{v_{I}-r}(\alpha'_{I})\{\partial\}_{I/p^{r}}
\]
iff $\alpha'_{I}-\alpha_{I}\in V^{v_{I}+1}(W_{r+1}(A))$ for all $I$. 
\end{lem}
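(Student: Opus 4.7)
For the existence part, I would start from the definition: any $\phi \in \mathcal{E}W^{(0)}_{r+1}(B;A)$ is by definition a finite $W_{r+1}(A)$-linear combination $\phi = \sum_s c_s F^{\text{val}(i_s)-r}(a_s)\tilde{D}^{(s)}_{i_s}$ of elementary Witt-differential operators, with $\tilde{D}^{(s)}$ a canonical Hasse--Schmidt derivation and $1\le i_s\le p^r$. The first step is to absorb each $c_s\in W_{r+1}(A)$ into the coefficient: on $V^{r-\text{val}(i_s)}(W_{r+1}(A))$, one checks directly from the formula $V^j(a\cdot b) = F^{-j}(a)\cdot b$ that $c \cdot F^{\text{val}(i_s)-r}(a_s) = F^{\text{val}(i_s)-r}(F^{r-\text{val}(i_s)}(c_s)\, a_s)$, so we may assume $c_s = 1$. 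Next, apply Lemma \ref{lem:Reps-of-HS} to expand each $\tilde{D}^{(s)}_{i_s} = \sum_I F^{v_I - r}(\beta^{(s)}_I)\{\partial\}_{I/p^r}$, noting that when $v_I\ge\text{val}(i_s)$ we have $\beta^{(s)}_I \in V^{v_I-\text{val}(i_s)}(W_{r+1}(A))$. The last step is the composition: writing $\beta^{(s)}_I = V^{v_I-\text{val}(i_s)}(\tilde\beta^{(s)}_I)$ and working on $V^{r-v_I}(W_{r+1}(A))$, one verifies by direct calculation with Verschiebung identities that $F^{\text{val}(i_s)-r}(a_s)\cdot F^{v_I-r}(\beta^{(s)}_I) = F^{v_I-r}(V^{v_I-\text{val}(i_s)}(a_s\tilde\beta^{(s)}_I))$. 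Summing over $s$ and collecting like $I$'s yields the required representation with $\alpha_I = \sum_s V^{v_I-\text{val}(i_s)}(a_s\tilde\beta^{(s)}_I)$.

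For the uniqueness, it is enough (by linearity) to show that if $\sum_I F^{v_I-r}(\alpha_I)\{\partial\}_{I/p^r} = 0$ on $W_{r+1}(B)$, then each $\alpha_I \in V^{v_I+1}(W_{r+1}(A))$. The ``if'' direction is immediate: for $\alpha\in V^{v+1}(W_{r+1}(A))$ and any $y\in W_{r+1}(A)$, one has $F^{v-r}(\alpha)\cdot V^{r-v}(y) = V^{r-v}(\alpha y) \in V^{r+1}(W_{r+1}(A)) = 0$. For the ``only if'' direction, I would argue by induction on $I$ in the lexicographic order on $\mathbb N^n$. For the least $I^*$ with $\alpha_{I^*}\notin V^{v_{I^*}+1}$, test the vanishing operator on the Teichm\"uller lift $\tilde T^{I^*}\in W_{r+1}(B)$, which under the embedding $W_{r+1}(B^{(r)})\hookrightarrow \mathcal{B}_{r+1}$ of Lemma \ref{lem:Basic-Presentation-of-Witt} corresponds to $T^{I^*p^r}$. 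Any lift of $\partial^{[J]}$ to $\mathcal{B}_{r+1}$ gives $\{\partial\}_{J/p^r}(\tilde T^{I^*}) = \binom{I^*p^r}{J}T^{I^*p^r-J}$; in particular the term with $J = I^*$ contributes $\binom{I^*p^r}{I^*}T^{I^*(p^r-1)}$, and a Kummer's-theorem computation shows $\text{val}_p \binom{I^*p^r}{I^*} = r - v_{I^*}$, so this term equals $V^{r-v_{I^*}}(u_{I^*}\cdot T^{I^*(p^r-1)})$ with $u_{I^*}$ a unit. The contribution of any $J$ not lying entrywise below $I^*p^r$ vanishes, and by inductive control of $\alpha_J$ for $J$ smaller than $I^*$ one separates the leading contribution to isolate $\alpha_{I^*}\cdot u_{I^*}T^{I^*(p^r-1)}$ modulo $V^{v_{I^*}+1}$, forcing $\alpha_{I^*}\in V^{v_{I^*}+1}(W_{r+1}(A))$, a contradiction.

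The main obstacle is the bookkeeping in the uniqueness argument: outputs of distinct terms $F^{v_I-r}(\alpha_I)\{\partial\}_{I/p^r}$ live in different strata $V^{r-v_I}(W_{r+1}(A))$ of the filtration, so one cannot extract a single $\alpha_{I^*}$ by a naive projection. The key technical input is the carry-count for $\binom{I^*p^r}{I^*}$, together with the interaction between lex-smaller and lex-larger multi-indices on a chosen test monomial. Once one establishes the sharp valuation $\text{val}_p \binom{I^*p^r}{I^*} = r - v_{I^*}$ (so that the leading coefficient is \emph{not} further absorbed into $V^{r-v_{I^*}+1}$) and identifies the correct ordering-compatible test elements, the induction should close cleanly.
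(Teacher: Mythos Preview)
Your existence argument is essentially the paper's, though you silently restrict to the case $v_I \geq \text{val}(i_s)$ when you write $\beta^{(s)}_I = V^{v_I - \text{val}(i_s)}(\tilde\beta^{(s)}_I)$; the complementary case $v_I < \text{val}(i_s)$ needs a separate (easier) manipulation, since then $F^{\text{val}(i_s)-r}(a_s)$ can simply be rewritten as $F^{v_I - r}(F^{\text{val}(i_s)-v_I}(a_s))$ and absorbed.

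The uniqueness argument, however, has a genuine gap: your claimed identity $\text{val}_p\binom{I^* p^r}{I^*} = r - v_{I^*}$ is false. For a single variable with $I^* = (p)$ one has $\binom{p^{r+1}}{p}$, and Kummer's theorem (or direct computation of $\text{val}_p(p^{r+1}(p^{r+1}-1)\cdots(p^{r+1}-p+1)/p!)$) gives valuation $r$, not $r-1$. For a multi-index like $I^* = (1,1)$ you get $\binom{p^r}{1}^2 = p^{2r}$, valuation $2r$, not $r$. In general the valuation is at least $r - v_{I^*}$ but often strictly larger, so the term you want to isolate may vanish. Worse, your test element $T^{I^* p^r}$ has very large exponents, so many lex-larger $J$ still satisfy $J \leq I^* p^r$ componentwise and hence contribute nontrivially to $\partial^{[J]}(T^{I^*p^r})$; your remark about ``inductive control of $\alpha_J$ for $J$ smaller than $I^*$'' does not help with these.

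The paper's fix is to use a much smaller test element: in the embedded picture $W_{r+1}(B^{(r)}) \subset \mathcal{B}_{r+1}$, evaluate on $p^{r - v_{I_1}} T^{I_1}$ where $I_1$ is the lex-least index with nonzero coefficient. Then $\partial^{[I_1]}(T^{I_1}) = 1$ exactly, with no binomial coefficient to control, and any $I$ contributing must satisfy $I \leq I_1$ componentwise, hence $I \leq I_1$ in lex, hence $I = I_1$ by minimality. This gives $p^{r-v_{I_1}}\tilde\beta_{I_1} = 0$ in $\mathcal{A}_{r+1}$ immediately.
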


\begin{proof}
In the previous lemma we showed that such a representation exists
for $\phi$ if $\phi$ is itself a canonical Hasse-Schmidt derivation;
more precisely, if $\phi=\tilde{D}_{j}$ for some $j\leq p^{r}$ then
\[
\phi=\sum_{I}F^{v_{I}-r}(\beta_{I})\{\partial\}_{I/p^{r}}
\]
where the sum ranges over $I$ such that ${\displaystyle \sum_{l=1}^{n}i_{l}\leq j}$,
and we have $\beta_{I}\in V^{v_{I}-\text{val}(j)}(W_{r+1}(A^{(r)}))$
whenever $v_{I}>\text{val}(j)$. We must show that a similar representation
exists for operators of the form $F^{\text{val}(j)-r}(\alpha)\cdot\phi$
for some $\alpha\in W_{r+1}(A)$. We write 
\[
F^{\text{val}(j)-r}(\alpha)\cdot\phi
\]
\[
=\sum_{\{I|v_{I}\leq\text{val}(j)\}}F^{\text{val}(j)-r}(\alpha)F^{v_{I}-r}(\beta_{I})\{\partial\}_{I/p^{r}}+\sum_{\{I|v_{I}>\text{val}(j)\}}F^{\text{val}(j)-r}(\alpha)F^{v_{I}-r}(\beta_{I})\{\partial\}_{I/p^{r}}
\]
\[
=\sum_{\{I|v_{I}\leq\text{val}(j)\}}F^{\text{val}(j)-v_{I}}(\alpha)\cdot F^{v_{I}-r}(\alpha\beta_{I})\{\partial\}_{I/p^{r}}+\sum_{\{I|v_{I}>\text{val}(j)\}}F^{\text{val}(j)-r}(\alpha)F^{v_{I}-r}(\beta_{I})\{\partial\}_{I/p^{r}}
\]
Now, the first sum is already in the required form since we may write
\[
F^{\text{val}(j)-v_{I}}(\alpha)\cdot F^{v_{I}-r}(\alpha\beta_{I})=F^{v_{I}-r}(F^{r-\text{val}(j)-2v_{I}}(\alpha)\cdot\beta_{I})
\]
and $r-\text{val}(j)-2v_{I}=(r-v_{I})+(\text{val}(j)-v_{I})\geq0$. 

On the other hand, for $I$ such that $v_{I}>\text{val}(j)$, we may,
by the previous lemma, write $\beta_{I}=V^{v_{I}-\text{val}(j)}(\beta'_{I})$
so that 
\[
\sum_{\{I|v_{I}>\text{val}(j)\}}F^{\text{val}(j)-r}(\alpha)F^{v_{I}-r}(\beta_{I})\{\partial\}_{I/p^{r}}
\]
\[
=\sum_{\{I|v_{I}>\text{val}(j)\}}F^{\text{val}(j)-r}(\alpha)F^{v_{I}-r}(V^{v_{I}-\text{val}(j)}(\beta'_{I}))\{\partial\}_{I/p^{r}}
\]
\[
=\sum_{\{I|v_{I}>\text{val}(j)\}}F^{v_{I}-r}(V^{v_{I}-\text{val}(j)}(\alpha\beta'_{I}))\{\partial\}_{I/p^{r}}
\]
which is a sum of the required form; therefore, the claimed representation
exists. 

Now we deal with the uniqueness. First note that, if $\alpha_{I}'-\alpha_{I}\in V^{v_{I}+1}(W_{r+1}(A))$,
then, since the operator $\{\partial\}_{I/p^{r}}$ takes $W_{r+1}(B)$
into $V^{r-v_{I}}(W_{r+1}(A))$ (by \prettyref{prop:Move-Down}),
we have that $F^{v_{I}-r}(\alpha'_{I}-\alpha_{I})\cdot\{\partial\}_{I/p^{r}}$
acts as $0$. So we must show the other implication; namely, that
if an operator 
\[
\sum_{I}F^{v_{I}-r}(\beta_{I})\{\partial\}_{I/p^{r}}
\]
acts as $0$, then $\beta_{I}\in V^{v_{I}+1}(W_{r+1}(A))$. This is
equivalent to the assertion that, if a differential operator
\[
\sum_{I}\tilde{\beta}_{I}\partial^{[I]}
\]
(with $F^{r-v_{I}}(\tilde{\beta}_{I})\in W_{r+1}(A^{(r)})$) is zero
on $W_{r+1}(B^{(r)})$, then 
\[
F^{r-v_{I}}(\tilde{\beta}_{I})\in V^{v_{I}+1}(W_{r+1}(A^{(r)}))
\]
for all $I$; we shall in fact show that $\tilde{\beta}_{I}\in p^{v_{I}+1}A_{r+1}$,
which immediately implies the above statement.

Now, for any $I$ we may consider $p^{r-v_{I}}T^{I}\in W_{r+1}(A^{(r)})$.
If $I_{1}$ is the least (in the lexicographic ordering) index such
that $\tilde{\beta}_{I_{1}}\neq0$, we have
\[
0=(\sum_{I}\tilde{\beta}_{I}\partial^{[I]})(p^{r-v_{I_{1}}}T^{I_{1}})=p^{r-v_{I_{1}}}\tilde{\beta}_{I_{1}}
\]
which implies $\tilde{\beta}_{I_{1}}\in p^{v_{I}+1}A_{r+1}$ as claimed.
Further, by the forward implication, this implies that the term $\tilde{\beta}_{I_{1}}\partial^{[I_{1}]}$
acts as zero on $W_{r+1}(B^{(r)})$ , we see that the operator ${\displaystyle \sum_{I>I_{1}}\tilde{\beta}_{I}\partial^{(I)}}$
acts as zero on $W_{r+1}(B^{(r)})$ as well. Thus we may replace ${\displaystyle \sum_{I}\tilde{\beta}_{I}\partial^{(I)}}$
by ${\displaystyle \sum_{I>I_{1}}\tilde{\beta}_{I}\partial^{(I)}}$
and run the argument again; continuing in this way implies the result. 
\end{proof}
From this, we deduce 
\begin{cor}
\label{cor:Reps-of-phi-redux}Every element $\phi\in\mathcal{E}W^{(0)}(B;A)$
may be written as 
\[
\phi=\sum_{r=0}^{\infty}\sum_{I}F^{-r}(\alpha_{I})\{\partial\}_{I/p^{r}}
\]
where $\alpha_{I}\in W(A)$, $I=(i_{1},\dots,i_{n})\in\mathbb{N}^{n}\backslash\{(0,\dots,0)\}$,
with ${\displaystyle \sum_{l=1}^{n}i_{l}\leq p^{r}}$, and $v_{I}=0$
for all $I$. Furthermore, this representation is unique. 
\end{cor}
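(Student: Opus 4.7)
My plan is to apply Lemma \prettyref{lem:Reps-of-phi} at each finite truncation level $W_{N+1}(A)$ and then assemble the resulting truncated representations into a single representation over $W(A)=\varprojlim_{N}W_{N+1}(A)$, using the bijection between multi-indices $K$ with $v_{K}=s$ and pairs $(r,I)=(N-s,K/p^{s})$ with $v_{I}=0$.

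For existence, write $\phi=(\phi_{N+1})_{N}$ under the inverse limit description of $\mathcal{E}W^{(0)}(B;A)$. For each $N$, the previous lemma produces
\[
\phi_{N+1}=\sum_{K}F^{v_{K}-N}(\alpha_{K}^{(N+1)})\{\partial\}_{K/p^{N}},
\]
and the change of variables $K=p^{s}I$ with $s=v_{K}$, $v_{I}=0$, $r=N-s$ converts this into a double sum over $0\leq r\leq N$ and nonzero multi-indices $I$ with $v_{I}=0$, $|I|\leq p^{r}$, with coefficient $\alpha_{r,I}^{(N+1)}:=\alpha_{p^{N-r}I}^{(N+1)}$. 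Using that the restriction map $R$ intertwines the canonical Hasse-Schmidt derivations (\prettyref{prop:Move-Down}) and that the $r=N+1$ term at level $N+2$ has image in $V^{N+1}(W_{N+2}(A))$, hence projects to zero in $W_{N+1}(A)$, the uniqueness half of Lemma \prettyref{lem:Reps-of-phi} yields
\[
R(\alpha_{r,I}^{(N+2)})\equiv\alpha_{r,I}^{(N+1)}\pmod{V^{N-r+1}(W_{N+1}(A))}
\]
for each fixed $(r,I)$ with $r\leq N$.

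This approximate compatibility must then be upgraded to exact compatibility, which is the one nontrivial step: one proceeds by induction on $N$, modifying $\alpha_{r,I}^{(N+2)}$ within its $V^{(N+1)-r+1}$-ambiguity class so that $R(\alpha_{r,I}^{(N+2)})$ equals $\alpha_{r,I}^{(N+1)}$ on the nose. The required lifts exist because $R\colon V^{k}(W_{N+2}(A))\to V^{k}(W_{N+1}(A))$ is surjective. The resulting compatible sequence defines $\alpha_{r,I}\in W(A)$, and the double sum $\sum_{r,I}F^{-r}(\alpha_{r,I})\{\partial\}_{I/p^{r}}$ equals $\phi$ since at each level $N+1$ only the (finitely many) terms with $r\leq N$ survive, and they reconstitute $\phi_{N+1}$.

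Uniqueness is simpler: if $\sum_{r,I}F^{-r}(\alpha_{r,I})\{\partial\}_{I/p^{r}}=0$, then for each $N$ the partial sum at level $N+1$ vanishes, and reindexing back to the form of Lemma \prettyref{lem:Reps-of-phi} forces the image of $\alpha_{r,I}$ in $W_{N+1}(A)$ to lie in $V^{N-r+1}(W_{N+1}(A))$; letting $N\to\infty$ with $(r,I)$ fixed gives $\alpha_{r,I}=0$ in $W(A)$. The main obstacle is the exact-compatibility step in existence: because the ambiguity in Lemma \prettyref{lem:Reps-of-phi} depends on $v_{K}$, a naive inverse limit only produces coefficients modulo a shrinking (but nontrivial at each finite stage) ambiguity, and one needs surjectivity of $R$ on each step of the Verschiebung filtration to promote this data to genuine elements of $W(A)$.
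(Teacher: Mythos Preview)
Your proof is correct and follows essentially the same route as the paper's. The paper streamlines your exact-compatibility step by observing directly that $\varprojlim_{N} W_{N+1}(A)/V^{N-r+1}(W_{N+1}(A))\cong W(A)$, so the well-defined coset of each $\alpha_{r,I}^{(N+1)}$ modulo its ambiguity already assembles into an element of $W(A)$ without any need to choose exactly compatible lifts.
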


\begin{proof}
By definition 
\[
\mathcal{E}W^{(0)}(B;A)=\lim_{r}\mathcal{E}W_{r+1}^{(0)}(B;A)
\]
and, if $\phi_{r}$ is the image of $\phi$ in $\mathcal{E}W_{r+1}^{(0)}(B;A)$,
then, by slightly rewriting the previous lemma, we have an expression
\[
\phi_{r}=\sum_{r'=0}^{r}\sum_{I}F^{-r'}(\alpha_{I,r})\{\partial\}_{I/p^{r'}}
\]
with $v_{I}=0$ for all $I$ (in this expression, terms of the form
$F^{-r'}(\alpha_{I,r})\{\partial\}_{I/p^{r'}}$ correspond to terms
with $v_{I}=r-r'$ in the statement of the previous lemma). 

Thus, by the uniqueness statement of the previous lemma, the image
\[
\overline{\alpha_{I,r}}\in W_{r+1}(A)/V^{r-r'+1}(W_{r+1}(A))
\]
is well-defined. So, fixing an index $I/p^{r'}$, we obtain an element
\[
\alpha_{I}:=\lim_{r}\overline{\alpha}_{I,r}\in\lim_{r}W_{r+1}(A)/V^{r-r'+1}(W_{r+1}(A))\tilde{=}W(A)
\]
so that the action of ${\displaystyle \sum_{r=0}^{\infty}\sum_{I}F^{v_{I}-r}(\alpha_{I})\{\partial\}_{I/p^{r}}}$
agrees with $\phi$ on $W(A)$. 

To see that this representation is unique, suppose that a term of
the form ${\displaystyle \sum_{r=0}^{\infty}\sum_{I}F^{v_{I}-r}(\alpha_{I})\{\partial\}_{I/p^{r}}}$
acts as zero on $W(A)$. Consider a term $F^{v_{I}-r'}(\alpha_{I})\{\partial\}_{I/p^{r'}}$
for some index $r'$. Then, for each $r\geq r'$, the uniqueness statement
of the previous lemma implies that $\alpha_{I}\in V^{r-r'+1}(W_{r+1}(A))$.
Since this is true for all $r$, we see $\alpha_{I}=0$ as claimed. 
\end{proof}
Now we can deduce from this the analogous presentation for $\mathcal{E}W^{(m)}(B;A)$: 
\begin{cor}
\label{cor:Reps-of-phi-for-m}Let $m\in\mathbb{Z}$. Every element
$\phi\in\mathcal{E}W^{(m)}(B;A)$ may be written as 
\[
\phi=\sum_{r=0}^{\infty}\sum_{I}F^{-r}(\alpha_{I})\{\partial\}_{I/p^{r}}+\sum_{J}\alpha_{J}\{\partial\}_{J}
\]
where, in the first sum, $\alpha_{I}\in W(A)$, $I=(i_{1},\dots,i_{n})\in\mathbb{N}^{n}\backslash\{(0,\dots,0)\}$,
with ${\displaystyle \sum_{l=1}^{n}i_{l}\leq p^{r}}$, and $v_{I}=0$
for all $I$, and, in the second sum, we have $\alpha_{J}\in W(A)$
and ${\displaystyle \sum_{l=1}^{n}j_{l}\leq p^{m}}$. Furthermore,
this representation is unique. 
\end{cor}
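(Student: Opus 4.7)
By $W(A)$-linearity and the definition of $\mathcal{E}W^{(m)}(B;A)$ as the $W(A)$-module generated by elementary Witt-differential operators, it suffices to produce a representation of the stated form for a single elementary operator $\phi_0=F^{\mathrm{val}(j)-r}(a)\cdot\tilde D_j$ with $0<j\le p^{r+m}$ and $a\in W_{r+1}(A)$; an inverse limit over the truncation parameter, carried out exactly as in the passage from \prettyref{lem:Reps-of-phi} to \prettyref{cor:Reps-of-phi-redux}, then delivers the general statement. For $m\le 0$ the inequality $j\le p^{r+m}\le p^r$ makes $\phi_0$ an elementary level-$0$ operator and \prettyref{cor:Reps-of-phi-redux} applies verbatim, the second sum being empty by convention.

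Assume $m\ge 1$. The first step is to upgrade \prettyref{lem:Reps-of-HS} by allowing HS derivations of length up to $p^{r+m}$. The lexicographic reduction in that lemma only used the bound $|I|\le j$ on the multi-index of the underlying divided-power differential operator $\sum_I\tilde\alpha_I\partial^{[I]}$, never the specific inequality $j\le p^r$; the same argument therefore produces
\[
\tilde D_j \;=\;\sum_{|I|\le p^{r+m}}F^{v_I-r}(\beta_I)\cdot\{\partial\}_{I/p^r},
\]
with $\beta_I\in V^{v_I-\mathrm{val}(j)}(W_{r+1}(A))$ whenever $v_I\ge\mathrm{val}(j)$. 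After multiplying by the outer factor $F^{\mathrm{val}(j)-r}(a)$ as in \prettyref{lem:Reps-of-phi}, I split the sum according to whether $|I|\le p^r$ or $p^r<|I|\le p^{r+m}$: the first part is absorbed into the first sum of the statement via \prettyref{cor:Reps-of-phi-redux}.

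For the second part, decompose each index uniquely as $I=p^rK+I'$ with $K\in\mathbb{N}^n$, $1\le|K|\le p^m$, and $0\le i'_l<p^r$ for each $l$. The iteration rule $\partial_l^{[c]}\partial_l^{[d]}=\binom{c+d}{c}\partial_l^{[c+d]}$, applied coordinatewise at every truncation level $r'\ge r$, yields
\[
\{\partial\}_{I/p^r}\;=\;\mu_I\cdot\{\partial\}_K\cdot\{\partial\}_{I'/p^r},
\]
where $\mu_I$ is the inverse of the product of binomial coefficients $\prod_l\binom{k_lp^{r'}+i'_lp^{r'-r}}{k_lp^{r'}}$. Kummer's theorem shows these binomials are $p$-adic units (the base-$p$ supports of $k_lp^{r'}$ and $i'_lp^{r'-r}$ are disjoint, so the addition is carry-free), hence $\mu_I$ lifts to a unit at each finite level. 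The factor $\{\partial\}_K$ with $1\le|K|\le p^m$ matches the template of the second sum; the factor $\{\partial\}_{I'/p^r}$, once $\mu_I$ and the outer Frobenius twist are absorbed into its coefficient, contributes to the first sum. A final inverse limit produces the claimed representation.

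For uniqueness, one follows the scheme of \prettyref{lem:Reps-of-phi} and \prettyref{cor:Reps-of-phi-redux}. The key new observation is that the operators $\{\partial\}_J$ of the second sum carry $v_J\ge 1$ in general (for instance the high-order term $\{\partial_l\}_{p^m}$ has $v_J=m$), while the first sum uses only indices $I$ with $v_I=0$; hence no term of the second sum coincides with any term of the first. Testing a putative zero representation on monomials $T^J\in W_{r+1}(B)$ therefore separates the two sums, forcing all $\alpha_J=0$ by a straightforward divided-power calculation; the uniqueness of the first sum is then \prettyref{cor:Reps-of-phi-redux}. The main obstacle is the decomposition step: one must verify that $\mu_I$ is a $p$-adic unit (so that the rewriting produces coefficients in $W(A)$ rather than $W(A)[p^{-1}]$) and that absorbing $\mu_I$ together with the outer Frobenius twist preserves the $v_I=0$ normalization on the first sum.
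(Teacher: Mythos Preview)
Your strategy diverges from the paper's. The paper uses Frobenius conjugation: from \prettyref{prop:HS-and-F} one has $\tilde D_j F^m = F^m\tilde D_{j/p^m}$, so any level-$m$ elementary operator $\phi$ satisfies $\phi\circ F^m = F^m\circ\psi$ with $\psi\in\mathcal{E}W^{(0)}(B;A)$; applying \prettyref{lem:Reps-of-phi} to $\psi$ and conjugating back sends $\{\partial\}_{I/p^{r'}}\mapsto\{\partial\}_{I/p^{r'-m}}$ and $F^{-r'}(\alpha)\mapsto F^{m-r'}(\alpha)$. The terms with $r'<m$ land in the second sum (the index $I/p^{r'-m}=p^{m-r'}I$ is integral, of total size $\le p^m$), and those with $r'\ge m$ re-index into the first sum. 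This is a one-step reduction to the level-$0$ case already proved.

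Your argument has a genuine gap at the decomposition step. You factor $\{\partial\}_{I/p^r}=\mu_I\,\{\partial\}_K\,\{\partial\}_{I'/p^r}$ and then assert that the factor $\{\partial\}_K$ goes to the second sum while $\{\partial\}_{I'/p^r}$ (with a suitable coefficient) goes to the first. But these pieces are \emph{multiplied}, not added: the corollary asks for an expression $\sum(\text{coeff})\{\partial\}_{I/p^r}+\sum(\text{coeff})\{\partial\}_J$, a sum of single monomial operators with scalar coefficients in $W(A)$. A term $(\text{coeff})\,\{\partial\}_K\,\{\partial\}_{I'/p^r}$ with both $K\neq 0$ and $I'\neq 0$ lies in neither summand, and undoing the factorization via the iterative rule just returns you to $\{\partial\}_{I/p^r}$. (You may be conflating the target form here with the product form in \prettyref{thm:Basis}.) Your uniqueness sketch also rests on the claim that every index $J$ in the second sum has $v_J\ge 1$; this is false, since for instance a unit vector $J=e_l$ has $v_J=0$ and $|J|=1\le p^m$, so the two sums are not separated by the valuation of the index alone.
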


\begin{proof}
By \prettyref{prop:HS-and-F} we have $\tilde{D}_{p^{m}j}F^{m}=F^{m}\tilde{D}_{j}$
for any canonical Hasse-Schmidt derivation from $W_{r+1}(B)$ to $W_{r+1}(A)$.
Therefore if $\phi$ is a canonical Hasse-Schmidt derivation of length
$\leq p^{m}$, we have $\phi F^{m}=F^{m}\psi$ where $\psi\in\mathcal{E}W_{r+1}^{(0)}(B;A)$.
So, by \prettyref{lem:Reps-of-phi} we have 
\[
\phi\circ F^{m}=F^{m}(\sum_{I}F^{v_{I}-r}(\alpha_{I})\{\partial\}_{I/p^{r}})=\sum_{I}F^{m+v_{I}-r}(\alpha_{I})\{\partial\}_{p^{m}I/p^{r}}\circ F^{m}
\]
where the notation is as in loc cit. So for an arbitrary element of
the form ${\displaystyle \sum_{i}F^{m_{i}}(\alpha_{i})\phi_{i}\in}\mathcal{E}W_{r+1}^{(m)}(B;A)$
(where $\phi_{i}$ is the $i$th component of a canonical Hasse-Schmidt
derivation, and $m_{i}=\text{min}\{0,\text{val}(i)-r\}$) we have
\[
\sum_{i}F^{m_{i}}(\alpha_{i})\phi_{i}\circ F^{m}=\sum_{i}\sum_{I}F^{m_{i}}(\alpha_{i})\cdot F^{m+v_{I}-r}(\alpha_{I,i})\{\partial\}_{p^{m}I/p^{r}}\circ F^{m}
\]
As in the proof of \prettyref{lem:Reps-of-phi}, taking the inverse
limit and re-indexing yields an expression 
\[
\phi\circ F^{m}=(\sum_{r=0}^{\infty}\sum_{I}F^{-r}(\alpha_{I})\{\partial\}_{I/p^{r}}+\sum_{J}\alpha_{J}\{\partial\}_{J})\circ F^{m}
\]
for $\phi\in\mathcal{E}W^{(m)}(B;A)$; this easily implies the result
(as $F^{m}$ is an isomorphism on $W(A)[p^{-1}]$). 
\end{proof}
Now we turn to proving the full statement of \prettyref{thm:Basis}.
To do this, we need to also analyze products. We start with the elementary 
\begin{lem}
\label{lem:products}Inside $\mathcal{D}_{W(A)}$ for any $i\geq1$,
$r\geq0$, we have 
\[
\{\partial_{l}\}_{1/p^{r}}^{i}=u\cdot i!\{\partial_{l}\}_{i/p^{r}}
\]
where $u\in\mathbb{Z}_{p}$ is a unit (which depends on $r$ and $i$).
In particular, for $I=(i_{1},\dots,i_{n})$ with each $i_{j}<p^{r}$
we have 
\[
\{\partial\}_{I/p^{r}}=u'\prod_{l,r'}\{\partial_{l}\}_{1/p^{r'}}^{i_{r'}}
\]
where $r'\in\{1\dots,r-1\}$, $0\leq i_{r'}<p$, and $u'$ is a unit
in $\mathbb{Z}_{p}$. 
\end{lem}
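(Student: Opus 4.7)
The plan is to compute both sides as concrete operators on each $W_{r'+1}(B)$ for $r' \geq r$ and then promote the resulting identity to $\mathcal{D}_{W(A)}$ by appealing to the uniqueness in \prettyref{lem:Reps-of-phi}. The key input to have in hand is the iterativity of the canonical lifts: for any integers $j,k \geq 0$,
\[
\{\partial_l\}_{j}\{\partial_l\}_{k} = \binom{j+k}{j}\{\partial_l\}_{j+k}.
\]
This is where I would begin. By \prettyref{thm:Canonical-H-S}, the canonical lift is independent of the chosen Hasse--Schmidt lift to $\mathcal{A}_{r+1}$; pulling back the standard iterative Hasse--Schmidt derivations on $W_{r+1}(k)[T_1,\dots,T_n]$ via the étale map from $k[T_1,\dots,T_n]$ produces iterative lifts on $\mathcal{B}_{r+1}$, and restricting them to $W_{r+1}(B^{(r)})$ transports iterativity to the canonical family.

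With iterativity established, the first identity follows from the observation that on $W_{r'+1}(B)$, $\{\partial_l\}_{1/p^r}$ acts simply as $\partial_l^{[p^{r'-r}]}$ (the coefficient is $1$, so there is no Frobenius twist). Applying iterativity $i$ times gives
\[
\{\partial_l\}_{1/p^r}^i = \frac{(ip^{r'-r})!}{(p^{r'-r})!^i}\,\partial_l^{[ip^{r'-r}]} = \frac{(ip^{r'-r})!}{(p^{r'-r})!^i}\,\{\partial_l\}_{i/p^r}
\]
on $W_{r'+1}(B)$. Legendre's formula $v_p(n!) = (n - s_p(n))/(p-1)$, together with $s_p(ip^{r'-r}) = s_p(i)$ and $s_p(p^{r'-r}) = 1$, yields
\[
v_p\!\left(\frac{(ip^{r'-r})!}{(p^{r'-r})!^i}\right) = \frac{i - s_p(i)}{p-1} = v_p(i!),
\]
so the constant equals $u_{r'} \cdot i!$ for some $u_{r'} \in \mathbb{Z}_p^{\times}$. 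The uniqueness statement of \prettyref{lem:Reps-of-phi} then shows that these $u_{r'}$ assemble into a single $p$-adic unit $u \in \mathbb{Z}_p$ for which the desired equation holds in $\mathcal{D}_{W(A)}$.

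For the second statement I would proceed by base-$p$ expansion. Since the $\{\partial_l\}$ for different $l$ commute (their lifts to $\mathcal{A}_{r+1}$ are mutually commuting derivations), $\{\partial\}_{I/p^r} = \prod_l \{\partial_l\}_{i_l/p^r}$. For each $l$, write $i_l = \sum_{a=0}^{r-1} c_{l,a}\,p^a$ in base $p$ with $0 \leq c_{l,a} < p$. Iteratively applying iterativity to the product $\prod_a \partial_l^{[c_{l,a}p^{r'-r+a}]}$ introduces a multinomial coefficient; because the base-$p$ digits of the exponents $c_{l,a}p^{r'-r+a}$ never carry (each $c_{l,a}<p$ and they occupy disjoint digit positions), this multinomial is a $p$-adic unit, and one concludes $\{\partial_l\}_{i_l/p^r}$ equals $\prod_a \{\partial_l\}_{c_{l,a}/p^{r-a}}$ up to a unit. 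Finally, applying the first statement to each $\{\partial_l\}_{c_{l,a}/p^{r-a}}$ (and using that $c_{l,a}! \in \mathbb{Z}_p^{\times}$ since $c_{l,a}<p$) rewrites this as $\prod_a \{\partial_l\}_{1/p^{r-a}}^{c_{l,a}}$ up to a unit, giving the claimed decomposition.

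The step I expect to be the main obstacle is establishing iterativity cleanly: the canonical lifts are characterized by a uniqueness property rather than an explicit multiplicative formula, so one must invest some care in choosing compatible étale lifts of Frobenius-free iterative Hasse--Schmidt systems on $\mathcal{A}_{r+1}$ before the $p$-adic bookkeeping above becomes routine.
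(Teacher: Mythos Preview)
Your argument is correct and follows essentially the same route as the paper: compute at each finite level $r'\geq r$ using iterativity of the lifted divided-power derivations, identify the scalar as $(\text{unit})\cdot i!$, and then pass to the inverse limit. Your use of Legendre's formula to evaluate $v_p\bigl((ip^{r'-r})!/(p^{r'-r})!^{\,i}\bigr)$ is equivalent to the paper's term-by-term computation $\text{val}\binom{jp^{r'-r}}{p^{r'-r}}=\text{val}(j)$; both yield $v_p(i!)$.

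One point deserves slightly more care. The uniqueness clause of \prettyref{lem:Reps-of-phi} is stated at a \emph{fixed} level and does not by itself exhibit compatibility of the units $u_{r'}$ across levels. What makes the assembly work is that both $\{\partial_l\}_{1/p^r}^{\,i}$ and $\{\partial_l\}_{i/p^r}$ are, by construction (\prettyref{def:basic-ops} via \prettyref{prop:Move-Down}), compatible with the restriction maps $R$; restricting the level-$(r'+1)$ identity to level $r'$ and \emph{then} applying the uniqueness of \prettyref{lem:Reps-of-phi} forces $u_{r'+1}\equiv u_{r'}$ modulo a power of $p$ growing with $r'$. The paper packages exactly this compatibility as the binomial congruence of \prettyref{claim:basic-id}. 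Once you make that step explicit, your proof and the paper's coincide.
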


\begin{proof}
Let $r'\geq r$. Then $\{\partial_{l}\}_{i/p^{r}}$ acts on $W_{r'+1}(A)$
via the action of $\partial_{l}^{[ip^{r'-r}]}$ on $W_{r+1}(A^{(r)})\subset\mathcal{A}_{r'+1}$.
But we have 
\[
(\partial_{l}^{[p^{r'-r}]})^{i}=\prod_{j=1}^{i}{jp^{r'-r} \choose p^{r'-r}}\partial_{l}^{[ip^{r'-r}]}
\]
(this follows easily from the fact that $\{\partial_{l}^{[i]}\}$
form an iterative HS derivation). Now, we have that 
\[
{jp^{r'-r} \choose p^{r'-r}}=\prod_{m=0}^{p^{r'-r}-1}\frac{(jp^{r'-r}-m)}{(p^{r'-r}-m)}=j\cdot\prod_{m=1}^{p^{r'-r}-1}\frac{(jp^{r'-r}-m)}{(p^{r'-r}-m)}
\]
For each $m\in\{1,\dots,p^{r'-r}-1\}$, we have $\text{val}(p^{r'-r}-m)=\text{val}(m)=\text{val}(jp^{r'-r}-m)$.
Therefore 
\[
\text{val}{jp^{r'-r} \choose p^{r'-r}}=\text{val}(j)
\]
and we see that 
\[
(\partial_{l}^{[p^{r'-r}]})^{i}=u_{r'}\cdot i!\partial_{l}^{[ip^{r'-r}]}
\]
where $u_{r'}$ is a unit in $\mathbb{Z}/p^{r+1}$. The fact that
$u_{r'+1}\equiv u_{r'}$ mod $p^{r'}$ follows from the claim directly
below, therefore we may set ${\displaystyle u:=\lim_{r'}u_{r'}\in\mathbb{Z}_{p}}$
to prove the result.
\end{proof}
To finish the proof, we need to show
\begin{claim}
\label{claim:basic-id}We have the identity 
\begin{equation}
{lp^{r} \choose p^{j}}\equiv{lp^{r-1} \choose p^{j-1}}\phantom{q}\text{mod}\phantom{q}p^{r}\label{eq:basic-id-1}
\end{equation}
for all natural numbers $l$, $r$, and $j$. 
\end{claim}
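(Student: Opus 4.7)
The plan is to encode the two binomial coefficients as coefficients of $X^{p^j}$ in two power series in $\mathbb{Z}[X]$, and to reduce the congruence to a statement about these series modulo $p^r$. The trivial edge cases are disposed of first: when $j=0$, both sides equal $lp^r$ modulo $p^r$ (the right hand side being interpreted as $0$), and when $r=0$ the congruence is vacuous; so assume $r,j\ge 1$.

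First I would establish the auxiliary identity
\[
(1+X)^{p^n}\equiv (1+X^p)^{p^{n-1}}\pmod{p^n}\qquad\text{in }\mathbb{Z}[X],
\]
for every $n\ge 1$, by induction on $n$. The base case $n=1$ is the freshman's dream. For the inductive step, write $(1+X)^{p^{n-1}}=(1+X^p)^{p^{n-2}}+p^{n-1}Q(X)$ for some $Q\in\mathbb{Z}[X]$, raise to the $p$-th power, and expand via the binomial theorem. The $i=0$ term gives $(1+X^p)^{p^{n-1}}$; for $1\le i\le p-1$ the divisibility $p\mid\binom{p}{i}$ combined with $(p^{n-1})^i$ produces a factor of $p$-adic valuation at least $(n-1)i+1\ge n$; and for $i=p$ the pure factor $p^{p(n-1)}$ has valuation at least $n$ whenever $n\ge 2$, which is exactly the regime of the inductive step.

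Given the lemma, I can raise to the $l$-th power to obtain
\[
(1+X)^{lp^r}=\bigl((1+X)^{p^r}\bigr)^l\equiv (1+X^p)^{lp^{r-1}}\pmod{p^r},
\]
and then extract the coefficient of $X^{p^j}$ from both sides. On the left this is $\binom{lp^r}{p^j}$; on the right the only monomial of the form $X^{pk}$ equal to $X^{p^j}$ corresponds to $k=p^{j-1}$, giving $\binom{lp^{r-1}}{p^{j-1}}$.

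The main obstacle will be the careful $p$-adic bookkeeping in the inductive step of the lemma. The $i=p$ contribution is the tightest, requiring $p(n-1)\ge n$, equivalently $n\ge p/(p-1)$; this holds automatically once $n\ge 2$, which is precisely the range where the inductive step is invoked. All other terms have extra room coming from $p\mid\binom{p}{i}$, so no difficulty arises there.
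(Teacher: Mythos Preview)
Your argument is correct. The power-series lemma $(1+X)^{p^{n}}\equiv(1+X^{p})^{p^{n-1}}\pmod{p^{n}}$ is established cleanly by your induction (the $i=p$ term really does need $n\ge 2$, and you note this), and raising to the $l$-th power followed by extracting the coefficient of $X^{p^{j}}$ gives the claim.

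The paper takes a different route. Rather than proving the binomial identity from scratch, it observes that $\binom{lp^{r}}{p^{j}}$ and $\binom{lp^{r-1}}{p^{j-1}}$ arise as the eigenvalues of the divided-power derivations $d^{[p^{j}]}$ and $d^{[p^{j-1}]}$ on the monomials $T^{lp^{r}}$ and $T^{lp^{r-1}}$, respectively, and then reads off the congruence from the already-established relation $R\circ d^{[p^{j}]}=d^{[p^{j-1}]}\circ R$ of \prettyref{prop:Move-Down}. In other words, the paper treats the claim as a numerical shadow of a structural fact about how canonical Hasse--Schmidt lifts interact with the restriction maps on Witt vectors. Your proof is self-contained and elementary, which is exactly what the paper alludes to when it remarks that the identity ``is not difficult to check directly''; the paper's proof, by contrast, exhibits the identity as a consequence of the operator calculus it has built up, at the cost of depending on the somewhat involved combinatorics in the proof of \prettyref{prop:Move-Down}.
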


\begin{proof}
This is not difficult to check directly. However, since ${\displaystyle d^{([p]}(T^{lp^{r}})={lp^{r} \choose p^{j}}}T^{lp^{r}-p^{j}}$
inside $\mathbb{Z}/p^{r+1}[T]$, and ${\displaystyle d^{[p^{j-1}]}(T^{lp^{r-1}})={lp^{r-1} \choose p^{j-1}}}T^{lp^{r-1}-p^{j-1}}$
inside $\mathbb{Z}/p^{r}[T]$, this equality also follows from $R\circ d^{[p^{j}]}=d^{[p^{j-1}]}\circ R$,
which is \prettyref{prop:Move-Down}. 
\end{proof}
For the course of the next few lemmas, we write $\widehat{\mathcal{D}}_{W(A),b}^{(m)}$
for the set of elements in $\widehat{\mathcal{D}}_{W(A)}^{(m)}$ which
have a representation as in \prettyref{thm:Basis}. If 
\[
Q=\sum_{I}(\sum_{r=1}^{\infty}\sum_{J_{I}}F^{-r}(\alpha_{J_{I}})\{\partial\}_{J_{I}/p^{r}}+\sum_{K_{I}}\alpha_{K_{I}}\{\partial\}_{K_{I}})\{\partial\}_{p^{m}}^{I}
\]
is an element of $\widehat{\mathcal{D}}_{W(A),b}^{(m)}$, we say that
$Q\in\widehat{\mathcal{D}}_{W(A),b,i}^{(m)}$ if $Q$ admits a representation
as above so that for each $r<i$ and each associated $J_{I}$, we
have $\alpha_{J_{I}}\in V^{i-r}(W(A))$; we demand also that $\alpha_{K_{I}}\in V^{i}(W(A))$
for all $K_{I}$. Note that if we have for each $s$ an element $\phi_{s}\in\widehat{D}_{W(A),b,s}$
then 
\[
\sum_{s=i}^{\infty}\phi_{s}\in\widehat{\mathcal{D}}_{W(A),b,i}^{(m)}
\]
for all $i\geq0$. 
\begin{lem}
\label{lem:big-products}Fix $m\in\mathbb{Z}$. Suppose $\phi\in\mathcal{E}W^{(m)}(A)$,
and $Q\in\widehat{\mathcal{D}}_{W(A),b}^{(m)}$. Then $\phi\cdot Q\in\widehat{\mathcal{D}}_{W(A),b}^{(m)}$.
Further, if $\phi\in\mathcal{E}W^{m,(r)}(A)$, then $\phi\cdot Q\in\widehat{\mathcal{D}}_{W(A),b.r}^{(m)}$.
If $Q\in\widehat{\mathcal{D}}_{W(A),b,r}^{(m)}$, then $\phi\cdot Q\in\widehat{\mathcal{D}}_{W(A),b.r}^{(m)}$
for any $\phi$. Similarly, $a\cdot Q\in\widehat{\mathcal{D}}_{W(A),b,r}^{(m)}$
for any $Q$ if $a\in V^{r}(W(A))$. 
\end{lem}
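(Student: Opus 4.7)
The plan is to reduce to an atomic computation via the normal forms already established, and then to run a direct commutation argument using the Hasse--Schmidt Leibniz rule together with \prettyref{prop:HS-and-F}.

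First, by the closure property noted just before the statement of the lemma (if $\phi_s\in\widehat{\mathcal{D}}_{W(A),b,s}^{(m)}$ for every $s\ge i$, then $\sum_{s\ge i}\phi_s\in\widehat{\mathcal{D}}_{W(A),b,i}^{(m)}$), it suffices to handle a single summand of the expansion of $\phi$ provided by \prettyref{cor:Reps-of-phi-for-m}. So I may assume $\phi=F^{-r}(\alpha)\{\partial\}_{I/p^{r}}$ with $v_{I}=0$, or $\phi=\alpha\{\partial\}_{J}$ with entries of $J$ bounded by $p^{m}$. \prettyref{lem:products} then expresses such a $\{\partial\}_{I/p^{r}}$ as a $\mathbb{Z}_p^{\times}$-multiple of an ordered product of atomic operators $\{\partial_{l}\}_{1/p^{r'}}$, and since all coordinate derivations mutually commute, by induction on the total order it is enough to show that left multiplication by $a\in W(A)$, by $F^{-r}(\alpha)\{\partial_{l}\}_{1/p^{r}}$, and by $\alpha\{\partial_{l}\}_{p^{m}}^{i}$ each preserves $\widehat{\mathcal{D}}_{W(A),b}^{(m)}$ and interacts correctly with the filtration.

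The case of multiplication by $a\in W(A)$ is immediate: the representation in \prettyref{thm:Basis} is manifestly $W(A)$-linear on the outside, so $a\cdot Q$ is already of the required form, and $a\in V^{r}(W(A))$ forces each coefficient into $V^{r}(W(A))$, giving assertion $(4)$. For the interesting atomic case I would expand a product $\{\partial_{l}\}_{1/p^{r}}\cdot F^{-r'}(\beta)\{\partial\}_{J_{I}/p^{r'}}\{\partial\}_{p^{m}}^{I}$ at each finite truncation level $r''$ using the Hasse--Schmidt Leibniz rule
\[
\partial_{l}^{[p^{r''-r}]}\bigl(F^{r''-r'}(\beta)\cdot x\bigr)=\sum_{j+k=p^{r''-r}}\partial_{l}^{[j]}\bigl(F^{r''-r'}(\beta)\bigr)\cdot\partial_{l}^{[k]}(x),
\]
and apply \prettyref{prop:HS-and-F} to discard the summands with $p^{r''-r'}\nmid j$ and to rewrite each remaining term as $F^{r''-r'}\bigl(\partial_{l}^{[j/p^{r''-r'}]}(\beta)\bigr)\cdot\partial_{l}^{[k]}(x)$. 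Re-indexing and passing to the inverse limit in $r''$ expresses the product as a convergent sum of atomic operators, which, after reapplying \prettyref{cor:Reps-of-phi-for-m} and \prettyref{lem:products} to assemble the tail factor $\{\partial\}_{p^{m}}^{I}$, lies in $\widehat{\mathcal{D}}_{W(A),b}^{(m)}$. Absorbing the outer scalar $F^{-r}(\alpha)$ into the new leading coefficient (using that $F^{-r}(\alpha)\cdot\{\partial_{l}\}_{1/p^{r}}$ is a well-defined operator landing in $V^{r-s}$ for appropriate $s$, by \prettyref{prop:Move-Down}) completes the identification.

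The main obstacle is the filtration bookkeeping required for assertions $(2)$ and $(3)$. For $(3)$ one checks that the Leibniz expansion above never lowers the $V$-depth of a coefficient: since $\partial_{l}^{[j]}$ preserves the Verschiebung filtration by \prettyref{prop:Move-Down}, a hypothesis $\beta\in V^{r-r'}(W(A))$ built into $Q\in\widehat{\mathcal{D}}_{W(A),b,r}^{(m)}$ propagates through the computation so that every coefficient of $\phi\cdot Q$ lies in the appropriate $V$-power. For $(2)$ the point is that $\phi\in\mathcal{E}W^{(m),r}(A)$ forces each coefficient in its normal form to lie in the correct Verschiebung depth (this is precisely the uniqueness statement of \prettyref{lem:Reps-of-phi} combined with \prettyref{prop:Move-Down}$(b)$), and one then tracks through the Leibniz product that the resulting coefficients of $\phi\cdot Q$ all lie in the filtration level defining $\widehat{\mathcal{D}}_{W(A),b,r}^{(m)}$. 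Assertion $(1)$ follows from $(3)$ by taking $r=0$.
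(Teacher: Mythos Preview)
Your overall strategy matches the paper's: both reduce via \prettyref{cor:Reps-of-phi-for-m} to individual terms and then expand the product using the Hasse--Schmidt Leibniz rule together with the Frobenius commutation $\{\partial_{l}\}_{j/p^{r}}F=F\{\partial_{l}\}_{j/p^{r+1}}$. One technical difference: the paper carries out the commutator computation directly in $W(A)[p^{-1}]$, writing $[\{\partial_{l}\}_{j/p^{r}},F^{-s}(\beta)]$ as an explicit convergent sum, rather than at each finite truncation level; this sidesteps the notational issue in your expression ``$F^{r''-r'}(\beta)$'', which is not actually the scalar that multiplies at level $r''$.

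There is, however, a real gap in your handling of the filtration assertions $(2)$ and $(3)$. After the Leibniz expansion, one obtains terms of the form $\{\partial_{l}\}_{i/p^{r'}}\cdot\{\partial_{t}\}_{j'/p^{s}}$ with $(i,p)=1$. When $r'=s$ and $l=t$, \prettyref{lem:products} gives (up to a unit) $\binom{i+j'}{i}\{\partial_{l}\}_{(i+j')/p^{s}}$. If $\nu=\mathrm{val}_{p}(i+j')>0$, then $\{\partial_{l}\}_{(i+j')/p^{s}}$ is an operator at the \emph{shallower} depth $s-\nu$, so for the product to remain in $\widehat{\mathcal{D}}_{W(A),b,s}^{(m)}$ one must show $\mathrm{val}_{p}\binom{i+j'}{i}\ge\nu$. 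The paper proves exactly this inequality via a short Frobenius argument in $\mathbb{Z}/p^{\nu}[T_{1},T_{2}]$, and it is the crux of the filtration statement. Your appeal to \prettyref{prop:Move-Down} only says that each $\partial_{l}^{[j]}$ preserves the Verschiebung filtration on elements of $W(A)$; it says nothing about the depth jump that occurs when two such operators are multiplied together, so it does not close this gap.
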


\begin{proof}
By \prettyref{cor:Reps-of-phi-for-m}, if $\phi\in\mathcal{E}W^{(m)}(A)$
we can write 
\[
\phi=\sum_{r=0}^{\infty}\sum_{I}F^{-r}(\alpha_{I})\{\partial\}_{I/p^{r}}+\sum_{J}\alpha_{J}\{\partial\}_{J}\in\widehat{\mathcal{D}}_{W(A),b}^{(m)}
\]
Therefore we must compute
\[
\phi\cdot Q=
\]
 
\begin{equation}
(\sum_{r=0}^{\infty}\sum_{I}F^{-r}(\alpha_{I})\{\partial\}_{I/p^{r}}+\sum_{J}\alpha_{J}\{\partial\}_{J})(\sum_{I}(\sum_{r=1}^{\infty}\sum_{J_{I}}F^{-r}(\alpha_{J_{I}})\{\partial\}_{J_{I}/p^{r}}+\sum_{K_{I}}\alpha_{K_{I}}\{\partial\}_{K_{I}})\{\partial\}_{p^{m}}^{I})\label{eq:phi-times-Q}
\end{equation}
In order to compute this term, we first note that if $D=(D_{0},\dots,D_{j})$
is any Hasse-Schmidt derivation (on an arbitrary ring $R$), we have
an equality of operators
\[
[D_{j},f]=\sum_{i=1}^{j}D_{i}(f)D_{j-i}=\sum_{i=0}^{j-1}D_{j-i}(f)D_{i}
\]
(where $f\in R$ denotes the operator $g\to fg$). Now choose any
$j$ so that $j\leq p^{r+m}$. We have, for any $r'\geq r-\text{val}(j)$
and $\alpha\in W_{r'+1}(A)$, 
\[
[\partial_{l}^{[jp^{r'-r}]},\alpha]=\sum_{i=0}^{jp^{r'-r}-1}\partial_{l}^{[jp^{r'-r}-i]}(\alpha)\partial_{l}^{[i]}
\]
which may be rewritten as 
\[
[\{\partial_{l}\}_{j/p^{r}},\alpha]=\sum_{i=0}^{jp^{r'-r}-1}\{\partial_{l}\}_{(jp^{r'-r}-i)/p^{r'}}(\alpha)\cdot\{\partial_{l}\}_{i/p^{r'}}
\]
\[
=\sum_{i=0}^{jp^{r'-r}-1}\{\partial_{l}\}_{(jp^{r'-r}-i)/p^{r'}}(\alpha)\cdot\{\partial_{l}\}_{i/p^{r'}}
\]
So that we obtain the equality 
\[
[\{\partial_{l}\}_{j/p^{r}},\alpha]=\{\partial_{l}\}_{j/p^{r}}(\alpha)+\sum_{r'\geq r-\text{val}(j)}^{\infty}\sum_{i=1}^{jp^{r'-r}-1}\{\partial_{l}\}_{(jp^{r'-r}-i)/p^{r'}}(\alpha)\cdot\{\partial_{l}\}_{i/p^{r'}}
\]
where on the right hand side we sum over $i$ with $(i,p)=1$. This
is an equality of operators on $W(A)$; with both sides in $\mathcal{D}_{W(A)}^{(m)}$
(in fact the right hand side is in $\mathcal{E}W_{A}$). Thus we see
from this expression that $[\{\partial_{l}\}_{j/p^{r}},\alpha]\in\widehat{\mathcal{D}}_{W(A),b,r}^{(m)}$,
and consequently that $F^{-r}(\alpha)\{\partial\}_{I/p^{r}}\cdot\alpha_{I_{0}}\in\widehat{\mathcal{D}}_{W(A),b,r}^{(m)}$
for any $\alpha,\alpha_{I_{0}}\in W(A)$. 

Next, we consider a product of the form $F^{-r}(\alpha)\{\partial_{l}\}_{j/p^{r}}\cdot F^{-s}(\beta)\{\partial_{t}\}_{j'/p^{s}}$,
where $j'$ is not divisible by $p$, and $s\geq r$ (the case $s<r$
is essentially identical; as is the case where we need to multiply
$\alpha_{J}\{\partial\}_{J}\cdot F^{-s}(\beta)\{\partial_{t}\}_{j'/p^{s}}$).
To compute this term, we may, by extending linearly, regard both $\{\partial_{l}\}_{j/p^{r}}$
and $F^{-s}(\beta)\{\partial_{t}\}_{j'/p^{s}}$ as endomorphisms of
$W(A)[p^{-1}]$. Since $F^{-s}(\beta)\in W(A)[p^{-1}]$, the previous
equality implies an equality
\[
[\{\partial_{l}\}_{j/p^{r}},F^{-s}(\beta)]
\]
\[
=\{\partial_{l}\}_{j/p^{r}}(F^{-s}(\beta))+\sum_{r'\geq r-\text{val}(j)}^{\infty}\sum_{i=1}^{jp^{r'-r}-1}\{\partial_{l}\}_{(jp^{r'-r}-i)/p^{r'}}(F^{-s}(\beta))\cdot\{\partial_{l}\}_{i/p^{r'}}
\]
of operators on $W(A)[p^{-1}]$. Therefore 
\begin{equation}
[\{\partial_{l}\}_{j/p^{r}},F^{-s}(\beta)\{\partial_{m}\}_{j'/p^{s}}]\label{eq:bracket-1}
\end{equation}
\[
=\{\partial_{l}\}_{j/p^{r}}(F^{-s}(\beta))\{\partial_{t}\}_{j'/p^{s}}
\]
\[
+\sum_{r'\geq r-\text{val}(j)}^{\infty}\sum_{i=1}^{jp^{r'-r}-1}\{\partial_{l}\}_{(jp^{r'-r}-i)/p^{r'}}(F^{-s}(\beta))\cdot\{\partial_{l}\}_{i/p^{r'}}\{\partial_{t}\}_{j'/p^{s}}
\]
(using that $\{\partial_{l}\}_{j/p^{r}}$ and $\{\partial_{t}\}_{i/p^{s}}$
commute); as above we suppose $(i,p)=1$ in the right hand sum. 

Now, \prettyref{prop:HS-and-F}, applied to the operators $\{\partial_{l}\}_{j/p^{r}}$,
yields a relation 
\[
\{\partial_{l}\}_{j/p^{r}}F=F\{\partial_{l}\}_{j/p^{r+1}}
\]
which implies 
\[
F^{-1}\{\partial_{l}\}_{j/p^{r}}=\{\partial_{l}\}_{j/p^{r+1}}F^{-1}
\]
in $W(A)[p^{-1}]$. We note that this makes sense for all $j,r\in\mathbb{Z}$
such that $j>0$ and $r\geq0$; in particular we have 
\[
\{\partial_{l}\}_{(j/p^{r}-i/p^{r'})}(F^{-s}(\beta))=F^{-s}(\{\partial_{l}\}_{(jp^{r'-r}-i)p^{s}/p^{r'}}(\beta))
\]
So that 
\[
F^{-r}(\alpha)\{\partial_{l}\}_{j/p^{r}}\cdot F^{-s}(\beta)\{\partial_{t}\}_{j'/p^{s}}=F^{-s}(\beta F^{s-r}(\alpha))\{\partial_{t}\}_{j'/p^{s}}\{\partial_{l}\}_{j/p^{r}}
\]
\[
+F^{-s}(\{\partial_{l}\}_{p^{s}j/p^{r}}(\beta)\cdot F^{s-r}(\alpha))\{\partial_{t}\}_{j'/p^{s}}
\]
\[
+\sum_{r'\geq r-\text{val}(j)}^{\infty}\sum_{i=1}^{jp^{r'-r}-1}F^{-s}(\{\partial_{l}\}_{(jp^{r'-r}-i)p^{s}/p^{r'}}(\beta)F^{s-r}(\alpha))\cdot\{\partial_{l}\}_{i/p^{r'}}\{\partial_{t}\}_{j'/p^{s}}
\]
as operators on $W(A)$. We note that all of the terms involved both
sides of this equality are in $\mathcal{D}_{W(A)}^{(m)}$; indeed,
since $\mathcal{D}_{W(A)}^{(m)}$ is an algebra, this is clear for
the product $F^{-r}(\alpha)\{\partial_{l}\}_{j/p^{r}}\cdot F^{-s}(\beta)\{\partial_{t}\}_{i/p^{s}}$,
and it is also clear for the terms $F^{-s}(\beta F^{s-r}(\alpha))\{\partial_{t}\}_{j'/p^{s}}\{\partial_{l}\}_{j/p^{r}}$
and $F^{-s}(\{\partial_{l}\}_{p^{s}j/p^{r}}(\beta)\cdot F^{s-r}(\alpha))\{\partial_{t}\}_{j'/p^{s}}$,
therefore it is true for the last term as well.

So, since $\mathcal{D}_{W(A)}^{(m)}\subset\text{End}_{W(k)}(W(A))$,
the equality of \prettyref{eq:bracket-1} is in fact an equality of
elements of $\mathcal{D}_{W(A)}^{(m)}$, and so we obtain that the
image of $F^{-r}(\alpha)\{\partial_{l}\}_{j/p^{r}}\cdot F^{-s}(\beta)\{\partial_{t}\}_{i/p^{s}}$
in $\widehat{\mathcal{D}}_{W(A)}^{(m)}$ is contained in $\widehat{\mathcal{D}}_{W(A),b}^{(m)}$. 

Now, we claim that $F^{-r}(\alpha)\{\partial_{l}\}_{j/p^{r}}\cdot F^{-s}(\beta)\{\partial_{t}\}_{i/p^{s}}$
is actually contained in $\widehat{\mathcal{D}}_{W(A),b,s}^{(m)}$.
This is obvious in the above sum unless $r'=s$ and $l=t$. In that
case, we have 
\[
\{\partial_{l}\}_{i/p^{s}}\{\partial_{l}\}_{j'/p^{s}}=u\cdot{i+j' \choose i}\{\partial_{l}\}_{(i+j')/p^{s}}
\]
by \prettyref{lem:products}. Let $\nu=\text{val}(i+j')$. Then we
need to show that 
\[
\text{val}{i+j' \choose i}\geq\nu
\]
This only has content if $v>0$, so we assume this from now on; let
$\alpha=(i+j')/p$. To see this, we consider the element $(T_{1}+T_{2})^{\alpha}$
inside $\mathbb{Z}/p^{v}[T_{1},T_{2}]$. Since $\text{val}(\alpha)=\nu-1$,
we have $(T_{1}+T_{2})^{\alpha}\in W_{v}(\mathbb{F}_{p}[T_{1},T_{2}]^{(\nu-1)})$.
Therefore $F(T_{1}+T_{2})^{\alpha}=(T_{1}+T_{2})^{i+j'}$ for any
lift of Frobenius on $\mathbb{Z}/p^{v}[T_{1},T_{2}]$; choosing $F$
to be the lift which takes $T_{i}\to T_{i}^{p}$, we see that the
coefficient of $T_{1}^{a}T_{2}^{b}$ in $(T_{1}+T_{2})^{i+j'}$ is
zero if either $a$ or $b$ is coprime to $p$. As the coefficient
of $T_{1}^{i}T_{2}^{j'}$ is ${\displaystyle {i+j' \choose i}}$ ,
and as $i$ is coprime to $p$, we deduce that ${\displaystyle {i+j' \choose i}}$
is zero in $\mathbb{Z}/p^{v}$ as required. 

Now the result follows, via additivity, from the remark above that
${\displaystyle \sum_{s=i}^{\infty}\phi_{s}\in\widehat{\mathcal{D}}_{W(A),b,i}^{(m)}}$
whenever $\phi_{s}\in\widehat{\mathcal{D}}_{W(A),b,s}^{(m)}$. 
\end{proof}
From this we deduce 
\begin{cor}
\label{cor:existence-of-basis}Every element of $\widehat{\mathcal{D}}_{W(A)}^{(m)}$
is contained in $\widehat{\mathcal{D}}_{W(A),b}^{(m)}$. Further,
the ideal $\widehat{I^{(i)}}$ is equal to $\widehat{\mathcal{D}}_{W(A),b.i}^{(m)}$. 
\end{cor}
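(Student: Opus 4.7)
The plan is to deduce both statements by combining \prettyref{cor:Reps-of-phi-for-m} and \prettyref{lem:big-products} with the standard completion formalism, in three stages. First, I would show $\mathcal{D}_{W(A)}^{(m)} \subseteq \widehat{\mathcal{D}}_{W(A),b}^{(m)}$. Since $\mathcal{D}_{W(A)}^{(m)}$ is generated as a (non-commutative) algebra by $W(A)$ and $\mathcal{E}W^{(m)}(A)$, it is enough to check the claim on generators and induct on monomial length. Any $\alpha \in W(A)$ lies in $\widehat{\mathcal{D}}_{W(A),b}^{(m)}$ trivially (take $I=0$ and the single $r=1$, $J_I=0$ summand), while \prettyref{cor:Reps-of-phi-for-m} handles $\phi \in \mathcal{E}W^{(m)}(A)$; a general monomial is reduced to this case by peeling off its rightmost factor using \prettyref{lem:big-products}.

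Second, I would refine this to show $I^{(i)} \cap \mathcal{D}_{W(A)}^{(m)} \subseteq \widehat{\mathcal{D}}_{W(A),b,i}^{(m)}$. By definition, the two-sided ideal $I^{(i)}$ is generated by $V^i(W(A))$ and $\mathcal{E}W^{(m),i}(A)$, so every element is a finite sum of monomials in which at least one factor lies in one of these subspaces. The final three sentences of \prettyref{lem:big-products} then guarantee that the product with such a factor lies in $\widehat{\mathcal{D}}_{W(A),b,i}^{(m)}$, and that subsequent multiplications by arbitrary generators preserve this property.

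Finally, I would pass to the completion. An arbitrary element of $\widehat{\mathcal{D}}_{W(A)}^{(m)} = \varprojlim_r \mathcal{D}_{W(A)}^{(m)}/I^{(r)}$ is realized by a telescoping series $\sum_{s \geq 0} \phi_s$ with $\phi_s \in I^{(s)} \cap \mathcal{D}_{W(A)}^{(m)}$; Step~2 puts each $\phi_s \in \widehat{\mathcal{D}}_{W(A),b,s}^{(m)}$, and the observation immediately preceding \prettyref{lem:big-products} then forces the sum into $\widehat{\mathcal{D}}_{W(A),b}^{(m)}$, proving the first assertion. The same argument with the series starting at $s=i$ yields $\widehat{I^{(i)}} \subseteq \widehat{\mathcal{D}}_{W(A),b,i}^{(m)}$. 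For the reverse inclusion, I would start from a canonical representation of $Q \in \widehat{\mathcal{D}}_{W(A),b,i}^{(m)}$ as in \prettyref{thm:Basis} and check directly that each individual summand $F^{-r}(\alpha_{J_I})\{\partial\}_{J_I/p^r}\{\partial\}_{p^m}^I$ with $\alpha_{J_I} \in V^{i-r}(W(A))$ (and similarly for the $K_I$-terms) already lies in $\mathcal{E}W^{(m),i}(A)\cdot\mathcal{D}_{W(A)}^{(m)} \subseteq I^{(i)}$; this uses \prettyref{prop:Move-Down} together with the observation that left-multiplication by $F^{-r}(V^{i-r}(W(A)))$ sends the image $\{\partial\}_{J_I/p^r}(W(A)) \subseteq V^{r-v_{J_I}}(W(A))$ into $V^{i-v_{J_I}}(W(A)) \subseteq V^i(W(A))$ (recall that $v_{J_I}=0$ for $r \geq 2$ in the canonical form).

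The main obstacle I anticipate is this final direct check, which requires careful book-keeping combining the $F^{-r}$-semilinearity of the multiplications on $V^r(W(A))$ with the filtration bounds of \prettyref{prop:Move-Down}; once this is in place, everything else is a mechanical combination of the preceding lemmas and the completion formalism.
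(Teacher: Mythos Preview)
Your proposal is correct and follows essentially the same three-step argument as the paper: show the uncompleted algebra lands in $\widehat{\mathcal{D}}_{W(A),b}^{(m)}$ via \prettyref{lem:big-products}, show $I^{(i)}$ lands in $\widehat{\mathcal{D}}_{W(A),b,i}^{(m)}$, and pass to the completion by telescoping. The only differences are cosmetic: since \prettyref{lem:big-products} concerns \emph{left} multiplication $\phi\cdot Q$, you should peel off the \emph{leftmost} factor in your induction; and the paper dismisses the reverse inclusion $\widehat{\mathcal{D}}_{W(A),b,i}^{(m)}\subset\widehat{I^{(i)}}$ as ``clear from the definitions'' rather than spelling out the check you outline (which is indeed routine once you observe, as you do, that each basic term $F^{-r}(\alpha_{J_I})\{\partial\}_{J_I/p^r}\{\partial\}_{p^m}^I$ with $\alpha_{J_I}\in V^{i-r}(W(A))$ already lies in $I^{(i)}$).
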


\begin{proof}
Since $W(A)\subset\widehat{\mathcal{D}}_{W(A),b}^{(m)}$ and $\mathcal{E}W^{(m)}(A)\subset\widehat{\mathcal{D}}_{W(A),b}^{(0)}$,
we see that the image of $\mathcal{D}_{W(A)}^{(m)}\to\widehat{\mathcal{D}}_{W(A)}^{(m)}$
is contained in $\widehat{\mathcal{D}}_{W(A),b}^{(m)}$. By the last
statement of \prettyref{lem:big-products}, the image of $I^{(i)}$
is contained in $\widehat{\mathcal{D}}_{W(A),b,i}^{(m)}$. Since an
arbitrary sum ${\displaystyle \sum_{s=0}^{\infty}Q_{s}}$ converges
if $Q_{s}\in\widehat{D}_{W(A),b,s}$, we deduce that $\widehat{\mathcal{D}}_{W(A)}^{(m)}=\widehat{\mathcal{D}}_{W(A),b}^{(m)}$,
with $\widehat{I^{(i)}}\subset\widehat{\mathcal{D}}_{W(A),b,i}^{(m)}$.
But the containment $\widehat{\mathcal{D}}_{W(A),b,i}^{(m)}\subset\widehat{I^{(i)}}$
is clear from the definitions. 
\end{proof}
Finally, we need to address uniqueness. It will follow from the a
generalization of \prettyref{lem:Reps-of-phi}; to state this, set,
for $i\in\mathbb{N}$, ${\displaystyle f_{i}:=\text{val}(i!)}$, and,
for a multi-index $I$, set ${\displaystyle f_{I}:=\sum_{j=1}^{n}f_{i_{j}}}$. 
\begin{lem}
\label{lem:Injectivity-Lemma}An element 
\[
Q=\sum_{I}(\sum_{r=1}^{\infty}\sum_{J_{I}}F^{-r}(\alpha_{J_{I}})\{\partial\}_{J_{I}/p^{r}}+\sum_{K_{I}}\alpha_{K_{I}}\{\partial\}_{K_{I}})\{\partial\}_{p^{m}}^{I}
\]
(where the notation is as in \prettyref{thm:Basis}) acts trivially
on $W_{r'+1}(A)$ iff we have $\alpha_{J_{I}}\in V^{r'+1-r-f_{I}}(W(A))$
for each $I$ and $J_{I}$ (where we take the superscript to be $0$
if $r'+1-r-f_{I}<0$), and $\alpha_{K_{I}}\in V^{r'+1}(W(A))$ for
all $K_{I}$. 
\end{lem}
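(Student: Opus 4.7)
The argument extends \prettyref{lem:Reps-of-phi} by incorporating the extra factor $\{\partial\}_{p^{m}}^{I}$. The essential structural input is the identity $\{\partial\}_{p^{m}}^{I} = u\cdot I!\cdot\{\partial\}_{Ip^{m}}$ for a unit $u\in\mathbb{Z}_{p}^{\times}$ (a direct extension of \prettyref{lem:products}, proved by the iterative identity $(\partial_{l}^{[p^{r''+m}]})^{i_{l}}=u_{l}\cdot i_{l}!\cdot\partial_{l}^{[i_{l}p^{r''+m}]}$). Since $\text{val}(I!)=f_{I}$, this is what produces the shift $f_{I}$ in the statement.

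For the ``if'' direction, I would verify that each summand $F^{-r}(\alpha_{J_{I}})\{\partial\}_{J_{I}/p^{r}}\{\partial\}_{p^{m}}^{I}$ with $\alpha_{J_{I}}\in V^{r'+1-r-f_{I}}(W(A))$ sends $W(A)$ into $V^{r'+1}(W(A))$ and hence vanishes in $W_{r'+1}(A)$. The operator $\{\partial\}_{p^{m}}^{I}$ shifts $W(A)$ into $V^{f_{I}}(W(A))$ via the $I!$ factor (using $p^{f_{I}}W(A)\subset V^{f_{I}}(W(A))$); then $\{\partial\}_{J_{I}/p^{r}}$ contributes an additional shift of $r$ (by \prettyref{prop:Move-Down}, since $v_{J_{I}}=0$); and finally the Frobenius-twisted multiplication $F^{-r}(\alpha_{J_{I}})$ applied to $V^{r+f_{I}}(W(A))$ lands in $V^{r+f_{I}+(r'+1-r-f_{I})}(W(A))=V^{r'+1}(W(A))$. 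The $K_{I}$ summands reduce to the same factorization identity.

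For the ``only if'' direction, I would assume $Q$ acts as zero on $W_{r'+1}(A)$ and induct on the lexicographic ordering of the pairs $(I,J_{I})$: for the smallest pair where the required containment fails, evaluate $Q$ on a monomial test element $p^{N}T^{Ip^{m}+J_{I}p^{r''-r}}$ (with $r''$ sufficiently large, and $N$ calibrated so that the targeted summand contributes a unit times a single nonzero class in $V^{r'}/V^{r'+1}$). As in the proof of \prettyref{lem:Reps-of-phi}, lesser pairs annihilate this element, strictly greater pairs contribute only in higher stages of the Verschiebung filtration, and the targeted summand contributes a term proportional to $I!\cdot\alpha_{J_{I}}$; the forced vanishing then gives $\alpha_{J_{I}}\in V^{r'+1-r-f_{I}}(W(A))$, completing the induction step.

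The main obstacle is the orchestration of test monomials in the ``only if'' direction. One must simultaneously (i) verify that the binomial coefficients $\binom{K p^{r''}+I p^{r''+m}}{K p^{r''}}$ arising from composing HS derivations are $p$-adic units (a Kummer/no-carries observation valid because the entries of $K$ are less than $p^{m}$), and (ii) keep precise track of the interaction between the $I!$ factor, the Frobenius twists $F^{-r}$, and the Verschiebung filtration. The binomial identities of \prettyref{claim:basic-id} and \prettyref{lem:products} are the combinatorial engine driving this bookkeeping.
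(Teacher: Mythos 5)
Your proposal is correct and follows essentially the same route as the paper's proof: the factorization $(\partial_{l}^{[p^{r'+m}]})^{i}=u\,p^{f_{i}}\partial_{l}^{[ip^{r'+m}]}$ from \prettyref{lem:products} supplies the $f_{I}$ shift, the ``if'' direction is the same filtration bookkeeping, and the ``only if'' direction is the same lexicographic induction evaluating on the monomial dual to the least surviving index, where the relevant binomial coefficients are $p$-adic units.
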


\begin{proof}
We consider the operator 
\[
\sum_{I}(\sum_{r=-m}^{r'}\sum_{J_{I}}F^{-r}(\alpha_{J_{I}})\partial^{[p^{r'-r}J_{I}]}+\sum_{K_{I}}\alpha_{K_{I}}\partial^{[p^{r'}K_{I}]})(\partial^{[p^{r'+m}]})^{I}
\]
we note that by our conventions on sums in \prettyref{thm:Basis}
we have $\alpha_{J_{I}},\alpha_{K_{I}}\to0$ as $I\to\infty$, so
for the purposes of considering the action on $W_{r'+1}(A)$ we may
regard this as being a finite sum. We claim that 
\[
((\partial^{[p^{r'+m}]})^{I})(W_{r'+1}(A^{(r')}))\subset V^{f_{I}}(W_{r'+1}(A^{(r')}))
\]
To show this it suffices to show $(\partial_{l}^{[p^{r'+m}]})^{i}(W_{r'+1}(A^{(r')}))\subset p^{f_{i}}(W_{r'+1}(A^{(r')}))$
for each $l$ and $i$. We have, by \prettyref{lem:products} the
relation 
\[
(\partial_{l}^{[p^{r'+m}]})^{i}=up^{f_{i}}\partial_{l}^{[ip^{r'+m}]}
\]
for an appropriate unit $u\in\mathbb{Z}_{p}$; the operator $\partial_{l}^{[ip^{r'}]}$
is the canonical lift of the Hasse-Schmidt derivation $\partial_{l}^{[ip^{r'}]}$
and hence preserves $W_{r'+1}(A^{(r')})$, so that $(\partial_{l}^{[p^{r'}]})^{i}$
must take $W_{r'+1}(A^{(r')})$ to $p^{f_{i}}(W_{r'+1}(A^{(r')}))$
as required.

Next, we claim that $F^{-r}(\alpha_{J_{I}})\partial^{[p^{r'-r}J_{I}]}(V^{f_{I}}(W_{r'+1}(A^{(r')})))=0$
if \\
$\alpha_{J_{I}}\in V^{r'-r+1-f_{I}}(W(A))$; this follows directly
from the definitions as in \prettyref{lem:Reps-of-phi}. This, along
with the fact that $\alpha_{K_{I}}\partial^{[p^{r'}K_{I}]}(W_{r'+1}(A^{(r')}))=0$
if \\
$\alpha_{K_{I}}\in V^{r'+1}(W_{r'+1}(A^{(r')}))$, gives us the reverse
direction of the lemma. 

For the forward direction, consider the (finite) set of indices $\{p^{r'-r}J_{I}+p^{r'+m}I|\alpha_{J_{I}}\neq0\}$,
union with $\{p^{r'K_{I}}+p^{r'+m}I|\alpha_{K_{I}}\neq0\}$. Suppose
the least (in the lexicographic ordering) index in this set has the
form $p^{r'-r}J_{I_{1}}+p^{r'}I_{1}$ (the case where the least element
has the form $p^{r'K_{I}}+p^{r'+m}I$ is similar, but simpler). Then
we have 
\[
0=\sum_{I}(\sum_{r=-m}^{r'}\sum_{J_{I}}F^{-r}(\alpha_{J_{I}})\partial^{[p^{r'-r}J_{I}]}+\sum_{K_{I}}\alpha_{K_{I}}\partial^{[p^{r'}K_{I}]})(\partial^{[p^{r'+m}]})^{I}(p^{r}T^{p^{r'-r}J_{I_{1}}+p^{r'}I_{1}})
\]
\[
=u\cdot p^{r+f_{I}}F^{-r}(\alpha_{J_{I}})
\]
for some $u$ such that $\text{val}(u)=0$; here we have used that
$\partial^{[p^{r'-r}J_{I}]}(\partial^{[p^{r'}]})^{I}(T^{I'})=0$ whenever
$I'<p^{r'-r}J_{I}+p^{r'}I$, and that $\partial^{[p^{r'-r}J_{I_{1}}]}T^{p^{r'-r}J_{I_{1}}}=1$
while $(\partial^{[p^{r'}]})^{I_{1}}T^{I_{1}}=up^{f_{I}}$. For this
to hold we must have $\alpha_{J_{I_{1}}}\in V^{r'+1-r-f_{I}}(W_{r'+1}(A^{(r')}))$;
this shows, using the direction of the lemma already proved, that
$F^{-r}(\alpha_{J_{I_{1}}})\partial^{[p^{r'-r}J_{I_{1}}]}(\partial^{[p^{r'}]})^{I_{1}}$
acts as zero on $W_{r'+1}(A^{(r')})$. Thus we may subtract this element
from 
\[
\sum_{I}\sum_{r=0}^{r'}\sum_{J_{I}}F^{-r}(\alpha_{J_{I}})\partial^{[p^{r'-r}J_{I}]}(\partial^{[p^{r'}]})^{I}
\]
and obtain another operator which acts as zero; continuing in this
way shows that $\alpha_{J_{I}}\in V^{r'+1-r-f_{I}}(W_{r'+1}(A^{(r')}))$
for all $\alpha_{J}$ as required. 
\end{proof}
Now we put everything together for the 
\begin{proof}
(of \prettyref{thm:Basis}) The existence, for each $Q\in\widehat{\mathcal{D}}_{W(A)}^{(m)}$,
of the claimed representation is given by \prettyref{cor:existence-of-basis}.
As for the uniqueness, we note that if any element of the form 
\[
\sum_{I}(\sum_{r=1}^{\infty}\sum_{J_{I}}F^{-r}(\alpha_{J_{I}})\{\partial\}_{J_{I}/p^{r}}+\sum_{K_{I}}\alpha_{K_{I}}\{\partial\}_{K_{I}})\{\partial\}_{p^{m}}^{I}
\]
acts as zero on $W(A)$, then by the previous lemma we have, for each
index $J_{I}$, $\alpha_{J_{I}}\in V^{r'-r+1-f_{I}}(W(A))$ for all
$r'\geq0$; but this implies $\alpha_{J_{I}}=0$. 
\end{proof}
The proof actually gave us a little more:
\begin{cor}
The natural map $\widehat{\mathcal{D}}_{W(A)}^{(m)}\to\text{End}_{W(k)}(W(A))$
is injective. 
\end{cor}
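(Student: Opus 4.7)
The plan is to observe that the content of this corollary is, in essence, already contained in the uniqueness portion of \prettyref{thm:Basis}, and to make this explicit. Suppose $Q \in \widehat{\mathcal{D}}_{W(A)}^{(m)}$ maps to zero in $\text{End}_{W(k)}(W(A))$. By \prettyref{cor:existence-of-basis} (the existence portion of \prettyref{thm:Basis}), $Q$ admits a representation
\[
Q=\sum_{I}\Bigl(\sum_{r=1}^{\infty}\sum_{J_{I}}F^{-r}(\alpha_{J_{I}})\{\partial\}_{J_{I}/p^{r}}+\sum_{K_{I}}\alpha_{K_{I}}\{\partial\}_{K_{I}}\Bigr)\{\partial\}_{p^{m}}^{I}
\]
with the conventions from the statement of that theorem.

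The key step is to apply \prettyref{lem:Injectivity-Lemma}. Since the image of $Q$ in $\text{End}_{W(k)}(W(A))$ is zero, the induced action on each quotient $W_{r'+1}(A)$ is zero for every $r' \geq 0$. The lemma then forces, for every $r' \geq 0$ and every $I$ with corresponding $J_{I}$,
\[
\alpha_{J_{I}} \in V^{r'+1-r-f_{I}}(W(A)),
\]
and similarly $\alpha_{K_{I}} \in V^{r'+1}(W(A))$ for every $K_{I}$. Letting $r' \to \infty$ and using that $W(A)$ is separated for the $V$-adic filtration gives $\alpha_{J_{I}}=0$ and $\alpha_{K_{I}}=0$ for all indices. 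Hence the given representative of $Q$ is literally the zero sum, so $Q=0$ in $\widehat{\mathcal{D}}_{W(A)}^{(m)}$.

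There is no serious obstacle here: the only thing to be a little careful about is that the representation from \prettyref{thm:Basis} lives in the $I^{(i)}$-adic completion, so the coefficients $\alpha_{J_I}$, $\alpha_{K_I}$ are genuine elements of $W(A)$ (not of some quotient), and the separatedness of the $V$-adic filtration on $W(A)$ is what converts the ``acts as zero on $W_{r'+1}(A)$ for all $r'$'' condition into the vanishing of these coefficients. This is immediate from the definition of $W(A)$ as an inverse limit, so the corollary follows with essentially no further work.
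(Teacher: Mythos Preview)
Your proposal is correct and follows essentially the same approach as the paper: the paper simply remarks that ``the proof actually gave us a little more,'' meaning that the uniqueness argument in the proof of \prettyref{thm:Basis} (which invokes \prettyref{lem:Injectivity-Lemma} for all $r'$ and then lets $r'\to\infty$) already shows that any element acting as zero on $W(A)$ has all coefficients zero, hence is zero. You have made this explicit, including the $\alpha_{K_I}$ terms that the paper's proof glosses over.
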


In addition, using the description of $I^{(1)}$ in \prettyref{cor:existence-of-basis},
we obtain
\begin{cor}
For any $m\geq0$ there is an isomorphism $\widehat{\mathcal{D}}_{W(A)}^{(m)}/I^{(1)}\tilde{\to}\mathcal{D}_{A}^{(m)}$. 
\end{cor}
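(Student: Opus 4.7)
The plan is to prove the corollary by using the natural action of $\widehat{\mathcal{D}}_{W(A)}^{(m)}$ on $A = W_{1}(A) = W(A)/V(W(A))$. From \prettyref{def:WDO-Affine}, every element of the defining ideal $I^{(1)} = (V(W(A)), \mathcal{E}W^{(m),1}(A))$ sends $W(A)$ into $V(W(A))$, and this property is preserved under topological completion. Hence the action of $\widehat{\mathcal{D}}_{W(A)}^{(m)}$ on $W(A)$ descends to a ring homomorphism
\[
\pi: \widehat{\mathcal{D}}_{W(A)}^{(m)}/\widehat{I^{(1)}} \longrightarrow \text{End}_{k}(A).
\]

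Next I show that the image of $\pi$ is exactly $\mathcal{D}_{A}^{(m)}$. By \prettyref{def:basic-ops}, the restriction of $\{\partial_{l}\}_{j}$ to $W_{1}(A) = A$ is precisely $\partial_{l}^{[j]}$. Using the basis representation in \prettyref{thm:Basis}, any $Q \in \widehat{\mathcal{D}}_{W(A)}^{(m)}$ reduces modulo $\widehat{I^{(1)}}$ to the finite sum
\[
\pi(Q) = \sum_{I}\Bigl(a_{0,I} + \sum_{K_{I}} a_{K_{I}}\partial^{[K_{I}]}\Bigr)(\partial^{[p^{m}]})^{I},
\]
the terms with $r \geq 1$ and $J_{I} \neq 0$ having vanished because each such term sends $W(A)$ into $V^{r}(W(A)) \subset V(W(A))$; here $a_{\ast}$ denotes the image in $A$ of the corresponding $\alpha_{\ast}$. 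So the image lies in $\mathcal{D}_{A}^{(m)}$, and for surjectivity one recalls that $\mathcal{D}_{A}^{(m)}$ is generated as an $A$-algebra by the operators $\partial_{l}^{[k]}$ for $1 \leq k \leq p^{m}$ (see \cite{key-1}), each of which is the image of $\{\partial_{l}\}_{k}$.

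For injectivity, I would use \prettyref{lem:products} together with Lucas' theorem to see that $\partial^{[K]}(\partial^{[p^{m}]})^{I} = u_{I} \cdot \partial^{\langle K + p^{m}I\rangle_{(m)}}$ with $u_{I} \in \mathbb{F}_{p}^{\times}$, for all multi-indices $K$ with $0 \leq k_{l} < p^{m}$ and all multi-indices $I$. Hence the collection $\{\partial^{[K]}(\partial^{[p^{m}]})^{I}\}$ is an $A$-basis of $\mathcal{D}_{A}^{(m)}$, so if $\pi(Q) = 0$ then the $A$-coefficients $a_{0,I}, a_{K_{I}}$ all vanish, which means $\alpha_{0,I}, \alpha_{K_{I}} \in V(W(A))$. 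By the characterization $\widehat{I^{(1)}} = \widehat{\mathcal{D}}_{W(A),b,1}^{(m)}$ from \prettyref{cor:existence-of-basis}, this says $Q \in \widehat{I^{(1)}}$.

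The main obstacle is the identification of $\{\partial^{[K]}(\partial^{[p^{m}]})^{I}\}$ as an $A$-basis of $\mathcal{D}_{A}^{(m)}$; once this comparison of bases is in hand, via \prettyref{lem:products} and Berthelot's structure theorem for $\mathcal{D}_{A}^{(m)}$, the proof assembles quickly from the structural results already proved.
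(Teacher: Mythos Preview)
Your proof is correct and follows essentially the same approach as the paper: both deduce the corollary directly from the basis description in \prettyref{thm:Basis} together with the identification $\widehat{I^{(1)}}=\widehat{\mathcal{D}}_{W(A),b,1}^{(m)}$ from \prettyref{cor:existence-of-basis}. The paper simply reads the quotient off from these two facts, while you make the isomorphism explicit via the action on $A=W_{1}(A)$ and carry out the basis comparison $\partial^{[K]}(\partial^{[p^{m}]})^{I}=u\cdot\partial^{\langle K+p^{m}I\rangle_{(m)}}$ in $\mathcal{D}_{A}^{(m)}$; this extra verification (the multinomial coefficient $\frac{(ip^{m})!}{(p^{m}!)^{i}\,i!}$ being a $p$-adic unit, and Lucas for the cross term) is correct and makes the argument self-contained, but the underlying idea is the same.
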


As well as 
\begin{cor}
\label{cor:Defn-of-D-infty}For each $m$ the obvious map $\widehat{\mathcal{D}}_{W(A)}^{(m)}\to\widehat{\mathcal{D}}_{W(A)}^{(m+1)}$
is injective. For each $m$ the natural map $\widehat{\mathcal{D}}_{W(A)}^{(m)}\to\widehat{\mathcal{D}}_{W(A)}$
is injective. 
\end{cor}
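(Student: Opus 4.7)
The strategy is to derive both assertions from the previous corollary, which says that the natural map $\widehat{\mathcal{D}}_{W(A)}^{(m)}\to\text{End}_{W(k)}(W(A))$ is injective for every $m\in\mathbb{Z}$. First I would check that the transition map $\widehat{\mathcal{D}}_{W(A)}^{(m)}\to\widehat{\mathcal{D}}_{W(A)}^{(m+1)}$ is induced by the tautological inclusion $\mathcal{D}_{W(A)}^{(m)}\hookrightarrow\mathcal{D}_{W(A)}^{(m+1)}$ at the uncompleted level: every elementary Witt-differential operator of level $\leq m$ is one of level $\leq m+1$ by \prettyref{def:Elementary-Witt-Ring}, and the ideal $I^{(r)}$ in $\mathcal{D}_{W(A)}^{(m)}$ maps into the ideal of the same name in $\mathcal{D}_{W(A)}^{(m+1)}$ since both are characterized by taking $W(A)$ into $V^{r}(W(A))$. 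Thus the inclusion is continuous for the filtration topologies and extends uniquely to a map of completions.

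Next I would set up the triangle
\[
\widehat{\mathcal{D}}_{W(A)}^{(m)}\to\widehat{\mathcal{D}}_{W(A)}^{(m+1)}\to\text{End}_{W(k)}(W(A)),
\]
whose commutativity follows componentwise from the inverse-limit description used to define each completion: an element of $\widehat{\mathcal{D}}_{W(A)}^{(m)}$ is a compatible system of classes $Q_{r}\in\mathcal{D}_{W_{r}(A)}^{(m)}$, and both sides of the triangle act on each $W_{r}(A)$ through the common map $\mathcal{D}_{W_{r}(A)}^{(m)}\to\mathcal{D}_{W_{r}(A)}^{(m+1)}\to\text{End}_{W(k)}(W_{r}(A))$; passing to the limit gives the same action on $W(A)=\varprojlim W_{r}(A)$. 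Since the composition in the display is injective by the previous corollary, so is the first factor $\widehat{\mathcal{D}}_{W(A)}^{(m)}\to\widehat{\mathcal{D}}_{W(A)}^{(m+1)}$.

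For the second assertion, recall that $\widehat{\mathcal{D}}_{W(A)}=\varinjlim_{m}\widehat{\mathcal{D}}_{W(A)}^{(m)}$ is a directed colimit along the transition maps just shown to be injective. If $Q\in\widehat{\mathcal{D}}_{W(A)}^{(m)}$ maps to zero in the colimit, then it already maps to zero in some $\widehat{\mathcal{D}}_{W(A)}^{(m')}$ with $m'\geq m$; but the composite $\widehat{\mathcal{D}}_{W(A)}^{(m)}\to\widehat{\mathcal{D}}_{W(A)}^{(m')}$, being a finite composition of injections, is itself injective, forcing $Q=0$. I expect the only real work to be verifying the commutativity of the triangle above, which is ultimately a bookkeeping matter regarding compatible actions on each $W_{r}(A)$; everything else is formal once the preceding corollary is in hand.
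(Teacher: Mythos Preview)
Your proposal is correct and follows the same route the paper implicitly takes: the corollary is stated without proof immediately after the injectivity of $\widehat{\mathcal{D}}_{W(A)}^{(m)}\to\text{End}_{W(k)}(W(A))$, and your factorization argument through that action map is exactly the intended deduction. One small imprecision: the ideals $I^{(r)}$ are not \emph{characterized} by the property of sending $W(A)$ into $V^{r}(W(A))$ (that would be the operator filtration, which is larger); rather, $I^{(r)}$ at level $m$ is generated by $V^{r}(W(A))$ and $\mathcal{E}W^{(m),r}(A)$, and these generators visibly lie in the corresponding generators at level $m+1$, which is all you need for continuity.
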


To close out this section, we would like to record for later use the
analogue of \prettyref{thm:Basis} when we have a morphism $\varphi^{\#}:B\to A$
of smooth $k$-algebras. By the functoriality of the Witt vectors
there is a morphism $W\varphi^{\#}:W(B)\to W(A)$. Letting $X=\text{Spec}(A)$
and $Y=\text{Spec}(B)$, we obtain a morphism of affine formal schemes
$W\varphi:W(X)\to W(Y)$. 
\begin{defn}
Let $\mathcal{D}_{W(X)\to W(Y)}^{(m)}$ be the $(\mathcal{D}_{W(A)},\mathcal{D}_{W(B)})$
bi-submodule of \\
$\text{Hom}_{W(k)}(W(B),W(A))$ generated by $\mathcal{E}W^{(m)}(B;A)$.
Let $\widehat{\mathcal{D}}_{W(X)\to W(Y)}^{(m)}$ be the completion
of this bimodule along the filtration 
\[
F^{l}(\mathcal{D}_{W(X)\to W(Y)}^{(m)})=\{I^{(i)}\mathcal{D}_{W(X)\to W(Y)}^{(m)}I^{(j)}\}_{i+j\geq l}
\]
\end{defn}

Suppose that $B$ possesses local coordinates $\{T_{1},\dots,T_{d}\}$.
Then we have 
\begin{cor}
\label{cor:Basis-for-bimodule}Every element of $\widehat{\mathcal{D}}_{W(X)\to W(Y)}^{(m)}$
can be uniquely expressed as 
\[
\sum_{I}(\sum_{r=0}^{\infty}\sum_{J_{I}}F^{-r}(\alpha_{J_{I}})\cdot W\varphi^{\#}(\{\partial\}_{J_{I}/p^{r}})\{\partial\}^{I})
\]
where $\alpha_{J_{I}}\in W(A)$, $\{\partial\}_{J_{I}/p^{r}}$ and
$\{\partial\}^{I}$ are in $\widehat{\mathcal{D}}_{W(B)}$, and the
convergence conditions are as in \prettyref{thm:Basis}. In particular,
for any $m\geq0$ we have 
\[
I^{(1)}\backslash\widehat{\mathcal{D}}_{W(X)\to W(Y)}/I^{(1)}\tilde{\to}A\otimes_{B}\mathcal{D}_{B}^{(m)}
\]
The latter object is the usual transfer bimodule in the theory of
$\mathcal{D}^{(m)}$-modules. 
\end{cor}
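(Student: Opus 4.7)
The plan is to mimic the proof of Theorem \ref{thm:Basis} almost verbatim, adapting each step to the bimodule setting. By definition $\widehat{\mathcal{D}}_{W(X)\to W(Y)}^{(m)}$ is the completion of the $(\mathcal{D}_{W(A)},\mathcal{D}_{W(B)})$ bi-submodule of $\mathrm{Hom}_{W(k)}(W(B),W(A))$ generated by $\mathcal{E}W^{(m)}(B;A)$. Corollary \ref{cor:Reps-of-phi-for-m}, applied to the morphism $B\to A$ rather than to $A=B$, already provides a presentation of elements of $\mathcal{E}W^{(m)}(B;A)$ that matches the stated form with the rightmost factor $\{\partial\}^I$ trivial (i.e.\ $I=0$). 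So existence reduces to showing that left multiplication by $\mathcal{D}_{W(A)}$ and right multiplication by $\mathcal{D}_{W(B)}$ preserve the class of expressions of the claimed shape.

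The right action is cheap: since $W\varphi^{\#}\colon W(B)\to W(A)$ is a ring map intertwining the two Frobenii, composing a generator $F^{-r}(\alpha)\,W\varphi^{\#}(\{\partial\}_{J_I/p^r})$ with $Q\in\widehat{\mathcal{D}}_{W(B)}^{(m)}$ yields
\[
F^{-r}(\alpha)\,W\varphi^{\#}\bigl(\{\partial\}_{J_I/p^r}\cdot Q\bigr),
\]
and expanding the product $\{\partial\}_{J_I/p^r}\cdot Q$ via Theorem \ref{thm:Basis} applied inside $\widehat{\mathcal{D}}_{W(B)}^{(m)}$ immediately gives a sum of the desired type (the coefficients absorbed under $W\varphi^{\#}$ become elements of $W(A)$ via the map $W\varphi^{\#}$, which commutes with $F^{-*}$). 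The left action is the main technical obstacle. Here one needs to commute a typical generator $F^{-s}(\beta)\{\partial_A\}_{j'/p^s}\in\mathcal{E}W^{(m)}(A)$ past a factor $F^{-r}(\alpha)\,W\varphi^{\#}(\{\partial\}_{J_I/p^r})$; this is performed by the same Leibniz expansion as in the key computation of Lemma \ref{lem:big-products}, using the bracket identity
\[
[\{\partial_A\}_{j'/p^s},F^{-r}(\alpha)\gamma]=\sum_{i}\{\partial_A\}_{(j'p^{?}-i)/p^{?}}\bigl(F^{-r}(\alpha)\gamma\bigr)\cdot\{\partial_A\}_{i/p^{?}}
\]
viewed in $\mathrm{End}_{W(k)}(W(A)[p^{-1}])$, together with the fact that $W\varphi^{\#}(\{\partial\}_{J_I/p^r})(b)\in W(A)$ so $\{\partial_A\}_{j'/p^s}$ can be legitimately applied to it. The convergence statement for $I\to\infty$ follows, just as in Corollary \ref{cor:existence-of-basis}, from the observation that each commutator lands in a deeper term of the filtration $F^{l}(\mathcal{D}_{W(X)\to W(Y)}^{(m)})$.

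For uniqueness, I would imitate Lemma \ref{lem:Injectivity-Lemma} directly. Choose a flat lift $\mathcal{B}_{r'+1}$ of $B$ with the coordinate lifts $\tilde T_1,\dots,\tilde T_d$, embed $W_{r'+1}(B)\hookrightarrow\mathcal{B}_{r'+1}$ via Lemma \ref{lem:Basic-Presentation-of-Witt}, and evaluate a hypothetical zero sum on the test elements $p^{r}\tilde T^{p^{r'-r}J_{I_1}+p^{r'+m}I_1}$ for the lexicographically least surviving multi-index $(r,J_{I_1},I_1)$. Exactly as in the proof of Lemma \ref{lem:Injectivity-Lemma}, only one term survives and its vanishing forces $\alpha_{J_{I_1}}\in V^{r'+1-r-f_{I_1}}(W(A))$; since this holds for every $r'$, we conclude $\alpha_{J_{I_1}}=0$, and an induction along the lexicographic order completes the argument. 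The main difficulty compared to the absolute case is bookkeeping, since three indices $(r,J_I,I)$ are now in play, but the structural argument is identical.

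For the final isomorphism $I^{(1)}\backslash\widehat{\mathcal{D}}_{W(X)\to W(Y)}/I^{(1)}\tilde{\to}A\otimes_B\mathcal{D}_B^{(m)}$, I would read it off the basis just established. Killing $I^{(1)}$ on the left forces $\alpha_{J_I}\in W(A)$ to lie in $W(A)/V^1(W(A))=A$ and kills every $F^{-r}$-term ($r\geq 1$) as well as every contribution with $J_I\neq 0$; killing $I^{(1)}$ on the right further restricts the integer-level operators $\{\partial\}^I$ on $W(B)$ to their reduction in $\mathcal{D}_B^{(m)}$. What remains is precisely the free $A$-module on the basis $\{\partial^{[I]}\,:\,0\leq i_l<p^{m+1}\}$, which is $A\otimes_B\mathcal{D}_B^{(m)}$ by the standard description of the transfer bimodule for PD-differential operators of level $m$.
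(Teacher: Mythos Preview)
Your treatment of the right action, of uniqueness via \prettyref{lem:Injectivity-Lemma}, and of the final isomorphism all match the paper's proof. The gap is in your left-action step. Your Leibniz expansion, copied from \prettyref{lem:big-products}, produces terms of the form
\[
(\text{coefficient})\cdot\bigl(\{\partial_A\}_{i/p^{?}}\circ W\varphi^{\#}\bigr)\circ\{\partial\}_{J_I/p^r},
\]
where $\{\partial_A\}_{i/p^{?}}$ is a differential operator built from the local coordinates on $A$. But the basis you are aiming for is expressed entirely in terms of the $B$-side operators $\{\partial\}_{J/p^r}\in\widehat{\mathcal{D}}_{W(B)}$. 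In the absolute case of \prettyref{lem:big-products} this distinction vanishes because $A=B$; here it does not, and you have not explained how to convert $\{\partial_A\}_{i/p^{?}}\circ W\varphi^{\#}$ into a combination $\sum F^{-t}(\gamma)\,W\varphi^{\#}\circ\{\partial\}_{K/p^t}$ with $\gamma\in W(A)$ and $\{\partial\}_{K/p^t}\in\widehat{\mathcal{D}}_{W(B)}$.

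That conversion is precisely the content of \prettyref{lem:Reps-of-HS}, \prettyref{lem:Reps-of-phi}, and \prettyref{cor:Reps-of-phi-redux} run with the local coordinates on $B$: one observes that $\{\partial_A\}_{i/p^{?}}\circ W\varphi^{\#}$ is itself a component of a canonical Hasse--Schmidt derivation from $W(B)$ to $W(A)$, hence lies in $\mathcal{E}W^{(m)}(B;A)$, and then repeats those arguments verbatim. The paper does exactly this---it skips your Leibniz detour and simply says the left action reduces to showing $\phi\circ W\varphi^{\#}\in\widehat{\mathcal{D}}_{W(X)\to W(Y),b,i}$ for $\phi\in\mathcal{E}W^{(0),i}(A)$, which follows by rerunning those three lemmas. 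So the fix is immediate, but as written your argument does not close.
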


\begin{proof}
It follows directly from the description of $\mathcal{E}W^{(m)}(B;A)$
in \prettyref{cor:Reps-of-phi-for-m} the set of such sums are contained
in $\widehat{\mathcal{D}}_{W(X)\to W(Y)}$; denote this set by $\widehat{\mathcal{D}}_{W(X)\to W(Y),b}$
and as above we say that $Q\in\widehat{\mathcal{D}}_{W(X)\to W(Y),b,i}$
if $Q$ admits a representation as above so that for each $r<i$ and
each associated $J_{I}$, we have $\alpha_{J_{I}}\in V^{i-r}(W(A))$.
So, applying \prettyref{thm:Basis}, we have to show that for any
element $\phi$ of $\mathcal{E}W^{(0),i}(A)$, we have that $\phi\circ W\varphi^{\#}$
is contained in $\widehat{D}_{W(A),b,i}$. 

Using the local coordinates on $B$, we see that the argument(s) of
\prettyref{lem:Reps-of-HS}, \prettyref{lem:Reps-of-phi}, and \prettyref{cor:Reps-of-phi-redux}
carry over to this situation with (essentially) no change, and we
conclude that $\widehat{\mathcal{D}}_{W(X)\to W(Y)}=\widehat{\mathcal{D}}_{W(X)\to W(Y),b}$. 

The uniqueness of the representation of an element of $\widehat{\mathcal{D}}_{W(X)\to W(Y)}$
as a sum follows exactly as in \prettyref{lem:Injectivity-Lemma},
and the last sentence follows immediately from the description and
the definition of the ideal $I^{(1)}$. 
\end{proof}
To finish out this section we show how to turn $\widehat{\mathcal{D}}_{W(A)}$
into a sheaf on the etale site of $X=\text{Spec}(A)$. This immediately
leads to a definition of $\widehat{\mathcal{D}}_{W(X)}$ for any smooth
$X$ over $k$. We need to show:
\begin{prop}
\label{prop:construction-of-transfer}1) Let $\psi^{\#}:B\to A$ be
an etale morphism of smooth $k$-algebras, where $B$ admits local
coordinates. Then there is an isomorphism 
\[
W(A)\widehat{\otimes}_{W(B)}\widehat{\mathcal{D}}_{W(B)}\tilde{\to}\widehat{\mathcal{D}}_{W(A)}
\]
where on the left hand side the $\widehat{\otimes}$ denotes completion
with respect to the filtration $V^{i}(W(A))$. In particular, we have
for each $r\geq0$ 
\[
W_{r+1}(A)\otimes_{W_{r+1}(B)}\mathcal{D}_{W_{r+1}(B)}\tilde{\to}\mathcal{D}_{W_{r+1}(A)}
\]

2) Let $\varphi^{\#}:C\to B$ any morphism, and $\psi^{\#}:B\to A$
as above. Let $X=\text{Spec}(B)$, $Y=\text{Spec}(C)$, and $U=\text{Spec}(A)$,
we obtain morphisms of affine formal schemes $W\varphi:W(X)\to W(Y)$
and $W\psi:W(U)\to W(X)$, as well as the composition $W(U)\to W(Y)$.
Then there is an isomorphism 
\[
W(A)\widehat{\otimes}_{W(B)}\widehat{\mathcal{D}}_{W(X)\to W(Y)}\tilde{\to}\widehat{\mathcal{D}}_{W(U)\to W(Y)}
\]
\end{prop}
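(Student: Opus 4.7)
Here is how I would approach this. The plan is to reduce everything to the explicit basis description of Theorem~\ref{thm:Basis} and its bimodule variant, Corollary~\ref{cor:Basis-for-bimodule}, and then to observe that an etale morphism does not change the Hasse--Schmidt derivations or their canonical Witt lifts, only the coefficients.

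First, observe that if $B$ admits local coordinates via an etale map $k[T_{1},\dots,T_{n}]\to B$, then the composite $k[T_{1},\dots,T_{n}]\to B\xrightarrow{\psi^{\#}}A$ is again etale, so $A$ admits local coordinates \emph{with the same generators}. Using the equivalence between Hasse--Schmidt derivations and algebra maps to $A[t]/t^{n+1}$ together with the infinitesimal lifting property for etale morphisms (noted after Definition~\ref{def:H-S}), the derivations $\partial_{l}^{[i]}$ on $B$ extend uniquely to Hasse--Schmidt derivations on $A$ over $k$ with the same names. By Theorem~\ref{thm:Canonical-H-S}, the associated canonical lifts to $W_{r+1}(A)$ exist and are uniquely determined by the underlying Hasse--Schmidt derivations on $A$; in particular the elementary Witt-differential operators $\{\partial_{l}\}_{j/p^{r}}$ defined on $W(B)$ extend canonically to operators with the same name on $W(A)$. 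Acting by $W(A)$ on the left, these generate a natural (continuous, filtration-preserving) map
\[
\Psi\colon W(A)\otimes_{W(B)}\mathcal{D}_{W(B)}\to\mathcal{D}_{W(A)},\qquad\alpha\otimes P\mapsto\alpha\cdot P^{\mathrm{ext}},
\]
which is well-defined because the canonical extension is $W(B)$-linear in $P$ (and so $\alpha\beta\otimes P\mapsto\alpha\beta P^{\mathrm{ext}}=\alpha(\beta P)^{\mathrm{ext}}$). Since $P^{\mathrm{ext}}$ preserves $V^{i}(W(A))$ whenever $P$ preserves $V^{i}(W(B))$ (by Proposition~\ref{prop:Move-Down}), this map is compatible with the ideals $I^{(r)}$ on both sides, and therefore extends to a map of the completions.

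To see $\Psi$ is an isomorphism, I would invoke the uniqueness half of Theorem~\ref{thm:Basis} twice. By that theorem applied to $A$, every element of $\widehat{\mathcal{D}}_{W(A)}$ has a unique convergent expansion of the form
\[
Q=\sum_{I}\Bigl(\sum_{r=1}^{\infty}\sum_{J_{I}}F^{-r}(\alpha_{J_{I}})\{\partial\}_{J_{I}/p^{r}}+\sum_{K_{I}}\alpha_{K_{I}}\{\partial\}_{K_{I}}\Bigr)\{\partial\}^{I}
\]
with $\alpha_{J_{I}},\alpha_{K_{I}}\in W(A)$. But the underlined operators $\{\partial\}_{J_{I}/p^{r}}$, $\{\partial\}_{K_{I}}$, $\{\partial\}^{I}$ are precisely the generators obtained by extension from the corresponding operators on $W(B)$, and applying Theorem~\ref{thm:Basis} on $B$ writes each of those as a (unique) element of $\widehat{\mathcal{D}}_{W(B)}$. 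Thus the given expansion is the image under $\Psi$ of a unique element of $W(A)\widehat{\otimes}_{W(B)}\widehat{\mathcal{D}}_{W(B)}$, where the completed tensor product is to be computed via the filtration $V^{i}(W(A))\otimes_{W(B)}\widehat{\mathcal{D}}_{W(B)}+W(A)\otimes_{W(B)}I^{(i)}$. Surjectivity and injectivity of $\Psi$ follow simultaneously from this matching of normal forms, and passage to the quotient by $I^{(r)}$ gives the stated ``uncompleted'' version for $W_{r+1}(A)$ (here one uses that every element of $\mathcal{D}_{W_{r+1}(B)}$ is represented by a \emph{finite} sum, so no completion is needed).

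For part~$2)$, the same argument applies verbatim, but with Corollary~\ref{cor:Basis-for-bimodule} in place of Theorem~\ref{thm:Basis}. Indeed, the basis elements for $\widehat{\mathcal{D}}_{W(X)\to W(Y)}^{(m)}$ are of the form $W\varphi^{\#}(\{\partial\}_{J_{I}/p^{r}})\{\partial\}^{I}$ and involve only Hasse--Schmidt derivations coming from the local coordinates of $C$ (composed with the morphism to $B$); after base change along $\psi^{\#}$ these operators, and their canonical Witt lifts, are unchanged--only the $W(B)$-coefficients get promoted to $W(A)$-coefficients. The uniqueness in loc.~cit.~then identifies $W(A)\widehat{\otimes}_{W(B)}\widehat{\mathcal{D}}_{W(X)\to W(Y)}$ with $\widehat{\mathcal{D}}_{W(U)\to W(Y)}$.

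The main obstacle I anticipate is purely bookkeeping: verifying that the tensor-product filtration on $W(A)\widehat{\otimes}_{W(B)}\widehat{\mathcal{D}}_{W(B)}$ really corresponds term-by-term to the filtration $I^{(r)}$ on $\widehat{\mathcal{D}}_{W(A)}$, i.e.~that the expansion coefficients of a fixed element lie in the expected $V^{i}$-power of $W(A)$. This requires combining the uniqueness statement of Lemma~\ref{lem:Injectivity-Lemma} with the observation that the canonical extension of operators from $W(B)$ to $W(A)$ commutes with reduction modulo $V^{i}$, both of which are already established.
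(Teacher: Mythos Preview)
Your approach is essentially the paper's own: construct the map by canonically extending Hasse--Schmidt derivations along the etale morphism, then use the basis description of Theorem~\ref{thm:Basis} (resp.\ Corollary~\ref{cor:Basis-for-bimodule} for part~2) on both sides to see it is an isomorphism, and read off the finite-level statement from the identification $\widehat{I^{(r+1)}}=\widehat{\mathcal{D}}_{W(A),b,r+1}$.

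The one point the paper isolates that you pass over is this: in the basis expansion the coefficients are not bare elements of $W(A)$ but terms of the form $F^{-r}(\alpha_{J_{I}})$ with $\alpha_{J_{I}}\in W(A)$, and $F^{-r}(\alpha_{J_{I}})$ does not itself lie in $W(A)$. To exhibit $F^{-r}(\alpha_{J_{I}})\{\partial\}_{J_{I}/p^{r}}$ as an element of $W(A)\widehat{\otimes}_{W(B)}\widehat{\mathcal{D}}_{W(B)}$ (rather than merely of $\widehat{\mathcal{D}}_{W(A)}$) one needs the isomorphism
\[
W(A)\otimes_{W(B)}F^{-r}(W(B))\;\tilde{\to}\;F^{-r}(W(A)),
\]
viewing both sides inside $W(A)[p^{-1}]$; this is what lets you write $F^{-r}(\alpha_{J_{I}})=\sum_{j}\beta_{j}\,F^{-r}(\gamma_{j})$ with $\beta_{j}\in W(A)$ and $\gamma_{j}\in W(B)$, so that each basis term factors as $\sum_{j}\beta_{j}\otimes\bigl(F^{-r}(\gamma_{j})\{\partial\}_{J_{I}/p^{r}}\bigr)$. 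The paper checks this fact directly in local coordinates using that $A$ is etale over $B$. Once you insert this observation, your ``matching of normal forms'' goes through exactly as you describe, and the filtration bookkeeping you flag at the end is then genuinely routine.
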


\begin{proof}
1) As HS derivations extend canonically over etale morphisms, one
sees easily that there is a morphism $\mathcal{E}W_{B}\to\mathcal{E}W_{A}$
which extends to a morphism $\widehat{\mathcal{D}}_{W(B)}\to\widehat{\mathcal{D}}_{W(A)}$;
extending this by linearity and completing gives the morphism of the
statement. The fact that it is an isomorphism follows directly from
\prettyref{thm:Basis} for both $\widehat{\mathcal{D}}_{W(A)}$ and
$\widehat{\mathcal{D}}_{W(B)}$; noting the fact that 
\[
W(A)\otimes_{W(B)}F^{-r}(W(B))\tilde{\to}F^{-r}(W(A))
\]
where $F^{-r}(W(B))$ and $F^{-r}(W(A))$ are regarded as sub-modules
of $W(B)[p^{-1}]$ and $W(A)[p^{-1}]$, respectively; this fact, in
turn, can be easily checked using local coordinates for $B$ and the
fact that $A$ is etale over $B$. Now the last sentence follows from
the identification of $\widehat{I^{(r+1)}}$ with $\widehat{D}_{W(A),b,r+1}$. 

2) This follows exactly as in $1)$ but using \prettyref{cor:Basis-for-bimodule}
instead of \prettyref{thm:Basis}. 
\end{proof}
This leads to the 
\begin{defn}
\label{def:D-and-Transfer}1) Let $X$ be a smooth scheme over $k$.
We define $\widehat{\mathcal{D}}_{W(X)}$ as the unique sheaf of rings
(in the Zariski topology of $X$) which assigns to each open affine
$\text{Spec}(A)\subset X$ the ring $\widehat{\mathcal{D}}_{W(A)}$.
By the previous lemma this is an inverse limit of quasicoherent sheaves
on $\{W_{r+1}(X)\}$. 

2) Let $\varphi:X\to Y$ a morphism of smooth schemes. We define $\widehat{\mathcal{D}}_{W(X)\to W(Y)}$
as the unique sheaf of $(\widehat{\mathcal{D}}_{W(X)},W\varphi^{-1}(\widehat{\mathcal{D}}_{W(Y)}))$
bimodules which assigns, to each open affine $\text{Spec}(A)\subset X$
such that $\varphi(\text{Spec}(A))\subset\text{Spec}(B)\subset Y$,
the bimodule $\widehat{\mathcal{D}}_{W(\text{Spec}(A))\to W(\text{Spec}(B))}$.
This is an inverse limit of quasicoherent sheaves on $\{W_{r+1}(X)\}$. 
\end{defn}

\subsection{Compliment: The algebra $\widehat{\mathcal{D}}_{W(X),\text{crys}}^{(0)}$}

In this subsection we wish to discuss the analogue of the above results
and constructions for a slightly different algebra, which will turn
out to be a completion of $\widehat{\mathcal{D}}_{W(X)}^{(0)}$. As
above we start in the case where $X=\text{Spec}(A)$. 
\begin{defn}
The operator filtration on $\widehat{\mathcal{D}}_{W(A)}^{(0)}$ is
the filtration defined by 
\[
F^{i}(\widehat{\mathcal{D}}_{W(A)}^{(0)})=\{P\in\widehat{\mathcal{D}}_{W(A)}^{(0)}|P(W(A)\subset V^{i}(W(A))\}
\]
This is a filtration by two sided ideals, and we denote the completion
along it by $\widehat{\mathcal{D}}_{W(X),\text{crys}}^{(0)}$. 
\end{defn}

There is a description of $\widehat{\mathcal{D}}_{W(X),\text{crys}}^{(0)}$
in local coordinates: 
\begin{prop}
Suppose $A$ admits local coordinates, and let the notation be as
in \prettyref{thm:Basis}. Then an element 
\[
\sum_{I}(\sum_{r=0}^{\infty}\sum_{J_{I}}F^{-r}(\alpha_{J_{I}})\{\partial\}_{J_{I}/p^{r}})\{\partial\}^{I}\in\widehat{\mathcal{D}}_{W(A)}^{(0)}
\]
is contained in $F^{i}(\widehat{\mathcal{D}}_{W(A)}^{(0)})$ iff,
whenever $f_{I}<i$ and $r<i$, we have $\alpha_{J_{I}}\in V^{i-(r+f_{I})}(W(A))$
(where we set $V^{i-(r+f_{I})}(W(A)=W(A)$ for $i-(r+f_{I})<0$). 

Therefore, every element of $\widehat{\mathcal{D}}_{W(A),\text{crys}}^{(0)}$
has a unique expression 
\[
\sum_{I}(\sum_{r=0}^{\infty}\sum_{J_{I}}F^{-r}(\alpha_{J_{I}})\{\partial\}_{J_{I}/p^{r}})\{\partial\}^{I}
\]
where $I=(i_{1},\dots,i_{n})\in\mathbb{N}^{n}$, $\{\partial\}^{I}:=\{\partial_{1}\}_{1}^{i_{1}}\cdots\{\partial_{n}\}_{1}^{i_{n}}$,
and $J_{I}=(j_{1},\dots,j_{n})$ satisfies $0\leq j_{l}<p^{r}$ for
each $j_{l}$ and $(p,j_{l})=1$ for at least one $l$; unless $r=0$
and $J_{I}=0$. 
\end{prop}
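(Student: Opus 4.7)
The plan is to derive both assertions as direct consequences of the Injectivity Lemma (\prettyref{lem:Injectivity-Lemma}), applied with $m=0$. The key observation linking the two is tautological: by the definition of the operator filtration, an element $P \in \widehat{\mathcal{D}}_{W(A)}^{(0)}$ lies in $F^{i}(\widehat{\mathcal{D}}_{W(A)}^{(0)})$ iff $P(W(A)) \subset V^{i}(W(A))$, equivalently iff $P$ acts as zero on the quotient $W_i(A) = W(A)/V^{i}(W(A))$. Since for $m=0$ the basis expression of \prettyref{thm:Basis} contains no $K_I$ terms, the Injectivity Lemma applied with $r'+1 = i$ is directly applicable.

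For the first assertion I would simply substitute $r'+1 = i$ into \prettyref{lem:Injectivity-Lemma} to obtain: $P$ acts as zero on $W_i(A)$ iff $\alpha_{J_I} \in V^{i-r-f_I}(W(A))$ for every triple $(I, r, J_I)$, with the convention that this condition is vacuous when $i - r - f_I \leq 0$, i.e., precisely when $r \geq i$ or $f_I \geq i$. This is exactly the condition stated in the proposition.

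For the second assertion, by construction $\widehat{\mathcal{D}}_{W(A),\text{crys}}^{(0)} = \varprojlim_i \widehat{\mathcal{D}}_{W(A)}^{(0)}/F^i$. Uniqueness then follows formally from the first part: if two such expressions represent the same element, the coefficient-differences $\alpha_{J_I} - \alpha'_{J_I}$ would lie in $V^{i-r-f_I}(W(A))$ for every $i$, hence vanish by $V$-adic separatedness of $W(A)$. For existence, I would first observe that for each fixed $i$ only finitely many triples $(I, r, J_I)$ satisfy both $r < i$ and $f_I < i$: indeed $r \in \{0, \dots, i-1\}$, and since $f_{i_j} = \text{val}(i_j!)$ grows unboundedly in $i_j$, the condition $f_I < i$ bounds each component $i_j$, cutting out a finite box in $\mathbb{N}^n$ (and for each fixed $r$ there are $p^{rn}$ possible $J_I$). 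Hence any formal sum $\sum F^{-r}(\alpha_{J_I})\{\partial\}_{J_I/p^r}\{\partial\}^I$ with arbitrary $\alpha_{J_I} \in W(A)$ has well-defined truncations modulo each $F^i$ and determines a compatible inverse system; conversely, any element of the inverse limit yields, for each $(I, r, J_I)$, a coherent system in $\varprojlim_i W(A)/V^{\max(0, i-r-f_I)}(W(A)) \cong W(A)$, recovering the coefficients.

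The essential technical content is entirely packaged in the Injectivity Lemma, so there is no serious obstacle; beyond that lemma, the only non-formal ingredient is the finiteness observation $|\{I : f_I < i\}| < \infty$, which is immediate from $f_i \to \infty$ as $i \to \infty$.
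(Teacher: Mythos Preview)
Your proof is correct and follows exactly the paper's approach: the paper's proof consists of the single sentence ``This follows from \prettyref{lem:Injectivity-Lemma} above,'' and you have spelled out precisely the details that this one-liner encodes, including the substitution $r'+1=i$, the finiteness of $\{I:f_I<i\}$, and the inverse-limit reconstruction of the coefficients.
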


\begin{proof}
This follows from \prettyref{lem:Injectivity-Lemma} above.
\end{proof}
Exactly as above, this implies that the sheafification $\widehat{\mathcal{D}}_{W(X),\text{crys}}^{(0)}$
has the property that $\Gamma(U,\widehat{\mathcal{D}}_{W(X),\text{crys}}^{(0)})=\widehat{\mathcal{D}}_{W(A),\text{crys}}^{(0)}$
whenever $U=\text{Spec}(A)$ has local coordinates. We also see that 
\begin{cor}
The completion map $\widehat{\mathcal{D}}_{W(X)}^{(0)}\to\widehat{\mathcal{D}}_{W(X),\text{crys}}^{(0)}$
is injective.
\end{cor}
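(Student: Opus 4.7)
The plan is to reduce to the affine case with local coordinates and then exploit the faithful action of $\widehat{\mathcal{D}}_{W(A)}^{(0)}$ on $W(A)$. Since both sheaves are defined by their sections on open affines admitting local coordinates (via the local description immediately preceding the corollary, compare with \prettyref{def:D-and-Transfer}), it suffices to prove that for any smooth $k$-algebra $A$ admitting local coordinates, the completion map
\[
\widehat{\mathcal{D}}_{W(A)}^{(0)} \longrightarrow \widehat{\mathcal{D}}_{W(A),\text{crys}}^{(0)}
\]
is injective. This in turn is equivalent to showing that the operator filtration is separated, i.e.\ that
\[
\bigcap_{i \geq 0} F^{i}(\widehat{\mathcal{D}}_{W(A)}^{(0)}) = 0.
\]

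Suppose $P$ lies in this intersection. By the very definition of the operator filtration, we have $P(W(A)) \subset V^{i}(W(A))$ for every $i \geq 0$. Since $W(A)$ is separated with respect to the $V$-adic topology (indeed, $W(A) = \varprojlim_{r} W_{r}(A)$ by construction, so $\bigcap_{i} V^{i}(W(A)) = 0$), this forces $P(a) = 0$ for every $a \in W(A)$; that is, $P$ acts as the zero endomorphism of $W(A)$.

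Now invoke the corollary stated directly after the proof of \prettyref{thm:Basis}, which asserts that the natural map $\widehat{\mathcal{D}}_{W(A)}^{(m)} \to \text{End}_{W(k)}(W(A))$ is injective. Applied to $m = 0$, this immediately yields $P = 0$, completing the proof of injectivity in the affine case and hence globally. The argument really only uses two inputs, both already established in the excerpt: the faithfulness of the action of Witt-differential operators on $W(A)$, and the separatedness of the $V$-adic filtration on $W(A)$; there is no serious obstacle, and in particular no delicate estimate on coefficients in the basis of \prettyref{thm:Basis} is needed.
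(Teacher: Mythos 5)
Your proof is correct. The paper states this corollary without an explicit argument, but its placement immediately after the proposition describing $F^{i}(\widehat{\mathcal{D}}_{W(A)}^{(0)})$ in local coordinates indicates the intended route: an element of $\bigcap_{i}F^{i}$ has, in the basis of \prettyref{thm:Basis}, each coefficient $\alpha_{J_{I}}$ lying in $V^{i-(r+f_{I})}(W(A))$ for all $i$, hence equal to zero. You instead bypass the coefficient description entirely: you identify the kernel of the completion map with $\bigcap_{i}F^{i}$, observe that any such operator lands in $\bigcap_{i}V^{i}(W(A))=0$ and therefore acts as zero on $W(A)$, and conclude by the faithfulness of the action (the corollary following \prettyref{thm:Basis}). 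Both arguments ultimately rest on \prettyref{lem:Injectivity-Lemma} (the faithfulness statement is itself deduced from it), so the substance is the same; your packaging is slightly more economical in that it needs only the separatedness of the $V$-adic filtration on $W(A)$ plus faithfulness, and no estimate on where the coefficients live. The reduction to the affine coordinatized case is also fine, since injectivity of a morphism of sheaves is local and such opens form a base.
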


\section{Accessibility}

In this chapter we will define the main category of interest in this
paper, the category of accessible modules over $\widehat{\mathcal{D}}_{W(X)}^{(m)}$
(here, $m\geq0$). In particular, we shall prove \prettyref{thm:Consider-the--bimodule},
\prettyref{thm:Bimodule-unique!}, \prettyref{cor:Bimodule-properties!},
as well as several other related results. To get things off the ground,
we need to defined and study the fundamental bimodule $\Phi^{*}\widehat{\mathcal{D}}_{\mathcal{A}}^{(m)}$
where $\mathcal{A}$ is, as usual, a smooth $W(k)$ algebra lifting
$A$ which admits local coordinates. We let $F:\mathcal{A}\to\mathcal{A}$
be some coordinatized lift of Frobenius and $\Phi:\mathcal{A}\to W(A)$
the associated map to Witt vectors. 
\begin{lem}
1) The action map $\widehat{\mathcal{D}}_{\mathcal{A}}^{(m)}\to\mathcal{E}nd_{W(k)}(\mathcal{A})$
is injective. 

2) Let $\Phi^{*}\widehat{\mathcal{D}}_{\mathcal{A}}^{(m)}$ denote
the completion of $W(A)\otimes_{\mathcal{A}}\widehat{\mathcal{D}}_{\mathcal{A}}^{(m)}$
along the filtration $V^{i}(W(A))\otimes_{\mathcal{A}}\widehat{\mathcal{D}}_{\mathcal{A}}^{(m)}$.
Then there is an embedding 
\[
\Phi^{*}\widehat{\mathcal{D}}_{\mathcal{A}}^{(m)}\to\mathcal{H}om_{W(k)}(\mathcal{A},W(A))
\]
which takes $1\otimes1$ to $\Phi$. This embedding preserves the
natural right $\widehat{\mathcal{D}}_{\mathcal{A}}^{(m)}$-module
structures on both sides. 
\end{lem}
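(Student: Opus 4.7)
The plan is to prove both parts by exploiting the Berthelot normal form for $\widehat{\mathcal{D}}_{\mathcal{A}}^{(m)}$ relative to the local coordinates $\{T_{1},\dots,T_{n}\}$ on $\mathcal{A}$: every element of $\widehat{\mathcal{D}}_{\mathcal{A}}^{(m)}$ has a unique expansion $P=\sum_{I}a_{I}\,\partial^{\langle I\rangle_{(m)}}$ with $a_{I}\in\mathcal{A}$, in terms of Berthelot's level $m$ divided power operators. The strategy for both parts will then be a lower-triangular evaluation argument on the monomials $T^{I}$, in the spirit of \prettyref{lem:Injectivity-Lemma}.

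For part $1)$, I would first reduce modulo $p^{r}$ to replace the (convergent) expansion of a putative nonzero $P$ by a finite sum, and let $I_{0}$ be the minimal multi-index (lexicographically) with $a_{I_{0}}\not\equiv0$ mod $p^{r}$. Applying $P$ to $T^{I_{0}}$, the standard formula $\partial^{\langle I\rangle_{(m)}}(T^{I_{0}})$ shows that every term with $I\neq I_{0}$ is either zero or lies in strictly higher monomials, while the $I=I_{0}$ term contributes $a_{I_{0}}$ up to a $p$-adic unit; hence $P(T^{I_{0}})\neq0$ mod $p^{r}$, a contradiction. Letting $r\to\infty$ gives the injectivity.

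For part $2)$, the Berthelot presentation identifies $W(A)\otimes_{\mathcal{A}}\widehat{\mathcal{D}}_{\mathcal{A}}^{(m)}$ with a module of expansions $\sum_{I}\alpha_{I}\otimes\partial^{\langle I\rangle_{(m)}}$ (with $\alpha_{I}\in W(A)$), and the $V$-adic completion supplies the convergence $\alpha_{I}\to0$ in $V^{i}(W(A))$. The candidate embedding sends such a sum to the map $a\mapsto\sum_{I}\alpha_{I}\cdot\Phi(\partial^{\langle I\rangle_{(m)}}(a))$: this is well defined over $\mathcal{A}$ because $\Phi$ is a ring homomorphism making $W(A)$ a right $\mathcal{A}$-module, right $\widehat{\mathcal{D}}_{\mathcal{A}}^{(m)}$-linearity is immediate from the definition by precomposition, and the image of $1\otimes1$ is $\Phi$ by construction. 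Since any element of $V^{i}(W(A))\otimes_{\mathcal{A}}\widehat{\mathcal{D}}_{\mathcal{A}}^{(m)}$ evidently acts with image in $V^{i}(W(A))$, and $\mathcal{H}om_{W(k)}(\mathcal{A},W(A))$ is $V$-adically complete (as $W(A)$ is), the map extends uniquely to the $V$-adic completion $\Phi^{*}\widehat{\mathcal{D}}_{\mathcal{A}}^{(m)}$.

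The main obstacle is injectivity on the completion. Here I would run an argument parallel to part $1)$: suppose $Q=\sum_{I}\alpha_{I}\otimes\partial^{\langle I\rangle_{(m)}}$ lies in the kernel, and suppose for contradiction that some $\alpha_{I}$ does not lie in every $V^{i}(W(A))$. Fix the largest $i$ with some $\alpha_{I}\notin V^{i}(W(A))$, and let $I_{0}$ be the lexicographically least such index. Evaluating at $T^{I_{0}}$ and reducing modulo $V^{i}(W(A))$, all terms with $I\neq I_{0}$ either vanish in $W(A)/V^{i}(W(A))$ (by the minimality of $I_{0}$ combined with the lower-triangular action of the $\partial^{\langle I\rangle_{(m)}}$) or contribute nothing mod $V^{i}$; the $I_{0}$ term gives $\alpha_{I_{0}}\cdot u$ with $u$ a $p$-adic unit coming from $\Phi(\partial^{\langle I_{0}\rangle_{(m)}}(T^{I_{0}}))$, forcing $\alpha_{I_{0}}\in V^{i}(W(A))$, a contradiction. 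Iterating $i\to\infty$ and then over $I_{0}$ yields $Q=0$ in the completion. The delicate point, which I expect to require the most care, is verifying the triangularity of the evaluation $\Phi(\partial^{\langle I\rangle_{(m)}}(T^{J}))$ modulo $V^{i}(W(A))$ for all $I,J$ simultaneously, and tracking the interaction of the $V$-adic filtration with the Berthelot indexing; this should follow from the explicit formulas for $\Phi$ on Teichmüller representatives combined with the binomial identities for level $m$ operators.
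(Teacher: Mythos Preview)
Your approach to part $1)$ is the one the paper has in mind (it says the proof ``boils down to a vastly simplified version of \prettyref{lem:Injectivity-Lemma}''), but the claim that $\partial^{\langle I_{0}\rangle_{(m)}}(T^{I_{0}})$ is a $p$-adic unit is false: already at level $0$ one has $\partial^{I_{0}}(T^{I_{0}})=I_{0}!$, divisible by $p$ as soon as some entry of $I_{0}$ is $\geq p$. The argument still works, but you must track the valuation of this constant and conclude only $a_{I_{0}}\in p^{\,r-\text{val}_{p}(I_{0}!)}\mathcal{A}$ (or the level-$m$ analogue), then let $r\to\infty$ with $I_{0}$ fixed; this is precisely the $f_{I}$ bookkeeping of \prettyref{lem:Injectivity-Lemma}. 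The same defect recurs in your part $2)$.

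For part $2)$ the paper takes a shorter route that avoids re-running any triangularity argument. Since $\Phi$ makes $W(A)$ topologically free over $\mathcal{A}$ on the basis $\{V^{r}(T^{I})\}$ (with $0\leq i_{j}<p^{r}$ and some $i_{j}$ coprime to $p$, together with $1$), every $Q\in\Phi^{*}\widehat{\mathcal{D}}_{\mathcal{A}}^{(m)}$ decomposes uniquely as $\sum_{r,I}V^{r}(T^{I})\cdot Q_{I}$ with $Q_{I}\in\widehat{\mathcal{D}}_{\mathcal{A}}^{(m)}$. Its image in $\mathcal{H}om_{W(k)}(\mathcal{A},W(A))$ sends $a\mapsto\sum_{r,I}V^{r}(T^{I})\cdot\Phi(Q_{I}(a))$, and uniqueness of this $\mathcal{A}$-basis expansion in $W(A)$ forces each $\Phi\circ Q_{I}$ to vanish on $\mathcal{A}$; then part $1)$ gives $Q_{I}=0$. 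This reduces $2)$ to $1)$ in one line. Your direct approach can be repaired, but note also a logical slip: there is no ``largest $i$ with some $\alpha_{I}\notin V^{i}(W(A))$'', since any nonzero $\alpha_{I}$ lies outside $V^{i}$ for all large $i$. You presumably mean to fix $i$, show every $\alpha_{I}\in V^{i}$, and then let $i\to\infty$.
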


\begin{proof}
1) This is a well-known fact, whose proof boils down to a (vastly)
simplified version of \prettyref{lem:Injectivity-Lemma}. 

2) There is a map $\widehat{\mathcal{D}}_{\mathcal{A}}^{(m)}\to\mathcal{H}om_{W(k)}(\mathcal{A},W(A))$
given by taking an element $Q\in\widehat{\mathcal{D}}_{\mathcal{A}}^{(m)}$
to $\Phi\circ Q:\mathcal{A}\to W(A)$ (where we regard $Q$ as acting
on $\mathcal{A}$). Since $\Phi$ is injective this map is injective
also; in addition, the right action of $\widehat{\mathcal{D}}_{\mathcal{A}}^{(m)}$
on itself clearly corresponds to the right action of $\widehat{\mathcal{D}}_{\mathcal{A}}^{(m)}$
on $\mathcal{H}om_{W(k)}(\mathcal{A},W(A))$ through the action on
$\mathcal{A}$. Further, the (left) action of $W(A)$ on itself gives
$\mathcal{H}om_{W(k)}(\mathcal{A},W(A))$ the structure of a $W(A)$-module,
so that there is an induced map 
\[
\iota:W(A)\otimes_{\mathcal{A}}\widehat{\mathcal{D}}_{\mathcal{A}}^{(m)}\to\mathcal{H}om_{W(k)}(\mathcal{A},W(A))
\]
and, for elements $a_{i}\in V^{i}(W(A))\otimes_{\mathcal{A}}\widehat{\mathcal{D}}_{\mathcal{A}}^{(m)}$,
the map $\iota(a_{i})$ has image in $V^{i}(W(A))$, so that the sum
${\displaystyle \sum_{i}\iota(a_{i})}$ is well-defined in $\mathcal{H}om_{W(k)}(\mathcal{A},W(A))$.
Thus there is an induced map, which we also call $\iota$, $\iota:\widehat{\mathcal{D}}_{\mathcal{A}}^{(m)}\to\mathcal{H}om_{W(k)}(\mathcal{A},W(A))$,
which clearly preserves the right $\widehat{\mathcal{D}}_{\mathcal{A}}^{(m)}$-module
structures. 

To show that this map is injective, let $\{T_{1},\dots,T_{n}\}$ be
local coordinates on $\mathcal{A}$. Then any element of $W(A)$ can
be written uniquely as ${\displaystyle \sum_{r=0}^{\infty}\sum_{I}V^{r}(T^{I})\cdot\Phi(a_{I})}$
where, for any $r$, $I=(i_{1},\dots,i_{n})$ ranges over elements
of $\mathbb{N}^{n}$ such that $0\leq i_{j}<p^{r}$ and $(p,i_{j})=1$
for at least one index $j$; and $a_{I}\in\mathcal{A}$. Thus any
element $Q\in\Phi^{*}\widehat{\mathcal{D}}_{\mathcal{A}}^{(m)}$ can
be written uniquely as ${\displaystyle \sum_{r=0}^{\infty}\sum_{I}V^{r}(T^{I})\cdot Q_{I}}$
where $Q_{I}\in\Phi^{*}\widehat{\mathcal{D}}_{\mathcal{A}}^{(m)}$
and $I$ satisfies the same conditions as above. But such an element
acts as $0$ on $\mathcal{A}$ iff each $Q_{I}$ acts as zero on $\mathcal{A}$
which implies that $Q=0$; therefore $\iota$ is injective as claimed. 
\end{proof}
The natural action of $\widehat{\mathcal{D}}_{W(A)}^{(m)}$ on $W(A)$
endows the module $\mathcal{H}om_{W(k)}(\mathcal{A},W(A))$ with a
left action by $\widehat{\mathcal{D}}_{W(A)}^{(m)}$. 
\begin{prop}
\label{prop:Construction-of-bimodule}The submodule $\Phi^{*}\widehat{\mathcal{D}}_{\mathcal{A}}^{(m)}$
of $\mathcal{H}om_{W(k)}(\mathcal{A},W(A))$ is preserved under the
action of $\widehat{\mathcal{D}}_{W(A)}^{(m)}$. Thus $\Phi^{*}\widehat{\mathcal{D}}_{\mathcal{A}}^{(m)}$
has the structure of a $(\widehat{\mathcal{D}}_{W(A)}^{(m)},\widehat{\mathcal{D}}_{\mathcal{A}}^{(m)})$-bimodule.
If $m\geq0$ the map $\mathcal{D}_{W(\mathcal{A})}^{(0)}\to\Phi^{*}\widehat{\mathcal{D}}_{\mathcal{A}}^{(0)}$
which takes $1\to\Phi$ is onto. 
\end{prop}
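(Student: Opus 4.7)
The plan is to reduce the closure statement to a single local-coordinate computation that will also yield the surjectivity. First, I would observe that any element of $\Phi^{*}\widehat{\mathcal{D}}_{\mathcal{A}}^{(m)}$ admits, by its very construction, a convergent presentation $\psi=\sum_{i}\alpha_{i}\cdot(\Phi\circ P_{i})$ with $\alpha_{i}\in W(A)$ and $P_{i}\in\widehat{\mathcal{D}}_{\mathcal{A}}^{(m)}$, and that the post-composition action of $\widehat{\mathcal{D}}_{W(A)}^{(m)}$ on $\mathcal{H}om_{W(k)}(\mathcal{A},W(A))$ is continuous with respect to the defining filtration $V^{i}(W(A))\otimes_{\mathcal{A}}\widehat{\mathcal{D}}_{\mathcal{A}}^{(m)}$. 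Combining these with \prettyref{thm:Basis}, it suffices to verify $Q\circ\Phi\in\Phi^{*}\widehat{\mathcal{D}}_{\mathcal{A}}^{(m)}$ for $Q$ ranging over the elementary generators: multiplication by $\alpha\in W(A)$, which is immediate, and the Witt derivations $F^{-r}(\beta)\{\partial_{l}\}_{j/p^{r}}$. For a general element $\psi=\sum_{i}\alpha_{i}(\Phi\circ P_{i})$, the Leibniz rule applied component-wise to the canonical Hasse--Schmidt derivation containing $\{\partial_{l}\}_{j/p^{r}}$ expands $Q\circ\psi$ into terms of the form $\tilde{D}_{\mu}(\alpha_{i})\cdot(\tilde{D}_{\lambda}\circ\Phi\circ P_{i})$. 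The first factor lives in $W(A)$, and the second factor lies in $\Phi^{*}\widehat{\mathcal{D}}_{\mathcal{A}}^{(m)}$ once one knows $\tilde{D}_{\lambda}\circ\Phi$ does, since $\Phi^{*}\widehat{\mathcal{D}}_{\mathcal{A}}^{(m)}$ is a right $\widehat{\mathcal{D}}_{\mathcal{A}}^{(m)}$-module.

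So the content is a local-coordinate computation showing $\{\partial_{l}\}_{j/p^{r}}\circ\Phi\in\Phi^{*}\widehat{\mathcal{D}}_{\mathcal{A}}^{(m)}$. I would pass to the ghost-coordinate embedding $W_{r'+1}(A)\hookrightarrow\mathcal{A}_{r'+1}$ of \prettyref{lem:Basic-Presentation-of-Witt}: under it $w_{r'}\circ\Phi$ agrees modulo $p^{r'+1}$ with the $r'$-fold iterate $F_{\mathcal{A}}^{r'}$ of the chosen Frobenius lift on $\mathcal{A}$, and $\{\partial_{l}\}_{j/p^{r}}$ restricts to the canonical extension of $\partial_{l}^{[jp^{r'-r}]}$. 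Therefore $w_{r'}(\{\partial_{l}\}_{j/p^{r}}(\Phi(a)))=\partial_{l}^{[jp^{r'-r}]}(F_{\mathcal{A}}^{r'}(a))$, and using the iterative Hasse--Schmidt identities on $\mathcal{A}$ together with the binomial reductions of \prettyref{claim:Multinomial-reduction-claim} and \prettyref{lem:products}, one can rewrite this as the $r'$-th ghost coordinate of $F^{-r}(\gamma_{a})\cdot\Phi(P_{l,j,r}(a))$ for an explicit $\gamma_{a}\in W(A)$ and $P_{l,j,r}\in\widehat{\mathcal{D}}_{\mathcal{A}}^{(m)}$. Checking that these presentations are compatible as $r'$ grows (and match the $F^{-r}$-semilinearity built into \prettyref{def:basic-ops}) promotes the ghost-level equality to an equality in $W(A)$, valid for every $a\in\mathcal{A}$. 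I expect the bookkeeping here---simultaneously tracking the valuation of $j$, the $V$-adic filtration, and the iterated Frobenius---to be the main obstacle.

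For the last assertion, the case $r=0$, $j=1$ of the preceding computation gives $\{\partial_{l}\}_{1}\circ\Phi=u\cdot\Phi\circ\partial_{l}$ for a $p$-adic unit $u$, so every $\Phi\circ\partial_{l}$ lies in the image of the map $\widehat{\mathcal{D}}_{W(A)}^{(0)}\to\Phi^{*}\widehat{\mathcal{D}}_{\mathcal{A}}^{(0)}$ sending $Q$ to $Q\circ\Phi$. Forming products inside $\widehat{\mathcal{D}}_{W(A)}^{(0)}$ and using the divided-power structure inherited on the right from $\widehat{\mathcal{D}}_{\mathcal{A}}^{(0)}$ then realizes every $\Phi\circ\partial^{[J]}$ in the image, and multiplying by elements of $W(A)$ on the left recovers the $W(A)$-submodule spanned by the $\alpha\cdot\Phi\circ\partial^{[J]}$. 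Since the image of $\widehat{\mathcal{D}}_{W(A)}^{(0)}$ is a $V$-adically closed submodule of the complete module $\Phi^{*}\widehat{\mathcal{D}}_{\mathcal{A}}^{(0)}$ and contains this dense subset, it is all of $\Phi^{*}\widehat{\mathcal{D}}_{\mathcal{A}}^{(0)}$, as required.
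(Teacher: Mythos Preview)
Your strategy for the closure statement is essentially the paper's: reduce to showing that each $\{\partial_l\}_{j/p^{r}}\circ\Phi$ lies in $\Phi^{*}\widehat{\mathcal{D}}_{\mathcal{A}}^{(m)}$ by passing to the ghost embedding of \prettyref{lem:Basic-Presentation-of-Witt} and computing $\partial_{l}^{[jp^{r'-r}]}\circ F_{\mathcal{A}}^{r'}$ explicitly. The paper carries this out by showing the relevant binomial coefficients $\binom{k_{i}p^{r'}}{j_{i}p^{r'-r}}$ are polynomial in $k_{i}$ over $\mathbb{Z}/p^{r'+1}$, hence come from an operator of the form $\partial_{i}\cdot q'(T_{i}\partial_{i})$; your invocation of the ``binomial reductions'' points at the same computation, though the subscript $\gamma_{a}$ should not depend on $a$.

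There is, however, a genuine error in your surjectivity argument. The identity $\{\partial_{l}\}_{1}\circ\Phi=u\cdot\Phi\circ\partial_{l}$ for a $p$-adic unit $u$ is \emph{false} integrally. The ghost-level computation gives $\{\partial_{l}\}_{1}(\Phi(T_{l}^{k}))=\binom{kp^{r'}}{p^{r'}}\,\Phi(T_{l}^{k-1})$, and while each $\binom{kp^{r'}}{p^{r'}}/k=\prod_{m=1}^{p^{r'}-1}\bigl((k-1)\tfrac{p^{r'}}{m}+1\bigr)$ is indeed a $p$-adic unit, it \emph{depends on $k$}. So $\{\partial_{l}\}_{1}\circ\Phi=\Phi\circ P_{l}$ for an operator $P_{l}\in\widehat{\mathcal{D}}_{\mathcal{A}}^{(0)}$ with $P_{l}\equiv\partial_{l}\pmod{p}$, not a scalar multiple of $\partial_{l}$. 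The paper handles this by reducing mod $p$ first (using $p$-adic completeness of $\Phi^{*}\widehat{\mathcal{D}}_{\mathcal{A}}^{(0)}$): in $\Phi^{*}\mathcal{D}_{A}^{(0)}$ one does have $\{\partial_{i}\}\cdot(1\otimes1)=1\otimes\partial_{i}$ exactly, and then every $\alpha\otimes P$ is visibly in the image. Your closing density argument would work once you replace the false unit identity by this mod-$p$ congruence, but as written the step does not go through. (Also, drop the reference to a ``divided-power structure'' on $\widehat{\mathcal{D}}_{\mathcal{A}}^{(0)}$ and the notation $\partial^{[J]}$: at level $0$ one works with ordinary powers $\partial^{J}$, and the divided powers $\partial^{[J]}$ only enter at higher level.)
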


\begin{proof}
Let us consider the first statement. We shall suppose that $m=0$,
the other cases being extremely similar. We begin by considering an
element of the form $\{\partial\}_{J/p^{r}}$, where $r\geq0$ and
$0\leq j_{i}\leq p^{r}$ for all $i\in\{1,\dots n\}$). We shall show
that $\{\partial\}_{J/p^{r}}\circ\Phi\in\text{Hom}_{W(k)}(\mathcal{A},W(A))$
agrees with an element of $\Phi^{*}\widehat{\mathcal{D}}_{\mathcal{A}}^{(0)}$. 

The map $\{\partial\}_{J/p^{r}}\circ\Phi$ induces, for each $r'\geq r$,
a map $\mathcal{A}/p^{r'+1}\to W_{r'+1}(A)$. Further, $\mathcal{A}/p^{r'+1}$
is etale over a polynomial ring $W_{r'+1}(k)[T_{1},\dots,T_{n}]$.
Therefore we must compute the action of $\{\partial\}_{J/p^{r}}$
on a monomial $T_{1}^{k_{1}}\cdots T_{n}^{k_{n}}\in W_{r'+1}(k)[T_{1},\dots,T_{n}]$.
We have 
\[
\{\partial\}_{J/p^{r}}(T_{1}^{k_{1}}\cdots T_{n}^{k_{n}})=\prod_{i=1}^{n}\binom{k_{i}p^{r'}}{j_{i}p^{r'-r}}\cdot T^{k_{i}-j_{i}/p^{r}}\in W_{r'+1}(k[T_{1},\dots,T_{n}])
\]
 When $j_{i}\neq0$ we have the identity 
\[
\binom{k_{i}p^{r'}}{j_{i}p^{r'-r}}=\prod_{l=1}^{j_{i}p^{r'-r}}\frac{k_{i}p^{r'}-j_{i}p^{r'-r}+l}{l}=\prod_{l=1}^{j'_{i}p^{r'-r+v_{i}}}\frac{k_{i}p^{r'}-j'_{i}p^{r'-r+v_{i}}+l}{l}
\]
where we have written $j_{i}=j_{i}'p^{v_{i}}$ where $v_{i}=\text{val}(j_{i})$.
Our aim is to show that this expression is a polynomial in $k_{i}$,
with coefficients in $\mathbb{Z}/p^{r'+1}$. If $j_{i}=p^{r}$, then
we have 
\[
\binom{k_{i}p^{r'}}{j_{i}p^{r'-r}}=\binom{k_{i}p^{r'}}{p^{r'}}=\prod_{l=1}^{p^{r'}}\frac{(k_{i}-1)p^{r'}+l}{l}
\]
\[
=k_{i}\cdot\prod_{l=1}^{p^{r'}-1}((k_{i}-1)\frac{p^{r'}}{l}+1)
\]
which is evidently a polynomial in $k_{i}$, with coefficients in
$\mathbb{Z}/p^{r'+1}$. We remark that if $q_{i,r'}(t)$ denotes this
polynomial, then $t|q_{i,r'}(t)$. 

Next, if $j_{i}<p^{r}$ then $v_{i}<r$ and we have 
\begin{equation}
\frac{k_{i}p^{r'}-j'_{i}p^{r'-r+v_{i}}-l}{l}=\frac{p^{r'-r+v_{i}}(k_{i}p^{r-v_{i}}-j'_{i})-l}{l}\label{eq:term}
\end{equation}
and since $\text{val}(j_{i}')=0$ we have $\text{val}(k_{i}p^{r-v_{i}}-j'_{i})=0$
and so, if $\text{val}(l)\leq r'-r+v_{i}$ we have 
\[
\frac{p^{r'-r+v_{i}}(k_{i}p^{r-v_{i}}-j'_{i})-l}{l}=\frac{p^{r'-r+v_{i}}}{l}(k_{i}p^{r-v_{i}}-j'_{i})-1\in\mathbb{Z}/p^{r'+1}
\]
However, if $\text{val}(l)>r'-r+v_{i}$ then the term \prettyref{eq:term},
considered as a element of $\mathbb{Q}$, is not in $\mathbb{Z}_{p}$.
For each such $l$ we define $l_{j'}:=j'p^{r'-r+v_{i}}-l$; since
$1\leq l\leq j'p^{r'-r+v_{i}}$ the same is true of $l_{j'}$, and
we have $\text{val}(l_{j'})=r'-r+v_{i}$. We have that 
\[
\frac{k_{i}p^{r'}-j'_{i}p^{r'-r+v_{i}}-l}{l}\cdot\frac{k_{i}p^{r'}-j'_{i}p^{r'-r+v_{i}}-l_{j'}}{l_{j'}}
\]
\[
=\frac{k_{i}p^{r'}-l_{j'}}{l_{j'}}\cdot\frac{k_{i}p^{r'}-l}{l}=(k_{i}\frac{p^{r'}}{l_{j'}}-1)(k_{i}\frac{p^{r'}}{l}-1)
\]
So, in sum, we have 
\[
\binom{k_{i}p^{r'}}{j_{i}p^{r'-r}}=\prod(\frac{p^{r'-r+v_{i}}}{l}(k_{i}p^{r-v_{i}}-j'_{i})-1)\cdot\prod_{\{l|\text{val}(l)>r'-r+v_{i}\}}(k_{i}\frac{p^{r'}}{l_{j'}}-1)(k_{i}\frac{p^{r'}}{l}-1)
\]
where the first product ranges over $l\in\{1,\dots,j'_{i}p^{r'-r+v_{i}}\}$
such that $\text{val}(l)\leq r'-r+v_{i}$ and $l$ is not of the form
$l_{j'}$. This expression is evidently a polynomial in $k_{i}$,
with coefficients in $\mathbb{Z}/p^{r'+1}$; as above we denote this
polynomial by $q_{i,r'}(t)$ . When $l=j'_{i}p^{r'-r+v_{i}}$ we get
the term ${\displaystyle k_{i}\frac{p^{r-v_{i}}}{j_{i}'}}$, so that
again $t|q_{i,r'}(t)$. By inspection one sees that the image of $q_{i,r'}(t)$
in $\mathbb{Z}/p^{r'}[t]$ is $q_{i,r'-1}(t)$. 

Now we note that, for any polynomial $q'(t)\in\mathbb{Z}/p^{r'+1}[t]$,
and for a fixed index $i\in\{1,\dots n\}$, the endomorphism of $W_{r'+1}(k)[T_{1},\dots,T_{n}]$
given by 
\[
\prod_{j=1}^{n}T_{j}^{k_{j}}\to k_{i}q'(k_{i})\prod_{j\neq i}T_{j}^{k_{j}}\cdot T_{i}^{k_{i}-1}
\]
is given by the action of an operator in $D_{\mathcal{A}/p^{r'+1}}^{(0)}$;
indeed we have that $\frac{\partial}{\partial T_{i}}\cdot q'{\displaystyle (T_{i}\frac{\partial}{\partial T_{i}})(\prod_{j=1}^{n}T_{j}^{k_{j}})=k_{i}q'(k_{i})\cdot\prod_{j\neq i}T_{j}^{k_{j}}\cdot T_{i}^{k_{i}-1}}$.
So we have
\[
\{\partial\}_{J/p^{r}}(T_{1}^{k_{1}}\cdots T_{n}^{k_{n}})=(\prod_{\{i|j_{i}\neq0\}}p^{r-v_{i}}T_{i}^{1-j_{i}/p^{r}}\cdot q_{i,r'}(k_{i}))(T_{1}^{k_{1}-\epsilon_{1}}\cdots T_{n}^{k_{n}-\epsilon_{n}})
\]
(where $\epsilon_{i}=1$ if $j_{i}\neq0$ and $\epsilon_{i}=0$ if
$j_{i}=0$), which, since $t|q_{i,r'}(t)$, is given by the application
of an operator 
\[
\prod_{\{i|j_{i}\neq0\}}p^{r-v_{i}}T_{i}^{1-j_{i}/p^{r}}\cdot Q_{i,r'}
\]
for appropriate $Q_{i,r'}\in D_{\mathcal{A}/p^{r'+1}}^{(0)}$; since
$\mathcal{A}/p^{r'+1}$ is etale over $W_{r'+1}(k)[T_{1},\dots,T_{n}]$
and both $\{\partial\}_{J/p^{r}}$ and ${\displaystyle \prod_{\{i|j_{i}\neq0\}}p^{r-v_{i}}T_{i}^{1-j_{i}/p^{r}}\cdot Q_{i,r'}}$
are differential operators, we deduce that they are equal on all of
$\mathcal{A}/p^{r'+1}$. Furthermore the image of $Q_{i,r'}$ in $D_{\mathcal{A}/p^{r'}}^{(0)}$
is $Q_{i,r'-1}$ since this is true for the polynomials $q_{i,r'}$.
Therefore, we have that the operator 
\[
\prod_{\{i|j_{i}\neq0\}}p^{r-v_{i}}T_{i}^{1-j_{i}/p^{r}}\cdot Q_{i}=\lim_{r'\to\infty}\prod_{\{i|j_{i}\neq0\}}p^{r-v_{i}}T_{i}^{1-j_{i}/p^{r}}\cdot Q_{i,r'}
\]
acts as $\{\partial\}_{J/p^{r}}$ on $\mathcal{A}$, which shows that
$\{\partial\}_{J/p^{r}}\circ\Phi$ agrees with an element of $\Phi^{*}\mathcal{D}_{\mathcal{A}}^{(0)}$. 

Now suppose, in addition, that $J=(j_{1},\dots,j_{n})$ has the property
that at least one $j_{i}$, say $j_{i'}$, satisfies $\text{val}(j_{i'})=0$.
Then we may consider the operator $F^{-r}(\alpha)\cdot\{\partial\}_{J/p^{r}}$,
and the previous discussion shows that this operator acts as 
\[
V^{r}(\alpha T^{p^{r}-j_{i'}})Q_{i'}\cdot\prod_{i\neq i'}p^{r-v_{i}}T_{i}^{1-j_{i}/p^{r}}\cdot Q_{i}\in\Phi^{*}\mathcal{D}_{\mathcal{A}}^{(0)}
\]
This implies further that, for any $Q\in\mathcal{D}_{\mathcal{A}}^{(0)}$,
the operator $F^{-r}(\alpha)\{\partial\}_{J/p^{r}}\circ\Phi\circ Q$
agrees with $V^{r}(\alpha T^{p^{r}-j_{i'}})Q_{i'}\cdot\prod_{i\neq i'}p^{r-v_{i}}T_{i}^{1-j_{i}/p^{r}}\cdot Q_{i}\cdot Q$,
as an operator on $\mathcal{A}$, and is therefore also contained
in $\Phi^{*}\mathcal{D}_{\mathcal{A}}^{(0)}$. Thus we see that, for
any sum ${\displaystyle \sum_{r=1}^{\infty}F^{-r}(\alpha)\cdot\{\partial\}_{J/p^{r}}}$,
we have 
\[
\sum_{r=1}^{\infty}F^{-r}(\alpha)\cdot\{\partial\}_{J/p^{r}}\circ\Phi\circ Q={\displaystyle \sum_{r=1}^{\infty}}V^{r}(\alpha T^{p^{r}-j_{i'}})Q_{i'}\cdot\prod_{i\neq i'}p^{r-v_{i}}T_{i}^{1-j_{i}/p^{r}}\cdot Q_{i}\cdot Q
\]
which is a convergent sum in $\Phi^{*}\mathcal{D}_{\mathcal{A}}^{(0)}$. 

Next consider any element of the form $\{\partial\}^{I}=\{\partial_{1}\}^{i_{1}}\cdots\{\partial_{n}\}^{i_{n}}$.
Suppose, inductively, that $\{\partial_{1}\}^{i_{1}-1}\cdots\{\partial_{n}\}^{i_{n}}\circ\Phi\in\Phi^{*}\mathcal{D}_{\mathcal{A}}^{(0)}$,
and write 
\[
\{\partial_{1}\}^{i_{1}-1}\cdots\{\partial_{n}\}^{i_{n}}\circ\Phi=\sum_{r=0}^{\infty}V^{r}(\alpha_{r})\cdot\Phi\circ Q_{r}
\]
for $Q_{r}\in\mathcal{D}_{\mathcal{A}}^{(0)}$. Then 
\[
\{\partial_{1}\}\sum_{r=0}^{\infty}V^{r}(\alpha_{r})\cdot\Phi\circ Q_{r}=\sum_{r=0}^{\infty}[\{\partial_{1}\},V^{r}(\alpha_{r})]\cdot\Phi\circ Q_{r}+\sum_{r=0}^{\infty}V^{r}(\alpha_{r})\{\partial\}_{1}\circ\Phi\circ Q_{r}
\]
and since ${\displaystyle [\{\partial_{1}\},V^{r}(\alpha_{r})]=\sum_{l=1}^{\infty}\sum_{j\leq p^{l}}F^{-l}(\beta_{jl})\{\partial\}_{j/p^{l}}}$
(as in the proof of \prettyref{eq:phi-times-Q}), the previous remarks
imply that $\{\partial\}^{I}\circ\Phi\in\Phi^{*}\mathcal{D}_{\mathcal{A}}^{(0)}$.
As above this implies directly that $\{\partial\}^{I}\circ\Phi\circ Q\in\Phi^{*}\mathcal{D}_{\mathcal{A}}^{(0)}$
for all $Q\in\mathcal{D}_{\mathcal{A}}^{(0)}$, and then, using the
representation of an element of $\widehat{D}_{W(A)}^{(0)}$ as in
\prettyref{thm:Basis}, the first result follows. 

Now we consider the second statement, again when $m=0$ ($m>0$ is
essentially identical). We may, by the fact that $\Phi^{*}\widehat{\mathcal{D}}_{\mathcal{A}}^{(0)}$
is $p$-adically complete, prove the analogous fact for the reduction
$\Phi^{*}\widehat{\mathcal{D}}_{\mathcal{A}}^{(0)}/p$; let us call
this object $\Phi^{*}\mathcal{D}_{A}^{(0)}$. We have, for each $1\leq i\leq n$,
that $\{\partial_{i}\}\cdot(1\otimes1)=1\otimes\partial_{i}$ inside
$\Phi^{*}\mathcal{D}_{A}^{(0)}$ (this follows directly from the above
description of the action of $\{\partial_{i}\}$). So we see that
the image of $\widehat{\mathcal{D}}_{W(A)}^{(0)}$ acting on $1\otimes1\in\Phi^{*}\mathcal{D}_{A}^{(0)}$
contains every element of the form $\alpha\otimes P$ for $P\in\mathcal{D}_{A}^{(0)}$.
As every element of $\Phi^{*}\mathcal{D}_{A}^{(0)}$ is a (convergent)
sum of such elements, the result follows. 
\end{proof}
Let us note here that this bimodule sheafifies well:
\begin{lem}
\label{lem:Bimodule-is-a-sheaf}Let $\mathcal{A}$ and $\Phi:\mathcal{A}\to W(A)$
be as above, and let $X=\text{Spec}(A)$, $\mathfrak{X}=\text{Specf}(\mathcal{A})$.
For each affine open $U\subset X$ we obtain, by complete localization
of the map $\Phi$, a map $\Phi:\mathcal{O}_{\mathfrak{X}}(U)\to W(\mathcal{O}_{X}(U))$,
and, therefore, a bimodule $\Phi^{*}\mathcal{O}_{\mathfrak{X}}(U)$.
Thus, regarding $\Phi$ as a morphism $\mathcal{O}_{\mathfrak{X}}\to W(\mathcal{O}_{X})$,
this sheaf is the sheaf of bi-sub-modules of $\mathcal{H}om_{W(k)}(\mathcal{O}_{\mathfrak{X}},\mathcal{O}_{W(X)})$
locally generated by $\Phi$. It is also the inverse limit of the
quasicoherent sheaves $(W_{r}(A)\otimes_{\mathcal{A}}\mathcal{D}_{\mathcal{A}_{r}}^{(0)})^{\sim}$
on $\text{Spec}(W_{r}(A))$, and hence quasicoherent on $W(X)$. 
\end{lem}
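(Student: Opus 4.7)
The plan is to deduce the three assertions---well-posedness of the local bimodule on a basis of affine opens, the identification with the sub-bimodule of $\mathcal{H}om_{W(k)}(\mathcal{O}_{\mathfrak{X}},\mathcal{O}_{W(X)})$ locally generated by $\Phi$, and the inverse limit description---by combining Proposition \ref{prop:Construction-of-bimodule} with the sheafification argument already used for $\widehat{\mathcal{D}}_{W(X)}^{(0)}$ in Proposition \ref{prop:construction-of-transfer}.

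First I would check that the local input is well-posed. For a basic open $U=\text{Spec}(A_g)\subset X$, lift $g$ to some $\widetilde{g}\in\mathcal{A}$ and form the $p$-adic completion $\mathcal{A}_{\widetilde{g}}$. By etale rigidity this is independent (up to canonical isomorphism) of the choice of $\widetilde{g}$, and the coordinatized Frobenius lift $F$ on $\mathcal{A}$ extends uniquely to a coordinatized Frobenius lift on $\mathcal{A}_{\widetilde{g}}$. The universal property of Witt vectors then supplies $\Phi:\mathcal{A}_{\widetilde{g}}\to W(A_g)$, fitting into a commutative square over $\mathcal{A}\to\mathcal{A}_{\widetilde{g}}$, and hence a well-defined bimodule $\Phi^{*}\widehat{\mathcal{D}}_{\mathcal{A}_{\widetilde{g}}}^{(0)}$ in the sense of \prettyref{prop:Construction-of-bimodule}.

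Next, for the sheaf axiom I would produce the natural base-change map
\[
W(A_g)\,\widehat{\otimes}_{W(A)}\,\Phi^{*}\widehat{\mathcal{D}}_{\mathcal{A}}^{(0)}\ \longrightarrow\ \Phi^{*}\widehat{\mathcal{D}}_{\mathcal{A}_{\widetilde{g}}}^{(0)}
\]
and prove it is an isomorphism by reducing both sides modulo the $r$-th step of the filtration $V^{i}(W(A))\otimes_{\mathcal{A}}\widehat{\mathcal{D}}_{\mathcal{A}}^{(0)}$ and invoking the etale localization formula $\widehat{\mathcal{D}}_{\mathcal{A}_{\widetilde{g}},r}^{(0)}\tilde{=}\mathcal{A}_{\widetilde{g},r}\otimes_{\mathcal{A}_r}\mathcal{D}_{\mathcal{A}_r}^{(0)}$ together with $W_r(A_g)\tilde{=}W_r(A)\otimes_{\mathcal{A}_r}\mathcal{A}_{\widetilde{g},r}$ (valid because $\mathcal{A}_{\widetilde{g},r}$ is etale over $\mathcal{A}_r$, so this last base change is literally a localization on the Witt level by \prettyref{lem:Basic-Presentation-of-Witt}). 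This gives the sheaf property on a basis of opens, from which one obtains the sheaf $\Phi^{*}\widehat{\mathcal{D}}_{\mathfrak{X}}^{(0)}$. The identification of this sheaf with the locally generated sub-bimodule of $\mathcal{H}om_{W(k)}(\mathcal{O}_{\mathfrak{X}},\mathcal{O}_{W(X)})$ is then immediate from the surjection statement in \prettyref{prop:Construction-of-bimodule}, applied sectionwise.

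Finally, the inverse limit description follows at once from unwinding the definition of $\Phi^{*}\widehat{\mathcal{D}}_{\mathcal{A}}^{(0)}$: its $r$-th quotient is by construction $W_r(A)\otimes_{\mathcal{A}_r}\mathcal{D}_{\mathcal{A}_r}^{(0)}$, which is the underlying module of a quasicoherent sheaf on $\text{Spec}(W_r(A))$; taking the inverse limit over $r$ then yields quasicoherence on $W(X)$ in the sense of the conventions fixed in the introduction. The main obstacle is the isomorphism displayed above: one must verify that the completion defining $\Phi^{*}$ commutes with completed etale base change along $\mathcal{A}\to\mathcal{A}_{\widetilde{g}}$, and that the resulting bimodule is independent of the choice of $\widetilde{g}$; the former reduces modulo $V^r$ to the etale-localization formula for Berthelot's rings, while the latter either is checked directly or follows a posteriori from the canonicity in \prettyref{thm:Bimodule-unique!}.
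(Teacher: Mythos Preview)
Your proposal is correct and aligned with the paper's approach: both deduce the sheaf property and bimodule identification from \prettyref{prop:Construction-of-bimodule}, and both obtain quasicoherence from the description of $\Phi^{*}\widehat{\mathcal{D}}_{\mathcal{A}}^{(0)}$ as an inverse limit of the $W_r(A)\otimes_{\mathcal{A}_r}\mathcal{D}_{\mathcal{A}_r}^{(0)}$. The paper is much terser, declaring everything but quasicoherence ``clear from the above''; the one explicit point it adds that you leave implicit is the invocation of the fact (Kashiwara--Schapira, lemma 1.1.6) that for a \emph{surjective} inverse system of quasicoherent sheaves one has $\lim_n \mathcal{F}_n(U)\tilde{=}(\lim_n\mathcal{F}_n)(U)$ on affines, which is what guarantees that the inverse-limit sheaf really has the sections $\Phi^{*}\widehat{\mathcal{D}}_{\mathcal{A}_{\widetilde g}}^{(0)}$ you computed.
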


\begin{proof}
Everything is clear from the above except, perhaps, the quasicoherence
statement. For that, recall that, for a $\{\mathcal{F}_{n}\}$ a surjective
system of quasicoherent sheaves and $U\subset X$ any open affine,
we have ${\displaystyle \lim_{n}\mathcal{F}_{n}(U)\tilde{=}(\lim_{n}\mathcal{F}_{n})(U)}$
(by, e.g., \cite{key-8}, lemma 1.1.6). But for each such $U$ the
theorem above implies 
\[
\lim_{r}(W_{r}(A)\otimes_{\mathcal{A}}\mathcal{D}_{\mathcal{A}_{r}}^{(0)})^{\sim}(U)\tilde{=}\Phi^{*}\mathcal{O}_{\mathfrak{X}}(U)
\]
and the result follows.
\end{proof}
Before proceeding, we would like to give the crystalline version of
these results. Fix $r\geq1$, and consider $\mathfrak{X}_{r}$ a smooth
formal scheme over $W_{r}(k)$ with special fibre $X$. We recall
that the differential operators $\mathcal{D}_{\mathfrak{X}_{r}}^{(0)}$
posses a two sided ideal $\mathcal{I}_{r}$ consisting of sections
$P$ which annihilate $\mathcal{O}_{\mathfrak{X}_{r}}$. If $\mathfrak{X}_{r}$
is affine and possess local coordinates $\{T_{1},\dots,T_{n}\}$,
then this ideal is generated by sections of the form $\{p^{j}\partial_{i}^{p^{r-j}}\}$
for $1\leq i\leq n$ and $0\leq j\leq r-1$. The completion of $\mathcal{D}_{\mathfrak{X}_{r}}^{(0)}$
along the powers of $\mathcal{I}_{r}$ is denoted $\mathcal{D}_{\mathfrak{X}_{r},\text{crys}}^{(0)}$;
if $\mathfrak{X}_{r}=\text{Spec}(\mathcal{A}_{r})$ then we denote
$\mathcal{D}_{\mathcal{A}_{r},\text{crys}}^{(0)}$ the global sections
of $\mathcal{D}_{\mathfrak{X}_{r},\text{crys}}^{(0)}$. Taking the
inverse limit over $r$, we obtain the algebra $\mathcal{\widehat{D}}_{\mathcal{A},\text{crys}}^{(0)}$,
which is a further completion of $\widehat{\mathcal{D}}_{\mathcal{A}}^{(0)}$;
similarly we have the sheaf $\mathcal{\widehat{D}}_{\mathfrak{X},\text{crys}}^{(0)}$.
One can show without much difficulty that elements of this ring have
unique expressions of the form 
\[
\sum_{I}a_{I}\partial^{I}
\]
where $a_{I}\in\mathcal{A}$. Since $\partial^{I}\to0$ as $I\to\infty$
as operators on $\mathcal{A}$, this algebra acts on $\mathcal{A}$,
and the action is faithful. 

Now, repeating the proof of the above theorems essentially verbatim,
we obtain 
\begin{thm}
1) Let $\Phi^{*}\widehat{\mathcal{D}}_{\mathcal{A},\text{crys}}^{(0)}$
denote the completion of $W(A)\otimes_{\mathcal{A}}\widehat{\mathcal{D}}_{\mathcal{A},\text{crys}}^{(0)}$
along the filtration $V^{i}(W(A))\otimes_{\mathcal{A}}\widehat{\mathcal{D}}_{\mathcal{A},\text{crys}}^{(0)}$.
Then there is an embedding 
\[
\Phi^{*}\widehat{\mathcal{D}}_{\mathcal{A},\text{crys}}^{(0)}\to\mathcal{H}om_{W(k)}(\mathcal{A},W(A))
\]
which takes $1\otimes1$ to $\Phi$. This embedding preserves the
natural right $\widehat{\mathcal{D}}_{\mathcal{A},\text{crys}}^{(0)}$-module
structures on both sides. 

2) The submodule $\Phi^{*}\widehat{\mathcal{D}}_{\mathcal{A},\text{crys}}^{(0)}$
of $\mathcal{H}om_{W(k)}(\mathcal{A},W(A))$ is preserved under the
action of $\widehat{\mathcal{D}}_{W(A)}^{(0)}$. Thus $\Phi^{*}\widehat{\mathcal{D}}_{\mathcal{A},\text{crys}}^{(0)}$
has the structure of a $(\widehat{\mathcal{D}}_{W(A)}^{(0)},\widehat{\mathcal{D}}_{\mathcal{A},\text{crys}}^{(0)})$-bimodule.

3) Let $X=\text{Spec}(A)$, $\mathfrak{X}=\text{Specf}(\mathcal{A})$.
Then there is a $(\widehat{\mathcal{D}}_{W(X)}^{(0)},\widehat{\mathcal{D}}_{\mathfrak{X},\text{crys}}^{(0)})$,
which is an inverse limit of quasicoherent sheaves on $W(X)$.
\end{thm}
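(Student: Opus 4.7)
The plan is to mimic the proofs of the preceding lemma, \prettyref{prop:Construction-of-bimodule}, and \prettyref{lem:Bimodule-is-a-sheaf}, substituting $\widehat{\mathcal{D}}_{\mathcal{A},\text{crys}}^{(0)}$ for $\widehat{\mathcal{D}}_{\mathcal{A}}^{(0)}$ throughout and checking that each step survives the further completion along powers of the two-sided ideal $\mathcal{I}$ of operators annihilating $\mathcal{O}_{\mathfrak{X}}$. The essential input, recorded in the discussion preceding the theorem, is that every element of $\widehat{\mathcal{D}}_{\mathcal{A},\text{crys}}^{(0)}$ has a unique expression $\sum_{I} a_{I}\partial^{I}$ with $\partial^{I}\to 0$ in the operator topology, and that the resulting action on $\mathcal{A}$ is faithful.

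For part $1)$, composition with $\Phi$ gives an injection $\widehat{\mathcal{D}}_{\mathcal{A},\text{crys}}^{(0)}\hookrightarrow\mathcal{H}om_{W(k)}(\mathcal{A},W(A))$ sending $Q\mapsto\Phi\circ Q$; extending $W(A)$-linearly and completing along $V^{i}(W(A))$ produces the required embedding, and injectivity is checked by the same decomposition $W(A)=\sum_{r,I}V^{r}(T^{I})\cdot\Phi(\mathcal{A})$ used in the non-crystalline case. Part $2)$ is a direct consequence of the computation already performed in the proof of \prettyref{prop:Construction-of-bimodule}: for each elementary generator $F^{-r}(\alpha)\{\partial\}_{J/p^{r}}$ of $\widehat{\mathcal{D}}_{W(A)}^{(0)}$, that argument produces an identity of the form
\[
F^{-r}(\alpha)\{\partial\}_{J/p^{r}}\circ\Phi = V^{r}(\alpha T^{p^{r}-j_{i'}})\, Q_{i'} \cdot \prod_{i\neq i'} p^{r-v_{i}} T_{i}^{1-j_{i}/p^{r}}\, Q_{i}\circ\Phi,
\]
in which each $Q_{i}$ is already an element of $\widehat{\mathcal{D}}_{\mathcal{A}}^{(0)}\subset\widehat{\mathcal{D}}_{\mathcal{A},\text{crys}}^{(0)}$, being the $p$-adic limit of the operators $Q_{i,r'}$ built from the polynomials $q_{i,r'}(T_{i}\partial_{i})$. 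Right-composing with any $Q\in\widehat{\mathcal{D}}_{\mathcal{A},\text{crys}}^{(0)}$ therefore remains in $\Phi^{*}\widehat{\mathcal{D}}_{\mathcal{A},\text{crys}}^{(0)}$; induction on the factors $\{\partial\}^{I}$, together with \prettyref{thm:Basis} writing arbitrary elements of $\widehat{\mathcal{D}}_{W(A)}^{(0)}$ as convergent sums of elementary operators, extends this to the full action, with convergence in the $V^{i}$-topology controlled by the same estimate used in \prettyref{lem:big-products}.

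Part $3)$ is then a routine sheafification: each truncation $\Phi^{*}\widehat{\mathcal{D}}_{\mathcal{A},\text{crys}}^{(0)}/V^{r}$ is identified with the quasicoherent module $(W_{r}(A)\otimes_{\mathcal{A}_{r}}\mathcal{D}_{\mathcal{A}_{r},\text{crys}}^{(0)})^{\sim}$ on $W_{r}(X)$, and the inverse limit in $r$ yields the desired sheaf on $W(X)$, with gluing across etale coordinate changes handled exactly as in \prettyref{prop:construction-of-transfer}. The main obstacle I anticipate is in part $2)$: one must verify that no pathology is introduced by the infinite-order nature of elements of $\widehat{\mathcal{D}}_{\mathcal{A},\text{crys}}^{(0)}$ (which is precisely what distinguishes it from $\widehat{\mathcal{D}}_{\mathcal{A}}^{(0)}$, whose elements have bounded order at each $p$-adic truncation), so that right-composing a Witt-differential operator with a sum $\sum_{I}a_{I}\partial^{I}$ convergent only in the crystalline operator topology yields a sum still convergent in $\Phi^{*}\widehat{\mathcal{D}}_{\mathcal{A},\text{crys}}^{(0)}$. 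This is forced by the fact that $\mathcal{I}$ is two-sided, so right-composition preserves its powers and is therefore continuous for the crystalline topology; combined with the bound in \prettyref{lem:big-products} for the $V^{i}$-direction, this gives the required joint continuity.
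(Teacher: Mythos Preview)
Your proposal is correct and follows the same approach as the paper, which in fact dispatches this theorem in a single sentence: ``Now, repeating the proof of the above theorems essentially verbatim, we obtain'' the statement. Your more careful walkthrough---including the observation that the operators $Q_i$ produced in the proof of \prettyref{prop:Construction-of-bimodule} already lie in $\widehat{\mathcal{D}}_{\mathcal{A}}^{(0)}\subset\widehat{\mathcal{D}}_{\mathcal{A},\text{crys}}^{(0)}$, and your continuity check for the crystalline topology---is exactly the verification one would want if asked to unpack that sentence.
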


Now we want to compare two bimodules $\Phi^{*}\widehat{\mathcal{D}}_{\mathcal{A}}^{(m)}$
and $\Psi^{*}\widehat{\mathcal{D}}_{\mathcal{A}}^{(m)}$ for two morphisms
$\Phi,\Psi:\mathcal{A}\to W(A)$ coming from two lifts of Frobenius
on $\mathcal{A}$. In fact, assuming $p>2$ the situation is as nice
as possible. We begin by showing 
\begin{lem}
Let $\Phi$ and $\Psi$ be two morphisms $\mathcal{A}\to W(A)$, coming
from coordinatized lifts of Frobenius on $\mathcal{A}$. Make $W(A)$
into an $\mathcal{A}$-module via $\Phi$. Then, for any $n\geq1$,
the map $\delta_{n}:\mathcal{A}/p^{n}\to W(A)/p^{n}$, given by the
reduction mod $p^{n}$ of $\delta=\Phi-\Psi$, is a differential operator
of degree $\leq n$ over $\mathcal{A}$. 
\end{lem}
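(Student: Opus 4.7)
The plan is to reduce the claim to a computation of iterated commutators and then exploit the key identity $V(x)V(y)=pV(xy)$ in $W(A)$.

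First I would observe that the image of $\delta=\Phi-\Psi$ lies in $V(W(A))$. Since $\Phi$ and $\Psi$ are ring homomorphisms lifting the Witt vector Frobenius, both compose with the projection $W(A)\twoheadrightarrow W(A)/V(W(A))=A$ to give the reduction $\mathcal{A}\to A$. Hence for any $a\in \mathcal{A}$ we have $\delta(a)\in V(W(A))$. In particular $\delta$ is $W(k)$-linear, so it descends to a well-defined $W(k)$-linear map $\delta_n:\mathcal{A}/p^n\to W(A)/p^n$.

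Next I would compute the iterated commutator. Recall that $\mathcal{A}$ acts on $W(A)$ via $\Phi$, so for $a,b\in\mathcal{A}$,
\[
[a,\delta](b)=\Phi(a)\delta(b)-\delta(ab)=\Phi(a)\Phi(b)-\Phi(a)\Psi(b)-\Phi(ab)+\Psi(ab)=-\delta(a)\cdot\Psi(b),
\]
using that $\Phi$ and $\Psi$ are ring maps. Thus $[a,\delta]$ is the operator $-\delta(a)\cdot\Psi$. A straightforward induction on the same pattern (the bracket always introduces a factor $\Psi(a_k)-\Phi(a_k)=-\delta(a_k)$) gives
\[
[a_n,[a_{n-1},\ldots,[a_0,\delta]\cdots]]=(-1)^{n+1}\delta(a_0)\delta(a_1)\cdots\delta(a_n)\cdot\Psi
\]
as an operator $\mathcal{A}\to W(A)$, where the product on the right is taken in $W(A)$.

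Finally, I would invoke the identity $V(x)V(y)=pV(xy)$, which follows from the projection formula $V(x)\cdot y=V(x\cdot F(y))$ together with $FV=p$. By induction this yields $V(W(A))^{k}\subseteq p^{k-1}V(W(A))\subseteq p^{k-1}W(A)$. Since each $\delta(a_i)\in V(W(A))$ by the first step, the product $\delta(a_0)\cdots\delta(a_n)$ lies in $V(W(A))^{n+1}\subseteq p^n W(A)$, so reduces to $0$ modulo $p^n$. Thus the $(n+1)$-fold iterated commutator of $\delta_n$ with elements of $\mathcal{A}$ vanishes, which by definition means $\delta_n$ is a differential operator of degree $\le n$ over $\mathcal{A}$.

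The only real content is the inductive commutator computation; the containment in $V(W(A))$ and the multiplicativity identity for the Verschiebung are both standard, so I do not expect any serious obstacle.
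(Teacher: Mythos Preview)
Your argument is correct and follows the same strategy as the paper: both compute the iterated commutator of $\delta$ with elements of $\mathcal{A}$ and then kill it using $V(W(A))^{n+1}\subset p^{n}W(A)$. Your packaging is in fact slightly cleaner: by writing $[a,\delta]=-\delta(a)\cdot\Psi$ you land immediately on an operator of the form $(\text{constant})\cdot(\text{ring homomorphism})$, so the induction is a one-line computation $[a,c\Psi]=c\,\delta(a)\,\Psi$; the paper instead compares the iterated commutator with $\epsilon_{a_1,\dots,a_r}(b)=\delta(a_1)\cdots\delta(a_r)\delta(b)$ and must carry along an extra $\mathcal{A}$-linear correction term through the induction.

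One small correction: your sign $(-1)^{n+1}$ is wrong. From $[a,c\Psi]=c\,\delta(a)\,\Psi$ each subsequent bracket introduces a factor $\delta(a_k)$ with \emph{no} sign change, so the iterated commutator is $-\delta(a_0)\cdots\delta(a_n)\Psi$ for every $n$. This is purely cosmetic and does not affect the conclusion.
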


\begin{proof}
We start by noting the formula 
\[
\delta(ab)=a\delta(b)+b\delta(a)-\delta(a)\delta(b)
\]
for any $a,b\in\mathcal{A}$. Indeed, we have 
\[
\delta(ab)=\Phi(ab)-\Psi(ab)=\Phi(a)\Phi(b)-\Psi(a)\Psi(b)
\]
\[
=\Phi(a)\Phi(b)-\Phi(a)\Psi(b)+\Phi(a)\Psi(b)-\Psi(a)\Psi(b)
\]
\[
=\Phi(a)\delta(b)+\delta(a)\Psi(b)=\Phi(a)\delta(b)+\delta(a)(-\delta(b)+\Phi(b))
\]
\[
=a\delta(b)+b\delta(a)-\delta(a)\delta(b)
\]
as claimed. Now, the map $\delta$ takes values in the ideal $V(W(A))$,
as the reduction mod $V(W(A))$ of both $\Phi$ and $\Psi$ is, by
construction, the natural quotient map $\mathcal{A}\to A$. So, after
reduction to $W(A)/p$ we have $\delta(a)\delta(b)=0$; in other words,
$\delta_{1}:A\to W(A)/p$ is a derivation. 

Now we suppose $n\geq2$. For any $a\in\mathcal{A}/p^{n}$ we have
the operator $\delta_{n,a}:\mathcal{A}/p^{n}\to W(A)/p^{n}$ defined
by $\delta_{n,a}(b)=\delta_{n}(ab)-a\cdot\delta_{n}(b)$. Iterating,
we obtain for any sequence $(a_{1},\dots a_{r})$ in $\mathcal{A}/p^{n}$
an operator $\delta_{n,a_{1},\dots,a_{r}}:\mathcal{A}/p^{n}\to W(A)/p^{n}$
defined inductively by 
\[
\delta_{n,a_{1},\dots,a_{r}}(b)=\delta_{n,a_{1},\dots,a_{r-1}}(a_{r}\cdot b)-a_{r}\cdot\delta_{n,a_{1},\dots,a_{r-1}}(b)
\]
To show that $\delta_{n}$ is a differential operator of order $\leq n$
we must show that $\delta_{n,a_{1},\dots,a_{n+1}}=0$ for any sequence
$(a_{1},\dots a_{n+1})$ of length $n+1$. 

On the other hand, for the sequence $(a_{1},\dots,a_{r})$ we can
define the operator $\epsilon_{a_{1},\dots a_{r}}:\mathcal{A}/p^{n}\to W(A)/p^{n}$
\[
\epsilon_{a_{1},\dots a_{r}}(b)=\delta_{n}(a_{1})\cdots\delta_{n}(a_{r})\delta_{n}(b)
\]
I claim that for each $r\geq1$, the map
\[
\delta_{a_{1},\dots,a_{r}}+(-1)^{r+1}\epsilon_{a_{1},\dots a_{r}}:\mathcal{A}/p^{n}\to W(A)/p^{n}
\]
is an $\mathcal{A}/p^{n}$-linear map. We proceed by induction on
$r$; when $r=1$ this follows from 
\[
\delta_{n,a}(b):=\delta_{n}(ab)-a\cdot\delta_{n}(b)=b\cdot\delta_{n}(a)-\delta_{n}(a)\delta_{n}(b)
\]
Supposing the result holds for $r$, we have 
\[
\delta_{n,a_{1},\dots,a_{r+1}}(b):=\delta_{n,a_{1},\dots,a_{r}}(a_{r+1}\cdot b)-a_{r+1}\cdot\delta_{n,a_{1},\dots,a_{r}}(b)
\]
By induction, we have that 
\[
\delta_{n,a_{1},\dots,a_{r}}=(-1)^{r+1}\epsilon_{a_{1},\dots,a_{r}}+\varphi
\]
where $\varphi:\mathcal{A}/p^{n}\to W(A)/p^{n}$ is an $\mathcal{A}/p^{n}$-linear
map. So 
\[
\delta_{n,a_{1},\dots,a_{r}}(a_{r+1}\cdot b)=(-1)^{r+1}\epsilon_{a_{1},\dots,a_{r}}(a_{r+1}\cdot b)+a_{r+1}\varphi(b)
\]
and 
\[
a_{r+1}\cdot\delta_{n,a_{1},\dots,a_{r}}(b)=(-1)^{r+1}a_{r+1}\cdot\epsilon_{a_{1},\dots,a_{r}}(b)+a_{r+1}\varphi(b)
\]
So we see that 
\[
\delta_{n,a_{1},\dots,a_{r+1}}(b)=(-1)^{r+1}(\epsilon_{a_{1},\dots,a_{r}}(a_{r+1}\cdot b)-a_{r+1}\cdot\epsilon_{a_{1},\dots,a_{r}}(b))
\]
But 
\[
\epsilon_{a_{1},\dots,a_{r}}(a_{r+1}\cdot b)=\delta_{n}(a_{1})\cdots\delta_{n}(a_{r})\delta_{n}(a_{r+1}b)
\]
\[
=b\cdot\delta_{n}(a_{1})\cdots\delta_{n}(a_{r})\delta_{n}(a_{r+1})+a_{r+1}\cdot\delta_{n}(a_{1})\cdots\delta_{n}(a_{r})\delta_{n}(b)-\delta_{n}(a_{1})\cdots\delta_{n}(a_{r})\delta_{n}(a_{r+1})\delta_{n}(b)
\]
\[
=b\cdot\delta_{n}(a_{1})\cdots\delta_{n}(a_{r})\delta_{n}(a_{r+1})+a_{r+1}\epsilon_{a_{1},\dots,a_{r}}(b)-\epsilon_{a_{1},\dots,a_{r+1}}(b)
\]
so that 
\[
\epsilon_{a_{1},\dots,a_{r}}(a_{r+1}\cdot b)-a_{r+1}\cdot\epsilon_{a_{1},\dots,a_{r}}(b)=b\cdot\delta_{n}(a_{1})\cdots\delta_{n}(a_{r})\delta_{n}(a_{r+1})-\epsilon_{a_{1},\dots,a_{r+1}}(b)
\]
Therefore
\[
\delta_{n,a_{1},\dots,a_{r+1}}(b)=(-1)^{r+1}(\epsilon_{a_{1},\dots,a_{r}}(a_{r+1}\cdot b)-a_{r+1}\cdot\epsilon_{a_{1},\dots,a_{r}}(b))
\]
\[
=(-1)^{r+1}(b\cdot\delta_{n}(a_{1})\cdots\delta_{n}(a_{r})\delta_{n}(a_{r+1})-\epsilon_{a_{1},\dots,a_{r+1}}(b))
\]
\[
=(-1)^{r+2}\epsilon_{a_{1},\dots,a_{r+1}}(b)+(-1)^{r+1}b\cdot\delta_{n}(a_{1})\cdots\delta_{n}(a_{r})\delta_{n}(a_{r+1})
\]
as required. 

Finally, to finish the proof, we note that 
\[
\epsilon_{a_{1},\dots a_{n}}(b)=\delta_{n}(a_{1})\cdots\delta_{n}(a_{n})\delta_{n}(b)=0
\]
for all $b$, as the product of $n+1$ elements of the form $V(\alpha)$
(in $W(A)$) is contained in the ideal generated by $p^{n}$. Thus
$\delta_{n,a_{1},\dots,a_{n}}$ is a linear operator, and the result
follows. 
\end{proof}
Using this, we can prove 
\begin{thm}
\label{thm:Uniqueness-of-bimodule}Let $\Phi$ and $\Psi$ be two
morphisms $\mathcal{A}\to W(A)$, coming from coordinatized lifts
of Frobenius on $\mathcal{A}$.

1) Suppose $p>2$. Then there is a canonical isomorphism of $(\widehat{\mathcal{D}}_{W(A)}^{(0)},\widehat{\mathcal{D}}_{\mathcal{A}}^{(0)})$-bimodules
$\epsilon_{\Phi,\Psi}:\Phi^{*}\widehat{\mathcal{D}}_{\mathcal{A}}^{(0)}\tilde{\to}\Psi^{*}\widehat{\mathcal{D}}_{\mathcal{A}}^{(0)}$.
If $\Xi$ is a third such lift then we have the cocycle condition
$\epsilon_{\Xi,\Phi}\circ\epsilon_{\Phi,\Psi}=\epsilon_{\Xi,\Psi}$. 

2) For any $p$, there is a canonical isomorphism of $(\widehat{\mathcal{D}}_{W(A),\text{crys}}^{(0)},\widehat{\mathcal{D}}_{\mathcal{A},\text{crys}}^{(0)})$-bimodules
$\epsilon_{\Phi,\Psi}:\Phi^{*}\widehat{\mathcal{D}}_{\mathcal{A},\text{crys}}^{(0)}\tilde{\to}\Psi^{*}\widehat{\mathcal{D}}_{\mathcal{A},\text{crys}}^{(0)}$.
If $\Xi$ is a third such lift then we have the cocycle condition
$\epsilon_{\Xi,\Phi}\circ\epsilon_{\Phi,\Psi}=\epsilon_{\Xi,\Psi}$.
When $p>2$ this isomorphism is a completion of the one constructed
in part $1)$ above. 

3) Let $m>0$. Then for all $p$ there is a canonical isomorphism
of $(\widehat{\mathcal{D}}_{W(A)}^{(m)},\widehat{\mathcal{D}}_{\mathcal{A}}^{(m)})$-bimodules
$\epsilon_{\Phi,\Psi}:\Phi^{*}\widehat{\mathcal{D}}_{\mathcal{A}}^{(m)}\tilde{\to}\Psi^{*}\widehat{\mathcal{D}}_{\mathcal{A}}^{(m)}$;
we again have the cocycle condition for a third such lift.
\end{thm}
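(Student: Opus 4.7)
The plan is to construct $\epsilon_{\Phi,\Psi}$ uniformly across all three parts by exploiting the fact, established in Proposition~\ref{prop:Construction-of-bimodule}, that both $\Phi^{*}\widehat{\mathcal{D}}_{\mathcal{A}}^{(m)}$ and $\Psi^{*}\widehat{\mathcal{D}}_{\mathcal{A}}^{(m)}$ embed as the bi-submodule of $\mathcal{H}om_{W(k)}(\mathcal{A},W(A))$ generated by $\Phi$ (respectively $\Psi$). The map $\epsilon_{\Phi,\Psi}$ will be the tautological identification coming from exhibiting both as the \emph{same} bi-submodule of this ambient Hom space. Once this is done, the bimodule-map property is automatic (the two bimodule structures are inherited identically from the ambient space), and the cocycle condition $\epsilon_{\Xi,\Phi}\circ\epsilon_{\Phi,\Psi}=\epsilon_{\Xi,\Psi}$ is immediate from the fact that all three maps are restrictions of the identity. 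Hence the entire content of the theorem reduces to showing the two bi-submodules coincide.

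By symmetry, it suffices to show that $\Psi$ lies in the bi-submodule generated by $\Phi$, or equivalently, that $\delta:=\Phi-\Psi$ does. Here I would invoke the preceding lemma: for each $n\geq 1$, $\delta_{n}\colon \mathcal{A}/p^{n}\to W(A)/p^{n}$ is a differential operator of order $\leq n$ over $\mathcal{A}$ (via $\Phi$). Choosing local coordinates $\{T_{1},\dots,T_{d}\}$ on $\mathcal{A}$, I would expand $\delta_{n}$ in the basis $\{\partial^{[I]}\}_{|I|\leq n}$, producing coefficients $\alpha_{n,I}\in W(A)/p^{n}$. The remaining task is to verify (i) compatibility of these coefficients as $n$ varies, producing limits $\alpha_{I}\in W(A)$, and (ii) that the formal sum $\sum_{I}\alpha_{I}\partial^{[I]}$ converges in the appropriate completion. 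The key analytic input is the Witt-vector identity $V^{n}(a)V^{n}(b)=p^{n}V^{n}(ab)$, which forces the iterated products $\delta(T_{i_{1}})\cdots\delta(T_{i_{r}})$ appearing as top-order symbols of $\delta_{n}$ to acquire factors of $p^{r-1}$.

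For Part~1 ($p>2$, level $0$), this $p$-divisibility, combined with the factorials arising when one rewrites a plain-monomial differential operator in the divided-power basis and the estimate $v_{p}(I!)\sim |I|/(p-1)$, is just strong enough to force $V$-adic convergence of $\sum_{I}\alpha_{I}\partial^{[I]}$ in $\Phi^{*}\widehat{\mathcal{D}}_{\mathcal{A}}^{(0)}$; the estimate fails precisely when $p=2$. For Part~2 (crystalline, any $p$), I would observe that $\widehat{\mathcal{D}}_{\mathcal{A},\text{crys}}^{(0)}$ is the completion along the coarser operator filtration, in which any bounded sequence of coefficients automatically yields a convergent series (since $\partial^{[I]}\to 0$ as operators on $\mathcal{A}$). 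The same expansion thus succeeds at every prime, and for $p>2$ the resulting $\epsilon_{\Phi,\Psi}$ is manifestly the completion of the one produced in Part~1. For Part~3 ($m>0$, any $p$), the ring $\widehat{\mathcal{D}}_{\mathcal{A}}^{(m)}$ contains higher-divided-power operators whose denominators absorb extra factors of $p$; the resulting additional slack in the convergence estimate eliminates the $p>2$ restriction at level $m\geq 1$.

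The main obstacle lies in the sharp convergence estimate underlying Part~1: one must carefully track the interplay between the $V$-adic filtration on $W(A)$ and the $p$-adic denominators introduced by expanding non-divided differential operators in the divided-power basis. It is exactly this estimate that fails at $p=2$, and this failure is what forces both the hypothesis $p>2$ in Part~1 and the passage to the coarser crystalline completion in Part~2 when one wishes to treat all primes uniformly.
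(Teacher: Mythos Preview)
Your proposal is correct and follows essentially the same strategy as the paper: both arguments embed $\Phi^{*}\widehat{\mathcal{D}}_{\mathcal{A}}^{(m)}$ and $\Psi^{*}\widehat{\mathcal{D}}_{\mathcal{A}}^{(m)}$ into $\mathcal{H}om_{W(k)}(\mathcal{A},W(A))$, reduce to showing $\delta=\Phi-\Psi$ lies in the image, invoke the preceding lemma that $\delta_{n}$ has order $\leq n$, and then control the passage from divided powers $\partial^{[I]}$ to plain monomials $\partial^{I}$ via the estimate $\text{val}_{p}(I!)\leq |I|/(p-1)$. The only organizational difference is that the paper packages the argument as an inductive construction of approximating operators $\gamma_{n}\in\Phi^{*}\widehat{\mathcal{D}}_{\mathcal{A}}^{(0)}$ with $\gamma_{n}\equiv\delta\pmod{p^{n}}$, so that the correction $\gamma_{n+1}-\gamma_{n}$ is visibly divisible by $p^{n-\text{val}_{p}(J!)}$; this is equivalent to your direct limit of coefficients (your symbol computation gives $\alpha_{I}\in p^{|I|-1}W(A)$, which yields the same convergence), but the inductive formulation makes the bookkeeping of which coefficients acquire which $p$-divisibility slightly more transparent.
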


\begin{proof}
1) By construction (c.f. \prettyref{prop:Construction-of-bimodule}),
we have canonical inclusions $\Psi^{*}\widehat{\mathcal{D}}_{\mathcal{A}}^{(0)},\Phi^{*}\widehat{\mathcal{D}}_{\mathcal{A}}^{(0)}\subset\text{Hom}_{W(k)}(\mathcal{A},W(A))$.
We shall show that, when $p>2$, these inclusions have the same image;
this immediately proves part $1)$. 

To proceed, we shall show that $\Psi:\mathcal{A}\to W(A)$ is contained
in the image of $\Phi^{*}\widehat{\mathcal{D}}_{\mathcal{A}}^{(0)}$
in $\text{Hom}_{W(k)}(\mathcal{A},W(A))$. This implies directly that
the image of $\Psi^{*}\widehat{\mathcal{D}}_{\mathcal{A}}^{(0)}$
is contained in image of $\Phi^{*}\widehat{\mathcal{D}}_{\mathcal{A}}^{(0)}$.
By symmetry, we also have the reverse inclusion, and hence the required
equality. 

Since $\Phi:\mathcal{A}\to W(A)$ is contained in the image of $\Phi^{*}\widehat{\mathcal{D}}_{\mathcal{A}}^{(0)}$
by definition, it suffices to show that $\delta=\Phi-\Psi$ is contained
in the image of $\Phi^{*}\widehat{\mathcal{D}}_{\mathcal{A}}^{(0)}$.
By the previous lemma, the reduction $\delta_{n}:\mathcal{A}/p^{n}\to W(A)/p^{n}$
is a differential operator of order $\leq n$ (as in that lemma, we
regard $W(A)$ as being a $\mathcal{A}$-module via $\Phi$). Therefore
it suffices to construct a sequence of operators $(\gamma_{n})$ in
the image of $\Phi^{*}\widehat{\mathcal{D}}_{\mathcal{A}}^{(0)}$
in $\text{Hom}_{W(k)}(\mathcal{A},W(A))$, such that $\gamma_{n}-\delta_{n}=0$
on $\mathcal{A}/p^{n}$ and such that $(\gamma_{n})$ is convergent
in $\Phi^{*}\widehat{\mathcal{D}}_{\mathcal{A}}^{(0)}$. Then $\gamma:=\lim\gamma_{n}$
will agree with $\delta$ and the result will follow. 

We construct $\gamma_{n}$ by induction on $n$. We know that $\delta_{1}:A\to W(A)/p$
is a derivation. So, we can write 
\[
\delta_{1}=\sum_{i=1}^{n}\sum_{(I,r)}\bar{a}_{I,i}(\overline{p^{r}T^{I/p^{r}}})\cdot\partial_{i}
\]
where the second sum is over pairs $(I,r)$ where $I$ is a multi-index,
each of whose entries is $<p^{r}$, and at least one entry of which
is coprime to $p$; and $\bar{a}_{I,i}\in A$ (as usual we write $p^{r}T^{I/p^{r}}\in W(A)$
for $V^{r}(T^{I})$). Choosing lifts of the $\bar{a}_{I,i}$ to $a_{I,i}\in\mathcal{A}$,
we obtain a derivation $\gamma_{1}:\mathcal{A}\to W(A)$
\[
\gamma_{1}=\sum_{i=1}^{n}\sum_{(I,r)}a_{I,i}p^{r}T^{I/p^{r}}\partial_{i}
\]
so $\gamma_{1}-\delta_{1}=0$ on $\mathcal{A}/p$, and $\gamma_{1}\in\Phi^{*}\widehat{\mathcal{D}}_{\mathcal{A}}^{(0)}$
by construction. 

Suppose we have constructed $(\gamma_{1},\gamma_{2},\dots,\gamma_{n})$.
Then the operator $\gamma_{n}-\delta:\mathcal{A}\to W(A)$ takes values
in $p^{n}W(A)$. Looking at the reduction mod $p^{n+1}$ of this operator,
we obtain a map 
\[
\gamma_{n}-\delta:\mathcal{A}/p^{n+1}\to W(A)/p^{n+1}W(A)
\]
Since $\gamma_{n}\in\Phi^{*}\widehat{\mathcal{D}}_{\mathcal{A}}^{(0)}$
has order $\leq n$ by induction, and $\delta_{n+1}$ is a differential
operator of order $\leq n+1$, we see that this map takes the form
\[
\gamma_{n}-\delta=\sum_{|J|\leq n+1}\sum_{(I,r)}p^{n}\bar{a}_{I,J}(\overline{p^{r}T^{I/p^{r}}})\partial^{[J]}
\]
where the notation is as above. Now, 
\[
\partial^{[J]}=\frac{\partial_{1}^{j_{1}}}{j_{1}!}\cdots\frac{\partial_{d}^{j_{d}}}{j_{d}!}
\]
with $j_{1}+\dots+j_{d}\leq n+1$. Recalling the formula 
\[
\text{val}(j!)=\frac{j-s_{p}(j)}{p-1}
\]
where $s_{p}(j)$ is the sum of the digits in the $p$-adic expansion
of $j$, we see that 
\[
\text{val}_{p}(j_{1}!\cdots j_{d}!)=\sum_{i=1}^{d}\text{val}_{p}(j_{i}!)=\sum_{i=1}^{d}\frac{j_{i}-s_{p}(j_{i})}{p-1}\leq\frac{n}{p-1}
\]
where in the last inequality we used that $j_{1}+\dots+j_{d}\leq n+1$.
Therefore we have 
\[
p^{n}\partial^{[J]}=\alpha\cdot p^{n-\text{val}_{p}(j_{1}!\cdots j_{d}!)}\partial^{J}
\]
for some $\alpha\in\mathbb{Z}_{p}$. So we have 
\[
\gamma_{n}-\delta=\sum_{|J|\leq n+1}\sum_{I}p^{n-\text{val}_{p}(j_{1}!\cdots j_{d}!)}\alpha a_{I,J}(p^{r}T^{I/p^{r}})\partial^{J}
\]
and we can thus define $\gamma_{n+1}$ as 
\[
\gamma_{n}-\sum_{|J|\leq n+1}\sum_{I}p^{n-\text{val}_{p}(j_{1}!\cdots j_{d}!)}\alpha a_{I,J}(p^{r}T^{I/p^{r}})\partial^{J}
\]
which makes sense as a map $\mathcal{A}\to W(A)$ (and is clearly
an element of $\Phi^{*}\widehat{\mathcal{D}}_{\mathcal{A}}^{(0)}$). 

Further, since $p>2$, we see that 
\[
n-\text{val}_{p}(j_{1}!\cdots j_{d}!)\to\infty
\]
as $n\to\infty$. Thus we see that, in this case, the sequence $(\gamma_{n})$
is convergent inside $\Phi^{*}\widehat{\mathcal{D}}_{\mathcal{A}}^{(0)}$
as claimed. 

2) The proof proceeds identically to that of part $1)$, until we
need to evaluate $n-\text{val}_{p}(j_{1}!\cdots j_{d}!)$. At that
point, we simply have the estimate 
\[
\text{val}_{p}(j_{1}!\cdots j_{d}!)\leq n
\]
(indeed, they can be equal in characteristic $2$). Thus the sequence
of operators $(\gamma_{n})$ may not converge in $\Phi^{*}\widehat{\mathcal{D}}_{\mathcal{A}}^{(0)}$,
but only in $\Phi^{*}\widehat{\mathcal{D}}_{\mathcal{A},\text{crys}}^{(0)}$.
Thus we obtain the isomorphism $\Phi^{*}\widehat{\mathcal{D}}_{\mathcal{A},\text{crys}}^{(0)}\tilde{\to}\Psi^{*}\widehat{\mathcal{D}}_{\mathcal{A},\text{crys}}^{(0)}$
as desired. 

3) We again repeat the proof, but note that we can now use operators
of the form 
\[
\sum_{|J|\leq n+1}\sum_{I}p^{n-\text{val}_{p}(j_{1}!\cdots j_{d}!)}\alpha a_{I,J}(p^{r}T^{I/p^{r}})(\frac{\partial^{p^{m}}}{p^{m}!})^{J}
\]
and when $m>1$ obtain the required convergence even when $p=2$. 
\end{proof}
Later on, we shall use this result to explain the relationship between
$\mathcal{D}_{\mathfrak{X}_{r}}^{(m)}$-modules on a smooth scheme
$\mathfrak{X}_{r}$ over $W(k)/p^{r}$ and modules over $\widehat{\mathcal{D}}_{W(X)}^{(0)}/p^{r}$.
To that end, we shall record the following result, which follows directly
from the construction: 
\begin{cor}
\label{cor:Iso-mod-p^n}Let $m\geq0$, and let $p>2$ if $m=0$. Let
$\Phi$ and $\Psi$ be two morphisms $\mathcal{A}\to W(A)$, coming
from coordinatized lifts of Frobenius on $\mathcal{A}$. Suppose that
these maps agree after reduction mod $p^{r}$. Then the reduction
mod $p^{r}$ of the canonical isomorphism $\epsilon_{\Phi,\Psi}:\Phi^{*}\widehat{\mathcal{D}}_{\mathcal{A}}^{(m)}/p^{r}\tilde{\to}\Psi^{*}\widehat{\mathcal{D}}_{\mathcal{A}}^{(m)}/p^{r}$
constructed above agrees with the obvious identification $\Phi^{*}(\widehat{\mathcal{D}}_{\mathcal{A}}^{(m)}/p^{r})\tilde{\to}\Psi^{*}(\widehat{\mathcal{D}}_{\mathcal{A}}^{(m)}/p^{r})$
coming from the fact that $\Phi=\Psi$ on $\mathcal{A}/p^{n}$. 
\end{cor}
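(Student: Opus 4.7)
The plan is to reduce to showing that $\bar\epsilon_{\Phi,\Psi}$ (the mod $p^{r}$ reduction of $\epsilon_{\Phi,\Psi}$) and the obvious identification agree on the distinguished element $1 \otimes 1$. By the last sentence of \prettyref{prop:Construction-of-bimodule}, the element $\Phi = 1 \otimes 1$ generates $\Phi^{*}\widehat{\mathcal{D}}_{\mathcal{A}}^{(m)}$ as a left $\widehat{\mathcal{D}}_{W(A)}^{(m)}$-module, a fortiori as a bimodule. Hence any bimodule map $\Phi^{*}\widehat{\mathcal{D}}_{\mathcal{A}}^{(m)}/p^{r} \to \Psi^{*}\widehat{\mathcal{D}}_{\mathcal{A}}^{(m)}/p^{r}$ is determined by its value on the class of $\Phi$.

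When $\Phi \equiv \Psi$ mod $p^{r}$, the induced maps $\bar\Phi, \bar\Psi : \mathcal{A}/p^{r} \to W(A)/p^{r}$ coincide, so $\Phi^{*}(\widehat{\mathcal{D}}_{\mathcal{A}}^{(m)}/p^{r})$ and $\Psi^{*}(\widehat{\mathcal{D}}_{\mathcal{A}}^{(m)}/p^{r})$ are literally the same completed tensor product, and the obvious identification is the identity, sending $1 \otimes_{\Phi} 1$ to $1 \otimes_{\Psi} 1$. It therefore suffices to prove that $\bar\epsilon_{\Phi,\Psi}(1 \otimes_{\Phi} 1) = 1 \otimes_{\Psi} 1$.

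Recall from the proof of \prettyref{thm:Uniqueness-of-bimodule} that $\epsilon_{\Phi,\Psi}$ arises from the equality $\Phi^{*}\widehat{\mathcal{D}}_{\mathcal{A}}^{(m)} = \Psi^{*}\widehat{\mathcal{D}}_{\mathcal{A}}^{(m)}$ of subobjects of $\text{Hom}_{W(k)}(\mathcal{A}, W(A))$. Setting $\delta = \Phi - \Psi$ and running the inductive argument of that proof with the roles of $\Phi$ and $\Psi$ interchanged produces an element $\gamma \in \Psi^{*}\widehat{\mathcal{D}}_{\mathcal{A}}^{(m)}$ whose image in $\text{Hom}_{W(k)}(\mathcal{A}, W(A))$ equals $\delta$, so that $\epsilon_{\Phi,\Psi}(1 \otimes_{\Phi} 1) = 1 \otimes_{\Psi} 1 + \gamma$. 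The crucial point is that when $\Phi \equiv \Psi$ mod $p^{r}$, one has $\delta(\mathcal{A}) \subset p^{r} W(A)$, so the first $r$ approximations $\gamma_{1}, \dots, \gamma_{r}$ in the inductive construction can all be chosen to be zero (the relevant reductions $\delta_{n}$ vanish for $n \leq r$); each subsequent increment $\gamma_{n+1} - \gamma_{n}$ is then visibly divisible by $p^{r}$ in $\Psi^{*}\widehat{\mathcal{D}}_{\mathcal{A}}^{(m)}$. Hence $\gamma \in p^{r} \Psi^{*}\widehat{\mathcal{D}}_{\mathcal{A}}^{(m)}$, yielding the required agreement modulo $p^{r}$.

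The main obstacle is the $p$-adic valuation bookkeeping in the inductive step: one must check by direct inspection of the formula producing $\gamma_{n+1} - \gamma_{n}$ (from $\delta_{n+1} - \gamma_{n}$) in the proof of \prettyref{thm:Uniqueness-of-bimodule} that starting from vanishing approximations really does force every surviving summand to carry a factor of $p^{r}$, and that this is preserved under the completion defining $\Psi^{*}\widehat{\mathcal{D}}_{\mathcal{A}}^{(m)}$. Modulo this essentially mechanical check, the corollary follows formally from the bimodule-generation statement of \prettyref{prop:Construction-of-bimodule}.
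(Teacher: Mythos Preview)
Your approach is the right one, and it is precisely what the paper means by ``follows directly from the construction'': reduce to the generator $1\otimes_\Phi 1$, write $\epsilon_{\Phi,\Psi}(1\otimes_\Phi 1)=1\otimes_\Psi 1+\gamma$ with $\gamma$ the element of $\Psi^{*}\widehat{\mathcal{D}}_{\mathcal{A}}^{(m)}$ representing $\delta=\Phi-\Psi$, and show $\gamma\in p^{r}\Psi^{*}\widehat{\mathcal{D}}_{\mathcal{A}}^{(m)}$.

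There is, however, a genuine subtlety in the step you label ``essentially mechanical''. The formula for the increment in the proof of \prettyref{thm:Uniqueness-of-bimodule} produces terms $p^{\,n-\text{val}_p(J!)}(\cdots)\partial^{J}$ with $|J|\le n+1$; using only this bound one gets $\text{val}_p(J!)\le n/(p-1)$, so $n-\text{val}_p(J!)$ can be strictly smaller than $r$ (take $p=3$, $n=r$, $|J|=r+1$). Thus the increments are \emph{not} visibly divisible by $p^r$ from the formula as written. What saves the argument is an improved order bound: since $W(A)$ is $p$-torsion-free one may write $\delta=p^{r}\eta$, and $\eta$ satisfies $\eta(ab)=a\eta(b)+b\eta(a)-p^{r}\eta(a)\eta(b)$, so the very same lemma preceding \prettyref{thm:Uniqueness-of-bimodule} shows that $\eta$ modulo $p^{m}$ has order $\le m$. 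Hence $\delta$ modulo $p^{n+1}$ has order $\le n+1-r$, not merely $\le n+1$. Feeding this into the induction (starting from $\gamma_r=0$) one finds $|J|\le n+1-r$ at step $n$, whence $\text{val}_p(J!)\le (n-r)/(p-1)\le n-r$ and $n-\text{val}_p(J!)\ge r$, as needed.

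A cleaner variant avoids the bookkeeping entirely: run the whole construction of \prettyref{thm:Uniqueness-of-bimodule} for $\eta$ in place of $\delta$ to obtain $\gamma'\in\Psi^{*}\widehat{\mathcal{D}}_{\mathcal{A}}^{(m)}$ with image $\eta$ in $\text{Hom}_{W(k)}(\mathcal{A},W(A))$; then $p^{r}\gamma'$ has image $\delta$, and injectivity of the embedding forces $\gamma=p^{r}\gamma'$.
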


We should also remark here on the relation between these constructions
and some facts from the usual crystalline cohomology theory. Recall
that the algebra $\mathcal{D}_{\mathcal{A}_{r},\text{crys}}^{(0)}$
is a major player in that story, under the name of the HPD differential
operators. They are constructed as the dual of the divided power envelope
of the diagonal of $\mathfrak{X}_{r}$ (c.f. \cite{key-10}, chapter
4 for details). 

We consider the two maps 
\[
\Phi,\Psi:\mathcal{A}_{r}\to W(A)/p^{r}
\]
Via the standard divided power structure on $W(A)$, both of these
maps may be considered as (inverse limits of) divided power thickenings.
Therefore the theory of the crystalline site (or, in this case, the
description of $\mathcal{D}_{\mathcal{A}_{r},\text{crys}}^{(0)}$
as the HPD differential operators) yields a canonical isomorphism
\[
\eta_{r}:\Phi^{*}\mathcal{D}_{\mathcal{A}_{r},\text{crys}}^{(0)}\tilde{\to}\Psi^{*}\mathcal{D}_{\mathcal{A}_{r},\text{crys}}^{(0)}
\]
Taking the inverse limit over $r$yields an isomorphism 
\[
\eta:\Phi^{*}\widehat{\mathcal{D}}_{\mathcal{A},\text{crys}}^{(0)}\tilde{\to}\Psi^{*}\widehat{\mathcal{D}}_{\mathcal{A},\text{crys}}^{(0)}
\]
and we have 
\begin{cor}
\label{cor:Crystalline-Iso}The isomorphism $\eta$ agrees with the
isomorphism $\epsilon$ constructed above in \prettyref{thm:Uniqueness-of-bimodule}. 
\end{cor}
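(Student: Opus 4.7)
The plan is to reduce the agreement of $\eta$ and $\epsilon$ to a computation of their action on the canonical generator $1 \otimes 1 \in \Phi^{*}\widehat{\mathcal{D}}_{\mathcal{A},\text{crys}}^{(0)}$. By \prettyref{prop:Construction-of-bimodule} (in its crystalline variant stated just above), the left action of $\widehat{\mathcal{D}}_{W(A)}^{(0)}$ on $1 \otimes 1$ already surjects onto $\Phi^{*}\widehat{\mathcal{D}}_{\mathcal{A},\text{crys}}^{(0)}$, so any $(\widehat{\mathcal{D}}_{W(A)}^{(0)},\widehat{\mathcal{D}}_{\mathcal{A},\text{crys}}^{(0)})$-bimodule map out of it is determined by where it sends this element. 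Both $\eta$ and $\epsilon$ are bimodule maps (for $\epsilon$ this is the content of \prettyref{thm:Uniqueness-of-bimodule}; for $\eta$ it is a standard property of the crystalline transition isomorphism, which respects both the $\mathcal{O}$-structure and the HPD action by construction), so it suffices to prove $\eta(1\otimes 1) = \epsilon(1\otimes 1)$ inside $\Psi^{*}\widehat{\mathcal{D}}_{\mathcal{A},\text{crys}}^{(0)}$.

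To carry out the comparison I would use the injection $\Psi^{*}\widehat{\mathcal{D}}_{\mathcal{A},\text{crys}}^{(0)} \hookrightarrow \mathcal{H}om_{W(k)}(\mathcal{A},W(A))$ and check the two images there. For $\epsilon$ this is immediate from its definition: in \prettyref{thm:Uniqueness-of-bimodule} the isomorphism was built by identifying both $\Phi^{*}\widehat{\mathcal{D}}_{\mathcal{A},\text{crys}}^{(0)}$ and $\Psi^{*}\widehat{\mathcal{D}}_{\mathcal{A},\text{crys}}^{(0)}$ with the same sub-bimodule of $\mathcal{H}om_{W(k)}(\mathcal{A},W(A))$, so the image of $\epsilon(1\otimes 1)$ is precisely $\Phi \in \mathcal{H}om_{W(k)}(\mathcal{A},W(A))$. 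The remaining task is therefore to show that the image of $\eta(1 \otimes 1)$ in $\mathcal{H}om_{W(k)}(\mathcal{A},W(A))$ is also $\Phi$.

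For this, I would work mod $p^{r}$ and unpack the HPD construction of $\eta_r$. Fix local coordinates $\{T_1,\dots,T_n\}$ on $\mathcal{A}$ and set $\xi_i := \Phi(T_i) - \Psi(T_i) \in V(W(A)/p^{r})$. The PD envelope $\widehat{P}_r$ of the diagonal of $\mathcal{A}_r$ is, in these coordinates, freely generated over $\mathcal{A}_r$ by divided powers of $\xi_i$, and $\mathcal{D}_{\mathcal{A}_r,\text{crys}}^{(0)}$ is its $\mathcal{A}_r$-linear dual with dual basis $\partial^{[I]}$. The pair $(\Phi,\Psi):\mathcal{A}_r \otimes \mathcal{A}_r \to W(A)/p^{r}$ agrees mod $V$ with the Frobenius, hence extends uniquely to a PD morphism $\widehat{P}_r \to W(A)/p^{r}$ sending $\xi_i \mapsto \xi_i$; dualising yields the well-known Taylor formula
\[
\eta_r(1 \otimes 1) \;=\; \sum_{I} \xi^{[I]} \otimes \partial^{[I]}.
\]
Evaluating this element in $\mathcal{H}om_{W(k)}(\mathcal{A}/p^{r}, W(A)/p^{r})$ at $a \in \mathcal{A}/p^{r}$ yields $\sum_{I} \xi^{[I]}\Psi(\partial^{[I]}(a))$, which is exactly the PD Taylor expansion of $\Phi(a)$ around $\Psi(a)$ in the coordinates $T_i \mapsto T_i + \xi_i$ (which converges since $\xi_i \in V(W(A)/p^{r})$ is a PD element). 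Passing to the inverse limit over $r$ gives $\eta(1\otimes 1) = \Phi$ as required.

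The main obstacle is the precise verification that the crystalline isomorphism $\eta_r$ is really given by the displayed Taylor formula in the present setting and that these formulas are compatible under the transition maps $W(A)/p^{r+1} \to W(A)/p^{r}$, so that they assemble into an equality in the inverse limit. Once this standard HPD bookkeeping is done, the identity $\Phi(a) = \sum_{I} \xi^{[I]}\Psi(\partial^{[I]}(a))$ — which can be checked first for $a \in W(k)[T_1,\dots,T_n]/p^{r}$ by the multinomial/Taylor expansion and then extended to $\mathcal{A}_r$ by the unique extension of PD Taylor series along \'etale maps — closes the argument, and \prettyref{cor:Iso-mod-p^n} guarantees the result is consistent with the identifications when $\Phi \equiv \Psi \pmod{p^{r}}$.
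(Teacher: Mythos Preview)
Your proposal is correct and follows essentially the same route as the paper: both arguments reduce to showing that $\eta$ is compatible with the embeddings $\Phi^{*}\widehat{\mathcal{D}}_{\mathcal{A},\text{crys}}^{(0)},\Psi^{*}\widehat{\mathcal{D}}_{\mathcal{A},\text{crys}}^{(0)}\hookrightarrow\text{Hom}_{W(k)}(\mathcal{A},W(A))$, which is how $\epsilon$ was defined in the first place. The paper dispatches this compatibility in one line by noting that $\eta_{r}$ is constructed by factoring $\Phi\times\Psi$ through the PD envelope of the diagonal; you instead unpack that construction explicitly as the Taylor formula $\eta_{r}(1\otimes1)=\sum_{I}\xi^{[I]}\otimes\partial^{[I]}$ and evaluate it on $\mathcal{A}$ to recover $\Phi$, which is exactly the content behind the paper's citation to \cite{key-10}.
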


\begin{proof}
This will follow if we know that the maps $\Phi^{*}\widehat{\mathcal{D}}_{\mathcal{A},\text{crys}}^{(0)}\to\text{Hom}_{W(k)}(\mathcal{A},W(A))$
and $\Psi^{*}\widehat{\mathcal{D}}_{\mathcal{A},\text{crys}}^{(0)}\to\text{Hom}_{W(k)}(\mathcal{A},W(A))$
coming from the action of differential operators on $\mathcal{A}$
are compatible with $\eta$. But the analogous fact is true for $\eta_{r}$,
simply from the construction of $\eta_{r}$ which is given by factoring
$\Phi\times\Psi$ through a divided power neighborhood of the diagonal
(c.f. \cite{key-10}, chapters 3 and 4). Taking the inverse limit
over $r$ we see that this is true for $\eta$ as well. 
\end{proof}
To finish off this section, we shall develop another fundamental property
of the bimodules $\Phi^{*}\widehat{\mathcal{D}}_{\mathcal{A}}^{(m)}$,
namely 
\begin{thm}
\label{thm:Projective!} 1) As a right $\widehat{\mathcal{D}}_{\mathcal{A}}^{(m)}$-module,
$\Phi^{*}\widehat{\mathcal{D}}_{\mathcal{A}}^{(m)}$ is faithfully
flat. 

2) Let $m\geq0$. As a left $\widehat{\mathcal{D}}_{W(A)}$-module,
$\Phi^{*}\widehat{\mathcal{D}}_{\mathcal{A}}^{(m)}$ is projective
and cyclic; in particular, it is a summand of $\widehat{\mathcal{D}}_{W(A)}^{(m)}$
itself. 

The analogous statements hold for $\Phi^{*}\widehat{\mathcal{D}}_{\mathcal{A},\text{crys}}^{(m)}$
as a $(\widehat{\mathcal{D}}_{W(A),\text{crys}}^{(m)},\Phi^{*}\widehat{\mathcal{D}}_{\mathcal{A},\text{crys}}^{(m)})$
bimodule. 
\end{thm}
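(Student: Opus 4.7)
The plan is to prove the two claims in turn, exploiting the explicit local-coordinate bases established in Theorem Basis and its counterparts, and to then deduce the crystalline analogues by the same argument after replacing each basis by its crystalline completion.

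For part (1), the faithful flatness as a right $\widehat{\mathcal{D}}_{\mathcal{A}}^{(m)}$-module, I would begin from the observation (already used in the proof of the preceding embedding lemma) that every element of $W(A)$ has a unique expansion $\sum_{r,I} V^r(T^I)\cdot \Phi(a_{r,I})$ with $(r,I)$ ranging over admissible multi-indices. This says that $W(A)$ is free as an $\mathcal{A}$-module via $\Phi$, with basis $\{V^r(T^I)\}$. Tensoring with $\widehat{\mathcal{D}}_{\mathcal{A}}^{(m)}$ on the right therefore realizes $W(A)\otimes_{\mathcal{A}}\widehat{\mathcal{D}}_{\mathcal{A}}^{(m)}$ as a free right $\widehat{\mathcal{D}}_{\mathcal{A}}^{(m)}$-module; the filtration $V^i(W(A))\otimes_{\mathcal{A}}\widehat{\mathcal{D}}_{\mathcal{A}}^{(m)}$ corresponds to truncation in the index $r$, whose graded pieces involve only finitely many multi-indices $I$ (those with $|I|<p^r$). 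Consequently the completion $\Phi^*\widehat{\mathcal{D}}_{\mathcal{A}}^{(m)}$ is a level-wise finite completed product of free modules, hence flat; faithful flatness follows because the $(r,I)=(0,0)$ factor provides a retraction onto $\widehat{\mathcal{D}}_{\mathcal{A}}^{(m)}$.

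For part (2), we already have, from Proposition Construction-of-bimodule, the surjection $\pi\colon \widehat{\mathcal{D}}_{W(A)}^{(m)}\twoheadrightarrow\Phi^*\widehat{\mathcal{D}}_{\mathcal{A}}^{(m)}$ sending $Q\mapsto Q\circ \Phi$, which establishes cyclicity as a left $\widehat{\mathcal{D}}_{W(A)}^{(m)}$-module with generator $1\otimes 1$. The summand claim is then equivalent to producing a left $\widehat{\mathcal{D}}_{W(A)}^{(m)}$-linear splitting $s$ of $\pi$; equivalently, an element $e\in\widehat{\mathcal{D}}_{W(A)}^{(m)}$ with $\pi(e)=1\otimes 1$ whose left annihilator coincides with $\ker(\pi)$. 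Using the basis of Theorem Basis, I would attempt to exhibit such an $e$ as a convergent sum indexed by the ``fractional'' parameters $(r,J_I)$ so that, on the integer-exponent part of the expansion, it agrees with the identity, while on the fractional part it produces an idempotent in the completed sense. The verification that $e$ has the required annihilator then reduces to the explicit commutation identities proved in Lemma big-products, together with the fact (from Proposition Construction-of-bimodule) that every fractional operator $\{\partial\}_{J/p^r}$ restricted to $\Phi(\mathcal{A})$ already lies in $\Phi^*\widehat{\mathcal{D}}_{\mathcal{A}}^{(m)}$.

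The hard part, and the one that will require the most care, is the construction of the left-linear splitting in (2). The obvious candidate splitting, namely taking the ``integer'' part of the basis expansion, is not $\widehat{\mathcal{D}}_{W(A)}^{(m)}$-linear, because commuting a fractional operator $\{\partial_i\}_{1/p^r}$ past an element of $W(A)$ produces further fractional contributions. The correct strategy is to characterize $\Phi^*\widehat{\mathcal{D}}_{\mathcal{A}}^{(m)}$ intrinsically, as the sub-bimodule of $\mathrm{Hom}_{W(k)}(\mathcal{A},W(A))$ generated by $\Phi$, and then to use the decomposition of $W(A)$ as an $\mathcal{A}$-module to split $\mathrm{Hom}_{W(k)}(W(A),W(A))$ as the sum of maps factoring through $\Phi(\mathcal{A})$ and through its complement; the image of $\widehat{\mathcal{D}}_{W(A)}^{(m)}$ inherits a splitting provided one can show, using the commutation relations, that this decomposition is compatible with the left action. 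For the crystalline analogues, the same argument carries over verbatim with the basis of $\widehat{\mathcal{D}}_{W(A),\mathrm{crys}}^{(m)}$ in place of that of $\widehat{\mathcal{D}}_{W(A)}^{(m)}$, and the crystalline completion preserves both flatness and the splitting constructed above.
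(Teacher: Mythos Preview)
Your approach to part~(1) is essentially that of the paper, though the paper streamlines it by first reducing modulo~$p$ (invoking the general fact that a $p$-torsion-free $p$-adically complete module is faithfully flat once its reduction mod~$p$ is) and then observing that $\Phi^*\widehat{\mathcal{D}}_{\mathcal{A}}^{(m)}/p$ is an inverse limit, along surjections, of finite free $\mathcal{D}_A^{(m)}$-modules.

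For part~(2), you correctly identify that the goal is to produce an idempotent $e\in\widehat{\mathcal{D}}_{W(A)}^{(m)}$ with $\widehat{\mathcal{D}}_{W(A)}^{(m)}\cdot e\cong\Phi^*\widehat{\mathcal{D}}_{\mathcal{A}}^{(m)}$, but your proposed construction has a genuine gap. The decomposition of $\mathrm{Hom}_{W(k)}(W(A),W(A))$ into maps factoring through $\Phi(\mathcal{A})$ and those factoring through its complement is \emph{automatically} compatible with the left $\widehat{\mathcal{D}}_{W(A)}^{(m)}$-action, since that action is by composition on the left and the splitting is $P\mapsto(P\circ\pi,\,P\circ(1-\pi))$ for $\pi$ the $\mathcal{A}$-linear projector onto $\Phi(\mathcal{A})$. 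The actual obstruction is whether this splitting restricts to $\widehat{\mathcal{D}}_{W(A)}^{(m)}$, i.e.\ whether $\pi$ itself lies in $\widehat{\mathcal{D}}_{W(A)}^{(m)}$. This is far from obvious---$\pi$ is defined by a coordinate projection, not by any a priori differential-operator formula---and is the entire content of the proof. Your references to ``commutation relations'' and to exhibiting $e$ as a convergent sum do not address it.

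The paper establishes $\pi\in\widehat{\mathcal{D}}_{W(A)}^{(0)}$ through a substantial explicit construction. For $A=k[T]$, one first shows that the projector $\pi_{(i,p^r)}$ onto monomials of residue~$i$ modulo~$p^r$ equals the differential operator $T^id^{[p^r-1]}T^{p^r-1-i}$. One then proves that each $T^id^{[i]}$ acts on $W_{r+1}(k)[T]$ via eigenvalues that are polynomial along arithmetic progressions; this allows one to inductively correct the lift of each $\pi_{(i,p^r)}$ by elements of $V^r(\widehat{\mathcal{D}}_{W(k[T])}^{(0)})$ so that the limit $\pi\in\widehat{\mathcal{D}}_{W(k[T])}^{(0)}$ genuinely projects $W(k[T])$ onto $W(k)\langle\langle T\rangle\rangle$. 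The several-variable case is obtained by taking products of one-variable projectors, and the general case by \'etale extension. Once $\pi$ is constructed, the isomorphism $\widehat{\mathcal{D}}_{W(A)}^{(m)}\cdot\pi\cong\Phi^*\widehat{\mathcal{D}}_{\mathcal{A}}^{(m)}$ follows by comparing annihilators of $\pi$ and of $\Phi$.
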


This result will allow us, in the next section, to define categories
of $\widehat{\mathcal{D}}_{W(X)}^{(m)}$-modules which are surprisingly
well behaved. 
\begin{proof}
(of part $1)$ As $\Phi^{*}\widehat{\mathcal{D}}_{\mathcal{A}}^{(m)}$
is $p$-torsion free and $p$-adically complete, by \cite{key-8},
corollary 1.6.7 it suffices to show that $\Phi^{*}\widehat{\mathcal{D}}_{\mathcal{A}}^{(m)}/p$
is faithfully flat over $\mathcal{D}_{A}^{(m)}$. However, $\Phi^{*}\widehat{\mathcal{D}}_{\mathcal{A}}^{(m)}/p$
is an inverse limit of free modules over $\mathcal{D}_{A}^{(m)}$,
namely modules of the form 
\[
\bigoplus_{I<p^{r}}V^{r}(T^{I})\cdot\mathcal{D}_{A}^{(m)}
\]
where $I=(i_{1},\dots,i_{n})$ is a multi-index and $I<p^{r}$ means
that each $i_{j}<p^{r}$. As the maps in this inverse system are surjective,
one deduces immediately that $\Phi^{*}\widehat{\mathcal{D}}_{\mathcal{A}}^{(m)}/p$
is faithfully flat over $\mathcal{D}_{A}^{(m)}$ as needed. An identical
proof works for the crystalline version of the theory. 
\end{proof}
To prove part $2)$, we will have to work a bit harder. We start with
the special case in which $A=k[T]$ ; the lift of this algebra is
given by $W(k)<<T>>$ and we shall use the standard Frobenius lift
$T\to T^{p}$. In this context we write ${\displaystyle d=\frac{d}{dt}}$.
For $i\in\{0,\dots,p^{r}-1\}$, we write $\pi_{(i,p^{r})}:k[T]\to k[T]$
for the $k$-linear projection operator which takes $T^{i+ap^{r}}$
to itself (for all $a$) and which takes all other monomials to zero.
Then
\begin{lem}
\label{lem:The-operator-=00005Cpi-r}The operator $\pi_{(i,p^{r})}$
is a differential operator of order $<p^{r}$, namely 
\[
\pi_{(i,p^{r})}=T^{i}d^{[p^{r}-1]}T^{p^{r}-1-i}=\sum_{l=i}^{p^{r}-1}{p^{r}-1-i \choose p^{r}-1-l}T^{l}\cdot d^{[l]}
\]
\end{lem}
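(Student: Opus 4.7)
The plan is to verify the two claimed equalities separately, checking each directly on monomials, and to invoke Lucas's theorem at the key moment.

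First I would establish the second equality $T^i d^{[p^r-1]} T^{p^r-1-i} = \sum_{l=i}^{p^r-1} \binom{p^r-1-i}{p^r-1-l} T^l d^{[l]}$ by a purely formal computation. This is not really arithmetic; it is the Leibniz rule for the iterative Hasse--Schmidt derivation $d^{[n]}$, which gives the operator identity
\[
d^{[n]} \circ T^m \;=\; \sum_{j=0}^{n} \binom{m}{j}\, T^{m-j}\, d^{[n-j]}.
\]
Specializing to $n = p^r-1$ and $m = p^r-1-i$ and reindexing via $l = p^r-1-j$ (so $l$ runs from $i$ to $p^r-1$, since $j \leq p^r-1-i$), then left-multiplying by $T^i$, yields the displayed expression. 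This already exhibits $\pi_{(i,p^r)}$ as a differential operator of order $\leq p^r-1$.

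Next I would verify the first equality $\pi_{(i,p^r)} = T^i d^{[p^r-1]} T^{p^r-1-i}$ by testing both sides on an arbitrary monomial $T^m$. A short calculation gives
\[
T^i d^{[p^r-1]} T^{p^r-1-i}(T^m) \;=\; \binom{m+p^r-1-i}{p^r-1}\, T^m
\]
whenever $m \geq i$ (and the output is $0$ for $m < i$, which is consistent with $\pi_{(i,p^r)}$ since $m<i$ forces $m \not\equiv i \pmod{p^r}$). So it suffices to show that the binomial coefficient is $1$ mod $p$ when $m \equiv i \pmod{p^r}$ and $0$ mod $p$ otherwise.

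The key step, and the only non-formal one, is this binomial reduction. The digits of $p^r-1$ in base $p$ are $(p-1,p-1,\dots,p-1)$ in positions $0$ through $r-1$. Writing $m - i = qp^r + s$ with $0 \leq s \leq p^r-1$, the low $r$ digits of $m + p^r - 1 - i$ in base $p$ are exactly the base-$p$ digits of $s + p^r - 1 - \varepsilon$, and Lucas's theorem reduces the question to whether every low digit is $p-1$. If $s = 0$ this is automatic and the coefficient is $1$; if $s \geq 1$, at least one low digit of the relevant number is strictly less than $p-1$, making some factor $\binom{a}{p-1}$ vanish mod $p$. I expect this digit bookkeeping to be the main (and only real) obstacle, but it is entirely routine. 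The two equalities together complete the proof.
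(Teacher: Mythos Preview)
Your proposal is correct. Both you and the paper derive the second displayed equality the same way, via the Hasse--Schmidt (Leibniz) expansion of $d^{[p^{r}-1]}\circ T^{p^{r}-1-i}$. The difference lies in how the first equality is checked on monomials. The paper treats the case $i=0$ first: it evaluates $\binom{p^{r}a+p^{r}-1}{p^{r}-1}=1$ directly as a product and obtains the vanishing on $T^{p^{r}a+b}$ (with $1\leq b\leq p^{r}-1$) from the commutator identity $[d^{[p^{r}-1]},T^{p^{r}(a+1)}]=0$; it then reduces the general $i$ to $i=0$ by working over $k[T,T^{-1}]$ and using the same commutator. You instead handle all $i$ at once by reducing to the single statement $\binom{m+p^{r}-1-i}{p^{r}-1}\equiv\delta_{m\equiv i\ (\mathrm{mod}\ p^{r})}\pmod p$ and invoking Lucas's theorem. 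Your route is slightly more streamlined (no Laurent-polynomial detour), at the cost of making the dependence on characteristic~$p$ less structural; the paper's use of $[d^{[p^{r}-1]},T^{p^{r}}]=0$ highlights a relation it uses elsewhere. Your digit bookkeeping is indeed routine: with $m-i=qp^{r}+s$ one has low $r$ digits of $m+p^{r}-1-i$ equal to those of $p^{r}-1$ when $s=0$ and those of $s-1<p^{r}-1$ when $s\geq1$, so Lucas gives $1$ and $0$ respectively.
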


\begin{proof}
We begin with the case $i=0$. Then, for any $a\geq0$ we have
\[
d^{[p^{r}-1]}T^{p^{r}-1}(T^{p^{r}a})={p^{r}a+p^{r}-1 \choose p^{r}-1}T^{p^{r}a}
\]
But 
\[
{p^{r}a+p^{r}-1 \choose p^{r}-1}=\prod_{m=1}^{p^{r}-1}\frac{p^{r}a+m}{m}=\prod_{m=1}^{p^{r}-1}(1+a\frac{p^{r}}{m})=1
\]
since $\text{val}_{p}(m)<r$ for all terms in the product; so we see
$d^{[p^{r}-1]}T^{p^{r}-1}(T^{p^{r}a})=T^{p^{r}a}$. On the other hand,
for any natural number of the form $p^{r}a+b$ where $1\leq b\leq p^{r}-1$,
we have 
\[
d^{[p^{r}-1]}T^{p^{r}-1}(T^{p^{r}a+b})=T^{p^{r}(a+1)}\cdot d^{[p^{r}-1]}(T^{b-1})=0
\]
where we have used the well known identity (over $k$) $[d^{[p^{r}-1]},T^{p^{r}(a+1)}]=0$.
This proves the result when $i=0$. 

Now we consider the general case. Working for a moment over the ring
$k[T,T^{-1}]$, it follows from the case $i=0$ that the operator
$\pi_{(i,p^{r})}$ has the form \\
${\displaystyle T^{i-p^{r}}(d^{(p^{r}-1)}T^{p^{r}-1})T^{p^{r}-i}}$.
Now write $2p^{r}-1-i=p^{r}+b$ for $b\in\{0,\dots,p^{r}-2\}$. Then
\[
T^{i-p^{r}}(d^{[p^{r}-1]}T^{p^{r}-1})T^{p^{r}-i}=T^{i-p^{r}}d^{[p^{r}-1]}T^{p^{r}+b}=T^{i}d^{[p^{r}-1]}T^{b}
\]
is the operator of the required form, where in the last equality we
have used $[d^{[p^{r}-1]},T^{p^{r}}]=0$. 

Finally to prove the equality 
\[
T^{i}d^{[p^{r}-1]}T^{p^{r}-1-i}=\sum_{l=i}^{p^{r}-1}{p^{r}-1-i \choose p^{r}-1-l}T^{l}\cdot d^{[l]}
\]
we note that by the Hasse-Schmidt property we have
\[
[d^{[p^{r}-1]},T^{p^{r}-1-i}]=\sum_{l=0}^{p^{r}-2}d^{[p^{r}-1-l]}(T^{p^{r}-1-i})\cdot d^{[l]}
\]
\[
=\sum_{l=0}^{p^{r}-2}{p^{r}-1-i \choose p^{r}-1-l}T^{l-i}\cdot d^{[l]}
\]
from which the equality follows directly. 
\end{proof}
Our goal is to study the actions of these operators when lifted to
$W_{r+1}(A)$; we start with the 
\begin{defn}
Let $f:\mathbb{Z}_{\geq0}\to\mathbb{Z}/p^{r+1}\mathbb{Z}$ be a function,
and let $j>0$ be some natural number. We say that $f$ is polynomial
in arithmetic progressions (of length $j$) if, for each $1\leq a\leq j$,
there is a polynomial $p_{a}\in(\mathbb{Z}/p^{r+1}\mathbb{Z})[x]$
such that $f(a+bj)=p_{a}(b)$ for all $b\in\mathbb{Z}_{\geq0}$. 
\end{defn}

We note that if $f$ is polynomial in arithmetic progressions (of
length $j$), then $f$ is also polynomial in arithmetic progressions
of length $j'$ for all $j'$ which are divisible by $j$. 

Then the result we need is 
\begin{prop}
\label{prop:polynomial-in-AP}Let $1\leq i<p^{n}$. Fix $r\geq0$.
Then each operator of the form $T^{i}d^{[i]}$ on $W_{r+1}(k)[T]$
takes the form 
\[
T^{i}d^{[i]}(T^{a})=f(a)T^{a}
\]
where $f:\mathbb{Z}_{\geq0}\to\mathbb{Z}/p^{r+1}\mathbb{Z}$ is polynomial
in arithmetic progressions of length $p^{n-1}$. 
\end{prop}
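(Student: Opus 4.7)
The plan is first to reduce to a purely number-theoretic claim. Since $d^{[i]}(T^a) = \binom{a}{i}T^{a-i}$ as an identity in $W_{r+1}(k)[T]$, with the integer $\binom{a}{i}$ reduced modulo $p^{r+1}$, we have $T^{i}d^{[i]}(T^a) = \binom{a}{i}T^a$, so $f(a) \equiv \binom{a}{i} \bmod p^{r+1}$. The proposition therefore reduces to showing that for $1 \le i < p^n$, the function $a \mapsto \binom{a}{i} \bmod p^{r+1}$ is polynomial in arithmetic progressions of length $p^{n-1}$.

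For this, I would apply Vandermonde's convolution
\[
\binom{c + bp^{n-1}}{i} = \sum_{k=0}^{i}\binom{c}{i-k}\binom{bp^{n-1}}{k}.
\]
Each $\binom{c}{i-k}$ is a fixed element of $\mathbb{Z}/p^{r+1}$ for given $c \in \{1,\ldots,p^{n-1}\}$, so it suffices to show that for each $0 \le k < p^n$, the function $b \mapsto \binom{bp^{n-1}}{k}$ is the value at $b$ of some polynomial in $(\mathbb{Z}/p^{r+1})[y]$; equivalently, that $\binom{yp^{n-1}}{k}$, a priori a polynomial in $\mathbb{Q}[y]$ of degree $k$, actually lies in $\mathbb{Z}_{(p)}[y]$.

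The main obstacle is exactly this $p$-adic integrality assertion. I would prove it by factoring the numerator $(yp^{n-1})_k = \prod_{j=0}^{k-1}(yp^{n-1}-j)$ according to whether $p^{n-1}$ divides $j$. The $\lceil k/p^{n-1}\rceil$ factors with $p^{n-1}\mid j$ collectively produce $p^{(n-1)\lceil k/p^{n-1}\rceil}\,(y)_{\lceil k/p^{n-1}\rceil}$, yielding a large explicit power of $p$; the remaining factors $(yp^{n-1}-j)$ with $p^{n-1}\nmid j$ contribute additional $p$-divisibility because each has leading coefficient $p^{n-1}$ and constant term $-j$ with $v_p(j) < n-1$. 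Balancing these extracted powers of $p$ against Legendre's formula $v_p(k!) = \sum_{j\ge 1}\lfloor k/p^j\rfloor$, which for $k < p^n$ terminates at $j = n-1$, confirms that every monomial coefficient of $\binom{yp^{n-1}}{k}$ lies in $\mathbb{Z}_{(p)}$. The bound $k < p^n$ is sharp: for $k = p^n$, a direct calculation gives the coefficient of $y^1$ in $\binom{yp^{n-1}}{k}$ as $(-1)^{k-1}p^{n-1}/k$, of $p$-adic valuation $-1$, so the integrality genuinely fails beyond the stated range.
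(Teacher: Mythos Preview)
Your proof is correct but takes a genuinely different route from the paper's. The paper first handles the special case $i = p^m$ (for $m < n$) by a direct manipulation: writing $\binom{a + bp^m}{p^m}$ as $\prod_{l=1}^{p^m}\frac{a + (b-1)p^m + l}{l}$ and reparametrizing via $a + l = t + \epsilon_t p^m$ to obtain $b \prod_{t=1}^{p^m-1}\bigl(1 + (b-1+\epsilon_t)p^m/t\bigr)$, which is visibly a polynomial in $b$ with $\mathbb{Z}_p$-coefficients since each $p^m/t$ has positive valuation. For general $i$, it then invokes the $p$-adic expansion $i = \sum a_j p^j$ to factor $d^{[i]}$ (up to a unit) as a product of powers of the $d^{[p^j]}$, and uses the commutation relation $T^{p^m}d^{[p^m]}\cdot T^{p^{m'}}d^{[p^{m'}]} = \sum c_l\, T^{l}d^{[l]}$ to express $T^i d^{[i]}$ as a $\mathbb{Z}_p$-polynomial in the operators $\{T^{p^m} d^{[p^m]}\}_{m<n}$.

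Your approach is more self-contained and purely arithmetic: the Vandermonde reduction to the $p$-integrality of $\binom{yp^{n-1}}{k}$ avoids the operator algebra entirely, and your factoring argument does yield exactly enough $p$-divisibility. Concretely, splitting off the $m = \lceil k/p^{n-1}\rceil$ factors with $p^{n-1}\mid j$ gives $p^{(n-1)m}(y)_m$, and extracting $p^{v_p(j)}$ from each remaining factor (leaving a unit constant term and a $y$-coefficient of positive valuation) contributes $p^{v_p((k-1)!) - (m-1)(n-1) - v_p((m-1)!)}$; the total extracted power is $p^{(n-1)+v_p((k-1)!)-v_p((m-1)!)}$. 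Since $k < p^n$ forces $m \le p$, hence $v_p((m-1)!) = 0$, while $v_p(k) \le n-1$, this dominates $v_p(k!)$ and the leftover polynomial lies in $\mathbb{Z}_p[y]$. One small remark: your ``equivalently'' is really only an implication (having $\mathbb{Z}_{(p)}$-coefficients suffices but is not obviously necessary for the mod-$p^{r+1}$ statement), though that is all you need. A cost of your route is that it does not isolate the explicit congruence $\binom{a+bp^m}{p^m} \equiv b \pmod p$, which the paper extracts from its special-case computation and then uses in the corollary immediately following.
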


\begin{proof}
We begin with the case of the operator $T^{p^{m}}d^{[p^{m}]}$ for
$m<n$. Fix $a\in\{0,\dots,p^{m}-1\}$. In this case, we have 
\[
T^{p^{m}}d^{[p^{m}]}(T^{a+bp^{m}})={a+bp^{m} \choose p^{m}}T^{a+bp^{m}}
\]
We shall show that ${\displaystyle {a+bp^{m} \choose p^{m}}}$ is
polynomial in $b$ with coefficients in $\mathbb{Z}_{p}$, and therefore
$f$ is arithmetic progressions of length $p^{m}$ (as remarked above,
this implies that it is also polynomial in arithmetic progressions
of length $p^{n-1}$). We have 
\[
{a+bp^{m} \choose p^{m}}=\prod_{l=1}^{p^{m}}\frac{a+(b-1)p^{m}+l}{l}
\]
For each $l\in\{1,\dots,p^{m}\}$ set 
\[
a+l=t+\epsilon_{l}p^{m}
\]
where $t\in\{1,\dots,p^{m}\}$ and $\epsilon_{t}\in\{0,1\}$. Note
that, as $l$ ranges over $\{1,\dots,p^{m}\}$, $t$ also ranges over
$\{1,\dots,p^{m}\}$. Therefore we have 
\[
\prod_{l=1}^{p^{m}}\frac{a+(b-1)p^{m}+l}{l}=\prod_{t=1}^{p^{m}}\frac{(b-1)p^{m}+t+\epsilon_{l}p^{m}}{t}
\]
\[
=\prod_{t=1}^{p^{m}}\frac{(b-1+\epsilon_{t})p^{m}+t}{t}=b\prod_{t=1}^{p^{m}-1}\frac{(b-1+\epsilon_{t})p^{m}+t}{t}
\]
Now, in the last expression, we have $\text{val}_{p}(t)=\text{val}_{p}((b-1+\epsilon_{t})p^{m}+t)$.
It follows that this product is a polynomial in $b$ with coefficients
in $\mathbb{Z}_{p}$ as desired. 

In order to handle the general case, we recall that, if $i<p^{n}$
has $p$-adic expansion ${\displaystyle i=\sum_{j=0}^{m}a_{j}p^{j}}$,
then we have 
\[
d^{[i]}=u\cdot(d^{[p^{j}]})^{a_{j}}\cdots(d)^{a_{0}}
\]
where $u$ is a unit in $\mathbb{Z}_{p}$. From this, and the standard
relation 
\[
d^{[m]}T^{m}\cdot d^{[n]}T^{n}=
\]
\[
\sum_{l=0}^{m}{n \choose m-l}{l+n \choose l}T^{(l+n)}d^{[l+n]}
\]
we see that $T^{i}d^{[i]}$ is itself a polynomial (with coefficients
in $\mathbb{Z}_{p}$) is the operators $\{T^{p^{m}}d^{[p^{m}]}\}_{m=0}^{p^{n-1}}$,
and the result follows. 
\end{proof}
\begin{cor}
Let $\Xi:k[T]\to k[T[$ be an operator which satisfies $\Xi(T^{a})=f(a)T^{a}$,
where $f:\mathbb{Z}_{\geq0}\to\mathbb{Z}/p\mathbb{Z}$ is polynomial
in arithmetic progressions of length $p^{n-1}$. Then $\Xi$ is given
by a differential operator of the form 
\[
\Xi=\sum_{i=0}^{p^{n}-1}g_{i}(T^{i}d^{[i]})
\]
where $g_{i}$ are polynomials in $\mathbb{Z}/p\mathbb{Z}[T]$. 
\end{cor}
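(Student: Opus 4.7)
The plan is to reduce the statement to the observation that $f$ descends to a function on $\mathbb{Z}/p^{n}\mathbb{Z}$, after which the problem becomes linear algebra driven by Lucas' theorem.

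First I would argue that $f$ is periodic with period $p^{n}$. By hypothesis, for each $c \in \{1,\dots,p^{n-1}\}$ there is a polynomial $p_{c} \in (\mathbb{Z}/p\mathbb{Z})[x]$ such that $f(c + b p^{n-1}) = p_{c}(b)$ for all $b \geq 0$. Since $b^{p} \equiv b \pmod{p}$ for every integer $b$ by Fermat's little theorem, the polynomial function $b \mapsto p_{c}(b)$ with values in $\mathbb{F}_{p}$ depends only on $b \bmod p$. Hence $f(c + b p^{n-1})$ depends only on the pair $(c,\, b \bmod p)$, and so $f$ factors through $\mathbb{Z}/p^{n}\mathbb{Z}$.

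Next, the key linear-algebraic step: the functions $a \mapsto \binom{a}{i} \bmod p$, for $i = 0,1,\dots,p^{n}-1$, form an $\mathbb{F}_{p}$-basis of the space of $\mathbb{F}_{p}$-valued functions on $\mathbb{Z}/p^{n}\mathbb{Z}$. Both spaces have $\mathbb{F}_{p}$-dimension $p^{n}$, so it suffices to prove linear independence. By Lucas' theorem, writing $a = \sum_{j=0}^{n-1} a_{j} p^{j}$ and $i = \sum_{j=0}^{n-1} i_{j} p^{j}$ in base $p$, one has $\binom{a}{i} \equiv \prod_{j=0}^{n-1} \binom{a_{j}}{i_{j}} \pmod{p}$. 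Thus the $p^{n} \times p^{n}$ matrix $M = \bigl(\binom{a}{i}\bigr)_{a,i < p^{n}}$ is the $n$-fold tensor product of the $p \times p$ matrix $M_{1} = \bigl(\binom{b}{c}\bigr)_{b,c<p}$. When ordered by $c \leq b$, the matrix $M_{1}$ is upper triangular with diagonal entries $\binom{b}{b} = 1$, hence invertible over $\mathbb{F}_{p}$; so $M$ is invertible too.

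Consequently, there exist unique constants $c_{i} \in \mathbb{F}_{p}$ such that $f(a) = \sum_{i=0}^{p^{n}-1} c_{i} \binom{a}{i}$ for all $a \in \mathbb{Z}_{\geq 0}$. Since $T^{i} d^{[i]}(T^{a}) = \binom{a}{i} T^{a}$, this yields
\[
\Xi(T^{a}) = f(a)\, T^{a} = \sum_{i=0}^{p^{n}-1} c_{i} \binom{a}{i} T^{a} = \Bigl(\sum_{i=0}^{p^{n}-1} c_{i}\, T^{i} d^{[i]}\Bigr)(T^{a}),
\]
so $\Xi = \sum_{i=0}^{p^{n}-1} c_{i}\, T^{i} d^{[i]}$, and taking $g_{i} = c_{i}$ as constant polynomials in $\mathbb{F}_{p}[T]$ gives the required decomposition. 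The main obstacle is the basis claim, but once Lucas' theorem is invoked it collapses to the triviality that the classical $p \times p$ binomial matrix is unitriangular. Note that the argument actually produces \emph{constant} $g_{i}$, which is strictly stronger than what the corollary asserts.
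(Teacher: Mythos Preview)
Your argument is correct and cleaner than the paper's, with one small slip at the end: taking $g_i = c_i$ as \emph{constant} polynomials would give $g_i(T^i d^{[i]}) = c_i \cdot \mathrm{Id}$, collapsing the sum to a scalar. What you want is $g_i(x) = c_i x$, so that $g_i(T^i d^{[i]}) = c_i\, T^i d^{[i]}$ and your identity $\Xi = \sum_i c_i\, T^i d^{[i]}$ is literally of the asserted form. The strengthening you obtain is therefore that the $g_i$ may be taken to be linear monomials, not constants.

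The paper takes a different, more constructive route. It uses two facts established just prior: that the projector $\pi_{(a,p^{n-1})}$ onto the monomials $T^{a + bp^{n-1}}$ is already a linear combination of the $T^j d^{[j]}$ (\prettyref{lem:The-operator-=00005Cpi-r}), and that over $\mathbb{F}_p$ the operator $T^{p^{n-1}} d^{[p^{n-1}]}$ acts on $T^{a+bp^{n-1}}$ with eigenvalue exactly $b$ (extracted from the proof of \prettyref{prop:polynomial-in-AP}). For each residue $a$ one forms $p_a\bigl(T^{p^{n-1}}d^{[p^{n-1}]}\bigr) \cdot \pi_{(a,p^{n-1})}$, which acts by $p_a(b)$ on the progression through $a$ and by zero elsewhere, and then sums over $a$. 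The output is a genuinely polynomial expression in the $T^i d^{[i]}$, which the paper leaves unreduced.

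Your Lucas-based basis argument bypasses both of those ingredients and lands directly on an $\mathbb{F}_p$-linear combination; it is shorter and yields the minimal form. The paper's approach has the compensating virtue of reusing the projector machinery that is immediately recycled in \prettyref{prop:construction-of-projector}, so it sits more organically in the surrounding argument even if it is less economical in isolation.
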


\begin{proof}
By (the proof of) the previous proposition, we have that 
\[
T^{p^{m}}d^{[p^{m}]}T^{(a+bp^{m})}=b\cdot T^{(a+bp^{m})}
\]
for each $0\leq a<p^{m}-1$. Thus, if $g$ is any polynomial in $\mathbb{Z}/p\mathbb{Z}$,
the operator $g(T^{p^{m}}d^{[p^{m}]})$ takes $T^{(a+bp^{m})}$ to
$g(b)T^{a+bp^{m}}$. Furthermore, the projector $\pi_{(a,p^{m})}$
is an operator of the form ${\displaystyle \sum_{i=0}^{p^{m}-1}c_{i}T^{i}d^{[i]}}$
for $c_{i}\in\mathbb{Z}$ (by \prettyref{lem:The-operator-=00005Cpi-r}).
So the composition $g(T^{p^{m}}d^{[p^{m}]})\cdot\pi_{a/p^{m}}$ acts
with eigenvalue $g(b)$ on each element of the form $T^{a+bp^{m}}$
and is zero on all other monomials; as above we see that this product
is a polynomial in the $\{T^{i}d^{[i]}\}$. Adding up such operators
over $a\in\{0,\dots,p^{m}-1\}$ yields the result. 
\end{proof}
Now, for any $r\geq0$, consider for any $i\in\{0,\dots,p^{r}-1\}$
the projection operator $\pi_{i/p^{r}}$ on $W_{r+1}(A)$ which takes
$p^{m}T^{a/p^{m}}$ to itself whenever $p^{r-m}a\equiv i\phantom{i}\text{mod}\phantom{i}p^{r}$
and $0$ otherwise. Then we have
\begin{prop}
\label{prop:construction-of-projector}Let $r\geq0$. For each $i\in\{1,\dots,p^{r}-1\}$
the projector $\pi_{i/p^{r}}$ (on $W_{r+1}(A)$) is given by the
action of an element in $\mathcal{D}_{W(A)}^{(0)}$, of the form 
\[
\psi_{i/p^{r}}=\sum_{i=1}^{p^{r}-1}g_{i}(T^{i/p^{r}}\{d\}_{i/p^{r}})
\]
where $g_{i}\in\mathbb{Z}_{p}[x]$. We have that $\psi_{i/p^{r}}\in V^{r-\text{val}_{p}(i)}(\mathcal{D}_{W(A)})$.
Therefore, the element 
\[
\pi_{r}=1-\sum_{i=1}^{p^{r}-1}\psi_{i/p^{r}}
\]
acts as a projector from $W_{r+1}(A)$ to $\mathcal{A}_{r}$. Further,
one has $\pi_{r}\equiv\pi_{r-1}\phantom{i}\text{mod}\phantom{i}V^{r}(\mathcal{\widehat{D}}_{W(A)}^{(0)})$.
Therefore the sequence $\{\pi_{r}\}$, approaches a limit, $\pi\in\mathcal{\widehat{D}}_{W(A)}^{(0)}$,
which acts as a projector from $W(A)$ to $\mathcal{A}$. 
\end{prop}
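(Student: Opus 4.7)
The plan is to reduce the construction of $\psi_{i/p^r}$ to an explicit eigenvalue computation for the operators $T^{j/p^r}\{d\}_{j/p^r}$ on the canonical spanning set $\{V^m(T^a)\}$ of $W_{r+1}(A)$, and then to apply the polynomial-in-arithmetic-progressions machinery built up in \prettyref{prop:polynomial-in-AP} and its corollary.

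First I would compute the action of $T^{j/p^r}\{d\}_{j/p^r}$ on $V^m(T^a)$. Using the $F^r$-semilinear embedding $W_{r+1}(A^{(r)})\hookrightarrow\mathcal{A}_{r+1}$ and \prettyref{prop:Move-Down}, the operator $\{d\}_{j/p^r}$ corresponds to $d^{[j]}$ on $\mathcal{A}_{r+1}$, and the Frobenius-twisted multiplication by $T^{j/p^r}=F^{v_j-r}(T^j)$ precisely compensates the exponent shift introduced by the divided derivation, yielding the eigenvalue identity
\[
T^{j/p^r}\{d\}_{j/p^r}\bigl(V^m(T^a)\bigr)=\binom{ap^{r-m}}{j}\,V^m(T^a).
\]
Thus the eigenvalue is a function of $c:=ap^{r-m}$, and by \prettyref{prop:polynomial-in-AP} applied with $n=r$, this function is polynomial in arithmetic progressions of length $p^{r-1}$ modulo $p^{r+1}$.

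Next, I would use the corollary to \prettyref{prop:polynomial-in-AP} to construct, for each $i\in\{1,\dots,p^r-1\}$, polynomials $g_{i,j}\in\mathbb{Z}_p[x]$ so that $\psi_{i/p^r}:=\sum_{j=1}^{p^r-1}g_{i,j}\bigl(T^{j/p^r}\{d\}_{j/p^r}\bigr)$ acts on $V^m(T^a)$ as the characteristic function of the congruence $ap^{r-m}\equiv i\pmod{p^r}$. The condition $ap^{r-m}\equiv i\pmod{p^r}$ forces $m\ge r-v_i$, so the image automatically lies in $V^{r-v_i}(W_{r+1}(A))$, giving the filtration statement $\psi_{i/p^r}\in V^{r-v_i}(\mathcal{D}_{W(A)})$.

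With the $\psi_{i/p^r}$ in hand, the element $\pi_r=1-\sum_{i=1}^{p^r-1}\psi_{i/p^r}$ acts as the identity on spanning elements $V^0(T^a)\in\Phi(\mathcal{A}_r)$ and kills all spanning elements $V^m(T^a)$ corresponding to nonzero residues modulo $p^r$, so it is the desired projector from $W_{r+1}(A)$ onto $\mathcal{A}_r$. The compatibility $\pi_r\equiv\pi_{r-1}\pmod{V^r(\widehat{\mathcal{D}}_{W(A)}^{(0)})}$ will follow from the compatibility noted in the proof of \prettyref{prop:polynomial-in-AP}, namely that the polynomials $q_{i,r'}$ reduce modulo $p^{r'}$ to $q_{i,r'-1}$; propagating this through the combinatorial construction yields a compatible family whose inverse limit defines $\pi\in\widehat{\mathcal{D}}_{W(A)}^{(0)}$. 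The main obstacle is the combinatorial construction of the $g_{i,j}$: one must extract, from the polynomial-in-AP structure of eigenvalues at period $p^{r-1}$ in $c$, a projector distinguishing residues at the finer level $p^r$. The extra resolution is available because $\{V^m(T^a)\}$ is not all of $\mathcal{A}_{r+1}$ but a structured subset in which the $V$-level $m$ records additional $p$-adic information about $c$ (namely $\mathrm{val}_p(c)\ge r-m$), and one must carefully track the interaction of this extra structure with the semi-linear twist $F^{v_j-r}(T^j)\cdot$ in the operators $T^{j/p^r}\{d\}_{j/p^r}$.
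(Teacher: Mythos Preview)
Your eigenvalue computation is correct, and you have identified the right toolbox (the polynomial-in-AP machinery). But the step ``use the corollary to \prettyref{prop:polynomial-in-AP} to construct the $g_{i,j}$'' hides the whole difficulty, and your final paragraph does not actually close it. That corollary is stated only over $k$: it produces an operator whose eigenvalues match a prescribed function \emph{modulo $p$}. What you need is an operator whose action on $p^mT^{ap^{r-m}}$ matches the characteristic function modulo $p^{r+1-m}$, with the strength of the congruence varying with the $V$-level. A single invocation of the mod-$p$ corollary cannot deliver this.

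The paper's proof resolves exactly this by induction on $r$, and the inductive structure is what makes the mod-$p$ corollary suffice. For $\mathrm{val}_p(i)=0$, one starts from the known integral lift $\sum_j c_jT^jd^{[j]}$ of the mod-$p$ projector $\pi_{(i,p^r)}$ (\prettyref{lem:The-operator-=00005Cpi-r}). Because every nonzero $c_j$ has $\mathrm{val}_p(j)=0$, this operator sends all of $W_{r+1}(A^{(r)})$ into $V^r$ by \prettyref{prop:Move-Down}; its discrepancy from the true projector $\pi_{i/p^r}$ therefore also lies in $V^r$, where one works modulo $p$ and the corollary applies directly to produce a correction term in $V^r(\mathcal{D}_{W(A)})$. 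For $\mathrm{val}_p(i)>0$, one lifts the inductively-constructed $\psi_{i'/p^{r-1}}$ via Frobenius and again corrects in $V^r$. This two-case induction is the missing idea: it guarantees both that the corrections live at the right filtration level and, automatically, the compatibility $\pi_r\equiv\pi_{r-1}\pmod{V^r}$---which in your approach is asserted without justification.
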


\begin{proof}
We proceed by induction on $r$, the case $r=0$ being trivial. Suppose
the theorem holds at level $r-1$. Consider any $i\in\{1,\dots,p^{r}-1\}$
with $\text{val}(i)=0$. We will work now with the copy of $W_{r+1}(A^{(r)})$
contained in $\mathcal{A}_{r+1}=W_{r+1}(k)[T]$ and construct the
projector there. 

The projector $\pi_{(i,p^{r})}$ on $k[T]$ is given by an expression
of the form ${\displaystyle \sum_{j=0}^{p^{r}-1}c_{j}T^{j}d^{[j]}}$
(for $c_{j}\in\mathbb{Z}$); where, since $\text{val}(i)=0$, we have
$c_{j}=0$ whenever $\text{val}(j)\neq0$ (to see this, note that
the restriction of the operator to the subalgebra $k[T^{p}]$ is $0$).
Now, consider the action of the term ${\displaystyle \sum_{j=0}^{p^{r}-1}c_{j}T^{j}d^{[j]}}$
on $W_{r+1}(A^{(r)})\subset\mathcal{A}_{r+1}$. As $\text{val}(j)=0$
for all nonzero $c_{j}$, this operator takes every term of the form
$p^{m}T^{ap^{r-m}}$ to a term of the form $f_{m}(a)\cdot p^{r}T^{ap^{r-m}}$
for some $f_{m}(a)\in\mathbb{Z}/p\mathbb{Z}$. To analyze the functions
$f_{m}$, we shall apply \prettyref{prop:polynomial-in-AP}. In particular,
we see that $a\to f_{m}(a)$ is polynomial on arithmetic progressions
of length $p^{m}$, and is equal to zero on all terms of the form
$a=pa'$ (when $m>0$). 

Applying the previous result, we see that there is an operator of
the form 
\[
\sum_{i=0}^{p^{r+1}-1}g_{i,.m}(p^{r-\text{val}(i)}T^{i}d^{[i]})
\]
where $g_{i,m}\in\mathbb{Z}_{p}[X]$, which takes $p^{m}T^{ap^{r-m}}\to f_{m}(a)\cdot p^{r}T^{ap^{r-m}}$
for all $a$. By construction this operator is contained in $V_{r}(\mathcal{D}_{W(A)}^{(0)})$,
and we have 
\[
\psi_{i/p^{r}}=\sum_{j=0}^{p^{r}-1}c_{j}T^{j}d^{[j]}-\sum_{m=0}^{r-1}\sum_{i=0}^{p^{r+1}-1}g_{i}(p^{r-\text{val}(i)}T^{i}d^{[i]})
\]
Now suppose $\text{val}(i)>0$. Write $i=pi'$. By induction we have
already constructed the operator $\psi_{i'/p^{r-1}}$ on $W_{r}(A)$,
and it is an operator of the form ${\displaystyle \sum_{i=0}^{p^{r}-1}h_{i}(T^{i/p^{r-1}}\{d\}_{i/p^{r-1}})}$
for polynomials $h_{i}\in\mathbb{Z}_{p}[X]$. Translating to $W_{r}(A^{(r-1)})\subset W_{r}(k)[T]$
it is an operator of the form ${\displaystyle {\displaystyle \sum_{i=0}^{p^{r}-1}h_{i}(T^{i}d^{[i]})}}$.
Lifting to $W_{r+1}(A^{(r))})\subset W_{r+1}(k)[T]$ yields the operator
${\displaystyle \sum_{i=0}^{p^{r}-1}h_{i}(T^{pi}d^{[pi]})}$. By construction,
the operator 
\[
\psi_{i/p^{r}}-\sum_{i=0}^{p^{r}-1}h_{i}(T^{pi}d^{[pi]})
\]
takes each term of the form $p^{m}T^{ap^{r-m}}$ to a term of the
form $f_{m}(a)\cdot p^{r}T^{ap^{r-m}}$ for some $f_{m}(a)\in\mathbb{Z}/p\mathbb{Z}$
which is polynomial on arithmetic progressions of length $p^{m}$
(again by \prettyref{prop:polynomial-in-AP}). Arguing as above, we
find a term in $V_{r}(\mathcal{D}_{W(A)}^{(0)})$ which acts as ${\displaystyle \psi_{i/p^{r}}-\sum_{i=0}^{p^{r}-1}h_{i}(T^{pi}d^{[pi]})}$
on $W_{r+1}(A^{(r)})$, and thus construct $\psi_{i/p^{r}}$ as required. 
\end{proof}
Now we can give the 
\begin{cor}
\label{cor:Projector!}Let $A$ be any smooth affine algebra which
is equipped with local coordinates, and let $\Phi:\mathcal{A}\to W(A)$
be the map coming from a coordinatized Frobenius lift. Then there
is an element $\pi\in\mathcal{\widehat{D}}_{W(A)}^{(m)}$ which acts
as a projector onto $\Phi(\mathcal{A})$, i.e., $\pi^{2}=\pi$ and
$\pi(W(A))=\Phi(\mathcal{A})$. 

Further, the map $\mathcal{\widehat{D}}_{W(A)}^{(m)}\to\Phi^{*}\mathcal{\widehat{D}}_{\mathcal{A}}^{(m)}$
which takes $1\to\Phi$ induces an isomorphism $\mathcal{\widehat{D}}_{W(A)}^{(m)}\cdot\pi\tilde{\to}\Phi^{*}\mathcal{\widehat{D}}_{\mathcal{A}}^{(m)}$;
therefore, $\widehat{\Phi}^{*}\mathcal{\widehat{D}}_{\mathcal{A}}^{(m)}$
is a summand of $\mathcal{D}_{W(A)}^{(m)}$. The analogous facts are
true for $\mathcal{\widehat{D}}_{W(A),\text{crys}}^{(0)}$ and $\Phi^{*}\mathcal{\widehat{D}}_{\mathcal{A},\text{crys}}^{(0)}$. 
\end{cor}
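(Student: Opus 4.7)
My plan is to construct $\pi$ in three stages: first for an affine space over $k$, then transfer to a general $A$ with local coordinates via etale descent, and finally to identify $\widehat{\mathcal{D}}_{W(A)}^{(m)}\cdot\pi$ with $\Phi^{*}\widehat{\mathcal{D}}_{\mathcal{A}}^{(m)}$ using the surjection already established.

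For $A=k[T_{1},\dots,T_{n}]$ with the coordinatized Frobenius lift $T_{i}\mapsto T_{i}^{p}$, I would apply \prettyref{prop:construction-of-projector} in each variable separately: the canonical lifts $\{\partial_{j}\}_{i/p^{r}}$ for distinct indices $j$ mutually commute, so the resulting one-variable projectors $\pi^{(i)}\in\widehat{\mathcal{D}}_{W(A)}^{(0)}$ are commuting idempotents on $W(A)$, each killing precisely those summands whose exponent in $T_{i}$ is not of integral-power type. Setting $\pi:=\pi^{(1)}\cdots\pi^{(n)}$ gives an element of $\widehat{\mathcal{D}}_{W(A)}^{(0)}$ with $\pi^{2}=\pi$ whose image is exactly the intersection of the $\pi^{(i)}(W(A))$, i.e.\ $\Phi(\mathcal{A})$.

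For a general $A$ admitting an etale map $\psi^{\#}:B=k[T_{1},\dots,T_{n}]\to A$, I would use \prettyref{prop:construction-of-transfer} to transfer the polynomial-ring projector $\pi_{B}$ to an element $\pi_{A}\in\widehat{\mathcal{D}}_{W(A)}^{(0)}$ via the isomorphism $W(A)\widehat{\otimes}_{W(B)}\widehat{\mathcal{D}}_{W(B)}\tilde{\to}\widehat{\mathcal{D}}_{W(A)}$. Since $\mathcal{A}$ is the unique etale lift of $A$ over $\mathcal{B}$ and the coordinatized Frobenius on $\mathcal{A}$ is the unique etale extension of that on $\mathcal{B}$, the map $\Phi_{A}:\mathcal{A}\to W(A)$ is compatible with $\Phi_{B}$, and faithful flatness of $W(A)$ over $W(B)$ (together with compatibility of the $V$-filtrations) forces $\pi_{A}$ to remain idempotent with image $\Phi_{A}(\mathcal{A})$. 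The same construction applied using the crystalline completions produces a projector in $\widehat{\mathcal{D}}_{W(A),\text{crys}}^{(0)}$.

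Finally, to identify the summand, I would use the surjection $\alpha:\widehat{\mathcal{D}}_{W(A)}^{(m)}\to\Phi^{*}\widehat{\mathcal{D}}_{\mathcal{A}}^{(m)}$, $Q\mapsto Q\circ\Phi$, from \prettyref{prop:Construction-of-bimodule}. Its kernel consists of $Q$ vanishing on $\Phi(\mathcal{A})\subset W(A)$; since $1-\pi$ vanishes on $\Phi(\mathcal{A})$, we have $\widehat{\mathcal{D}}_{W(A)}^{(m)}\cdot(1-\pi)\subset\ker(\alpha)$. Conversely, if $\alpha(Q)=0$, then $Q\pi$ annihilates all of $W(A)$ (because $\pi(W(A))\subset\Phi(\mathcal{A})$), so by injectivity of $\widehat{\mathcal{D}}_{W(A)}^{(m)}\hookrightarrow\mathrm{End}_{W(k)}(W(A))$ (a corollary of \prettyref{thm:Basis}) we conclude $Q\pi=0$ and hence $Q=Q(1-\pi)$. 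Thus $\alpha$ restricts to an isomorphism $\widehat{\mathcal{D}}_{W(A)}^{(m)}\cdot\pi\tilde{\to}\Phi^{*}\widehat{\mathcal{D}}_{\mathcal{A}}^{(m)}$, exhibiting the target as a direct summand of $\widehat{\mathcal{D}}_{W(A)}^{(m)}$. The main obstacle is the etale-descent step: one must verify that tensoring the $B$-projector with $W(A)$ over $W(B)$ really gives an element projecting onto $\Phi_{A}(\mathcal{A})$ rather than onto the a priori larger module $W(A)\widehat{\otimes}_{W(B)}\Phi_{B}(\mathcal{B})$, which boils down to a careful compatibility check between the two Frobenius lifts and the resulting maps to Witt vectors.
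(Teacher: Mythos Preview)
Your proposal is correct and follows essentially the same route as the paper: construct $\pi$ as a product of one-variable projectors from \prettyref{prop:construction-of-projector} in the polynomial case, extend via the etale isomorphism of \prettyref{prop:construction-of-transfer}, and then identify the summand by matching the kernel of the surjection $Q\mapsto Q\circ\Phi$ with the left ideal generated by $1-\pi$ (the paper phrases this last step as an equality of left annihilators of $\Phi$ and of $\pi$, which is the same computation). Your flagged ``main obstacle'' is not a genuine difficulty: since $\mathcal{A}$ is the unique etale lift of $A$ over $\mathcal{B}$, one has $W(A)\widehat{\otimes}_{W(B)}\Phi_{B}(\mathcal{B})\cong\Phi_{A}(\mathcal{A})$ directly, and the extended operator $\pi_{A}$ is the same explicit expression in the $T^{i/p^{r}}\{\partial\}_{i/p^{r}}$ as $\pi_{B}$, so its action on $W(A)$ is determined level by level exactly as in \prettyref{prop:construction-of-projector}.
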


Note that this immediately proves part $2)$ of \prettyref{thm:Projective!}
\begin{proof}
If $A=k[T_{1},\dots,T_{d}]$, then we have inclusions $\mathcal{\widehat{D}}_{W(k[T_{i}])}^{(0)}\to\mathcal{\widehat{D}}_{W(A)}^{(0)}$
coming from sending $p^{r}T_{i}^{a/p^{r}}\to p^{r}T_{i}^{a/p^{r}}$
and $\{d\}_{a/p^{r}}\to\{\partial_{i}\}_{a/p^{r}}$; and we can set
${\displaystyle \pi=\prod_{i=1}^{d}\pi_{i}}$ where $\pi_{i}\in\mathcal{D}_{W(k[T_{i}])}^{(0)}$
is the element constructed directly above. In this case we see that
$\pi:W(A)\to\mathcal{A}=W(k)<<T_{1},\dots,T_{d}>>$ is the standard
coordinate projection; i.e., the continuous map which takes $p^{r}T^{I/p^{r}}$to
$0$ whenever $r\geq1$ and $T^{I}\to T^{I}$ for $I\in(\mathbb{Z}^{\geq0})^{d}$. 

For the general case, the choice of local coordinates yields an etale
map \\
$k[T_{1},\dots,T_{d}]\to A$. By the etale local nature of Witt-differential
operators (\prettyref{prop:construction-of-transfer}), we see that
$\pi$ extends uniquely to $\mathcal{D}_{W(A)}^{(0)}$; this is the
required element. 

To obtain the last statement, note that the element $\Phi\in\widehat{\Phi}^{*}\mathcal{\widehat{D}}_{\mathcal{A}}^{(0)}$
has annihilator consisting of $\{\varphi\in\mathcal{\widehat{D}}_{W(A)}^{(0)}|\varphi\cdot\Phi(\mathcal{A})=0\}$;
this is also the left annihilator of $\pi$; this yields the isomorphism
$\mathcal{D}_{W(A)}^{(0)}\cdot\pi\tilde{\to}\widehat{\Phi}^{*}\mathcal{\widehat{D}}_{\mathcal{A}}^{(0)}$.
Since $\pi$ is an idempotent the rest of the statement follows directly.
We deduce the statement for higher $m$ by looking at the image of
$\pi$ in $\mathcal{\widehat{D}}_{W(A)}^{(m)}$. 

Similarly, to handle the crystalline version, we recall that there
is an injective map $\mathcal{\widehat{D}}_{W(A)}^{(0)}\to\mathcal{\widehat{D}}_{W(A),\text{crys}}^{(0)}$,
and so the image of $\pi$ in $\mathcal{\widehat{D}}_{W(A),\text{crys}}^{(0)}$
does the job. 
\end{proof}

\subsection{\label{subsec:Accessible-modules}Accessible modules}

In this section we use the fundamental isomorphism of \prettyref{thm:Uniqueness-of-bimodule}
to define and study our categories of interest. 

Let us begin in positive characteristic. Translating \prettyref{cor:Iso-mod-p^n}
to this situation (c.f. \prettyref{cor:Global-bimodule-mod-p} for
a proof that works in all characteristics) yields 
\begin{prop}
\label{prop:Properties-of-B_W}Let $m\geq0$. There is a well-defined
$(\mathcal{\widehat{D}}_{W(X)}^{(m)}/p,\mathcal{D}_{X}^{(m)})$ bimodule,
denoted $\mathcal{B}_{X}^{(m)}$, which is locally projective over
$\mathcal{\widehat{D}}_{W(X)}^{(m)}/p$, and locally faithfully flat
as a right $\mathcal{D}_{X}^{(0)}$-module. The associated functor
\[
\mathcal{B}_{X}^{(m)}\otimes_{\mathcal{D}_{X}^{(0)}}?:\mathcal{D}_{X}^{(m)}-\text{mod}\to\mathcal{\widehat{D}}_{W(X)}^{(m)}/p-\text{mod}
\]
is exact and fully faithful, and admits an exact right adjoint $\mathcal{H}om_{\mathcal{\widehat{D}}_{W(X)}^{(m)}/p}(\mathcal{B}_{X}^{(m)},?)$.
We have 
\[
\mathcal{H}om_{\mathcal{\widehat{D}}_{W(X)}^{(m)}/p}(\mathcal{B}_{X}^{(m)},\mathcal{B}_{X}^{(m)}\otimes_{\mathcal{D}_{X}^{(m)}}\mathcal{M})\tilde{\to}\mathcal{M}
\]
for all $\mathcal{M}\in\mathcal{D}_{X}^{(m)}-\text{mod}$. Therefore,
the functor 
\[
\mathcal{B}_{X}\otimes_{\mathcal{D}_{X}^{(m)}}^{L}:D(\mathcal{D}_{X}^{(m)}-\text{mod})\to D(\mathcal{\widehat{D}}_{W(X)}^{(m)}/p-\text{mod})
\]
is fully faithful and we have 
\[
R\mathcal{H}om_{\mathcal{\widehat{D}}_{W(X)}^{(m)}/p}(\mathcal{B}_{X}^{(m)},\mathcal{B}_{X}^{(m)}\otimes_{\mathcal{D}_{X}^{(m)}}\mathcal{M}^{\cdot})\tilde{\to}\mathcal{M}^{\cdot}
\]
for all $\mathcal{M}^{\cdot}\in D(\mathcal{D}_{X}^{(m)}-\text{mod})$. 
\end{prop}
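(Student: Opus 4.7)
The strategy is to build $\mathcal{B}^{(m)}_X$ by gluing local models of the form $\Phi^*\widehat{\mathcal{D}}^{(m)}_{\mathcal{A}}/p$, then read off every listed property from the local description via the idempotent of Corollary \ref{cor:Projector!}. The derived statement will follow from the underived one by local flatness on one side and local projectivity on the other.

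\emph{Local construction and gluing.} On any affine open $U=\text{Spec}(A)\subset X$ admitting local coordinates, choose a smooth $W(k)$-lift $\mathcal{A}$ of $A$ and a coordinatized Frobenius lift, producing $\Phi\colon\mathcal{A}\to W(A)$, and set $\mathcal{B}^{(m)}_U := \Phi^*\widehat{\mathcal{D}}^{(m)}_{\mathcal{A}}/p$ (using Proposition \ref{prop:Construction-of-bimodule} and Lemma \ref{lem:Bimodule-is-a-sheaf}). The mod-$p$ reduction of the canonical bimodule isomorphisms of Theorem \ref{thm:Uniqueness-of-bimodule}, together with their compatibility proved in Corollary \ref{cor:Iso-mod-p^n} and the \'etale-local nature of the construction (Proposition \ref{prop:construction-of-transfer}), yields canonical identifications on overlaps satisfying the cocycle condition. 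These assemble into a global sheaf $\mathcal{B}^{(m)}_X$ of $(\widehat{\mathcal{D}}^{(m)}_{W(X)}/p,\mathcal{D}^{(m)}_X)$-bimodules.

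\emph{Local structural consequences.} The key local input is Corollary \ref{cor:Projector!}: there is an idempotent $\pi\in\widehat{\mathcal{D}}^{(m)}_{W(A)}$ projecting $W(A)$ onto $\Phi(\mathcal{A})$, and $\Phi^*\widehat{\mathcal{D}}^{(m)}_{\mathcal{A}} = \widehat{\mathcal{D}}^{(m)}_{W(A)}\cdot\pi$. Reducing mod $p$ and writing $R := \widehat{\mathcal{D}}^{(m)}_{W(A)}/p$, $e:=\bar\pi$, the bimodule becomes $Re$, a direct summand of $R$; this provides local projectivity over $\widehat{\mathcal{D}}^{(m)}_{W(X)}/p$. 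Faithful flatness on the right over $\mathcal{D}^{(m)}_X$ is the mod-$p$ case of Theorem \ref{thm:Projective!}, already in hand. Moreover, since $e$ projects onto $A$ and $\mathcal{D}^{(m)}_A$ acts faithfully on $A$, the natural embedding $\mathcal{D}^{(m)}_A\hookrightarrow eRe$ is an isomorphism, so that the right $\mathcal{D}^{(m)}_A$-action on $Re$ is the standard one from the idempotent formalism.

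\emph{Adjunction, unit, derived statement.} Exactness of $\mathcal{B}^{(m)}_X\otimes_{\mathcal{D}^{(m)}_X}(-)$ is immediate from faithful flatness. The right adjoint $\mathcal{H}om_{\widehat{\mathcal{D}}^{(m)}_{W(X)}/p}(\mathcal{B}^{(m)}_X,-)$ exists abstractly, and locally is the functor $N\mapsto eN$, multiplication by a global idempotent, hence exact. To see the unit map $\mathcal{M}\to\mathcal{H}om(\mathcal{B}^{(m)}_X,\mathcal{B}^{(m)}_X\otimes_{\mathcal{D}^{(m)}_X}\mathcal{M})$ is an isomorphism, work locally and use $\mathcal{D}^{(m)}_A\tilde{=}eRe$: the unit becomes the tautological chain
\[
M \xrightarrow{\sim} \operatorname{Hom}_R(Re,\,Re\otimes_{eRe} M) \;=\; e(Re\otimes_{eRe} M) \;=\; eRe\otimes_{eRe} M \;=\; M.
\]
This yields full faithfulness of $\mathcal{B}^{(m)}_X\otimes_{\mathcal{D}^{(m)}_X}(-)$. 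For the derived version, local flatness of $\mathcal{B}^{(m)}_X$ on the right gives $\otimes^L=\otimes$, and local projectivity on the left makes the sheaves $\mathcal{E}xt^i_{\widehat{\mathcal{D}}^{(m)}_{W(X)}/p}(\mathcal{B}^{(m)}_X,-)$ vanish for $i>0$, so $R\mathcal{H}om = \mathcal{H}om$; applying the underived formula termwise delivers the derived isomorphism and hence the full faithfulness of the derived functor.

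\emph{Main obstacle.} The hardest step is the gluing in the local construction: verifying that the canonical mod-$p$ isomorphisms between $\Phi_1^*\widehat{\mathcal{D}}^{(m)}_{\mathcal{A}}/p$ and $\Phi_2^*\widehat{\mathcal{D}}^{(m)}_{\mathcal{A}}/p$ truly descend to a cocycle on overlaps of affines with independently chosen lifts, and remain compatible under passage to non-coordinatized affines via \'etale extension. Once the global $\mathcal{B}^{(m)}_X$ is in place, the rest is essentially formal idempotent Morita theory combined with the projectivity/flatness inputs from Section~3.
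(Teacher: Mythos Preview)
Your approach is essentially the paper's: glue $\mathcal{B}_X^{(m)}$ from the local models $\Phi^*\widehat{\mathcal{D}}_{\mathcal{A}}^{(m)}/p$, use the idempotent $\pi$ of Corollary \ref{cor:Projector!} for local projectivity and to compute the adjunction unit, and invoke Theorem \ref{thm:Projective!} for faithful flatness on the right. Two small points to tighten. First, for $m=0$ and $p=2$ the reference to Theorem \ref{thm:Uniqueness-of-bimodule} does not supply the mod-$p$ gluing isomorphisms (its part 1 requires $p>2$, and part 3 needs $m>0$); the paper covers this case via the $\widehat{\mathcal{U}}$-algebra argument, Corollary \ref{cor:Global-bimodule-mod-p}, which you should cite as well. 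Second, your justification for $\mathcal{D}_A^{(m)}\tilde{\to} eRe$ (``$e$ projects onto $A$ and the action is faithful'') yields only injectivity; surjectivity is the observation, used in the paper as the equality $\pi\mathcal{B}_X^{(m)}=\mathcal{D}_X^{(m)}$, that $\pi$ acts on $\Phi^*\mathcal{D}_A^{(m)}$ by projecting the $W(A)/p$-coefficient to $A$, so $e\cdot Re = A\otimes_A\mathcal{D}_A^{(m)}=\mathcal{D}_A^{(m)}$.
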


\begin{proof}
The existence of the bimodule follows immediately from \prettyref{thm:Uniqueness-of-bimodule}
and \prettyref{cor:Iso-mod-p^n} (or \prettyref{cor:Global-bimodule-mod-p}).
By \prettyref{thm:Projective!}, we see that it is locally faithfully
flat as a right $\mathcal{D}_{X}^{(m)}$-module and locally projective
over $\mathcal{\widehat{D}}_{W(X)}^{(m)}/p$. The rest follows directly
once we prove that the adjunction map
\begin{equation}
\mathcal{M}\to\mathcal{H}om_{\mathcal{\widehat{D}}_{W(X)}^{(m)}/p}(\mathcal{B}_{X}^{(m)},\mathcal{B}_{X}^{(m)}\otimes_{\mathcal{D}_{X}^{(m)}}\mathcal{M})\label{eq:Adjunction}
\end{equation}
is an isomorphism for all $\mathcal{M}\in\mathcal{D}_{X}^{(m)}-\text{mod}$.
To see this, we may work locally and assume $X=\text{Spec}(A)$. Let
$\Phi:\mathcal{A}\to W(A)$ be the morphism coming from a coordinatized
lift of Frobenius on $\mathcal{A}$, and $\pi:W(A)\to\mathcal{A}$
the associated projector. Now note that, as a right $\mathcal{D}_{X}^{(m)}$-module,
we have 
\[
\mathcal{B}_{X}^{(m)}=(1-\pi)\mathcal{B}_{X}^{(m)}\oplus\pi\mathcal{B}_{X}^{(m)}=(1-\pi)\mathcal{B}_{X}^{(m)}\oplus\mathcal{D}_{X}^{(m)}
\]
so that 
\[
\mathcal{B}_{X}^{(m)}\otimes_{\mathcal{D}_{X}^{(m)}}\mathcal{M}=((1-\pi)\mathcal{B}_{X}^{(m)}\otimes_{\mathcal{D}_{X}^{(m)}}\mathcal{M})\oplus\mathcal{M}
\]
As the functor $\mathcal{H}om_{\mathcal{\widehat{D}}_{W(X)}^{(m)}/p}(\mathcal{B}_{X}^{(m)},\mathcal{N})$
agrees with $\{n\in\mathcal{N}|(1-\pi)n=0\}$, we see that \prettyref{eq:Adjunction}
is an isomorphism as required. 
\end{proof}
This yields the 
\begin{defn}
\label{def:Accessible}1) A module $\mathcal{N}\in\mathcal{\widehat{D}}_{W(X)}^{(m)}/p-\text{mod}$
is accessible if is of the form $\mathcal{B}_{X}^{(m)}\otimes_{\mathcal{D}_{X}^{(m)}}\mathcal{M}$
for some $\mathcal{M}\in\mathcal{D}_{X}^{(m)}-\text{mod}$. 

2) A complex $\mathcal{N}^{\cdot}\in D(\mathcal{\widehat{D}}_{W(X)}^{(m)}/p-\text{mod})$
is accessible if is of the form $\mathcal{B}_{X}^{(m)}\otimes_{\mathcal{D}_{X}^{(m)}}^{L}\mathcal{M}^{\cdot}$
for some $\mathcal{M}^{\cdot}\in D(\mathcal{D}_{X}^{(m)}-\text{mod})$.

3) Let $r\geq1$. A complex $\mathcal{N}^{\cdot}\in D(\mathcal{\widehat{D}}_{W(X)}^{(m)}/p^{r}-\text{mod})$
is accessible if $\mathcal{N}^{\cdot}\otimes_{W_{r}(k)}^{L}k$ is
accessible in $D(\mathcal{\widehat{D}}_{W(X)}^{(m)}/p-\text{mod})$.
Similarly, a complex $\mathcal{N}^{\cdot}\in D_{cc}(\mathcal{\widehat{D}}_{W(X)}^{(m)}-\text{mod})$
is accessible if $\mathcal{N}^{\cdot}\otimes_{W(k)}^{L}k$ is accessible
in $D(\mathcal{\widehat{D}}_{W(X)}^{(m)}/p-\text{mod})$. 
\end{defn}

As the full subcategory of accessible complexes in $D(\mathcal{\widehat{D}}_{W(X)}^{(m)}/p-\text{mod})$
is a thick triangulated subcategory, the same is true for the accessible
complexes in $D(\mathcal{\widehat{D}}_{W(X)}^{(m)}/p^{n}-\text{mod})$
and $D(\mathcal{\widehat{D}}_{W(X)}^{(m)}-\text{mod})$. We denote
these categories by \\
$D_{\text{acc}}(\mathcal{\widehat{D}}_{W(X)}^{(m)}/p^{n}-\text{mod})$
and $D_{\text{acc}}(\mathcal{\widehat{D}}_{W(X)}^{(m)}-\text{mod})$,
respectively. 

In the presence of a lift of Frobenius, we have the following characterization
of accessibility. Before giving it, let us set some notation: Let
$A$ be smooth affine which possesses local coordinates and let $\Phi:\mathcal{A}\to W(A)$
be the morphism coming from a coordinatized lift of Frobenius. For
$\mathcal{M}^{\cdot}\in D(\mathcal{D}_{X_{n}}^{(m)}-\text{mod})$,
let $\Phi^{*}\mathcal{M}^{\cdot}:=\Phi^{*}\mathcal{D}_{X_{n}}^{(m)}\otimes_{\mathcal{D}_{X_{n}}^{(m)}}^{L}\mathcal{M}^{\cdot}$.
For $\mathcal{M}^{\cdot}\in D_{cc}(\mathcal{D}_{\mathfrak{X}}^{(m)}-\text{mod})$,
let $\Phi^{*}\mathcal{M}^{\cdot}:=\Phi^{*}\mathcal{D}_{\mathfrak{X}}^{(m)}\widehat{\otimes}_{\mathcal{D}_{\mathfrak{X}}^{(m)}}^{L}\mathcal{M}^{\cdot}$,
the cohomological completion of $\Phi^{*}\mathcal{M}^{\cdot}:=\Phi^{*}\mathcal{D}_{\mathfrak{X}}^{(m)}\otimes_{\mathcal{D}_{\mathfrak{X}}^{(m)}}^{L}\mathcal{M}^{\cdot}$.
Then we have
\begin{thm}
\label{thm:Local-Accessible}Let $A$, $\mathcal{A}$ and $\Phi:\mathcal{A}\to W(A)$
be the morphism coming from a coordinatized lift of Frobenius. Let
$X=\text{Spec}(A)$. Then $\mathcal{N}^{\cdot}\in D(\mathcal{\widehat{D}}_{W(X)}^{(m)}/p^{r}-\text{mod})$
is accessible iff $\mathcal{N}^{\cdot}\tilde{=}\Phi^{*}\mathcal{M}^{\cdot}$
for some $\mathcal{M}^{\cdot}\in D(\mathcal{D}_{\mathfrak{X}_{r}}^{(m)}-\text{mod})$
(where $\mathfrak{X}_{r}=\text{Spec}(\mathcal{A}/p^{r})$). 

Similarly, $\mathcal{N}^{\cdot}\in D_{cc}(\mathcal{\widehat{D}}_{W(X)}^{(m)}-\text{mod})$
is accessible iff $\mathcal{N}^{\cdot}\tilde{=}\Phi^{*}\mathcal{M}^{\cdot}$
for some $\mathcal{M}^{\cdot}\in D_{cc}(\mathcal{D}_{\mathfrak{X}}^{(m)}-\text{mod})$. 
\end{thm}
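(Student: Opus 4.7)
The plan is to build up an adjoint pair from the bimodule $\Phi^*\widehat{\mathcal{D}}_\mathcal{A}^{(m)}$ and use the idempotent $\pi$ from Corollary \ref{cor:Projector!} to construct an inverse to $\Phi^*$. Specifically, since Theorem \ref{thm:Projective!} together with Corollary \ref{cor:Projector!} gives an isomorphism $\Phi^*\widehat{\mathcal{D}}_\mathcal{A}^{(m)} \tilde\to \widehat{\mathcal{D}}_{W(A)}^{(m)}\cdot\pi$ of left $\widehat{\mathcal{D}}_{W(A)}^{(m)}$-modules, the functor
\[
\Phi^!(-) := R\mathcal{H}om_{\widehat{\mathcal{D}}_{W(A)}^{(m)}}(\Phi^*\widehat{\mathcal{D}}_\mathcal{A}^{(m)}, -)
\]
is computable on any complex $\mathcal{N}^\cdot$ simply as the ``$\pi$-fixed part'' $\pi\mathcal{N}^\cdot$, with no derived correction; moreover $\pi\widehat{\mathcal{D}}_{W(A)}^{(m)}\pi$ acts on this by $\widehat{\mathcal{D}}_\mathcal{A}^{(m)}$, using the right-hand $\widehat{\mathcal{D}}_\mathcal{A}^{(m)}$-action on the bimodule. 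Faithful flatness of $\Phi^*\widehat{\mathcal{D}}_\mathcal{A}^{(m)}$ over $\widehat{\mathcal{D}}_\mathcal{A}^{(m)}$ yields $\Phi^!\Phi^* \simeq \mathrm{id}$, and the unit of the resulting adjunction gives a natural map $\Phi^*\Phi^!\mathcal{N}^\cdot \to \mathcal{N}^\cdot$ that we must check is an isomorphism exactly on the accessible subcategory.

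For the mod-$p^r$ version, the ``if'' direction is direct: for $\mathcal{M}^\cdot \in D(\mathcal{D}_{\mathfrak{X}_r}^{(m)}-\mathrm{mod})$, the reduction $(\Phi^*\mathcal{M}^\cdot)\otimes^L_{W_r(k)}k$ can be computed by base-change along $\Phi^*\widehat{\mathcal{D}}_\mathcal{A}^{(m)}/p \simeq \mathcal{B}_X^{(m)}$ (an identification from Proposition \ref{prop:Properties-of-B_W} combined with the uniqueness statement in Corollary \ref{cor:Iso-mod-p^n}) and equals $\mathcal{B}_X^{(m)}\otimes^L_{\mathcal{D}_X^{(m)}}(\mathcal{M}^\cdot\otimes^L_{W_r(k)}k)$, which is accessible by Definition \ref{def:Accessible}. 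For the converse, given accessible $\mathcal{N}^\cdot$, set $\mathcal{M}^\cdot := \Phi^!\mathcal{N}^\cdot$; to show $\Phi^*\mathcal{M}^\cdot \tilde\to \mathcal{N}^\cdot$, it suffices (by the derived-category Nakayama lemma for $p^r$-torsion algebras) to check the statement after $\otimes^L_{W_r(k)}k$. There, both sides become the mod-$p$ construction of Proposition \ref{prop:Properties-of-B_W}, where the unit map $\mathcal{B}_X^{(m)}\otimes^L\mathcal{H}om(\mathcal{B}_X^{(m)},?) \to ?$ is known to be an isomorphism on accessibles.

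The cohomologically complete case proceeds by the same template, with $\mathcal{M}^\cdot := \Phi^!\mathcal{N}^\cdot$, but one must verify two compatibilities: first, that $\Phi^!$ preserves cohomological completeness (which follows because $\Phi^*\widehat{\mathcal{D}}_\mathfrak{X}^{(m)}$ is a summand of $\widehat{\mathcal{D}}_{W(X)}^{(m)}$ as a left module, so that $\Phi^!\mathcal{N}^\cdot$ is a summand of $\mathcal{N}^\cdot$, inheriting completeness), and second, that the completed tensor product $\Phi^*\widehat{\mathcal{D}}_\mathfrak{X}^{(m)} \widehat\otimes^L_{\widehat{\mathcal{D}}_\mathfrak{X}^{(m)}}\Phi^!\mathcal{N}^\cdot$ reproduces $\mathcal{N}^\cdot$. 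For the latter, one reduces mod $p$ using Nakayama and invokes the characteristic-$p$ case established above.

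The main obstacle is the interaction between derived cohomological completion and the Hom/tensor adjunction: one must show that the uncompleted tensor $\Phi^*\widehat{\mathcal{D}}_\mathfrak{X}^{(m)} \otimes^L_{\widehat{\mathcal{D}}_\mathfrak{X}^{(m)}}\Phi^!\mathcal{N}^\cdot$ already lies in $D_{cc}$ so that no additional completion is needed before comparing with $\mathcal{N}^\cdot$, or alternatively that the natural map from the uncompleted to the completed tensor is an isomorphism in $D_{cc}$. The resolution should again go through the idempotent description: $\Phi^*\widehat{\mathcal{D}}_\mathfrak{X}^{(m)} \otimes^L_{\widehat{\mathcal{D}}_\mathfrak{X}^{(m)}}(\pi\mathcal{N}^\cdot)$ is directly identified with $\widehat{\mathcal{D}}_{W(X)}^{(m)}\pi\otimes^L_{\pi\widehat{\mathcal{D}}_{W(X)}^{(m)}\pi}\pi\mathcal{N}^\cdot$, and the projectivity from Theorem \ref{thm:Projective!} collapses this to the correction-free expression $\pi\mathcal{N}^\cdot \oplus (1-\pi)\widehat{\mathcal{D}}_{W(X)}^{(m)}\pi\otimes\pi\mathcal{N}^\cdot$, avoiding any completion subtleties.
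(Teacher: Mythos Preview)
Your approach is essentially the same as the paper's: both set up the adjoint pair $(\Phi^*,\Phi^!)$ via the bimodule $\Phi^*\widehat{\mathcal{D}}_{\mathcal{A}}^{(m)}$, exploit the idempotent $\pi$ (equivalently, the summand property from \prettyref{thm:Projective!}) to see that $\Phi^!$ is exact and preserves cohomological completeness, and then reduce the counit map $\Phi^*\Phi^!\mathcal{N}^\cdot\to\mathcal{N}^\cdot$ to the characteristic-$p$ case via Nakayama. Your final paragraph about the ``main obstacle'' is over-engineered: since $\Phi^*$ is by definition the \emph{completed} tensor product, both source and target of the adjunction map already lie in $D_{cc}$, so Nakayama applies directly without any need to analyze the uncompleted tensor; the paper simply observes this and moves on.
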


\begin{proof}
Let $\mathcal{N}^{\cdot}\in D(\mathcal{\widehat{D}}_{W(X)}^{(m)}/p-\text{mod})$.
It follows from \prettyref{prop:Properties-of-B_W} that $\mathcal{N}^{\cdot}$
is accessible iff the adjunction 
\[
\mathcal{N}^{\cdot}\to\mathcal{B}_{W(X)}^{(m)}\otimes_{\mathcal{D}_{X}^{(m)}}^{L}R\mathcal{H}om_{\mathcal{\widehat{D}}_{W(X)}^{(m)}/p}(\mathcal{B}_{W(X)}^{(m)},\mathcal{N}^{\cdot})
\]
is an isomorphism. 

Now let $\mathcal{N}^{\cdot}\in D(\mathcal{\widehat{D}}_{W(X)}^{(m)}/p^{r}-\text{mod})$.
Suppose $\mathcal{N}^{\cdot}$ is accessible. We have the bimodule
$\mathcal{B}_{\mathfrak{X}_{r}}^{(m)}=\Phi^{*}\mathcal{D}_{\mathfrak{X}_{r}}^{(m)}$,
which, by \prettyref{thm:Projective!}, is locally projective as a
left $\widehat{\mathcal{D}}_{W(X)}^{(m)}/p^{n}$-module and is locally
faithfully flat as a right $\mathcal{D}_{\mathfrak{X}_{r}}^{(m)}$-module.
Therefore 
\[
R\mathcal{H}om_{\mathcal{\widehat{D}}_{W(X)}^{(m)}/p^{r}}(\Phi^{*}\mathcal{D}_{X_{n}}^{(m)},\mathcal{N}^{\cdot})\otimes_{W_{r}(k)}^{L}k\tilde{\to}R\mathcal{H}om_{\mathcal{\widehat{D}}_{W(X)}^{(m)}/p}(\mathcal{B}_{W(X)}^{(m)},\mathcal{N}^{\cdot}\otimes_{W_{r}(k)}^{L}k)
\]
Therefore, 
\[
(\Phi^{*}\mathcal{D}_{\mathfrak{X}_{r}}^{(m)}\otimes_{\mathcal{D}_{\mathfrak{X}_{r}}^{(m)}}^{L}R\mathcal{H}om_{\mathcal{\widehat{D}}_{W(X)}^{(m)}/p^{r}}(\Phi^{*}\mathcal{D}_{\mathfrak{X}_{r}}^{(m)},\mathcal{N}^{\cdot}))\otimes_{W_{r}(k)}^{L}k
\]
\[
\tilde{\to}\mathcal{B}_{X}^{(m)}\otimes_{\mathcal{D}_{X}^{(m)}}^{L}R\mathcal{H}om_{\mathcal{\widehat{D}}_{W(X)}^{(m)}/p}(\mathcal{B}_{W(X)}^{(m)},\mathcal{N}^{\cdot}\otimes_{W_{r}(k)}^{L}k)
\]
so that the adjunction map 
\[
\mathcal{N}^{\cdot}\to\Phi^{*}\mathcal{D}_{\mathfrak{X}_{r}}^{(m)}\otimes_{\mathcal{D}_{\mathfrak{X}_{r}}^{(m)}}^{L}R\mathcal{H}om_{\mathcal{\widehat{D}}_{W(X)}^{(m)}/p^{r}}(\Phi^{*}\mathcal{D}_{\mathfrak{X}_{r}}^{(m)},\mathcal{N}^{\cdot})
\]
becomes an isomorphism after applying $\otimes_{W_{r}(k)}^{L}k$,
and therefore is already an isomorphism by Nakayama. Thus we have
$\mathcal{N}^{\cdot}\tilde{=}\Phi^{*}\mathcal{M}^{\cdot}$ for some
$\mathcal{M}^{\cdot}\in\text{Qcoh}(\mathcal{D}_{\mathfrak{X}_{r}}^{(m)})$;
the converse direction is clear. 

The statement for $\mathcal{N}^{\cdot}\in D_{cc}(\mathcal{\widehat{D}}_{W(X)}^{(m)}-\text{mod})$
is essentially identical, using the Nakayama lemma for cohomologically
complete complexes and the fact that 
\[
R\mathcal{H}om_{\mathcal{\widehat{D}}_{W(X)}^{(m)}}(\Phi^{*}\mathcal{D}_{\mathfrak{X}_{n}}^{(m)},\mathcal{N}^{\cdot})
\]
is cohomologically complete if $\mathcal{N}^{\cdot}$ is (since this
sheaf is a summand of $\mathcal{N}^{\cdot}$). 
\end{proof}
From this, we obtain
\begin{cor}
\label{cor:Qcoh-mod-p}Let $D_{\text{acc},\text{qcoh}}(\mathcal{\widehat{D}}_{W(X)}^{(m)}/p^{r}-\text{mod})$
denote the full subcategory of $D_{\text{acc}}(\mathcal{\widehat{D}}_{W(X)}^{(m)}/p^{r}-\text{mod})$
consisting of complexes $\mathcal{N}^{\cdot}$ such that 
\[
\mathcal{N}^{\cdot}\otimes_{W_{r}(k)}^{L}k\tilde{\to}\mathcal{B}_{X}^{(m)}\otimes_{\mathcal{D}_{X}^{(m)}}^{L}\mathcal{M}^{\cdot}
\]
where $\mathcal{H}^{i}(\mathcal{M}^{\cdot})\in\text{Qcoh}(\mathcal{D}_{X}^{(m)})$
for all $i$. Then, if $X=\text{Spec}(A)$ as above, we have $\mathcal{N}^{\cdot}\in D_{\text{acc},\text{qcoh}}(\mathcal{\widehat{D}}_{W(X)}^{(m)}/p^{r}-\text{mod})$
iff $\mathcal{N}^{\cdot}\tilde{=}\Phi^{*}\mathcal{M}^{\cdot}$ for
some $\mathcal{M}^{\cdot}\in D_{\text{qcoh}}(\mathcal{D}_{\mathfrak{X}_{r}}^{(m)}-\text{mod})$. 
\end{cor}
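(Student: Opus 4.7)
The plan is to leverage Theorem (Local-Accessible), which already provides the underlying equivalence $\mathcal{N}^{\cdot}\simeq\Phi^{*}\mathcal{M}^{\cdot}$ for some $\mathcal{M}^{\cdot}\in D(\mathcal{D}_{\mathfrak{X}_{r}}^{(m)}\text{-mod})$, and show that the quasicoherence condition on $\mathcal{N}^{\cdot}\otimes_{W_{r}(k)}^{L}k$ transports to quasicoherence of each cohomology of $\mathcal{M}^{\cdot}$, and vice versa. The key input will be the projectivity of $\Phi^{*}\mathcal{D}_{\mathfrak{X}_{r}}^{(m)}$ as a left $\widehat{\mathcal{D}}_{W(X)}^{(m)}/p^{r}$-module (Theorem (Projective!)), which makes the adjoint functor $R\mathcal{H}om_{\widehat{\mathcal{D}}_{W(X)}^{(m)}/p^{r}}(\Phi^{*}\mathcal{D}_{\mathfrak{X}_{r}}^{(m)},-)$ exact and compatible with $\otimes_{W_{r}(k)}^{L}k$.

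For the forward direction, given $\mathcal{M}^{\cdot}\in D_{\text{qcoh}}(\mathcal{D}_{\mathfrak{X}_{r}}^{(m)}\text{-mod})$, I would first observe that $\Phi^{*}\mathcal{D}_{\mathfrak{X}_{r}}^{(m)}\otimes_{W_{r}(k)}^{L}k\tilde{\to}\mathcal{B}_{X}^{(m)}$ by flatness of the bimodule on both sides, together with the fact that $\mathcal{D}_{\mathfrak{X}_{r}}^{(m)}\otimes_{W_{r}(k)}^{L}k\tilde{\to}\mathcal{D}_{X}^{(m)}$. Base change then yields
\[
\Phi^{*}\mathcal{M}^{\cdot}\otimes_{W_{r}(k)}^{L}k\tilde{\to}\mathcal{B}_{X}^{(m)}\otimes_{\mathcal{D}_{X}^{(m)}}^{L}(\mathcal{M}^{\cdot}\otimes_{W_{r}(k)}^{L}k),
\]
and a short induction along the Koszul-type filtration of $k$ over $W_{r}(k)$ (or equivalently the triangle $\mathcal{M}^{\cdot}\otimes^L k\to\mathcal{M}^{\cdot}\to\mathcal{M}^{\cdot}\otimes^L W_{r-1}(k)$) shows that quasicoherence of the $\mathcal{H}^{i}(\mathcal{M}^{\cdot})$ forces quasicoherence of the cohomology of $\mathcal{M}^{\cdot}\otimes_{W_{r}(k)}^{L}k$. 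Hence $\Phi^{*}\mathcal{M}^{\cdot}\in D_{\text{acc},\text{qcoh}}$.

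For the converse, starting with $\mathcal{N}^{\cdot}\in D_{\text{acc},\text{qcoh}}$, Theorem (Local-Accessible) produces $\mathcal{M}^{\cdot}\in D(\mathcal{D}_{\mathfrak{X}_{r}}^{(m)}\text{-mod})$ with $\mathcal{N}^{\cdot}\simeq\Phi^{*}\mathcal{M}^{\cdot}$; concretely
\[
\mathcal{M}^{\cdot}\simeq R\mathcal{H}om_{\widehat{\mathcal{D}}_{W(X)}^{(m)}/p^{r}}(\Phi^{*}\mathcal{D}_{\mathfrak{X}_{r}}^{(m)},\mathcal{N}^{\cdot}).
\]
Projectivity of $\Phi^{*}\mathcal{D}_{\mathfrak{X}_{r}}^{(m)}$ over $\widehat{\mathcal{D}}_{W(X)}^{(m)}/p^{r}$ lets the reduction $\otimes_{W_{r}(k)}^{L}k$ pass inside the $R\mathcal{H}om$, giving
\[
\mathcal{M}^{\cdot}\otimes_{W_{r}(k)}^{L}k\simeq R\mathcal{H}om_{\widehat{\mathcal{D}}_{W(X)}^{(m)}/p}(\mathcal{B}_{X}^{(m)},\mathcal{N}^{\cdot}\otimes_{W_{r}(k)}^{L}k).
\]
Combining this with the full faithfulness part of Proposition (Properties-of-B\_W) applied to the presentation $\mathcal{N}^{\cdot}\otimes_{W_{r}(k)}^{L}k\simeq\mathcal{B}_{X}^{(m)}\otimes_{\mathcal{D}_{X}^{(m)}}^{L}\mathcal{L}^{\cdot}$ (with $\mathcal{H}^{i}(\mathcal{L}^{\cdot})$ quasicoherent) identifies $\mathcal{M}^{\cdot}\otimes_{W_{r}(k)}^{L}k\simeq\mathcal{L}^{\cdot}$, which has quasicoherent cohomology.

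It remains to lift quasicoherence from the mod-$p$ reduction back to $\mathcal{M}^{\cdot}$ itself — this is the main (and most routine) obstacle. I would argue by induction on $r$: the $r=1$ case is immediate, and for $r>1$ the short exact sequence $0\to k\to W_{r}(k)\to W_{r-1}(k)\to 0$ produces a distinguished triangle
\[
\mathcal{M}^{\cdot}\otimes_{W_{r}(k)}^{L}k\to\mathcal{M}^{\cdot}\to\mathcal{M}^{\cdot}\otimes_{W_{r}(k)}^{L}W_{r-1}(k),
\]
in which the first term has quasicoherent cohomology by the preceding paragraph, and the third does by the inductive hypothesis applied to the analogous statement over $W_{r-1}(k)$. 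Since quasicoherent sheaves are closed under extensions, the long exact cohomology sequence then forces each $\mathcal{H}^{i}(\mathcal{M}^{\cdot})\in\text{Qcoh}(\mathcal{D}_{\mathfrak{X}_{r}}^{(m)})$, completing the proof.
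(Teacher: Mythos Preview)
Your proposal is correct and follows essentially the same approach as the paper. Both arguments reduce, via the local characterization of accessibility (the previous theorem), to the purely $\mathcal{O}_{\mathfrak{X}_{r}}$-module statement that $\mathcal{M}^{\cdot}$ has quasicoherent cohomology iff $\mathcal{M}^{\cdot}\otimes_{W_{r}(k)}^{L}k$ does, and both prove the nontrivial direction by induction on $r$ using a distinguished triangle coming from a short exact sequence of $W_{r}(k)$-modules. The only cosmetic differences are that the paper uses the sequence $0\to W_{r-1}(k)\to W_{r}(k)\to k\to 0$ (via $p\cdot$) rather than your $0\to k\to W_{r}(k)\to W_{r-1}(k)\to 0$ (via $p^{r-1}\cdot$), and that the paper treats the forward direction as obvious rather than unpacking it; your version simply spells out the intermediate steps with the bimodule $\Phi^{*}\mathcal{D}_{\mathfrak{X}_{r}}^{(m)}$ more explicitly.
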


\begin{proof}
By the previous theorem, it suffices to show the following: let $\mathcal{M}^{\cdot}\in D(\mathcal{O}_{\mathfrak{X}_{r}}-\text{mod})$.
Then $\mathcal{M}^{\cdot}\in D(\text{QCoh}(\mathfrak{X}_{r}))$ iff
$\mathcal{M}^{\cdot}\otimes_{W_{r}(k)}^{L}k\in D(\text{QCoh}(\mathfrak{X}_{r}))$.
The forward direction is obvious. For the reverse, consider the short
exact sequence 
\[
0\to W_{r-1}(k)\to W_{r}(k)\to k\to0
\]
where on the left we identify $pW_{n}(k)\tilde{=}W_{n-1}(k)$. This
yields a distinguished triangle 
\[
\mathcal{M}^{\cdot}\otimes_{W_{r}(k)}^{L}W_{r-1}(k)\to\mathcal{M}^{\cdot}\to\mathcal{M}^{\cdot}\otimes_{W_{r}(k)}^{L}k
\]
As $(\mathcal{M}^{\cdot}\otimes_{W_{r}(k)}^{L}W_{r-1}(k))\otimes_{W_{r-1}(k)}^{L}k\tilde{\to}\mathcal{M}^{\cdot}\otimes_{W_{r}(k)}^{L}k$,
by induction we may assume $\mathcal{M}^{\cdot}\otimes_{W_{r}(k)}^{L}W_{r-1}(k)\in D(\text{Qcoh}(\mathfrak{X}_{r-1}))$;
hence both edges of the triangle can be regarded as members of $D(\text{Qcoh}(\mathfrak{X}_{r}))$,
therefore $\mathcal{M}^{\cdot}\in D(\text{Qcoh}(\mathfrak{X}_{r}))$
as well. 
\end{proof}
We shall also find it useful to consider the category $D_{\text{acc},\text{qcoh}}(\mathcal{\widehat{D}}_{W(X)}^{(m)}-\text{mod})$
consisting of complexes $\mathcal{N}^{\cdot}$ such that 
\[
\mathcal{N}^{\cdot}\otimes_{W_{r}(k)}^{L}k\tilde{\to}\mathcal{B}_{X}^{(m)}\otimes_{\mathcal{D}_{X}^{(m)}}^{L}\mathcal{M}^{\cdot}
\]
where $\mathcal{H}^{i}(\mathcal{M}^{\cdot})\in\text{Qcoh}(\mathcal{D}_{X}^{(m)})$
for all $i$. I don't know a local characterization of it in terms
of the functor $\Phi^{*}$ as above. However, we do have: 
\begin{cor}
\label{cor:D-b-coh}Let $D_{\text{acc},\text{coh}}^{b}(\mathcal{\widehat{D}}_{W(X)}^{(m)}-\text{mod})$
denote the full subcategory of $D_{\text{acc}}(\mathcal{\widehat{D}}_{W(X)}^{(m)}-\text{mod})$
consisting of bounded complexes $\mathcal{N}^{\cdot}$ such that 
\[
\mathcal{N}^{\cdot}\otimes_{W(k)}^{L}k\tilde{\to}\mathcal{B}_{X}^{(m)}\otimes_{\mathcal{D}_{X}^{(m)}}^{L}\mathcal{M}^{\cdot}
\]
where $\mathcal{H}^{i}(\mathcal{M}^{\cdot})\in\text{Coh}(\mathcal{D}_{X}^{(m)})$
for all $i$. Then, if $X=\text{Spec}(A)$ as above, we have $\mathcal{N}^{\cdot}\in D_{\text{acc},\text{coh}}^{b}(\mathcal{\widehat{D}}_{W(X)}^{(m)}-\text{mod})$
iff $\mathcal{N}^{\cdot}\tilde{=}\Phi^{*}\mathcal{P}^{\cdot}$ for
some $\mathcal{P}^{\cdot}\in D_{\text{coh}}^{b}(\mathcal{\widehat{D}}_{\mathfrak{X}}^{(m)}-\text{mod})$.
Furthermore, a bounded complex $\mathcal{N}^{\cdot}$ is contained
in $D_{\text{acc},\text{coh}}^{b}(\mathcal{\widehat{D}}_{W(X)}^{(m)}-\text{mod})$
iff each $\mathcal{H}^{i}(\mathcal{N}^{\cdot})$, considered as a
complex in degree $0$, is contained in $D_{\text{acc},\text{coh}}^{b}(\mathcal{\widehat{D}}_{W(X)}^{(m)}-\text{mod})$.
In particular, the elements of $D_{\text{acc},\text{coh}}^{b}(\mathcal{\widehat{D}}_{W(X)}^{(m)}-\text{mod})$
in homological degree zero form an abelian subcategory of $D_{\text{acc}}(\mathcal{\widehat{D}}_{W(X)}^{(m)}-\text{mod})$. 
\end{cor}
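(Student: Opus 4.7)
The plan is to reduce everything to Theorem \prettyref{thm:Local-Accessible} combined with the summand property from Theorem \prettyref{thm:Projective!}, and then handle the coherence condition via a Nakayama argument modulo $p$ for cohomologically complete complexes.

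First I would establish the equivalence with the existence of a local lift to $D^b_{\mathrm{coh}}(\widehat{\mathcal{D}}_{\mathfrak{X}}^{(m)}-\text{mod})$. Given $\mathcal{N}^\cdot \in D^b_{\mathrm{acc},\mathrm{coh}}$, Theorem \prettyref{thm:Local-Accessible} yields $\mathcal{N}^\cdot \tilde{=} \Phi^*\mathcal{P}^\cdot$ for some $\mathcal{P}^\cdot \in D_{cc}(\widehat{\mathcal{D}}_{\mathfrak{X}}^{(m)}-\text{mod})$, and the right adjoint $R\mathcal{H}om_{\widehat{\mathcal{D}}_{W(X)}^{(m)}}(\Phi^*\widehat{\mathcal{D}}_{\mathfrak{X}}^{(m)},-)$ (the analogue of the adjunction in \prettyref{prop:Properties-of-B_W}) recovers $\mathcal{P}^\cdot$ from $\mathcal{N}^\cdot$. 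Since $\Phi^*\widehat{\mathcal{D}}_{\mathfrak{X}}^{(m)}$ is a direct summand of $\widehat{\mathcal{D}}_{W(X)}^{(m)}$ by \prettyref{thm:Projective!}, this $R\mathcal{H}om$ preserves boundedness, so $\mathcal{P}^\cdot$ is bounded.

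Second, I would verify that $\mathcal{P}^\cdot$ has coherent cohomology. Applying $\otimes^L_{W(k)} k$ to the isomorphism $\mathcal{N}^\cdot \tilde{=} \Phi^*\mathcal{P}^\cdot$ and using that the cohomological completion in the definition of $\Phi^*$ becomes trivial after reduction mod $p$, one obtains $\mathcal{N}^\cdot \otimes^L_{W(k)} k \simeq \mathcal{B}_X^{(m)} \otimes^L_{\mathcal{D}_X^{(m)}}(\mathcal{P}^\cdot \otimes^L_{W(k)} k)$. The hypothesis on $\mathcal{N}^\cdot$ together with the full faithfulness of $\mathcal{B}_X^{(m)} \otimes^L_{\mathcal{D}_X^{(m)}}-$ from \prettyref{prop:Properties-of-B_W} then forces $\mathcal{P}^\cdot \otimes^L_{W(k)} k$ to have coherent cohomology over $\mathcal{D}_X^{(m)}$. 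The remaining input is a Nakayama lemma for coherence: for bounded $\mathcal{P}^\cdot \in D_{cc}(\widehat{\mathcal{D}}_{\mathfrak{X}}^{(m)}-\text{mod})$, coherence of the cohomology of $\mathcal{P}^\cdot \otimes^L_{W(k)} k$ implies coherence of each $\mathcal{H}^i(\mathcal{P}^\cdot)$. I would prove this by induction on cohomological amplitude using the truncation triangles; the single-module case reduces to the statement that a cohomologically complete $\widehat{\mathcal{D}}_{\mathfrak{X}}^{(m)}$-module whose reduction mod $p$ is coherent over $\mathcal{D}_X^{(m)}$ is itself coherent over $\widehat{\mathcal{D}}_{\mathfrak{X}}^{(m)}$, which follows from the Bockstein triangle once one controls the $p$-torsion via Berthelot's structural results. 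The reverse direction of the equivalence is immediate from the same computation.

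Finally, for the cohomology characterization and the abelian subcategory assertion, I would use that $D^b_{\mathrm{acc},\mathrm{coh}}$ is a thick triangulated subcategory and proceed by induction on amplitude through the standard truncation triangles, reducing to the claim that $D^b_{\mathrm{acc},\mathrm{coh}}$ is closed under extensions; this closure follows at once from the long exact sequence of cohomology after applying $\otimes^L_{W(k)} k$. The abelian subcategory statement in homological degree zero then follows because kernels and cokernels in the heart are computed as truncations of cones between objects in the heart, which lie in $D^b_{\mathrm{acc},\mathrm{coh}}$ by the extension-closure property. The main obstacle will be the Nakayama-for-coherence argument in the second paragraph: lifting coherence across $\otimes^L_{W(k)} k$ requires careful control of the $p$-torsion encoded by $\mathcal{H}^{-1}(\mathcal{P}^\cdot \otimes^L_{W(k)} k)$, since $\widehat{\mathcal{D}}_{\mathfrak{X}}^{(m)}$ is not noetherian in general and one must invoke Berthelot's results on coherence over arithmetic differential operators.
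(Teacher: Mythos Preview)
Your treatment of the first claim (the local characterization via $\Phi^{*}\mathcal{P}^{\cdot}$) matches the paper's approach. The paper simply cites Kashiwara--Schapira (\cite{key-8}, theorem 1.6.4) for the Nakayama-for-coherence step you sketch, so your induction-via-Bockstein argument is a reasonable substitute. One correction: $\widehat{\mathcal{D}}_{\mathfrak{X}}^{(m)}$ \emph{is} locally noetherian for $m\ge 0$ (this is a theorem of Berthelot), so your stated concern there is misplaced, though it does not affect the argument.

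There is, however, a genuine gap in your treatment of the cohomology characterization. Your truncation-and-extension argument only yields the implication ``each $\mathcal{H}^{i}(\mathcal{N}^{\cdot})$ lies in $D^{b}_{\text{acc},\text{coh}}$ $\Rightarrow$ $\mathcal{N}^{\cdot}$ does.'' Thickness and extension-closure say nothing about the reverse implication: knowing $\mathcal{N}^{\cdot}\in D^{b}_{\text{acc},\text{coh}}$ does not, by general nonsense, force its truncations or cohomology sheaves to remain in the subcategory. The paper fills this gap by proving a concrete fact you are missing: on $D^{b}_{\text{coh}}(\widehat{\mathcal{D}}_{\mathfrak{X}}^{(m)})$, the derived completion in the definition of $\Phi^{*}$ is superfluous, i.e.
\[
\Phi^{*}\widehat{\mathcal{D}}_{\mathfrak{X}}^{(m)}\,\widehat{\otimes}^{L}_{\widehat{\mathcal{D}}_{\mathfrak{X}}^{(m)}}\,\mathcal{P}^{\cdot}\ \tilde{=}\ \Phi^{*}\widehat{\mathcal{D}}_{\mathfrak{X}}^{(m)}\,\otimes^{L}_{\widehat{\mathcal{D}}_{\mathfrak{X}}^{(m)}}\,\mathcal{P}^{\cdot}.
\]
This is checked first for locally free coherent modules (where the tensor product is already $p$-adically complete), and then extended to all of $D^{b}_{\text{coh}}$ using that $\widehat{\mathcal{D}}_{\mathfrak{X}}^{(m)}$ has finite homological dimension, so every coherent module has a finite locally free resolution. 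Once this is known, flatness of $\Phi^{*}\widehat{\mathcal{D}}_{\mathfrak{X}}^{(m)}$ gives $\mathcal{H}^{i}(\Phi^{*}\mathcal{P}^{\cdot})\tilde{=}\Phi^{*}\widehat{\mathcal{D}}_{\mathfrak{X}}^{(m)}\otimes_{\widehat{\mathcal{D}}_{\mathfrak{X}}^{(m)}}\mathcal{H}^{i}(\mathcal{P}^{\cdot})$, which is exactly the forward direction you need. Without this step, your argument for the cohomology characterization, and hence the abelian-subcategory conclusion, is incomplete.
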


\begin{proof}
Note that, as the functor $\otimes_{W(k)}^{L}k$ has finite homological
dimension, we have that the complex $\mathcal{M}^{\cdot}$ in the
statement is bounded. Further, $\mathcal{N}^{\cdot}\tilde{=}\Phi^{*}\mathcal{P}^{\cdot}$
for some $\mathcal{P}^{\cdot}\in D_{\text{coh}}^{b}(\mathcal{D}_{\mathfrak{X}}^{(m)}-\text{mod})$
clearly implies 
\[
\mathcal{N}^{\cdot}\otimes_{W(k)}^{L}k\tilde{\to}\mathcal{B}_{X}^{(m)}\otimes_{\mathcal{D}_{X}^{(m)}}^{L}(\mathcal{P}^{\cdot}\otimes_{W(k)}^{L}k)
\]
which gives the converse direction of the if and only if. For the
forward direction, note that $\mathcal{N}^{\cdot}\tilde{=}\Phi^{*}\mathcal{P}^{\cdot}$
for some cohomologically complete complex $\mathcal{P}^{\cdot}$ which
satisfies $\mathcal{P}^{\cdot}\otimes_{W(k)}^{L}k\in D_{\text{coh}}^{b}(\mathcal{D}_{X}^{(m)}-\text{mod})$;
therefore , applying \cite{key-8}, theorem 1.6.4, we have $\mathcal{P}^{\cdot}\in D_{\text{coh}}^{b}(\mathcal{\widehat{D}}_{\mathfrak{X}}^{(m)}-\text{mod})$. 

Now, we have that $\Phi^{*}\mathcal{D}_{\mathfrak{X}}^{(m)}\widehat{\otimes}_{\mathcal{D}_{\mathfrak{X}}^{(m)}}^{L}\widehat{\mathcal{D}}_{\mathfrak{X}}^{(m)}$
is the cohomological completion of
\[
\Phi^{*}\mathcal{D}_{\mathfrak{X}}^{(m)}\otimes_{\mathcal{D}_{\mathfrak{X}}^{(m)}}^{L}\widehat{\mathcal{D}}_{\mathfrak{X}}^{(m)}=\Phi^{*}\mathcal{D}_{\mathfrak{X}}^{(m)}
\]
which, being $p$-torsion-free and $p$-adically complete, is already
cohomologically complete. Thus, for any locally free coherent $\mathcal{\widehat{D}}_{\mathfrak{X}}^{(m)}$-module
$\mathcal{P}$, we have 
\[
\Phi^{*}\mathcal{D}_{\mathfrak{X}}^{(m)}\widehat{\otimes}_{\mathcal{D}_{\mathfrak{X}}^{(m)}}^{L}\mathcal{P}\tilde{=}\Phi^{*}\mathcal{D}_{\mathfrak{X}}^{(m)}\otimes_{\mathcal{D}_{\mathfrak{X}}^{(m)}}^{L}\mathcal{P}=\Phi^{*}\mathcal{D}_{\mathfrak{X}}^{(m)}\otimes_{\mathcal{D}_{\mathfrak{X}}^{(m)}}\mathcal{P}
\]
As $\mathcal{\widehat{D}}_{\mathfrak{X}}^{(m)}$ has finite homological
dimension, we have that any coherent $\mathcal{\widehat{D}}_{\mathfrak{X}}^{(m)}$-module
$\mathcal{M}$ is quasi-isomorphic to a finite complex of locally
free coherent $\mathcal{\widehat{D}}_{\mathfrak{X}}^{(m)}$-modules,
and so, as $\Phi^{*}\mathcal{D}_{\mathfrak{X}}^{(m)}$ is flat over
$\mathcal{D}_{\mathfrak{X}}^{(m)}$, we see
\[
\Phi^{*}\mathcal{D}_{\mathfrak{X}}^{(m)}\widehat{\otimes}_{\mathcal{D}_{\mathfrak{X}}^{(m)}}^{L}\mathcal{M}\tilde{=}\Phi^{*}\mathcal{D}_{\mathfrak{X}}^{(m)}\otimes_{\mathcal{D}_{\mathfrak{X}}^{(m)}}\mathcal{M}
\]
for all such $\mathcal{M}$; by induction on the cohomological length
we see that 
\[
\Phi^{*}\mathcal{D}_{\mathfrak{X}}^{(m)}\widehat{\otimes}_{\mathcal{D}_{\mathfrak{X}}^{(m)}}^{L}\mathcal{M}^{\cdot}\tilde{=}\Phi^{*}\mathcal{D}_{\mathfrak{X}}^{(m)}\otimes_{\mathcal{D}_{\mathfrak{X}}^{(m)}}^{L}\mathcal{M}^{\cdot}
\]
for all $\mathcal{M}^{\cdot}\in D_{\text{coh}}^{b}(\mathcal{\widehat{D}}_{\mathfrak{X}}^{(m)}-\text{mod})$.
Therefore, a bounded complex $\mathcal{N}^{\cdot}$ is contained in
$D_{\text{acc},\text{coh}}^{b}(\mathcal{\widehat{D}}_{W(X)}^{(m)}-\text{mod})$
iff we have $\mathcal{H}^{i}(\mathcal{N}^{\cdot})=\Phi^{*}\mathcal{D}_{\mathfrak{X}}^{(m)}\otimes_{\mathcal{D}_{\mathfrak{X}}^{(m)}}\mathcal{M}$
for some coherent $\mathcal{\widehat{D}}_{\mathfrak{X}}^{(m)}$-module
$\mathcal{M}$; indeed the forward direction of this follows immediately
from the above discussion, and the converse follows by induction on
the cohomological length; the rest of the corollary follows immediately.
\end{proof}
\begin{rem}
\label{rem:=00005CPhi-pull-for-bounded-torsion}In the course of the
above proof we showed that 
\[
\Phi^{*}\mathcal{D}_{\mathfrak{X}}^{(m)}\widehat{\otimes}_{\mathcal{D}_{\mathfrak{X}}^{(m)}}^{L}\mathcal{M}\tilde{=}\Phi^{*}\mathcal{D}_{\mathfrak{X}}^{(m)}\otimes_{\mathcal{D}_{\mathfrak{X}}^{(m)}}\mathcal{M}
\]
for any coherent $\mathcal{D}_{\mathfrak{X}}^{(m)}$-module $\mathcal{M}$.
In fact this isomorphism holds whenever $\mathcal{M}$ is a cohomologically
complete $\mathcal{D}_{\mathfrak{X}}^{(m)}$-module with bounded $p$-torsion;
i.e., there is some $N\in\mathbb{N}$ such that, if a section $m$
is killed by a power of $p$, then it is killed by $p^{N}$. Let $\mathcal{M}_{\text{tors}}$
denote this subsheaf. Then, since modules of bounded torsion are cohomologically
complete, we see that $\Phi^{*}\mathcal{D}_{\mathfrak{X}}^{(m)}\widehat{\otimes}_{\mathcal{D}_{\mathfrak{X}}^{(m)}}^{L}\mathcal{M}_{\text{tors}}=\Phi^{*}\mathcal{D}_{\mathfrak{X}}^{(m)}\otimes_{\mathcal{D}_{\mathfrak{X}}^{(m)}}^{L}\mathcal{M}_{\text{tors}}=\Phi^{*}\mathcal{D}_{\mathfrak{X}}^{(m)}\otimes_{\mathcal{D}_{\mathfrak{X}}^{(m)}}\mathcal{M}_{\text{tors}}$.
Further, since $\mathcal{M}/\mathcal{M}_{\text{tors}}$ is $p$-torsion-free,
so is $\Phi^{*}\mathcal{D}_{\mathfrak{X}}^{(m)}\otimes_{\mathcal{D}_{\mathfrak{X}}^{(m)}}\mathcal{M}/\mathcal{M}_{\text{tors}}$
(by the flatness of $\Phi^{*}\mathcal{D}_{\mathfrak{X}}^{(m)}$) and
we have that $\Phi^{*}\mathcal{D}_{\mathfrak{X}}^{(m)}\widehat{\otimes}_{\mathcal{D}_{\mathfrak{X}}^{(m)}}^{L}\mathcal{M}/\mathcal{M}_{\text{tors}}$
is simply the $p$-adic completion of $\Phi^{*}\mathcal{D}_{\mathfrak{X}}^{(m)}\otimes_{\mathcal{D}_{\mathfrak{X}}^{(m)}}\mathcal{M}/\mathcal{M}_{\text{tors}}$,
which lives in homological degree $0$. Therefore the result follows
for $\mathcal{M}$ from the short exact sequence 
\[
0\to\mathcal{M}_{\text{tors}}\to\mathcal{M}\to\mathcal{M}/\mathcal{M}_{\text{tors}}\to0
\]
\end{rem}

When working mod $p^{r}$ we can do even better:
\begin{cor}
1) Let $\mathcal{N}^{\cdot}\in D(\mathcal{\widehat{D}}_{W(X)}^{(m)}/p^{r}-\text{mod})$.
Then $\mathcal{N}^{\cdot}$ is accessible iff $\mathcal{H}^{i}(\mathcal{N}^{\cdot})$
(considered as a complex concentrated in degree $0$) is accessible
for all $i$. Thus the full subcategory of $\mathcal{\widehat{D}}_{W(X)}^{(m)}/p^{r}-\text{mod}$
consisting of accessible modules (when considered as complexes in
degree $0$) is abelian. The same holds for accessible quasicoherent
and coherent modules.
\end{cor}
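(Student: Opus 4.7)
The statement is local on $X$, so I would first reduce to $X = \text{Spec}(A)$ affine admitting local coordinates, and fix a coordinatized lift of Frobenius $\Phi: \mathcal{A} \to W(A)$. (One should check separately that accessibility is indeed a local property: mod $p$ accessibility is local because it is characterized by the counit of the adjunction $\mathcal{B}_X^{(m)} \otimes^L_{\mathcal{D}_X^{(m)}} R\mathcal{H}om(\mathcal{B}_X^{(m)}, -) \to \mathrm{id}$ being an isomorphism, and mod $p^r$ accessibility is defined via $\otimes^L_{W_r(k)} k$, which commutes with restriction to opens.) By \prettyref{thm:Local-Accessible}, in this local setting $\mathcal{N}^{\cdot}$ is accessible iff $\mathcal{N}^{\cdot} \cong \Phi^{*}\mathcal{M}^{\cdot}$ for some $\mathcal{M}^{\cdot} \in D(\mathcal{D}_{\mathfrak{X}_{r}}^{(m)}-\text{mod})$, where $\Phi^{*} := \Phi^{*}\mathcal{D}_{\mathfrak{X}_{r}}^{(m)} \otimes^{L}_{\mathcal{D}_{\mathfrak{X}_{r}}^{(m)}} -$.

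The engine of the proof is \prettyref{thm:Projective!} (and \prettyref{cor:Projector!}): $\Phi^{*}\mathcal{D}_{\mathfrak{X}_{r}}^{(m)}$ is faithfully flat as a right $\mathcal{D}_{\mathfrak{X}_{r}}^{(m)}$-module and is a direct summand of $\widehat{\mathcal{D}}_{W(X)}^{(m)}/p^{r}$ as a left module. Consequently the functors $F := \Phi^{*}\mathcal{D}_{\mathfrak{X}_{r}}^{(m)} \otimes_{\mathcal{D}_{\mathfrak{X}_{r}}^{(m)}} -$ and $G := \mathcal{H}om_{\widehat{\mathcal{D}}_{W(X)}^{(m)}/p^{r}}(\Phi^{*}\mathcal{D}_{\mathfrak{X}_{r}}^{(m)}, -)$ are both exact between the relevant abelian categories, and the derived adjunction counit $FG \to \mathrm{id}$ agrees with $F \circ G$ applied termwise. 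For the forward direction, if $\mathcal{N}^{\cdot} \cong \Phi^{*}\mathcal{M}^{\cdot}$ then exactness of $F$ gives $\mathcal{H}^{i}(\mathcal{N}^{\cdot}) = \Phi^{*}\mathcal{H}^{i}(\mathcal{M}^{\cdot})$, which is accessible by definition. For the converse, suppose each $\mathcal{H}^{i}(\mathcal{N}^{\cdot})$ is accessible, and set $\mathcal{Q}^{\cdot} := G(\mathcal{N}^{\cdot})$. Exactness of $G$ gives $\mathcal{H}^{i}(\mathcal{Q}^{\cdot}) = G(\mathcal{H}^{i}(\mathcal{N}^{\cdot}))$, and the counit $\Phi^{*}\mathcal{Q}^{\cdot} \to \mathcal{N}^{\cdot}$, read in each cohomological degree, is the counit $FG\mathcal{H}^{i}(\mathcal{N}^{\cdot}) \to \mathcal{H}^{i}(\mathcal{N}^{\cdot})$; this is an isomorphism because $\mathcal{H}^{i}(\mathcal{N}^{\cdot})$ lies in the essential image of $F$ (analogue of \prettyref{prop:Properties-of-B_W}, which gives $GF \cong \mathrm{id}$). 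Hence $\mathcal{N}^{\cdot} \cong \Phi^{*}\mathcal{Q}^{\cdot}$ is accessible. The abelian claim then follows formally: given a morphism $f: \mathcal{M} \to \mathcal{N}$ of accessible modules concentrated in degree $0$, its cone $C(f)$ is accessible (thick triangulated subcategory), and the biconditional just established forces both $\ker f = \mathcal{H}^{-1}(C(f))$ and $\mathrm{coker}\, f = \mathcal{H}^{0}(C(f))$ to be accessible. The quasicoherent and coherent variants follow identically, since $F$ and $G$ both preserve quasicoherence (resp. coherence) degree-wise via \prettyref{cor:Qcoh-mod-p} and \prettyref{cor:D-b-coh}.

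The main obstacle, which is essentially already dispatched by the structural results quoted, is exhibiting $G$ as an exact functor; this rests on $\Phi^{*}\mathcal{D}_{\mathfrak{X}_{r}}^{(m)}$ being a summand of the structure sheaf of rings itself, so that $\mathcal{H}om$ out of it is an exact projection. The other point requiring care is that, at mod $p^{r}$ for $p = 2$ and $m = 0$, there is no globally defined bimodule $\mathcal{B}_{X}^{(m)}$; this is why the argument must be phrased via the local characterization of \prettyref{thm:Local-Accessible} and the locality of the notion of accessibility, rather than directly via a global counit. Once these points are in place, the remainder of the proof is the formal interplay between exactness of a pair of adjoint functors and passage to cohomology.
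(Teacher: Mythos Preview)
Your proposal is correct and follows essentially the same approach as the paper: reduce to the local situation, then use the exactness of both $\Phi^{*}$ and $\mathcal{H}om_{\widehat{\mathcal{D}}_{W(X)}^{(m)}/p^{r}}(\Phi^{*}\mathcal{D}_{\mathfrak{X}_{r}}^{(m)},-)$ (coming from \prettyref{thm:Projective!}) so that the adjunction map characterizing accessibility commutes with passage to cohomology. The paper compresses all of this into a single sentence invoking \prettyref{cor:Qcoh-mod-p} and the exactness of these two functors, whereas you have spelled out the forward/backward directions and the deduction of the abelian-subcategory claim explicitly; but the content is the same.
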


\begin{proof}
The statement is local, so we can apply \prettyref{cor:Qcoh-mod-p}
using the exactness of $\Phi^{*}$ and the functor $\mathcal{H}om_{\mathcal{\widehat{D}}_{W(X)}^{(m)}/p^{r}}(\Phi^{*}\mathcal{D}_{\mathfrak{X}_{r}}^{(m)},)$. 
\end{proof}
We denote by $\mathcal{\widehat{D}}_{W(X)}^{(m)}/p^{r}-\text{mod}_{\text{acc}}$,
$\mathcal{\widehat{D}}_{W(X)}^{(m)}/p^{r}-\text{mod}_{\text{acc},\text{qcoh}}$,
and $\mathcal{\widehat{D}}_{W(X)}^{(m)}/p^{r}-\text{mod}_{\text{acc},\text{coh}}$
the abelian category of accessible modules, accessible quasicoherent
modules, and accessible coherent modules, respectively. 

Now let us note what happens in the presence of a global smooth lift
$\mathfrak{X}_{r}$ of $X$ (over $W_{r}(k)$; we allow $r=\infty$
here to cover the case of a smooth formal $\mathfrak{X}$ over $W(k)$). 
\begin{cor}
\label{cor:Bimodule-over-X-r}Suppose that $p>2$ if $m=0$. There
is a $(\mathcal{\widehat{D}}_{W(X)}^{(m)}/p^{r},\mathcal{D}_{\mathfrak{X}_{r}}^{(m)})$-bimodule
$\mathcal{B}_{\mathfrak{X}_{r}}^{(m)}$ which is locally isomorphic
to $\Phi^{*}\mathcal{D}_{\mathfrak{X}_{r}}^{(m)}$ whenever we have
$X=\text{Spec}(A)$ as above. This bimodule induces an equivalence
of categories 
\[
\mathcal{D}_{\mathfrak{X}_{r}}^{(m)}-\text{mod}\to\mathcal{\widehat{D}}_{W(X)}^{(m)}/p^{r}-\text{mod}_{\text{acc}}
\]
as well as a derived equivalence 
\[
D(\mathcal{D}_{\mathfrak{X}_{r}}^{(m)}-\text{mod})\to D_{\text{acc}}(\mathcal{\widehat{D}}_{W(X)}^{(m)}/p^{r}-\text{mod})
\]
and when $r=\infty$ and equivalence 
\[
D_{cc}(\widehat{\mathcal{D}}_{\mathfrak{X}}^{(m)}-\text{mod})\to D_{\text{acc}}(\mathcal{\widehat{D}}_{W(X)}^{(m)}-\text{mod})
\]
\end{cor}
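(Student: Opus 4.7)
The plan is to first construct $\mathcal{B}_{\mathfrak{X}_r}^{(m)}$ by gluing and then verify the three equivalences by reducing to the local descriptions established earlier in the section. Cover $X$ by open affines $U_i=\text{Spec}(A_i)$ such that each $A_i$ admits local coordinates, and fix for each $i$ a smooth $W(k)$-lift $\mathcal{A}_i$ of $A_i$ (reduced mod $p^r$ when $r<\infty$) together with a coordinatized Frobenius lift, yielding $\Phi_i:\mathcal{A}_i\to W(A_i)$. Over $U_i$ take the local bimodule $\Phi_i^*\widehat{\mathcal{D}}_{\mathcal{A}_i}^{(m)}/p^r$ constructed in \prettyref{prop:Construction-of-bimodule} (and sheafified via \prettyref{lem:Bimodule-is-a-sheaf}). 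On overlaps $U_i\cap U_j$, \prettyref{thm:Uniqueness-of-bimodule} (using the hypothesis $p>2$ when $m=0$) supplies canonical bimodule isomorphisms $\epsilon_{ij}$ satisfying the cocycle condition $\epsilon_{jk}\circ\epsilon_{ij}=\epsilon_{ik}$, which is exactly the datum needed to glue the local bimodules into a well-defined $(\widehat{\mathcal{D}}_{W(X)}^{(m)}/p^r,\mathcal{D}_{\mathfrak{X}_r}^{(m)})$-bimodule $\mathcal{B}_{\mathfrak{X}_r}^{(m)}$ on $X$; local isomorphism to $\Phi^*\mathcal{D}_{\mathfrak{X}_r}^{(m)}$ is by construction.

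For the equivalence on abelian categories, consider the functor $\mathcal{F}(\mathcal{M})=\mathcal{B}_{\mathfrak{X}_r}^{(m)}\otimes_{\mathcal{D}_{\mathfrak{X}_r}^{(m)}}\mathcal{M}$ and its right adjoint $\mathcal{G}(\mathcal{N})=\mathcal{H}om_{\widehat{\mathcal{D}}_{W(X)}^{(m)}/p^r}(\mathcal{B}_{\mathfrak{X}_r}^{(m)},\mathcal{N})$. By \prettyref{thm:Projective!}, $\mathcal{B}_{\mathfrak{X}_r}^{(m)}$ is locally faithfully flat as a right $\mathcal{D}_{\mathfrak{X}_r}^{(m)}$-module and locally a summand of $\widehat{\mathcal{D}}_{W(X)}^{(m)}/p^r$ via the projector $\pi$ of \prettyref{cor:Projector!}. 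Thus $\mathcal{F}$ is exact, and the unit $\mathcal{M}\to\mathcal{G}\mathcal{F}\mathcal{M}$ is an isomorphism by exactly the argument of \prettyref{prop:Properties-of-B_W}, which may be carried out affine-locally and glued via the canonical bimodule isomorphisms. The essential image of $\mathcal{F}$ agrees, locally on $X$, with the accessible objects characterized by \prettyref{thm:Local-Accessible}; hence globally, $\mathcal{F}$ lands in $\widehat{\mathcal{D}}_{W(X)}^{(m)}/p^r-\text{mod}_{\text{acc}}$ and is essentially surjective.

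For the derived version at finite $r$, local faithful flatness of $\mathcal{B}_{\mathfrak{X}_r}^{(m)}$ on the right makes $\mathcal{F}^L$ agree with $\mathcal{F}$ on objects with flat resolutions, so the fully faithfulness follows from the abelian statement applied levelwise; essential surjectivity onto $D_{\text{acc}}$ follows again from \prettyref{thm:Local-Accessible}, combined with the fact that both source and target are stable under shifts, cones and direct sums, so the accessible subcategory is generated by its heart once the local characterization is in hand. For $r=\infty$, one replaces $\otimes$ by the cohomologically completed $\widehat{\otimes}$ and uses the Nakayama lemma for cohomologically complete complexes (from the Notations and conventions): the adjunction unit and counit become isomorphisms iff they do so after $\otimes_{W(k)}^L k$, which reduces the statement to the $r=1$ case proved above.

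The main obstacle is the gluing step: one must verify that the $\epsilon_{ij}$ of \prettyref{thm:Uniqueness-of-bimodule}, which were defined only affine-locally from two chosen Frobenius lifts, genuinely produce descent data on triple overlaps for an arbitrary cover by coordinate patches. The cocycle clause of the theorem is exactly what is needed, but one must check compatibility with further refinements (intersecting with a third patch $U_k$ whose coordinates and Frobenius lift are independent), as well as with the natural $\Phi$-independent sheaf structure of \prettyref{lem:Bimodule-is-a-sheaf}. A secondary subtlety arises in the $r=\infty$ equivalence when tracking whether $\widehat{\otimes}$ and completion commute with $R\mathcal{H}om_{\widehat{\mathcal{D}}_{W(X)}^{(m)}}(\mathcal{B}_{\mathfrak{X}}^{(m)},-)$; here the projector $\pi$ and \prettyref{rem:=00005CPhi-pull-for-bounded-torsion} let one replace $\widehat{\otimes}$ by ordinary $\otimes$ on the relevant (summand) pieces, and the cohomological completeness hypothesis on the source ensures the comparison descends without loss.
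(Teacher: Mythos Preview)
Your proposal is correct and follows essentially the same route as the paper: glue the local bimodules $\Phi^{*}\mathcal{D}_{\mathfrak{X}_{r}}^{(m)}$ using the canonical isomorphisms and cocycle condition of \prettyref{thm:Uniqueness-of-bimodule}, then read off the equivalences from the local characterizations already established (chiefly \prettyref{thm:Local-Accessible} and the argument of \prettyref{prop:Properties-of-B_W}). The paper's own proof is the single sentence ``This follows immediately from the above discussions and \prettyref{cor:Iso-mod-p^n} and \prettyref{thm:Uniqueness-of-bimodule},'' so your expansion is a faithful unpacking.

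The one ingredient you should make explicit is \prettyref{cor:Iso-mod-p^n}. The ``main obstacle'' you flag---that on an overlap $U_{i}\cap U_{j}$ the two local $W(k)$-lifts $\mathcal{A}_{i}$, $\mathcal{A}_{j}$ must be identified before \prettyref{thm:Uniqueness-of-bimodule} even applies, and that this identification is only canonical mod $p^{r}$---is resolved precisely by that corollary: it guarantees that $\epsilon_{\Phi,\Psi}\bmod p^{r}$ depends only on $\Phi\bmod p^{r}$ and $\Psi\bmod p^{r}$, so the reduced gluing datum is insensitive to the non-canonical choice of $W(k)$-lift of $\mathcal{O}_{\mathfrak{X}_{r}}(U_{i})$. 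With that citation in place, your descent argument goes through cleanly, and the triple-overlap compatibility you worry about is exactly the cocycle clause of \prettyref{thm:Uniqueness-of-bimodule}.
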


This follows immediately from the above discussions and \prettyref{cor:Iso-mod-p^n}
and \prettyref{thm:Uniqueness-of-bimodule}. 

To finish off this section we make some remarks about the completions
in this theory, which will be useful when discussing right $\mathcal{D}$-modules
and the de Rham-Witt resolution. Let us begin by noting the 
\begin{lem}
\label{lem:two-filtrations}Let $A$ be smooth affine which possesses
local coordinates, and let $\Phi:\mathcal{A}\to W(A)$ be the morphism
coming from a coordinatized lift of Frobenius. Let $X=\text{Spec}(A)$.
For $r\geq1$ consider the filtrations $\{V^{m}(W(A)/p^{r})\}_{m\geq1}$
and 
\[
F^{m}(W(A)/p^{r})=\{\sum_{(I,p)=1}a_{I}p^{s}T^{I/p^{s}}|a_{I}=0\phantom{i}\text{for}\phantom{i}s<m\}
\]
(for $m\geq1$). These filtrations satisfy $F^{m}(W(A)/p^{r})\subset V^{m}(W(A)/p^{r})$
(for all $m$) and $V^{m+r-1}(W(A)/p^{r})\subset F^{m}(W(A)/p^{r})$
(for all $m$).
\end{lem}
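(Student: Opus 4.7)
The plan is to argue via the explicit basis description of $W(A)$ given in the proof of \prettyref{prop:Construction-of-bimodule}: in the presence of local coordinates $\{T_1,\dots,T_n\}$, every $\gamma\in W(A)$ has a unique expansion
\[
\gamma=\Phi(a_0)+\sum_{s\ge 1}\sum_{I\in S_s}\Phi(a_{I,s})\cdot V^s([T^I]),
\]
with $a_0,a_{I,s}\in\mathcal A$ and $S_s=\{I\in\mathbb{N}^n:0\le i_j<p^s,\ (p,i_j)=1\ \text{for some}\ j\}$. Since the chosen lift of Frobenius is coordinatized we have $[T^I]=\Phi(T^I)$. In this description, $F^m(W(A)/p^r)$ is precisely the submodule of classes whose basis expansion has $a_0$ and each $a_{I,s}$ with $s<m$ in $p^r\mathcal A$.

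The inclusion $F^m(W(A)/p^r)\subset V^m(W(A)/p^r)$ is immediate: every basis element $V^s([T^I])$ with $s\ge m$ lies in $V^s(W(A))\subset V^m(W(A))$, and the latter is an ideal stable under multiplication by $\Phi(\mathcal A)$.

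For the reverse inclusion, I would begin with $\beta=V^{m+r-1}(\gamma)$, expand $\gamma$ in the basis above, and push $V^{m+r-1}$ through using the commutation $c\cdot V^s(y)=V^s(F^s(c)y)$ together with $F\circ\Phi=\Phi\circ F_{\mathcal A}$ and $[T^I]=\Phi(T^I)$. This rewrites $\beta$ as a convergent sum of terms of the form $V^l(\Phi(c))$ with $l\ge m+r-1$ and $c\in\mathcal A$, reducing the lemma to showing $V^l(\Phi(c))\in F^m(W(A)/p^r)$ in this range. Expanding $\Phi(c)$ as a $W(k)$-linear sum of Teichmüller monomials $[T^J]$ and writing each index as $J=p^kJ_0$ with $(J_0,p)=1$ or $J_0=0$, the identity $V^aF^a=p^a$ gives
\[
V^l([T^J])=V^l(F^k([T^{J_0}]))=p^{\min(l,k)}\,V^{l-\min(l,k)}\bigl([T^{p^{\max(0,k-l)}J_0}]\bigr).
\]
Since $l\ge m+r-1\ge r$, when $\min(l,k)\ge r$ the term vanishes modulo $p^r$; otherwise $\min(l,k)=k<r$ (so $l\ge k$), and the surviving term $p^kV^{l-k}([T^{J_0}])$ has $V$-index $l-k\ge(m+r-1)-(r-1)=m$, placing it in $F^m(W(A)/p^r)$.

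The main delicate point is the Teichmüller expansion $\Phi(c)=\sum c_J[T^J]$: this is immediate when $\mathcal A$ is a polynomial lift $W(k)[T_1,\dots,T_n]^{\wedge}$, but in the general etale case one must first transfer along the etale map. This can be handled by invoking \prettyref{prop:construction-of-transfer}, which describes how $W(A)$ and its filtrations are obtained from the polynomial Witt vectors under etale base change; alternatively, both filtrations appearing in the lemma are closed in the natural topology on $W(A)/p^r$, so the inclusion follows from its verification on the dense polynomial subring.
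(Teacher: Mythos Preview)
Your argument is correct and follows essentially the same route as the paper's: both reduce the second inclusion to checking $V^l([T^J])\in F^m$ for $l\ge m+r-1$ via the $p$-adic structure of the multi-index $J$. Your single factorization $J=p^kJ_0$ (with $k$ the minimal valuation) is in fact a bit more streamlined than the paper's two-step decomposition, which first peels off the coordinates of valuation $\ge s$ as a $\Phi(\mathcal A)$-coefficient $T^{J_1}$ and only then extracts $p^t$ from the remaining part $I_2$; the extra step is not needed, since once one has $p^kV^{l-k}([T^{J_0}])$ with $(J_0,p)=1$ and $l-k\ge m$ the term is already visibly in $F^m$.

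One caution on the \'etale reduction: your density alternative does not work as stated, since $W(A_0)/p^r$ is not dense in $W(A)/p^r$ (for $A_0$ the polynomial ring). The base-change suggestion is the right one, and concretely the only fact needed is that $\mathcal A$ is free over $F^l(\mathcal A)$ with basis $\{T^K:0\le k_i<p^l\}$ even in the \'etale case; then $V^l(\Phi(F^l(a)T^K))=\Phi(a)\cdot V^l([T^K])$ exhibits every $V^l(\Phi(c))$ as a $\Phi(\mathcal A)$-linear combination of the $V^l([T^K])$, reducing to the monomial computation you (and the paper) carry out.
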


\begin{proof}
The first inclusion is obvious since $p^{s}T^{I/p^{s}}\in V^{m}$
whenever $s\geq m$. For the second, set $s=r+m-1$ and note that
for $I\in\mathbb{Z}_{\geq0}^{d}$
\[
V^{s}(T^{I})=p^{s}T^{I/p^{s}}
\]
Write $T^{I}=T^{I_{1}}T^{I_{2}}$ where the valuation of each entry
in $I_{1}$ is $\geq s$ and the valuation of each entry in $I_{2}$
is $<s$ (if there are no entries in the second category we take $I_{2}=\emptyset$).
Let $J_{1}=I_{1}/p^{s}$. Then 
\[
p^{s}T^{I/p^{s}}=T^{J_{1}}\cdot p^{s}T^{I_{2}/p^{s}}
\]
 If $I_{2}=\emptyset$ then $p^{s}T^{I/p^{s}}=p^{s}T^{J_{1}}=0$ (as
$s\geq r$). If $I_{2}\neq\emptyset$ then let $t=\text{min}\{\text{val}_{p}(i)|i\in I_{2}\}$,
and set $J_{2}=I_{2}/p^{t}$. Then 
\[
p^{s}T^{I_{2}/p^{s}}=p^{t}p^{s-t}T^{J_{2}/p^{s-t}}
\]
and if $t<r$ then $s-t\geq m$. So we have either $p^{s}T^{I_{2}/p^{s}}=0$
(because $t\geq r$) or $p^{s}T^{I_{2}/p^{s}}\in F^{m}(W(A)/p^{r})$;
so we see that $V^{r+m-1}(T^{I})\in F^{m}(W(A)/p^{r})$ for all $I$.
As $W(A)/p^{r}$ is the completion of the span over $\mathcal{A}/p^{r}$
of terms of the form $T^{I}$ and $V^{j}(T^{I})$ for some $j>0$,
the result follows. 
\end{proof}
This allows us to consider limits in the theory. Recall that if $\mathcal{F}_{i}$
is a sequence of objects in a triangulated category with maps $\mathcal{F}_{i}\to\mathcal{F}_{i-1}$
an object $\mathcal{L}$ is said to be a homotopy limit of the $\mathcal{F}_{i}$
if there is a distinguished triangle 
\[
\mathcal{L}\to\prod_{i}\mathcal{F}_{i}\to\prod_{i}\mathcal{F}_{i}
\]
where the second map is $\text{Id }-\text{Shift}$. Even if they exist,
homotopy limits are not functorial in a general triangulated category.
However, in this paper all of our triangulated categories will be
the derived category of modules over some sheaf of rings on $X$,
or a full subcategory thereof. In this setting, functorial cones and
limits do exists; indeed if $\mathcal{R}$ is a sheaf of rings on
$X$ then, by a fundamental result of Spaltenstein, we have 
\[
D(\mathcal{R}-\text{mod})\tilde{\to}K(\text{k-inj}(\mathcal{R}))
\]
where $\text{k-inj}(\mathcal{R})$ is the category of k-injective
complexes in $\mathcal{R}-\text{mod}$, and $K()$ is the homotopy
category thereof. In other words, we may replace any element $\mathcal{M}^{\cdot}\in D(\mathcal{R}-\text{mod})$
with a k-injective complex, and this replacement is unique up to homotopy.
As the category $K(\text{k-inj}(\mathcal{R}))$ admits functorial
cones, so too does the category $D(\mathcal{R}-\text{mod})$. So in
this setting we will write $\text{holim}(\mathcal{F}_{i})$ for the
functorially defined homotopy limit of the $\mathcal{F}_{i}$. 

Specializing further, suppose $\mathcal{M}^{\cdot}\in\mathcal{\widehat{D}}_{W(X)}^{(m)}/p^{r}-\text{mod}$.
Then we define the derived completion of $\mathcal{M}^{\cdot}$ along
$V^{i}(\mathcal{O}_{W(X)}/p^{r})$ to be 
\[
\widehat{\mathcal{M}}^{\cdot}:=\text{holim}(\mathcal{M}^{\cdot}\otimes_{\mathcal{O}_{W(X)/p^{r}}}^{L}(\mathcal{O}_{W(X)}/p^{r})/V^{i}(\mathcal{O}_{W(X)}/p^{r}))
\]

In the accessible, quasicoherent case, this object admits a more direct
description: 
\begin{prop}
\label{prop:completion}Let $r\geq1$. Let $(\mathcal{M}^{j},d)$
be a complex of $\mathcal{\widehat{D}}_{W(X)}^{(m)}/p^{r}$ modules,
and suppose its image $\mathcal{M}^{\cdot}$ in $D(\mathcal{\widehat{D}}_{W(X)}^{(m)}/p^{r}-\text{mod})$
is accessible and quasicoherent. Form the complex $\mathcal{\mathcal{N}}^{j}$
whose terms are the completions 
\[
\mathcal{\widehat{M}}^{j}:=\lim_{i}\mathcal{M}^{j}/V^{i}(\mathcal{O}_{W(X)}/p^{r})
\]
Then there is an isomorphism 
\[
\mathcal{N}^{\cdot}\tilde{\to}\widehat{\mathcal{M}}^{\cdot}
\]
In particular, if $X=\text{Spec}(A)$ possesses local coordinates
and $\Phi:\mathcal{A}_{r}\to W(A)/p^{r}$ is as above, then for any
$\mathcal{M}^{\cdot}\in D_{\text{acc}}(\mathcal{\widehat{D}}_{W(X)}^{(m)}/p^{r}-\text{mod})$,
if $\mathcal{M}^{\cdot}=\Phi^{*}\mathcal{N}^{\cdot}$ then 
\[
R\lim_{i}(\mathcal{M}^{\cdot}\otimes_{\mathcal{O}_{W(X)/p^{r}}}^{L}(\mathcal{O}_{W(X)}/p^{r})/V^{i})\tilde{=}\widehat{\Phi}^{*}\mathcal{N}^{\cdot}
\]
where $\widehat{\Phi}^{*}$ denotes pullback followed by completion.
For each $\mathcal{N}^{i}$, $\widehat{\Phi}^{*}\mathcal{N}^{i}$
is an inverse limit of a direct sum of copies of $\mathcal{N}^{i}$. 
\end{prop}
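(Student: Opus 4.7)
The plan is to reduce to the local affine setting. Since both sides of the proposed isomorphism are constructed canonically from $\mathcal{M}^\cdot$, and since accessibility and quasicoherence are local conditions, I would first restrict to $X=\text{Spec}(A)$ equipped with local coordinates and a coordinatized Frobenius lift $\Phi:\mathcal{A}\to W(A)$. By \prettyref{thm:Local-Accessible} and \prettyref{cor:Qcoh-mod-p}, the complex $\mathcal{M}^\cdot$ is then of the form $\Phi^*\mathcal{Q}^\cdot$ for some complex $\mathcal{Q}^\cdot$ of $\mathcal{D}_{\mathfrak{X}_r}^{(m)}$-modules with quasicoherent cohomology, and after replacing the representative $(\mathcal{M}^j,d)$ by $\Phi^*$ applied to a termwise flat resolution of $\mathcal{Q}^\cdot$, I may assume $\mathcal{M}^j = \Phi^*\mathcal{Q}^j$ on the nose.

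The technical heart is \prettyref{lem:two-filtrations}, which shows that the filtrations $\{V^i\}$ and $\{F^i\}$ on $\mathcal{O}_{W(X)}/p^r$ are mutually cofinal, so the derived completion along $V^\bullet$ may be computed along $F^\bullet$. The advantage of $F^\bullet$ is that the quotient $(\mathcal{O}_{W(X)}/p^r)/F^i$ splits canonically as a \emph{finite} direct sum of copies of $\mathcal{O}_{\mathfrak{X}_r}/p^r$, indexed by the pairs $(s,I)$ with $0\le s<i$ and $I=(i_1,\ldots,i_n)$ satisfying $0\le i_j<p^s$ and (for $s\ge 1$) at least one $i_j$ coprime to $p$. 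Pulling back, $\mathcal{M}^j/F^i\mathcal{M}^j$ becomes a finite direct sum of copies of $\mathcal{Q}^j$; since arbitrary direct sums are flat, this implies that the derived tensor product $\mathcal{M}^j\otimes^L_{\mathcal{O}_{W(X)}/p^r}(\mathcal{O}_{W(X)}/p^r)/F^i$ is concentrated in degree zero and agrees with the ordinary quotient. The tower $\{\mathcal{M}^\cdot/F^i\mathcal{M}^\cdot\}$ has termwise surjective transition maps, hence is Mittag--Leffler, so $R\lim$ reduces to the ordinary inverse limit computed degree by degree.

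The $j$th term of this limit is then exactly $\lim_i\mathcal{M}^j/V^i\mathcal{M}^j=\widehat{\mathcal{M}}^j$ (again by cofinality), which, in the local picture, is an inverse limit of the direct sums of copies of $\mathcal{Q}^j$ described above; by definition this is $\widehat{\Phi}^*\mathcal{Q}^j$. This proves both the global isomorphism $\mathcal{N}^\cdot\tilde{\to}\widehat{\mathcal{M}}^\cdot$ (by gluing the canonical local isomorphisms) and the explicit local formula $\widehat{\Phi}^*\mathcal{Q}^\cdot$, along with the description of each $\widehat{\Phi}^*\mathcal{Q}^i$ as an inverse limit of direct sums of copies of $\mathcal{Q}^i$.

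The main obstacle is verifying that the derived tensor product with each quotient $(\mathcal{O}_{W(X)}/p^r)/F^i$ is concentrated in degree zero. A priori $\mathcal{M}^j=\Phi^*\mathcal{Q}^j$ is an infinite-rank sheaf over $\mathcal{O}_{W(X)}/p^r$ and there is no abstract reason for it to be flat; the argument must rely on the explicit description of $\Phi^*\mathcal{D}_{\mathfrak{X}_r}^{(m)}/F^i$ as a finite direct sum of copies of $\mathcal{D}_{\mathfrak{X}_r}^{(m)}$ over $\mathcal{A}_r$, extracted from the proof of \prettyref{lem:two-filtrations}. Quasicoherence of $\mathcal{Q}^\cdot$ enters crucially to ensure that these direct sums behave well under passage between local and global sections, and that the resulting inverse limits commute with the gluing used to globalize the local isomorphism.
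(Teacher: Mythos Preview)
Your local computation is along the same lines as the paper's (passing to the $F^\bullet$ filtration via \prettyref{lem:two-filtrations}, describing $\widehat{\Phi}^*\mathcal{Q}^j$ as an inverse limit of direct sums of copies of $\mathcal{Q}^j$), but there is a genuine gap in your reduction. The statement is about a \emph{given} complex $(\mathcal{M}^j,d)$: the object $\mathcal{N}^\cdot$ is its termwise completion, and this depends on the specific terms, not just on the quasi-isomorphism class. When you write ``after replacing the representative $(\mathcal{M}^j,d)$ by $\Phi^*$ applied to a termwise flat resolution of $\mathcal{Q}^\cdot$, I may assume $\mathcal{M}^j = \Phi^*\mathcal{Q}^j$ on the nose,'' you have silently changed $\mathcal{N}^\cdot$. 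Termwise completion is not a quasi-isomorphism invariant in general, so this substitution is not justified.

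The paper's approach addresses exactly this point. It first takes a K-flat resolution $\mathcal{F}^\cdot$ of the original $\mathcal{M}^\cdot$ (so that $(\widehat{\mathcal{F}})^\cdot$ genuinely computes the derived completion), and observes that the termwise quotient maps $\mathcal{M}^j/V^i\to\mathcal{F}^j/V^i$ assemble, after taking inverse limits, into an honest map of complexes $\mathcal{N}^\cdot\to(\widehat{\mathcal{F}})^\cdot$. Only \emph{after} this comparison map is in hand does the paper localize and pass to a special K-flat complex $\mathcal{H}^\cdot$ whose terms are direct sums of $\Phi^*\mathcal{D}_{\mathfrak{X}_r}^{(0)}$; the map $\mathcal{N}^\cdot\to(\widehat{\mathcal{H}})^\cdot$ is then checked to be a quasi-isomorphism by applying $\otimes^L_{W_r(k)}k$ and invoking that $\widehat{\Phi}^*$ is exact and conservative. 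The point is that the original $\mathcal{N}^\cdot$ stays in the picture throughout.

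A smaller issue: your argument that $\mathcal{M}^j\otimes^L_{\mathcal{O}_{W(X)}/p^r}(\mathcal{O}_{W(X)}/p^r)/F^i$ is concentrated in degree zero ``since arbitrary direct sums are flat'' is incomplete. Describing the \emph{ordinary} quotient $\mathcal{M}^j/F^i\mathcal{M}^j$ as a direct sum of copies of $\mathcal{Q}^j$ says nothing about the higher Tor groups; one would need flatness of $\mathcal{M}^j$ or of $(\mathcal{O}_{W(X)}/p^r)/F^i$ over $\mathcal{O}_{W(X)}/p^r$, and neither is obvious. The paper sidesteps this by working with K-flat resolutions from the outset.
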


\begin{proof}
Let $\mathcal{M}^{\cdot}\to\mathcal{F}^{\cdot}$ be a K-flat resolution
in $\mathcal{\widehat{D}}_{W(X)}^{(m)}/p^{r}-\text{mod}$. Then $\mathcal{M}^{\cdot}\otimes_{\mathcal{O}_{W(X)/p^{r}}}^{L}(\mathcal{O}_{W(X)}/p^{r})/V^{i}$
is represented by the complex $\mathcal{F}^{\cdot}\otimes_{\mathcal{O}_{W(X)/p^{r}}}(\mathcal{O}_{W(X)}/p^{r})/V^{i})$
which is the complex whose terms are $\mathcal{F}^{j}/V^{i}(\mathcal{O}_{W(X)}/p^{r})$.
Thus we obtain a map of complexes whose terms are $\mathcal{M}^{j}/V^{i}(\mathcal{O}_{W(X)}/p^{r})\to\mathcal{F}^{j}/V^{i}(\mathcal{O}_{W(X)}/p^{r})$.
Taking the inverse limit we obtain a map of complexes 
\[
\mathcal{N}{}^{\cdot}\to(\mathcal{\widehat{F}})^{\cdot}
\]
and the latter is a complex representing $\widehat{\mathcal{M}}$.
To show this is an isomorphism, we may work locally and assume $X=\text{Spec}(A)$
possesses local coordinates. Choose a quasi-isomorphism $\mathcal{F}^{\cdot}\to\mathcal{H}^{\cdot}$,
where $\mathcal{H}^{\cdot}$ is a k-flat complex whose terms are direct
sums of $\Phi^{*}\mathcal{D}_{\mathfrak{X}_{r}}^{(0)}$ (this is possible
as $\mathcal{M}^{\cdot}$ is accessible and quasicoherent). It suffices
to show that the induced map $(\widehat{\mathcal{M}})^{\cdot}\to(\mathcal{\widehat{H}})^{\cdot}$
is an isomorphism.

To see it, apply $\otimes_{W_{r}(k)}^{L}k$. We can evaluate it using
the fact that each $\mathcal{M}^{i}\tilde{=}\Phi^{*}\mathcal{N}^{i}$,
and that the completion along $V^{i}$ is equal to the completion
along the filtration $F^{i}$ (by the lemma above); this makes $\widehat{\mathcal{M}}^{i}$
an inverse limit of a surjective system of modules, each of which
is a direct sum of copies of $\mathcal{M}^{i}$. This shows that 
\[
(\widehat{\mathcal{M}})^{\cdot}\otimes_{W_{r}(k)}^{L}k\tilde{=}\widehat{\Phi}^{*}(\mathcal{N}^{\cdot}\otimes_{W(k)}^{L}k)
\]
By the same token, if $\mathcal{H}^{i}={\displaystyle \bigoplus_{J_{i}}}\Phi^{*}\mathcal{D}_{\mathfrak{X}_{r}}^{(0)}$,
then 
\[
(\mathcal{\widehat{H}})^{i}\otimes_{W_{r}(k)}^{L}k\tilde{=}\widehat{\Phi}^{*}({\displaystyle \bigoplus_{J_{i}}}\mathcal{D}_{X}^{(0)})
\]
and since $\widehat{\Phi}^{*}$ is exact and conservative we obtain
the result. 
\end{proof}

\subsection{\label{subsec:The-crystalline-version}The crystalline version}

To relate the theory considered here to crystals, we run the above
program with $\mathcal{\widehat{D}}_{W(X),crys}^{(m)}$ in place of
$\mathcal{\widehat{D}}_{W(X)}^{(m)}$. All of the theorems above go
through without any change, and we obtain 
\begin{thm}
\label{thm:Crystalline-version!}1) There is a well-defined $(\mathcal{\widehat{D}}_{W(X),\text{crys}}^{(0)}/p,\mathcal{D}_{X,\text{crys}}^{(0)})$
bimodule, denoted $\mathcal{B}_{X,\text{crys}}^{(0)}$ which is locally
projective over $\mathcal{\widehat{D}}_{W(X),\text{crys}}^{(0)}/p$,
and locally faithfully flat as a right $\mathcal{D}_{X,\text{crys}}^{(0)}$-module.
The associated functor 
\[
\mathcal{B}_{X,\text{crys}}^{(0)}\otimes_{\mathcal{D}_{X,\text{crys}}^{(0)}}?:\mathcal{D}_{X,\text{crys}}^{(0)}-\text{mod}\to\mathcal{\widehat{D}}_{W(X),\text{crys}}^{(0)}/p-\text{mod}
\]
is exact and fully faithful, and admits an exact right adjoint $\mathcal{H}om_{\mathcal{\widehat{D}}_{W(X),\text{crys}}^{(0)}/p}(\mathcal{B}_{X,\text{crys}}^{(0)},?)$.
We have 
\[
\mathcal{H}om_{\mathcal{\widehat{D}}_{W(X),\text{crys}}^{(0)}/p}(\mathcal{B}_{X,\text{crys}}^{(0)},\mathcal{B}_{X,\text{crys}}^{(0)}\otimes_{\mathcal{D}_{X,\text{crys}}^{(0)}}\mathcal{M})\tilde{\to}\mathcal{M}
\]
for all $\mathcal{M}\in\mathcal{D}_{X,\text{crys}}^{(0)}-\text{mod}$.
Therefore, the functor 
\[
\mathcal{B}_{X,\text{ crys}}\otimes_{\mathcal{D}_{X,\text{crys}}^{(0)}}^{L}:D(\mathcal{D}_{X,\text{crys}}^{(0)}-\text{mod})\to D(\mathcal{\widehat{D}}_{W(X),\text{crys}}^{(0)}/p-\text{mod})
\]
is fully faithful and we have 
\[
R\mathcal{H}om_{\mathcal{\widehat{D}}_{W(X),\text{crys}}^{(0)}/p}(\mathcal{B}_{X,\text{crys}}^{(0)},\mathcal{B}_{X,\text{crys}}^{(0)}\otimes_{\mathcal{D}_{X,\text{crys}}^{(0)}}\mathcal{M}^{\cdot})\tilde{\to}\mathcal{M}^{\cdot}
\]
for all $\mathcal{M}^{\cdot}\in D(\mathcal{D}_{X,\text{crys}}^{(0)}-\text{mod})$.

2) A module $\mathcal{M}^{\cdot}\in D(\mathcal{\widehat{D}}_{W(X),\text{crys}}^{(0)}/p-\text{mod})$
is called accessible if it is of the form $\mathcal{B}_{X,\text{crys}}^{(0)}\otimes_{\mathcal{D}_{X,\text{crys}}^{(0)}}^{L}\mathcal{N}^{\cdot}$
for some $\mathcal{N}^{\cdot}\in D(\mathcal{D}_{X,\text{crys}}^{(0)}-\text{mod})$.
Let $\mathcal{M}^{\cdot}\in D_{cc}(\mathcal{\widehat{D}}_{W(X),\text{crys}}^{(0)}-\text{mod})$.
Then $\mathcal{M}^{\cdot}$ is said to be accessible\textbf{ }if $\mathcal{M}^{\cdot}\otimes_{W(k)}^{L}k$
is accessible inside $D(\mathcal{\widehat{D}}_{W(X)}^{(0)}/p-\text{mod})$;
similarly $\mathcal{M}^{\cdot}\in D(\mathcal{\widehat{D}}_{W(X),\text{crys}}^{(0)}/p^{r}-\text{mod})$
is said to accessible if $\mathcal{M}^{\cdot}\otimes_{W_{r}(k)}^{L}k$
is accessible inside $D(\mathcal{\widehat{D}}_{W(X)}^{(0)}/p-\text{mod})$.

The complex $\mathcal{M}^{\cdot}$ is accessible iff, for any open
affine $\text{Spec}(A)\subset X$, which admits local coordinates,
and any coordinatized lift of Frobenius $\Phi$, we have 
\[
\mathcal{M}^{\cdot}\tilde{\to}\Phi^{*}\mathcal{\widehat{D}}_{\mathfrak{X,\text{crys}}}^{(0)}\widehat{\otimes}_{\widehat{\mathcal{D}}_{\mathfrak{X},\text{crys}}^{(0)}}^{L}\mathcal{N}^{\cdot}
\]
for a complex $\mathcal{N}^{\cdot}\in D_{cc}(\mathcal{\widehat{D}}_{\mathfrak{X}}^{(0)}-\text{mod})$
In particular, the latter condition is independent of the choice of
$\Phi$; one has the analogous statement for $D(\mathcal{\widehat{D}}_{W(X),\text{crys}}^{(0)}/p^{r}-\text{mod})$,
in which case $\mathcal{M}^{\cdot}$ is accessible iff $\mathcal{H}^{i}(\mathcal{M}^{\cdot})$
is for all $i$. 

3) Suppose that $\mathfrak{X}$ is a smooth formal scheme over $W(k)$
whose special fibre is $X$ (it might not exist in general). Then
there is an equivalence of categories 
\[
D_{cc}(\mathcal{\widehat{D}}_{\mathfrak{X,\text{crys}}}^{(0)}-\text{mod})\to D_{\text{acc}}(\mathcal{\widehat{D}}_{W(X),\text{crys}}^{(0)}-\text{mod})
\]
and the analogous fact holds for schemes $\mathfrak{X}_{r}$ which
are smooth over $W_{r}(k)$. 
\end{thm}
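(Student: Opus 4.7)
The plan is to imitate, essentially verbatim, the program already carried out for $\widehat{\mathcal{D}}_{W(X)}^{(m)}$ in \prettyref{prop:Properties-of-B_W}, \prettyref{def:Accessible}, \prettyref{thm:Local-Accessible}, and \prettyref{cor:Bimodule-over-X-r}, with the key technical input being part $2)$ of \prettyref{thm:Uniqueness-of-bimodule}, which is valid for \emph{all} primes $p$ in the crystalline setting. This is what allows part $3)$ of the present theorem to avoid the restriction $p>2$.

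First I would construct $\mathcal{B}_{X,\text{crys}}^{(0)}$ globally by gluing. On any affine open $U=\text{Spec}(A)\subset X$ admitting local coordinates, pick a coordinatized lift $\mathcal{A}$ together with a coordinatized Frobenius lift $\Phi$; this produces the local bimodule $\Phi^{*}\widehat{\mathcal{D}}_{\mathcal{A},\text{crys}}^{(0)}$, which by the crystalline version of \prettyref{prop:Construction-of-bimodule} (stated immediately after it) sheafifies to a $(\widehat{\mathcal{D}}_{W(U)}^{(0)},\widehat{\mathcal{D}}_{\mathfrak{U},\text{crys}}^{(0)})$-bimodule on $U$, inverse limit of quasicoherent sheaves on $W(U)$. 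Reducing mod $p$, \prettyref{thm:Uniqueness-of-bimodule}$(2)$ (and \prettyref{cor:Crystalline-Iso} for compatibility with the crystalline site data) supplies canonical isomorphisms $\epsilon_{\Phi,\Psi}$ between two such choices and the cocycle condition for triples; this glues the local objects into a sheaf on all of $X$, which I would call $\mathcal{B}_{X,\text{crys}}^{(0)}$. Part $1)$ now follows: local faithful flatness over $\mathcal{D}_{X,\text{crys}}^{(0)}$ is the crystalline analogue of \prettyref{thm:Projective!}$(1)$ (the same filtration-by-free-modules argument applies), and local projectivity over $\widehat{\mathcal{D}}_{W(X),\text{crys}}^{(0)}/p$, together with cyclicity, is given by the crystalline clause of \prettyref{cor:Projector!}: the idempotent $\pi\in\widehat{\mathcal{D}}_{W(A)}^{(0)}$ constructed there maps into $\widehat{\mathcal{D}}_{W(A),\text{crys}}^{(0)}$, and realizes $\mathcal{B}_{X,\text{crys}}^{(0)}$ as a direct summand. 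The adjunction isomorphism $\mathcal{H}om_{\widehat{\mathcal{D}}_{W(X),\text{crys}}^{(0)}/p}(\mathcal{B}_{X,\text{crys}}^{(0)},\mathcal{B}_{X,\text{crys}}^{(0)}\otimes\mathcal{M})\tilde{\to}\mathcal{M}$ then follows by the same decomposition $1=\pi+(1-\pi)$ used in the proof of \prettyref{prop:Properties-of-B_W}, and passing to derived functors yields the derived full faithfulness.

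Next I would handle the characterization of accessibility in part $2)$. The only non-formal content is the local statement: for $X=\text{Spec}(A)$ with a choice of $\Phi$, a complex $\mathcal{M}^{\cdot}\in D_{cc}(\widehat{\mathcal{D}}_{W(X),\text{crys}}^{(0)}-\text{mod})$ is accessible iff $\mathcal{M}^{\cdot}\tilde{=}\Phi^{*}\widehat{\mathcal{D}}_{\mathfrak{X},\text{crys}}^{(0)}\widehat{\otimes}_{\widehat{\mathcal{D}}_{\mathfrak{X},\text{crys}}^{(0)}}^{L}\mathcal{N}^{\cdot}$. This is exactly the argument of \prettyref{thm:Local-Accessible}: one forms the adjunction
\[
\mathcal{M}^{\cdot}\to\Phi^{*}\widehat{\mathcal{D}}_{\mathfrak{X},\text{crys}}^{(0)}\widehat{\otimes}_{\widehat{\mathcal{D}}_{\mathfrak{X},\text{crys}}^{(0)}}^{L}R\mathcal{H}om_{\widehat{\mathcal{D}}_{W(X),\text{crys}}^{(0)}}(\Phi^{*}\widehat{\mathcal{D}}_{\mathfrak{X},\text{crys}}^{(0)},\mathcal{M}^{\cdot}),
\]
notes that both sides are cohomologically complete (for the right side, because $\Phi^{*}\widehat{\mathcal{D}}_{\mathfrak{X},\text{crys}}^{(0)}$ is a summand of $\widehat{\mathcal{D}}_{W(X),\text{crys}}^{(0)}$ via $\pi$), and checks that the map becomes an isomorphism after $-\otimes_{W(k)}^{L}k$; then the Nakayama lemma for $D_{cc}$ (\cite{key-8}, 1.5.8) finishes the job. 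Independence of $\Phi$ is automatic from the cocycle data in \prettyref{thm:Uniqueness-of-bimodule}$(2)$. The analogous mod-$p^{r}$ statement, including the ``iff $\mathcal{H}^{i}$'' clause, is immediate since each $\widehat{\mathcal{D}}_{W(X),\text{crys}}^{(0)}/p^{r}$-module is automatically cohomologically complete and the functors $\Phi^{*}$ and $\mathcal{H}om(\Phi^{*}\widehat{\mathcal{D}}_{\mathfrak{X},\text{crys}}^{(0)},-)$ are exact on the abelian category.

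Finally, for part $3)$, a global smooth formal lift $\mathfrak{X}/W(k)$ allows one to globalize the construction of $\Phi^{*}\widehat{\mathcal{D}}_{\mathfrak{X},\text{crys}}^{(0)}$ (independent of local Frobenius choices by \prettyref{thm:Uniqueness-of-bimodule}$(2)$ plus \prettyref{cor:Crystalline-Iso}), giving a sheaf on $X$ with commuting left $\widehat{\mathcal{D}}_{W(X),\text{crys}}^{(0)}$- and right $\widehat{\mathcal{D}}_{\mathfrak{X},\text{crys}}^{(0)}$-module structures. The pair of functors $\Phi^{*}\widehat{\mathcal{D}}_{\mathfrak{X},\text{crys}}^{(0)}\widehat{\otimes}_{\widehat{\mathcal{D}}_{\mathfrak{X},\text{crys}}^{(0)}}^{L}(-)$ and $R\mathcal{H}om_{\widehat{\mathcal{D}}_{W(X),\text{crys}}^{(0)}}(\Phi^{*}\widehat{\mathcal{D}}_{\mathfrak{X},\text{crys}}^{(0)},-)$ then furnish the inverse equivalences; fully faithful in one direction is the global form of the adjunction above, and essential surjectivity onto $D_{\text{acc}}$ is the characterization of part $2)$. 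The mod-$p^{r}$ case is identical. The main obstacle I anticipate is not conceptual but bookkeeping: one must verify that the cohomological completeness hypotheses are preserved under all the completed tensor products in play and that the crystalline cocycle of \prettyref{thm:Uniqueness-of-bimodule}$(2)$ glues correctly with the one secretly used in \prettyref{cor:Projector!} to produce a \emph{global} projector; however once the compatibility furnished by \prettyref{cor:Crystalline-Iso} is invoked, everything reduces to the arguments already in the $\widehat{\mathcal{D}}_{W(X)}^{(0)}$ case.
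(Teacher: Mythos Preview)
Your proposal is correct and matches the paper's own treatment: the paper gives no separate proof of this theorem, stating only that one ``run[s] the above program with $\mathcal{\widehat{D}}_{W(X),\text{crys}}^{(m)}$ in place of $\mathcal{\widehat{D}}_{W(X)}^{(m)}$'' and that ``all of the theorems above go through without any change.'' Your outline is exactly this transcription, with the correct identification of \prettyref{thm:Uniqueness-of-bimodule}$(2)$ as the reason the $p>2$ hypothesis disappears in part~$3)$.
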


In addition, we have a new subcategory to consider: 
\begin{defn}
An object $\mathcal{M}$ in $\mathcal{\widehat{D}}_{W(X),\text{crys}}^{(0)}/p^{r}-\text{mod}_{\text{acc}}$
is said to be locally nilpotent if each local section is annihilated
by $F^{m}(\mathcal{\widehat{D}}_{W(X),\text{crys}}^{(0)}/p^{r})$
for all $m>>0$ (here, $F^{m}$ is the image of the operator filtration
on $\mathcal{\widehat{D}}_{W(X),\text{crys}}^{(0)}$). If $X=\text{Spec}(A)$
possesses local coordinates this is equivalent to $\mathcal{M}\tilde{=}\Phi^{*}\mathcal{N}$
where $\mathcal{N}$ is locally nilpotent over $\mathcal{D}_{\mathfrak{X}_{r},\text{crys}}^{(0)}$
in the usual sense (each section is killed by a power of the ideal
$\mathcal{I}_{r}$). One makes the same definition over $\mathcal{\widehat{D}}_{W(X)}^{(0)}/p^{r}$,
and the category of locally nilpotent accessible $\mathcal{\widehat{D}}_{W(X)}^{(0)}/p^{r}$-modules
is equivalent to the category of locally nilpotent accessible $\mathcal{\widehat{D}}_{W(X),\text{crys}}^{(0)}/p^{r}$-modules;
indeed, the local nilpotence condition ensures that the $\mathcal{\widehat{D}}_{W(X)}^{(0)}/p^{r}$-module
structure extends uniquely to a $\mathcal{\widehat{D}}_{W(X),\text{crys}}^{(0)}/p^{r}$-module. 
\end{defn}

We use the subscript $\text{ln}$ to denote locally nilpotent objects
in a given category. Then we have 
\begin{prop}
The map $\mathcal{\widehat{D}}_{W(X)}^{(0)}/p^{r}\to\mathcal{\widehat{D}}_{W(X),\text{crys}}^{(0)}/p^{r}$
yields a functor 
\[
\mathcal{\widehat{D}}_{W(X),\text{crys}}^{(0)}/p^{r}-\text{mod}_{\text{acc},\text{qcoh,ln}}\to\mathcal{\widehat{D}}_{W(X)}^{(0)}/p^{r}-\text{mod}_{\text{acc},\text{qcoh}}
\]
This functor is fully faithful, and its image consists of all sheaves
$\mathcal{M}$ such which are accessible, quasicoherent, and locally
nilpotent.
\end{prop}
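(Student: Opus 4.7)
The plan is to reduce to a local assertion on an affine open $U = \text{Spec}(A) \subset X$ with local coordinates and a coordinatized Frobenius lift $\Phi:\mathcal{A}\to W(A)$, and then to invoke the standard fact that, for a ring $R$ with ideal $I$ whose $I$-adic completion is $\widehat{R}$, the categories of $\widehat{R}$-modules and of $R$-modules on which $I$ acts locally nilpotently are equivalent via restriction and extension. In our setting, $R=\mathcal{D}_{\mathfrak{U}_r}^{(0)}$, $\widehat{R}=\mathcal{D}_{\mathfrak{U}_r,\text{crys}}^{(0)}$, and $I=\mathcal{I}_r$ is the ideal of operators annihilating $\mathcal{O}_{\mathfrak{U}_r}$. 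The equivalences of \prettyref{cor:Bimodule-over-X-r} and of \prettyref{thm:Crystalline-version!} then transport this classical equivalence into our context.

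For well-definedness, I would observe that any $\mathcal{M}$ on the crystalline side restricts along $\mathcal{\widehat{D}}_{W(X)}^{(0)}/p^{r}\to\mathcal{\widehat{D}}_{W(X),\text{crys}}^{(0)}/p^{r}$ to a $\mathcal{\widehat{D}}_{W(X)}^{(0)}/p^{r}$-module. Locally, \prettyref{thm:Crystalline-version!} gives $\mathcal{M}|_U\tilde{=}\Phi^{*}\mathcal{N}$ for $\mathcal{N}$ a locally nilpotent quasicoherent $\mathcal{D}_{\mathfrak{U}_r,\text{crys}}^{(0)}$-module; by the classical result $\mathcal{N}$ arises uniquely from a locally nilpotent quasicoherent $\mathcal{D}_{\mathfrak{U}_r}^{(0)}$-module $\mathcal{N}_0$, and $\Phi^{*}\mathcal{N}_0\tilde{=}\mathcal{M}|_U$ as $\mathcal{\widehat{D}}_{W(U)}^{(0)}/p^{r}$-modules. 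This places $\mathcal{M}$ inside $\mathcal{\widehat{D}}_{W(X)}^{(0)}/p^{r}-\text{mod}_{\text{acc,qcoh}}$ and verifies the local nilpotence for the image of the operator filtration.

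Full faithfulness reduces to a density argument. Given a $\mathcal{\widehat{D}}_{W(X)}^{(0)}/p^{r}$-linear morphism $\phi:\mathcal{M}_1\to\mathcal{M}_2$ between locally nilpotent crystalline objects, any local section $m$ of $\mathcal{M}_1$ satisfies $F^{N}\cdot m = 0$ for some $N$; since $\mathcal{\widehat{D}}_{W(X)}^{(0)}/p^{r}$ is dense in $\mathcal{\widehat{D}}_{W(X),\text{crys}}^{(0)}/p^{r}$ for the operator topology, any $P\in\mathcal{\widehat{D}}_{W(X),\text{crys}}^{(0)}/p^{r}$ admits an approximation $P_0$ in the image with $P - P_0 \in F^{N}$, so $P\cdot m = P_0\cdot m$ and $\phi(P\cdot m) = \phi(P_0\cdot m) = P_0\cdot\phi(m) = P\cdot\phi(m)$, where the last equality uses the local nilpotence of $\phi(m)$. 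Thus $\phi$ is automatically crystalline-linear.

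For the characterization of the image, the same continuity argument yields an extension: for a locally nilpotent $\mathcal{M}$ in $\mathcal{\widehat{D}}_{W(X)}^{(0)}/p^{r}-\text{mod}_{\text{acc,qcoh}}$, one sets $P\cdot m := P_0\cdot m$ for any $P_0$ approximating $P$ modulo $F^{N}$ where $N$ is chosen so that $F^N\cdot m=0$; this is well-defined and satisfies the module axioms. The main subtlety is checking that the resulting crystalline action is compatible with accessibility and quasicoherence — this is handled by the local identification $\mathcal{M}|_U\tilde{=}\Phi^{*}\mathcal{N}_0$ for a locally nilpotent $\mathcal{N}_0$: classically $\mathcal{N}_0$ extends uniquely to a $\mathcal{D}_{\mathfrak{U}_r,\text{crys}}^{(0)}$-module, and the crystalline $\Phi^{*}$ of this extension recovers $\mathcal{M}|_U$ with its newly defined crystalline action. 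The uniqueness of the extension makes the global gluing automatic and independent of all local choices.
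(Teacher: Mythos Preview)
Your proposal is correct and follows essentially the same route as the paper: localize to $U=\text{Spec}(A)$ with a chosen $\Phi$, and transport via $\Phi^*$ to the standard equivalence between locally nilpotent modules over $\mathcal{D}_{\mathfrak{U}_r}^{(0)}$ and over its $\mathcal{I}_r$-adic completion $\mathcal{D}_{\mathfrak{U}_r,\text{crys}}^{(0)}$. The one step you treat as automatic but the paper makes explicit is the assertion $\Phi^{*}\mathcal{N}_0\tilde{=}\mathcal{M}|_U$, i.e.\ that the natural map
\[
\Phi^{*}\mathcal{D}_{\mathfrak{U}_r}^{(0)}\otimes_{\mathcal{D}_{\mathfrak{U}_r}^{(0)}}\mathcal{N}_0\;\longrightarrow\;\Phi^{*}\mathcal{D}_{\mathfrak{U}_r,\text{crys}}^{(0)}\otimes_{\mathcal{D}_{\mathfrak{U}_r,\text{crys}}^{(0)}}\mathcal{N}
\]
is an isomorphism: the paper verifies this by writing $\mathcal{N}$ as a union of coherent nilpotent submodules $\mathcal{N}'$ and, for each $\mathcal{N}'$, using a finite presentation (over either ring) to identify both sides with $\displaystyle\lim_{\leftarrow}(\mathcal{O}_{W(X)}/p^r)/V^m\otimes_{\mathcal{O}_{\mathfrak{X}_r}}\mathcal{N}'$, then passing to the direct limit. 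Your density argument for full faithfulness, carried out directly at the Witt level, is a valid alternative to the paper's reduction to the obvious isomorphism $\mathcal{H}om_{\mathcal{D}_{\mathfrak{X}_r,\text{crys}}^{(0)}}(\mathcal{N}_1,\mathcal{N}_2)\tilde{\to}\mathcal{H}om_{\mathcal{D}_{\mathfrak{X}_r}^{(0)}}(\mathcal{N}_1,\mathcal{N}_2)$; just be sure to choose $N$ large enough that $F^N$ annihilates both $m$ and $\phi(m)$.
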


\begin{proof}
The map of algebras $\mathcal{\widehat{D}}_{W(X)}^{(0)}/p^{r}\to\mathcal{\widehat{D}}_{W(X),\text{crys}}^{(0)}/p^{r}$
yields a forgetful functor $D(\mathcal{\widehat{D}}_{W(X),\text{crys}}^{(0)}/p^{r}-\text{mod})\to D(\mathcal{\widehat{D}}_{W(X)}^{(0)}/p^{r}-\text{mod})$.
Suppose $\mathcal{M}$ is an accessible, quasicoherent, and locally
nilpotent module over $\mathcal{\widehat{D}}_{W(X),\text{crys}}^{(0)}/p^{r}$.
Restricting to $X=\text{Spec}(A)$ for some $A$ which possesses local
coordinates, we may write 
\[
\mathcal{M}\tilde{\to}\Phi^{*}\mathcal{D}_{\mathfrak{X}_{r},\text{crys}}^{(0)}\otimes_{\mathcal{D}_{\mathfrak{X}_{r},\text{crys}}^{(0)}}^{L}\mathcal{N}
\]
As $\mathcal{N}$ is quasicoherent and locally nilpotent over $\mathcal{D}_{\mathfrak{X}_{r},\text{crys}}^{(0)}$,
it is a union of its coherent $\mathcal{D}_{\mathfrak{X}_{r},\text{crys}}^{(0)}$-submodules
which are nilpotent (i.e. the entire module is annihilated by some
power of $\mathcal{I}_{r}$). Let $\mathcal{N}'$ be such a coherent
submodule. Then the natural map
\[
\Phi^{*}\mathcal{D}_{\mathfrak{X}_{r}}^{(0)}\otimes_{\mathcal{D}_{\mathfrak{X}_{r}}^{(0)}}^{L}\mathcal{N}'\to\Phi^{*}\mathcal{D}_{\mathfrak{X}_{r},\text{crys}}^{(0)}\otimes_{\mathcal{D}_{\mathfrak{X}_{r},\text{crys}}^{(0)}}^{L}\mathcal{N}'
\]
is an isomorphism. To see this, note that $\mathcal{N}'$ is also
coherent over $\mathcal{D}_{\mathfrak{X}_{r}}^{(0)}$. Therefore both
sides are isomorphic to ${\displaystyle \lim_{\leftarrow}(\mathcal{O}_{W(X)}/p^{r})/V^{m}\otimes_{\mathcal{O}_{\mathfrak{X}_{r}}}\mathcal{N}'}$
(use a presentation of $\mathcal{N}'$ over $\mathcal{D}_{\mathfrak{X}_{r}}^{(0)}$
on the left hand side and over $\mathcal{D}_{\mathfrak{X}_{r},\text{crys}}^{(0)}$
on the right hand side). As the tensor product commutes with inductive
limits, we see that 
\[
\Phi^{*}\mathcal{D}_{\mathfrak{X}_{r}}^{(0)}\otimes_{\mathcal{D}_{\mathfrak{X}_{r}}^{(0)}}^{L}\mathcal{N}\tilde{\to}\Phi^{*}\mathcal{D}_{\mathfrak{X}_{r},\text{crys}}^{(0)}\otimes_{\mathcal{D}_{\mathfrak{X}_{r},\text{crys}}^{(0)}}^{L}\mathcal{N}
\]
In other words, $\mathcal{M}$ is also accessible and quasicoherent
when regarded as a module over $\mathcal{\widehat{D}}_{W(X)}^{(0)}/p^{r}$.
Thus the natural functor 
\[
\mathcal{\widehat{D}}_{W(X),\text{crys}}^{(0)}/p^{r}-\text{mod}_{\text{acc},\text{qcoh,ln}}\to\mathcal{\widehat{D}}_{W(X)}^{(0)}/p^{r}-\text{mod}_{\text{acc},\text{qcoh}}
\]
has image in $\mathcal{\widehat{D}}_{W(X)}^{(0)}/p^{r}-\text{mod}_{\text{acc},\text{qcoh}}$
as claimed, and is clearly onto the category of accessible, quasicoherent,
and locally nilpotent modules. For the full faithfulness, for any
two objects $\mathcal{M}_{1},\mathcal{M}_{2}$ in $\mathcal{\widehat{D}}_{W(X),\text{crys}}^{(0)}/p^{r}-\text{mod}_{\text{acc},\text{qcoh,ln}}$,
we consider the morphism of sheaves 
\[
\mathcal{H}om_{\mathcal{\widehat{D}}_{W(X),\text{crys}}^{(0)}/p^{r}}(\mathcal{M}_{1},\mathcal{M}_{2})\to\mathcal{H}om_{\mathcal{\widehat{D}}_{W(X)}^{(0)}/p^{r}}(\mathcal{M}_{1},\mathcal{M}_{2})
\]
we will be done if we can show it is an isomorphism; to do so, we
can work locally and suppose $X=\text{Spec}(A)$ and $\mathcal{M}_{i}=\Phi^{*}\mathcal{N}_{i}$
(for $i=1,2$). Then we need to show that 
\[
\mathcal{H}om_{\mathcal{\mathcal{D}}_{\mathfrak{X}_{r},\text{crys}}^{(0)}}(\mathcal{N}_{1},\mathcal{N}_{2})\to\mathcal{H}om_{\mathcal{D}_{\mathfrak{X}_{r}}^{(0)}}(\mathcal{N}_{1},\mathcal{N}_{2})
\]
is an isomorphism; but this is clear. 
\end{proof}
Now we give the relation with crystals in the usual sense. It reads
\begin{thm}
\label{thm:Embedding-of-crystals}Consider the category $\text{Crys}_{W_{r}(k)}(X)$
of crystals on $X$ over $W_{r}(k)$. There is an exact functor
\[
\eta:\text{Crys}_{W_{r}(k)}(X))\to\mathcal{\widehat{D}}_{W(X),\text{crys}}^{(0)}/p^{r}-\text{mod}_{\text{acc},}
\]
which is fully faithful. Let 
\[
\epsilon:\text{Crys}_{W_{r}(k)}(X))\to\mathcal{\widehat{D}}_{W(X)}^{(0)}/p^{r}-\text{mod}
\]
denote the composition of the restriction of $\eta$ with the forgetful
functor to $\mathcal{\widehat{D}}_{W(X)}^{(0)}/p^{r}-\text{mod}$.
Then, upon restriction to $\text{Qcoh}(\text{Crys}_{W_{r}(k)}(X)))$
$\epsilon$ lands in $\mathcal{\widehat{D}}_{W(X)}^{(0)}/p^{r}-\text{mod}_{\text{acc,qcoh}}$.
The functor
\[
\epsilon:D_{\text{qcoh}}(\text{Crys}_{W_{r}(k)}(X))\to D_{\text{acc},\text{qcoh}}(\mathcal{\widehat{D}}_{W(X)}^{(0)}/p^{r}-\text{mod})
\]
is also fully faithful. The image of $\epsilon$ consists of all complexes
$\mathcal{M}^{\cdot}$ such that, for each $i$, $\mathcal{H}^{i}(\mathcal{M}^{\cdot})$
is accessible, quasicoherent, and locally nilpotent.
\end{thm}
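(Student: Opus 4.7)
The plan is to construct $\eta$ by first working locally on affine opens that admit a smooth $W_r(k)$-lift and a coordinatized lift of Frobenius, and then descending via the canonical crystalline identifications of Theorem~\ref{thm:Uniqueness-of-bimodule}(2). On such a local open $U = \text{Spec}(A)$ with lift $\mathfrak{U}_r = \text{Specf}(\mathcal{A}_r)$, classical crystalline theory gives an equivalence between $W_r(k)$-crystals on $U$ and locally nilpotent $\mathcal{D}_{\mathfrak{U}_r,\text{crys}}^{(0)}$-modules (a crystal evaluates on $\mathfrak{U}_r$ to give a module with an HPD stratification, hence an action of $\mathcal{D}_{\mathfrak{U}_r,\text{crys}}^{(0)}$ killed locally by powers of the augmentation ideal). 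Applying the bimodule $\Phi^*\widehat{\mathcal{D}}_{\mathcal{A},\text{crys}}^{(0)}/p^r$ of Theorem~\ref{thm:Uniqueness-of-bimodule}(2) yields an accessible, locally nilpotent $\widehat{\mathcal{D}}_{W(U),\text{crys}}^{(0)}/p^r$-module. For two choices of lift and Frobenius, the canonical isomorphisms of Theorem~\ref{thm:Uniqueness-of-bimodule}(2), together with their cocycle compatibility (and \prettyref{cor:Crystalline-Iso}, which identifies these with the isomorphisms coming from the crystalline site itself) allow these local constructions to glue into a global functor $\eta$.

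Exactness of $\eta$ follows because the local equivalence between crystals and locally nilpotent $\mathcal{D}_{\text{crys}}^{(0)}$-modules is exact, and because the bimodule $\mathcal{B}_{X,\text{crys}}^{(0)}$ is flat on the right (Theorem~\ref{thm:Projective!}). Fully faithfulness of $\eta$ reduces, via the adjunction of Theorem~\ref{thm:Crystalline-version!}(1), to fully faithfulness of the local crystal/$\mathcal{D}_{\text{crys}}^{(0)}$-module equivalence, which is classical. Moreover, by construction the image of $\eta$ on the quasicoherent part of $\text{Crys}_{W_r(k)}(X)$ lands in the full subcategory of accessible, quasicoherent, locally nilpotent $\widehat{\mathcal{D}}_{W(X),\text{crys}}^{(0)}/p^r$-modules.

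The statements about $\epsilon$ now follow by composing $\eta$ with the forgetful functor studied in the proposition immediately preceding this theorem, which shows that forgetting from $\widehat{\mathcal{D}}_{W(X),\text{crys}}^{(0)}/p^r$ to $\widehat{\mathcal{D}}_{W(X)}^{(0)}/p^r$ identifies accessible, quasicoherent, locally nilpotent modules on the two sides. This yields the essential image description for $\epsilon$ at the level of abelian categories. For the derived statement, I would argue that this abelian equivalence (between $\text{Qcoh}(\text{Crys}_{W_r(k)}(X))$ and the category of accessible, qcoh, locally nilpotent $\widehat{\mathcal{D}}_{W(X)}^{(0)}/p^r$-modules) lifts to a derived equivalence using a standard t-structure/gluing argument: the target category is a (Serre, or at least weakly Serre) subcategory of $\widehat{\mathcal{D}}_{W(X)}^{(0)}/p^r-\text{mod}_{\text{acc,qcoh}}$ whose objects are characterized by a cohomological condition, and such subcategories give fully faithful embeddings at the derived level with image equal to the complexes whose cohomology objects lie in the subcategory.

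The main obstacle will be the global descent in the first paragraph: one must verify that the canonical isomorphisms from Theorem~\ref{thm:Uniqueness-of-bimodule}(2) applied to transitions between different local lifts $(\mathfrak{U}_r^{\alpha},\Phi_\alpha)$ satisfy a strict cocycle condition on triple overlaps, compatible with the gluing data that defines a crystal. The cleanest way to handle this is to use \prettyref{cor:Crystalline-Iso} to identify the bimodule isomorphisms with the standard transition isomorphisms on the divided-power envelope of the diagonal, so that the cocycle reduces to the cocycle built into the definition of the crystalline site; then the descent becomes tautological. A secondary subtlety is the essential surjectivity in the derived statement, where one must check that every complex with accessible, qcoh, locally nilpotent cohomology sheaves actually lifts to a complex of crystals (rather than merely having cohomology objects in the image), which is standard once the abelian equivalence and the t-structure compatibility are in place.
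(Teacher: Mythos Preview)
Your construction of $\eta$ matches the paper's in substance. The paper phrases it slightly differently: it first evaluates the crystal on the pd-thickening $\mathcal{A}_r \to W(A)/p^r$, obtaining the \emph{completed} object $\widehat{\Phi}^*\mathcal{N}$ (this is what the crystalline site produces directly, and it carries a $\widehat{\mathcal{D}}_{W(X),\text{crys}}^{(0)}/p^r$-action but is not accessible), and then applies the accessibilization functor $(\cdot)_{\text{acc}}$ to land back on $\Phi^*\mathcal{N}$. You skip straight to $\Phi^*\mathcal{N}$ via the bimodule; the end result and the gluing via \prettyref{cor:Crystalline-Iso} are identical. Your treatment of exactness, abelian full faithfulness, and the essential image at the abelian level is fine.

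The gap is in the derived full faithfulness of $\epsilon$. Your claim that a Serre subcategory ``gives a fully faithful embedding at the derived level with image the complexes whose cohomology lies in the subcategory'' is false without further hypotheses: for $\mathcal{A}\subset\mathcal{B}$ Serre, the functor $D(\mathcal{A})\to D(\mathcal{B})$ is fully faithful exactly when the maps $\mathrm{Ext}^i_{\mathcal{A}}(M,N)\to\mathrm{Ext}^i_{\mathcal{B}}(M,N)$ are isomorphisms for all $M,N\in\mathcal{A}$ and all $i$, and this is a genuine condition that can fail. Here the subcategory is that of locally nilpotent modules, i.e.\ (after reducing locally to $\mathcal{D}_{\mathfrak{X}_r}^{(0)}$-modules) the modules set-theoretically supported on the zero section over the center $\mathcal{Z}(\mathcal{D}_{\mathfrak{X}_r}^{(0)})\cong\mathcal{O}_{T^*X^{(1)}}$. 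The paper supplies exactly this missing step: it reduces the comparison
\[
R\mathcal{H}om_{\mathrm{Qcoh}_{\text{ln}}(\mathcal{D}_{\mathfrak{X}_r}^{(0)})}(\mathcal{N}_1^{\cdot},\mathcal{N}_2^{\cdot})\;\longrightarrow\; R\mathcal{H}om_{\mathcal{D}_{\mathfrak{X}_r}^{(0)}\text{-mod}}(\mathcal{N}_1^{\cdot},\mathcal{N}_2^{\cdot})
\]
to a local cohomology statement along the zero section (citing \cite{key-3}, lemma 3.1.7). Concretely, the inclusion of torsion modules has an exact left adjoint (inclusion) and a right adjoint $\Gamma_Z$; one checks $R\Gamma_Z$ is the identity on $D_{\text{ln}}$, whence injective resolutions in the ambient category restrict to injective resolutions in the subcategory and the Ext comparison follows. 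Without this argument you have only the abelian statement; the derived full faithfulness and the essential image description do not come for free.
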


\begin{proof}
Let $\tilde{\mathcal{M}}$ be an element of $\text{Crys}_{W_{r}(k)}(X)$.
For every open subset of the form $U=\text{Spec}(A)$ (which possesses
local coordinates), and every flat lift $\mathcal{A}_{r}$ to $W_{r}(k)$,
$\tilde{\mathcal{M}}$ produces a canonically defined sheaf with locally
nilpotent flat connection on $\mathcal{A}_{r}$, denoted $(\mathcal{N},\nabla)$.
For any map $\Phi:\mathcal{A}_{r}\to W(A)/p^{r}$ coming from a coordinatized
lift of Frobenius, we have $\widehat{\Phi}^{*}\mathcal{N}$, the completion
(along $V^{i}(\mathcal{O}_{W(X)}/p^{r})$) of $\Phi^{*}\mathcal{N}$;
this is exactly the sheaf over $W(X)_{p^{r}=0}$ that the theory of
the crystalline site attaches to $\mathcal{N}$ (coming from the fact
that $\Phi:\mathcal{A}_{r}\to W(A)/p^{r}$ is an inverse limit of
pd thickenings). 

For any other such map $\Psi$, the crystalline theory yields a canonical
isomorphism $\widehat{\Phi}^{*}\mathcal{N}\tilde{\to}\widehat{\Psi}^{*}\mathcal{N}$,
and, using \prettyref{cor:Crystalline-Iso}, we have that this map
agrees with the canonical isomorphism coming from theory of accessible
$\mathcal{\widehat{D}}_{W(A),\text{crys}}^{(0)}/p^{r}$-modules (this
is because the morphism is realized via $\Phi^{*}\mathcal{D}_{\mathfrak{X}_{r}}^{(0)}\widehat{\otimes}_{\mathcal{D}_{\mathfrak{X}_{r}}^{(0)}}\mathcal{N}\tilde{\to}\Psi^{*}\mathcal{D}_{\mathfrak{X}_{r}}^{(0)}\widehat{\otimes}_{\mathcal{D}_{\mathfrak{X}_{r}}^{(0)}}\mathcal{N}$).
So the sheaf $\tilde{\mathcal{M}}_{W(X)_{p^{r}=0}}$ on $W(X)_{p^{r}=0}$
which the theory of the crystalline site attaches to $\tilde{\mathcal{M}}$
carries a canonical $\mathcal{\widehat{D}}_{W(X),\text{crys}}^{(0)}/p^{r}$
action. This sheaf is not accessible, however, there is an exact functor
\[
\mathcal{\widehat{D}}_{W(X),\text{crys}}^{(0)}/p^{r}-\text{mod}\to\mathcal{\widehat{D}}_{W(X),\text{crys}}^{(0)}/p^{r}-\text{mod}_{\text{acc}}
\]
which is right adjoint to the inclusion functor (c.f. the discussion
right below \prettyref{def:D-acc} below); we denote it $\mathcal{M}\to\mathcal{M}_{\text{acc}}$;
if $X=\text{Spec}(A)$ then $(\widehat{\Phi}^{*}\mathcal{N})_{\text{acc}}=\Phi^{*}\mathcal{N}$.
Therefore we define the functor 
\[
\eta(\tilde{\mathcal{M}})=(\tilde{\mathcal{M}}_{W(X)_{p^{r}=0}})_{\text{acc}}
\]
Clearly its restriction to quasicoherent crystals lands in quasicoherent
accessible modules (this is the argument of the previous proposition).
There is a canonical map $\mathcal{H}om_{\text{crys}}(\tilde{\mathcal{M}}_{1},\tilde{\mathcal{M}}_{2})\to\mathcal{H}om_{\mathcal{\widehat{D}}_{W(X),\text{crys}}^{(0)}/p^{r}}(\eta(\tilde{\mathcal{M}}_{1}),\eta(\tilde{\mathcal{M}}_{2}))$
of sheaves in the Zariski topology of $X$; to show it is an isomorphism
we may work locally, but there the claim reduces to the full-faithfulness
of $\Phi^{*}$. 

Now, after restricting $\eta$ to quasicoherent sheaves, we obtain
$\epsilon$, which is also fully faithful. Let us consider the derived
version. Let $\mathcal{M}_{i}^{\cdot}$ ($i=1,2$) be elements of
$D_{\text{qcoh}}(\text{Crys}_{W_{r}(k)}(X)))$; and replace $\mathcal{M}_{2}$
with a K-injective resolution, $\mathcal{K}^{\cdot}$ Then we have
\[
R\mathcal{H}om_{\text{crys}}(\mathcal{M}_{1}^{\cdot},\mathcal{M}_{2}^{\cdot})=\mathcal{H}om_{\text{crys}}(\mathcal{M}_{1},\mathcal{K}^{\cdot})\to\mathcal{H}om_{\mathcal{\widehat{D}}_{W(X)}^{(0)}/p^{r}}(\epsilon(\mathcal{M}_{1}),\epsilon(\mathcal{K}^{\cdot}))
\]
where the latter two $\mathcal{H}om$ indicate Hom in the homotopy
category of chain complexes. Now we have a map
\[
R\mathcal{H}om_{\mathcal{\widehat{D}}_{W(X)}^{(0)}/p^{r}}(\epsilon(\mathcal{M}_{1}),\epsilon(\mathcal{K}^{\cdot}))\to\mathcal{H}om_{\mathcal{\widehat{D}}_{W(X)}^{(0)}/p^{r}}(\epsilon(\mathcal{M}_{1}),\epsilon(\mathcal{K}^{\cdot}))
\]
and we will show that both of these maps are isomorphisms. This is
a local question, which boils down to the following: if $\mathcal{N}_{i}^{\cdot}$
($i=1,2$) are elements of the derived category $\text{Qcoh}_{\text{ln}}(\mathcal{D}_{\mathfrak{X}_{r}}^{(0)})$
(the category of quasicoherent $\mathcal{D}_{\mathfrak{X}_{r}}^{(0)}$-modules
which are locally nilpotent), then the map 
\[
R\mathcal{H}om_{\text{Qcoh}_{\text{ln}}(\mathcal{D}_{\mathfrak{X}_{r}}^{(0)})}(\mathcal{N}_{1}^{\cdot},\mathcal{N}_{2}^{\cdot})\to R\mathcal{H}om_{\mathcal{D}_{\mathfrak{X}_{r}}^{(0)}-\text{mod}}(\mathcal{N}_{1}^{\cdot},\mathcal{N}_{2}^{\cdot})
\]
is an isomorphism. This follows from by regarding a quasicoherent
$\mathcal{D}_{\mathfrak{X}_{r}}^{(0)}$-module as a quasicoherent
sheaf on the center $\mathcal{Z}(\mathcal{D}_{\mathfrak{X}_{r}}^{(0)})$,
and applying the functor of local cohomology with support along the
zero section (compare, e.g. \cite{key-3}, lemma 3.1.7). 
\end{proof}
It is worth noting that we also have equivalences 
\[
D(\text{Qcoh}(\text{Crys}_{W_{r}(k)}(X)))\tilde{\to}D_{\text{qcoh}}(\text{Crys}_{W_{r}(k)}(X))
\]
and 
\[
D(\mathcal{\widehat{D}}_{W(X)}^{(0)}/p^{r}-\text{mod}_{\text{acc,qcoh,ln}})\tilde{\to}D_{\text{acc},\text{qcoh},\text{ln}}(\mathcal{\widehat{D}}_{W(X)}^{(0)}/p^{r}-\text{mod})
\]
These can be proved in a very similar way to the classical statement
\[
D(\text{QCoh}(X))\tilde{\to}D_{\text{qcoh}}(\mathcal{O}_{X}-\text{mod})
\]
c.f. \cite{key-19}, corollary 5.5.

\subsection{Frobenius descent}

In this subsection we explain how Berthelot's fundamental theorem
on the independence of the Frobenius in arithmetic $\mathcal{D}$-module
theory also follows from our results. To set things up, let us recall
the main results of that theory: 
\begin{thm}
1) Let $A$ be a smooth algebra which possesses local coordinates;
let $\mathcal{A}$ be a lift of $A$ and let $F:\mathcal{A}\to\mathcal{A}$
be a coordinatized lift of Frobenius. Let consider the $(\widehat{\mathcal{D}}_{\mathcal{A}}^{(m+1)},\widehat{\mathcal{D}}_{\mathcal{A}}^{(m)})$
bisubmodule of $\text{End}_{W(k)}(\mathcal{A})$ generated by $F$.
Then the natural map $F^{*}\widehat{\mathcal{D}}_{\mathcal{A}}^{(m)}\to\text{End}_{W(k)}(\mathcal{A})$
which takes $a\otimes P\to a\cdot F(P)$ is an isomorphism onto this
bisubmodule. Furthermore, this bimodule induces an equivalence of
categories 
\[
\mathcal{M}\to F^{*}\mathcal{\widehat{D}}_{\mathcal{A}}^{(m)}\otimes_{\mathcal{\widehat{D}}_{\mathcal{A}}^{(m)}}\mathcal{M}:=F^{*}\mathcal{M}
\]
from $\mathcal{\widehat{D}}_{\mathcal{A}}^{(m)}-\text{mod}$ to $\mathcal{\widehat{D}}_{\mathcal{A}}^{(m+1)}-\text{mod}$.

2) If $F_{1},F_{2}$ are two lifts of Frobenius as a above, then (assuming
$p>2$ when $m=0$), there is a canonical isomorphism of bimodules
\[
F_{1}^{*}\mathcal{\widehat{D}}_{\mathcal{A}}^{(m)}\tilde{\to}F_{2}^{*}\mathcal{\widehat{D}}_{\mathcal{A}}^{(m)}
\]
In particular, if $X$ is an arbitrary smooth scheme over $k$ and
$\mathfrak{X}$ is a lift, then there is a globally defined equivalence
of categories $F^{*}:\mathcal{D}_{\mathfrak{X}}^{(m)}-\text{mod}\to\mathcal{D}_{\mathfrak{X}}^{(m+1)}-\text{mod}$. 
\end{thm}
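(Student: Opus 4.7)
The plan is to deduce this statement from the machinery developed in Section 3, in particular by an adaptation of Theorem \prettyref{thm:Uniqueness-of-bimodule}. Part 1) is Berthelot's classical Frobenius descent, which admits a proof entirely in the style of the earlier material; part 2) will follow by running the argument of Theorem \prettyref{thm:Uniqueness-of-bimodule} with $W(A)$ replaced by $\mathcal{A}$ and the ideal $V(W(A))$ replaced by $p\mathcal{A}$.

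For part 1), the injectivity of $F^{*}\widehat{\mathcal{D}}_{\mathcal{A}}^{(m)} \to \text{End}_{W(k)}(\mathcal{A})$ follows from the faithful action of $\widehat{\mathcal{D}}_{\mathcal{A}}^{(m)}$ on $\mathcal{A}$ together with the injectivity of $F$ (which holds because $\mathcal{A}$ is $W(k)$-flat). That the image is precisely the bisubmodule generated by $F$ is immediate from the definition. For the equivalence of categories, I would verify in local coordinates that $F^{*}\widehat{\mathcal{D}}_{\mathcal{A}}^{(m)}$ is free as a left $\widehat{\mathcal{D}}_{\mathcal{A}}^{(m+1)}$-module on the finite basis $\{F \circ \partial^{I}\}$ for $I \in \{0,\dots,p-1\}^n$, using commutation relations between $\partial^{[p^{m+1}]}$ and the coordinate lift of Frobenius. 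This yields both faithful flatness over $\widehat{\mathcal{D}}_{\mathcal{A}}^{(m)}$ and projectivity over $\widehat{\mathcal{D}}_{\mathcal{A}}^{(m+1)}$; the quasi-inverse functor is then given by $\text{Hom}_{\widehat{\mathcal{D}}_{\mathcal{A}}^{(m+1)}}(F^{*}\widehat{\mathcal{D}}_{\mathcal{A}}^{(m)},-)$.

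For part 2), I would follow the strategy of Theorem \prettyref{thm:Uniqueness-of-bimodule} essentially verbatim. First, one adapts the preliminary lemma there to show that $\delta = F_1 - F_2$ reduces mod $p^n$ to a differential operator of order $\leq n$ from $\mathcal{A}/p^n$ to itself, viewed as an $\mathcal{A}$-module via $F_1$; the proof used there relied only on the fact that the two morphisms agree after reduction mod $p$, and that property holds equally for $F_1, F_2$. Next, one constructs inductively a sequence $\gamma_n \in F_1^{*}\widehat{\mathcal{D}}_{\mathcal{A}}^{(m)}$ satisfying $\gamma_n \equiv \delta \pmod{p^n}$, at each stage expressing the divided-power term $\partial^{[J]}$ in terms of the renormalized $(\partial^{p^m})^{J}$ to accommodate the level. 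Convergence of $\{\gamma_n\}$ in the $p$-adic topology hinges on the sharp valuation estimate $n - \text{val}_p(j_1!\cdots j_d!) \to \infty$, which holds when $p > 2$ in the base case $m = 0$ and unconditionally when $m \geq 1$, thanks to the extra factor of $p^m$ absorbed into each denominator. This produces the canonical bimodule isomorphism $\epsilon_{F_1,F_2} : F_1^{*}\widehat{\mathcal{D}}_{\mathcal{A}}^{(m)} \tilde{\to} F_2^{*}\widehat{\mathcal{D}}_{\mathcal{A}}^{(m)}$, and the cocycle condition for a third lift is automatic from the construction. Finally, the globalization to arbitrary smooth formal $\mathfrak{X}$ proceeds by gluing the locally defined equivalences over affine opens admitting coordinatized Frobenius lifts, using the cocycle compatibility to match them on overlaps.

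The main obstacle is securing the $p$-adic convergence of $\{\gamma_n\}$: the inductive construction of each $\gamma_n$ is formally straightforward, but the convergence requires the delicate estimate on $p$-adic valuations of the relevant multinomial coefficients, which is precisely what forces the hypothesis $p > 2$ in the $m = 0$ case. Once that is in hand, everything else is either standard (for part 1) or a direct transposition of arguments already established (for part 2).
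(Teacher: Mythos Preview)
Your approach to part 1) aligns with the paper's sketch: it notes that the construction of $\Phi^{*}\widehat{\mathcal{D}}_{\mathcal{A}}^{(m)}$ and the proof of \prettyref{thm:Projective!} are elaborate versions of the corresponding facts for $F^{*}\widehat{\mathcal{D}}_{\mathcal{A}}^{(m)}$, and that the equivalence follows from the endomorphism computation $\widehat{\mathcal{D}}_{\mathcal{A}}^{(m+1)}\tilde{\to}\text{End}_{\widehat{\mathcal{D}}_{\mathcal{A}}^{(m),\text{opp}}}(F^{*}\widehat{\mathcal{D}}_{\mathcal{A}}^{(m)})$, checked mod $p$.

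For part 2), however, your route is genuinely different from the paper's. You propose to transpose the proof of \prettyref{thm:Uniqueness-of-bimodule} directly, replacing $\Phi_{1},\Phi_{2}:\mathcal{A}\to W(A)$ by $F_{1},F_{2}:\mathcal{A}\to\mathcal{A}$ and the ideal $V(W(A))$ by $p\mathcal{A}$; this works and is essentially Berthelot's original argument recast. The paper instead deduces part 2) from the Witt-vector machinery: it constructs an \emph{intrinsic} algebra map $F:\widehat{\mathcal{D}}_{W(X)}^{(m)}\to\widehat{\mathcal{D}}_{W(X)}^{(m+1)}$ (conjugation by the Witt-vector Frobenius), and then proves \prettyref{thm:Indep-of-Frobenius!}, which identifies $F^{*}\Phi^{*}\mathcal{N}^{\cdot}\tilde{\to}\Phi^{*}(F^{*}\mathcal{N}^{\cdot})$ via the bimodule identity $F\circ\Phi=\Phi\circ F$. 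Since the left-hand side depends only on the Witt-vector Frobenius and not on the chosen lift, the isomorphism $F_{1}^{*}\tilde{\to}F_{2}^{*}$ drops out as a corollary. Your direct approach is more self-contained and yields a bimodule isomorphism explicitly; the paper's approach makes the independence conceptually transparent (it is inherited from the canonicity of the Witt-vector Frobenius) and illustrates that Berthelot's theorem is a shadow of structure already present on $W(X)$, though it ultimately leans on \prettyref{thm:Uniqueness-of-bimodule} through the accessible theory and so carries the same $p>2$ restriction at $m=0$.
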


This theorem is proved in \cite{key-2}, section 2.3. (for part $1)$)
and \cite{key-2}, theorem 2.2.5 for part $2)$; c.f. also \cite{key-21},
corollary 13.3.8. 

Part $1)$ can be proved rather rapidly from the methods of this paper.
Our construction of $\Phi^{*}\mathcal{\widehat{D}}_{\mathcal{A}}^{(m)}$
is a more elaborate version of the construction of $F^{*}\mathcal{\widehat{D}}_{\mathcal{A}}^{(m)}$,
and, the proof of \prettyref{thm:Projective!} is a more elaborate
version of the proof that $F^{*}\mathcal{\widehat{D}}_{\mathcal{A}}^{(m)}$
is projective over $\mathcal{D}_{\mathcal{A}}^{(m+1)}$. Once this
is known, the crucial isomorphism 
\[
\mathcal{\widehat{D}}_{\mathcal{A}}^{(m+1)}\tilde{\to}\text{End}_{\mathcal{\widehat{D}}_{\mathcal{A}}^{(m),\text{opp}}}(F^{*}\mathcal{\widehat{D}}_{\mathcal{A}}^{(m)})
\]
can be proved after reduction mod $p$, where it is an elementary
computation (c.f. \cite{key-16}, proposition 4.18 for the case $m=0$). 

Now let us address part $2)$. We in fact have the following for Witt
differential operators: 
\begin{lem}
There is a morphism of sheaves of algebras $F:\widehat{\mathcal{D}}_{W(X)}^{(m)}\to\widehat{\mathcal{D}}_{W(X)}^{(m+1)}$
whose restriction to $\mathcal{O}_{W(X)}$ is the Witt-vector Frobenius
map. 
\end{lem}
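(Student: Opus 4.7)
The plan is to construct $F$ via a universal property: for $P \in \widehat{\mathcal{D}}_{W(X)}^{(m)}$, I characterize $F(P) \in \widehat{\mathcal{D}}_{W(X)}^{(m+1)}$ as the unique operator satisfying the intertwining relation $F(P) \circ F = F \circ P$, where $F: W(A) \to W(A)$ is the Witt-vector Frobenius acting on sections. Since $F$ is injective on $W(A)$ (because $A$ is smooth, hence reduced), this relation determines $F(P)$ uniquely and simultaneously yields the algebra-homomorphism property: from $F(PQ) \circ F = F \circ PQ = F(P) \circ F \circ Q = F(P) \circ F(Q) \circ F$ and injectivity one reads off $F(PQ) = F(P)F(Q)$. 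The case $P = \alpha \in \mathcal{O}_{W(X)}$ gives $F(\alpha)(F(a)) = F(\alpha \cdot a) = F(\alpha) \cdot F(a)$, so $F$ restricts to the Witt-vector Frobenius on the structure sheaf.

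The main content is to show that $F(P)$ actually exists inside $\widehat{\mathcal{D}}_{W(X)}^{(m+1)}$. I would do this first locally, when $A$ admits local coordinates, using the basis of \prettyref{thm:Basis}. The candidate formula is dictated by \prettyref{prop:HS-and-F}, rewritten in the form $F \circ \{\partial_i\}_\lambda = \{\partial_i\}_{p\lambda} \circ F$: this prescribes the rule
\[
F^{-r}(\alpha)\{\partial\}_{J/p^r}\{\partial\}_{p^m}^I \;\longmapsto\; F^{-r+1}(\alpha)\{\partial\}_{J/p^{r-1}}\{\partial\}_{p^{m+1}}^I
\]
on basis elements (with $F^{0} = \mathrm{id}$, and the low-order terms $\alpha\{\partial\}_K\{\partial\}_{p^m}^I$ mapped to $F(\alpha)\{\partial\}_{pK}\{\partial\}_{p^{m+1}}^I$). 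The shifts $J/p^r \mapsto J/p^{r-1}$ and $\{\partial\}_{p^m}^I \mapsto \{\partial\}_{p^{m+1}}^I$ each raise the valuation of every $\{\partial_i\}_\mu$ by one, so the image lies in level $m+1$; convergence of the resulting sum in $\widehat{\mathcal{D}}_{W(A)}^{(m+1)}$ is controlled by the same filtration estimates that govern $\widehat{\mathcal{D}}_{W(A)}^{(m)}$, since the rule shifts the index $r$ by $-1$ uniformly.

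With the local candidate constructed, one verifies term-by-term, using $\{\partial_i\}_\mu F = F \{\partial_i\}_{\mu/p}$, that the intertwining identity $F(P) \circ F = F \circ P$ holds as operators on $W(A)$ for every basis element, and hence by continuity for every $P$. Injectivity of $F$ on $W(A)$ then upgrades this to an intrinsic characterization, manifestly independent of the choice of coordinates or of any Frobenius lift on $\mathcal{A}$, so the local morphisms glue to a morphism of sheaves of algebras $F: \widehat{\mathcal{D}}_{W(X)}^{(m)} \to \widehat{\mathcal{D}}_{W(X)}^{(m+1)}$. The hard part will be the bookkeeping required to re-express $F^{-r+1}(\alpha)\{\partial\}_{J/p^{r-1}}$ in the standard basis of $\widehat{\mathcal{D}}_{W(A)}^{(m+1)}$: when $r \geq 2$, entries of $J$ may be divisible by $p$, and one must compose and re-sort the various $\{\partial_l\}_\mu$ using \prettyref{lem:products} and the commutator identity from the proof of \prettyref{lem:big-products} to produce a legitimate basis representation. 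This is purely combinatorial and does not affect the intrinsic characterization, but it is where the work lies in making the local formula completely rigorous.
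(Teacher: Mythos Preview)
Your approach is correct and is essentially the paper's, but phrased less efficiently. The paper simply sets $F(P):=F\circ P\circ F^{-1}$, interpreting $F^{-1}$ on $W(A)[p^{-1}]$ (where $F$ is bijective since $FV=VF=p$). Conjugation is then \emph{automatically} an algebra homomorphism and coordinate-free, so the only thing to check is that the generators land in $\widehat{\mathcal{D}}_{W(A)}^{(m+1)}$: $F\circ a\circ F^{-1}=F(a)$ and, by \prettyref{prop:HS-and-F}, $F\circ\{\partial_i\}_{\lambda}\circ F^{-1}=\{\partial_i\}_{p\lambda}$; then \prettyref{thm:Basis} finishes.

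Two remarks on your write-up. First, your uniqueness claim is slightly off as stated: injectivity of $F$ on $W(A)$ only pins down $F(P)$ on $\mathrm{im}(F)$, not on all of $W(A)$. The fix is exactly the paper's move---extend to $W(A)[p^{-1}]$, where $F$ is bijective, and use faithfulness of $\widehat{\mathcal{D}}_{W(A)}^{(m+1)}\hookrightarrow\mathrm{End}_{W(k)}(W(A))\hookrightarrow\mathrm{End}(W(A)[p^{-1}])$. Second, the ``hard part'' you flag---re-expressing $F^{-r+1}(\alpha)\{\partial\}_{J/p^{r-1}}$ in the standard basis---is entirely avoidable. Once you know the map is conjugation (hence multiplicative), you never need to put $F(P)$ back into basis form; it suffices that each generator maps to an element of $\widehat{\mathcal{D}}_{W(A)}^{(m+1)}$, and products take care of themselves.
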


\begin{proof}
It suffices to show this locally, so without loss of generality we
assume $X=\text{Spec}(A)$ possesses local coordinates. Then for an
operator $P\in\widehat{\mathcal{D}}_{W(A)}^{(m)}$ we consider the
operator $F\circ P\circ F^{-1}$. When $P$ is multiplication by an
element $a\in W(A)$, this operator is $F(a)$. Using \prettyref{prop:HS-and-F},
we see that $F\{\partial_{i}\}_{\lambda}F^{-1}=\{\partial_{i}\}_{p\lambda}$
for all $\lambda$; so that \prettyref{thm:Basis} ensures us that
$F\circ P\circ F^{-1}\in\widehat{\mathcal{D}}_{W(A)}^{(m+1)}$ as
required.
\end{proof}
Now, we can prove
\begin{thm}
\label{thm:Indep-of-Frobenius!}Let $(\mathcal{A},F)$ be as above,
and $X=\text{Spec(A)}$; let $\Phi:\mathcal{A}\to W(A)$ the associated
map. Then for $\mathcal{M}^{\cdot}\in D_{\text{acc}}(\widehat{\mathcal{D}}_{W(X)}^{(m)}-\text{mod})$,
if $\mathcal{M}^{\cdot}=\Phi^{*}\mathcal{N}^{\cdot}$ then 
\[
F^{*}\mathcal{M}^{\cdot}\tilde{\to}\Phi^{*}(F^{*}\mathcal{N}^{\cdot})
\]
where on the left we have the pullback with respect to the algebra
morphism $\widehat{\mathcal{D}}_{W(X)}^{(m)}\to\widehat{\mathcal{D}}_{W(X)}^{(m+1)}$,
and on the right we have Berthelot's Frobenius pullback. In particular
for two such morphisms $F_{1},F_{2}$ we obtain a canonical isomorphism
$F_{1}^{*}\mathcal{N}^{\cdot}\tilde{\to}F_{2}^{*}\mathcal{N}^{\cdot}$. 
\end{thm}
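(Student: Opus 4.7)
The core of the proof will be to establish, at the level of bimodules, the canonical isomorphism
\[
F^{*}\Phi^{*}\widehat{\mathcal{D}}_{\mathfrak{X}}^{(m)}\;\tilde{\to}\;\Phi^{*}F^{*}\widehat{\mathcal{D}}_{\mathfrak{X}}^{(m)}
\]
of $(\widehat{\mathcal{D}}_{W(X)}^{(m+1)},\widehat{\mathcal{D}}_{\mathfrak{X}}^{(m)})$-bimodules, where on the left $F^{*}$ is pullback along the algebra morphism $F\colon\widehat{\mathcal{D}}_{W(X)}^{(m)}\to\widehat{\mathcal{D}}_{W(X)}^{(m+1)}$ of the preceding lemma, and on the right $F^{*}$ is Berthelot's Frobenius pullback (which at the level of bimodules is the $(\widehat{\mathcal{D}}_{\mathfrak{X}}^{(m+1)},\widehat{\mathcal{D}}_{\mathfrak{X}}^{(m)})$-sub-bimodule of $\mathrm{End}_{W(k)}(\mathcal{A})$ generated by $F$).

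To define the map, I realize $\Phi^{*}\widehat{\mathcal{D}}_{\mathfrak{X}}^{(m)}$ as a sub-bimodule of $\mathrm{Hom}_{W(k)}(\mathcal{A},W(A))$ via the embedding of \prettyref{prop:Construction-of-bimodule} and send $1\otimes\phi\mapsto F_{W}\circ\phi$, where $F_{W}$ is the Witt-vector Frobenius. Well-definedness over $\widehat{\mathcal{D}}_{W(X)}^{(m)}$ via $F$ is the identity $F_{W}\circ(P\circ\phi)=(F_{W}PF_{W}^{-1})\circ F_{W}\circ\phi=F(P)\circ(F_{W}\circ\phi)$, which is the definition of $F$ in the preceding lemma. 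The image is generated as a bimodule by $F_{W}\circ\Phi=\Phi\circ F$, so lies in (and by bimodule generation equals) $\Phi^{*}F^{*}\widehat{\mathcal{D}}_{\mathfrak{X}}^{(m)}$; this gives surjectivity. Injectivity is immediate from the fact that $F_{W}\colon W(A)\to W(A)$ is injective, so post-composition by $F_{W}$ is injective on $\mathrm{Hom}_{W(k)}(\mathcal{A},W(A))$.

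Having the bimodule isomorphism, the main statement follows formally: by \prettyref{cor:Projector!}, $\Phi^{*}\widehat{\mathcal{D}}_{\mathfrak{X}}^{(m)}$ is a summand of $\widehat{\mathcal{D}}_{W(X)}^{(m)}$, hence projective on the left, so $F^{*}$ commutes with the (derived, cohomologically completed) tensor product defining $\Phi^{*}\mathcal{N}^{\cdot}$, and
\[
F^{*}\Phi^{*}\mathcal{N}^{\cdot}\;\tilde{=}\;(F^{*}\Phi^{*}\widehat{\mathcal{D}}_{\mathfrak{X}}^{(m)})\widehat{\otimes}^{L}_{\widehat{\mathcal{D}}_{\mathfrak{X}}^{(m)}}\mathcal{N}^{\cdot}\;\tilde{=}\;\Phi^{*}F^{*}\widehat{\mathcal{D}}_{\mathfrak{X}}^{(m)}\widehat{\otimes}^{L}_{\widehat{\mathcal{D}}_{\mathfrak{X}}^{(m)}}\mathcal{N}^{\cdot}\;\tilde{=}\;\Phi^{*}(F^{*}\mathcal{N}^{\cdot}).
\]
The main technical obstacle I expect is keeping track of completions: checking that the algebra pullback by the continuous map $F\colon\widehat{\mathcal{D}}_{W(X)}^{(m)}\to\widehat{\mathcal{D}}_{W(X)}^{(m+1)}$ is compatible with both the $p$-adic completion in $\Phi^{*}\widehat{\mathcal{D}}_{\mathfrak{X}}^{(m)}$ and the cohomological completion in the formula for $\Phi^{*}\mathcal{N}^{\cdot}$, and that the intermediate object is still cohomologically complete and accessible.

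For the ``in particular'' statement, I would combine the just-proved identification with \prettyref{thm:Uniqueness-of-bimodule}. Given two coordinatized lifts $F_{1},F_{2}$ with associated $\Phi_{1},\Phi_{2}$, apply the algebra Frobenius $F$ (which depends only on the Witt-vector Frobenius, not on the choice of $F_{i}$) to the canonical isomorphism $\epsilon_{\Phi_{1},\Phi_{2}}\colon\Phi_{1}^{*}\mathcal{N}^{\cdot}\tilde{\to}\Phi_{2}^{*}\mathcal{N}^{\cdot}$ to obtain $F^{*}\Phi_{1}^{*}\mathcal{N}^{\cdot}\tilde{\to}F^{*}\Phi_{2}^{*}\mathcal{N}^{\cdot}$; by the main statement this rewrites as $\Phi_{1}^{*}F_{1}^{*}\mathcal{N}^{\cdot}\tilde{\to}\Phi_{2}^{*}F_{2}^{*}\mathcal{N}^{\cdot}$. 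Composing with $\epsilon_{\Phi_{2},\Phi_{1}}^{-1}\colon\Phi_{2}^{*}F_{1}^{*}\mathcal{N}^{\cdot}\tilde{\to}\Phi_{1}^{*}F_{1}^{*}\mathcal{N}^{\cdot}$ on the source and using that $\Phi_{2}^{*}$ is fully faithful (\prettyref{cor:Bimodule-over-X-r}) produces the required canonical isomorphism $F_{1}^{*}\mathcal{N}^{\cdot}\tilde{\to}F_{2}^{*}\mathcal{N}^{\cdot}$; the cocycle condition for three Frobenius lifts will then be inherited from the cocycle condition in \prettyref{thm:Uniqueness-of-bimodule}.
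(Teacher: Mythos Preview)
Your proposal is correct and follows essentially the same route as the paper: both arguments reduce to the bimodule isomorphism $F^{*}\Phi^{*}\widehat{\mathcal{D}}_{\mathfrak{X}}^{(m)}\tilde{\to}\Phi^{*}F^{*}\widehat{\mathcal{D}}_{\mathfrak{X}}^{(m)}$, proved by identifying each side with the $(\widehat{\mathcal{D}}_{W(X)}^{(m+1)},\widehat{\mathcal{D}}_{\mathfrak{X}}^{(m)})$-bisubmodule of $\mathrm{Hom}_{W(k)}(\mathcal{A},W(A))$ generated by $F_{W}\circ\Phi=\Phi\circ F$, and then tensoring with $\mathcal{N}^{\cdot}$ using projectivity of $\Phi^{*}\widehat{\mathcal{D}}_{\mathfrak{X}}^{(m)}$. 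Your treatment of the ``in particular'' via \prettyref{thm:Uniqueness-of-bimodule} and full faithfulness of $\Phi_{2}^{*}$ is more explicit than the paper's (which leaves it essentially as a remark), but is the intended mechanism.
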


\begin{proof}
By definition we have 
\[
F^{*}\mathcal{M}^{\cdot}=\widehat{\mathcal{D}}_{W(X)}^{(m+1)}\widehat{\otimes}_{\widehat{\mathcal{D}}_{W(X)}^{(m)}}^{L}\mathcal{M}^{\cdot}
\]
where the action of $\widehat{\mathcal{D}}_{W(X)}^{(m)}$ on $\widehat{\mathcal{D}}_{W(X)}^{(m+1)}$
is via the map $F$. As $\mathcal{M}^{\cdot}=\Phi^{*}\widehat{\mathcal{D}}_{\mathfrak{X}}^{(m)}\widehat{\otimes}_{\widehat{\mathcal{D}}_{\mathfrak{X}}^{(m)}}^{L}\mathcal{N}^{\cdot}$
we are left to ponder 
\[
\widehat{\mathcal{D}}_{W(X)}^{(m+1)}\widehat{\otimes}_{\widehat{\mathcal{D}}_{W(X)}^{(m)}}^{L}\Phi^{*}\widehat{\mathcal{D}}_{\mathfrak{X}}^{(m)}
\]
As $\Phi^{*}\widehat{\mathcal{D}}_{\mathfrak{X}}^{(m)}$ is locally
projective over $\widehat{\mathcal{D}}_{W(X)}^{(m)}$ this is concentrated
in a single degree, and is in fact a summand of $\widehat{\mathcal{D}}_{W(X)}^{(m+1)}$.
We are considering $\widehat{\mathcal{D}}_{W(X)}^{(m+1)}$ as a $(\widehat{\mathcal{D}}_{W(X)}^{(m+1)},\widehat{\mathcal{D}}_{W(X)}^{(m)})$
bimodule; and it is exactly the bisubmodule of $\mathcal{E}nd_{W(k)}(\mathcal{O}_{W(X)})$
which is locally generated by $F$. As $\Phi^{*}\widehat{\mathcal{D}}_{\mathfrak{X}}^{(m)}$
is the $(\widehat{\mathcal{D}}_{W(X)}^{(m)},\widehat{\mathcal{D}}_{\mathfrak{X}}^{(m)})$
bisubmodule of $\mathcal{H}om_{W(k)}(\mathcal{O}_{\mathfrak{X}},\mathcal{O}_{W(X)})$
locally generated by $\Phi$, we conclude that 
\[
\widehat{\mathcal{D}}_{W(X)}^{(m+1)}\widehat{\otimes}_{\widehat{\mathcal{D}}_{W(X)}^{(m)}}^{L}\Phi^{*}\widehat{\mathcal{D}}_{\mathfrak{X}}^{(m)}
\]
is the $(\widehat{\mathcal{D}}_{W(X)}^{(m+1)},\widehat{\mathcal{D}}_{\mathfrak{X}}^{(m)})$
bisubmodule of $\mathcal{H}om_{W(k)}(\mathcal{O}_{\mathfrak{X}},\mathcal{O}_{W(X)})$
locally generated by $F\circ\Phi$. By the same token we have that
\[
\Phi^{*}\widehat{\mathcal{D}}_{\mathfrak{X}}^{(m+1)}\widehat{\otimes}_{\widehat{\mathcal{D}}_{\mathfrak{X}}^{(m+1)}}^{L}F^{*}\widehat{\mathcal{D}}_{\mathfrak{X}}^{(m)}
\]
is the $(\widehat{\mathcal{D}}_{W(X)}^{(m+1)},\widehat{\mathcal{D}}_{\mathfrak{X}}^{(m)})$
bisubmodule of $\mathcal{H}om_{W(k)}(\mathcal{O}_{\mathfrak{X}},\mathcal{O}_{W(X)})$
locally generated by $\Phi\circ F$. As $F\circ\Phi=\Phi\circ F$
we obtain that these two bimodules are isomorphic and so 
\[
\widehat{\mathcal{D}}_{W(X)}^{(m+1)}\widehat{\otimes}_{\widehat{\mathcal{D}}_{W(X)}^{(m)}}^{L}\mathcal{M}^{\cdot}=\widehat{\mathcal{D}}_{W(X)}^{(m+1)}\widehat{\otimes}_{\widehat{\mathcal{D}}_{W(X)}^{(m)}}^{L}\Phi^{*}\widehat{\mathcal{D}}_{\mathfrak{X}}^{(m)}\widehat{\otimes}_{\widehat{\mathcal{D}}_{\mathfrak{X}}^{(m)}}^{L}\mathcal{N}^{\cdot}
\]
\[
\tilde{\to}\Phi^{*}\widehat{\mathcal{D}}_{\mathfrak{X}}^{(m+1)}\widehat{\otimes}_{\widehat{\mathcal{D}}_{\mathfrak{X}}^{(m+1)}}^{L}F^{*}\widehat{\mathcal{D}}_{\mathfrak{X}}^{(m)}\widehat{\otimes}_{\widehat{\mathcal{D}}_{\mathfrak{X}}^{(m)}}^{L}\mathcal{N}^{\cdot}
\]
\[
=\Phi^{*}(F^{*}\mathcal{N}^{\cdot})
\]
as required. 
\end{proof}
Restricting oneself to the subcategory of coherent accessible modules,
we obtain an isomorphism of functors $F_{1}^{*}\tilde{\to}F_{2}^{*}$
on that category. This recovers part $2)$ of Berthelot's theorem
above. In addition, one may use this technique to get a rather quick
proof that the category of coherent modules in Berthelot's theory
of overconvergent $\mathcal{D}$-modules, admits a Frobenius action.
We will return to this topic elsewhere. 

\section{Operations on Accessible Modules}

In this chapter we develop the basic operations on the accessible
$\widehat{\mathcal{D}}_{W(X)}^{(m)}$-modules; we assume throughout
that the level $m\geq0$. 

\subsection{\label{subsec:Operations-Left-Right}Operations on modules: Right
Modules and the left-right interchange}

In this section, we'll construct the category of accessible \emph{right
}$\mathcal{D}_{W(X)}^{(m)}$-modules and explain the left-right interchange
in this context. Before doing so, we give a quick review of the left-right
interchange in the classical situation: on $\mathfrak{X}$, the line
bundle $\omega_{\mathfrak{X}}$ carries the natural structure of a
right $\widehat{\mathcal{D}}_{\mathfrak{X}}^{(m)}$ module. One can
define the structure of a sheaf of algebras on 
\[
\omega_{\mathfrak{X}}\otimes_{\mathcal{O}_{\mathfrak{X}}}\mathcal{\widehat{D}}_{\mathfrak{X}}^{(m)}\otimes_{\mathcal{O}_{\mathfrak{X}}}\omega_{\mathfrak{X}}^{-1}
\]
via 
\[
(s_{1}\otimes\Phi_{1}\otimes t_{1})\cdot(s_{2}\otimes\Phi_{2}\otimes t_{2})=s_{1}\otimes\Phi_{1}<t_{1},s_{1}>\Phi_{2}\otimes t_{2}
\]
snd we have an there is an isomorphism of sheaves of algebras 
\[
\omega_{\mathfrak{X}}\otimes_{\mathcal{O}_{\mathfrak{X}}}\mathcal{\widehat{D}}_{\mathfrak{X}}^{(m)}\otimes_{\mathcal{O}_{\mathfrak{X}}}\omega_{\mathfrak{X}}^{-1}\tilde{=}\mathcal{\widehat{D}}_{\mathfrak{X}}^{(m),\text{op}}
\]
(this uses the right action of $\mathcal{\widehat{D}}_{\mathfrak{X}}^{(m)}$
on $\omega_{\mathfrak{X}}$). One obtains from this the fact that
$\mathcal{M}\to\omega_{\mathfrak{X}}\otimes_{\mathcal{O}_{\mathfrak{X}}}\mathcal{M}$
is an equivalence of categories from $\mathcal{\widehat{D}}_{\mathfrak{X}}^{(m)}-\text{mod}$
to $\mathcal{\widehat{D}}_{\mathfrak{X}}^{(m),\text{op}}-\text{mod}$.
This extends to an equivalence on the derived categories, which preserves
cohomologically complete objects on each side. We wish to extend this
to the accessible $\widehat{\mathcal{D}}_{W(X)}^{(m)}$-modules. 

Let's begin with the local situation: 
\begin{prop}
\label{prop:Construction-of-Phi-!}Let $X=\text{Spec}(A)$ where $A$
possesses local coordinates; let $F:\mathcal{A}\to\mathcal{A}$ a
coordinatized lift of Frobenius, and let $\Phi:\mathcal{A}\to W(A)$;
let $\pi:W(A)\to\mathcal{A}$ be the associated projection as in \prettyref{cor:Projector!}.
Then 
\[
\Phi^{!}(\mathcal{\widehat{D}}_{\mathcal{A}}^{(m)}):=\text{Hom}_{\mathcal{\widehat{D}}_{W(A)}^{(m)}}(\Phi^{*}\widehat{\mathcal{D}}_{\mathcal{A}}^{(m)},\mathcal{\widehat{D}}_{W(A)}^{(m)})\tilde{=}\pi\cdot\mathcal{\widehat{D}}_{W(A)}^{(m)}
\]
is a $(\mathcal{\widehat{D}}_{\mathcal{A}}^{(m)},\mathcal{\widehat{D}}_{W(A)}^{(m)})$
bimodule, which is a projective as a right $\mathcal{\widehat{D}}_{W(A)}^{(m)}$-module.
Further, there is an isomorphism of right $\mathcal{\widehat{D}}_{\mathcal{A}}^{(m)}$-modules
\[
\tilde{\text{Hom}}_{\mathcal{A}}(W(A),\mathcal{\widehat{D}}_{\mathcal{A}}^{(m)})\tilde{\to}\Phi^{!}(\mathcal{\widehat{D}}_{\mathcal{A}}^{(m)})
\]
where $\tilde{\text{Hom}}_{\mathcal{A}}(W(A),\mathcal{\widehat{D}}_{\mathcal{A}}^{(m)})\subset\text{Hom}_{\mathcal{A}}(W(A),\mathcal{\widehat{D}}_{\mathcal{A}}^{(m)})$
consists of those $\mathcal{A}$-linear morphisms $\epsilon:W(A)\to\mathcal{\widehat{D}}_{\mathcal{A}}^{(m)}$
which satisfy $\epsilon(V^{r}(W(A)))\subset p^{r}\mathcal{\widehat{D}}_{\mathcal{A}}^{(m)}$. 
\end{prop}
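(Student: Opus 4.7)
The approach rests on the idempotent $\pi \in \widehat{\mathcal{D}}_{W(A)}^{(m)}$ constructed in \prettyref{cor:Projector!}, which projects $W(A)$ onto $\Phi(\mathcal{A})$ and yields an isomorphism $\widehat{\mathcal{D}}_{W(A)}^{(m)}\pi \tilde{\to} \Phi^*\widehat{\mathcal{D}}_\mathcal{A}^{(m)}$ of left $\widehat{\mathcal{D}}_{W(A)}^{(m)}$-modules, sending $R\pi \mapsto R \circ \Phi$. Given this, the first three assertions of the proposition are formal. The standard identity $\text{Hom}_R(Re, R) \cong eR$ (via $f \mapsto f(e)$) for any idempotent $e$ in any ring yields
\[
\Phi^!(\widehat{\mathcal{D}}_\mathcal{A}^{(m)}) = \text{Hom}_{\widehat{\mathcal{D}}_{W(A)}^{(m)}}(\widehat{\mathcal{D}}_{W(A)}^{(m)}\pi, \widehat{\mathcal{D}}_{W(A)}^{(m)}) \cong \pi\widehat{\mathcal{D}}_{W(A)}^{(m)}.
\]
Projectivity as a right $\widehat{\mathcal{D}}_{W(A)}^{(m)}$-module is witnessed by the decomposition $\widehat{\mathcal{D}}_{W(A)}^{(m)} = \pi\widehat{\mathcal{D}}_{W(A)}^{(m)} \oplus (1-\pi)\widehat{\mathcal{D}}_{W(A)}^{(m)}$, and the $(\widehat{\mathcal{D}}_\mathcal{A}^{(m)}, \widehat{\mathcal{D}}_{W(A)}^{(m)})$-bimodule structure is transported from the bimodule structure on $\Phi^*\widehat{\mathcal{D}}_\mathcal{A}^{(m)}$ via functoriality of Hom.

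For the final identification with $\tilde{\text{Hom}}_\mathcal{A}(W(A), \widehat{\mathcal{D}}_\mathcal{A}^{(m)})$, I would build the map $\Psi: \pi\widehat{\mathcal{D}}_{W(A)}^{(m)} \to \tilde{\text{Hom}}_\mathcal{A}(W(A), \widehat{\mathcal{D}}_\mathcal{A}^{(m)})$ explicitly. Given $\pi Q \in \pi\widehat{\mathcal{D}}_{W(A)}^{(m)}$, I define $\Psi(\pi Q) = \epsilon_{\pi Q}$ by sending $w \in W(A)$ to the operator on $\mathcal{A}$ given by $a \mapsto \pi(Q(w \cdot \Phi(a)))$, where the multiplication occurs in $W(A)$ and we identify $\pi(W(A)) = \Phi(\mathcal{A})$ with $\mathcal{A}$. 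Right $\mathcal{A}$-linearity follows from the computation $\epsilon_{\pi Q}(w\Phi(b))(a) = \pi(Q(w\Phi(ba))) = \epsilon_{\pi Q}(w)(ba)$. The filtration condition $\epsilon_{\pi Q}(V^r(W(A))) \subset p^r\widehat{\mathcal{D}}_\mathcal{A}^{(m)}$ is straightforward: $V^r(W(A)) \cdot \Phi(\mathcal{A}) \subset V^r(W(A)) \subset p^r W(A)$, and the basis description of \prettyref{thm:Basis} ensures the image survives this $p$-scaling.

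The principal obstacle is to verify that $\epsilon_{\pi Q}(w)$ genuinely lies in $\widehat{\mathcal{D}}_\mathcal{A}^{(m)}$ and not merely in $\text{End}_{W(k)}(\mathcal{A})$; equivalently, one must identify the corner algebra $\pi\widehat{\mathcal{D}}_{W(A)}^{(m)}\pi$ with $\widehat{\mathcal{D}}_\mathcal{A}^{(m)}$ under the faithful action on $\mathcal{A}$. The key idea is that $\pi Q w \pi \in \widehat{\mathcal{D}}_{W(A)}^{(m)}\pi$ and hence corresponds via \prettyref{cor:Projector!} to an element $T \in \Phi^*\widehat{\mathcal{D}}_\mathcal{A}^{(m)} \subset \text{Hom}_{W(k)}(\mathcal{A}, W(A))$ whose image lies in $\pi(W(A)) = \Phi(\mathcal{A})$; a direct check using the description of elements of $\Phi^*\widehat{\mathcal{D}}_\mathcal{A}^{(m)}$ in terms of the operators $\{\partial\}_{J/p^r}$ from the proof of \prettyref{prop:Construction-of-bimodule} then shows that such a $T$ must be of the form $\Phi \circ P'$ for a unique $P' \in \widehat{\mathcal{D}}_\mathcal{A}^{(m)}$, and this $P'$ is $\epsilon_{\pi Q}(w)$. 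Bijectivity of $\Psi$ can then be checked by expanding $W(A)$ in the Witt-vector basis of \prettyref{exa:Affine-Space} and matching the parametrization of right-$\mathcal{A}$-linear maps $W(A) \to \widehat{\mathcal{D}}_\mathcal{A}^{(m)}$ satisfying the filtration condition against the basis of $\pi\widehat{\mathcal{D}}_{W(A)}^{(m)}$ obtained from \prettyref{thm:Basis}, with the two filtrations matched via the valuation parameter $r$.
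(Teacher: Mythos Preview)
Your treatment of the first three assertions matches the paper exactly: the identification $\Phi^{!}(\widehat{\mathcal{D}}_{\mathcal{A}}^{(m)})\cong\pi\widehat{\mathcal{D}}_{W(A)}^{(m)}$ via the idempotent, projectivity, and the bimodule structure all follow formally from \prettyref{cor:Projector!} as you say.

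For the final isomorphism you take the opposite route from the paper. The paper builds the map in the direction $\tilde{\text{Hom}}_{\mathcal{A}}(W(A),\widehat{\mathcal{D}}_{\mathcal{A}}^{(m)})\to\Phi^{!}(\widehat{\mathcal{D}}_{\mathcal{A}}^{(m)})$ by sending $\epsilon\otimes P$ to the operator $P\circ\epsilon:W(A)\to\mathcal{A}$; injectivity is then immediate and the serious work is surjectivity, which is discharged by a dedicated lemma constructing explicit elements $\phi_{I/p^{r}}$ and showing both that $\Phi\circ\phi_{I/p^{r}}\in\pi\widehat{\mathcal{D}}_{W(A)}^{(0)}$ and that the image is stable under the right $\widehat{\mathcal{D}}_{W(A)}^{(m)}$-action. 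Your inverse-direction map $\Psi$ is a legitimate alternative, but the weight of the argument has simply been relocated to the corner-algebra identification $\pi\widehat{\mathcal{D}}_{W(A)}^{(m)}\pi\cong\widehat{\mathcal{D}}_{\mathcal{A}}^{(m)}$, which you dismiss as ``a direct check.'' This is precisely the nontrivial content: one must show that every element of $\Phi^{*}\widehat{\mathcal{D}}_{\mathcal{A}}^{(m)}$ whose image lies in $\Phi(\mathcal{A})$ is of the form $\Phi\circ P'$, and unwinding this via the basis of \prettyref{thm:Basis} and the formulas in \prettyref{prop:Construction-of-bimodule} amounts to calculations of the same order as the paper's lemma. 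Your final ``basis matching'' step is similarly schematic.

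There is also a convention issue to watch. Your computation shows $\epsilon_{\pi Q}(w\Phi(b))=\epsilon_{\pi Q}(w)\cdot b$, i.e.\ linearity for the \emph{right} $\mathcal{A}$-action on the target $\widehat{\mathcal{D}}_{\mathcal{A}}^{(m)}$; the paper's identification $\tilde{\text{Hom}}_{\mathcal{A}}(W(A),\widehat{\mathcal{D}}_{\mathcal{A}}^{(m)})\cong\tilde{\text{Hom}}_{\mathcal{A}}(W(A),\mathcal{A})\widehat{\otimes}_{\mathcal{A}}\widehat{\mathcal{D}}_{\mathcal{A}}^{(m)}$ uses the \emph{left} $\mathcal{A}$-structure on the target. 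These give genuinely different submodules of $\text{Hom}_{W(k)}(W(A),\widehat{\mathcal{D}}_{\mathcal{A}}^{(m)})$, so your $\Psi$ as written does not land in the paper's $\tilde{\text{Hom}}$. One can repair this (e.g.\ by transposing the construction), but it should be addressed explicitly.
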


\begin{proof}
The first line follows immediately from $\Phi^{*}\widehat{\mathcal{D}}_{\mathcal{A}}^{(m)}=\mathcal{\widehat{D}}_{W(A)}^{(m)}\cdot\pi$;
the $(\mathcal{\widehat{D}}_{\mathcal{A}}^{(m)},\mathcal{\widehat{D}}_{W(A)}^{(m)})$-bimodule
structure is deduced from the right action of $\mathcal{\widehat{D}}_{W(A)}^{(m)}$
on\\
$\text{Hom}_{\mathcal{\widehat{D}}_{W(A)}^{(m)}}(\Phi^{*}\widehat{\mathcal{D}}_{\mathcal{A}}^{(m)},\mathcal{\widehat{D}}_{W(A)}^{(m)})$
and the right action of $\widehat{\mathcal{D}}_{\mathcal{A}}^{(m)}$
on $\Phi^{*}\widehat{\mathcal{D}}_{\mathcal{A}}^{(m)}$. So the only
thing that needs to be done is to construct the isomorphism 
\[
\tilde{\text{Hom}}_{\mathcal{A}}(W(A),\mathcal{\widehat{D}}_{\mathcal{A}}^{(m)})\tilde{\to}\Phi^{!}(\mathcal{\widehat{D}}_{\mathcal{A}}^{(m)})
\]
First, define $\tilde{\text{Hom}}_{\mathcal{A}}(W(A),\mathcal{A})$
to be the set of $\mathcal{A}$ linear maps $\epsilon:W(A)\to\mathcal{A}$
satisfying $\epsilon(V^{r}(W(A)))\subset p^{r}\mathcal{A}$. By the
lemma below we have that if $\epsilon\in\tilde{\text{Hom}}_{\mathcal{A}}(W(A),\mathcal{A})$,
then $\Phi\circ\epsilon\in\pi\cdot\mathcal{\widehat{D}}_{W(A)}^{(m)}$;
thus we obtain an inclusion $\iota:\tilde{\text{Hom}}_{\mathcal{A}}(W(A),\mathcal{A})\subset\Phi^{!}(\mathcal{\widehat{D}}_{\mathcal{A}}^{(m)})$.
Now, I claim that there is an isomorphism 
\begin{equation}
\tilde{\text{Hom}}_{\mathcal{A}}(W(A),\mathcal{\widehat{D}}_{\mathcal{A}}^{(m)})\tilde{=}\tilde{\text{Hom}}_{\mathcal{A}}(W(A),\mathcal{A})\widehat{\otimes}_{\mathcal{A}}\mathcal{\widehat{D}}_{\mathcal{A}}^{(m)}\label{eq:iso-for-shriek}
\end{equation}
where $\widehat{\otimes}$ here denotes the completion with respect
to the filtration 
\[
F^{m}(\tilde{\text{Hom}}_{\mathcal{A}}(W(A),\mathcal{A})):=\{\epsilon\in\tilde{\text{Hom}}_{\mathcal{A}}(W(A),\mathcal{A})|\epsilon(W(A))\subseteq p^{m}\mathcal{A}\}
\]
To prove this note that, via the inclusion $\mathcal{A}\subset\mathcal{\widehat{D}}_{\mathcal{A}}^{(m)}$
we have an inclusion $\tilde{\text{Hom}}_{\mathcal{A}}(W(A),\mathcal{A})\subset\tilde{\text{Hom}}_{\mathcal{A}}(W(A),\mathcal{\widehat{D}}_{\mathcal{A}}^{(m)})$,
which induces a morphism 
\[
\tilde{\text{Hom}}_{\mathcal{A}}(W(A),\mathcal{A})\otimes_{\mathcal{A}}\mathcal{\widehat{D}}_{\mathcal{A}}^{(m)}\to\tilde{\text{Hom}}_{\mathcal{A}}(W(A),\mathcal{\widehat{D}}_{\mathcal{A}}^{(m)})
\]
whose completion induces \prettyref{eq:iso-for-shriek} (that it is
an isomorphism is easily checked in local coordinates). 

So, via the right action of $\mathcal{\widehat{D}}_{\mathcal{A}}^{(m)}$
on $\Phi^{!}(\mathcal{\widehat{D}}_{\mathcal{A}}^{(m)})$ we obtain
a map
\[
\tilde{\text{Hom}}_{\mathcal{A}}(W(A),\mathcal{A})\otimes_{\mathcal{A}}\mathcal{\widehat{D}}_{\mathcal{A}}^{(m)}\to\Phi^{!}(\mathcal{\widehat{D}}_{\mathcal{A}}^{(m)})
\]
given by $(\epsilon,P)\to\iota(\epsilon)\cdot P$. As elements of
$F^{m}(\tilde{\text{Hom}}_{\mathcal{A}}(W(A),\mathcal{A}))$ are contained
in $V^{r}(\mathcal{\widehat{D}}_{W(A)}^{(0)})$ (as shown in the lemma
directly below), we may complete to obtain a map 
\[
a:\tilde{\text{Hom}}_{\mathcal{A}}(W(A),\mathcal{A})\widehat{\otimes}_{\mathcal{A}}\mathcal{\widehat{D}}_{\mathcal{A}}^{(m)}\to\Phi^{!}(\mathcal{\widehat{D}}_{\mathcal{A}}^{(m)})
\]
Identifying $\Phi^{!}(\mathcal{\widehat{D}}_{\mathcal{A}}^{(m)})\tilde{=}\pi\cdot\mathcal{\widehat{D}}_{W(A)}^{(m)}$
with a set of $W(k)$-linear morphisms from $W(A)$ to $\mathcal{A}$,
this map is given by the composition 
\[
(\epsilon,P)\to P\circ\epsilon:W(A)\to\mathcal{A}
\]
It follows directly that the map $a$ is injective. To see the surjectivity,
it suffices to show that the image of $a$ is preserved under the
right action of $\mathcal{\widehat{D}}_{W(A)}^{(m)}$ (since $\pi$
is clearly contained in the image). The proof of this is given in
the lemma directly below
\end{proof}
In the proof above, we used the 
\begin{lem}
1) Let $\phi_{I/p^{r}}:W(A)\to\mathcal{A}$ be the $\mathcal{A}$-linear
map which takes $p^{r}T^{I/p^{r}}$ to $p^{r}$ and which takes $p^{r'}T^{J/p^{r'}}$
to $0$ for all $J\neq I$. Then $\Phi\circ\phi_{I/p^{r}}\in\pi\cdot\mathcal{\widehat{D}}_{W(A)}^{(0)}$. 

2) Let $P\in\mathcal{\widehat{D}}_{W(A)}^{(0)}$. Then $\phi_{I/p^{r}}(P\cdot):W(A)\to\mathcal{A}$
is contained in the image of $\tilde{\text{Hom}}_{\mathcal{A}}(W(A),\mathcal{A})\widehat{\otimes}_{\mathcal{A}}\mathcal{\widehat{D}}_{\mathcal{A}}^{(m)}$
in $\text{Hom}_{W(k)}(W(A),\mathcal{A})$. 
\end{lem}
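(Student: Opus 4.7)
The plan is to reduce part (1) to an explicit construction inside $\widehat{\mathcal{D}}_{W(A)}^{(0)}$, and then to use part (1) together with the basis theorem to deduce part (2). For part (1), I would first observe that $\Phi\circ\phi_{I/p^{r}}$ has image entirely contained in $\Phi(\mathcal{A})=\pi(W(A))$; consequently, once we exhibit $\Phi\circ\phi_{I/p^{r}}$ as some element $Q\in\widehat{\mathcal{D}}_{W(A)}^{(0)}$, the identity $\pi\cdot Q=Q$ holds automatically because $\pi$ restricts to the identity on $\Phi(\mathcal{A})$ by \prettyref{cor:Projector!}, placing $Q$ inside $\pi\cdot\widehat{\mathcal{D}}_{W(A)}^{(0)}$.

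The natural candidate for $Q$ is $\pi\circ\{\partial\}_{I/p^{r}}$, which lies in $\pi\cdot\widehat{\mathcal{D}}_{W(A)}^{(0)}$ by construction. To verify this equals $\Phi\circ\phi_{I/p^{r}}$, I would evaluate both sides on the $\mathcal{A}$-module basis $\{V^{r'}(T^{J})\}$ of $W(A)$ appearing in the proof of \prettyref{prop:Construction-of-bimodule} (with $\mathcal{A}$-action via $\Phi$). Using the embedding $W_{r''+1}(A)\hookrightarrow\mathcal{A}_{r''+1}$ of \prettyref{lem:Basic-Presentation-of-Witt}, the operator $\{\partial\}_{I/p^{r}}$ becomes $\partial^{[Ip^{r''-r}]}$, and a direct computation gives $\{\partial\}_{I/p^{r}}(V^{r}(T^{I}))=p^{r}$. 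For any other basis element $V^{r'}(T^{J})$ with $(r',J)\neq(r,I)$, Kummer-type estimates on the relevant binomial coefficients together with the valuation bounds in \prettyref{prop:Move-Down} place the image in $V(W(A))$. Applying the Hasse-Schmidt product rule to $\Phi(a)V^{r'}(T^{J})$ spreads the derivatives across the two factors, and every split other than the one putting all of $\{\partial\}_{I/p^{r}}$ onto $V^{r}(T^{I})$ produces a term landing in $V(W(A))$. Composing with $\pi$ annihilates all such terms and leaves $p^{r}\Phi(a)=\Phi(\phi_{I/p^{r}}(\Phi(a)V^{r}(T^{I})))$ in the remaining case, matching $\Phi\circ\phi_{I/p^{r}}$.

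For part (2), I would use part (1) to write $\Phi\circ\phi_{I/p^{r}}=\pi\cdot Q_{0}$ for some $Q_{0}\in\widehat{\mathcal{D}}_{W(A)}^{(0)}$, so that $\Phi\circ(\phi_{I/p^{r}}\circ P)=\pi\cdot(Q_{0}P)$ is again in $\pi\cdot\widehat{\mathcal{D}}_{W(A)}^{(0)}$. Expanding $Q_{0}P$ via \prettyref{thm:Basis} as a convergent sum of basis monomials $F^{-r'}(\alpha)\{\partial\}_{J/p^{r'}}\{\partial\}^{K}$, each individual term, viewed after post-composition with $\Phi^{-1}$ as a map $W(A)\to\mathcal{A}$, can be re-expressed using the construction in the proof of \prettyref{prop:Construction-of-bimodule} (which exhibits $\{\partial\}_{J/p^{r'}}\circ\Phi$ as an element of $\Phi^{*}\widehat{\mathcal{D}}_{\mathcal{A}}^{(0)}=W(A)\widehat{\otimes}_{\mathcal{A}}\widehat{\mathcal{D}}_{\mathcal{A}}^{(0)}$). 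Dualizing, each such summand is an element of the form $R\circ\phi_{J'/p^{r'}}$ with $R\in\widehat{\mathcal{D}}_{\mathcal{A}}^{(m)}$ and $\phi_{J'/p^{r'}}\in\tilde{\text{Hom}}_{\mathcal{A}}(W(A),\mathcal{A})$. Summing, the map $\phi_{I/p^{r}}\circ P$ is realized as a convergent sum in the image of $\tilde{\text{Hom}}_{\mathcal{A}}(W(A),\mathcal{A})\widehat{\otimes}_{\mathcal{A}}\widehat{\mathcal{D}}_{\mathcal{A}}^{(m)}$, with convergence inherited from the convergence of the basis expansion of $Q_{0}P$ and the compatibility between the $V$-adic filtration on $\tilde{\text{Hom}}_{\mathcal{A}}(W(A),\mathcal{A})$ and the ideal filtration $\{I^{(i)}\}$ on $\widehat{\mathcal{D}}_{W(A)}^{(0)}$.

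The main obstacle will be the verification in part (1) that $\pi\circ\{\partial\}_{I/p^{r}}$ kills every $V^{r'}(T^{J})$ with $(r',J)\neq(r,I)$, as this requires case-by-case tracking of Kummer-type valuation estimates for binomials of the form $\binom{Jp^{r''-r'}}{Ip^{r''-r}}$, combined with the identification of the resulting monomials with elements of the $V^{s}(W(A))$-filtration. Should $\pi\circ\{\partial\}_{I/p^{r}}$ only agree with $\Phi\circ\phi_{I/p^{r}}$ modulo a correction operator valued in $V^{s}(W(A))$ for some $s\geq1$, that correction lies in the ideal $I^{(s)}\subset\widehat{\mathcal{D}}_{W(A)}^{(0)}$ and can be absorbed by iterating the same construction with shifted parameters; this inductive sum then converges by completeness of $\widehat{\mathcal{D}}_{W(A)}^{(0)}$ with respect to $\{I^{(i)}\}$.
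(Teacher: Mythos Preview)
Your candidate $Q=\pi\circ\{\partial\}_{I/p^{r}}$ for part (1) is not equal to $\Phi\circ\phi_{I/p^{r}}$, and the error is not of the kind you suggest. Take $n=1$, $I=1$, $r=1$, and evaluate on the basis element $V^{1}(T^{p+1})=pT^{(p+1)/p}$. In $\mathcal{A}_{r'+1}$ this is $pT^{(p+1)p^{r'-1}}$, and
\[
d^{[p^{r'-1}]}\bigl(pT^{(p+1)p^{r'-1}}\bigr)=p\binom{(p+1)p^{r'-1}}{p^{r'-1}}T^{p^{r'}},
\]
where by Lucas the binomial is a $p$-adic unit $u$. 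Thus $\{d\}_{1/p}(V^{1}(T^{p+1}))=pu\cdot[T]=\Phi(puT)$, which lies in $\Phi(\mathcal{A})$ and is \emph{fixed} by $\pi$, not killed by it. So $\pi\circ\{d\}_{1/p}$ sends $V^{1}(T^{p+1})$ to $\Phi(puT)\neq 0$, while $\Phi\circ\phi_{1/p}$ sends it to $0$. The implicit assumption behind ``composing with $\pi$ annihilates all such terms'' is that $\pi$ kills $V(W(A))$; it does not, since $\Phi(p\mathcal{A})\subset V(W(A))\cap\Phi(\mathcal{A})$.

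Your fallback, iterating with ``shifted parameters,'' is in the right spirit but is not a proof as written: you have not identified which operators to use as corrections, nor why the procedure closes up. The paper's argument does carry out such an induction, but with different ingredients: it first composes $\{d\}_{i/p^{r}}$ with the \emph{specific} projector $\pi_{i/p^{r}}$ (not the global $\pi$) constructed in \prettyref{prop:construction-of-projector}, and then at each level $r'$ subtracts a polynomial correction in the operators $T^{i/p^{r'}}\{d\}_{i/p^{r'}}$, using the polynomial-in-arithmetic-progressions machinery of \prettyref{prop:polynomial-in-AP} to control the eigenvalue functions that arise. This is what makes the induction converge in the $V$-adic topology. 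For part (2), the paper likewise proceeds by an explicit computation of $\phi_{I/p^{m}}(\{\partial_{j}\}_{1/p^{r}}\cdot)$, showing it equals $p^{a}f(1+p^{r}T_{j}\partial_{j})\circ\phi_{I'/p^{b}}$ for a suitable polynomial $f$ and shifted index $I'/p^{b}$; your ``dualizing'' step does not make this identification, and without it the claim that each summand lies in $\tilde{\text{Hom}}_{\mathcal{A}}(W(A),\mathcal{A})\widehat{\otimes}_{\mathcal{A}}\widehat{\mathcal{D}}_{\mathcal{A}}^{(m)}$ is unsupported.
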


\begin{proof}
These will be variants of the proofs of \prettyref{cor:Projector!}
and \prettyref{prop:Construction-of-bimodule}. 

1) Start with case $A=k[T]$, $\mathcal{A}=W(k)<<T>>$, and the Frobenius
lift is the standard one. Let $i\in\{1,\dots p^{r}-1\}$ so that $\text{val}_{p}(i)=0$.
Let $r'\geq r$. As in \prettyref{prop:construction-of-projector},
let $\pi_{i/p^{r}}:W_{r'+1}(A)\to W_{r'+1}(A)$ denote the projection
operator which takes any monomial of the form $p^{r}T^{j+i/p^{r}}$
to itself (for each $j\in\mathbb{Z}_{\geq0}$), and all other monomials
to $0$. By induction on $r'$ we shall construct an operator satisfying
\begin{equation}
\Phi_{r'}\circ\phi_{i/p^{r}}^{r'}:=\{d\}_{i/p^{r}}\circ\pi_{i/p^{r}}-\sum_{i=1}^{p^{r'+1}-1}f_{i}(T^{i/p^{r'}}\{d\}_{i/p^{r'}})\circ\{d\}_{i/p^{r}}\circ\pi_{i/p^{r}}\label{eq:operator!}
\end{equation}
for some $f_{i}\in\mathbb{Z}_{p}[X]$, so that $\phi_{i/p^{r}}^{r'}\equiv\phi_{i/p^{r}}^{r'-1}\phantom{i}\text{mod}\phantom{i}V^{r}(\mathcal{\widehat{D}}_{W(A)}^{(0)})$.
The inverse limit of these operators is then the required $\phi_{i/p^{r}}$. 

We will work now with the copy of $W_{r'+1}(A^{(r')})$ contained
in $\mathcal{A}_{r'+1}=W_{r'+1}(k)[T]$ and construct the operator
there. We will proceed by induction; when $r'=r$ we have that 
\[
d^{[i]}\circ\pi_{i/p^{r}}
\]
is already equal to $\phi_{i/p^{r}}^{r}$. Assuming by induction that
$\phi_{i/p^{r}}^{r'-1}$ has been constructed which satisfies \prettyref{eq:operator!},
consider the operator 
\[
d^{[p^{r'-r}i]}\circ\pi_{i/p^{r}}-\sum_{i=1}^{p^{r'}-1}f_{i}(T^{i}d^{[i]})\circ d^{[p^{r'-r}i]}\circ\pi_{i/p^{r}}
\]
on $\mathcal{A}_{r'+1}$. Arguing exactly as in the proof of \prettyref{prop:polynomial-in-AP},
and employing the induction hypothesis, we see that the evaluation
of this operator on a term of the form $p^{r}T^{p^{r'-r}i}T^{ap^{r'}}$
yields $p^{r'}h(a)T^{ap^{r'}}$ where $h$ is a polynomial in $\mathbb{Z}_{p}[X]$.
So we can add a term of the form 
\[
-p^{r'}h(T^{p^{r'+1}}d^{[p^{r'+1}]})\circ d^{[p^{r'-r}i]}\circ\pi_{i/p^{r}}
\]
to construct $\phi^{r'}$, as desired. 

To construct $\phi_{I/p^{r}}$ in general, use the inclusions $\mathcal{\widehat{D}}_{W(k[T_{i}])}^{(0)}\to\mathcal{\widehat{D}}_{W(A)}^{(0)}$
(as noted in \prettyref{cor:Projector!}) and take products of operators
in $\mathcal{\widehat{D}}_{W(k[T_{i}])}^{(0)}$. 

$2)$ As $\tilde{\text{Hom}}_{\mathcal{A}}(W(A),\mathcal{A})\widehat{\otimes}_{\mathcal{A}}\mathcal{\widehat{D}}_{\mathcal{A}}^{(m)}\to\text{Hom}_{W(k)}(W(A),\mathcal{A})$
is injective, we identify $\tilde{\text{Hom}}_{\mathcal{A}}(W(A),\mathcal{A})\widehat{\otimes}_{\mathcal{A}}\mathcal{\widehat{D}}_{\mathcal{A}}^{(m)}$
with its image. By an $F$-conjugation argument, we can assume $m=0$
from now on. As $A$ is etale over $k[T_{1},\dots,T_{n}]$, we can
assume without loss of generality that $A=k[T_{1},\dots,T_{n}]$ with
the coordinate lift of Frobenius. 

We begin by considering $\phi_{I/p^{m}}(\{\partial_{j}\}_{1/p^{r}}\cdot)$;
without loss of generality suppose $j=1$. Choose $l\geq r,m$ and
consider $\phi_{I/p^{m}}$ acting on $W_{l+1}(A)$; as usual we work
with the copy of $W_{l+1}(A^{(l)})\subset\mathcal{A}_{l+1}=W(k)[T_{1},\dots,T_{n}]$.
Then $\phi_{I/p^{m}}$ is nonzero on monomials of the form $T^{K}$
where $K=(p^{l-m}i_{1}+p^{l}j_{1},\dots,p^{l-m}i_{n}+p^{l}j_{n})$
where $j_{i}\in\mathbb{Z}_{\geq0}$ for all $i$, and zero on all
other monomials. In this context, $\{\partial_{1}\}_{1/p^{r}}$ becomes
the operator $\partial_{1}^{[p^{l-r}]}$. The operator $\phi_{I/p^{m}}(\partial_{1}^{[p^{l-r}]}\cdot)$
is nonzero on monomials of the form $T^{K'}$where $K'=(p^{l-m}i_{1}+p^{l}j_{1}+p^{l-r},\dots,p^{l-m}i_{n}+p^{l}j_{n})$,
and zero on all the others.

We have 
\[
d_{1}^{[p^{l-r}]}T_{1}^{p^{l-m}i_{1}+p^{l}j_{1}+p^{l-r}}T_{2}^{p^{l-m}i_{2}+p^{l}j_{2}}\cdots T_{n}^{p^{l-m}i_{n}+p^{l}j_{n}}
\]
\[
={p^{l-m}i_{1}+p^{l}j_{1}+p^{l-r} \choose p^{l-r}}T_{1}^{p^{l-m}i_{1}+p^{l}j_{1}}T_{2}^{p^{l-m}i_{2}+p^{l}j_{2}}\cdots T_{n}^{p^{l-m}i_{n}+p^{l}j_{n}}
\]
However, 
\[
{p^{l-m}i_{1}+p^{l-r}(p^{r}j_{1}+1) \choose p^{l-r}}=f_{l}(p^{r}j_{1}+1)
\]
is a polynomial in $j_{1}$ (by \prettyref{prop:polynomial-in-AP}).
So we deduce that 
\[
\phi_{I/p^{m}}(\partial_{1}^{[p^{l-r}]}\cdot)=p^{a}\lim_{l}f_{l}(1+p^{r}T_{1}\partial_{1})\circ\phi_{I'/p^{b}}
\]
where ${\displaystyle \lim_{l}f_{l}(1+p^{r}T_{1}\partial_{1})}$ is
understood as an element of $\widehat{\mathcal{D}}_{\mathcal{A}}^{(0)}$,
$I'/p^{b}$ is the term $I/p^{m}+(1/p^{r},0,\dots,0)$, and $a=|b-\text{max}\{r,m\}|$.;
thus we see that $\phi_{I/p^{m}}(\partial_{1}^{[p^{l-r}]}\cdot)\in\tilde{\text{Hom}}_{\mathcal{A}}(W(A),\mathcal{A})\widehat{\otimes}_{\mathcal{A}}\mathcal{\widehat{D}}_{\mathcal{A}}^{(0)}$.
As any element of $\tilde{\text{Hom}}_{\mathcal{A}}(W(A),\mathcal{A})\widehat{\otimes}_{\mathcal{A}}\mathcal{\widehat{D}}_{\mathcal{A}}^{(0)}$
can be written as a sum of the form 
\[
\sum_{I,r}\phi_{I/p^{r}}\cdot P_{I}
\]
we see that $\tilde{\text{Hom}}_{\mathcal{A}}(W(A),\mathcal{A})\widehat{\otimes}_{\mathcal{A}}\mathcal{\widehat{D}}_{\mathcal{A}}^{(0)}$
is closed under the action of $\{\partial_{j}\}_{1/p^{r}}$, and therefore
under the action of every term of the form $\{\partial\}_{J/p^{r}}$.
By \prettyref{thm:Basis}, it suffices to show that it is closed under
the action of terms of the form $T^{K/p^{r}}\{\partial\}_{J/p^{r}}$.
That follows from the above by noting that the operator 
\[
\phi_{I/p^{m}}(T^{K/p^{r}}\cdot):V^{r}(W(A))\to\mathcal{A}
\]
is the restriction to $V^{r}(W(A))$ of a sum of operators of the
form $\phi_{J/p^{l}}$ (and this is straightforward to verify). 
\end{proof}
From the above we obtain 
\begin{defn}
Set $\mathcal{H}om_{\mathcal{\widehat{D}}_{W(X)}^{(m)}/p}(\mathcal{B}_{X}^{(m)},\mathcal{\widehat{D}}_{W(X)}^{(m)}/p):=\mathcal{B}_{X}^{(m),r}$.
This is a sheaf of $(\widehat{\mathcal{D}}_{X}^{(m)},\mathcal{\widehat{D}}_{W(X)}^{(m)}/p)$
bimodules, which is locally isomorphic to $\Phi^{!}(\mathcal{\widehat{D}}_{X}^{(m)})$
for a coordinatized lift of Frobenius $\Phi$. 
\end{defn}

In particular, this bimodule always exists when $n=1$. We can thus
copy over \prettyref{def:Accessible} and obtain 
\begin{defn}
\label{def:right-accessible}1) A module $\mathcal{N}\in\mathcal{\widehat{D}}_{W(X)}^{(m),\text{op}}/p-\text{mod}$
is accessible if is of the form $\mathcal{M}\otimes_{\mathcal{D}_{X}^{(m)}}\mathcal{B}_{X}^{(m),r}$
for some $\mathcal{M}\in\mathcal{D}_{X}^{(m),\text{op}}-\text{mod}$. 

2) A complex $\mathcal{N}^{\cdot}\in D(\mathcal{\widehat{D}}_{W(X)}^{(m),\text{op}}/p-\text{mod})$
is accessible if is of the form $\mathcal{M}^{\cdot}\otimes_{\mathcal{D}_{X}^{(m)}}^{L}\mathcal{B}_{X}^{(m),r}$
for some $\mathcal{M}^{\cdot}\in D(\mathcal{D}_{X}^{(m),\text{op}}-\text{mod})$.

3) Let $r\geq1$. A complex $\mathcal{N}^{\cdot}\in D(\mathcal{\widehat{D}}_{W(X)}^{(m),\text{op}}/p^{r}-\text{mod})$
is accessible if $\mathcal{N}^{\cdot}\otimes_{W_{r}(k)}^{L}k$ is
accessible in $D(\mathcal{\widehat{D}}_{W(X)}^{(m),\text{op}}/p-\text{mod})$.
Similarly, a complex $\mathcal{N}^{\cdot}\in D_{cc}(\mathcal{\widehat{D}}_{W(X)}^{(m),\text{op}}-\text{mod})$
is accessible if $\mathcal{N}^{\cdot}\otimes_{W(k)}^{L}k$ is accessible
in $D(\mathcal{\widehat{D}}_{W(X)}^{(m),\text{op}}/p-\text{mod})$. 
\end{defn}

The analogues of all the basic results on accessibility (\prettyref{thm:Local-Accessible}
through \prettyref{cor:Bimodule-over-X-r}) all hold without any change,
and with identical proofs, for right modules.

With this in place, we turn to defining the fundamental bimodule $\mathcal{\widehat{D}}_{W(X),\text{acc}}^{(m)}$
and then describing the left-right swap for $\mathcal{\widehat{D}}_{W(X)}^{(m)}$-modules. 

First suppose we are working locally, i.e., $X=\text{Spec}(A)$ possesses
local coordinates. Then the functor 
\[
\mathcal{N}^{\cdot}\to\Phi^{*}\mathcal{\widehat{D}}_{\mathfrak{X}}^{(m)}\widehat{\otimes}_{\mathcal{\widehat{D}}_{\mathfrak{X}}^{(m)}}^{L}R\mathcal{H}om_{\mathcal{\widehat{D}}_{W(X)}^{(m)}}(\Phi^{*}\mathcal{\widehat{D}}_{\mathfrak{X}}^{(m)},\mathcal{N}^{\cdot})
\]
\[
\tilde{\to}\Phi^{*}\mathcal{\widehat{D}}_{\mathfrak{X}}^{(m)}\widehat{\otimes}_{\mathcal{\widehat{D}}_{\mathfrak{X}}^{(m)}}^{L}(\Phi^{!}\mathcal{\widehat{D}}_{\mathfrak{X}}^{(m)}\widehat{\otimes}_{\mathcal{\widehat{D}}_{W(X)}^{(m)}}^{L}\mathcal{N}^{\cdot})
\]
is the right adjoint to the inclusion $D_{acc}(\mathcal{\widehat{D}}_{W(X)}^{(m)}-\text{mod})\to D_{cc}(\mathcal{\widehat{D}}_{W(X)}^{(m)}-\text{mod})$.
Applying this functor to $\widehat{\mathcal{D}}_{W(X)}^{(m)}$ itself,
we obtain $\Phi^{*}\mathcal{\widehat{D}}_{\mathfrak{X}}^{(m)}\widehat{\otimes}_{\mathcal{\widehat{D}}_{\mathfrak{X}}^{(m)}}^{L}\Phi^{!}\mathcal{\widehat{D}}_{\mathfrak{X}}^{(m)}$.
As $\Phi^{!}\mathcal{\widehat{D}}_{\mathfrak{X}}^{(m)}$ is $p$-torsion
free, we see from \prettyref{rem:=00005CPhi-pull-for-bounded-torsion}
that $\Phi^{*}\mathcal{\widehat{D}}_{\mathfrak{X}}^{(m)}\widehat{\otimes}_{\mathcal{\widehat{D}}_{\mathfrak{X}}^{(m)}}^{L}\Phi^{!}\mathcal{\widehat{D}}_{\mathfrak{X}}^{(m)}$
is concentrated in degree $0$; i.e., it is a sheaf. By the uniqueness
of adjoints, we obtain for any pair of lifts $\Phi_{1},\Phi_{2}$,
a canonical isomorphism 
\begin{equation}
\Phi_{1}^{*}\mathcal{\widehat{D}}_{\mathfrak{X}}^{(m)}\widehat{\otimes}_{\mathcal{\widehat{D}}_{\mathfrak{X}}^{(m)}}^{L}\Phi_{1}^{!}\mathcal{\widehat{D}}_{\mathfrak{X}}^{(m)}\tilde{\to}\Phi_{2}^{*}\mathcal{\widehat{D}}_{\mathfrak{X}}^{(m)}\widehat{\otimes}_{\mathcal{\widehat{D}}_{\mathfrak{X}}^{(m)}}^{L}\Phi_{2}^{!}\mathcal{\widehat{D}}_{\mathfrak{X}}^{(m)}\label{eq:canonical-iso}
\end{equation}
of sheaves; for a third lift $\Phi_{3}$ we necessarily have the cocycle
condition. Therefore, for arbitrary $X$, the object $\Phi^{*}\mathcal{\widehat{D}}_{\mathfrak{X}}^{(m)}\widehat{\otimes}_{\mathcal{\widehat{D}}_{\mathfrak{X}}^{(m)}}^{L}\Phi^{!}\mathcal{\widehat{D}}_{\mathfrak{X}}^{(m)}$
glues to form a sheaf on $X$. 
\begin{defn}
\label{def:D-acc}The sheaf of bimodules constructed just above is
called $\mathcal{\widehat{D}}_{W(X),\text{acc}}^{(m)}$. 
\end{defn}

As the natural map 
\[
\Phi^{*}\mathcal{\widehat{D}}_{\mathfrak{X}}^{(m)}\widehat{\otimes}_{\mathcal{\widehat{D}}_{\mathfrak{X}}^{(m)}}^{L}\Phi^{!}\mathcal{\widehat{D}}_{\mathfrak{X}}^{(m)}\to\mathcal{\widehat{D}}_{W(X)}^{(m)}
\]
coming from the adjunction is compatible with the isomorphism \prettyref{eq:canonical-iso},
we obtain a morphism of bimodules $\mathcal{\widehat{D}}_{W(X),\text{acc}}^{(m)}\to\mathcal{\widehat{D}}_{W(X)}^{(m)}$. 

This, in turn, implies that the right adjoint to the inclusion $D_{acc}(\mathcal{\widehat{D}}_{W(X)}^{(m)}-\text{mod})\to D_{cc}(\mathcal{\widehat{D}}_{W(X)}^{(m)}-\text{mod})$
is defined on any $X$, namely, it is given by the functor 
\[
\mathcal{N}^{\cdot}\to\mathcal{\widehat{D}}_{W(X),\text{acc}}^{(m)}\widehat{\otimes}_{\mathcal{\widehat{D}}_{W(X)}^{(m)}}^{L}\mathcal{N}^{\cdot}
\]
Note that 
\[
\mathcal{N}^{\cdot}\to\mathcal{N}^{\cdot}\widehat{\otimes}_{\mathcal{\widehat{D}}_{W(X)}^{(m)}}^{L}\mathcal{\widehat{D}}_{W(X),\text{acc}}^{(m)}
\]
is the right adjoint to the inclusion $D_{acc}(\text{mod}-\mathcal{\widehat{D}}_{W(X)}^{(m)})\to D_{cc}(\text{mod}-\mathcal{\widehat{D}}_{W(X)}^{(m)})$
of the category of right accessible $\mathcal{\widehat{D}}_{W(X)}^{(m)}$
modules (this discussion completes the proof of \prettyref{cor:Bimodule-properties!},
part $2$). Note the the same argument, applied to $\mathcal{\widehat{D}}_{W(X),\text{acc}}^{(m)}/p^{r}$,
shows that the inclusion $D_{acc}(\mathcal{\widehat{D}}_{W(X)}^{(m)}/p^{r}-\text{mod})\to D(\mathcal{\widehat{D}}_{W(X)}^{(m)}/p^{r}-\text{mod})$
admits a right adjoint as well; in this case the functor preserves
the abelian subcategories of accessible modules as well. We note here
also that replacing everywhere $\mathcal{\widehat{D}}_{W(X)}^{(m)}$
with $\mathcal{\widehat{D}}_{W(X),\text{crys}}^{(m)}$ yields the
analogous results for those categories of modules.

Now, let's give an application of the construction of $\Phi^{!}$: 
\begin{prop}
\label{prop:omega-n-is-D-module}For each $m\geq0$ the sheaf $W\omega_{X}$
admits the structure of an accessible right $\mathcal{\widehat{D}}_{W(X)}^{(m)}$-module.
If $X=\text{Spec}(A)$ then choosing a coordinatized lift of Frobenius,
and the corresponding map $\Phi:\mathcal{A}\to W(A)$, we have $W\omega_{X}\tilde{=}\Phi^{!}(\omega_{\mathfrak{X}})$.
For each $m\geq0$ we have $F^{!}W\omega_{X}\tilde{\to}W\omega_{X}$. 
\end{prop}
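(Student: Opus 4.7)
The plan is to reduce everything to the local affine setting, where $X = \mathrm{Spec}(A)$ admits local coordinates and $\Phi : \mathcal{A} \to W(A)$ comes from a coordinatized Frobenius lift, and then to globalize using cocycle data for two different lifts.  First I would apply \prettyref{prop:Construction-of-Phi-!} together with the fact that $\omega_{\mathfrak{X}}$ is a line bundle to rewrite
\[
\Phi^{!}(\omega_{\mathfrak{X}}) \;=\; \omega_{\mathfrak{X}} \,\widehat{\otimes}_{\widehat{\mathcal{D}}_{\mathcal{A}}^{(m)}}\, \Phi^{!}(\widehat{\mathcal{D}}_{\mathcal{A}}^{(m)}) \;\tilde{=}\; \omega_{\mathfrak{X}} \,\widehat{\otimes}_{\mathcal{A}}\, \widetilde{\mathrm{Hom}}_{\mathcal{A}}(W(A),\mathcal{A}) \;\tilde{=}\; \widetilde{\mathrm{Hom}}_{\mathcal{A}}(W(A),\omega_{\mathfrak{X}}),
\]
where the tilde denotes the admissibility condition $\psi(V^{r}W(A)) \subseteq p^{r}\omega_{\mathfrak{X}}$.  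Since this object is a summand of $\Phi^{!}(\widehat{\mathcal{D}}_{\mathcal{A}}^{(m)}) \tilde{=} \pi\cdot \widehat{\mathcal{D}}_{W(A)}^{(m)}$, it is already projective as a right $\widehat{\mathcal{D}}_{W(A)}^{(m)}$-module, and therefore accessible.

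The crux of the argument is to identify $\widetilde{\mathrm{Hom}}_{\mathcal{A}}(W(A),\omega_{\mathfrak{X}})$ canonically with $W\omega_{X}$.  My plan here is to give maps in both directions using the local presentation of $W\omega_{X}$ in terms of sums of basic Witt-forms $V^{r}(T^{I})\, d[T_{j_{1}}]\wedge\cdots\wedge d[T_{j_{n}}]/p^{r}$ (as in Langer--Zink and Illusie) together with the $\mathcal{A}$-linear "admissible dual basis" on the right, pairing it against the basis $\{V^{r}(T^{I/p^{r}})\}_{(p,I)=1}$ of $W(A)$.  The forward map sends a Witt form to the admissible $\mathcal{A}$-linear map that reads off its coefficient with respect to this basis, twisted by the generator $dT_{1}\wedge\cdots\wedge dT_{n}$ of $\omega_{\mathfrak{X}}$.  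Verifying that this map is an isomorphism of $\mathcal{A}$-modules is a bookkeeping computation; verifying that it intertwines the (already defined) right $\widehat{\mathcal{D}}_{W(X)}^{(m)}$-module structures on both sides is the main technical obstacle, as it requires matching the Hasse--Schmidt derivations $\{\partial\}_{J/p^{r}}$ on $W\omega_{X}$ (induced from the de Rham--Witt Cartan formulas) with the corresponding operators coming from the right action on $\pi\cdot \widehat{\mathcal{D}}_{W(A)}^{(m)}$.  I expect this to be the main obstacle, and would handle it by reducing, via \prettyref{thm:Basis}, to checking the identities on a generating set of operators $\{\partial_{i}\}_{1/p^{r}}$, $\{\partial\}_{p^{m}}^{I}$, and multiplication by elements of $W(A)$.

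To globalize, I would invoke the right-module analogue of \prettyref{thm:Uniqueness-of-bimodule} to produce, for two coordinatized lifts $\Phi_{1},\Phi_{2}$, a canonical $(\widehat{\mathcal{D}}_{\mathcal{A}}^{(m)},\widehat{\mathcal{D}}_{W(A)}^{(m)})$-bimodule isomorphism $\Phi_{1}^{!}(\widehat{\mathcal{D}}_{\mathcal{A}}^{(m)}) \tilde{\to} \Phi_{2}^{!}(\widehat{\mathcal{D}}_{\mathcal{A}}^{(m)})$ satisfying the cocycle condition.  Tensoring with $\omega_{\mathfrak{X}}$ and composing with the intrinsic description of $W\omega_{X}$, the local identifications glue to give a canonical accessible right $\widehat{\mathcal{D}}_{W(X)}^{(m)}$-module structure on $W\omega_{X}$ for arbitrary smooth $X$.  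Finally, the Frobenius compatibility $F^{!}W\omega_{X}\tilde{\to}W\omega_{X}$ follows by combining the right-module version of \prettyref{thm:Indep-of-Frobenius!} with the classical Frobenius-equivariance of $\omega_{\mathfrak{X}}$ in Berthelot's theory: locally, $F^{!}\Phi^{!}(\omega_{\mathfrak{X}}) \tilde{=} \Phi^{!}F^{!}(\omega_{\mathfrak{X}}) \tilde{=} \Phi^{!}(\omega_{\mathfrak{X}})$, using $F\circ\Phi=\Phi\circ F$ and the fact that $\omega_{\mathfrak{X}}$ is canonically preserved by Berthelot's $F^{!}$, and this isomorphism glues via the same cocycle.
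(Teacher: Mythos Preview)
Your overall strategy is workable but takes a more computational route than the paper, and in one place you set yourself an unnecessary task.

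\textbf{Local identification.} The paper does not compute $\Phi^{!}(\omega_{\mathfrak{X}})\tilde{=}W\omega_{X}$ by matching explicit Witt-forms against an admissible dual basis. Instead it invokes Grothendieck duality: Illusie's canonical isomorphism $p_{l}^{!}(W_{l}(k))\tilde{\to}W_{l}\omega_{X}$, together with $p_{l}^{!}(W_{l}(k))\tilde{\to}\omega_{\mathfrak{X}_{l}}$ and the composition-of-$!$-pullbacks formula, gives $\Phi^{!}\omega_{\mathfrak{X}_{l}}\tilde{\to}W_{l}\omega_{X}$ for free, and one takes the inverse limit. This is packaged in \prettyref{cor:F-!-Compatibility} and the lemma immediately following it. Your explicit approach would also work, but the duality argument avoids all bookkeeping.

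\textbf{The ``main technical obstacle'' is not there.} You propose to check that the right $\widehat{\mathcal{D}}_{W(X)}^{(m)}$-action on $W\omega_{X}$ ``induced from de Rham--Witt Cartan formulas'' matches the one from $\Phi^{!}$. But the proposition is \emph{defining} the $\widehat{\mathcal{D}}_{W(X)}^{(m)}$-structure on $W\omega_{X}$ via $\Phi^{!}$; there is no pre-existing action to compare with. The only thing to verify is that the structure is independent of $\Phi$.

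\textbf{Globalization.} Here the approaches genuinely diverge. You want to glue via the cocycle from the right-module analogue of \prettyref{thm:Uniqueness-of-bimodule}. That produces an isomorphism $\Phi_{1}^{!}\omega_{\mathfrak{X}}\tilde{\to}\Phi_{2}^{!}\omega_{\mathfrak{X}}$ of right $\widehat{\mathcal{D}}_{W(X)}^{(m)}$-modules, but you still need to know that under the two local identifications with the \emph{fixed} sheaf $W\omega_{X}$ this isomorphism becomes the identity (or at least respects the module structure on the fixed sheaf). The paper handles this by a rigidity argument: the composite $\epsilon_{\mathfrak{U}}\circ\epsilon_{\mathfrak{V}}^{-1}$ is an automorphism of $W\omega_{V}$ commuting with $F^{!}$, and the set of such automorphisms is exactly $\mathbb{Z}_{p}^{\times}$. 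Since scalars commute with everything, the two $\widehat{\mathcal{D}}_{W(X)}$-module structures automatically agree, and one can rescale to normalize. This step is what makes the globalization work without any further computation, and it also sidesteps the $p=2$, $m=0$ restriction in \prettyref{thm:Uniqueness-of-bimodule} that your route would inherit.

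Your argument for $F^{!}W\omega_{X}\tilde{\to}W\omega_{X}$ is essentially the same as the paper's (use $F\circ\Phi=\Phi\circ F$ and $F^{!}\omega_{\mathfrak{X}}\tilde{=}\omega_{\mathfrak{X}}$ from duality).
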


The proof of this takes a few steps. Echoing the construction of the
functor $F^{*}$ for left $\mathcal{D}$-modules, we have: 
\begin{cor}
Let $\mathfrak{X}=\text{Specf}(\mathcal{A})$ be equipped with a coordinatized
lift of Frobenius $F$, and let $\Phi:\mathcal{A}\to W(A)$ be the
associated morphism. Let $\mathcal{A}^{F}$ denote $\mathcal{A}$
as an $\mathcal{A}$-module via $F$ and let $\pi:\mathcal{A}\to F(\mathcal{A})$
be an $F(\mathcal{A})$-linear projection.

1) Let $F^{!}\widehat{\mathcal{D}}_{\mathcal{A}}^{(m)}$ denote the
$(\widehat{\mathcal{D}}_{\mathcal{A}}^{(m)},\widehat{\mathcal{D}}_{\mathcal{A}}^{(m+1)})$
bi-sub-module of $\text{Hom}_{W(k)}(\mathcal{A},\mathcal{A})$ generated
by $\pi$. Then the natural map
\[
\text{Hom}_{\mathcal{A}}(\mathcal{A}^{F},\mathcal{A})\otimes_{\mathcal{A}}\widehat{\mathcal{D}}_{\mathcal{A}}^{(m)}\to\text{Hom}_{W(k)}(\mathcal{A},\mathcal{A})
\]
which takes $(\phi,P)$ to $P\circ\phi$ induces isomorphisms
\[
\text{Hom}_{\mathcal{A}}(\mathcal{A}^{F},\widehat{\mathcal{D}}_{\mathcal{A}}^{(m)})\tilde{=}\text{Hom}_{\mathcal{A}}(\mathcal{A}^{F},\mathcal{A})\otimes_{\mathcal{A}}\widehat{\mathcal{D}}_{\mathcal{A}}^{(m)}\tilde{\to}F^{!}\widehat{\mathcal{D}}_{\mathcal{A}}^{(m)}
\]
Sheafifying to $\mathfrak{X}=\text{Specf}(\mathcal{A})$, we obtain
a sheaf of bimodules $F^{!}\widehat{\mathcal{D}}_{\mathfrak{X}}^{(m)}$.
Thus there is a functor $\mathcal{M}\to F^{!}\mathcal{M}:=\mathcal{M}\otimes_{\widehat{\mathcal{D}}_{\mathfrak{X}}^{(m)}}F^{!}(\widehat{\mathcal{D}}_{\mathfrak{X}}^{(m)})$
which is in fact an equivalence of categories $\widehat{\mathcal{D}}_{\mathfrak{X}}^{(m)}-\text{mod}\to\widehat{\mathcal{D}}_{\mathfrak{X}}^{(m+1)}-\text{mod}$.
When $\mathcal{M}\in\text{Coh}(\widehat{\mathcal{D}}_{\mathfrak{X}}^{(m)})$
we have 
\[
F^{!}\mathcal{M}\tilde{\to}\text{Hom}_{\mathcal{O}_{\mathfrak{X}}}(F_{*}\mathcal{O}_{\mathfrak{X}},\mathcal{M})
\]

2) Let $F^{!}\widehat{\mathcal{D}}_{W(A)}^{(m)}$ denote the $(\widehat{\mathcal{D}}_{W(A)}^{(m)},\widehat{\mathcal{D}}_{W(A)}^{(m+1)})$
bi-sub-module of $\text{Hom}_{W(k)}(W(A),W(A))$ generated by the
projection $\pi:W(A)\to\mathcal{A}^{(1)}$. Sheafifying, we obtain
a sheaf of bimodules $F^{!}\widehat{\mathcal{D}}_{W(X)}^{(m)}$, and
the induced functor $\mathcal{M}\to F^{!}\mathcal{M}:=\mathcal{M}\otimes_{\widehat{\mathcal{D}}_{W(X)}^{(m)}}\widehat{\mathcal{D}}_{W(X)}^{(m+1)}$
is an equivalence from right-accessible $\widehat{\mathcal{D}}_{W(X)}^{(m)}$-modules
to right-accessible $\widehat{\mathcal{D}}_{W(X)}^{(m+1)}$-modules.
In fact, if $\mathcal{M}\in\text{Qcoh}(\widehat{\mathcal{D}}_{W(X)}^{(m)})$
we have $F^{!}\Phi_{m}^{!}\mathcal{M}\tilde{\to}\Phi_{m+1}^{!}(F^{!}\mathcal{M})$
where on the left $F$ is the Witt-vector Frobenius and on the right
it is the lift of Frobenius on $\mathfrak{X}$. 
\end{cor}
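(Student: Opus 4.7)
The proof of this corollary proceeds by systematically dualizing the constructions and arguments already carried out in the paper for the left-module bimodules $\Phi^{*}\widehat{\mathcal{D}}_{\mathcal{A}}^{(m)}$ and $\widehat{\mathcal{D}}_{W(X),\text{acc}}^{(m)}$. The key structural input is that $F^{*}\widehat{\mathcal{D}}_{\mathcal{A}}^{(m)}$ is a summand of $\widehat{\mathcal{D}}_{\mathcal{A}}^{(m+1)}$, which can be proved in the same style as \prettyref{cor:Projector!}: the morphism $\mathcal{A} \to F(\mathcal{A})$ admits an $F(\mathcal{A})$-linear splitting $\pi$ (since $\mathcal{A}$ is a free $F(\mathcal{A})$-module of rank $p^n$ under the chosen coordinates), and this $\pi$ lifts to an idempotent in $\widehat{\mathcal{D}}_{\mathcal{A}}^{(m+1)}$ whose left ideal is $F^{*}\widehat{\mathcal{D}}_{\mathcal{A}}^{(m)}$. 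The right adjoint bimodule $F^{!}\widehat{\mathcal{D}}_{\mathcal{A}}^{(m)}$ is then identified with $\pi \cdot \widehat{\mathcal{D}}_{\mathcal{A}}^{(m+1)}$.

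To prove part $1)$, I would first show the stated isomorphism $\text{Hom}_{\mathcal{A}}(\mathcal{A}^{F}, \mathcal{A}) \otimes_{\mathcal{A}} \widehat{\mathcal{D}}_{\mathcal{A}}^{(m)} \tilde{\to} F^{!}\widehat{\mathcal{D}}_{\mathcal{A}}^{(m)}$ in complete parallel to \prettyref{prop:Construction-of-Phi-!}: the map $(\phi,P) \mapsto P \circ \phi$ is clearly injective, and surjectivity reduces to verifying that the bimodule generated by $\pi$ in $\text{Hom}_{W(k)}(\mathcal{A}, \mathcal{A})$ is closed under the action of operators of the form $\partial^{[J]}$, which can be checked term-by-term using explicit coordinate formulas (analogous to the commutation relations appearing in the second lemma above). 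This simultaneously produces the projectivity and cyclicity of $F^{!}\widehat{\mathcal{D}}_{\mathcal{A}}^{(m)}$ over $\widehat{\mathcal{D}}_{\mathcal{A}}^{(m+1)}$, so that $\mathcal{M} \mapsto F^{!}\mathcal{M}$ is exact. Faithful flatness over $\widehat{\mathcal{D}}_{\mathcal{A}}^{(m+1)}$ reduces mod $p$ and then follows from an explicit filtration argument (as in part $1)$ of \prettyref{thm:Projective!}). The equivalence of categories is then verified by comparing unit and counit: the composition $F^{*} \circ F^{!}$ acts on coherent modules by $\mathcal{M} \mapsto F^{*}\mathcal{A} \otimes_{\mathcal{A}} \text{Hom}_{\mathcal{A}}(\mathcal{A}^{F},\mathcal{M})$, which for $\mathcal{A}$ free over $F(\mathcal{A})$ is canonically $\mathcal{M}$. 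The coherent identification $F^{!}\mathcal{M} \tilde{\to} \text{Hom}_{\mathcal{O}_{\mathfrak{X}}}(F_{*}\mathcal{O}_{\mathfrak{X}},\mathcal{M})$ is the reformulation of $\text{Hom}_{\mathcal{A}}(\mathcal{A}^{F},\mathcal{M})$ after sheafification, using that $F$ is finite flat.

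For part $2)$, the projection $\pi : W(A) \to \mathcal{A}^{(1)}$ (the image of the Witt-vector Frobenius) exists and lies in $\widehat{\mathcal{D}}_{W(A)}^{(m+1)}$ by precisely the same argument as in \prettyref{cor:Projector!}, once one exploits $\{\partial_i\}_\lambda F = F\{\partial_i\}_{\lambda/p}$ from \prettyref{prop:HS-and-F}. The bimodule $F^{!}\widehat{\mathcal{D}}_{W(X)}^{(m)}$ is then a summand of $\widehat{\mathcal{D}}_{W(X)}^{(m+1)}$, and restricts to an equivalence between right-accessible categories because the left-handed version of \prettyref{thm:Indep-of-Frobenius!} (which characterizes $F^{*}$ of an accessible module as again accessible) dualizes automatically via the left-right interchange developed in \prettyref{subsec:Operations-Left-Right}.

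The crucial final compatibility $F^{!}\Phi_{m}^{!}\mathcal{M} \tilde{\to} \Phi_{m+1}^{!}(F^{!}\mathcal{M})$ is the right-handed mirror of \prettyref{thm:Indep-of-Frobenius!}. Identifying both sides as bisubmodules of $\text{Hom}_{W(k)}(W(A),\mathcal{A})$, they are respectively generated by $\pi_{\mathcal{A}} \circ \Phi_m$ and $\Phi_{m+1} \circ \pi_{W}$ (with $\pi_{\mathcal{A}}$ the Frobenius projection on $\mathcal{A}$ and $\pi_{W}$ the Witt-vector Frobenius projection on $W(A)$). The intertwining identity $\Phi \circ F = F \circ \Phi$ between the lift of Frobenius on $\mathcal{A}$ and the Witt-vector Frobenius on $W(A)$ passes to a matching identity $\Phi \circ \pi_{\mathcal{A}} = \pi_{W} \circ \Phi$ for the splittings (these splittings are canonical once the coordinates are fixed, so compatibility with $\Phi$ is forced). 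This yields the required isomorphism of generators and hence of bimodules. The main technical obstacle is the verification that the proposed bimodule description $\pi \cdot \widehat{\mathcal{D}}_{W(A)}^{(m+1)}$ really does exhaust the sub-bimodule generated by $\pi$ and is closed under both actions, which requires care analogous to the second lemma proved above for $\Phi^{!}$; the coordinate computations are identical in spirit but must be done with the divided-power Witt operators $\{\partial_i\}_\lambda$ in place of the classical $\partial^{[i]}$.
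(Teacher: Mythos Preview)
Your proposal is correct and matches the paper's approach exactly: the paper states that the proof is ``extremely similar to that of \prettyref{thm:Indep-of-Frobenius!} and \prettyref{prop:Construction-of-Phi-!}, and will be left to the reader,'' which is precisely the dualization strategy you outline. Your explicit identification of the role of the projector (as in \prettyref{cor:Projector!}) and the bimodule-generator argument via $\Phi\circ F=F\circ\Phi$ are the intended ingredients.
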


The proof of this result is extremely similar to that of \prettyref{thm:Indep-of-Frobenius!}
and \prettyref{prop:Construction-of-Phi-!}, and will be left to the
reader. 

It will be necessary to also verify an additional compatibility. To
state it, we recall a few basic facts from Grothendieck duality theory
(c.f. \cite{key-54}, as well as \cite{key-21}, appendix for a useful
discussion of the Cartier isomorphism in this context). Recall that
if $f:X\to Y$ is a proper morphism of locally noetherian separated
schemes then we have a functor $f^{!}:D^{b}\text{Coh}(Y)\to D^{b}\text{Coh}(X)$
which satisfies 
\[
R\mathcal{H}om_{\mathcal{O}_{Y}}(Rf_{*}\mathcal{N},\mathcal{M})\tilde{\to}Rf_{*}R\mathcal{H}om_{\mathcal{O}_{X}}(\mathcal{N},f^{!}\mathcal{M})
\]
for any $\mathcal{M}\in D^{b}\text{Coh}(Y)$ and $\mathcal{N}\in D^{b}\text{Coh}(X)$.
If $f$ is a finite morphism finite then we can in fact define
\[
f^{!}(\mathcal{M}):=f^{-1}\mathcal{H}om_{\mathcal{O}_{Y}}(f_{*}\mathcal{O}_{X},\mathcal{M})
\]
from $\text{Coh}(Y)$ to $\text{Coh}(X)$ (the $\mathcal{O}_{X}$-module
structure is given by the action of $\mathcal{O}_{X}$ on $f^{-1}(f_{*}\mathcal{O}_{X})$).
In most cases of interest in this paper $f$ will also be a topological
isomorphism, in which case we will suppress the $f^{-1}$ in the formula. 

Let $\mathfrak{X}_{r}$ be a flat lift of $X$ over $W_{r}(k)$; denote
the morphism to $W_{r}(k)$ by $p_{r}$. Then there is a canonical
isomorphism $p_{r}^{!}(W_{r}(k))\tilde{\to}\omega_{\mathfrak{X}_{r}}$.
It follows that, if $F:\mathfrak{X}_{r}\to\mathfrak{X}_{r}$ is a
lift of Frobenius, we have 
\[
\mathcal{H}om_{\mathcal{O}_{\mathfrak{X}_{r}}}(F_{*}(\mathcal{O}_{\mathfrak{X}_{r}}),\omega_{\mathfrak{X}_{r}}):=F^{!}(\omega_{\mathfrak{X}_{r}})\tilde{\to}\omega_{\mathfrak{X}_{r}}
\]
\begin{cor}
\label{cor:F-!-Compatibility}Let notation be as in the previous corollary.

1) Let $\mathcal{M}\in\text{mod}-\widehat{\mathcal{D}}_{\mathfrak{X}}^{(m)}$
be coherent and $p$-torsion free. Then there is a natural isomorphism
\[
\Phi^{!}(\mathcal{M})\tilde{\to}\tilde{\mathcal{H}om}_{\mathcal{O}_{\mathfrak{X}}}(\mathcal{O}_{W(X)},\mathcal{M})
\]
where $\tilde{\mathcal{H}om}_{\mathcal{O}_{\mathfrak{X}}}(\mathcal{O}_{W(X)},\mathcal{M})\subset\mathcal{H}om_{\mathcal{O}_{\mathfrak{X}}}(\mathcal{O}_{W(X)},\mathcal{M})$
consists of those morphisms which satisfy $\epsilon(V^{m}(\mathcal{O}_{W(X)}))\subset p^{m}\mathcal{N}$
for all $m$ (over every open set in $X$). In other words, 
\[
\Phi^{!}(\mathcal{M})\tilde{\to}\lim_{m}\mathcal{H}om_{\mathcal{O}_{\mathfrak{X}_{m}}}(\mathcal{O}_{W_{m}(X)},\mathcal{M}/p^{m}\mathcal{M})
\]

2) Suppose $\mathcal{N}\in\text{mod}-\widehat{\mathcal{D}}_{W(X)}^{(m)}$
is a right-accessible coherent module, which is $p$-torsion-free.
Then 
\[
F^{!}\mathcal{N}\tilde{=}\tilde{\mathcal{H}om}_{\mathcal{O}_{W(X)}}(F_{*}(\mathcal{O}_{W(X)}),\mathcal{N})
\]
where $\tilde{\mathcal{H}om}_{\mathcal{O}_{W(X)}}(F_{*}(\mathcal{O}_{W(X)}),\mathcal{N})\subset\mathcal{H}om_{\mathcal{O}_{W(X)}}(F_{*}(\mathcal{O}_{W(X)}),\mathcal{N})$
consists of morphisms $\epsilon$ satisfying $\epsilon(V^{l}(\mathcal{O}_{W(X)}))\subset p^{l}\mathcal{N}$
for all $m$ (over every open set in $X$). In other words, 
\[
F^{!}\mathcal{N}\tilde{=}\lim_{l}\mathcal{H}om_{\mathcal{O}_{W_{l}(X)}}(F_{*}(\mathcal{O}_{W_{l}(X)}),\mathcal{N}/p^{l})=\lim_{l}F^{!}(\mathcal{N}/p^{l}\mathcal{N})
\]
\end{cor}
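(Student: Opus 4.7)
The plan is to reduce both parts to the local affine case with a coordinatized Frobenius lift, use the preceding Proposition \ref{prop:Construction-of-Phi-!} to handle the regular representation, and then bootstrap to general coherent $p$-torsion-free modules by working level-by-level modulo $p^{r}$.

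For part 1, I would begin by localizing to the case $X = \text{Spec}(A)$ with local coordinates and a coordinatized Frobenius lift $F$ on $\mathcal{A}$, giving $\Phi : \mathcal{A} \to W(A)$. The natural evaluation map
\[
\mathcal{M} \otimes_{\widehat{\mathcal{D}}_{\mathcal{A}}^{(m)}} \Phi^{!}(\widehat{\mathcal{D}}_{\mathcal{A}}^{(m)}) \longrightarrow \tilde{\mathrm{Hom}}_{\mathcal{A}}(W(A), \mathcal{M}),
\qquad m \otimes \varphi \longmapsto \bigl(a \mapsto m \cdot \varphi(a)\bigr),
\]
is well-defined (the target uses the identification of $\Phi^{!}(\widehat{\mathcal{D}}_{\mathcal{A}}^{(m)})$ with $\tilde{\mathrm{Hom}}_{\mathcal{A}}(W(A), \widehat{\mathcal{D}}_{\mathcal{A}}^{(m)})$ coming from Proposition \ref{prop:Construction-of-Phi-!}). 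By that proposition the map is an isomorphism when $\mathcal{M} = \widehat{\mathcal{D}}_{\mathcal{A}}^{(m)}$, and hence when $\mathcal{M}$ is any finite free right $\widehat{\mathcal{D}}_{\mathcal{A}}^{(m)}$-module.

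The key step is the passage from finite free to general coherent $p$-torsion-free $\mathcal{M}$. Because $\mathcal{M}$ is $p$-torsion free, both sides of the desired identity commute with the inverse limit $\mathcal{M} = \lim_{r} \mathcal{M}/p^{r}\mathcal{M}$: on the right this is built into the tilde (a continuous Hom is determined by its reductions mod $p^{r}$, by the filtration condition), and on the left $\Phi^{!}(\widehat{\mathcal{D}}_{\mathcal{A}}^{(m)}) = \pi\cdot\widehat{\mathcal{D}}_{W(A)}^{(m)}$ is already $p$-adically complete and projective over $\widehat{\mathcal{D}}_{\mathcal{A}}^{(m)}$. It therefore suffices to check the isomorphism mod $p^{r}$ for all $r$. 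At level $r$, I would observe that $\mathcal{M}/p^{r}\mathcal{M}$ admits a finite presentation over $\widehat{\mathcal{D}}_{\mathcal{A}_{r}}^{(m)}$, the functor $? \otimes_{\widehat{\mathcal{D}}_{\mathcal{A}_{r}}^{(m)}} \Phi^{!}(\widehat{\mathcal{D}}_{\mathcal{A}_{r}}^{(m)})$ is exact by projectivity, and the target $\mathrm{Hom}_{\mathcal{A}_{r}}(W_{r}(A), ?)$ is exact on finitely presented right $\widehat{\mathcal{D}}_{\mathcal{A}_{r}}^{(m)}$-modules because, in local coordinates, $W_{r}(A)$ decomposes sector-by-sector as an inverse system of free $\mathcal{A}_{r}$-modules of the form $\mathcal{A}_{r}/p^{s}$ (indexed by $(I, s)$ with $(p, I) = 1$ and $0 \leq i_{j} < p^{s}$); combining this with the $p$-torsion freeness of $\mathcal{M}$, the two functors agree on a finite presentation by the five-lemma.

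For part 2, the strategy is completely parallel with $F$ replacing $\Phi$. I would use the analogous description $F^{!}(\widehat{\mathcal{D}}_{W(A)}^{(m)}) \tilde{=} \tilde{\mathrm{Hom}}_{\mathcal{O}_{W(A)}}(F_{*}\mathcal{O}_{W(A)}, \widehat{\mathcal{D}}_{W(A)}^{(m)})$ from the preceding corollary to construct the evaluation map, then reduce mod $p^{l}$ and invoke the sectorwise decomposition of $F_{*}\mathcal{O}_{W(A)}$ (in coordinates $F$ sends the lift of $T_{i}$ to $T_{i}^{p}$, so $F_{*}\mathcal{O}_{W(A)}$ splits into $p^{n}$ sectors as a module over the subring $F(\mathcal{O}_{W(A)})$); coherence and $p$-torsion freeness of $\mathcal{N}$ then yield the identification by the same finite-presentation argument. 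The main obstacle, both here and in part 1, is ensuring that the filtration-respecting (``tilde'') Homs behave well under the presentation: one must verify that a presentation over $\widehat{\mathcal{D}}$ remains a presentation in the appropriate filtered/completed sense so that the five-lemma can be applied without losing the continuity condition. This is precisely what the $p$-torsion-freeness hypothesis is used to control, via reducing mod $p^{r}$ and matching the $V$-adic topology on $W(A)$ with the $p$-adic topology on $\mathcal{M}$ through the requirement $\epsilon(V^{m}) \subset p^{m}\mathcal{M}$.
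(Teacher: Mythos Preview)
Your approach to part 1) is essentially the paper's: construct the evaluation map, verify it for the regular representation via \prettyref{prop:Construction-of-Phi-!}, then reduce modulo $p$ and use that both sides become infinite products of copies of $\mathcal{M}/p$ (resp.\ $\mathcal{D}_X^{(m)}$), with the passage handled by the fact that $\mathcal{M}$ is finitely generated over the \emph{noetherian} ring $\mathcal{D}_X^{(m)}$. Your formulation via finite presentations and the five-lemma amounts to the same thing, though the paper phrases it more directly as ``tensor with a finitely generated module over a noetherian ring commutes with products''.

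Your approach to part 2), however, has a real gap. You propose to rerun the part 1) argument with $F$ in place of $\Phi$, using a description $F^{!}(\widehat{\mathcal{D}}_{W(A)}^{(m)})\tilde{=}\tilde{\mathrm{Hom}}_{\mathcal{O}_{W(A)}}(F_{*}\mathcal{O}_{W(A)},\widehat{\mathcal{D}}_{W(A)}^{(m)})$ and then a finite-presentation/five-lemma bootstrap. The difficulty is that the base ring for this bootstrap is now $\widehat{\mathcal{D}}_{W(A)}^{(m)}$ (or its reduction mod $p^{l}$), which is \emph{not} noetherian; a ``coherent accessible'' $\mathcal{N}$ need not admit a finite presentation over it, and the product--tensor commutation that drives the argument in part 1) is unavailable. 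The paper sidesteps this entirely: it writes $\mathcal{N}=\Phi_{m}^{!}\mathcal{M}$ for a coherent $p$-torsion-free $\widehat{\mathcal{D}}_{\mathfrak{X}}^{(m)}$-module $\mathcal{M}$, applies the commutation $F^{!}\Phi_{m}^{!}\mathcal{M}\tilde{\to}\Phi_{m+1}^{!}F^{!}\mathcal{M}$ from the preceding corollary, evaluates $\Phi_{m+1}^{!}$ via part 1), and then uses $\Phi\circ F=F\circ\Phi$ together with composition in Grothendieck duality to match the two $\mathcal{H}om$ descriptions level by level. This reduces everything to the noetherian setting of $\widehat{\mathcal{D}}_{\mathfrak{X}}^{(m)}$, which is exactly what your direct argument lacks.
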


\begin{proof}
$1)$ There is a natural map 
\[
\mathcal{M}\otimes_{\widehat{\mathcal{D}}_{\mathfrak{X}}^{(m)}}\tilde{\mathcal{H}om}_{\mathcal{O}_{\mathfrak{X}}}(\mathcal{O}_{W(X)},\widehat{\mathcal{D}}_{\mathfrak{X}}^{(m)})\to\tilde{\mathcal{H}om}_{\mathcal{O}_{\mathfrak{X}}}(\mathcal{O}_{W(X)},\mathcal{M})
\]
taking $(m,\phi)$ to the morphism $f\to m\cdot\phi(f)$. As everything
in sight is $p$-torsion-free and $p$-adically complete, it suffices
to show that the reduction mod $p$
\[
\mathcal{M}/p\otimes_{\mathcal{D}_{X}^{(m)}}\tilde{\mathcal{H}om}_{\mathcal{O}_{\mathfrak{X}}}(\mathcal{O}_{W(X)},\widehat{\mathcal{D}}_{\mathfrak{X}}^{(m)})/p\to\tilde{\mathcal{H}om}_{\mathcal{O}_{\mathfrak{X}}}(\mathcal{O}_{W(X)},\mathcal{M})/p
\]
is an isomorphism. Further, we have 
\[
\tilde{\mathcal{H}om}_{\mathcal{O}_{\mathfrak{X}}}(\mathcal{O}_{W(X)},\widehat{\mathcal{D}}_{\mathfrak{X}}^{(m)})/p=\prod_{(I,r)}p^{r}T^{I}\cdot\mathcal{D}_{X}^{(m)}
\]
is an infinite product of copies of $\mathcal{D}_{X}^{(m)}$, and
similarly $\tilde{\mathcal{H}om}_{\mathcal{O}_{\mathfrak{X}}}(\mathcal{O}_{W(X)},\mathcal{M})/p$
is an infinite product of copies of $\mathcal{M}/p$. Since $\mathcal{M}$
is finitely generated and $\mathcal{D}_{X}^{(m)}$ is noetherian,
the result follows from this.

$2)$ Write $\mathcal{N}=\Phi_{m}^{!}\mathcal{M}$ for a coherent,
$p$-torsion-free $\mathcal{M}$. Then by the previous result we have
\[
F^{!}\Phi_{m}^{!}\mathcal{M}\tilde{\to}\Phi_{m+1}^{!}F^{!}\mathcal{M}
\]
as $\mathcal{M}$ is $p$-torsion-free and complete we have 
\[
\Phi_{m+1}^{!}F^{!}\mathcal{M}=\lim_{l}\mathcal{H}om_{\mathcal{O}_{\mathfrak{X}_{l}}}(\mathcal{O}_{W_{l}(X)},\mathcal{H}om_{\mathcal{O}_{\mathcal{\mathfrak{X}}_{l}}}(F_{*}\mathcal{O}_{\mathfrak{X}_{l}},\mathcal{M}/p^{l}\mathcal{M}))
\]
using part $1)$ to evaluate $\Phi_{m+1}^{!}$. We wish to prove that
\[
F^{!}\mathcal{N}\tilde{=}\lim_{l}\mathcal{H}om_{\mathcal{O}_{W_{l}(X)}}(F_{*}(\mathcal{O}_{W_{l}(X)}),\mathcal{N}/p^{l})
\]
But again by part $1)$ we have
\[
\mathcal{H}om_{\mathcal{O}_{W_{l}(X)}}(F_{*}(\mathcal{O}_{W_{l}(X)}),\mathcal{N}/p^{l})\tilde{=}\mathcal{H}om_{\mathcal{O}_{W_{l}(X)}}(F_{*}(\mathcal{O}_{W_{l}(X)}),\mathcal{H}om_{\mathcal{O}_{\mathfrak{X}_{l}}}(\mathcal{O}_{W_{l}(X)},\mathcal{M}/p^{l}\mathcal{M}))
\]
So the result follows from the fact that $\Phi\circ F=F\circ\Phi$
and the composition of morphisms in Grothendieck duality. 
\end{proof}
Let us continue with $X=\text{Spec}(A)$ as above. Recalling that
$\omega_{\mathfrak{X}}$ has the structure of a right $\mathcal{\widehat{D}}_{\mathfrak{X}}^{(m)}$
module for every $m\geq0$, and, the above isomorphism in Grothendieck
duality theory yields $F^{!}\omega_{\mathfrak{X}}\tilde{=}\omega_{\mathfrak{X}}$
as $\mathcal{\widehat{D}}_{\mathfrak{X}}^{(m+1)}$ modules. Thus $\omega_{\mathfrak{X}}$
acquires the structure of a $\mathcal{\widehat{D}}_{\mathfrak{X}}:=\lim\mathcal{\widehat{D}}_{\mathfrak{X}}^{(m)}$-module.
We have 
\begin{lem}
For $X=\text{Spec}(A)$ with Frobenius lift $\Phi$ we have that there
exists, for each $m\geq0$ a map of $\mathcal{\widehat{D}}_{W(X)}^{(m)}$-modules
$\eta_{m}:\Phi^{!}\omega_{\mathfrak{X}}\tilde{\to}W\omega_{X}$; the
sequence of maps $\eta_{m}$ are compatible with $F^{!}$ in the sense
that $\eta_{m+1}=F^{!}\eta_{m}$. Therefore $W\omega_{X}$ acquires
the structure of a right $\mathcal{\widehat{D}}_{W(X)}$-module and
there is an isomorphism $\eta_{\infty}:\Phi^{!}\omega_{\mathfrak{X}}\tilde{\to}W\omega_{X}$
satisfying $\eta_{\infty}F^{!}=F^{!}\eta_{\infty}$. 
\end{lem}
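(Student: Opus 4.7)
The plan is to build $\eta_m$ mod $p^r$ for every $r\ge 1$, pass to the inverse limit, and then establish the $\widehat{\mathcal{D}}_{W(X)}^{(m)}$-linearity and the Frobenius compatibility. By part 1 of \prettyref{cor:F-!-Compatibility} we have
\[
\Phi^{!}\omega_{\mathfrak{X}}\;\tilde{\to}\;\lim_{r}\mathcal{H}om_{\mathcal{O}_{\mathfrak{X}_{r}}}\!\bigl(\mathcal{O}_{W_{r}(X)},\omega_{\mathfrak{X}_{r}}\bigr),
\]
so it suffices to produce, for each $r$, a natural isomorphism
\[
\eta_{m,r}\colon \mathcal{H}om_{\mathcal{O}_{\mathfrak{X}_{r}}}\!\bigl(\mathcal{O}_{W_{r}(X)},\omega_{\mathfrak{X}_{r}}\bigr)\;\tilde{\to}\;W_{r}\omega_{X},
\]
functorial in $r$ under both restriction $W_{r+1}\to W_{r}$ and the Frobenius lift $F$. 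This is a Grothendieck-duality statement for the (topological) thickening $\Phi_{r}:\mathcal{A}_{r}\to W_{r}(A)$. In local coordinates $T_{1},\dots,T_{n}$ one can write down $\eta_{m,r}$ explicitly: a section of $W_{r}\omega_{X}$ is a sum of forms built from $F$- and $V$-decorations of $d[T_{i}]$, while an $\mathcal{A}_{r}$-linear $\phi:\mathcal{O}_{W_{r}(X)}\to\omega_{\mathfrak{X}_{r}}$ is determined by its values on the basis $\{p^{s}T^{I/p^{s}}\}$, and the filtration-compatibility condition in $\tilde{\mathcal{H}om}$ matches exactly the relations defining $W_{r}\omega_{X}$. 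One checks that this assignment is a bijection of $\mathcal{O}_{W_{r}(X)}$-modules that is independent of the chosen coordinates (using that two coordinate systems give canonically isomorphic de Rham--Witt complexes and canonically isomorphic dualizing sheaves).

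Next I would promote $\eta_{m,r}$ to a morphism of right $\widehat{\mathcal{D}}_{W(X)}^{(m)}/p^{r}$-modules. The left-hand side carries its right action through $\Phi^{!}\widehat{\mathcal{D}}_{\mathfrak{X}}^{(m)}$ as in \prettyref{prop:Construction-of-Phi-!}, so I transport this to $W_{r}\omega_{X}$ along $\eta_{m,r}$, and then must verify that the result is \emph{independent of} $\Phi$ so that it defines a genuine action on $W\omega_{X}$. For this, apply the right-sided analogue of \prettyref{thm:Uniqueness-of-bimodule}: for two coordinatized Frobenius lifts $\Phi_{1},\Phi_{2}$, the canonical isomorphism $\Phi_{1}^{!}\widehat{\mathcal{D}}_{\mathfrak{X}}^{(m)}\tilde{\to}\Phi_{2}^{!}\widehat{\mathcal{D}}_{\mathfrak{X}}^{(m)}$ identifies $\Phi_{1}^{!}\omega_{\mathfrak{X}}$ with $\Phi_{2}^{!}\omega_{\mathfrak{X}}$ as right $\widehat{\mathcal{D}}_{W(X)}^{(m)}$-modules, and by uniqueness of the right adjoint (same argument as in the construction of $\widehat{\mathcal{D}}_{W(X),\text{acc}}^{(m)}$) this identification is compatible with the two copies of $\eta_{m}$. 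Taking inverse limits gives $\eta_{\infty}:\Phi^{!}\omega_{\mathfrak{X}}\tilde{\to}W\omega_{X}$ and a well-defined right $\widehat{\mathcal{D}}_{W(X)}^{(m)}$-action on the target, accessible by construction.

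For the compatibility $\eta_{m+1}=F^{!}\eta_{m}$, the key input is the equality $\Phi\circ F=F\circ\Phi$ together with the composition law for $?^{!}$. On the classical side, Grothendieck duality yields the canonical identification $F^{!}\omega_{\mathfrak{X}}\tilde{\to}\omega_{\mathfrak{X}}$ with its $\widehat{\mathcal{D}}_{\mathfrak{X}}^{(m+1)}$-structure, and part 2 of \prettyref{cor:F-!-Compatibility} supplies the corresponding identity $F^{!}\Phi^{!}\tilde{=}\Phi^{!}F^{!}$ for the $\tilde{\mathcal{H}om}$-descriptions; applying $F^{!}$ to $\eta_{m}$ and comparing with $\eta_{m+1}$ reduces to this single composition-of-duality identity together with the standard Cartier trace $F^{!}\omega_{\mathfrak{X}}\tilde{=}\omega_{\mathfrak{X}}$. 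The main obstacle, and where most of the real work sits, is the very first step: showing that $\mathcal{H}om_{\mathcal{O}_{\mathfrak{X}_{r}}}(\mathcal{O}_{W_{r}(X)},\omega_{\mathfrak{X}_{r}})\tilde{\to}W_{r}\omega_{X}$ coordinate-independently. Although in coordinates it is a matching of bases, the intrinsic version amounts to a form of duality for the de Rham--Witt complex (in the spirit of Ekedahl); verifying that the coordinate construction is compatible with the transition maps between different Frobenius lifts, and glues to a morphism of sheaves on $X$, is the genuinely technical point.
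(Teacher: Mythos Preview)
Your outline is essentially correct, and in particular you have correctly located the only real issue: the coordinate-free identification $\mathcal{H}om_{\mathcal{O}_{\mathfrak{X}_{r}}}(\mathcal{O}_{W_{r}(X)},\omega_{\mathfrak{X}_{r}})\tilde{\to}W_{r}\omega_{X}$. The paper, however, does not build this by hand. Instead it invokes Illusie's result (see \cite{key-64}, and also \cite{key-63}, section 1.9) that $W_{l}\omega_{X}$ is canonically the Grothendieck dualizing sheaf of $W_{l}(X)$, i.e.\ $p_{l}^{!}(W_{l}(k))\tilde{\to}W_{l}\omega_{X}$ where $p_{l}:W_{l}(X)\to\text{Spec}(W_{l}(k))$ is the structure map. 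Since $p_{l}$ factors as $q_{l}\circ\Phi_{l}$ with $q_{l}:\mathfrak{X}_{l}\to\text{Spec}(W_{l}(k))$, composition of $!$-functors gives $W_{l}\omega_{X}\tilde{=}\Phi_{l}^{!}(q_{l}^{!}(W_{l}(k)))\tilde{=}\Phi_{l}^{!}\omega_{\mathfrak{X}_{l}}$, canonically and coordinate-free. Taking the inverse limit yields $\eta_{m}$ at the level of coherent sheaves; the right $\widehat{\mathcal{D}}_{W(X)}^{(m)}$-structure on $W\omega_{X}$ is then simply transported along this isomorphism from the one on $\Phi^{!}\omega_{\mathfrak{X}}$.

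Your Frobenius compatibility argument via \prettyref{cor:F-!-Compatibility} and $F^{!}\omega_{\mathfrak{X}}\tilde{=}\omega_{\mathfrak{X}}$ is exactly what the paper does. Where you propose to check independence from $\Phi$ via the right-handed analogue of \prettyref{thm:Uniqueness-of-bimodule}, note that at the level of this lemma the paper does not attempt this at all: the lemma is local and $\Phi$-dependent, and the globalization (including independence from $\Phi$) is deferred to the proof of \prettyref{prop:omega-n-is-D-module}, where it is handled by a rigidity argument (automorphisms of $W\omega_{X}$ commuting with $F^{!}$ are scalars in $\mathbb{Z}_{p}^{\times}$). So your approach would work, but you are doing more than the lemma requires, and you are reconstructing by hand a duality statement that is already in the literature.
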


\begin{proof}
According to Illusie (\cite{key-64}, c.f. also \cite{key-63}, section
1.9) there is, for each $l\geq0$, a canonical isomorphism 
\[
p_{l}^{!}(W_{l}(k))\tilde{\to}W_{l}\omega_{X}
\]
where $p_{l}$ maps $W_{l}(X)$ to a point. And therefore a canonical
isomorphism 
\[
\Phi^{!}p_{l}^{!}(W_{l}(k))\tilde{\to}\Phi^{!}\omega_{\mathfrak{X}_{l}}\tilde{\to}W_{l}\omega_{X}
\]
taking the inverse limit over $l$ we obtain the required isomorphism
at the level of coherent sheaves. Endowing $\omega_{\mathfrak{X}}$
with its right $\mathcal{\widehat{D}}_{\mathfrak{X}}^{(m)}$-module
structure as above gives $W\omega_{X}$ the structure of a right $\mathcal{\widehat{D}}_{W(X)}^{(m)}$-module
and using \prettyref{cor:F-!-Compatibility} shows the compatibility
with $F^{!}$ as required. 
\end{proof}
Now we can proceed to the 
\begin{proof}
(of \prettyref{prop:omega-n-is-D-module}) Without loss of generality
we may suppose $X$ is geometrically connected. Choose some $U\subset X$
which is open affine, such that $U=\text{Spec}(A)$ possesses local
coordinates; choose a coordinatized lift of Frobenius on $\mathfrak{U}$
as above. Then the above yields an isomorphism $\epsilon_{\mathfrak{U}}:\Phi^{!}\omega_{\mathfrak{U}}\tilde{\to}W\omega_{U}$
which respects the Frobenius structure and the structure of a $\mathcal{\widehat{D}}_{W(X)}$-module.
Choose and fix one such isomorphism. 

Now let $V\subset U$ be another such open affine; pick a Frobenius
lift $\Psi$ on $\mathfrak{V}\subset\mathfrak{U}$. Then we have $\epsilon_{\mathfrak{V}}:\Psi^{!}\omega_{\mathfrak{V}}\tilde{\to}W\omega_{V}$,
and so we obtain the isomorphism $\epsilon_{\mathfrak{U}}\circ\epsilon_{\mathfrak{V}}^{-1}$
on $W\omega_{V}$. This isomorphism commutes with $F^{!}$. But it
is easy to see that the set of such maps is simply $\mathbb{Z}_{p}^{\times}$;
in particular the isomorphism $\Phi^{!}\omega_{\mathfrak{U}}\tilde{\to}\Psi^{!}\omega_{\mathfrak{V}}$
must respect the $\widehat{\mathcal{D}}_{W(X)}$-module structure.
Furthermore, we may rescale the map $\epsilon_{\mathfrak{V}}$ and
obtain $\epsilon_{\mathfrak{U}}\circ\epsilon_{\mathfrak{V}}^{-1}=1$
on $\omega_{dRW(V)}^{n}$. Since $X$ is geometrically connected any
open affine $W\subset X$ intersects $U$ nontrivially and the result
follows directly. 
\end{proof}
Now we are ready to discuss the left-right interchange. Given the
right $\mathcal{\widehat{D}}_{W(X)}^{(m)}$-module structure on $W\omega_{X}$,
and the left $\mathcal{\widehat{D}}_{W(X)}^{(m)}$-module structure
on $\mathcal{O}_{W(X)}$, we see that $\mathcal{H}om_{W(k)}(\mathcal{O}_{W(X)},W\omega_{X})$
acquires the structure of a $(\mathcal{\widehat{D}}_{W(X)}^{(m),\text{opp}},\mathcal{\widehat{D}}_{W(X)}^{(m),\text{opp}})$-bimodule,
while $\mathcal{H}om_{W(k)}(W\omega_{X},\mathcal{O}_{W(X)})$ acquires
the structure of a $(\mathcal{\widehat{D}}_{W(X)}^{(m)},\mathcal{\widehat{D}}_{W(X)}^{(m)})$-bimodule.
\begin{prop}
\label{prop:L-R-bimodule}1) Consider the sheaf $W\omega_{X}\widehat{\otimes}_{\mathcal{O}_{W(X)}}\mathcal{\widehat{D}}_{W(X),\text{acc}}^{(m)}$,
which is the completion of $W\omega_{X}\otimes_{\mathcal{O}_{W(X)}}\mathcal{\widehat{D}}_{W(X),\text{acc}}^{(m)}$
along the filtration\footnote{Here, $G^{j}$ denotes the standard filtration on $W\omega_{X}$ coming
from de Rham-Witt theory } \\
$\{V^{i}(\mathcal{\widehat{D}}_{W(X),\text{acc}}^{(m)})\otimes G^{j}(W\omega_{X})\}_{i+j\geq r}$.
There is a natural injective map
\[
W\omega_{X}\widehat{\otimes}_{\mathcal{O}_{W(X)}}\mathcal{\widehat{D}}_{W(X),\text{acc}}^{(m)}\to\mathcal{H}om_{W(k)}(\mathcal{O}_{W(X)},W\omega_{X})
\]
the image of which is a $(\mathcal{\widehat{D}}_{W(X)}^{(m),\text{opp}},\mathcal{\widehat{D}}_{W(X)}^{(m),\text{opp}})$-bi-sub-module;
therefore \\
$W\omega_{X}\widehat{\otimes}_{\mathcal{O}_{W(X)}}\mathcal{\widehat{D}}_{W(X),\text{acc}}^{(m)}$
is itself a $(\mathcal{\widehat{D}}_{W(X)}^{(m),\text{opp}},\mathcal{\widehat{D}}_{W(X)}^{(m),\text{opp}})$-bimodule. 

2) Consider the sheaf $\tilde{\mathcal{H}om}_{\mathcal{O}_{W(X)}}(W\omega_{X},\mathcal{\widehat{D}}_{W(X),\text{acc}}^{(m)})$
where $\tilde{\mathcal{H}om}$ denotes those morphisms which take
$G^{i}(W\omega_{X})$ to $V^{i}(\mathcal{\widehat{D}}_{W(X),\text{acc}}^{(m)})$.
There is a natural injective map
\[
\tilde{\mathcal{H}om}_{\mathcal{O}_{W(X)}}(W\omega_{X},\mathcal{\widehat{D}}_{W(X),\text{acc}}^{(m)})\to\mathcal{H}om_{W(k)}(W\omega_{X},\mathcal{O}_{W(X)})
\]
the image of which is a $(\mathcal{\widehat{D}}_{W(X)}^{(m)},\mathcal{\widehat{D}}_{W(X)}^{(m)})$-bi-sub-module;
therefore \\
$\tilde{\mathcal{H}om}_{\mathcal{O}_{W(X)}}(W\omega_{X},\mathcal{\widehat{D}}_{W(X),\text{acc}}^{(m)})$
is itself a $(\mathcal{\widehat{D}}_{W(X)}^{(m)},\mathcal{\widehat{D}}_{W(X)}^{(m)})$-bimodule. 
\end{prop}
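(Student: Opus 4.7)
The plan is to reduce to the local case $X = \text{Spec}(A)$ equipped with a coordinatized lift of Frobenius $\Phi: \mathcal{A} \to W(A)$. There, Definition~\ref{def:D-acc} gives $\widehat{\mathcal{D}}_{W(X),\text{acc}}^{(m)} \cong \Phi^*\widehat{\mathcal{D}}_{\mathfrak{X}}^{(m)} \widehat{\otimes}_{\widehat{\mathcal{D}}_{\mathfrak{X}}^{(m)}} \Phi^!\widehat{\mathcal{D}}_{\mathfrak{X}}^{(m)}$, and Proposition~\ref{prop:omega-n-is-D-module} gives $W\omega_X \cong \Phi^!\omega_{\mathfrak{X}}$; with these identifications, the problem should reduce to the classical left-right interchange on $\mathfrak{X}$. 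Once the maps and their properties are established locally, the canonicity of both sides (independence of $\Phi$, via Theorem~\ref{thm:Uniqueness-of-bimodule} and the canonical isomorphisms used in defining $\widehat{\mathcal{D}}_{W(X),\text{acc}}^{(m)}$) will allow gluing to the global result.

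First I will define the maps explicitly. For part~1, $\iota_1(\omega \otimes P) = (f \mapsto \omega \cdot P(f))$, where $P \in \widehat{\mathcal{D}}_{W(X),\text{acc}}^{(m)}$ acts on $f \in \mathcal{O}_{W(X)}$ and $\omega \cdot P(f)$ uses the $\mathcal{O}_{W(X)}$-module structure on $W\omega_X$. The filtration compatibility needed to extend across the completion follows because $P \in V^i(\widehat{\mathcal{D}}^{(m)})$ sends $\mathcal{O}_{W(X)}$ into $V^i(\mathcal{O}_{W(X)})$, so for $\omega \in G^j(W\omega_X)$ we get $\omega \cdot P(f) \in G^{i+j}(W\omega_X)$. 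For part~2, $\iota_2(\psi)(\omega) = \psi(\omega)(1)$, the evaluation of the operator $\psi(\omega) \in \widehat{\mathcal{D}}^{(m)}_{W(X)}$ at $1 \in \mathcal{O}_{W(X)}$. Injectivity is checked locally: for $\iota_1$ it follows from the injectivity of $\widehat{\mathcal{D}}_{W(A)}^{(m)} \hookrightarrow \text{End}_{W(k)}(W(A))$ (the corollary to Theorem~\ref{thm:Basis}) combined with the local freeness of $W\omega_X$; for $\iota_2$, one uses the $\mathcal{O}_{W(X)}$-linearity $\psi(\omega f) = \psi(\omega) \cdot f$ to recover $\psi(\omega)(f) = \iota_2(\psi)(\omega f)$ for all $f$, which determines $\psi(\omega)$ by faithfulness.

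The heart of the argument is identifying the two commuting bimodule actions on the image. For part~1, the target $\mathcal{H}om_{W(k)}(\mathcal{O}_{W(X)}, W\omega_X)$ carries two natural commuting right $\widehat{\mathcal{D}}^{(m)}$-structures: one from the left $\widehat{\mathcal{D}}^{(m)}$-action on $\mathcal{O}_{W(X)}$, via $(\phi \cdot Q)(f) = \phi(Qf)$, and one from the right $\widehat{\mathcal{D}}^{(m)}$-action on $W\omega_X$, via $(\phi \cdot Q)(f) = \phi(f) \cdot Q$. The first corresponds on the tensor side to right multiplication in the second factor $\widehat{\mathcal{D}}_{W(X),\text{acc}}^{(m)}$, which preserves the image trivially. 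The second requires the classical analogue of the ``moving past the tensor'' identity for the left-right interchange: locally, under the identification with $\Phi^!\omega_{\mathfrak{X}} \otimes_{\mathcal{O}_{W(X)}} \Phi^*\widehat{\mathcal{D}}_{\mathfrak{X}}^{(m)} \widehat{\otimes}_{\widehat{\mathcal{D}}_{\mathfrak{X}}^{(m)}} \Phi^!\widehat{\mathcal{D}}_{\mathfrak{X}}^{(m)}$, this should reduce to the classical algebra structure $\omega_{\mathfrak{X}} \otimes \widehat{\mathcal{D}}_{\mathfrak{X}}^{(m)} \otimes \omega_{\mathfrak{X}}^{-1} \cong \widehat{\mathcal{D}}_{\mathfrak{X}}^{(m),\text{op}}$. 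For part~2 the analogous analysis uses the left and right $\widehat{\mathcal{D}}^{(m)}$-actions on the target bimodule $\widehat{\mathcal{D}}_{W(X),\text{acc}}^{(m)}$, transported to $\mathcal{H}om_{W(k)}(W\omega_X, \mathcal{O}_{W(X)})$ via the standard left $\widehat{\mathcal{D}}^{(m)}$-action on $\mathcal{O}_{W(X)}$ (target of the Hom) and the right $\widehat{\mathcal{D}}^{(m)}$-action on $W\omega_X$ (source of the Hom, giving $(\phi Q)(\omega) = \phi(\omega Q)$).

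The main obstacle is the stability of the image under this second action in each part—commuting a right action by $Q \in \widehat{\mathcal{D}}^{(m)}_{W(X)}$ past the operator $P \in \widehat{\mathcal{D}}_{W(X),\text{acc}}^{(m)}$ while remaining in the accessible submodule. The delicate point is that $\widehat{\mathcal{D}}_{W(X),\text{acc}}^{(m)}$ is a bimodule over $\widehat{\mathcal{D}}^{(m)}_{W(X)}$ (not a subalgebra), so the necessary combinations must be expressed using the bimodule structure rather than algebra multiplication. Locally, the classical left-right interchange on $\mathfrak{X}$ provides the required algebraic identity, and the projectivity from Theorem~\ref{thm:Projective!} ensures that the resulting structures are well-controlled. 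Globalization then proceeds via the uniqueness of the canonical isomorphisms (Theorem~\ref{thm:Uniqueness-of-bimodule}) and the descent encoded in Definition~\ref{def:D-acc}.
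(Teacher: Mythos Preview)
Your strategy matches the paper's: define the map by letting the operator act and then multiplying (or evaluating), reduce to the local case $X=\text{Spec}(A)$ with a coordinatized $\Phi$, and use the decomposition $\widehat{\mathcal{D}}_{W(X),\text{acc}}^{(m)}\cong\Phi^{*}\widehat{\mathcal{D}}_{\mathfrak{X}}^{(m)}\widehat{\otimes}_{\widehat{\mathcal{D}}_{\mathfrak{X}}^{(m)}}\Phi^{!}\widehat{\mathcal{D}}_{\mathfrak{X}}^{(m)}$ together with $W\omega_X\cong\Phi^{!}\omega_{\mathfrak{X}}$ to verify injectivity and closure under the two commuting right actions.

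Where you diverge from the paper is in how you justify the ``hard'' closure (the one coming from the right $\widehat{\mathcal{D}}_{W(X)}^{(m)}$-action on $W\omega_X$). You appeal to the classical isomorphism $\omega_{\mathfrak{X}}\otimes\widehat{\mathcal{D}}_{\mathfrak{X}}^{(m)}\otimes\omega_{\mathfrak{X}}^{-1}\cong\widehat{\mathcal{D}}_{\mathfrak{X}}^{(m),\text{op}}$; the paper instead splits the accessible bimodule into its $\Phi^{*}$- and $\Phi^{!}$-factors from the outset, embeds $W\omega_A\widehat{\otimes}_{W(A)}\Phi^{*}\widehat{\mathcal{D}}_{\mathcal{A}}^{(m)}\cong W\omega_A\widehat{\otimes}_{\mathcal{A}}\widehat{\mathcal{D}}_{\mathcal{A}}^{(m)}$ into $\tilde{\text{Hom}}_{W(k)}(\mathcal{A},W\omega_A)$, and observes that closure under the right $\widehat{\mathcal{D}}_{W(A)}^{(m)}$-action follows exactly as in \prettyref{prop:Construction-of-bimodule}; then composes with the already-established embedding $\Phi^{!}\widehat{\mathcal{D}}_{\mathcal{A}}^{(m)}\subset\tilde{\text{Hom}}_{W(k)}(W(A),\mathcal{A})$, which is closed under the other right action by \prettyref{prop:Construction-of-Phi-!}. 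This gives injectivity and both closures at once, without invoking the classical interchange. Your route should also work, but the bridge from ``$\Phi^{!}\omega_{\mathfrak{X}}\otimes\Phi^{*}\widehat{\mathcal{D}}_{\mathfrak{X}}^{(m)}\otimes\Phi^{!}\widehat{\mathcal{D}}_{\mathfrak{X}}^{(m)}$'' to the classical $\omega\otimes\mathcal{D}\otimes\omega^{-1}$ picture needs to be made explicit.

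One cautionary note on your injectivity arguments: both rely on faithfulness of the action of $\widehat{\mathcal{D}}_{W(X),\text{acc}}^{(m)}$ on $\mathcal{O}_{W(X)}$, i.e.\ on injectivity of the natural map $\widehat{\mathcal{D}}_{W(X),\text{acc}}^{(m)}\to\widehat{\mathcal{D}}_{W(X)}^{(m)}$. The paper never isolates this statement; by handling the $\Phi^{*}$- and $\Phi^{!}$-pieces separately and then composing inside $\mathcal{H}om_{W(k)}$, it avoids needing it.
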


\begin{proof}
$1)$ The map is given as the completion of the map 
\[
W\omega_{X}\otimes_{\mathcal{O}_{W(X)}}\mathcal{\widehat{D}}_{W(X),\text{acc}}^{(m)}\to\mathcal{H}om_{W(k)}(\mathcal{O}_{W(X)},W\omega_{X})
\]
which takes $\delta\otimes P\to\delta\cdot P$ (where $P\in\mathcal{\widehat{D}}_{W(X),\text{acc}}^{(m)}$
is regarded as an endomorphism of $\mathcal{O}_{W(X)}$). To verify
the properties of this map, we work locally. Assume $X=\text{Spec}(A)$,
$\Phi:\mathcal{A}\to W(A)$ and $\pi$ are given. Then as $\mathcal{\widehat{D}}_{W(X),\text{acc}}^{(m)}=\Phi^{*}\mathcal{D}_{\mathcal{A}}^{(m)}\widehat{\otimes}_{\mathcal{D}_{\mathcal{A}}^{(m)}}\Phi^{!}\mathcal{D}_{\mathcal{A}}^{(m)}$
we start by considering $W\omega_{A}\widehat{\otimes}_{W(A)}\Phi^{*}\mathcal{D}_{\mathcal{A}}^{(m)}$,
the completion of $W\omega_{A}\otimes_{W(A)}\Phi^{*}\mathcal{D}_{\mathcal{A}}^{(m)}$
along $\{V^{i}(W\omega_{A})\otimes_{W(A)}V^{j}(\Phi^{*}\mathcal{D}_{\mathcal{A}}^{(m)})\}_{i+j\geq r}$.
. We have
\[
W\omega_{A}\widehat{\otimes}_{W(A)}\Phi^{*}\mathcal{D}_{\mathcal{A}}^{(m)}\tilde{=}W\omega_{A}\widehat{\otimes}_{\mathcal{A}}\mathcal{D}_{\mathcal{A}}^{(m)}
\]
where on the right the completion is along $\{V^{i}(W\omega_{A})\otimes_{\mathcal{A}}p^{j}\mathcal{D}_{\mathcal{A}}^{(m)})\}_{i+j\geq r}$.
We then have 
\[
W\omega_{A}\widehat{\otimes}_{\mathcal{A}}\mathcal{D}_{\mathcal{A}}^{(m)}\to\text{\ensuremath{\tilde{Hom}}}_{W(k)}(\mathcal{A},W\omega_{A})
\]
where the last map is given by the completion of $\delta\otimes Q\to\delta\cdot Q$.
From the fact that $W\omega_{A}$ is an inverse limit of free $\mathcal{A}$-modules,
this is clearly injective, and we have that its image is closed under
the right action of $\mathcal{\widehat{D}}_{W(A)}^{(m)}$ (the proof
of this is extremely similar to \prettyref{prop:Construction-of-bimodule}).
As $\Phi^{!}\mathcal{D}_{\mathcal{A}}^{(m)}\subset\text{\ensuremath{\tilde{Hom}}}_{W(k)}(W(A),\mathcal{A})$
is preserved under the right action of $\mathcal{\widehat{D}}_{W(A)}^{(m)}$,
the map 
\[
W\omega_{A}\widehat{\otimes}_{W(A)}\Phi^{*}\mathcal{D}_{\mathcal{A}}^{(m)}\widehat{\otimes}_{\mathcal{D}_{\mathcal{A}}^{(m)}}\Phi^{!}\mathcal{D}_{\mathcal{A}}^{(m)}\to\mathcal{H}om_{W(k)}(W(A),W\omega_{A})
\]
defined by composition has image closed under the action of $(\mathcal{\widehat{D}}_{W(A)}^{(m),\text{opp}},\mathcal{\widehat{D}}_{W(A)}^{(m),\text{opp}})$. 

$2)$ This is extremely similar to $1)$, using instead that 
\[
\text{\ensuremath{\tilde{Hom}}}_{W(A)}(W\omega_{A},\Phi^{!}(\mathcal{\widehat{D}}_{\mathcal{A}}^{(m)}))\tilde{=}\text{\ensuremath{\tilde{Hom}}}_{\mathcal{A}}(W\omega_{A},\mathcal{\widehat{D}}_{\mathcal{A}}^{(m)})\to\text{\ensuremath{\tilde{Hom}}}_{W(k)}(W\omega_{A},\mathcal{A})
\]
where the last map is given by noting 
\[
\text{\ensuremath{\tilde{Hom}}}_{\mathcal{A}}(W\omega_{A},\mathcal{\widehat{D}}_{\mathcal{A}}^{(m)})\tilde{=}\text{\ensuremath{\tilde{Hom}}}_{\mathcal{A}}(W\omega_{A},\mathcal{A})\widehat{\otimes}_{\mathcal{A}}\mathcal{\widehat{D}}_{\mathcal{A}}^{(m)}
\]
and then embedding that module into $\text{\ensuremath{\tilde{Hom}}}_{W(k)}(W\omega_{A},\mathcal{A})$
by composing maps. As above the image is closed under the left action
of $\mathcal{\widehat{D}}_{W(A)}^{(m)}$, and the result follows.
\end{proof}
From this we deduce
\begin{cor}
\label{cor:Left-right!}The functors 
\[
(W\omega_{X}\widehat{\otimes}_{\mathcal{O}_{W(X)}}\mathcal{\widehat{D}}_{W(X),\text{acc}}^{(m)})\otimes_{\mathcal{\widehat{D}}_{W(X)}^{(m)}}^{L}
\]
and 
\[
\otimes_{\mathcal{\widehat{D}}_{W(X)}^{(m)}}^{L}\tilde{\mathcal{H}om}_{\mathcal{O}_{W(X)}}(W\omega_{X},\mathcal{\widehat{D}}_{W(X),\text{acc}}^{(m)})
\]
give inverse equivalences of categories from left accessible to right
accessible $\mathcal{\widehat{D}}_{W(X)}^{(m)}$-modules.
\end{cor}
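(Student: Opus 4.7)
The plan is to deduce the corollary from the classical left-right interchange on $\widehat{\mathcal{D}}_{\mathfrak{X}}^{(m)}$-modules together with the equivalences $\Phi^{*}$ and $\Phi^{!}$ between (right-)accessible $\widehat{\mathcal{D}}_{W(X)}^{(m)}$-modules and (right-)modules over $\widehat{\mathcal{D}}_{\mathfrak{X}}^{(m)}$, as developed in Section~3 and in the construction of the bimodule $\Phi^{!}\widehat{\mathcal{D}}_{\mathcal{A}}^{(m)}$ above. Since both functors in the statement are manifestly independent of choices on any open $U\subset X$ that admits a coordinatized Frobenius lift (they are defined without reference to $\Phi$), and since the equivalence $\Phi^{*}:D_{cc}(\widehat{\mathcal{D}}_{\mathfrak{X}}^{(m)}\text{-mod})\to D_{\text{acc}}(\widehat{\mathcal{D}}_{W(X)}^{(m)}\text{-mod})$ is compatible with the canonical isomorphisms of Theorem 3.13 under a change of Frobenius lift, it suffices to work locally on an affine open $X=\text{Spec}(A)$ with local coordinates and a chosen $\Phi:\mathcal{A}\to W(A)$, and to match the construction with the classical one under the equivalences.

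The key identification I would establish is
\[
W\omega_{X}\widehat{\otimes}_{\mathcal{O}_{W(X)}}\widehat{\mathcal{D}}_{W(X),\text{acc}}^{(m)}\;\tilde{\to}\;\Phi^{!}\bigl(\omega_{\mathfrak{X}}\otimes_{\mathcal{O}_{\mathfrak{X}}}\widehat{\mathcal{D}}_{\mathfrak{X}}^{(m)}\bigr)\widehat{\otimes}_{\widehat{\mathcal{D}}_{\mathfrak{X}}^{(m)}}\Phi^{*}\widehat{\mathcal{D}}_{\mathfrak{X}}^{(m)},
\]
as $(\widehat{\mathcal{D}}_{W(X)}^{(m),\text{op}},\widehat{\mathcal{D}}_{W(X)}^{(m),\text{op}})$-bimodules. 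The left $\widehat{\mathcal{D}}_{W(X)}^{(m)}$-action on the left hand side corresponds, via $\Phi^{!}$, to the left action of $\widehat{\mathcal{D}}_{\mathfrak{X}}^{(m)}$ on $\omega_{\mathfrak{X}}\otimes\widehat{\mathcal{D}}_{\mathfrak{X}}^{(m)}$ (the one coming from the classical left-right swap), while the right $\widehat{\mathcal{D}}_{W(X)}^{(m)}$-action corresponds, via $\Phi^{*}$, to right multiplication on $\widehat{\mathcal{D}}_{\mathfrak{X}}^{(m)}$. To build this isomorphism I would combine Proposition~4.24 ($W\omega_{X}\tilde{=}\Phi^{!}\omega_{\mathfrak{X}}$), the factorization $\widehat{\mathcal{D}}_{W(X),\text{acc}}^{(m)}=\Phi^{*}\widehat{\mathcal{D}}_{\mathfrak{X}}^{(m)}\widehat{\otimes}_{\widehat{\mathcal{D}}_{\mathfrak{X}}^{(m)}}\Phi^{!}\widehat{\mathcal{D}}_{\mathfrak{X}}^{(m)}$ from Definition 4.22, and the $\mathcal{O}$-linearity interchanges provided by the bimodule structures on $\Phi^{*}$, $\Phi^{!}$. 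Dually, I would identify $\tilde{\mathcal{H}om}_{\mathcal{O}_{W(X)}}(W\omega_{X},\widehat{\mathcal{D}}_{W(X),\text{acc}}^{(m)})$ with $\Phi^{*}\bigl(\mathcal{H}om_{\mathcal{O}_{\mathfrak{X}}}(\omega_{\mathfrak{X}},\widehat{\mathcal{D}}_{\mathfrak{X}}^{(m)})\bigr)\widehat{\otimes}\Phi^{!}\widehat{\mathcal{D}}_{\mathfrak{X}}^{(m)}$ in an analogous way, using Proposition~4.1 on $\Phi^{!}$ as a $\text{Hom}$-construction.

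Granting these local identifications, the first functor in the statement, applied to an accessible $\mathcal{M}^{\cdot}=\Phi^{*}\mathcal{N}^{\cdot}$ with $\mathcal{N}^{\cdot}\in D_{cc}(\widehat{\mathcal{D}}_{\mathfrak{X}}^{(m)}\text{-mod})$, becomes
\[
\Phi^{!}\bigl(\omega_{\mathfrak{X}}\otimes_{\mathcal{O}_{\mathfrak{X}}}\widehat{\mathcal{D}}_{\mathfrak{X}}^{(m)}\bigr)\widehat{\otimes}^{L}_{\widehat{\mathcal{D}}_{\mathfrak{X}}^{(m)}}\mathcal{N}^{\cdot}\;\tilde{=}\;\Phi^{!}\bigl(\omega_{\mathfrak{X}}\otimes_{\mathcal{O}_{\mathfrak{X}}}^{L}\mathcal{N}^{\cdot}\bigr),
\]
which is a right-accessible $\widehat{\mathcal{D}}_{W(X)}^{(m)}$-module because $\omega_{\mathfrak{X}}\otimes\mathcal{N}^{\cdot}$ is a right $\widehat{\mathcal{D}}_{\mathfrak{X}}^{(m)}$-module. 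The same reasoning applied to the second functor and a right-accessible module shows that it lands in left-accessible modules, and the two compositions become the classical left-right interchange compositions $\mathcal{H}om_{\mathcal{O}_{\mathfrak{X}}}(\omega_{\mathfrak{X}},\omega_{\mathfrak{X}}\otimes-)$ and its dual, which are the identity. The compatibility with the globally defined canonical isomorphisms $\epsilon_{\Phi_{1},\Phi_{2}}$ of Theorem~3.13 and with $W\omega_{X}$ (which is globally defined and $\widehat{\mathcal{D}}_{W(X)}^{(m)}$-equivariant by Proposition~4.24) ensures the two local equivalences glue to the global statement.

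The main obstacle will be the matching of completion topologies: on $W\omega_{X}\widehat{\otimes}_{\mathcal{O}_{W(X)}}\widehat{\mathcal{D}}_{W(X),\text{acc}}^{(m)}$ the completion is along the combined filtration $\{V^{i}\otimes G^{j}\}_{i+j\geq r}$, whereas on the $\mathfrak{X}$-side one works with the $p$-adic completion. I would handle this exactly as in the proof of Proposition~5.3: use that, locally, $W\omega_{X}$ is an inverse limit of free $\mathcal{A}$-modules (via $\Phi^{!}$), and use Lemma~3.26 to compare the two filtrations modulo $p^{r}$ and conclude that the topologies are cofinal, so the two completed tensor products agree. Verifying the compatibility of bimodule structures under these identifications (left action coming from the swap on $\omega_{\mathfrak{X}}\otimes\widehat{\mathcal{D}}_{\mathfrak{X}}^{(m)}$ versus the a priori left action on $W\omega_{X}\widehat{\otimes}\widehat{\mathcal{D}}_{W(X),\text{acc}}^{(m)}$) is the delicate bookkeeping step, but once the defining maps into $\mathcal{H}om_{W(k)}(\mathcal{O}_{W(X)},W\omega_{X})$ of Proposition~5.3 are written out explicitly, both structures are induced from the same action of $\widehat{\mathcal{D}}_{W(X)}^{(m)}$ on $W\omega_{X}$ via Proposition~4.24, so they agree.
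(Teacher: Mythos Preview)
Your approach is essentially the same as the paper's: reduce to the local case, identify $W\omega_{A}\widehat{\otimes}_{W(A)}\Phi^{*}\widehat{\mathcal{D}}_{\mathcal{A}}^{(m)}\tilde{=}\omega_{\mathcal{A}}\otimes_{\mathcal{A}}\Phi^{!}\widehat{\mathcal{D}}_{\mathcal{A}}^{(m)}$ via $W\omega_{X}\tilde{=}\Phi^{!}\omega_{\mathfrak{X}}$ and the description of $\Phi^{!}$ as a $\tilde{\mathrm{Hom}}$, and conclude that the functor sends $\Phi^{*}\mathcal{N}^{\cdot}$ to $\Phi^{!}(\omega_{\mathfrak{X}}\otimes_{\mathcal{O}_{\mathfrak{X}}}\mathcal{N}^{\cdot})$, i.e.\ the classical left-right swap. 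One small point: your displayed ``key identification'' $\Phi^{!}(\omega_{\mathfrak{X}}\otimes\widehat{\mathcal{D}}_{\mathfrak{X}}^{(m)})\widehat{\otimes}_{\widehat{\mathcal{D}}_{\mathfrak{X}}^{(m)}}\Phi^{*}\widehat{\mathcal{D}}_{\mathfrak{X}}^{(m)}$ does not type-check as written (there is no left $\widehat{\mathcal{D}}_{\mathfrak{X}}^{(m)}$-structure on $\Phi^{*}\widehat{\mathcal{D}}_{\mathfrak{X}}^{(m)}$ to tensor against), but the computation you actually carry out underneath it is the correct one and matches the paper's chain of isomorphisms; for globalization the paper uses the adjoint $R\mathcal{H}om_{\widehat{\mathcal{D}}_{W(X)}^{(m),\mathrm{op}}}(W\omega_{X}\widehat{\otimes}\widehat{\mathcal{D}}_{W(X),\mathrm{acc}}^{(m)},-)$ rather than gluing via $\epsilon_{\Phi_{1},\Phi_{2}}$, but either device works.
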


\begin{proof}
First let us examine the local situation.

We have
\[
W\omega_{A}\widehat{\otimes}_{W(A)}\Phi^{*}\mathcal{\widehat{D}}_{\mathcal{A}}^{(m)}\tilde{=}W\omega_{A}\widehat{\otimes}_{\mathcal{A}}\mathcal{\widehat{D}}_{\mathcal{A}}^{(m)}
\]
\[
\tilde{=}\tilde{\text{Hom}}_{\mathcal{A}}(W(A),\omega_{\mathcal{A}})\widehat{\otimes}_{\mathcal{A}}\mathcal{\widehat{D}}_{\mathcal{A}}^{(m)}
\]
\[
\tilde{=}\tilde{\text{Hom}}_{\mathcal{A}}(W(A),\omega_{\mathcal{A}}\otimes_{\mathcal{A}}\mathcal{\widehat{D}}_{\mathcal{A}}^{(m)})
\]
\[
\tilde{=}\omega_{\mathcal{A}}\otimes_{\mathcal{A}}\tilde{\text{Hom}}_{A}(W(A),\mathcal{\widehat{D}}_{\mathcal{A}}^{(m)})=\omega_{\mathcal{A}}\otimes_{\mathcal{A}}\Phi^{!}\mathcal{\widehat{D}}_{\mathcal{A}}^{(m)}
\]
where, in the first line, $W\omega_{A}\widehat{\otimes}_{\mathcal{A}}\mathcal{\widehat{D}}_{\mathcal{A}}^{(m)}$
is the completion of $W\omega_{A}\otimes_{\mathcal{A}}\mathcal{\widehat{D}}_{\mathcal{A}}^{(m)}$
along $\{V^{i}(W\omega_{A})\otimes p^{j}\mathcal{\widehat{D}}_{\mathcal{A}}^{(m)}\}$;
the second isomorphism is from $W\omega_{A}\tilde{=}\tilde{\text{Hom}}_{\mathcal{A}}(W(A),\omega_{\mathcal{A}})$,
and the third and fourth are from the fact that $\omega_{\mathcal{A}}$
is locally free over $\mathcal{A}$. 

It follows that 
\[
(W\omega_{A}\widehat{\otimes}_{W(A)}\mathcal{\widehat{D}}_{W(A),\text{acc}}^{(m)})\otimes_{\mathcal{\widehat{D}}_{W(A)}^{(m)}}\Phi^{*}\mathcal{\widehat{D}}_{\mathcal{A}}^{(m)}\tilde{\to}W\omega_{A}\widehat{\otimes}_{W(A)}\Phi^{*}\mathcal{\widehat{D}}_{\mathcal{A}}^{(m)}
\]
and therefore
\[
(W\omega_{A}\widehat{\otimes}_{W(A)}\mathcal{\widehat{D}}_{W(A),\text{acc}}^{(m)})\otimes_{\mathcal{\widehat{D}}_{W(A)}^{(m)}}^{L}\Phi^{*}\mathcal{M}^{\cdot}\tilde{=}(W\omega_{A}\widehat{\otimes}_{W(A)}\Phi^{*}\mathcal{\widehat{D}}_{\mathcal{A}}^{(m)})\widehat{\otimes}_{\mathcal{\widehat{D}}_{\mathcal{A}}^{(m)}}^{L}\mathcal{M}^{\cdot}
\]
\[
(\omega_{\mathcal{A}}\otimes_{\mathcal{A}}\Phi^{!}\mathcal{\widehat{D}}_{\mathcal{A}}^{(m)})\widehat{\otimes}_{\mathcal{\widehat{D}}_{\mathcal{A}}^{(m)}}^{L}\mathcal{M}^{\cdot}=\Phi^{!}(\omega_{\mathcal{A}}\otimes_{\mathcal{A}}\mathcal{M}^{\cdot})
\]
so this functor agrees with the usual left-right interchange over
$\mathcal{A}$. Now, the functor $(W\omega_{X}\widehat{\otimes}_{\mathcal{O}_{W(X)}}\mathcal{\widehat{D}}_{W(X),\text{acc}}^{(m)})\otimes_{\mathcal{\widehat{D}}_{W(X)}^{(m)}}$
admits an adjoint; namely \\
$R\mathcal{H}om_{\mathcal{\widehat{D}}_{W(X)}^{(m),\text{op}}}((W\omega_{X}\widehat{\otimes}_{\mathcal{O}_{W(X)}}\mathcal{\widehat{D}}_{W(X),\text{acc}}^{(m)}))$,
and an extremely similar argument shows that it also preserves the
subcategories of accessible modules; as it must be the inverse locally,
we see that it is so globally as well.

The proof for the functor $\otimes_{\mathcal{\widehat{D}}_{W(A)}^{(m)}}^{L}\tilde{\mathcal{H}om}_{\mathcal{O}_{W(X)}}(W\omega_{X},\mathcal{\widehat{D}}_{W(X),\text{acc}}^{(m)})$
is essentially identical. 
\end{proof}

\subsection{\label{subsec:Operations-Pull}Operations on modules: Pull-Back}

Throughout this section let $m\geq0$. Let $\varphi:\mathfrak{X}\to\mathfrak{Y}$
be a morphism of smooth formal schemes over $W(k)$. Recall that Berthelot
in \cite{key-2}, section 3.2, has shown that $\mathcal{\widehat{D}}_{\mathfrak{X}\to\mathfrak{Y}}^{(m)}:=\varphi^{*}(\mathcal{\widehat{D}}_{\mathfrak{Y}}^{(m)})$
carries the structure of a left $\mathcal{\widehat{D}}_{\mathfrak{X}}^{(m)}$-module
(by $\varphi^{*}$ we mean the $p$-adically completed pullback).
By definition $\varphi^{*}\mathcal{\widehat{D}}_{\mathfrak{Y}}^{(m)}$
carries the structure of a right $\varphi^{-1}(\mathcal{\widehat{D}}_{\mathfrak{Y}}^{(m)})$-module.
This, in turn allows one to define the functor $\varphi^{*}$ via
\[
L\varphi^{*}(\mathcal{M}):=\varphi^{*}\mathcal{\widehat{D}}_{\mathfrak{Y}}^{(j)}\widehat{\otimes}_{\varphi^{-1}(\mathcal{\widehat{D}}_{\mathfrak{Y}})}^{L}\varphi^{-1}(\mathcal{M})
\]
as usual, the completion means the cohomological completion. One sets
$\varphi^{!}:=L\varphi^{*}[d_{X/Y}]$ (where $d_{X/Y}=\text{dim}(X)-\text{dim}(Y)$). 

Now, if $\varphi:X\to Y$, we wish to define the analogous pullback
functor $L\varphi^{*}$ from accessible $\mathcal{\widehat{D}}_{W(Y)}^{(m)}$-modules
to accessible $\mathcal{\widehat{D}}_{W(X)}^{(m)}$-modules. Recall
that, by the functoriality of the Witt vectors, $\varphi$ give rise
to a morphism of ringed spaces $W\varphi:(X,\mathcal{O}_{WX)})\to(Y,\mathcal{O}_{W(Y)})$.
Thus we have a map $W\varphi^{\#}:W\varphi^{-1}(\mathcal{O}_{W(Y)})\to\mathcal{O}_{W(X)}$;
there is also an induced map $W\varphi^{\#}:W\varphi^{-1}(\mathcal{O}_{W(Y)}/p^{r})\to\mathcal{O}_{W(X)}/p^{r}$.
We shall construct a suitable sheaf of $(\mathcal{\widehat{D}}_{W(X)}^{(m)},W\varphi^{-1}(\mathcal{\widehat{D}}_{W(Y)}^{(m)}))$
bimodules; before doing so, let us remark on a slight generalization
of the notion of accessibility. 

Starting in characteristic $p$, we note that there is an exact, conservative
functor 
\[
\mathcal{Q}\to\mathcal{B}_{X}^{(m)}\otimes_{\mathcal{D}_{X}^{(m)}}\mathcal{Q}\otimes_{\varphi^{-1}(\mathcal{D}_{Y}^{(m)})}\varphi^{-1}(\mathcal{B}_{Y}^{(m),r})
\]
from the category of $(\mathcal{D}_{X}^{(m)},\varphi^{-1}(\mathcal{D}_{Y}^{(m)}))$-bimodules
to the category of \\
$(\mathcal{D}_{W(X)}^{(m)}/p,W\varphi^{-1}(\mathcal{D}_{W(Y)}^{(m)})/p)$-bimodules,
with right adjoint given by 
\[
\mathcal{P}\to\mathcal{B}_{X}^{(m),r}\otimes_{\mathcal{D}_{W(X)}^{(m)}/p}\mathcal{P}\otimes_{W\varphi^{-1}(\mathcal{D}_{W(Y)}^{(m)})/p)}W\varphi^{-1}(\mathcal{B}_{Y}^{(m)})
\]
We see (exactly as in \prettyref{prop:Properties-of-B_W}) that there
is a full embedding of categories 
\[
D((\mathcal{D}_{X}^{(m)},\varphi^{-1}(\mathcal{D}_{Y}^{(m)}))-\text{bimod})\to D((\mathcal{D}_{W(X)}^{(m)}/p,W\varphi^{-1}(\mathcal{D}_{W(Y)}^{(m)})/p)-\text{bimod})
\]
whose image we call the accessible $(\mathcal{D}_{W(X)}^{(m)}/p,W\varphi^{-1}(\mathcal{D}_{W(Y)}^{(m)})/p)$-bimodules. 
\begin{defn}
A complex $\mathcal{P}^{\cdot}\in D_{cc}((\mathcal{\widehat{D}}_{W(X)}^{(m)},W\varphi^{-1}(\mathcal{\widehat{D}}_{W(Y)}^{(m)}))-\text{bimod})$
is said to be accessible if $\mathcal{P}^{\cdot}\otimes_{W(k)}^{L}k$
is accessible as a complex of $(\mathcal{D}_{W(X)}^{(m)}/p,W\varphi^{-1}(\mathcal{D}_{W(Y)}^{(m)})/p)$-bimodules. 
\end{defn}

Exactly as in \prettyref{thm:Local-Accessible}, we have that $\mathcal{P}^{\cdot}$
is accessible iff, locally, it can be written as 
\[
(\Phi_{1}^{*}\mathcal{\widehat{D}}_{\mathfrak{X}}^{(m)})\widehat{\otimes}_{\mathcal{\widehat{D}}_{\mathfrak{X}}^{(m)}}^{L}\mathcal{Q}^{\cdot}\widehat{\otimes}_{\varphi^{-1}(\mathcal{\widehat{D}}_{\mathfrak{Y}}^{(m)})}^{L}\varphi^{-1}(\Phi_{2}^{!}\mathcal{\widehat{D}}_{\mathfrak{Y}}^{(m)})
\]
where $\Phi_{1}$ and $\Phi_{2}$ are coordinatized lifts of Frobenius.
Furthermore, we have the natural functor 
\[
\mathcal{P}^{\cdot}\to\mathcal{P}_{\text{acc}}^{\cdot}:=\mathcal{\widehat{D}}_{W(X),\text{acc}}^{(m)}\widehat{\otimes}_{\mathcal{\widehat{D}}_{W(X)}^{(m)}}^{L}\mathcal{P}^{\cdot}\widehat{\otimes}_{W\varphi^{-1}(\mathcal{\widehat{D}}_{W(Y)}^{(m)})}^{L}W\varphi^{-1}(\mathcal{\widehat{D}}_{W(Y)}^{(m)})
\]
which takes any complex of bimodules to an accessible complex, and
which is the identity on accessible complexes.
\begin{defn}
\label{def:Accessible-Transfer}For any morphism $\varphi:X\to Y$,
we have the sheaf of $(\mathcal{\widehat{D}}_{W(X)}^{(m)},W\varphi^{-1}(\mathcal{\widehat{D}}_{W(Y)}^{(m)}))$-bimodules
$\widehat{\mathcal{D}}_{W(X)\to W(Y)}^{(m)}$ constructed in \prettyref{def:D-and-Transfer}
Then we define 
\[
\widehat{\mathcal{D}}_{W(X)\to W(Y),\text{acc}}^{(m)}:=\mathcal{\widehat{D}}_{W(X),\text{acc}}^{(m)}\widehat{\otimes}_{\mathcal{\widehat{D}}_{W(X)}^{(m)}}^{L}\widehat{\mathcal{D}}_{W(X)\to W(Y)}^{(m)}\widehat{\otimes}_{W\varphi^{-1}(\mathcal{\widehat{D}}_{W(Y)}^{(m)})}^{L}W\varphi^{-1}(\mathcal{\widehat{D}}_{W(Y),\text{acc}}^{(m)})
\]
a sheaf of accessible $(\mathcal{\widehat{D}}_{W(X)}^{(m)},W\varphi^{-1}(\mathcal{\widehat{D}}_{W(Y)}^{(m)}))$-bimodules
on $X$. 
\end{defn}

By the local description of $\widehat{\mathcal{D}}_{W(X),\text{acc}}^{(m)}$
(given right above \prettyref{def:D-acc}) we see that when $\varphi$
is the identity map we have $\widehat{\mathcal{D}}_{W(X)\to W(Y),\text{acc}}^{(m)}=\widehat{\mathcal{D}}_{W(X),\text{acc}}^{(m)}$

Then we have the 
\begin{defn}
\label{def:Pull!}Let $\mathcal{M}^{\cdot}\in D_{\text{acc}}(\mathcal{\widehat{D}}_{W(Y)}^{(m)}-\text{mod})$.
We define 
\[
L(W\varphi)^{*}(\mathcal{M}^{\cdot}):=\widehat{\mathcal{D}}_{W(X)\to W(Y),\text{acc}}^{(m)}\widehat{\otimes}_{W\varphi^{-1}(\mathcal{\widehat{D}}_{W(Y)}^{(m)})}^{L}W\varphi^{-1}(\mathcal{M}^{\cdot})\in D(\mathcal{\widehat{D}}_{W(X)}^{(m)}-\text{mod})
\]
Similarly we define, for any $\mathcal{M}^{\cdot}\in D(\mathcal{\widehat{D}}_{W(Y)}^{(m)}/p^{r}-\text{mod})$,
\[
L(W\varphi)^{*}(\mathcal{M}^{\cdot}):=\widehat{\mathcal{D}}_{W(X)\to W(Y),\text{acc}}^{(m)}/p^{r}\widehat{\otimes}_{W\varphi^{-1}(\mathcal{\widehat{D}}_{W(Y)}^{(m)}/p^{r})}^{L}W\varphi^{-1}(\mathcal{M}^{\cdot})\in D(\mathcal{\widehat{D}}_{W(X)}^{(m)}/p^{r}-\text{mod})
\]
\end{defn}

In order to get control over this definition, we need to compare with
the usual pullback functor in the presence of local lifts. Suppose
for a moment that $X=\text{Spec}(A)$ and $Y=\text{Spec}(B)$ are
affine, each possessing local coordinates, as well as coordinatized
Frobenius lifts; let $\Phi_{1}:\mathcal{O}_{\mathfrak{X}}\to\mathcal{O}_{W(X)}$
and $\Phi_{2}:\mathcal{O}_{\mathfrak{Y}}\to\mathcal{O}_{W(Y)}$ be
the associated morphism, with projections $\pi_{1}$ and $\pi_{2}$,
respectively. Suppose we have a lift $\varphi:\mathfrak{X}\to\mathfrak{Y}$. 
\begin{prop}
\label{prop:Pullbback-and-transfer}Let $\widehat{\mathcal{D}}_{\mathfrak{X}\to\mathfrak{Y}}^{(m),\Phi_{1},\Phi_{2}}$
denote the $p$-adic completion of the $(\widehat{\mathcal{D}}_{\mathfrak{X}}^{(m)},\varphi^{-1}(\widehat{\mathcal{D}}_{\mathfrak{Y}}^{(m)}))$
bimodule locally generated by $\pi_{1}\circ W\varphi^{\#}\circ\varphi^{-1}(\Phi_{2}):\varphi^{-1}(\mathcal{O}_{\mathfrak{Y}})\to\mathcal{O}_{\mathfrak{X}}$.
We have 
\[
\widehat{\mathcal{D}}_{W(X)\to W(Y),\text{acc}}^{(m)}\tilde{=}\Phi_{1}^{*}\mathcal{\widehat{D}}_{\mathfrak{X}}^{(m)}\widehat{\otimes}_{\mathcal{\widehat{D}}_{\mathfrak{X}}^{(m)}}^{L}\widehat{\mathcal{D}}_{\mathfrak{X}\to\mathfrak{Y}}^{(m),\Phi_{1},\Phi_{2}}\widehat{\otimes}_{\varphi^{-1}(\mathcal{\widehat{D}}_{\mathfrak{Y}}^{(m)})}^{L}\varphi^{-1}(\Phi_{2}^{!}\widehat{\mathcal{D}}_{\mathfrak{Y}}^{(m)})
\]
i.e., the derived completed tensor product is concentrated in degree
$0$(and is equal to the usual $p$-adic completion of the tensor
product). In particular, if $\varphi\circ\Phi_{1}=\Phi_{2}\circ\varphi$
then we obtain 
\[
\widehat{\mathcal{D}}_{W(X)\to W(Y),\text{acc}}^{(m)}\tilde{=}\Phi_{1}^{*}\mathcal{\widehat{D}}_{\mathfrak{X}}^{(m)}\widehat{\otimes}_{\mathcal{\widehat{D}}_{\mathfrak{X}}^{(m)}}^{L}\widehat{\mathcal{D}}_{\mathfrak{X}\to\mathfrak{Y}}^{(m)}\widehat{\otimes}_{\varphi^{-1}(\mathcal{\widehat{D}}_{\mathfrak{Y}}^{(m)})}^{L}\varphi^{-1}(\Phi_{2}^{!}\widehat{\mathcal{D}}_{\mathfrak{Y}}^{(m)})
\]
\end{prop}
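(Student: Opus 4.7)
The plan is to reduce the statement to a concrete local computation by substituting the local descriptions of the accessible sheaves into the definition of $\widehat{\mathcal{D}}_{W(X)\to W(Y),\text{acc}}^{(m)}$ and then identifying the resulting triple tensor product with a bimodule of operators generated by a single explicit composition. First I would replace
$\mathcal{\widehat{D}}_{W(X),\text{acc}}^{(m)}$ by $\Phi_{1}^{*}\widehat{\mathcal{D}}_{\mathfrak{X}}^{(m)}\widehat{\otimes}_{\widehat{\mathcal{D}}_{\mathfrak{X}}^{(m)}}^{L}\Phi_{1}^{!}\widehat{\mathcal{D}}_{\mathfrak{X}}^{(m)}$ and similarly for $W\varphi^{-1}(\mathcal{\widehat{D}}_{W(Y),\text{acc}}^{(m)})$ (using the local description directly above \prettyref{def:D-acc} and that $W\varphi^{-1}$ commutes with cohomological completion on affine opens). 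After rearranging, the content of the proposition reduces to the identification
\[
\Phi_{1}^{!}\widehat{\mathcal{D}}_{\mathfrak{X}}^{(m)}\widehat{\otimes}_{\widehat{\mathcal{D}}_{W(X)}^{(m)}}^{L}\widehat{\mathcal{D}}_{W(X)\to W(Y)}^{(m)}\widehat{\otimes}_{W\varphi^{-1}(\widehat{\mathcal{D}}_{W(Y)}^{(m)})}^{L}W\varphi^{-1}(\Phi_{2}^{*}\widehat{\mathcal{D}}_{\mathfrak{Y}}^{(m)})\;\tilde{=}\;\widehat{\mathcal{D}}_{\mathfrak{X}\to\mathfrak{Y}}^{(m),\Phi_{1},\Phi_{2}}
\]
as $(\widehat{\mathcal{D}}_{\mathfrak{X}}^{(m)},\varphi^{-1}(\widehat{\mathcal{D}}_{\mathfrak{Y}}^{(m)}))$-bimodules.

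Second, I would prove concentration in degree $0$. By \prettyref{cor:Projector!} one has $\Phi_{1}^{!}\widehat{\mathcal{D}}_{\mathfrak{X}}^{(m)}\tilde{\to}\pi_{1}\cdot\widehat{\mathcal{D}}_{W(X)}^{(m)}$ as a right $\widehat{\mathcal{D}}_{W(X)}^{(m)}$-module, which is a direct summand and hence projective; similarly $\Phi_{2}^{*}\widehat{\mathcal{D}}_{\mathfrak{Y}}^{(m)}\tilde{\to}\widehat{\mathcal{D}}_{W(Y)}^{(m)}\cdot\pi_{2}$ is projective as a left $\widehat{\mathcal{D}}_{W(Y)}^{(m)}$-module. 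Hence all three tensor factors are flat over the rings they are tensored against, and the derived tensor product above is represented by the underived tensor product; after $p$-adic (equivalently cohomological) completion, we may prove the identity mod $p$ and lift by Nakayama (\cite{key-8}, proposition 1.5.8), using that each side is cohomologically complete and $p$-torsion free.

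Third, and this is the main step, I would identify the underived tensor product with $\widehat{\mathcal{D}}_{\mathfrak{X}\to\mathfrak{Y}}^{(m),\Phi_{1},\Phi_{2}}$ by realizing all three factors as sub-bimodules inside $\mathcal{H}om_{W(k)}(\varphi^{-1}(\mathcal{O}_{\mathfrak{Y}}),\mathcal{O}_{\mathfrak{X}})$ and composing: $\Phi_{1}^{!}\widehat{\mathcal{D}}_{\mathfrak{X}}^{(m)}$ embeds into $\mathcal{H}om_{W(k)}(\mathcal{O}_{W(X)},\mathcal{O}_{\mathfrak{X}})$ as the sub-bimodule generated by $\pi_{1}$ (\prettyref{prop:Construction-of-Phi-!}); $\widehat{\mathcal{D}}_{W(X)\to W(Y)}^{(m)}$ embeds into $\mathcal{H}om_{W(k)}(W\varphi^{-1}\mathcal{O}_{W(Y)},\mathcal{O}_{W(X)})$ as the sub-bimodule generated by $W\varphi^{\#}$ (by construction and the basis \prettyref{cor:Basis-for-bimodule}); and $W\varphi^{-1}(\Phi_{2}^{*}\widehat{\mathcal{D}}_{\mathfrak{Y}}^{(m)})$ embeds into $W\varphi^{-1}\mathcal{H}om_{W(k)}(\mathcal{O}_{\mathfrak{Y}},\mathcal{O}_{W(Y)})$ as the sub-bimodule locally generated by $\Phi_{2}$. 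Composing $(\alpha,\beta,\gamma)\mapsto\alpha\circ\beta\circ\varphi^{-1}(\gamma)$ yields a map to $\mathcal{H}om_{W(k)}(\varphi^{-1}\mathcal{O}_{\mathfrak{Y}},\mathcal{O}_{\mathfrak{X}})$ whose image is exactly the sub-bimodule generated by $\pi_{1}\circ W\varphi^{\#}\circ\varphi^{-1}(\Phi_{2})$, i.e.\ $\widehat{\mathcal{D}}_{\mathfrak{X}\to\mathfrak{Y}}^{(m),\Phi_{1},\Phi_{2}}$. The hard part will be verifying that this composition map is an isomorphism (not merely a surjection onto this bimodule): this requires showing that the obvious multiplication map is injective, which I would do after reduction mod $p$ by writing an element of the tensor product in the basis of \prettyref{cor:Basis-for-bimodule} (for the middle factor) with coefficients in $\Phi_1^!\mathcal{D}_{A}^{(m)}$ on the left and $\Phi_2^*\mathcal{D}_{B}^{(m)}$ on the right, and tracing through the local-coordinate calculation analogous to \prettyref{lem:Injectivity-Lemma} to see that distinct basis elements act as linearly independent operators.

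Finally, the last assertion of the proposition is a corollary: if $\varphi\circ\Phi_{1}=\Phi_{2}\circ\varphi$, then $W\varphi^{\#}\circ\varphi^{-1}(\Phi_{2})=\Phi_{1}\circ\varphi^{\#}$ as maps $\varphi^{-1}(\mathcal{O}_{\mathfrak{Y}})\to\mathcal{O}_{W(X)}$, so $\pi_{1}\circ W\varphi^{\#}\circ\varphi^{-1}(\Phi_{2})=\pi_{1}\circ\Phi_{1}\circ\varphi^{\#}=\varphi^{\#}$, and the bimodule $\widehat{\mathcal{D}}_{\mathfrak{X}\to\mathfrak{Y}}^{(m),\Phi_{1},\Phi_{2}}$ coincides with Berthelot's transfer bimodule $\widehat{\mathcal{D}}_{\mathfrak{X}\to\mathfrak{Y}}^{(m)}$.
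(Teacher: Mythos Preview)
Your overall architecture matches the paper's: reduce to the identification
\[
\Phi_{1}^{!}\widehat{\mathcal{D}}_{\mathfrak{X}}^{(m)}\otimes_{\widehat{\mathcal{D}}_{W(X)}^{(m)}}\widehat{\mathcal{D}}_{W(X)\to W(Y)}^{(m)}\otimes_{W\varphi^{-1}(\widehat{\mathcal{D}}_{W(Y)}^{(m)})}W\varphi^{-1}(\Phi_{2}^{*}\widehat{\mathcal{D}}_{\mathfrak{Y}}^{(m)})\;\tilde{=}\;\widehat{\mathcal{D}}_{\mathfrak{X}\to\mathfrak{Y}}^{(m),\Phi_{1},\Phi_{2}},
\]
realize the left side inside $\mathcal{H}om_{W(k)}(\varphi^{-1}\mathcal{O}_{\mathfrak{Y}},\mathcal{O}_{\mathfrak{X}})$ via composition, and identify the image with the bimodule generated by $\pi_{1}\circ W\varphi^{\#}\circ\varphi^{-1}(\Phi_{2})$. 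The final corollary is handled identically.

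However, you have misidentified where the work lies. You call injectivity ``the hard part'' and propose a mod-$p$ coordinate computation for it. In the paper the injectivity is immediate: since $\Phi_{1}^{!}\widehat{\mathcal{D}}_{\mathfrak{X}}^{(m)}=\pi_{1}\cdot\widehat{\mathcal{D}}_{W(X)}^{(m)}$ and $\Phi_{2}^{*}\widehat{\mathcal{D}}_{\mathfrak{Y}}^{(m)}=\widehat{\mathcal{D}}_{W(Y)}^{(m)}\cdot\pi_{2}$ are given by idempotents, the triple tensor product is a direct summand of $\widehat{\mathcal{D}}_{W(X)\to W(Y)}^{(m)}$ itself, namely the summand consisting of maps with image in $\pi_{1}(\mathcal{O}_{W(X)})=\Phi_{1}(\mathcal{O}_{\mathfrak{X}})$ and vanishing on the complement of $\varphi^{-1}(\Phi_{2}(\mathcal{O}_{\mathfrak{Y}}))$. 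Since $\widehat{\mathcal{D}}_{W(X)\to W(Y)}^{(m)}$ already acts faithfully, so does the summand. You implicitly have this in your step~2 but then abandon it.

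Conversely, you have glossed over the actual nontrivial step: showing that the image of the composition map is \emph{contained in} $\widehat{\mathcal{D}}_{\mathfrak{X}\to\mathfrak{Y}}^{(m),\Phi_{1},\Phi_{2}}$. The issue is that a general element of $\pi_{1}\cdot\widehat{\mathcal{D}}_{W(X)\to W(Y)}^{(m)}\cdot\pi_{2}$ is an infinite sum in the $V$-adic direction (via \prettyref{cor:Basis-for-bimodule}), and one must check that after composing with $\pi_{1}$ and $\varphi^{-1}(\Phi_{2})$ the resulting operator is a \emph{$p$-adically convergent} sum of elements of the form $a\cdot(\pi_{1}\circ W\varphi^{\#}\circ\varphi^{-1}(\Phi_{2}))\cdot P$. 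The paper does this by an explicit estimate: writing each basis element $\pi_{1}F^{-r}(\alpha_{J_{I}})\cdot W\varphi^{\#}(\{\partial\}_{J_{I}/p^{r}})\{\partial\}^{I}\pi_{2}$ in coordinates, expanding through $W\varphi^{\#}$ (which commutes with $V$), and tracking valuations to conclude the term lies in $p^{R}\cdot\widehat{\mathcal{D}}_{\mathfrak{X}\to\mathfrak{Y}}^{(m),\Phi_{1},\Phi_{2}}$ with $R=\max\{r,v\}$. Without this estimate, your claim that the image ``is exactly the sub-bimodule generated by $\pi_{1}\circ W\varphi^{\#}\circ\varphi^{-1}(\Phi_{2})$'' is unsupported in the $\subseteq$ direction. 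Once containment is established, surjectivity is trivial (the image is already a bimodule containing the generator), so the balance of difficulty is the reverse of what you suggested.
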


\begin{proof}
The first statement is equivalent to
\[
\Phi_{1}^{!}\mathcal{\widehat{D}}_{\mathfrak{X}}^{(m)}\otimes_{\widehat{\mathcal{D}}_{W(X)}^{(m)}}\widehat{\mathcal{D}}_{W(X)\to W(Y)}^{(m)}\otimes_{W\varphi^{-1}(\mathcal{\widehat{D}}_{W(Y)}^{(m)})}W\varphi^{-1}(\Phi_{2}^{*}\widehat{\mathcal{D}}_{\mathfrak{Y}}^{(m)})\tilde{=}\widehat{\mathcal{D}}_{\mathfrak{X}\to\mathfrak{Y}}^{(m),\Phi_{1},\Phi_{2}}
\]
The composition of morphisms produces a map
\begin{equation}
\Phi_{1}^{!}\mathcal{\widehat{D}}_{\mathfrak{X}}^{(m)}\otimes_{\widehat{\mathcal{D}}_{W(X)}^{(m)}}\widehat{\mathcal{D}}_{W(X)\to W(Y)}^{(m)}\otimes_{W\varphi^{-1}(\mathcal{\widehat{D}}_{W(Y)}^{(m)})}W\varphi^{-1}(\Phi_{2}^{*}\widehat{\mathcal{D}}_{\mathfrak{Y}}^{(m)})\to\mathcal{H}om_{W(k)}(\varphi^{-1}(\mathcal{O}_{\mathfrak{Y}}),\mathcal{O}_{\mathfrak{X}})\label{eq:action-on-X-Y}
\end{equation}
As the left hand side is the summand of $\widehat{\mathcal{D}}_{W(X)\to W(Y)}^{(m)}$
consisting of maps whose image is contained $\pi_{1}(\mathcal{O}_{W(X)})$,
and which vanish on the complement of $\varphi^{-1}(\Phi_{2}(\mathcal{O}_{\mathfrak{Y}}))$
in $\mathcal{O}_{W(Y)}$, this map is clearly injective. We shall
show that the image is equal to $\widehat{\mathcal{D}}_{\mathfrak{X}\to\mathfrak{Y}}^{(m),\Phi_{1},\Phi_{2}}$. 

By \prettyref{cor:Basis-for-bimodule}, $\Phi_{1}^{!}\mathcal{\widehat{D}}_{\mathfrak{X}}^{(m)}\otimes_{\widehat{\mathcal{D}}_{W(X)}^{(m)}}\widehat{\mathcal{D}}_{W(X)\to W(Y)}^{(m)}\otimes_{W\varphi^{-1}(\mathcal{\widehat{D}}_{W(Y)}^{(m)})}W\varphi^{-1}(\Phi_{2}^{*}\widehat{\mathcal{D}}_{\mathfrak{Y}}^{(m)})$
is the sheaf whose sections over $\mathfrak{X}$ are of the form 
\begin{equation}
\sum_{I}(\sum_{r=0}^{\infty}\sum_{J_{I}}\pi_{1}F^{-r}(\alpha_{J_{I}})\cdot W\varphi^{\#}(\{\partial\}_{J_{I}/p^{r}})\{\partial\}^{I}\pi_{2}\label{eq:big-sum-bimodule}
\end{equation}
where the notation is as in \prettyref{cor:Basis-for-bimodule}. 

Now, write 
\[
\{\partial\}_{J_{I}/p^{r}}\{\partial\}^{I}\pi_{2}=\sum_{(K,s)}p^{s}T^{K/p^{s}}\Phi_{2}(b_{K})
\]
for $b_{K}\in\mathcal{\widehat{D}}_{\mathfrak{Y}}^{(m)}(\mathfrak{Y})$,
where $s<r$ implies $b_{K}\in p^{r-s}\mathcal{\widehat{D}}_{\mathfrak{Y}}^{(m)}(\mathfrak{Y})$. 

Then
\[
\pi_{1}F^{-r}(\alpha_{J_{I}})\cdot W\varphi^{\#}(\{\partial\}_{J_{I}/p^{r}})\{\partial\}^{I}\pi_{2}=\sum_{(K,s)}\pi_{1}\circ F^{-r}(\alpha_{J_{I}})\cdot W\varphi^{\#}\circ p^{s}T^{K/p^{s}}\cdot\Phi_{2}(b_{K})
\]
As $p^{r}T^{J/p^{r}}=V^{r}(T^{J})$ and $W\varphi^{\#}$ commutes
with $V$, we see that the above sum is equal to 
\[
\sum_{(K,s)}\pi_{1}F^{-r}(\alpha_{J_{I}})\cdot(V^{r}(W\varphi^{\#}(T^{J})))\circ(W\varphi^{\#}\circ\Phi_{2})(b_{K})
\]
Let $\{X_{1},\dots X_{n}$ be local coordinates on $\mathcal{A}$.
Then we can write 
\[
(V^{r}(W\varphi^{\#}(T^{J})))\circ(W\varphi^{\#}\circ\Phi_{2})(b_{J})=\sum_{(L,m)}p^{m}X^{L/m}\cdot(W\varphi^{\#}\circ\Phi_{2})(b_{L})
\]
and $m<r$ implies $b_{L}\in p^{r-m}\mathcal{\widehat{D}}_{\mathfrak{Y}}^{(m)}(\mathfrak{Y})$.
Now, we have 
\[
\sum_{(L,m)}\pi_{1}F^{-r}(\alpha_{J_{I}})\cdot p^{m}X^{L/m}\cdot(W\varphi^{\#}\circ\Phi_{2})(b_{L})=\sum_{(L,m)}c_{L}\pi_{1}(W\varphi^{\#}\circ\Phi_{2})(b_{L})
\]
where $c_{L}\in p^{M}\mathcal{A}$ with $M=\text{max}\{m,r\}$, we
see that this sum is $p$-adically convergent, and therefore
\[
\pi_{1}F^{-r}(\alpha_{J_{I}})\cdot W\varphi^{\#}(\{\partial\}_{J_{I}/p^{r}})\{\partial\}^{I}\pi_{2}=\sum_{(L,m),(J,r)}c_{L}\cdot\pi_{1}\circ(W\varphi^{\#}\circ\Phi_{2})(b_{J})
\]
which is contained in the $p$-adic completion of the $(\widehat{\mathcal{D}}_{\mathfrak{X}}^{(m)}(\mathfrak{X}),\widehat{\mathcal{D}}_{\mathfrak{Y}}^{(m)}(\mathfrak{Y}))$
bimodule generated by $\pi_{1}\circ W\varphi^{\#}\circ\Phi_{2}$;
from the above conditions on $a_{I},b_{J}$ we also see that $\pi_{1}F^{-r}(\alpha_{J_{I}})\cdot W\varphi^{\#}(\{\partial\}_{J_{I}/p^{r}})\{\partial\}^{I}\pi_{2}$
is contained in $p^{R}\cdot\widehat{\mathcal{D}}_{\mathfrak{X}\to\mathfrak{Y}}^{(m),\Phi_{1},\Phi_{2}}$
where $R=\text{max}\{r,v\}$ where $v$ is the least natural number
such that $\alpha_{J_{I}}\in V^{v}(W(A))$.

Therefore the image of the sum \ref{eq:big-sum-bimodule} is contained
in $\widehat{\mathcal{D}}_{\mathfrak{X}\to\mathfrak{Y}}^{(m),\Phi_{1},\Phi_{2}}$
as well. Thus the image of the map \ref{eq:action-on-X-Y} is contained
in $\widehat{\mathcal{D}}_{\mathfrak{X}\to\mathfrak{Y}}^{(m),\Phi_{1},\Phi_{2}}$.
It is surjective, as the image of \prettyref{eq:action-on-X-Y} is
already a $(\widehat{\mathcal{D}}_{\mathfrak{X}}^{(m)},\varphi^{-1}(\widehat{\mathcal{D}}_{\mathfrak{Y}}^{(m)}))$
bi-submodule containing $\pi_{1}\circ W\varphi^{\#}\circ\varphi^{-1}(\Phi_{2})$;
and so we have the desired result. 

Finally, if $\varphi\circ\Phi_{1}=\Phi_{2}\circ\varphi$ we have $\pi_{1}\circ W\varphi^{\#}\circ\varphi^{-1}(\Phi_{2})=\varphi^{\#}$
and so the last sentence of the proposition follows.
\end{proof}
Now, for some $r\geq1$ (including $r=\infty$) let us suppose we
are given a lift of $\varphi$ to $\varphi:\mathfrak{X}_{r}\to\mathfrak{Y}_{r}$.
We say that this map $\varphi$ is \emph{locally compatible with a
Frobenius lift }if, locally on $\mathfrak{X}_{r}$ and $\mathfrak{Y}_{r}$,
we can find lifts $\mathfrak{X}$ and $\mathfrak{Y}$ of $\mathfrak{X}_{r}$
and $\mathfrak{Y}_{r}$ and a lift $\varphi:\mathfrak{X}\to\mathfrak{Y}$,
which commutes with some coordinatized lifts of Frobenius $\Phi_{1}$
on $\mathfrak{X}$ and $\Phi_{2}$ on $\mathfrak{Y}$. Then
\begin{cor}
\label{cor:pullback-and-transfer-over-X_n}Let $\varphi:\mathfrak{X}_{r}\to\mathfrak{Y}_{r}$
be locally compatible with a Frobenius lift (when $m=0$ and $p=2$,
we suppose $r=1$). Let $\mathcal{N}^{\cdot}\in D(\text{Mod}(\mathcal{D}_{\mathfrak{Y}_{r}}))$.
Then there is an isomorphism 
\[
LW\varphi^{*}(\mathcal{B}_{\mathfrak{Y}_{r}}^{(m)}\otimes_{\mathcal{D}_{\mathfrak{Y}_{r}}^{(m)}}^{L}\mathcal{N}^{\cdot})\tilde{\to}\mathcal{B}_{\mathfrak{X}_{r}}\otimes_{\mathcal{D}_{\mathfrak{X}_{r}}^{(m)}}^{L}L\varphi^{*}(\mathcal{N}^{\cdot})
\]
where on the right hand side the pullback is the pullback in the category
of $\mathcal{D}_{\mathfrak{X}_{r}}^{(m)}$-modules. The analogous
statement holds when $r=\infty$, i.e., for $\varphi:\mathfrak{X}\to\mathfrak{Y}$. 
\end{cor}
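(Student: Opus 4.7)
The statement is local on both $\mathfrak{X}_r$ and $\mathfrak{Y}_r$, so I would first reduce to the case where $X = \mathrm{Spec}(A)$ and $Y = \mathrm{Spec}(B)$ both admit local coordinates, and where $\varphi$ lifts to a morphism $\varphi\colon\mathfrak{X}\to\mathfrak{Y}$ which is compatible with chosen coordinatized Frobenius lifts $\Phi_1$ on $\mathfrak{X}$ and $\Phi_2$ on $\mathfrak{Y}$, in the sense that $\varphi\circ\Phi_1 = \Phi_2\circ\varphi$. Once the local model is chosen, the right hand side of the desired isomorphism may be rewritten as $\Phi_1^{*}\widehat{\mathcal{D}}_{\mathfrak{X}_r}^{(m)}\widehat{\otimes}_{\widehat{\mathcal{D}}_{\mathfrak{X}_r}^{(m)}}^{L}L\varphi^{*}(\mathcal{N}^{\cdot})$, and by a standard identification the latter equals
\[
\Phi_1^{*}\widehat{\mathcal{D}}_{\mathfrak{X}_r}^{(m)}\widehat{\otimes}_{\widehat{\mathcal{D}}_{\mathfrak{X}_r}^{(m)}}^{L}\widehat{\mathcal{D}}_{\mathfrak{X}_r\to\mathfrak{Y}_r}^{(m)}\widehat{\otimes}_{\varphi^{-1}(\widehat{\mathcal{D}}_{\mathfrak{Y}_r}^{(m)})}^{L}\varphi^{-1}(\mathcal{N}^{\cdot}).
\]

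Next, I would expand the left hand side using \prettyref{def:Pull!} and \prettyref{def:Accessible-Transfer}: applying $LW\varphi^{*}$ to $\mathcal{B}_{\mathfrak{Y}_r}^{(m)}\otimes^L \mathcal{N}^{\cdot} = \Phi_2^{*}\widehat{\mathcal{D}}_{\mathfrak{Y}_r}^{(m)}\widehat{\otimes}^L_{\widehat{\mathcal{D}}_{\mathfrak{Y}_r}^{(m)}}\mathcal{N}^{\cdot}$ yields
\[
\widehat{\mathcal{D}}_{W(X)\to W(Y),\mathrm{acc}}^{(m)}\widehat{\otimes}_{W\varphi^{-1}(\widehat{\mathcal{D}}_{W(Y)}^{(m)})}^{L}W\varphi^{-1}(\Phi_2^{*}\widehat{\mathcal{D}}_{\mathfrak{Y}_r}^{(m)}\widehat{\otimes}^L_{\widehat{\mathcal{D}}_{\mathfrak{Y}_r}^{(m)}}\mathcal{N}^{\cdot}).
\]
Now I invoke \prettyref{prop:Pullbback-and-transfer}, which under the Frobenius-compatibility hypothesis identifies $\widehat{\mathcal{D}}_{W(X)\to W(Y),\mathrm{acc}}^{(m)}$ with
\[
\Phi_1^{*}\widehat{\mathcal{D}}_{\mathfrak{X}}^{(m)}\widehat{\otimes}^{L}_{\widehat{\mathcal{D}}_{\mathfrak{X}}^{(m)}}\widehat{\mathcal{D}}_{\mathfrak{X}\to\mathfrak{Y}}^{(m)}\widehat{\otimes}^{L}_{\varphi^{-1}(\widehat{\mathcal{D}}_{\mathfrak{Y}}^{(m)})}\varphi^{-1}(\Phi_2^{!}\widehat{\mathcal{D}}_{\mathfrak{Y}}^{(m)}).
\]
Plugging this in and collecting tensor factors, the computation reduces to the assertion that the composition
\[
\varphi^{-1}(\Phi_2^{!}\widehat{\mathcal{D}}_{\mathfrak{Y}_r}^{(m)})\widehat{\otimes}^L_{W\varphi^{-1}(\widehat{\mathcal{D}}_{W(Y)}^{(m)})}W\varphi^{-1}(\Phi_2^{*}\widehat{\mathcal{D}}_{\mathfrak{Y}_r}^{(m)})\;\tilde{\to}\;\varphi^{-1}(\widehat{\mathcal{D}}_{\mathfrak{Y}_r}^{(m)})
\]
is an isomorphism; but this is simply the pullback by $\varphi^{-1}$ of the local description of $\widehat{\mathcal{D}}_{W(Y),\mathrm{acc}}^{(m)}$ as $\Phi_2^{*}\widehat{\mathcal{D}}_{\mathfrak{Y}_r}^{(m)}\widehat{\otimes}^L \Phi_2^{!}\widehat{\mathcal{D}}_{\mathfrak{Y}_r}^{(m)}$ acting on the accessible complex $\Phi_2^{*}\widehat{\mathcal{D}}_{\mathfrak{Y}_r}^{(m)}$, which is the identity by \prettyref{cor:Bimodule-over-X-r}.

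After this identification both sides become the expression displayed at the end of the first paragraph, so one obtains a local isomorphism. The remaining step is to check that this isomorphism is independent of the choices of $\Phi_1$, $\Phi_2$, and of the lift $\varphi$, so that it glues to a canonical global isomorphism. For this I would invoke \prettyref{thm:Uniqueness-of-bimodule} (and \prettyref{cor:Iso-mod-p^n} for the $r<\infty$ case, with the $p=2$, $m=0$, $r=1$ caveat accounting for the sole case in which the canonical isomorphism of bimodules exists only mod $p$), together with the analogous cocycle compatibility for the classical transfer bimodule $\widehat{\mathcal{D}}_{\mathfrak{X}\to\mathfrak{Y}}^{(m)}$: on the overlap of two local models $(\Phi_1,\Phi_2,\varphi)$ and $(\Phi_1',\Phi_2',\varphi')$, the resulting two local isomorphisms agree. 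The case $r=\infty$ is handled identically, noting that cohomological completeness is preserved under all the operations involved.

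The main obstacle, I expect, will be bookkeeping in the Frobenius-independence step: one must verify that the diagram of canonical comparison isomorphisms $\epsilon_{\Phi,\Phi'}$ built in \prettyref{thm:Uniqueness-of-bimodule} is compatible on all three sides (source, target, and transfer bimodule) with the explicit identification coming from \prettyref{prop:Pullbback-and-transfer}, so that changing the local Frobenius lift intertwines the two local isomorphisms. Once this compatibility is unwound, the result itself follows formally, but the diagram chase encoding Frobenius-independence of $\widehat{\mathcal{D}}_{W(X)\to W(Y),\mathrm{acc}}^{(m)}$ is the genuinely nontrivial input.
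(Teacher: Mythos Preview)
Your proposal is correct and follows essentially the same route as the paper: reduce to the local Frobenius-compatible situation, invoke \prettyref{prop:Pullbback-and-transfer} to identify $\widehat{\mathcal{D}}_{W(X)\to W(Y),\mathrm{acc}}^{(m)}$ with $\Phi_1^{*}\widehat{\mathcal{D}}_{\mathfrak{X}}^{(m)}\widehat{\otimes}^{L}\widehat{\mathcal{D}}_{\mathfrak{X}\to\mathfrak{Y}}^{(m)}\widehat{\otimes}^{L}\varphi^{-1}(\Phi_2^{!}\widehat{\mathcal{D}}_{\mathfrak{Y}}^{(m)})$, collapse the $\Phi_2^{!}\otimes\Phi_2^{*}$ factor, and then glue using \prettyref{cor:Iso-mod-p^n} (with \prettyref{cor:Global-bimodule-mod-p} for the $p=2$, $m=0$, $r=1$ case). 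The only minor remark is that the cancellation $\varphi^{-1}(\Phi_2^{!}\widehat{\mathcal{D}}_{\mathfrak{Y}}^{(m)})\widehat{\otimes}^{L}\varphi^{-1}(\Phi_2^{*}\widehat{\mathcal{D}}_{\mathfrak{Y}}^{(m)})\tilde{\to}\varphi^{-1}(\widehat{\mathcal{D}}_{\mathfrak{Y}}^{(m)})$ is really the adjunction from \prettyref{thm:Projective!} (or the discussion preceding \prettyref{def:D-acc}), not \prettyref{cor:Bimodule-over-X-r}.
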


\begin{proof}
We have the isomorphism 
\[
\widehat{\mathcal{D}}_{W(X)\to W(Y),\text{acc}}^{(m)}\tilde{=}\Phi_{1}^{*}\mathcal{\widehat{D}}_{\mathfrak{X}}^{(m)}\widehat{\otimes}_{\mathcal{\widehat{D}}_{\mathfrak{X}}^{(m)}}^{L}\widehat{\mathcal{D}}_{\mathfrak{X}\to\mathfrak{Y}}^{(m)}\widehat{\otimes}_{\varphi^{-1}(\mathcal{\widehat{D}}_{\mathfrak{Y}}^{(m)})}^{L}\varphi^{-1}(\Phi_{2}^{!}\widehat{\mathcal{D}}_{\mathfrak{Y}}^{(m)})
\]
for $\Phi\circ\varphi=\Phi\circ\varphi$ (c.f. \prettyref{prop:Pullbback-and-transfer});
taking reduction mod $p^{r}$ this proves the result locally on $\mathfrak{X}_{r}$
and $\mathfrak{Y}_{r}$. If $\Phi_{1}',\Phi'_{2}$ are different local
lifts, by \prettyref{cor:Iso-mod-p^n} the isomorphism $\Phi_{2}^{*}\widehat{\mathcal{D}}_{\mathfrak{Y}}^{(m)}\tilde{\to}(\Phi_{2}')^{*}\widehat{\mathcal{D}}_{\mathfrak{Y}}^{(m)}$
is compatible with reduction mod $p^{r}$ (and similarly for $\Phi_{1}$),
and the result follows; when $p=2$ and $n=1$ we appeal to \prettyref{cor:Global-bimodule-mod-p}
for the required gluing statement. 
\end{proof}
To make better use of this statement, we note the:
\begin{lem}
\label{lem:pull-and-reduce}For $\mathcal{M}^{\cdot}\in D(\mathcal{\widehat{D}}_{W(Y)}^{(m)}-\text{mod})$
we have 
\[
L(W\varphi)^{*}(\mathcal{M}^{\cdot})\otimes_{W(k)}^{L}k\tilde{\to}L(W\varphi)^{*}(\mathcal{M}^{\cdot}\otimes_{W(k)}^{L}k)
\]
where the functor on the right denotes the pullback in the category
of $\mathcal{\widehat{D}}_{W(Y)}^{(m)}/p$ modules. Similarly, if
$\mathcal{M}^{\cdot}\in D(\mathcal{\widehat{D}}_{W(Y)}^{(m)}/p^{n}-\text{mod})$
for some $n\geq1$, then 
\[
L(W\varphi)^{*}(\mathcal{M}^{\cdot})\otimes_{W_{n}(k)}^{L}k\tilde{\to}L(W\varphi)^{*}(\mathcal{M}^{\cdot}\otimes_{W_{n}(k)}^{L}k)
\]
 
\end{lem}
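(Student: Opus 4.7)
The plan is to reduce both statements to an associativity calculation for derived tensor products, using two facts: first, cohomological completion is transparent to the functor $\otimes_{W(k)}^{L}k$; second, the bimodule $\widehat{\mathcal{D}}_{W(X)\to W(Y),\text{acc}}^{(m)}$ is $p$-torsion-free, so that its reduction mod $p$ agrees with its derived reduction mod $p$.

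For the first statement, I would unpack Definition \ref{def:Pull!} to write $LW\varphi^{*}(\mathcal{M}^{\cdot})$ as the cohomological completion of
\[
T(\mathcal{M}^{\cdot}):=\widehat{\mathcal{D}}_{W(X)\to W(Y),\text{acc}}^{(m)}\otimes^{L}_{W\varphi^{-1}(\widehat{\mathcal{D}}_{W(Y)}^{(m)})}W\varphi^{-1}(\mathcal{M}^{\cdot}).
\]
Using the description of the completion functor as $R\mathcal{H}om_{\mathcal{R}}(\mathcal{R}[p^{-1}]/\mathcal{R}[-1],-)$ recalled in the conventions, the cone of the natural map $T(\mathcal{M}^{\cdot})\to\widehat{T(\mathcal{M}^{\cdot})}$ is a direct summand of an object carrying a $W(k)[p^{-1}]$-action, and hence is annihilated by $\otimes^{L}_{W(k)}k$. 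Thus $\widehat{T(\mathcal{M}^{\cdot})}\otimes^{L}_{W(k)}k\xrightarrow{\sim}T(\mathcal{M}^{\cdot})\otimes^{L}_{W(k)}k$.

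The next step is to identify the latter with the right-hand side of the lemma. For this I need $\widehat{\mathcal{D}}_{W(X)\to W(Y),\text{acc}}^{(m)}\otimes^{L}_{W(k)}k\simeq\widehat{\mathcal{D}}_{W(X)\to W(Y),\text{acc}}^{(m)}/p$, i.e.\ $p$-torsion-freeness of the bimodule. This may be checked locally: Proposition \ref{prop:Pullbback-and-transfer} presents it as a completed tensor product of the $p$-torsion-free sheaves $\Phi_{1}^{*}\widehat{\mathcal{D}}_{\mathfrak{X}}^{(m)}$, $\widehat{\mathcal{D}}_{\mathfrak{X}\to\mathfrak{Y}}^{(m),\Phi_{1},\Phi_{2}}$, and $\varphi^{-1}(\Phi_{2}^{!}\widehat{\mathcal{D}}_{\mathfrak{Y}}^{(m)})$, with the completed tensor product living in degree zero. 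Combining this with the standard associativity of the derived tensor product along the middle ring one obtains
\[
T(\mathcal{M}^{\cdot})\otimes^{L}_{W(k)}k\simeq\bigl(\widehat{\mathcal{D}}_{W(X)\to W(Y),\text{acc}}^{(m)}/p\bigr)\otimes^{L}_{W\varphi^{-1}(\widehat{\mathcal{D}}_{W(Y)}^{(m)}/p)}W\varphi^{-1}(\mathcal{M}^{\cdot}\otimes^{L}_{W(k)}k),
\]
which by definition is $LW\varphi^{*}(\mathcal{M}^{\cdot}\otimes^{L}_{W(k)}k)$ in the mod-$p$ category.

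For the second statement with $W_{n}(k)$, any complex of $\widehat{\mathcal{D}}_{W(Y)}^{(m)}/p^{n}$-modules is annihilated by $p^{n}$, hence automatically cohomologically complete, and so the completion step in the definition of $LW\varphi^{*}$ is vacuous. The remaining argument is the same associativity calculation, combined with the identity $\widehat{\mathcal{D}}_{W(X)\to W(Y),\text{acc}}^{(m)}/p^{n}\otimes^{L}_{W_{n}(k)}k\simeq\widehat{\mathcal{D}}_{W(X)\to W(Y),\text{acc}}^{(m)}/p$, which again reduces to $p$-torsion-freeness of $\widehat{\mathcal{D}}_{W(X)\to W(Y),\text{acc}}^{(m)}$. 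I expect the main technical obstacle to be the first step—carefully verifying that cohomological completion commutes past $\otimes^{L}_{W(k)}k$ in this noncommutative sheaf-theoretic setting; once that is established, the remainder is a formal manipulation using the local structure of the transfer bimodule.
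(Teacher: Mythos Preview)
Your proposal is correct and follows essentially the same route as the paper: both arguments reduce to the flatness of $\widehat{\mathcal{D}}_{W(X)\to W(Y),\text{acc}}^{(m)}$ and of $W\varphi^{-1}(\widehat{\mathcal{D}}_{W(Y)}^{(m)})$ over $W(k)$, together with associativity of derived tensor products. The paper states this in one line, absorbing the completion step into the symbol $\widehat{\otimes}^{L}$; your explicit handling of the cone of $T(\mathcal{M}^{\cdot})\to\widehat{T(\mathcal{M}^{\cdot})}$ is the correct justification for that step and is not the obstacle you feared---it is immediate from the description of completion in the conventions.
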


\begin{proof}
As $\varphi^{-1}(\mathcal{\widehat{D}}_{W(Y)}^{(m)})$ and $\widehat{\mathcal{D}}_{W(X)\to W(Y),\text{acc}}^{(m)}$
are flat over $W(k)$ we have 
\[
(\widehat{\mathcal{D}}_{W(X)\to W(Y),\text{acc}}^{(m)}\widehat{\otimes}_{\varphi^{-1}(\mathcal{\widehat{D}}_{W(Y)}^{(m)})}^{L}\varphi^{-1}(\mathcal{M}^{\cdot}))\otimes_{W(k)}^{L}k
\]
\[
\tilde{\to}(\widehat{\mathcal{D}}_{W(X)\to W(Y),\text{acc}}^{(m)}/p)\widehat{\otimes}_{\varphi^{-1}(\mathcal{\widehat{D}}_{W(Y)}^{(m)})/p}^{L}(\varphi^{-1}(\mathcal{M}^{\cdot})\widehat{\otimes}_{W(k)}^{L}k)
\]
which implies the first result. The statement about $\mathcal{M}^{\cdot}\in D(\mathcal{\widehat{D}}_{W(Y)}^{(m)}/p^{n}-\text{mod})$
is similar. 
\end{proof}
Combining these two results yields immediately
\begin{prop}
\label{prop:Pullback-preserves-acc}Let $\varphi:X\to Y$ be locally
compatible with a lift of Frobenius. Then $L(W\varphi)^{*}$ takes
$D_{\text{acc}}(\mathcal{\widehat{D}}_{W(Y)}^{(m)}-\text{mod})$ to
$D_{\text{acc}}(\mathcal{\widehat{D}}_{W(X)}^{(m)}-\text{mod})$.
The same holds mod $p^{r}$, i.e., for $D_{\text{acc}}(\mathcal{\widehat{D}}_{W(Y)}^{(m)}/p^{r}-\text{mod})$. 
\end{prop}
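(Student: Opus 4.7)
The plan is to combine the two preceding results (Corollary \ref{cor:pullback-and-transfer-over-X_n} and Lemma \ref{lem:pull-and-reduce}) in a formally straightforward way: Lemma \ref{lem:pull-and-reduce} reduces accessibility of $LW\varphi^*(\mathcal{M}^\cdot)$ to accessibility after reduction mod $p$, at which point Corollary \ref{cor:pullback-and-transfer-over-X_n} (with $r=1$, which is the case that works unconditionally on $p$) identifies the pullback with a classical $\mathcal{D}^{(m)}$-module pullback tensored up by $\mathcal{B}_X^{(m)}$, which is by definition accessible.

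More concretely, I would first treat the mod $p$ case. Let $\mathcal{N}^\cdot \in D_{\text{acc}}(\widehat{\mathcal{D}}_{W(Y)}^{(m)}/p-\text{mod})$, so by \prettyref{def:Accessible} we may write $\mathcal{N}^\cdot \simeq \mathcal{B}_Y^{(m)} \otimes^L_{\mathcal{D}_Y^{(m)}} \mathcal{P}^\cdot$ for some $\mathcal{P}^\cdot \in D(\mathcal{D}_Y^{(m)}-\text{mod})$. Since $\varphi$ is locally compatible with a lift of Frobenius, \prettyref{cor:pullback-and-transfer-over-X_n} applies and gives a canonical isomorphism
\[
LW\varphi^*(\mathcal{N}^\cdot) \simeq \mathcal{B}_X^{(m)} \otimes^L_{\mathcal{D}_X^{(m)}} L\varphi^*(\mathcal{P}^\cdot),
\]
which is accessible by definition. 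Then, for $\mathcal{M}^\cdot \in D_{\text{acc}}(\widehat{\mathcal{D}}_{W(Y)}^{(m)}-\text{mod})$ (or in $D_{\text{acc}}(\widehat{\mathcal{D}}_{W(Y)}^{(m)}/p^r-\text{mod})$), \prettyref{lem:pull-and-reduce} yields
\[
LW\varphi^*(\mathcal{M}^\cdot) \otimes^L_{W(k)} k \;\simeq\; LW\varphi^*\bigl(\mathcal{M}^\cdot \otimes^L_{W(k)} k\bigr)
\]
(and analogously with $W_r(k)$ replacing $W(k)$). Since $\mathcal{M}^\cdot \otimes^L_{W(k)} k$ is accessible by definition of accessibility, the mod $p$ case just treated shows that its pullback is accessible, and hence $LW\varphi^*(\mathcal{M}^\cdot)$ is accessible by \prettyref{def:Accessible} part $3$.

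I do not anticipate any serious obstacle; the argument is essentially bookkeeping. The one point worth checking, which is really what \prettyref{cor:pullback-and-transfer-over-X_n} handles, is that the local identification of $\widehat{\mathcal{D}}_{W(X)\to W(Y),\text{acc}}^{(m)}/p$ with $\Phi_1^*\mathcal{D}_{\mathfrak{X}_1}^{(m)} \otimes^L_{\mathcal{D}_{\mathfrak{X}_1}^{(m)}} \mathcal{D}_{\mathfrak{X}_1\to\mathfrak{Y}_1}^{(m)} \otimes^L_{\varphi^{-1}\mathcal{D}_{\mathfrak{Y}_1}^{(m)}} \varphi^{-1}(\Phi_2^!\mathcal{D}_{\mathfrak{Y}_1}^{(m)})$ is canonical enough to glue without caring about the (locally existing but not globally defined) Frobenius lifts; this has already been verified there via the naturality of the identifications coming from \prettyref{cor:Iso-mod-p^n} and \prettyref{cor:Global-bimodule-mod-p}. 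With that in hand the argument above goes through over an arbitrary $X$ and $Y$.
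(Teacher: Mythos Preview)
Your proposal is correct and is exactly the argument the paper has in mind: the text simply says the proposition follows immediately by combining \prettyref{cor:pullback-and-transfer-over-X_n} (with $r=1$, which avoids any restriction on $p$) and \prettyref{lem:pull-and-reduce}. Your write-up just spells out the bookkeeping.
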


To generalize this to all morphisms, we need some information about
compositions: 
\begin{lem}
\label{lem:Composition-of-bimod}Let $\varphi:X\to Y$ and $\psi:Y\to Z$.
There is a natural map 
\[
\widehat{\mathcal{D}}_{W(X)\to W(Y),\text{acc}}^{(m)}\otimes_{W\varphi^{-1}(\widehat{\mathcal{D}}_{W(Y)}^{(m)})}W\varphi^{-1}(\widehat{\mathcal{D}}_{W(Y)\to W(Z),\text{acc}}^{(m)})\to\widehat{\mathcal{D}}_{W(X)\to W(Z),\text{acc}}^{(m)}
\]
of $(\widehat{\mathcal{D}}_{W(X)}^{(m)},W(\psi\circ\varphi)^{-1}(\widehat{\mathcal{D}}_{W(X)}^{(m)}))$-bimodules. 
\end{lem}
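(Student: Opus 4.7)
The plan is to first construct a composition map at the level of the uncompleted transfer bimodules $\widehat{\mathcal{D}}_{W(X)\to W(Y)}^{(m)}$, and then promote it to the accessible versions using the definition of $\widehat{\mathcal{D}}_{W(X)\to W(Y),\text{acc}}^{(m)}$ as an ``accessification'' (\prettyref{def:Accessible-Transfer}). Since both transfer bimodules are constructed as bi-sub-modules of appropriate $\mathcal{H}om_{W(k)}$ spaces, the composition is realized by genuine composition of $W(k)$-linear operators: an element of $\widehat{\mathcal{D}}_{W(X)\to W(Y)}^{(m)}$ is a map $W\varphi^{-1}(\mathcal{O}_{W(Y)})\to\mathcal{O}_{W(X)}$, and an element of $\widehat{\mathcal{D}}_{W(Y)\to W(Z)}^{(m)}$ (pulled back by $W\varphi$) is a map $W(\psi\circ\varphi)^{-1}(\mathcal{O}_{W(Z)})\to W\varphi^{-1}(\mathcal{O}_{W(Y)})$. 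The composition lands in $\mathcal{H}om_{W(k)}(W(\psi\circ\varphi)^{-1}(\mathcal{O}_{W(Z)}),\mathcal{O}_{W(X)})$.

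To see that the result actually lies in $\widehat{\mathcal{D}}_{W(X)\to W(Z)}^{(m)}$, I would work locally where $X,Y,Z$ admit local coordinates and use the explicit bases described in \prettyref{cor:Basis-for-bimodule}: an element of $\widehat{\mathcal{D}}_{W(X)\to W(Y)}^{(m)}$ is of the form $\sum F^{-r}(\alpha)W\varphi^{\#}(\{\partial\}_{J/p^{r}})\{\partial\}^{I}$, and composing two such sums, one computes the result using the commutation rules between $W\varphi^{\#}$ and $\{\partial\}_{J/p^{r}}$ (which follow from functoriality of canonical Hasse-Schmidt derivations and \prettyref{prop:HS-and-F}) together with the convergence estimates of \prettyref{lem:big-products}. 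This shows both that the composition map is well defined and that it is continuous for the filtrations defining the completions.

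Next I would promote to the accessible versions. By definition,
\[
\widehat{\mathcal{D}}_{W(X)\to W(Y),\text{acc}}^{(m)}=\mathcal{\widehat{D}}_{W(X),\text{acc}}^{(m)}\widehat{\otimes}_{\mathcal{\widehat{D}}_{W(X)}^{(m)}}^{L}\widehat{\mathcal{D}}_{W(X)\to W(Y)}^{(m)}\widehat{\otimes}_{W\varphi^{-1}(\mathcal{\widehat{D}}_{W(Y)}^{(m)})}^{L}W\varphi^{-1}(\mathcal{\widehat{D}}_{W(Y),\text{acc}}^{(m)}),
\]
and similarly for the other two transfer bimodules. Tensoring the uncompleted composition constructed above on the left by $\mathcal{\widehat{D}}_{W(X),\text{acc}}^{(m)}$, on the right by $W(\psi\circ\varphi)^{-1}(\mathcal{\widehat{D}}_{W(Z),\text{acc}}^{(m)})$, and inserting the unit/counit for the right adjoint ``accessification'' functor on the middle factor $W\varphi^{-1}(\mathcal{\widehat{D}}_{W(Y),\text{acc}}^{(m)})$, produces the desired map. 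The key formal input here is that $\mathcal{\widehat{D}}_{W(Y),\text{acc}}^{(m)}\to\mathcal{\widehat{D}}_{W(Y)}^{(m)}$ is a map of $(\mathcal{\widehat{D}}_{W(Y)}^{(m)},\mathcal{\widehat{D}}_{W(Y)}^{(m)})$-bimodules (c.f.\ the discussion after \prettyref{def:D-acc}), so acting through it is compatible with tensor products over $W\varphi^{-1}(\mathcal{\widehat{D}}_{W(Y)}^{(m)})$.

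The main obstacle, as usual in this paper, is controlling the derived completions: a priori the tensor products in the definition of $\widehat{\mathcal{D}}_{W(X)\to W(Y),\text{acc}}^{(m)}$ are completed derived tensor products, and the composition map must be shown to be well defined at this level, not just after passing to cohomology. To handle this, I would reduce modulo $p$ and use \prettyref{thm:Projective!}, which guarantees that $\Phi^{*}\widehat{\mathcal{D}}_{\mathfrak{X}}^{(m)}/p$ is projective over $\mathcal{\widehat{D}}_{W(X)}^{(m)}/p$ and faithfully flat over $\mathcal{D}_{\mathfrak{X}}^{(m)}/p$, so that all the derived tensor products in sight collapse to ordinary ones. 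Then Nakayama for cohomologically complete complexes (as recalled in the introduction) lets one lift the resulting map to the integral statement. Finally, one checks bi-linearity with respect to the outer actions, which follows from the bi-module structure at each step of the construction.
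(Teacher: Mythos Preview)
Your first step is exactly the paper's: one builds the composition map
\[
\widehat{\mathcal{D}}_{W(X)\to W(Y)}^{(m)}\otimes_{W\varphi^{-1}(\widehat{\mathcal{D}}_{W(Y)}^{(m)})}W\varphi^{-1}(\widehat{\mathcal{D}}_{W(Y)\to W(Z)}^{(m)})\to\widehat{\mathcal{D}}_{W(X)\to W(Z)}^{(m)}
\]
by literally composing $W(k)$-linear maps and checking locally, via the commutation relations of \prettyref{lem:big-products}, that the image lands in $\widehat{\mathcal{D}}_{W(X)\to W(Z)}^{(m)}$.

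For the passage to the accessible versions, however, the paper takes a simpler route than the one you outline. Rather than tensoring the uncompleted composition on the outside by $\mathcal{\widehat{D}}_{W(X),\text{acc}}^{(m)}$ and $W(\psi\circ\varphi)^{-1}(\mathcal{\widehat{D}}_{W(Z),\text{acc}}^{(m)})$ and then handling the two copies of $\mathcal{\widehat{D}}_{W(Y),\text{acc}}^{(m)}$ in the middle via unit/counit maps, the paper simply uses the counit maps $\widehat{\mathcal{D}}_{W(X)\to W(Y),\text{acc}}^{(m)}\to\widehat{\mathcal{D}}_{W(X)\to W(Y)}^{(m)}$ (and similarly for $Y\to Z$), which exist because $\mathcal{\widehat{D}}_{W(?),\text{acc}}^{(m)}\to\mathcal{\widehat{D}}_{W(?)}^{(m)}$ is a bimodule map. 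Composing with the uncompleted composition gives a map from the accessible tensor product to $\widehat{\mathcal{D}}_{W(X)\to W(Z)}^{(m)}$. Since the source is already an accessible $(\widehat{\mathcal{D}}_{W(X)}^{(m)},W(\psi\circ\varphi)^{-1}(\widehat{\mathcal{D}}_{W(Z)}^{(m)}))$-bimodule, applying the functor $\mathcal{P}^{\cdot}\to\mathcal{P}^{\cdot}_{\text{acc}}$ to the target factors this map through $\widehat{\mathcal{D}}_{W(X)\to W(Z),\text{acc}}^{(m)}$. This sidesteps entirely the derived-completion worries in your last paragraph: one never needs to verify that the completed derived tensor products collapse, because one is not building the map by tensoring pieces together but rather by invoking the universal property of accessification on a map whose source is already accessible. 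Your approach would also work, but the paper's is shorter and avoids the mod-$p$ reduction and Nakayama step.
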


\begin{proof}
The composition of morphisms yields a map 
\[
\widehat{\mathcal{D}}_{W(X)\to W(Y)}^{(m)}\otimes_{\varphi^{-1}(\widehat{\mathcal{D}}_{W(Y)}^{(m)})}\varphi^{-1}(\widehat{\mathcal{D}}_{W(Y)\to W(Z)}^{(m)})\to\mathcal{H}om_{W(k)}((\varphi\circ\psi)^{-1}\mathcal{O}_{W(Z)},\mathcal{O}_{W(X)})
\]
and one sees directly- working locally and use the commutation relations
of \prettyref{lem:big-products}-that the image of this map is contained
in $\widehat{\mathcal{D}}_{W(X)\to W(Z)}^{(m)}$. Now, by definition
we have 
\[
\widehat{\mathcal{D}}_{W(X)\to W(Y),\text{acc}}^{(m)}=\mathcal{\widehat{D}}_{W(X),\text{acc}}^{(m)}\widehat{\otimes}_{\mathcal{\widehat{D}}_{W(X)}^{(m)}}^{L}\widehat{\mathcal{D}}_{W(X)\to W(Y)}^{(m)}\widehat{\otimes}_{W\varphi^{-1}(\mathcal{\widehat{D}}_{W(Y)}^{(m)})}^{L}W\varphi^{-1}(\mathcal{\widehat{D}}_{W(Y),\text{acc}}^{(m)})
\]
and so the natural maps $\mathcal{\widehat{D}}_{W(X),\text{acc}}^{(m)}\to\mathcal{\widehat{D}}_{W(X)}^{(m)}$
(and $\mathcal{\widehat{D}}_{W(Y),\text{acc}}^{(m)}\to\mathcal{\widehat{D}}_{W(Y)}^{(m)}$)
yield a natural map 
\[
\widehat{\mathcal{D}}_{W(X)\to W(Y),\text{acc}}^{(m)}\to\widehat{\mathcal{D}}_{W(X)\to W(Y)}^{(m)}
\]
of $(\widehat{\mathcal{D}}_{W(X)}^{(m)},W\varphi{}^{-1}(\widehat{\mathcal{D}}_{W(Y)}^{(m)}))$-bimodules;
and the analogous fact holds for $\widehat{\mathcal{D}}_{W(Y)\to W(Z),\text{acc}}^{(m)}$.
So, we have 
\[
\widehat{\mathcal{D}}_{W(X)\to W(Y),\text{acc}}^{(m)}\otimes_{W\varphi^{-1}(\widehat{\mathcal{D}}_{W(Y)}^{(m)})}W\varphi^{-1}(\widehat{\mathcal{D}}_{W(Y)\to W(Z),\text{acc}}^{(m)})
\]
\[
\to\widehat{\mathcal{D}}_{W(X)\to W(Y)}^{(m)}\otimes_{W\varphi^{-1}(\widehat{\mathcal{D}}_{W(Y)}^{(m)})}W\varphi^{-1}(\widehat{\mathcal{D}}_{W(Y)\to W(Z)}^{(m)})\to\widehat{\mathcal{D}}_{W(X)\to W(Z)}^{(m)}
\]
And, since $\widehat{\mathcal{D}}_{W(X)\to W(Y),\text{acc}}^{(m)}\otimes_{W\varphi^{-1}(\widehat{\mathcal{D}}_{W(Y)}^{(m)})}W\varphi^{-1}(\widehat{\mathcal{D}}_{W(Y)\to W(Z),\text{acc}}^{(m)})$
is an accessible $(\widehat{\mathcal{D}}_{W(X)}^{(m)},W(\psi\circ\varphi)^{-1}(\widehat{\mathcal{D}}_{W(X)}^{(m)}))$
bimodule, by applying the functor $\mathcal{P}^{\cdot}\to\mathcal{P}_{\text{acc}}^{\cdot}$
for $(\widehat{\mathcal{D}}_{W(X)}^{(m)},W(\psi\circ\varphi)^{-1}(\widehat{\mathcal{D}}_{W(X)}^{(m)}))$
bimodules, we obtain a natural map 
\[
\widehat{\mathcal{D}}_{W(X)\to W(Y),\text{acc}}^{(m)}\otimes_{W\varphi^{-1}(\widehat{\mathcal{D}}_{W(Y)}^{(m)})}W\varphi^{-1}(\widehat{\mathcal{D}}_{W(Y)\to W(Z),\text{acc}}^{(m)})\to\widehat{\mathcal{D}}_{W(X)\to W(Z),\text{acc}}^{(m)}
\]
as required. 
\end{proof}
Now, combining this with the previous result gives: 
\begin{cor}
\label{cor:Composition-of-pullbacks} 1) With notation as in \prettyref{prop:Pullbback-and-transfer},
we have an isomorphism of bimodules $\widehat{\mathcal{D}}_{\mathfrak{X}\to\mathfrak{Y}}^{(m),\Phi_{1},\Phi_{2}}\otimes_{W(k)}^{L}k\tilde{=}\widehat{\mathcal{D}}_{X\to Y}^{(m)}$. 

2) Suppose $\psi:Y\to Z$ is another map of smooth affine varieties
with local coordinates. Then the map of the previous lemma induces
an isomorphism 
\[
\widehat{\mathcal{D}}_{W(X)\to W(Y),\text{acc}}^{(m)}\otimes_{W\varphi^{-1}(\widehat{\mathcal{D}}_{W(Y)}^{(m)})}^{L}W\varphi^{-1}(\widehat{\mathcal{D}}_{W(Y)\to W(Z),\text{acc}}^{(m)})\tilde{\to}\widehat{\mathcal{D}}_{W(X)\to W(Z),\text{acc}}^{(m)}
\]
of $(\widehat{\mathcal{D}}_{W(X)}^{(m)},W(\psi\circ\varphi)^{-1}(\widehat{\mathcal{D}}_{W(X)}^{(m)}))$-bimodules. 

3) There is an isomorphism of functors $L(W\varphi)^{*}\circ L(W\psi)^{*}\tilde{\to}LW(\psi\circ\varphi)^{*}$
from $D_{\text{acc}}(\mathcal{\widehat{D}}_{W(Z)}^{(m)}-\text{mod})$
to $D_{\text{acc}}(\mathcal{\widehat{D}}_{W(X)}^{(m)}-\text{mod})$. 
\end{cor}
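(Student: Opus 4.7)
The plan is to prove (1) first by unwinding the explicit local description of $\widehat{\mathcal{D}}_{\mathfrak{X}\to\mathfrak{Y}}^{(m),\Phi_1,\Phi_2}$, then deduce (2) by reducing modulo $p$ via Nakayama and appealing to the classical composition formula for arithmetic transfer bimodules, and finally obtain (3) as a formal consequence of (2).

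For part (1), I would first observe that $\widehat{\mathcal{D}}_{\mathfrak{X}\to\mathfrak{Y}}^{(m),\Phi_1,\Phi_2}$ is defined as the $p$-adic completion of a bi-sub-module of $\mathcal{H}om_{W(k)}(\varphi^{-1}\mathcal{O}_{\mathfrak{Y}},\mathcal{O}_{\mathfrak{X}})$, hence is $p$-torsion-free, so the derived reduction $\otimes_{W(k)}^L k$ coincides with the underived quotient mod $p$. Next, by choosing local coordinates $\{T_1,\dots,T_d\}$ on $\mathfrak{Y}$, an argument directly analogous to \prettyref{cor:Basis-for-bimodule} produces a unique expansion for every section of $\widehat{\mathcal{D}}_{\mathfrak{X}\to\mathfrak{Y}}^{(m),\Phi_1,\Phi_2}$ as a $p$-adically convergent sum of operators $\alpha_I\cdot(\pi_1\circ W\varphi^{\#}\circ\varphi^{-1}(\Phi_2))(\partial^{[I]})$ with $\alpha_I\in\widehat{\mathcal{D}}_{\mathfrak{X}}^{(m)}$. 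Reducing mod $p$, the generator $\pi_1\circ W\varphi^{\#}\circ\varphi^{-1}(\Phi_2)$ becomes exactly $\bar\varphi^{\#}:\varphi^{-1}\mathcal{O}_Y\to\mathcal{O}_X$, and the expansion becomes the standard local basis of $\widehat{\mathcal{D}}_{X\to Y}^{(m)}$, yielding the claimed isomorphism.

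For part (2), both sides of the composition map lie in $D_{cc}$ of bimodules, so by the Nakayama lemma (\cite{key-8}, proposition 1.5.8) it suffices to show that the map becomes an isomorphism after applying $\otimes_{W(k)}^L k$. Working locally with chosen Frobenius lifts, Proposition \prettyref{prop:Pullbback-and-transfer} together with part (1) identifies the mod-$p$ reductions as
\[
\mathcal{B}_X^{(m)}\otimes_{\mathcal{D}_X^{(m)}}^L \mathcal{D}_{X\to Y}^{(m)}\otimes_{\varphi^{-1}\mathcal{D}_Y^{(m)}}^L \varphi^{-1}\mathcal{B}_Y^{(m),r}
\]
and the analogous tensor for $Y\to Z$. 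The middle factor occurring in the composition, namely $\mathcal{B}_Y^{(m),r}\otimes_{\widehat{\mathcal{D}}_{W(Y)}^{(m)}/p}^L \mathcal{B}_Y^{(m)}$, is the counit of the fully faithful embedding of \prettyref{prop:Properties-of-B_W}, so it is canonically isomorphic to $\mathcal{D}_Y^{(m)}$. The composition then collapses to $\mathcal{B}_X^{(m)}\otimes^L \mathcal{D}_{X\to Y}^{(m)}\otimes_{\varphi^{-1}\mathcal{D}_Y^{(m)}}^L \varphi^{-1}\mathcal{D}_{Y\to Z}^{(m)}\otimes^L \psi\varphi^{-1}\mathcal{B}_Z^{(m),r}$, and Berthelot's classical identification $\mathcal{D}_{X\to Y}^{(m)}\otimes^L_{\varphi^{-1}\mathcal{D}_Y^{(m)}}\varphi^{-1}\mathcal{D}_{Y\to Z}^{(m)}\tilde{=}\mathcal{D}_{X\to Z}^{(m)}$ identifies the result with the mod-$p$ reduction of $\widehat{\mathcal{D}}_{W(X)\to W(Z),\text{acc}}^{(m)}$, again using part (1) and \prettyref{prop:Pullbback-and-transfer}. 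Since this isomorphism is induced by composition of $W(k)$-linear maps on $\mathcal{O}_{W(Z)}$ values, it agrees with the reduction of the natural composition map from \prettyref{lem:Composition-of-bimod}, so the latter is indeed an isomorphism.

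Part (3) is then immediate: for $\mathcal{M}^{\cdot}\in D_{\text{acc}}(\widehat{\mathcal{D}}_{W(Z)}^{(m)}-\text{mod})$, associativity of the completed tensor product combined with part (2) gives
\[
LW\varphi^{*}(LW\psi^{*}\mathcal{M}^{\cdot})=\widehat{\mathcal{D}}_{W(X)\to W(Y),\text{acc}}^{(m)}\widehat{\otimes}^L W\varphi^{-1}(\widehat{\mathcal{D}}_{W(Y)\to W(Z),\text{acc}}^{(m)}\widehat{\otimes}^L W\psi^{-1}\mathcal{M}^{\cdot})
\]
\[
\tilde{\to}\widehat{\mathcal{D}}_{W(X)\to W(Z),\text{acc}}^{(m)}\widehat{\otimes}^L W(\psi\circ\varphi)^{-1}\mathcal{M}^{\cdot}=LW(\psi\circ\varphi)^{*}\mathcal{M}^{\cdot}.
\]
The main obstacle will be the careful bookkeeping in part (2): one must verify not only that the two accessible bimodules are abstractly isomorphic mod $p$, but that the specific composition map of \prettyref{lem:Composition-of-bimod} (defined by composition of $W(k)$-linear operators) corresponds under the local identifications to the classical composition of transfer bimodules. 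This requires tracking the naturality of the isomorphism $\mathcal{B}_Y^{(m),r}\otimes\mathcal{B}_Y^{(m)}\tilde{=}\mathcal{D}_Y^{(m)}$ and ensuring compatibility with the right-action of $\varphi^{-1}\mathcal{D}_Y^{(m)}$; once this is in hand, globalization is automatic since the composition map itself is defined intrinsically.
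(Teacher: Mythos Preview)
Your overall architecture---prove (1), then deduce (2) by Nakayama and the classical composition formula, then get (3) formally---is exactly the paper's. Part (3) matches the paper verbatim. The differences are in the execution of (1) and (2).

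For part (1), your basis expansion as written does not give uniqueness: if the coefficients $\alpha_I$ range over all of $\widehat{\mathcal{D}}_{\mathfrak{X}}^{(m)}$ while the $\partial^{[I]}$ act on the right, you are over-parametrizing the bimodule (for instance $\partial_j\cdot g\cdot 1$ and $1\cdot g\cdot\partial_j$ are not independent). What you need is $\alpha_I\in\mathcal{O}_{\mathfrak{X}}$, i.e.\ an isomorphism $\varphi^{*}\widehat{\mathcal{D}}_{\mathfrak{Y}}^{(m)}\tilde{\to}\widehat{\mathcal{D}}_{\mathfrak{X}\to\mathfrak{Y}}^{(m),\Phi_1,\Phi_2}$. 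The paper constructs this map directly by $a\otimes P\mapsto a\cdot\pi_1\circ W\varphi^{\#}\circ\varphi^{-1}(\Phi_2\cdot P)$; surjectivity comes out of the proof of \prettyref{prop:Pullbback-and-transfer}, and injectivity is checked mod $p$ not by writing down a basis but by producing a \emph{retraction}: the composite
\[
\varphi^{*}\mathcal{D}_{Y}^{(m)}\to\widehat{\mathcal{D}}_{\mathfrak{X}\to\mathfrak{Y}}^{(m),\Phi_1,\Phi_2}/p\hookrightarrow\widehat{\mathcal{D}}_{W(X)\to W(Y),\text{acc}}^{(m)}/p\to\widehat{\mathcal{D}}_{W(X)\to W(Y)}^{(m)}/p\to\varphi^{*}\mathcal{D}_{Y}^{(m)}
\]
(the last arrow being the quotient by $I^{(1)}$, cf.\ \prettyref{cor:Basis-for-bimodule}) is the identity. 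This is slicker than a basis argument and also delivers the bimodule-structure compatibility you need.

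For part (2), you reduce mod $p$ globally and then collapse the middle via the counit $\mathcal{B}_Y^{(m),r}\otimes_{\widehat{\mathcal{D}}_{W(Y)}^{(m)}/p}\mathcal{B}_Y^{(m)}\tilde{=}\mathcal{D}_Y^{(m)}$. The paper instead uses \prettyref{prop:Pullbback-and-transfer} with the \emph{same} $\Phi_2$ on the right of the first factor and the left of the second, so that $\varphi^{-1}(\Phi_2^{!}\widehat{\mathcal{D}}_{\mathfrak{Y}}^{(m)})\otimes_{W\varphi^{-1}(\widehat{\mathcal{D}}_{W(Y)}^{(m)})}W\varphi^{-1}(\Phi_2^{*}\widehat{\mathcal{D}}_{\mathfrak{Y}}^{(m)})$ collapses already at the integral level; the residual statement
\[
\widehat{\mathcal{D}}_{\mathfrak{X}\to\mathfrak{Y}}^{(m),\Phi_1,\Phi_2}\widehat{\otimes}^{L}_{\varphi^{-1}(\widehat{\mathcal{D}}_{\mathfrak{Y}}^{(m)})}\varphi^{-1}(\widehat{\mathcal{D}}_{\mathfrak{Y}\to\mathfrak{Z}}^{(m),\Phi_2,\Phi_3})\to\widehat{\mathcal{D}}_{\mathfrak{X}\to\mathfrak{Z}}^{(m),\Phi_1,\Phi_3}
\]
is then checked after $\otimes^{L}_{W(k)}k$ using part (1) and the classical $\mathcal{D}_{X\to Y}^{(m)}\otimes\varphi^{-1}\mathcal{D}_{Y\to Z}^{(m)}\tilde{=}\mathcal{D}_{X\to Z}^{(m)}$. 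Your route works too, but the paper's ordering avoids having to argue separately that the counit isomorphism is compatible with the specific composition map of \prettyref{lem:Composition-of-bimod}---which is precisely the ``careful bookkeeping'' you flag as the main obstacle.
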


\begin{proof}
There is a map $\mathcal{O}_{\mathfrak{X}}\otimes\varphi^{-1}(\widehat{\mathcal{D}}_{\mathfrak{Y}}^{(m)})\to\widehat{\mathcal{D}}_{\mathfrak{X}\to\mathfrak{Y}}^{(m),\Phi_{1},\Phi_{2}}$
given by sending a tensor $a\otimes\varphi^{-1}(P)$ to $a\cdot\pi_{1}\circ W\varphi^{\#}\circ\varphi^{-1}(\Phi_{2}\cdot P)$,
passing to the $p$-adic completion yields a map 
\[
\varphi^{*}\widehat{\mathcal{D}}_{\mathfrak{Y}}^{(m)}\to\widehat{\mathcal{D}}_{\mathfrak{X}\to\mathfrak{Y}}^{(m),\Phi_{1},\Phi_{2}}
\]
and we shall show that this map is an isomorphism. The surjectivity
follows from (the proof of) \prettyref{prop:Pullbback-and-transfer},
where we in fact showed that every term of the form \prettyref{eq:big-sum-bimodule}
is a sum of terms of the form $a\cdot\pi_{1}\circ W\varphi^{\#}\circ\varphi^{-1}(\Phi_{2}\cdot P)$.
As for the injectivity, we may apply the functor of reduction mod
$p$ to obtain a map 
\[
\varphi^{*}\mathcal{D}_{Y}^{(m)}\to\widehat{\mathcal{D}}_{\mathfrak{X}\to\mathfrak{Y}}^{(m),\Phi_{1},\Phi_{2}}/p\to\widehat{\mathcal{D}}_{W(X)\to W(Y),\text{acc}}^{(m)}/p
\]
the latter map is injective as $\widehat{\mathcal{D}}_{\mathfrak{X}\to\mathfrak{Y}}^{(m),\Phi_{1},\Phi_{2}}$
is a summand of $\widehat{\mathcal{D}}_{W(X)\to W(Y),\text{acc}}^{(m)}$.
On the other hand we have maps 
\[
\widehat{\mathcal{D}}_{W(X)\to W(Y),\text{acc}}^{(m)}/p\to\widehat{\mathcal{D}}_{W(X)\to W(Y)}^{(m)}/p\to\varphi^{*}\mathcal{D}_{Y}^{(m)}
\]
where the first map is the canonical one and the second is given by
the quotient by $I^{(1)}$ (c.f. \prettyref{prop:construction-of-transfer}).
It is not hard to see that the composition of these maps is the identity
on $\varphi^{*}\mathcal{D}_{Y}^{(m)}$; therefore $\varphi^{*}\mathcal{D}_{Y}^{(m)}\to\widehat{\mathcal{D}}_{\mathfrak{X}\to\mathfrak{Y}}^{(m),\Phi_{1},\Phi_{2}}/p$
is injective. As $\varphi^{*}\widehat{\mathcal{D}}_{\mathfrak{Y}}^{(m)}\to\widehat{\mathcal{D}}_{\mathfrak{X}\to\mathfrak{Y}}^{(m),\Phi_{1},\Phi_{2}}$
is a surjection of torsion-free $p$-adically complete sheaves, by
applying $\otimes_{W(k)}^{L}k$ we see that this map is an isomorphism.
Furthermore, the composition 
\[
\widehat{\mathcal{D}}_{\mathfrak{X}\to\mathfrak{Y}}^{(m),\Phi_{1},\Phi_{2}}/p\to\widehat{\mathcal{D}}_{W(X)\to W(Y),\text{acc}}^{(m)}/p\to\varphi^{*}\mathcal{D}_{Y}^{(m)}
\]
is easily seen to be a map of $(\mathcal{D}_{X}^{(m)},\varphi^{-1}(\mathcal{D}_{Y}^{(m)}))$
where $\mathcal{D}_{X}^{(m)}$ acts via the identification $\varphi^{*}\mathcal{D}_{Y}^{(m)}\tilde{=}\mathcal{D}_{X\to Y}^{(m)}$
(this follows readily in local coordinates). Thus $1)$ is proved.

As for $2)$, the map in question is obtained from the previous lemma
by noting that 
\[
\mathcal{H}^{0}(\widehat{\mathcal{D}}_{W(X)\to W(Y),\text{acc}}^{(m)}\otimes_{W\varphi^{-1}(\widehat{\mathcal{D}}_{W(Y)}^{(m)})}^{L}W\varphi^{-1}(\widehat{\mathcal{D}}_{W(Y)\to W(Z),\text{acc}}^{(m)}))
\]
\[
=\widehat{\mathcal{D}}_{W(X)\to W(Y),\text{acc}}^{(m)}\otimes_{W\varphi^{-1}(\widehat{\mathcal{D}}_{W(Y)}^{(m)})}W\varphi^{-1}(\widehat{\mathcal{D}}_{W(Y)\to W(Z),\text{acc}}^{(m)})
\]
Using the identifications 
\[
\widehat{\mathcal{D}}_{W(X)\to W(Y),\text{acc}}^{(m)}\tilde{=}\Phi_{1}^{*}\mathcal{\widehat{D}}_{\mathfrak{X}}^{(m)}\widehat{\otimes}_{\mathcal{\widehat{D}}_{\mathfrak{X}}^{(m)}}^{L}\widehat{\mathcal{D}}_{\mathfrak{X}\to\mathfrak{Y}}^{(m),\Phi_{1},\Phi_{2}}\widehat{\otimes}_{\varphi^{-1}(\mathcal{\widehat{D}}_{\mathfrak{Y}}^{(m)})}^{L}\varphi^{-1}(\Phi_{2}^{!}\widehat{\mathcal{D}}_{\mathfrak{Y}}^{(m)})
\]
and 
\[
W\varphi^{-1}(\widehat{\mathcal{D}}_{W(Y)\to W(Z),\text{acc}}^{(m)})\tilde{=}W\varphi^{-1}(\Phi_{2}^{*}\mathcal{\widehat{D}}_{\mathfrak{Y}}^{(m)}\widehat{\otimes}_{\mathcal{\widehat{D}}_{\mathfrak{Y}}^{(m)}}^{L}\widehat{\mathcal{D}}_{\mathfrak{Y}\to\mathfrak{Z}}^{(m),\Phi_{2},\Phi_{3}}\widehat{\otimes}_{\psi^{-1}(\mathcal{\widehat{D}}_{\mathfrak{Z}}^{(m)})}^{L}\psi^{-1}(\Phi_{3}^{!}\widehat{\mathcal{D}}_{\mathfrak{Z}}^{(m)}))
\]
yields an isomorphism 
\[
\widehat{\mathcal{D}}_{W(X)\to W(Y),\text{acc}}^{(m)}\otimes_{W\varphi^{-1}(\widehat{\mathcal{D}}_{W(Y)}^{(m)})}^{L}W\varphi^{-1}(\widehat{\mathcal{D}}_{W(Y)\to W(Z),\text{acc}}^{(m)})
\]
\[
\tilde{\to}\Phi_{1}^{*}\mathcal{\widehat{D}}_{\mathfrak{X}}^{(m)}\widehat{\otimes}_{\mathcal{\widehat{D}}_{\mathfrak{X}}^{(m)}}^{L}\widehat{\mathcal{D}}_{\mathfrak{X}\to\mathfrak{Y}}^{(m),\Phi_{1},\Phi_{2}}\widehat{\otimes}_{\varphi^{-1}(\mathcal{\widehat{D}}_{\mathfrak{Y}}^{(m)})}^{L}\varphi^{-1}(\widehat{\mathcal{D}}_{\mathfrak{Y}\to\mathfrak{Z}}^{(m),\Phi_{2},\Phi_{3}}\widehat{\otimes}_{\psi^{-1}(\mathcal{\widehat{D}}_{\mathfrak{Z}}^{(m)})}^{L}\psi^{-1}(\Phi_{3}^{!}\widehat{\mathcal{D}}_{\mathfrak{Z}}^{(m)}))
\]
\[
\tilde{\to}\Phi_{1}^{*}\mathcal{\widehat{D}}_{\mathfrak{X}}^{(m)}\widehat{\otimes}_{\mathcal{\widehat{D}}_{\mathfrak{X}}^{(m)}}^{L}(\widehat{\mathcal{D}}_{\mathfrak{X}\to\mathfrak{Y}}^{(m),\Phi_{1},\Phi_{2}}\widehat{\otimes}_{\varphi^{-1}(\mathcal{\widehat{D}}_{\mathfrak{Y}}^{(m)})}^{L}\varphi^{-1}(\widehat{\mathcal{D}}_{\mathfrak{Y}\to\mathfrak{Z}}^{(m),\Phi_{2},\Phi_{3}}))\widehat{\otimes}_{(\varphi\circ\psi)^{-1}(\mathcal{\widehat{D}}_{\mathfrak{Z}}^{(m)})}^{L}(\varphi\circ\psi)^{-1}(\Phi_{3}^{!}\widehat{\mathcal{D}}_{\mathfrak{Z}}^{(m)}))
\]
So the map of the previous lemma yields a map 
\[
\widehat{\mathcal{D}}_{\mathfrak{X}\to\mathfrak{Y}}^{(m),\Phi_{1},\Phi_{2}}\widehat{\otimes}_{\varphi^{-1}(\mathcal{\widehat{D}}_{\mathfrak{Y}}^{(m)})}^{L}\varphi^{-1}(\widehat{\mathcal{D}}_{\mathfrak{Y}\to\mathfrak{Z}}^{(m),\Phi_{2},\Phi_{3}})\to\widehat{\mathcal{D}}_{\mathfrak{X}\to\mathfrak{Z}}^{(m),\Phi_{1},\Phi_{3}}
\]
which is seen to be an isomorphism by applying $\otimes_{W(k)}^{L}k$
and part $1)$; so part $2)$ follows as well. 

3) Let $\mathcal{M}^{\cdot}\in D_{\text{acc}}(\mathcal{\widehat{D}}_{W(Y)}^{(m)}-\text{mod})$.
Then by part $2)$ we have 
\[
LW(\psi\circ\varphi)^{*}\mathcal{M}^{\cdot}=\widehat{\mathcal{D}}_{W(X)\to W(Z),\text{acc}}^{(m)}\otimes_{W(\psi\circ\varphi)^{-1}\mathcal{\widehat{D}}_{W(Z)}^{(m)}}^{L}W(\psi\circ\varphi)^{-1}\mathcal{M}^{\cdot}
\]
\[
\tilde{\to}\widehat{\mathcal{D}}_{W(X)\to W(Y),\text{acc}}^{(m)}\otimes_{W\varphi^{-1}(\widehat{\mathcal{D}}_{W(Y)}^{(m)})}^{L}W\varphi^{-1}(\widehat{\mathcal{D}}_{W(Y)\to W(Z),\text{acc}}^{(m)})\otimes_{W\varphi^{-1}\circ W\psi^{-1}(\mathcal{\widehat{D}}_{W(Z)}^{(m)})}^{L}W\varphi^{-1}\circ W\psi{}^{-1}(\mathcal{M}^{\cdot})
\]
\[
\tilde{\to}\widehat{\mathcal{D}}_{W(X)\to W(Y),\text{acc}}^{(m)}\otimes_{W\varphi^{-1}(\widehat{\mathcal{D}}_{W(Y)}^{(m)})}^{L}W\varphi^{-1}(\widehat{\mathcal{D}}_{W(Y)\to W(Z),\text{acc}}^{(m)}\otimes_{W\psi^{-1}(\mathcal{\widehat{D}}_{W(Z)}^{(m)})}^{L}W\psi{}^{-1}(\mathcal{M}^{\cdot}))
\]
\[
\tilde{\to}LW\varphi^{*}(LW\psi^{*}\mathcal{M}^{\cdot})
\]
as required. 
\end{proof}
Thus we may conclude
\begin{cor}
\label{cor:Pull-and-compose} 1) For any $\varphi$, if $\mathcal{M}^{\cdot}\in D_{\text{acc}}(\mathcal{\widehat{D}}_{W(Y)}^{(m)}-\text{mod})$,
we have $L\varphi^{*}(\mathcal{M}^{\cdot})\in D_{\text{acc}}(\mathcal{\widehat{D}}_{W(X)}^{(m)}-\text{mod})$.
The same holds for $\mathcal{M}^{\cdot}\in D_{\text{acc}}(\mathcal{\widehat{D}}_{W(Y)}^{(m)}/p^{r}-\text{mod})$. 

2) The result of \prettyref{cor:pullback-and-transfer-over-X_n} holds
for an arbitrary morphism $\varphi$. 
\end{cor}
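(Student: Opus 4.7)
The strategy is to observe that although Proposition~\prettyref{prop:Pullback-preserves-acc} and Corollary~\prettyref{cor:pullback-and-transfer-over-X_n} were conservatively stated under the hypothesis ``$\varphi$ locally compatible with a Frobenius lift,'' the two technical ingredients on which they rest---Proposition~\prettyref{prop:Pullbback-and-transfer} and Corollary~\prettyref{cor:Composition-of-pullbacks}(1)---were actually established in full generality, for any lift $\tilde\varphi\colon\mathfrak X\to\mathfrak Y$ and any pair of coordinatized Frobenius lifts $\Phi_{1},\Phi_{2}$, with no commutation $\Phi_{2}\tilde\varphi=\tilde\varphi\Phi_{1}$ imposed. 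Both parts of the corollary amount to rerunning those arguments in this wider setting.

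For part (1), I would reduce to $X=\text{Spec}(A)$, $Y=\text{Spec}(B)$ affine with local coordinates, use the infinitesimal lifting property to choose an arbitrary lift $\tilde\varphi\colon\mathfrak X\to\mathfrak Y$ together with arbitrary coordinatized Frobenius lifts $\Phi_{1},\Phi_{2}$, and invoke Proposition~\prettyref{prop:Pullbback-and-transfer} to write
\[
\widehat{\mathcal D}_{W(X)\to W(Y),\mathrm{acc}}^{(m)}\;\tilde{=}\;\Phi_{1}^{*}\widehat{\mathcal D}_{\mathfrak X}^{(m)}\,\widehat{\otimes}^{L}_{\widehat{\mathcal D}_{\mathfrak X}^{(m)}}\,\widehat{\mathcal D}_{\mathfrak X\to\mathfrak Y}^{(m),\Phi_{1},\Phi_{2}}\,\widehat{\otimes}^{L}_{\varphi^{-1}\widehat{\mathcal D}_{\mathfrak Y}^{(m)}}\,\varphi^{-1}(\Phi_{2}^{!}\widehat{\mathcal D}_{\mathfrak Y}^{(m)}).
\]
Writing $\mathcal M^{\cdot}\tilde{=}\Phi_{2}^{*}\mathcal N^{\cdot}$ locally (Theorem~\prettyref{thm:Local-Accessible}) and plugging this into Definition~\prettyref{def:Pull!}, the counit of the $(\Phi_{2}^{*},\Phi_{2}^{!})$-adjunction on accessible complexes gives $\Phi_{2}^{!}\widehat{\mathcal D}_{\mathfrak Y}^{(m)}\widehat\otimes^{L}\Phi_{2}^{*}\mathcal N^{\cdot}\tilde{=}\mathcal N^{\cdot}$, collapsing the rightmost factor and leaving an expression of the form $\Phi_{1}^{*}(\text{complex in }D_{cc}(\widehat{\mathcal D}_{\mathfrak X}^{(m)}))$, accessible by Theorem~\prettyref{thm:Local-Accessible} again. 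The mod $p^{r}$ version is identical, with Lemma~\prettyref{lem:pull-and-reduce} translating between the two settings.

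For part (2), the additional input is Corollary~\prettyref{cor:Composition-of-pullbacks}(1), whose proof in fact constructs, for arbitrary $\Phi_{1},\Phi_{2}$, a canonical isomorphism $\widehat{\mathcal D}_{\mathfrak X\to\mathfrak Y}^{(m),\Phi_{1},\Phi_{2}}\tilde{=}\varphi^{*}\widehat{\mathcal D}_{\mathfrak Y}^{(m)}$ of $(\widehat{\mathcal D}_{\mathfrak X}^{(m)},\varphi^{-1}\widehat{\mathcal D}_{\mathfrak Y}^{(m)})$-bimodules, with the right-hand side carrying Berthelot's transfer-bimodule structure. Substituting into the formula from part (1) and reducing mod $p^{r}$, the inner tensor product becomes $L\varphi^{*}(\mathcal N^{\cdot})$ in the classical $\mathcal D$-module sense, while the outer $\Phi_{1}^{*}$ is precisely $\mathcal B_{\mathfrak X_{r}}^{(m)}\otimes^{L}_{\mathcal D_{\mathfrak X_{r}}^{(m)}}(-)$; this yields the required isomorphism.

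The main obstacle is the global gluing of these local formulas. The sheaf $\widehat{\mathcal D}_{W(X)\to W(Y),\mathrm{acc}}^{(m)}$ is already well-defined globally (Definition~\prettyref{def:Accessible-Transfer}), so the issue is only the compatibility of the local identifications on overlapping affine charts. Independence of the Frobenius lifts follows from Theorem~\prettyref{thm:Uniqueness-of-bimodule} and Corollary~\prettyref{cor:Iso-mod-p^n}, which supply canonical cocycle isomorphisms among the various $\Phi_{i}^{*}\widehat{\mathcal D}_{\mathfrak X}^{(m)}$ and $\widehat{\mathcal D}_{\mathfrak X\to\mathfrak Y}^{(m),\Phi_{1},\Phi_{2}}$; independence of the auxiliary lift $\tilde\varphi$ (for fixed $\Phi_{i}$) is a routine infinitesimal-deformation argument in the style of Theorem~\prettyref{thm:Bimodule-unique!}(1), using that two lifts of $\varphi$ differ by a derivation into a nilpotent ideal. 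This is where the bulk of the bookkeeping lies, but it introduces no essentially new ideas.
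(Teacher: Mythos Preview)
Your approach is correct but takes a genuinely different route from the paper. The paper's proof is a two-line factorization argument: write $\varphi=\varphi_{2}\circ\varphi_{1}$ with $\varphi_{1}$ a closed immersion and $\varphi_{2}$ smooth (e.g.\ the graph factorization $X\hookrightarrow X\times Y\to Y$). Each of these is locally compatible with a Frobenius lift, so Proposition~\ref{prop:Pullback-preserves-acc} and Corollary~\ref{cor:pullback-and-transfer-over-X_n} apply to each factor separately, and Corollary~\ref{cor:Composition-of-pullbacks}(3) assembles the composite. Part~(2) follows by the same factorization.

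Your approach instead exploits the observation that Proposition~\ref{prop:Pullbback-and-transfer} and Corollary~\ref{cor:Composition-of-pullbacks}(1) were proved without the compatibility $\Phi_{2}\circ\varphi=\varphi\circ\Phi_{1}$, and pushes the local argument through directly. For part~(1) this is perfectly clean, since accessibility is characterized by an adjunction map being an isomorphism (proof of Theorem~\ref{thm:Local-Accessible}), hence is local. For part~(2) your route is also valid but, as you note, requires checking that the local identifications $\widehat{\mathcal D}_{\mathfrak X\to\mathfrak Y}^{(m),\Phi_{1},\Phi_{2}}\simeq\varphi^{*}\widehat{\mathcal D}_{\mathfrak Y}^{(m)}$ glue compatibly across charts and across choices of $\tilde\varphi$; the paper's factorization sidesteps this entirely by reducing to morphisms where the Frobenius-compatible case already suffices. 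The trade-off: your argument is more self-contained and clarifies exactly where the Frobenius-compatibility hypothesis is dispensable, while the paper's is shorter and leverages the composition formula to avoid any new gluing verification.
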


\begin{proof}
1) Factor $\varphi:X\to Y$ as $\varphi_{1}:X\to Z$ followed by $\varphi_{2}:Z\to Y$,
where $\varphi$ is a closed immersion and $\varphi_{2}$ is smooth.
By the previous result, it suffices to check that each of $L(W\varphi_{1})^{*}$
and $L(W\varphi_{2})^{*}$ preserve accessibility; as they are each
locally compatible with a lift of Frobenius this follows from \prettyref{prop:Pullback-preserves-acc}. 

2) This is similar to $1)$- we break up $\varphi=\varphi_{2}\circ\varphi_{1}$
and use \prettyref{cor:pullback-and-transfer-over-X_n} . 
\end{proof}
To finish off this section, let's note the compatibility with the
pullback for crystals. Namely,
\begin{prop}
\label{prop:pullback-and-crystals}Let $r\geq1$, and let $\epsilon$
be the functor of \prettyref{thm:Embedding-of-crystals}. Then for
any smooth morphism $\varphi:X\to Y$ smooth, and any $\mathcal{M}^{\cdot}\in D_{\text{qcoh}}(\text{Crys}_{W_{r}(k)}(X))$,
we have 
\[
LW\varphi^{*}\circ\epsilon\tilde{\to}\epsilon\circ L\varphi_{\text{crys}}^{*}
\]
where $L\varphi_{\text{crys}}^{*}$ is the pullback in the derived
category of crystals. 
\end{prop}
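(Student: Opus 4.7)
The plan is to verify the isomorphism locally and then glue using the canonical identifications already established in the paper. Assume (as must be intended) that $\mathcal{M}^{\cdot}\in D_{\mathrm{qcoh}}(\mathrm{Crys}_{W_{r}(k)}(Y))$; both $LW\varphi^{*}\circ\epsilon$ and $\epsilon\circ L\varphi_{\mathrm{crys}}^{*}$ are triangulated functors constructed via bimodules and pull back along Zariski open immersions, so it suffices to construct a functorial isomorphism locally and check that the gluing data agree.

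First, I reduce to the affine case $Y=\mathrm{Spec}(B)$, $X=\mathrm{Spec}(A)$ with local coordinates, and I choose coordinatized lifts $\mathfrak{Y}$, $\mathfrak{X}$ (over $W(k)$) together with coordinatized Frobenius lifts $F_2:\mathcal{B}\to\mathcal{B}$, $F_1:\mathcal{A}\to\mathcal{A}$ and a lift $\tilde\varphi:\mathfrak{X}\to\mathfrak{Y}$ of $\varphi$ intertwining $F_1$ with $F_2$; such data exist after shrinking (this is the content of $\varphi$ being locally compatible with a Frobenius lift, which for smooth $\varphi$ is automatic). Given such a choice, $\mathcal{M}:=\mathcal{M}^{\cdot}$ corresponds, under the crystalline description via lifts, to a complex $\mathcal{N}^{\cdot}$ of quasicoherent $\mathcal{D}_{\mathfrak{Y}_{r},\mathrm{crys}}^{(0)}$-modules with locally nilpotent action, and from the construction of $\eta$ in Theorem~\ref{thm:Embedding-of-crystals} we have $\epsilon(\mathcal{M})\tilde{\to}\Phi_{2}^{*}\mathcal{N}^{\cdot}$ as an accessible $\widehat{\mathcal{D}}_{W(Y)}^{(0)}/p^{r}$-module (using here the forgetful functor from $\widehat{\mathcal{D}}_{W(Y),\mathrm{crys}}^{(0)}/p^{r}$ to $\widehat{\mathcal{D}}_{W(Y)}^{(0)}/p^{r}$).

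Next I apply Corollary~\ref{cor:pullback-and-transfer-over-X_n} (extended to all $\varphi$ by Corollary~\ref{cor:Pull-and-compose}) to obtain a canonical isomorphism
\[
LW\varphi^{*}\epsilon(\mathcal{M})\;\tilde{\to}\;\mathcal{B}_{\mathfrak{X}_{r}}^{(0)}\otimes_{\mathcal{D}_{\mathfrak{X}_{r}}^{(0)}}^{L}L\tilde\varphi^{*}\mathcal{N}^{\cdot},
\]
where $L\tilde\varphi^{*}$ is the usual $\mathcal{D}^{(0)}$-module pullback along $\tilde\varphi:\mathfrak{X}_{r}\to\mathfrak{Y}_{r}$. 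On the other hand, the classical description of crystalline pullback via lifts (see \cite{key-10}, ch.~6) identifies $L\varphi_{\mathrm{crys}}^{*}\mathcal{M}$ locally with $L\tilde\varphi^{*}\mathcal{N}^{\cdot}$ equipped with its induced nilpotent connection, i.e.\ with its natural structure of a locally nilpotent $\mathcal{D}_{\mathfrak{X}_{r},\mathrm{crys}}^{(0)}$-module; thus $\epsilon(L\varphi_{\mathrm{crys}}^{*}\mathcal{M})\tilde{\to}\Phi_{1}^{*}(L\tilde\varphi^{*}\mathcal{N}^{\cdot})\tilde{\to}\mathcal{B}_{\mathfrak{X}_{r}}^{(0)}\otimes_{\mathcal{D}_{\mathfrak{X}_{r}}^{(0)}}^{L}L\tilde\varphi^{*}\mathcal{N}^{\cdot}$. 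Composing, I obtain the desired isomorphism locally.

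The remaining task is to check independence from the auxiliary choices so that the local isomorphisms glue into a global one. For a change of coordinatized Frobenius lifts on $\mathfrak{Y}$ (resp.\ $\mathfrak{X}$) the two presentations of $\epsilon$ are identified by the canonical bimodule isomorphism of Theorem~\ref{thm:Uniqueness-of-bimodule}/Corollary~\ref{cor:Iso-mod-p^n}, which in turn, by Corollary~\ref{cor:Crystalline-Iso}, agrees with the identification furnished by the crystalline site; for a change of the lift $\tilde\varphi$, the classical Taylor-series comparison for pullbacks of crystals matches with the canonical isomorphism of pullback bimodules $\widehat{\mathcal{D}}_{\mathfrak{X}\to\mathfrak{Y}}^{(0),\Phi_{1},\Phi_{2}}$ (Proposition~\ref{prop:Pullbback-and-transfer}); and for a change of the lifts $\mathfrak{X}$, $\mathfrak{Y}$ themselves one uses the standard independence for crystals together with the fact that both sides only depend on the reduction mod $p^{r}$. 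Verifying these compatibilities -- and in particular that the isomorphism of Corollary~\ref{cor:pullback-and-transfer-over-X_n} is itself functorial in the lifts (so that cocycle conditions match on triple overlaps) -- is the main technical obstacle; it is essentially a diagram-chase invoking the uniqueness statements already proved for the bimodules $\Phi^{*}\widehat{\mathcal{D}}_{\mathcal{A}}^{(0)}$ and the transfer bimodules, together with the corresponding uniqueness on the crystalline side. Once this is done, the local isomorphisms glue to the required natural isomorphism of functors $LW\varphi^{*}\circ\epsilon\tilde{\to}\epsilon\circ L\varphi_{\mathrm{crys}}^{*}$.
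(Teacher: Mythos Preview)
Your approach is essentially the same as the paper's: both reduce to the local situation with compatible Frobenius lifts and invoke Corollary~\ref{cor:pullback-and-transfer-over-X_n} to identify $LW\varphi^{*}$ with the usual $\mathcal{D}^{(0)}$-module pullback, then observe that this is by definition the crystalline pullback. The paper's proof is shorter because it first uses that $\varphi$ is smooth to make $\varphi^{*}$ exact and thereby reduce to $\mathcal{M}$ concentrated in a single degree, which sidesteps the extended discussion of gluing and independence of choices that occupies your final paragraph; once one is in a single degree the comparison is immediate from the local description of $\epsilon$ and of the crystalline pullback, and no explicit cocycle check is needed.
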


\begin{proof}
As $\varphi$ is smooth, $\varphi^{*}$ is exact, and so it suffices
to prove this when $\mathcal{M}$ is concentrated in a single degree.
In that case, we have that, for a local lift $\varphi:\mathfrak{X}_{r}\to\mathfrak{Y}_{r}$
is compatible with a Frobenius lift; and so $LW\varphi^{*}$ corresponds
to the usual pullback under \prettyref{cor:pullback-and-transfer-over-X_n}.
This shows immediately that the pullback preserves the local nilpotence
condition; furthermore, this is exactly the definition of the pullback
in the category of crystals, and the proposition follows. 
\end{proof}

\subsection{\label{subsec:Operations-on-modules:Push!}Operations on modules:
Pushforward}

As discussed in the introduction, the key to defining a pushforward
in (the usual) $\mathcal{D}$-module theory is the definition of a
transfer bimodule $\mathcal{D}_{X\leftarrow Y}$. This is constructed
as follows: we already have a $(\mathcal{D}_{X},\varphi^{-1}(\mathcal{D}_{Y}))$
bimodule $\mathcal{D}_{X\to Y}$; we can apply the left-right interchange
simultaneously to left $\mathcal{D}_{X}$-modules and to right $\varphi^{-1}(\mathcal{D}_{Y})$-modules
to obtain 
\[
\mathcal{D}_{Y\leftarrow X}:=\omega_{X}\otimes_{\mathcal{O}_{X}}\mathcal{D}_{X\to Y}\otimes_{\varphi^{-1}(\mathcal{O}_{Y})}\varphi^{-1}(\omega_{Y}^{-1})
\]
which is now a $(\varphi^{-1}(\mathcal{D}_{Y}),\mathcal{D}_{X})$-bimodule.
In particular, if $Y$ is a point then $\mathcal{D}_{Y\leftarrow X}=\omega_{X}$
with its canonical right $\mathcal{D}$-module structure. 

If we now have $\varphi:\mathfrak{X}\to\mathfrak{Y}$ a morphism of
smooth formal schemes, and we fix some $m\geq0$, an identical procedure
defines $\widehat{\mathcal{D}}_{\mathfrak{Y}\leftarrow\mathfrak{X}}^{(m)}$,
and then the pushforward is defined\footnote{This is not quite Berthelot's definition; as he works with sheaves
over $W_{r}(k)$ and then takes an inverse limit; whereas we bypass
this by using the cohomological completion. The two notions agree
on a large subclass of objects, for instance on coherent $\mathcal{D}$-modules} as 
\[
\int_{\varphi}\mathcal{M}^{\cdot}:=R\varphi_{*}(\widehat{\mathcal{D}}_{\mathfrak{Y}\leftarrow\mathfrak{X}}^{(m)}\widehat{\otimes}_{\widehat{\mathcal{D}}_{\mathfrak{X}}^{(m)}}\mathcal{M}^{\cdot})
\]

We can utilize the same procedure to define $\widehat{\mathcal{D}}_{W(Y)\leftarrow W(X),\text{acc}}^{(m)}$. 
\begin{defn}
\label{def:Right-transfer}Let $\varphi:X\to Y$. The transfer bimodule
$\widehat{\mathcal{D}}_{W(Y)\leftarrow W(X),\text{acc}}^{(m)}$ is
the sheaf of $(W\varphi^{-1}(\mathcal{\widehat{D}}_{W(Y)}^{(m)}),\mathcal{\widehat{D}}_{W(X)}^{(m)})$
bimodules defined as:
\[
(W\omega_{X}\widehat{\otimes}_{\mathcal{O}_{W(X)}}\mathcal{\widehat{D}}_{W(X),\text{acc}}^{(m)})\otimes_{\mathcal{\widehat{D}}_{W(X)}^{(m)}}\widehat{\mathcal{D}}_{W(X)\to W(Y),\text{acc}}^{(m)}\otimes_{W\varphi^{-1}(\mathcal{\widehat{D}}_{W(Y)}^{(m)})}W\varphi^{-1}(\tilde{\mathcal{H}om}_{\mathcal{O}_{W(Y)}}(W\omega_{Y},\mathcal{\widehat{D}}_{W(Y),\text{acc}}^{(m)}))
\]
\end{defn}

With this in hand, one makes the 
\begin{defn}
Let $\mathcal{M}^{\cdot}\in D_{\text{acc}}(\mathcal{\widehat{D}}_{W(X)}^{(m)}/p^{r}-\text{mod})$
(for $r\geq1$) Then for any morphism $\varphi:X\to Y$ we define
\[
\int_{W\varphi}\mathcal{M}^{\cdot}:=R(W\varphi)_{*}((\widehat{\mathcal{D}}_{W(Y)\leftarrow W(X)}^{(m)}/p^{r})\otimes_{\mathcal{\widehat{D}}_{W(X)}^{(m)}/p^{r}}^{L}\mathcal{M}^{\cdot})_{\text{acc}}
\]
Next, let $\mathcal{M}^{\cdot}\in D_{\text{acc}}(\mathcal{\widehat{D}}_{W(X)}^{(m)}-\text{mod})$.
Then we define 
\[
\int_{W\varphi}\mathcal{M}^{\cdot}:=R(W\varphi)_{*}(\widehat{\mathcal{D}}_{W(Y)\leftarrow W(X)}^{(m)}\widehat{\otimes}_{\mathcal{\widehat{D}}_{W(X)}^{(m)}}^{L}\mathcal{M}^{\cdot})_{\text{acc}}
\]
where as above $\widehat{\otimes}$ denotes the derived completion. 
\end{defn}

In fact these two definitions agree on their overlap:
\begin{rem}
\label{rem:Pushforwards-agree}Let $\mathcal{M}^{\cdot}\in D_{\text{acc}}(\mathcal{\widehat{D}}_{W(X)}^{(m)}/p^{r}-\text{mod})$,
and regard $\mathcal{M}^{\cdot}$ as an element of $D_{\text{acc}}(\mathcal{\widehat{D}}_{W(X)}^{(m)}-\text{mod})$
via the obvious inclusion. Then we have an isomorphism 
\[
(\widehat{\mathcal{D}}_{W(Y)\leftarrow W(X)}^{(m)}/p^{r})\otimes_{\mathcal{\widehat{D}}_{W(X)}^{(m)}/p^{r}}^{L}\mathcal{M}^{\cdot}\tilde{\to}\widehat{\mathcal{D}}_{W(Y)\leftarrow W(X)}^{(m)}\widehat{\otimes}_{\mathcal{\widehat{D}}_{W(X)}^{(m)}}^{L}\mathcal{M}^{\cdot}
\]
of complexes of $\varphi^{-1}(\mathcal{\widehat{D}}_{W(Y)}^{(m)})$-modules.
To see this, note that $\widehat{\mathcal{D}}_{W(Y)\leftarrow W(X)}^{(m)}$
is itself $p$-torsion free. So, to show the above, we let $\mathcal{F}^{\cdot}$
be a flat resolution of $\widehat{\mathcal{D}}_{W(Y)\leftarrow W(X)}^{(m)}$
in the category of right-$\mathcal{\widehat{D}}_{W(X)}^{(m)}$-modules.
Then, as $p^{r}$ annihilates each term of $\mathcal{M}^{\cdot}$,
we have 
\[
\widehat{\mathcal{D}}_{W(Y)\leftarrow W(X)}^{(m)}\widehat{\otimes}_{\mathcal{\widehat{D}}_{W(X)}^{(m)}}^{L}\mathcal{M}^{\cdot}=\widehat{\mathcal{D}}_{W(Y)\leftarrow W(X)}^{(m)}\otimes_{\mathcal{\widehat{D}}_{W(X)}^{(m)}}^{L}\mathcal{M}^{\cdot}
\]
and this is computed by 
\[
\mathcal{F}^{\cdot}\otimes_{\mathcal{\widehat{D}}_{W(X)}^{(m)}}\mathcal{M}^{\cdot}\tilde{\to}\mathcal{F}^{\cdot}/p^{r}\otimes_{\mathcal{\widehat{D}}_{W(X)}^{(m)}/p^{r}}\mathcal{M}^{\cdot}
\]
and since $\widehat{\mathcal{D}}_{W(X)\leftarrow W(Y)}^{(m)}$ is
flat over $W(k)$, we have that $\mathcal{F}^{\cdot}/p^{r}$ is a
flat resolution of $\widehat{\mathcal{D}}_{W(X)\leftarrow W(Y)}^{(m)}/p^{r}$
over $\mathcal{\widehat{D}}_{W(X)}^{(m)}/p^{r}$, whence the result. 
\end{rem}

As for pullback, the main tool to study this morphism comes from giving
an explicit expression for it locally. Indeed, combining \prettyref{def:Right-transfer}
with \prettyref{prop:Pullbback-and-transfer} we obtain 
\begin{prop}
\label{prop:local-bimodule-for-push}Let $\widehat{\mathcal{D}}_{\mathfrak{Y}\leftarrow\mathfrak{X}}^{(m),\Phi_{1},\Phi_{2}}$
denote the $(\varphi^{-1}(\mathcal{\widehat{D}}_{\mathfrak{Y}}^{(m)}),\mathcal{\widehat{D}}_{\mathfrak{X}}^{(m)})$-bimodule
obtained from $\widehat{\mathcal{D}}_{\mathfrak{X}\to\mathfrak{Y}}^{(m),\Phi_{1},\Phi_{2}}$
(as defined in \prettyref{prop:Pullbback-and-transfer}) via simultaneously
applying the left right interchange of $\mathcal{\widehat{D}}_{\mathfrak{X}}^{(m)}$-modules
and $\varphi^{-1}(\mathcal{\widehat{D}}_{\mathfrak{Y}}^{(m)})$-modules.
We have 
\[
\widehat{\mathcal{D}}_{W(Y)\leftarrow W(X),\text{acc}}^{(m)}\tilde{=}\varphi^{-1}(\Phi_{2}^{*}\widehat{\mathcal{D}}_{\mathfrak{Y}}^{(m)})\widehat{\otimes}_{\varphi^{-1}(\mathcal{\widehat{D}}_{\mathfrak{Y}}^{(m)})}^{L}\widehat{\mathcal{D}}_{\mathfrak{Y}\leftarrow\mathfrak{X}}^{(m),\Phi_{1},\Phi_{2}}\widehat{\otimes}_{\mathcal{\widehat{D}}_{\mathfrak{X}}^{(m)}}^{L}\Phi_{1}^{!}\mathcal{\widehat{D}}_{\mathfrak{X}}^{(m)}
\]
and this tensor product is concentrated in homological degree $0$.
In particular, if $\varphi\circ\Phi_{1}=\Phi_{2}\circ\varphi$ then
we obtain 
\[
\widehat{\mathcal{D}}_{W(Y)\leftarrow W(X),\text{acc}}^{(m)}\tilde{=}\varphi^{-1}(\Phi_{2}^{*}\widehat{\mathcal{D}}_{\mathfrak{Y}}^{(m)})\widehat{\otimes}_{\varphi^{-1}(\mathcal{\widehat{D}}_{\mathfrak{Y}}^{(m)})}^{L}\widehat{\mathcal{D}}_{\mathfrak{Y}\leftarrow\mathfrak{X}}^{(m)}\widehat{\otimes}_{\mathcal{\widehat{D}}_{\mathfrak{X}}^{(m)}}^{L}\Phi_{1}^{!}\mathcal{\widehat{D}}_{\mathfrak{X}}^{(m)}
\]
\end{prop}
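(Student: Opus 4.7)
The plan is to work locally and obtain the identification by substituting the known local expressions into the definition of $\widehat{\mathcal{D}}_{W(Y)\leftarrow W(X),\text{acc}}^{(m)}$ given in Def.~\ref{def:Right-transfer}. Assume $X = \text{Spec}(A)$ and $Y = \text{Spec}(B)$ with local coordinates and coordinatized lifts of Frobenius $\Phi_1, \Phi_2$; since both sides of the claimed isomorphism will be sheaves glued over $X$ (or over $Y$ via $\varphi^{-1}$), the general case follows once the local one is done. By Prop.~\ref{prop:Pullbback-and-transfer} the middle factor is
\[
\widehat{\mathcal{D}}_{W(X)\to W(Y),\text{acc}}^{(m)}\tilde{=}\Phi_1^{*}\widehat{\mathcal{D}}_{\mathfrak{X}}^{(m)}\widehat{\otimes}^L_{\widehat{\mathcal{D}}_{\mathfrak{X}}^{(m)}}\widehat{\mathcal{D}}_{\mathfrak{X}\to\mathfrak{Y}}^{(m),\Phi_1,\Phi_2}\widehat{\otimes}^L_{\varphi^{-1}(\widehat{\mathcal{D}}_{\mathfrak{Y}}^{(m)})}\varphi^{-1}(\Phi_2^{!}\widehat{\mathcal{D}}_{\mathfrak{Y}}^{(m)}).
\]
This reduces the task to absorbing the two outer factors against the two extreme terms $\Phi_1^{*}\widehat{\mathcal{D}}_{\mathfrak{X}}^{(m)}$ and $\varphi^{-1}(\Phi_2^{!}\widehat{\mathcal{D}}_{\mathfrak{Y}}^{(m)})$.

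For the $\mathfrak{X}$-side I would reuse the calculation in the proof of Cor.~\ref{cor:Left-right!}, which shows
\[
(W\omega_X\widehat{\otimes}_{\mathcal{O}_{W(X)}}\widehat{\mathcal{D}}_{W(X),\text{acc}}^{(m)})\otimes_{\widehat{\mathcal{D}}_{W(X)}^{(m)}}\Phi_1^{*}\widehat{\mathcal{D}}_{\mathfrak{X}}^{(m)}\tilde{\to}\omega_{\mathcal{A}}\otimes_{\mathcal{A}}\Phi_1^{!}\widehat{\mathcal{D}}_{\mathfrak{X}}^{(m)}
\]
as $(\widehat{\mathcal{D}}_{\mathfrak{X}}^{(m),\text{op}}, \widehat{\mathcal{D}}_{\mathfrak{X}}^{(m)})$-bimodules. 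For the $\mathfrak{Y}$-side I would establish the dual identification
\[
\varphi^{-1}(\Phi_2^{!}\widehat{\mathcal{D}}_{\mathfrak{Y}}^{(m)})\otimes_{W\varphi^{-1}(\widehat{\mathcal{D}}_{W(Y)}^{(m)})}W\varphi^{-1}(\tilde{\mathcal{H}om}_{\mathcal{O}_{W(Y)}}(W\omega_Y,\widehat{\mathcal{D}}_{W(Y),\text{acc}}^{(m)}))\tilde{\to}\varphi^{-1}(\Phi_2^{*}\widehat{\mathcal{D}}_{\mathfrak{Y}}^{(m)}\otimes_{\mathcal{O}_{\mathfrak{Y}}}\omega_{\mathfrak{Y}}^{-1})
\]
by exactly the same chain of isomorphisms as in Cor.~\ref{cor:Left-right!}, with $\tilde{\mathcal{H}om}_{\mathcal{B}}(W(B),-)$ in place of $\tilde{\text{Hom}}_{\mathcal{B}}(W(B),-)\widehat{\otimes}-$ and using that $\omega_{\mathfrak{Y}}$ is a line bundle. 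Assembling the three pieces under the associativity of the tensor product yields
\[
(\omega_{\mathcal{A}}\otimes_{\mathcal{A}}\Phi_1^{!}\widehat{\mathcal{D}}_{\mathfrak{X}}^{(m)})\widehat{\otimes}^L_{\widehat{\mathcal{D}}_{\mathfrak{X}}^{(m)}}\widehat{\mathcal{D}}_{\mathfrak{X}\to\mathfrak{Y}}^{(m),\Phi_1,\Phi_2}\widehat{\otimes}^L_{\varphi^{-1}(\widehat{\mathcal{D}}_{\mathfrak{Y}}^{(m)})}\varphi^{-1}(\Phi_2^{*}\widehat{\mathcal{D}}_{\mathfrak{Y}}^{(m)}\otimes_{\mathcal{O}_{\mathfrak{Y}}}\omega_{\mathfrak{Y}}^{-1}),
\]
and by definition of $\widehat{\mathcal{D}}_{\mathfrak{Y}\leftarrow\mathfrak{X}}^{(m),\Phi_1,\Phi_2}$ as the simultaneous left-right interchange of $\widehat{\mathcal{D}}_{\mathfrak{X}\to\mathfrak{Y}}^{(m),\Phi_1,\Phi_2}$, this rearranges to the target formula.

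The concentration in homological degree zero I would verify at the end: $\Phi_1^{!}\widehat{\mathcal{D}}_{\mathfrak{X}}^{(m)}$ is faithfully flat as a left $\widehat{\mathcal{D}}_{\mathfrak{X}}^{(m)}$-module (apply the classical left-right interchange for $\widehat{\mathcal{D}}_{\mathfrak{X}}^{(m)}$ to Thm.~\ref{thm:Projective!}(1)), and similarly $\Phi_2^{*}\widehat{\mathcal{D}}_{\mathfrak{Y}}^{(m)}$ is faithfully flat as a right $\widehat{\mathcal{D}}_{\mathfrak{Y}}^{(m)}$-module directly by Thm.~\ref{thm:Projective!}(1); hence both completed derived tensor products collapse to degree zero. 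The ``in particular'' clause is then immediate, because under the hypothesis $\varphi\circ\Phi_1 = \Phi_2\circ\varphi$ the bimodule $\widehat{\mathcal{D}}_{\mathfrak{X}\to\mathfrak{Y}}^{(m),\Phi_1,\Phi_2}$ is the Berthelot transfer bimodule $\widehat{\mathcal{D}}_{\mathfrak{X}\to\mathfrak{Y}}^{(m)}$ (second sentence of Prop.~\ref{prop:Pullbback-and-transfer}), and the simultaneous interchange of this is the Berthelot bimodule $\widehat{\mathcal{D}}_{\mathfrak{Y}\leftarrow\mathfrak{X}}^{(m)}$.

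The main obstacle is bookkeeping: one must check that the two outer identifications above are really isomorphisms of \emph{bimodules} compatible with the intermediate $\widehat{\mathcal{D}}_{\mathfrak{X}}^{(m)}$- and $\varphi^{-1}(\widehat{\mathcal{D}}_{\mathfrak{Y}}^{(m)})$-actions used in the next tensor product. This is forced by the construction of $W\omega_X\widehat{\otimes}_{\mathcal{O}_{W(X)}}\widehat{\mathcal{D}}_{W(X),\text{acc}}^{(m)}$ in Prop.~\ref{prop:L-R-bimodule} as the accessible analogue of the classical interchange, but verifying it requires carefully tracking through the chain of isomorphisms used for the outer factors; the key point in each case is that the composition-of-maps embedding into $\mathcal{H}om_{W(k)}$ intertwines the actions, exactly as in the proofs of Props.~\ref{prop:Construction-of-Phi-!} and \ref{prop:L-R-bimodule}.
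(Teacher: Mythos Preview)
Your proposal is correct and follows exactly the approach the paper indicates: the paper presents this proposition as an immediate consequence of ``combining \prettyref{def:Right-transfer} with \prettyref{prop:Pullbback-and-transfer}'' and gives no further details, while you have spelled out precisely how that combination works using the left-right interchange computations from the proof of \prettyref{cor:Left-right!}. Your treatment of the degree-zero concentration and the bimodule compatibility is more careful than anything the paper writes down explicitly.
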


Applying the fact that $\widehat{\mathcal{D}}_{\mathfrak{X}\to\mathfrak{Y}}^{(m),\Phi_{1},\Phi_{2}}/p\tilde{=}\mathcal{D}_{X\to Y}^{(m)}$
(by \prettyref{cor:Composition-of-pullbacks}, part $1$), we obtain
from this the 
\begin{cor}
For any $\varphi:X\to Y$ there is an isomorphism 
\[
\widehat{\mathcal{D}}_{W(Y)\leftarrow W(X),\text{acc}}^{(m)}/p\tilde{\to}\varphi^{-1}(\mathcal{B}_{Y}^{(m)})\otimes_{\varphi^{-1}(\mathcal{D}_{Y}^{(m)})}\mathcal{D}_{Y\leftarrow X}^{(m)}\otimes_{\mathcal{D}_{X}^{(m)}}\mathcal{B}_{X}^{(m),r}
\]
\end{cor}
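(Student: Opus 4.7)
The plan is to reduce the isomorphism to the local description already established in the preceding proposition and then identify each tensor factor modulo $p$ with its counterpart in the classical theory. First, I would work on a sufficiently small affine open of $X$ so that $X = \text{Spec}(A)$ and $\varphi(X) \subset Y' = \text{Spec}(B) \subset Y$, with both $A,B$ admitting local coordinates and coordinatized lifts of Frobenius $\Phi_1,\Phi_2$; by the preceding proposition the derived tensor product
\[
\varphi^{-1}(\Phi_{2}^{*}\widehat{\mathcal{D}}_{\mathfrak{Y}}^{(m)})\widehat{\otimes}_{\varphi^{-1}(\widehat{\mathcal{D}}_{\mathfrak{Y}}^{(m)})}^{L}\widehat{\mathcal{D}}_{\mathfrak{Y}\leftarrow\mathfrak{X}}^{(m),\Phi_{1},\Phi_{2}}\widehat{\otimes}_{\widehat{\mathcal{D}}_{\mathfrak{X}}^{(m)}}^{L}\Phi_{1}^{!}\widehat{\mathcal{D}}_{\mathfrak{X}}^{(m)}
\]
is concentrated in degree zero and canonically identified with $\widehat{\mathcal{D}}_{W(Y)\leftarrow W(X),\text{acc}}^{(m)}$.

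Next I would reduce mod $p$. Since $\varphi^{-1}(\Phi_2^*\widehat{\mathcal{D}}_{\mathfrak{Y}}^{(m)})$ is flat as a right $\varphi^{-1}(\widehat{\mathcal{D}}_{\mathfrak{Y}}^{(m)})$-module (by \prettyref{thm:Projective!}), $\widehat{\mathcal{D}}_{\mathfrak{Y}\leftarrow\mathfrak{X}}^{(m),\Phi_1,\Phi_2}$ is $p$-torsion-free (so Tor with $k$ vanishes), and $\Phi_1^!\widehat{\mathcal{D}}_{\mathfrak{X}}^{(m)}$ is likewise $p$-torsion-free, the reduction mod $p$ commutes with the tensor products and stays in degree zero. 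By definition, $\Phi_2^*\widehat{\mathcal{D}}_{\mathfrak{Y}}^{(m)}/p$ is the local model for $\mathcal{B}_Y^{(m)}$ and $\Phi_1^!\widehat{\mathcal{D}}_{\mathfrak{X}}^{(m)}/p$ is the local model for $\mathcal{B}_X^{(m),r}$. For the middle factor, \prettyref{cor:Composition-of-pullbacks}, part (1) gives $\widehat{\mathcal{D}}_{\mathfrak{X}\to\mathfrak{Y}}^{(m),\Phi_1,\Phi_2}/p \cong \mathcal{D}_{X\to Y}^{(m)}$, and applying the simultaneous left-right interchange (in the form recalled at the start of \prettyref{subsec:Operations-on-modules:Push!}, which commutes with reduction mod $p$ since $\omega_{\mathfrak{X}}/p = \omega_X$) converts this into $\widehat{\mathcal{D}}_{\mathfrak{Y}\leftarrow\mathfrak{X}}^{(m),\Phi_1,\Phi_2}/p \cong \mathcal{D}_{Y\leftarrow X}^{(m)}$. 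Assembling these three identifications produces a local isomorphism with the right-hand side of the corollary.

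The main obstacle, and what I would address last, is showing that this local isomorphism is independent of the chosen pair $(\Phi_1,\Phi_2)$ and hence glues to a canonical global one. For the two outer factors this is exactly the content of the canonical mod-$p$ isomorphism used in the construction of $\mathcal{B}_X^{(m)}$ and (by the right-handed version of) $\mathcal{B}_X^{(m),r}$, i.e.\ \prettyref{cor:Iso-mod-p^n} together with \prettyref{cor:Global-bimodule-mod-p}. For the middle factor $\widehat{\mathcal{D}}_{\mathfrak{Y}\leftarrow\mathfrak{X}}^{(m),\Phi_1,\Phi_2}/p$ one argues in the same spirit: by construction this bimodule is a summand of $\widehat{\mathcal{D}}_{W(Y)\leftarrow W(X),\text{acc}}^{(m)}/p$ (cut out locally by the projectors $\pi_1,\pi_2$), and under a change of Frobenius lift the projectors are carried to each other by the canonical isomorphisms, yielding a transition map whose image on $\mathcal{D}_{Y\leftarrow X}^{(m)}$ is the identity. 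Thus all three local identifications are compatible on overlaps, and the global isomorphism follows.

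Finally I would check naturality to confirm that the resulting map coincides with the one induced by the evident composition of projections and inclusions coming from the definition of $\widehat{\mathcal{D}}_{W(Y)\leftarrow W(X),\text{acc}}^{(m)}$ in \prettyref{def:Right-transfer}; this is immediate from the fact that, under the local identification, each constituent factor of \prettyref{def:Right-transfer} reduces mod $p$ to its classical analogue.
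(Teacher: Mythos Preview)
Your proposal is correct and follows essentially the same approach as the paper. The paper treats this corollary as immediate from \prettyref{prop:local-bimodule-for-push} together with the identification $\widehat{\mathcal{D}}_{\mathfrak{X}\to\mathfrak{Y}}^{(m),\Phi_{1},\Phi_{2}}/p\tilde{=}\mathcal{D}_{X\to Y}^{(m)}$ from \prettyref{cor:Composition-of-pullbacks}, part $1$ (and the left-right interchange), whereas you have spelled out the reduction-mod-$p$ argument and the gluing over changes of Frobenius lift more explicitly; these are the natural details to fill in and your treatment of them is sound.
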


From this we deduce
\begin{thm}
\label{thm:Push-and-transfer}Let $\varphi:X\to Y$ be an arbitrary
morphism. 

1) Then the natural map 
\[
R(W\varphi)_{*}(W\varphi^{-1}(\mathcal{B}_{Y}^{(m)})\otimes_{\varphi^{-1}(\mathcal{D}_{Y}^{(m)})}^{L}\mathcal{D}_{X\leftarrow Y}^{(m)}\otimes_{\mathcal{D}_{X}^{(m)}}^{L}\mathcal{N}^{\cdot})_{\text{acc}}\to\mathcal{B}_{Y}^{(m)}\otimes_{\mathcal{D}_{Y}^{(m)}}^{L}R\varphi_{*}(\mathcal{D}_{Y\leftarrow X}^{(m)}\otimes_{\mathcal{D}_{X}^{(m)}}^{L}\mathcal{N}^{\cdot})
\]
is an isomorphism for any $\mathcal{N}^{\cdot}\in D(\mathcal{D}_{X}^{(m)}-\text{mod})$. 

2) Let $\mathcal{N}^{\cdot}\in D(\mathcal{D}_{X}^{(m)}-\text{mod})$.
Then there is a natural isomorphism 
\[
\int_{W\varphi}(\mathcal{B}_{X}^{(m)}\otimes_{\mathcal{D}_{X}^{(m)}}^{L}\mathcal{N}^{\cdot})\tilde{\to}\mathcal{B}_{Y}^{(m)}\otimes_{\mathcal{D}_{Y}^{(m)}}^{L}\int_{\varphi}\mathcal{N}^{\cdot}
\]
\end{thm}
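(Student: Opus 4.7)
The plan is to deduce both parts from a suitable projection formula, combined with the local computation of the transfer bimodule from \prettyref{prop:local-bimodule-for-push}. Throughout, I use the observation that the underlying topological space of $W(X)$ is $X$ itself, so that $R(W\varphi)_{*} = R\varphi_{*}$ at the level of sheaves of abelian groups; likewise $W\varphi^{-1} = \varphi^{-1}$ as a functor between categories of sheaves.

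For part 1), the natural map in question is the projection formula map for pulling $\mathcal{B}_{Y}^{(m)}$ out past $R\varphi_{*}$. Locally on $Y$, \prettyref{cor:Projector!} provides an idempotent $\pi$ exhibiting $\mathcal{B}_{Y}^{(m)}$ as a direct summand of $\widehat{\mathcal{D}}_{W(Y)}^{(m)}/p$ as a right $\mathcal{D}_{Y}^{(m)}$-module; in particular $\mathcal{B}_{Y}^{(m)}$ is locally $\mathcal{D}_{Y}^{(m)}$-projective. Since the projection formula may be checked locally and is trivial for a summand of the structure sheaf of rings, I obtain the desired isomorphism already before the accessibility functor is applied. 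Finally, the right-hand side is manifestly in the essential image of $\mathcal{B}_{Y}^{(m)} \otimes_{\mathcal{D}_{Y}^{(m)}}^{L}$, hence accessible, so $(-)_{\text{acc}}$ acts as the identity and part 1) is complete.

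Part 2) will follow from part 1) by a short calculation. Substituting $\mathcal{M}^{\cdot} = \mathcal{B}_{X}^{(m)} \otimes_{\mathcal{D}_{X}^{(m)}}^{L} \mathcal{N}^{\cdot}$ into the definition of $\int_{W\varphi}$ and invoking the mod-$p$ reduction of \prettyref{prop:local-bimodule-for-push}, namely
\[
\widehat{\mathcal{D}}_{W(Y)\leftarrow W(X),\text{acc}}^{(m)}/p \tilde{=} \varphi^{-1}(\mathcal{B}_{Y}^{(m)}) \otimes_{\varphi^{-1}(\mathcal{D}_{Y}^{(m)})} \mathcal{D}_{Y\leftarrow X}^{(m)} \otimes_{\mathcal{D}_{X}^{(m)}} \mathcal{B}_{X}^{(m),r},
\]
the essential ingredient is the cancellation $\mathcal{B}_{X}^{(m),r} \otimes_{\widehat{\mathcal{D}}_{W(X)}^{(m)}/p}^{L} \mathcal{B}_{X}^{(m)} \tilde{\to} \mathcal{D}_{X}^{(m)}$ as $(\mathcal{D}_{X}^{(m)}, \mathcal{D}_{X}^{(m)})$-bimodules. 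This follows locally from the presentations $\mathcal{B}_{X}^{(m)} \tilde{=} (\widehat{\mathcal{D}}_{W(X)}^{(m)}/p) \cdot \pi$ and $\mathcal{B}_{X}^{(m),r} \tilde{=} \pi \cdot (\widehat{\mathcal{D}}_{W(X)}^{(m)}/p)$ of \prettyref{cor:Projector!} and \prettyref{prop:Construction-of-Phi-!}, together with the identification $\pi \cdot (\widehat{\mathcal{D}}_{W(X)}^{(m)}/p) \cdot \pi \tilde{=} \mathcal{D}_{X}^{(m)}$; the local projectivity of $\mathcal{B}_{X}^{(m)}$ forces the derived tensor to concentrate in degree zero. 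After this cancellation the integrand becomes $\varphi^{-1}(\mathcal{B}_{Y}^{(m)}) \otimes_{\varphi^{-1}(\mathcal{D}_{Y}^{(m)})} \mathcal{D}_{Y\leftarrow X}^{(m)} \otimes_{\mathcal{D}_{X}^{(m)}}^{L} \mathcal{N}^{\cdot}$, and part 1) delivers the desired identification with $\mathcal{B}_{Y}^{(m)} \otimes_{\mathcal{D}_{Y}^{(m)}}^{L} \int_{\varphi} \mathcal{N}^{\cdot}$.

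The main obstacle is making the cancellation above canonical and globally well-defined, rather than dependent on a local choice of coordinatized Frobenius lift and its associated projector $\pi$. The intrinsic constructions of $\mathcal{B}_{X}^{(m)}$ and $\mathcal{B}_{X}^{(m),r}$ furnish a canonical evaluation pairing $\mathcal{B}_{X}^{(m),r} \otimes_{\widehat{\mathcal{D}}_{W(X)}^{(m)}/p} \mathcal{B}_{X}^{(m)} \to \mathcal{D}_{X}^{(m)}$, and the task is to verify that this pairing is an isomorphism; this reduces via the adjunction identity $\mathcal{H}om_{\widehat{\mathcal{D}}_{W(X)}^{(m)}/p}(\mathcal{B}_{X}^{(m)}, \mathcal{B}_{X}^{(m)}) \tilde{\to} \mathcal{D}_{X}^{(m)}$ supplied by \prettyref{prop:Properties-of-B_W} to the purely local, already-verified computation with $\pi$.
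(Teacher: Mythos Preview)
Your argument for part 1) contains a genuine gap. You assert that $\mathcal{B}_{Y}^{(m)}$ is locally $\mathcal{D}_{Y}^{(m)}$-projective because it is a summand of $\widehat{\mathcal{D}}_{W(Y)}^{(m)}/p$. But $\widehat{\mathcal{D}}_{W(Y)}^{(m)}/p$ is \emph{not} the structure ring $\mathcal{D}_{Y}^{(m)}$: as a right $\mathcal{D}_{Y}^{(m)}$-module it is enormous, and the paper only establishes (\prettyref{thm:Projective!}) that $\mathcal{B}_{Y}^{(m)}$ is faithfully flat over $\mathcal{D}_{Y}^{(m)}$, realized locally as an inverse limit of finite free modules---not that it is projective or perfect. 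Consequently your projection formula ``trivial for a summand of the structure sheaf'' does not apply, and your claim that the isomorphism holds \emph{before} applying $(\,)_{\text{acc}}$ is unjustified. Indeed, the paper explicitly flags this point just after defining $\int_{W\varphi}$: the author does not know whether $R(W\varphi)_{*}(\widehat{\mathcal{D}}_{W(Y)\leftarrow W(X),\text{acc}}^{(m)}\otimes^{L}\mathcal{M}^{\cdot})$ is already accessible, precisely because projection formulas for such large objects are delicate.

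The paper's own argument sidesteps this entirely. It first applies $(\,)_{\text{acc}}$ to the source, then checks the map is an isomorphism by applying $R\mathcal{H}om_{\widehat{\mathcal{D}}_{W(Y)}^{(m)}/p}(\mathcal{B}_{Y}^{(m)},-)$ to both sides. This exploits the \emph{left} projectivity of $\mathcal{B}_{Y}^{(m)}$ over $\widehat{\mathcal{D}}_{W(Y)}^{(m)}/p$ (which is what \prettyref{cor:Projector!} actually gives) together with the adjunction $(W\varphi^{-1},R\varphi_{*})$ and the identity $R\mathcal{H}om(\mathcal{B}_{Y}^{(m)},\mathcal{B}_{Y}^{(m)}\otimes^{L}\mathcal{M}^{\cdot})\tilde{\to}\mathcal{M}^{\cdot}$ from \prettyref{prop:Properties-of-B_W}. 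Your reduction of part 2) to part 1) via the transfer-bimodule description and the cancellation $\mathcal{B}_{X}^{(m),r}\otimes^{L}\mathcal{B}_{X}^{(m)}\tilde{\to}\mathcal{D}_{X}^{(m)}$ is reasonable and close to what the paper does, but it inherits the gap from part 1).
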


\begin{proof}
By general nonsense there is a functorial morphism 
\[
R(W\varphi)_{*}(W\varphi^{-1}(\mathcal{B}_{Y}^{(m)})\otimes_{\varphi^{-1}(\mathcal{D}_{Y}^{(m)})}^{L}\mathcal{M}^{\cdot})\to\mathcal{B}_{Y}^{(m)}\otimes_{\mathcal{D}_{Y}^{(m)}}^{L}R\varphi_{*}(\mathcal{M}^{\cdot})
\]
for any $\mathcal{M}^{\cdot}\in D(\varphi^{-1}(\mathcal{D}_{Y}^{(m)})-\text{mod})$,
c.f. {[}Stacks{]}, tag 0B54. As the right hand side is clearly accessible,
applying the functor $()_{\text{acc}}$ to both sides gives a morphism
\begin{equation}
R(W\varphi)_{*}(W\varphi^{-1}(\mathcal{B}_{Y}^{(m)})\otimes_{\varphi^{-1}(\mathcal{D}_{Y}^{(m)})}^{L}\mathcal{M}^{\cdot})_{\text{acc}}\to\mathcal{B}_{Y}^{(m)}\otimes_{\mathcal{D}_{Y}^{(m)}}^{L}R\varphi_{*}(\mathcal{M}^{\cdot})\label{eq:general-adjunction-map}
\end{equation}
To show that is is an isomorphism it suffices to show that it is so
after applying $R\mathcal{H}om_{\mathcal{\widehat{D}}_{W(Y)}^{(m)}/p}(\mathcal{B}_{Y}^{(m)},)$.
We have 
\[
R\mathcal{H}om_{\mathcal{\widehat{D}}_{W(Y)}^{(m)}/p}(\mathcal{B}_{Y}^{(m)},R(W\varphi)_{*}(W\varphi^{-1}(\mathcal{B}_{Y}^{(m)})\otimes_{\varphi^{-1}(\mathcal{D}_{Y}^{(m)})}^{L}\mathcal{M}^{\cdot})_{\text{acc}})
\]
\[
\tilde{\to}R\mathcal{H}om_{\mathcal{\widehat{D}}_{W(Y)}^{(m)}/p}(\mathcal{B}_{Y}^{(m)},R(W\varphi)_{*}(W\varphi^{-1}(\mathcal{B}_{Y}^{(m)})\otimes_{\varphi^{-1}(\mathcal{D}_{Y}^{(m)})}^{L}\mathcal{M}^{\cdot}))
\]
\[
\tilde{\to}R\varphi_{*}R\mathcal{H}om_{\varphi^{-1}(\mathcal{\widehat{D}}_{W(Y)}^{(m)}/p)}(W\varphi^{-1}(\mathcal{B}_{Y}^{(m)}),\varphi^{-1}(\mathcal{B}_{Y}^{(m)})\otimes_{\varphi^{-1}(\mathcal{D}_{Y}^{(m)})}^{L}\mathcal{M}^{\cdot})
\]
\[
\tilde{\to}R\varphi_{*}(\mathcal{M}^{\cdot})
\]
where the first isomorphism is by the definition of accessibility,
and the second is by the adjunction of $W\varphi^{-1}$ and $R\varphi_{*}$.
It follows that the map \prettyref{eq:general-adjunction-map} is
an isomorphism, and $1)$ follows by setting $\mathcal{M}^{\cdot}=\mathcal{D}_{X\leftarrow Y}^{(m)}\otimes_{\mathcal{D}_{X}^{(m)}}^{L}\mathcal{N}^{\cdot}$.
Now $2)$ follows from the description of the transfer bimodule given
directly above. 
\end{proof}

Now let's discuss the base change functor $\otimes_{W(k)}^{L}k$.
The situation is very nice here:
\begin{lem}
\label{lem:Push-mod-p}There is, for any $\mathcal{M}^{\cdot}\in D_{\text{acc}}(\mathcal{\widehat{D}}_{W(X)}^{(m)}-\text{mod})$,
an isomorphism 
\[
(\int_{W\varphi}\mathcal{M}^{\cdot})\otimes_{W(k)}^{L}k\tilde{=}\int_{W\varphi}(\mathcal{M}^{\cdot}\otimes_{W(k)}^{L}k)
\]
where on the right we take the pushforward in the category $D_{\text{acc}}(\mathcal{\widehat{D}}_{W(X)}^{(m)}/p-\text{mod})$. 
\end{lem}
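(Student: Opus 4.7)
The plan is to unpack the definition of $\int_{W\varphi}$ and verify that each of the three operations involved commutes with $\otimes_{W(k)}^L k$: the derived completed tensor product with the transfer bimodule, the direct image $R(W\varphi)_*$, and the accessible-reflection functor $(-)_{\text{acc}}$. Once each step is verified, composing the isomorphisms yields the claim. The main subtlety lies in the final step, showing that $(-)_{\text{acc}}$ is compatible with reduction mod $p$.

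First, since $\widehat{\mathcal{D}}_{W(Y)\leftarrow W(X),\text{acc}}^{(m)}$ is (locally) a summand of $\widehat{\mathcal{D}}_{W(X)}^{(m)}$-modules built from $\Phi^*$ and $\Phi^!$, hence $W(k)$-flat and $p$-torsion-free, the identification in Remark~\ref{rem:Pushforwards-agree} (suitably extended) gives
\[
\bigl(\widehat{\mathcal{D}}_{W(Y)\leftarrow W(X),\text{acc}}^{(m)}\widehat{\otimes}_{\widehat{\mathcal{D}}_{W(X)}^{(m)}}^{L}\mathcal{M}^{\cdot}\bigr)\otimes_{W(k)}^{L}k \;\tilde{\to}\; \bigl(\widehat{\mathcal{D}}_{W(Y)\leftarrow W(X),\text{acc}}^{(m)}/p\bigr)\otimes_{\widehat{\mathcal{D}}_{W(X)}^{(m)}/p}^{L}\bigl(\mathcal{M}^{\cdot}\otimes_{W(k)}^{L}k\bigr),
\]
where one resolves the transfer bimodule by a $K$-flat complex of right $\widehat{\mathcal{D}}_{W(X)}^{(m)}$-modules and uses that reduction mod $p$ of such a resolution is $K$-flat over $\widehat{\mathcal{D}}_{W(X)}^{(m)}/p$.

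Second, the functor $R(W\varphi)_*$ commutes with $\otimes_{W(k)}^{L}k$: writing $-\otimes_{W(k)}^{L}k$ as the cone of multiplication by $p$, and noting that $R(W\varphi)_*$ is a triangulated functor commuting with cones (since it is computed on $K$-injective resolutions), one obtains
\[
\bigl(R(W\varphi)_*\mathcal{N}^{\cdot}\bigr)\otimes_{W(k)}^{L}k \;\tilde{\to}\; R(W\varphi)_*\bigl(\mathcal{N}^{\cdot}\otimes_{W(k)}^{L}k\bigr)
\]
for any $\mathcal{N}^{\cdot}$.

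The main obstacle is the third step: showing that $(-)_{\text{acc}}$ commutes with $\otimes_{W(k)}^L k$. Recall that $(-)_{\text{acc}}$ on $D_{cc}(\widehat{\mathcal{D}}_{W(Y)}^{(m)}-\text{mod})$ is realized by $\widehat{\mathcal{D}}_{W(Y),\text{acc}}^{(m)}\widehat{\otimes}_{\widehat{\mathcal{D}}_{W(Y)}^{(m)}}^{L}(-)$, while the mod-$p$ analogue $(-)_{\text{acc}}$ on $D(\widehat{\mathcal{D}}_{W(Y)}^{(m)}/p-\text{mod})$ is given by tensoring with the bimodule $\mathcal{B}_Y^{(m)}\otimes_{\mathcal{D}_Y^{(m)}}\mathcal{B}_Y^{(m),r}$ (i.e., the accessibility projector in characteristic $p$). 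Locally, $\widehat{\mathcal{D}}_{W(Y),\text{acc}}^{(m)} = \Phi^*\widehat{\mathcal{D}}_{\mathfrak{Y}}^{(m)}\widehat{\otimes}_{\widehat{\mathcal{D}}_{\mathfrak{Y}}^{(m)}}\Phi^!\widehat{\mathcal{D}}_{\mathfrak{Y}}^{(m)}$ is $p$-torsion-free and concentrated in degree $0$ (as established just before Definition~\ref{def:D-acc}), and its reduction mod $p$ agrees with the corresponding characteristic-$p$ accessibility bimodule. Combining this with the $K$-flatness argument of step one, one obtains
\[
\bigl(\mathcal{N}^{\cdot}\bigr)_{\text{acc}}\otimes_{W(k)}^{L}k \;\tilde{\to}\; \bigl(\mathcal{N}^{\cdot}\otimes_{W(k)}^{L}k\bigr)_{\text{acc}}
\]
for any cohomologically complete $\mathcal{N}^{\cdot}$; equivalently, the right adjoints to the inclusions of accessible objects commute with the base-change functor relating the characteristic-$0$ and characteristic-$p$ settings. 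Stringing together the three compatibilities gives the stated isomorphism.
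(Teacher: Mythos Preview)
Your proof is correct and follows essentially the same three-step decomposition as the paper: commute $\otimes_{W(k)}^{L}k$ past the transfer tensor product (using $W(k)$-flatness of the bimodule and the ring), past $R(W\varphi)_*$ (the paper invokes the projection formula for the perfect complex $k$, you use the equivalent cone-of-$p$ argument), and past $(-)_{\text{acc}}$. The paper simply asserts the third compatibility without justification, whereas you spell out why $\widehat{\mathcal{D}}_{W(Y),\text{acc}}^{(m)}/p$ agrees with the characteristic-$p$ accessibility bimodule; this extra detail is helpful and not something the paper provides.
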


\begin{proof}
Again by {[}Stacks{]}, tag 0B54 there is a map 
\[
R(W\varphi)_{*}(\widehat{\mathcal{D}}_{W(Y)\leftarrow W(X)}^{(m)}\widehat{\otimes}_{\mathcal{\widehat{D}}_{W(X)}^{(m)}}^{L}\mathcal{M}^{\cdot})\otimes_{W(k)}^{L}k\to R(W\varphi)_{*}((\widehat{\mathcal{D}}_{W(Y)\leftarrow W(X)}^{(m)}\widehat{\otimes}_{\mathcal{\widehat{D}}_{W(X)}^{(m)}}^{L}\mathcal{M}^{\cdot})\otimes_{W(k)}^{L}k)
\]
which is an isomorphism because $k$ is a perfect complex over $W(k)$.
However, we have 
\[
(\widehat{\mathcal{D}}_{W(Y)\leftarrow W(X)}^{(m)}\widehat{\otimes}_{\mathcal{\widehat{D}}_{W(X)}^{(m)}}^{L}\mathcal{M}^{\cdot})\otimes_{W(k)}^{L}k
\]
\[
\tilde{=}(\widehat{\mathcal{D}}_{W(Y)\leftarrow W(X)}^{(m)}/p)\widehat{\otimes}_{\mathcal{\widehat{D}}_{W(X)}^{(m)}/p}^{L}(\mathcal{M}^{\cdot}\otimes_{W(k)}^{L}k)
\]
as both $\widehat{\mathcal{D}}_{W(X)\leftarrow W(Y)}^{(m)}$ and $\widehat{\mathcal{D}}_{W(X)}^{(m)}$
are flat over $W(k)$. Further, as the functor $()_{\text{acc}}$
commutes with $\otimes_{W(k)}^{L}k$, applying $()_{\text{acc}}$
to both sides yields the result. 
\end{proof}
This yields
\begin{cor}
Suppose $\mathcal{M}^{\cdot}\in D_{\text{acc}}(\mathcal{\widehat{D}}_{W(X)}^{(m)}-\text{mod})$,
and let $\varphi:X\to Y$, $\psi:Y\to Z$. There is a functorial morphism
\[
\int_{W\psi}\int_{W\varphi}\mathcal{M^{\cdot}}\to\int_{W(\psi\circ\varphi)}\mathcal{M}^{\cdot}
\]
If $\mathcal{M}^{\cdot}\in D_{\text{qcoh}}(\mathcal{\widehat{D}}_{W(X)}^{(m)}-\text{mod})$,
then this map is an isomorphism. The same holds for $\mathcal{M}^{\cdot}\in D_{\text{qcoh}}(\mathcal{\widehat{D}}_{W(X)}^{(m)}/p^{r}-\text{mod})$,
for the pushforward of $\mathcal{\widehat{D}}_{W(X)}^{(m)}/p^{r}$-modules. 
\end{cor}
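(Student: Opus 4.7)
\medskip

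The plan is to mirror the strategy used for the composition of pullbacks (Corollary \ref{cor:Composition-of-pullbacks}) while incorporating a projection formula-type map to move the outer transfer bimodule past $R(W\varphi)_{*}$.

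First I would construct the functorial morphism. Applying the left-right interchange simultaneously to both the $\mathcal{\widehat{D}}^{(m)}_{W(X)}$-structure and the $W(\psi\circ\varphi)^{-1}(\mathcal{\widehat{D}}^{(m)}_{W(Z)})$-structure in the composition of bimodules constructed in Lemma \ref{lem:Composition-of-bimod} and Corollary \ref{cor:Composition-of-pullbacks}, part $2)$, I obtain a natural morphism
\[
W\varphi^{-1}(\widehat{\mathcal{D}}^{(m)}_{W(Z)\leftarrow W(Y),\text{acc}})\widehat{\otimes}^{L}_{W\varphi^{-1}(\mathcal{\widehat{D}}^{(m)}_{W(Y)})}\widehat{\mathcal{D}}^{(m)}_{W(Y)\leftarrow W(X),\text{acc}}\to\widehat{\mathcal{D}}^{(m)}_{W(Z)\leftarrow W(X),\text{acc}}
\]
of $(W(\psi\circ\varphi)^{-1}\mathcal{\widehat{D}}^{(m)}_{W(Z)},\mathcal{\widehat{D}}^{(m)}_{W(X)})$-bimodules. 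Combined with the general adjunction map (analogous to the one used in the proof of Theorem \ref{thm:Push-and-transfer}, coming from \cite[\textsection 0B54]{key-43})
\[
\widehat{\mathcal{D}}^{(m)}_{W(Z)\leftarrow W(Y),\text{acc}}\widehat{\otimes}^{L}_{\mathcal{\widehat{D}}^{(m)}_{W(Y)}}R(W\varphi)_{*}\mathcal{N}\to R(W\varphi)_{*}(W\varphi^{-1}\widehat{\mathcal{D}}^{(m)}_{W(Z)\leftarrow W(Y),\text{acc}}\widehat{\otimes}^{L}_{W\varphi^{-1}\mathcal{\widehat{D}}^{(m)}_{W(Y)}}\mathcal{N}),
\]
the equality $R(W\psi)_{*}\circ R(W\varphi)_{*}=R(W(\psi\circ\varphi))_{*}$, and functoriality of the accessibility functor $()_{\text{acc}}$, this produces the desired map $\int_{W\psi}\int_{W\varphi}\mathcal{M}^{\cdot}\to\int_{W(\psi\circ\varphi)}\mathcal{M}^{\cdot}$.

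Next I would prove the isomorphism, starting with the case of complexes over $\mathcal{\widehat{D}}^{(m)}_{W(X)}/p$. By Theorem \ref{thm:Push-and-transfer}, part $2)$, any $\mathcal{M}^{\cdot}\in D_{\text{acc},\text{qcoh}}(\mathcal{\widehat{D}}^{(m)}_{W(X)}/p-\text{mod})$ has the form $\mathcal{B}_{X}^{(m)}\otimes^{L}_{\mathcal{D}^{(m)}_{X}}\mathcal{N}^{\cdot}$ for some $\mathcal{N}^{\cdot}\in D_{\text{qcoh}}(\mathcal{D}^{(m)}_{X}-\text{mod})$, and both $\int_{W\psi}\int_{W\varphi}$ and $\int_{W(\psi\circ\varphi)}$ correspond under the equivalence to the classical pushforward functors $\int_{\psi}\int_{\varphi}$ and $\int_{\psi\circ\varphi}$ on quasi-coherent $\mathcal{D}^{(m)}$-modules. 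There the isomorphism is standard (it is the classical composition of direct images combined with the projection formula for quasi-coherent $\mathcal{O}$-modules). This handles the mod $p$ case; for modules over $\mathcal{\widehat{D}}^{(m)}_{W(X)}/p^{r}$ one iterates using the short exact sequence $0\to W_{r-1}(k)\to W_{r}(k)\to k\to0$ together with Lemma \ref{lem:Push-mod-p}, and for modules over $\mathcal{\widehat{D}}^{(m)}_{W(X)}$ one applies Lemma \ref{lem:Push-mod-p} once more and invokes the Nakayama lemma for cohomologically complete complexes (\cite[proposition 1.5.8]{key-8}).

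The main obstacle is the projection formula step: one must verify that the general adjunction map displayed above is in fact an isomorphism on quasi-coherent input, so that the composed map descends to an isomorphism rather than merely a morphism. This is where the quasicoherence hypothesis is genuinely needed, since $R(W\varphi)_{*}$ preserves quasicoherence (as an inverse limit of pushforwards on the truncated finite-type schemes $W_{n}(X)_{p^{r}=0}$), and for quasi-coherent complexes the projection formula reduces, level by level in the $V^{i}$-adic filtration, to the classical projection formula on finite type schemes. A secondary technical issue is checking that the functor $()_{\text{acc}}$ commutes up to canonical isomorphism with the operations that appear; this follows because $()_{\text{acc}}$ is a right adjoint and hence respects the accessible structure already built into the transfer bimodules by Definition \ref{def:Accessible-Transfer} and Definition \ref{def:Right-transfer}.
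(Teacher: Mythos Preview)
Your construction of the morphism and your core reduction strategy (paragraph 2) match the paper's proof exactly: build the map from the composition-of-transfer-bimodules isomorphism plus the tag 0B54 adjunction map and the natural map $\mathcal{N}_{\text{acc}}^{\cdot}\to\mathcal{N}^{\cdot}$, then apply $\otimes_{W(k)}^{L}k$, invoke \prettyref{lem:Push-mod-p}, and reduce to the known composition-of-pushforwards for quasicoherent $\mathcal{D}_X^{(m)}$-modules in characteristic $p$ (the paper cites \cite{key-16}, lemma 7.7).

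Your third paragraph, however, misidentifies the obstacle. You do \emph{not} need to prove that the tag 0B54 projection-formula map is itself an isomorphism, and the paper does not attempt this. The paper simply treats that map as one ingredient in a composite, applies $()_{\text{acc}}$ to the whole thing, and then checks the resulting single arrow is an isomorphism after $\otimes_{W(k)}^{L}k$---which is precisely your paragraph 2 argument. Trying to verify the projection formula level-by-level in the $V^i$-adic filtration, as you sketch, is both harder and unnecessary. Also, for the mod $p^r$ case the paper does not induct on $r$ via the short exact sequence $0\to W_{r-1}(k)\to W_r(k)\to k\to 0$; it instead invokes \prettyref{rem:Pushforwards-agree} to identify the mod $p^r$ pushforward with the restriction of the $W(k)$-level pushforward, so a single application of Nakayama suffices. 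Your inductive route would work too, but it is more circuitous.
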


\begin{proof}
The existence of the map is a little exercise in base changing. Namely,
from \prettyref{cor:Composition-of-pullbacks} we obtain the isomorphism
\[
W\varphi^{-1}(\widehat{\mathcal{D}}_{W(Z)\leftarrow W(Y),\text{acc}}^{(m)})\otimes_{W\varphi^{-1}(\widehat{\mathcal{D}}_{W(Y)}^{(m)})}^{L}\widehat{\mathcal{D}}_{W(Y)\leftarrow W(X),\text{acc}}^{(m)}\tilde{\to}\widehat{\mathcal{D}}_{W(Z)\leftarrow W(X),\text{acc}}^{(m)}
\]
So we have (by {[}Stacks{]}, tag 0B54)
\[
RW\psi_{*}(\widehat{\mathcal{D}}_{W(Z)\leftarrow W(Y),\text{acc}}^{(m)}\widehat{\otimes}_{\widehat{\mathcal{D}}_{W(Y)}^{(m)}}R(W\varphi)_{*}(\widehat{\mathcal{D}}_{W(Y)\leftarrow W(X)}^{(m)}\widehat{\otimes}_{\mathcal{\widehat{D}}_{W(X)}^{(m)}}^{L}\mathcal{M}^{\cdot}))
\]
\[
\to RW\psi_{*}\circ RW\varphi_{*}(W\varphi^{-1}(\widehat{\mathcal{D}}_{W(Z)\leftarrow W(Y),\text{acc}}^{(m)})\widehat{\otimes}_{W\varphi^{-1}(\widehat{\mathcal{D}}_{W(Y)}^{(m)})}(\widehat{\mathcal{D}}_{W(Y)\leftarrow W(X)}^{(m)}\widehat{\otimes}_{\mathcal{\widehat{D}}_{W(X)}^{(m)}}^{L}\mathcal{M}^{\cdot}))
\]
\[
\tilde{\to}RW(\psi\circ\varphi)_{*}(\widehat{\mathcal{D}}_{W(Z)\leftarrow W(X),\text{acc}}^{(m)}\widehat{\otimes}_{\mathcal{\widehat{D}}_{W(X)}^{(m)}}^{L}\mathcal{M}^{\cdot})
\]
Further, we have

\[
\int_{W\psi}\int_{W\varphi}\mathcal{M}^{\cdot}=RW\psi_{*}(\widehat{\mathcal{D}}_{W(Z)\leftarrow W(Y),\text{acc}}^{(m)}\widehat{\otimes}_{\widehat{\mathcal{D}}_{W(Y)}^{(m)}}\int_{W\varphi}\mathcal{M}^{\cdot})_{\text{acc}}
\]
\[
=RW\psi_{*}(\widehat{\mathcal{D}}_{W(Z)\leftarrow W(Y),\text{acc}}^{(m)}\widehat{\otimes}_{\widehat{\mathcal{D}}_{W(Y)}^{(m)}}R(W\varphi)_{*}(\widehat{\mathcal{D}}_{W(Y)\leftarrow W(X)}^{(m)}\widehat{\otimes}_{\mathcal{\widehat{D}}_{W(X)}^{(m)}}^{L}\mathcal{M}^{\cdot})_{\text{acc}})_{\text{acc}}
\]
\[
\to RW\psi_{*}(\widehat{\mathcal{D}}_{W(Z)\leftarrow W(Y),\text{acc}}^{(m)}\widehat{\otimes}_{\widehat{\mathcal{D}}_{W(Y)}^{(m)}}R(W\varphi)_{*}(\widehat{\mathcal{D}}_{W(Y)\leftarrow W(X)}^{(m)}\widehat{\otimes}_{\mathcal{\widehat{D}}_{W(X)}^{(m)}}^{L}\mathcal{M}^{\cdot}))_{\text{scc}}
\]
vis the natural map $\mathcal{N}_{\text{acc}}^{\cdot}\to\mathcal{N}^{\cdot}$
in $\widehat{\mathcal{D}}_{W(Y)}^{(m)}-\text{mod}$. Then, applying
$()_{\text{acc}}$ in $\widehat{\mathcal{D}}_{W(Z)}^{(m)}$-mod to
the previous map, we obtain a map 
\[
\int_{W\psi}\int_{W\varphi}\mathcal{M}^{\cdot}\to\tilde{\to}RW(\psi\circ\varphi)_{*}(\widehat{\mathcal{D}}_{W(Z)\leftarrow W(X),\text{acc}}^{(m)}\widehat{\otimes}_{\mathcal{\widehat{D}}_{W(X)}^{(m)}}^{L}\mathcal{M}^{\cdot})_{\text{acc}}=\int_{W(\psi\circ\varphi)}\mathcal{M}^{\cdot}
\]

To show its an isomorphism, apply $\otimes_{W(k)}^{L}k$ and the previous
lemma to reduce to the case of $\mathcal{D}_{X}^{(m)}$-modules in
positive characteristic. There, we have 
\[
\int_{\psi}\int_{\varphi}\mathcal{M^{\cdot}\tilde{\to}\int_{\psi\circ\varphi}\mathcal{M}^{\cdot}}
\]
for any $\mathcal{M}^{\cdot}\in D_{\text{qcoh}}(\mathcal{D}_{X}^{(m)}-\text{mod})$
(c.f. \cite{key-16}, lemma 7.7 for this statement), whence the result
for $\mathcal{M}^{\cdot}\in D_{\text{qcoh}}(\mathcal{\widehat{D}}_{W(X)}^{(m)}-\text{mod})$.
To obtain the result for $\mathcal{M}^{\cdot}\in D_{\text{qcoh}}(\mathcal{\widehat{D}}_{W(X)}^{(m)}/p^{r}-\text{mod})$,
apply \prettyref{rem:Pushforwards-agree}. 
\end{proof}
To round things out, we deduce the following compatibility: 
\begin{cor}
1) Let $\varphi:\mathfrak{X}_{r}\to\mathfrak{Y}_{r}$ where, if $m=0$
and $r>1$, we assume $p\neq2$ (we allow $r=\infty$ here). Let $\mathcal{N}^{\cdot}\in D(\mathcal{D}_{\mathfrak{X}_{r}}^{(m)}-\text{mod})$.
Then there is a natural isomorphism 
\[
\int_{\varphi}(\mathcal{B}_{\mathfrak{X}_{r}}^{(m)}\otimes_{\mathcal{D}_{\mathfrak{X_{r}}}^{(m)}}^{L}\mathcal{N}^{\cdot})\tilde{\to}\mathcal{B}_{\mathfrak{Y}_{r}}^{(m)}\otimes_{\mathcal{D}_{\mathfrak{Y}_{r}}^{(m)}}^{L}\int_{\varphi}\mathcal{N}^{\cdot}
\]
\end{cor}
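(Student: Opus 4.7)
The plan is to mimic the proof of \prettyref{thm:Push-and-transfer} verbatim, now at the level of lifts $\mathfrak{X}_r, \mathfrak{Y}_r$ rather than the special fibres $X, Y$, using $\mathcal{B}_{\mathfrak{X}_r}^{(m)}$ and $\mathcal{B}_{\mathfrak{Y}_r}^{(m)}$ (provided by \prettyref{cor:Bimodule-over-X-r}) in place of $\mathcal{B}_X^{(0)}$ and $\mathcal{B}_Y^{(0)}$. The first step would be to establish the lift-analogue of \prettyref{thm:Push-and-transfer}(1): for any $\mathcal{M}^{\cdot}\in D(\varphi^{-1}(\mathcal{D}_{\mathfrak{Y}_r}^{(m)})-\text{mod})$, the natural comparison
\[
R(W\varphi)_*(W\varphi^{-1}(\mathcal{B}_{\mathfrak{Y}_r}^{(m)})\otimes^L_{\varphi^{-1}(\mathcal{D}_{\mathfrak{Y}_r}^{(m)})}\mathcal{M}^{\cdot})_{\text{acc}}\to \mathcal{B}_{\mathfrak{Y}_r}^{(m)}\otimes^L_{\mathcal{D}_{\mathfrak{Y}_r}^{(m)}}R\varphi_*(\mathcal{M}^{\cdot})
\]
is an isomorphism. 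The verification is formal and identical to the one in the proof of \prettyref{thm:Push-and-transfer}(1): apply the right adjoint $R\mathcal{H}om_{\widehat{\mathcal{D}}_{W(Y)}^{(m)}/p^r}(\mathcal{B}_{\mathfrak{Y}_r}^{(m)},-)$ to both sides and use the full-faithfulness statement of \prettyref{cor:Bimodule-over-X-r} together with the $W\varphi^{-1}\dashv R(W\varphi)_*$ adjunction to see that both sides recover $R\varphi_*(\mathcal{M}^{\cdot})$.

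The second step is to identify the transfer bimodule. Reducing \prettyref{prop:local-bimodule-for-push} mod $p^r$ (respectively, taking cohomological completions when $r=\infty$) and invoking the left-right interchange simultaneously on the $\Phi_1^*\mathcal{D}_{\mathfrak{X}}^{(m)}$ and $\Phi_1^!\mathcal{D}_{\mathfrak{X}}^{(m)}$ sides, one obtains locally a canonical isomorphism
\[
\bigl(\widehat{\mathcal{D}}_{W(Y)\leftarrow W(X),\text{acc}}^{(m)}/p^r\bigr)\otimes^L_{\widehat{\mathcal{D}}_{W(X)}^{(m)}/p^r}\bigl(\mathcal{B}_{\mathfrak{X}_r}^{(m)}\otimes^L_{\mathcal{D}_{\mathfrak{X}_r}^{(m)}}\mathcal{N}^{\cdot}\bigr)\tilde{\to}\varphi^{-1}(\mathcal{B}_{\mathfrak{Y}_r}^{(m)})\otimes^L_{\varphi^{-1}(\mathcal{D}_{\mathfrak{Y}_r}^{(m)})}\bigl(\mathcal{D}_{\mathfrak{Y}_r\leftarrow\mathfrak{X}_r}^{(m)}\otimes^L_{\mathcal{D}_{\mathfrak{X}_r}^{(m)}}\mathcal{N}^{\cdot}\bigr).
\]
Feeding this into the projection formula of Step 1 with $\mathcal{M}^{\cdot}=\mathcal{D}_{\mathfrak{Y}_r\leftarrow\mathfrak{X}_r}^{(m)}\otimes^L_{\mathcal{D}_{\mathfrak{X}_r}^{(m)}}\mathcal{N}^{\cdot}$ then produces the desired isomorphism, and the sheafification is encoded in the functor $(-)_{\text{acc}}$ applied at $\mathfrak{Y}_r$.

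The main obstacle will be making the local identification of Step 2 globally canonical. Two things need to be checked: independence of the local Frobenius lifts $\Phi_1,\Phi_2$, which is exactly \prettyref{thm:Uniqueness-of-bimodule} together with its mod-$p^r$ compatibility \prettyref{cor:Iso-mod-p^n} (this is the source of the hypothesis $p\neq 2$ when $m=0$ and $r>1$); and that the derived tensor products and completions involved actually concentrate in degree zero, which is exactly the content of \prettyref{prop:local-bimodule-for-push} and rests on the fact that $\Phi_1^!\mathcal{D}_{\mathfrak{X}}^{(m)}$ and $\Phi_2^*\mathcal{D}_{\mathfrak{Y}}^{(m)}$ are summands of $\widehat{\mathcal{D}}_{W(X)}^{(m)}$ and $\widehat{\mathcal{D}}_{W(Y)}^{(m)}$ respectively (\prettyref{cor:Projector!} and \prettyref{thm:Projective!}). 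Once these compatibilities are in place the proof is essentially formal, paralleling \prettyref{thm:Push-and-transfer}(2) line for line.
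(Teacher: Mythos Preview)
Your Step~1 (the projection-formula argument, run with $\mathcal{B}_{\mathfrak{Y}_r}^{(m)}$ in place of $\mathcal{B}_Y^{(m)}$) is fine and works exactly as in \prettyref{thm:Push-and-transfer}(1). The gap is in Step~2. Reducing \prettyref{prop:local-bimodule-for-push} mod $p^r$ gives
\[
\widehat{\mathcal{D}}_{W(Y)\leftarrow W(X),\text{acc}}^{(m)}/p^r\;\tilde{=}\;\varphi^{-1}(\Phi_{2}^{*}\mathcal{D}_{\mathfrak{Y}_r}^{(m)})\otimes^L\widehat{\mathcal{D}}_{\mathfrak{Y}\leftarrow\mathfrak{X}}^{(m),\Phi_{1},\Phi_{2}}/p^r\otimes^L\Phi_{1}^{!}\mathcal{D}_{\mathfrak{X}_r}^{(m)},
\]
and the middle term is the \emph{twisted} transfer bimodule built from $\pi_{1}\circ W\varphi^{\#}\circ\varphi^{-1}(\Phi_{2})$, not $\mathcal{D}_{\mathfrak{Y}_r\leftarrow\mathfrak{X}_r}^{(m)}$. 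The two agree only when $\varphi$ is locally compatible with a Frobenius lift (the ``in particular'' clause of \prettyref{prop:local-bimodule-for-push}); for an arbitrary $\varphi:\mathfrak{X}_r\to\mathfrak{Y}_r$ that compatibility is not available, and the only identification of $\widehat{\mathcal{D}}_{\mathfrak{Y}\leftarrow\mathfrak{X}}^{(m),\Phi_{1},\Phi_{2}}$ with the usual transfer module proved in the paper is mod $p$ (\prettyref{cor:Composition-of-pullbacks}, part~1). So your displayed isomorphism in Step~2, with $\mathcal{D}_{\mathfrak{Y}_r\leftarrow\mathfrak{X}_r}^{(m)}$ on the right, is not justified in general, and the gluing statements you cite (\prettyref{thm:Uniqueness-of-bimodule}, \prettyref{cor:Iso-mod-p^n}) concern the bimodules $\Phi^{*}\widehat{\mathcal{D}}^{(m)}$ on a single space, not the twisted transfer bimodules.

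The paper's fix is exactly the factorization you omit: write $\varphi=\varphi_2\circ\varphi_1$ with $\varphi_1$ a closed immersion and $\varphi_2$ smooth. Each factor \emph{is} locally compatible with a Frobenius lift, so for each of them your Step~2 identification (now with the genuine $\mathcal{D}_{\mathfrak{Y}_r\leftarrow\mathfrak{X}_r}^{(m)}$) goes through via \prettyref{cor:pullback-and-transfer-over-X_n} and its pushforward analogue; then the composition result for pushforwards assembles the two pieces. Once you insert this factorization, the rest of your outline is correct.
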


\begin{proof}
This follows by breaking $\varphi$ up into a smooth morphism and
a closed embedding just as in \prettyref{cor:Pull-and-compose}. 
\end{proof}
Finally, we'd like to discuss some further specific properties of
the pushforward when $\varphi$ satisfies extra conditions. First,
we specialize to the case where $\varphi:X\to Y$ is a smooth morphism.
In this case, there is an adjunction between the pullback and the
pushforward; more precisely, we have: 
\begin{cor}
\label{cor:smooth-adjunction}Let $\varphi:X\to Y$ be smooth of relative
dimension $d$; let $\mathcal{M}^{\cdot}\in D_{\text{acc}}(\mathcal{\widehat{D}}_{W(X)}^{(m)}-\text{mod})$
and $\mathcal{N}^{\cdot}\in D_{\text{acc}}(\mathcal{\widehat{D}}_{W(Y)}^{(m)}-\text{mod}))$.
Then there is an isomorphism of functors 
\[
RW\varphi_{*}R\underline{\mathcal{H}om}_{\mathcal{\widehat{D}}_{W(X)}^{(m)}}(L(W\varphi)^{*}\mathcal{N}^{\cdot},\mathcal{M}^{\cdot})[d]\tilde{\to}R\underline{\mathcal{H}om}{}_{\mathcal{\widehat{D}}_{W(Y)}^{(m)}}(\mathcal{N}^{\cdot},\int_{W\varphi}\mathcal{M}^{\cdot})
\]
In particular the functors $LW\varphi^{*}[d]=W\varphi^{!}$ and ${\displaystyle \int_{\varphi}}$
form an adjoint pair on accessible modules. The analogous statement
holds for accessible $\mathcal{\widehat{D}}_{W(X)}^{(m)}/p^{r}$ modules
for any $r\geq1$. 
\end{cor}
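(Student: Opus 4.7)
The plan is to reduce the adjunction to a statement about the two transfer bimodules, then verify that statement locally by combining the explicit local factorizations of \prettyref{prop:Pullbback-and-transfer} and \prettyref{prop:local-bimodule-for-push} with the classical smooth adjunction in Berthelot's arithmetic $\widehat{\mathcal{D}}^{(m)}$-module theory.

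First, by the derived tensor-Hom adjunction applied to \prettyref{def:Pull!} and the standard sheaf-theoretic adjunction $(W\varphi^{-1}, RW\varphi_{*})$, there is a canonical natural isomorphism
\[
RW\varphi_{*}R\underline{\mathcal{H}om}_{\widehat{\mathcal{D}}_{W(X)}^{(m)}}(LW\varphi^{*}\mathcal{N}^{\cdot},\mathcal{M}^{\cdot})\;\tilde{\to}\;R\underline{\mathcal{H}om}_{\widehat{\mathcal{D}}_{W(Y)}^{(m)}}\!\Bigl(\mathcal{N}^{\cdot},\,RW\varphi_{*}R\underline{\mathcal{H}om}_{\widehat{\mathcal{D}}_{W(X)}^{(m)}}(\widehat{\mathcal{D}}_{W(X)\to W(Y),\text{acc}}^{(m)},\mathcal{M}^{\cdot})\Bigr).
\]
Thus the problem reduces to constructing, functorially in accessible $\mathcal{M}^{\cdot}$, an isomorphism of $(W\varphi^{-1}\widehat{\mathcal{D}}_{W(Y)}^{(m)})$-modules
\[
R\underline{\mathcal{H}om}_{\widehat{\mathcal{D}}_{W(X)}^{(m)}}(\widehat{\mathcal{D}}_{W(X)\to W(Y),\text{acc}}^{(m)},\mathcal{M}^{\cdot})[d]\;\tilde{\to}\;\Bigl(\widehat{\mathcal{D}}_{W(Y)\leftarrow W(X),\text{acc}}^{(m)}\,\widehat{\otimes}^{L}_{\widehat{\mathcal{D}}_{W(X)}^{(m)}}\,\mathcal{M}^{\cdot}\Bigr)_{\text{acc}},
\]
which, after applying $RW\varphi_{*}$, matches the definition of $\int_{W\varphi}\mathcal{M}^{\cdot}$.

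Next, the claim is local on $X$ and $Y$, so we pass to affine opens $\mathrm{Spec}(A)\subset X$, $\mathrm{Spec}(B)\subset Y$ with local coordinates; the infinitesimal lifting property permits us to choose lifts $\mathfrak{X}$, $\mathfrak{Y}$, a lift $\varphi:\mathfrak{X}\to\mathfrak{Y}$, and coordinatized Frobenius lifts $\Phi_{1}$ on $\mathfrak{X}$, $\Phi_{2}$ on $\mathfrak{Y}$ with $\varphi\circ\Phi_{1}=\Phi_{2}\circ\varphi$. Under these choices, \prettyref{prop:Pullbback-and-transfer} and \prettyref{prop:local-bimodule-for-push} give the factorizations
\[
\widehat{\mathcal{D}}_{W(X)\to W(Y),\text{acc}}^{(m)}\;\tilde{=}\;\Phi_{1}^{*}\widehat{\mathcal{D}}_{\mathfrak{X}}^{(m)}\,\widehat{\otimes}^{L}_{\widehat{\mathcal{D}}_{\mathfrak{X}}^{(m)}}\,\widehat{\mathcal{D}}_{\mathfrak{X}\to\mathfrak{Y}}^{(m)}\,\widehat{\otimes}^{L}_{\varphi^{-1}\widehat{\mathcal{D}}_{\mathfrak{Y}}^{(m)}}\,\varphi^{-1}\Phi_{2}^{!}\widehat{\mathcal{D}}_{\mathfrak{Y}}^{(m)},
\]
and the symmetric one for $\widehat{\mathcal{D}}_{W(Y)\leftarrow W(X),\text{acc}}^{(m)}$ with $*$'s and $!$'s interchanged. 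Iterated tensor-Hom adjunction, together with the local projectivity and bimodule-compatibility of the outer pieces $\Phi_{i}^{*}$ and $\Phi_{i}^{!}$ (from \prettyref{thm:Projective!} and \prettyref{prop:Construction-of-Phi-!}), strips off the outer layers and reduces the displayed identity to the purely classical statement
\[
R\underline{\mathcal{H}om}_{\widehat{\mathcal{D}}_{\mathfrak{X}}^{(m)}}(\widehat{\mathcal{D}}_{\mathfrak{X}\to\mathfrak{Y}}^{(m)},\widehat{\mathcal{D}}_{\mathfrak{X}}^{(m)})[d]\;\tilde{\to}\;\widehat{\mathcal{D}}_{\mathfrak{Y}\leftarrow\mathfrak{X}}^{(m)},
\]
which is Berthelot's smooth adjunction for arithmetic $\widehat{\mathcal{D}}^{(m)}$-modules (proved using the relative de Rham complex of $\widehat{\mathcal{D}}_{\mathfrak{X}}^{(m)}$ as a locally free right resolution of $\widehat{\mathcal{D}}_{\mathfrak{Y}\leftarrow\mathfrak{X}}^{(m)}[-d]$).

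For globalization, the local isomorphisms built from different choices of $(\mathfrak{X},\mathfrak{Y},\varphi,\Phi_{1},\Phi_{2})$ must be compatible under the canonical gluing isomorphisms of \prettyref{thm:Uniqueness-of-bimodule}; the required coherence is supplied automatically by the universal property of the right adjoint $()_{\text{acc}}$. The mod-$p^{r}$ version of the theorem follows either by the same argument applied to the reductions of every bimodule involved (using \prettyref{cor:pullback-and-transfer-over-X_n}), or by applying $\otimes^{L}_{W(k)}W_{r}(k)$ to the integral statement, invoking \prettyref{lem:pull-and-reduce} and \prettyref{lem:Push-mod-p} to commute this base change past $LW\varphi^{*}$ and $\int_{W\varphi}$.

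\textbf{Main obstacle.} The principal technical difficulty is the careful management of the accessibility functor $()_{\text{acc}}$ through the chain of adjunctions: one must verify that $R\underline{\mathcal{H}om}_{\widehat{\mathcal{D}}_{W(X)}^{(m)}}(\widehat{\mathcal{D}}_{W(X)\to W(Y),\text{acc}}^{(m)},\mathcal{M}^{\cdot})$ is itself accessible (so that the natural transformation to the accessification is an isomorphism), and that the completed tensor products $\widehat{\otimes}^{L}$ in the local factorization are compatible with $R\underline{\mathcal{H}om}$ in the relevant degrees — a point that should be tractable because \prettyref{prop:Pullbback-and-transfer} guarantees concentration in degree $0$, and because the $\Phi_{i}^{*}$/$\Phi_{i}^{!}$ factors are summands of free modules. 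A related subtlety is that Berthelot's classical smooth adjunction is usually formulated for coherent modules, so one must check the statement carries over to the full derived category via flat base change along the projective generator $\Phi^{*}\widehat{\mathcal{D}}_{\mathfrak{X}}^{(m)}$.
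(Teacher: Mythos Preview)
Your proposal is essentially the same strategy as the paper's: reduce via tensor--Hom and $(W\varphi^{-1},RW\varphi_{*})$ adjunction to a statement relating the two transfer bimodules, prove that statement locally using the factorizations of \prettyref{prop:Pullbback-and-transfer} and \prettyref{prop:local-bimodule-for-push} together with the classical isomorphism $R\mathcal{H}om_{\widehat{\mathcal{D}}_{\mathfrak{X}}^{(m)}}(\widehat{\mathcal{D}}_{\mathfrak{X}\to\mathfrak{Y}}^{(m)},\widehat{\mathcal{D}}_{\mathfrak{X}}^{(m)})\tilde{\to}\widehat{\mathcal{D}}_{\mathfrak{Y}\leftarrow\mathfrak{X}}^{(m)}[-d]$, and then globalize.

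The paper organizes this slightly differently, and the difference matters for the point you flag as your main obstacle. Rather than gluing local instances of the full functorial isomorphism, the paper first isolates the bimodule identity
\[
R\mathcal{H}om_{\widehat{\mathcal{D}}_{W(X)}^{(m)}}(\widehat{\mathcal{D}}_{W(X)\to W(Y),\text{acc}}^{(m)},\widehat{\mathcal{D}}_{W(X),\text{acc}}^{(m)})_{Y\text{-acc}}\;\tilde{=}\;\widehat{\mathcal{D}}_{W(Y)\leftarrow W(X),\text{acc}}^{(m)}[-d]
\]
as a separate proposition (\prettyref{prop:X-Y-switch-for-smooth}), and globalizes \emph{that} by a rigidity argument: the sheaf of $(\varphi^{-1}\widehat{\mathcal{D}}_{W(Y)}^{(m)},\widehat{\mathcal{D}}_{W(X)}^{(m)})$-bimodule endomorphisms of $\widehat{\mathcal{D}}_{W(Y)\leftarrow W(X),\text{acc}}^{(m)}$ is the constant sheaf $W(k)$, so locally defined isomorphisms automatically glue (up to a harmless global scalar). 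Your appeal to ``the universal property of $()_{\text{acc}}$'' for coherence is too vague to do this job; the rigidity computation is what actually makes the gluing work. With the bimodule identity in hand globally, the paper then produces a single global natural transformation between the two sides of the adjunction and checks it is an isomorphism by applying $\otimes_{W(k)}^{L}k$ and invoking the known adjunction for $\mathcal{D}^{(m)}$-modules in characteristic $p$ --- thereby also avoiding any separate argument that $R\mathcal{H}om(\widehat{\mathcal{D}}_{W(X)\to W(Y),\text{acc}}^{(m)},\mathcal{M}^{\cdot})$ is accessible a priori.
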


We will deduce the this theorem from the analogous fact for $\mathcal{\widehat{D}}^{(m)}$-modules.
As in that case, the key point is to prove the 
\begin{prop}
\label{prop:X-Y-switch-for-smooth}For any smooth morphism $\varphi:X\to Y$
of relative dimension $d$ there is an isomorphism of $(\widehat{\mathcal{D}}_{W(Y)}^{(m)},\widehat{\mathcal{D}}_{W(X)}^{(m)})$
bimodules 
\[
R\mathcal{H}om_{\mathcal{\widehat{D}}_{W(X)}^{(m)}}(\widehat{\mathcal{D}}_{W(X)\to W(Y),\text{acc}}^{(m)},\widehat{\mathcal{D}}_{W(X),\text{acc}}^{(m)})_{Y-\text{acc}}\tilde{=}\widehat{\mathcal{D}}_{W(Y)\leftarrow W(X),\text{acc}}^{(m)}[-d]
\]
where on the left hand side $()_{Y-\text{acc}}$ refers to the functor
of accessibalization applied in the category of left $\varphi^{-1}(\widehat{\mathcal{D}}_{W(Y)}^{(m)})$-modules. 
\end{prop}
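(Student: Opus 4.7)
The plan is to reduce the statement, via the explicit local descriptions of the transfer bimodules in \prettyref{prop:Pullbback-and-transfer} and \prettyref{prop:local-bimodule-for-push}, to the classical fact in Berthelot's theory that for a smooth morphism $\varphi:\mathfrak{X}\to\mathfrak{Y}$ of relative dimension $d$ one has an isomorphism of $(\varphi^{-1}\widehat{\mathcal{D}}_{\mathfrak{Y}}^{(m)},\widehat{\mathcal{D}}_{\mathfrak{X}}^{(m)})$-bimodules
\[
R\mathcal{H}om_{\widehat{\mathcal{D}}_{\mathfrak{X}}^{(m)}}(\widehat{\mathcal{D}}_{\mathfrak{X}\to\mathfrak{Y}}^{(m)},\widehat{\mathcal{D}}_{\mathfrak{X}}^{(m)})\tilde{=}\widehat{\mathcal{D}}_{\mathfrak{Y}\leftarrow\mathfrak{X}}^{(m)}[-d],
\]
proved via the relative de Rham resolution of $\mathcal{O}_{\mathfrak{X}}$ as a left $\widehat{\mathcal{D}}_{\mathfrak{X}}^{(m)}$-module over $\varphi^{-1}\widehat{\mathcal{D}}_{\mathfrak{Y}}^{(m)}$.

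First I would work locally, assuming $X=\text{Spec}(A)$, $Y=\text{Spec}(B)$ possess local coordinates and coordinatized Frobenius lifts $\Phi_1,\Phi_2$ and a lift $\varphi:\mathfrak{X}\to\mathfrak{Y}$ chosen so that $\varphi\circ\Phi_1=\Phi_2\circ\varphi$ (possible whenever $\varphi$ is smooth, up to further localization). By the cited propositions, the inputs decompose as $p$-adic completions of three-fold tensor products of $\Phi_i^*$-pieces, transfer modules, and $\Phi_i^!$-pieces. Next, I would exploit the projectivity of $\Phi_1^*\widehat{\mathcal{D}}_{\mathfrak{X}}^{(m)}$ as a left $\widehat{\mathcal{D}}_{W(X)}^{(m)}$-module (\prettyref{thm:Projective!}) together with the adjunction identifying $\Phi_1^!(-)=R\mathcal{H}om_{\widehat{\mathcal{D}}_{W(X)}^{(m)}}(\Phi_1^*\widehat{\mathcal{D}}_{\mathfrak{X}}^{(m)},-)$ on cohomologically complete complexes, in order to strip away the outer $\Phi_1^*\widehat{\mathcal{D}}_{\mathfrak{X}}^{(m)}$-factor appearing in $\widehat{\mathcal{D}}_{W(X)\to W(Y),\text{acc}}^{(m)}$. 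After this reduction, the computation of the left-hand side is controlled entirely by $R\mathcal{H}om_{\widehat{\mathcal{D}}_{\mathfrak{X}}^{(m)}}(\widehat{\mathcal{D}}_{\mathfrak{X}\to\mathfrak{Y}}^{(m)}\widehat{\otimes}^L\varphi^{-1}\Phi_2^!\widehat{\mathcal{D}}_{\mathfrak{Y}}^{(m)},\Phi_1^!\widehat{\mathcal{D}}_{\mathfrak{X}}^{(m)})$ in the world of Berthelot $\widehat{\mathcal{D}}^{(m)}$-bimodules.

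Then I would invoke the classical Berthelot duality displayed above, which produces the shift $[-d]$ and turns $\widehat{\mathcal{D}}_{\mathfrak{X}\to\mathfrak{Y}}^{(m)}$ into $\widehat{\mathcal{D}}_{\mathfrak{Y}\leftarrow\mathfrak{X}}^{(m)}$. The $\varphi^{-1}\Phi_2^!\widehat{\mathcal{D}}_{\mathfrak{Y}}^{(m)}$ factor commutes past the $R\mathcal{H}om$ as a right $\varphi^{-1}\widehat{\mathcal{D}}_{\mathfrak{Y}}^{(m)}$-module, becoming the right factor in the final tensor product. Finally, applying the $Y$-accessibalization functor converts the remaining right $\varphi^{-1}\widehat{\mathcal{D}}_{\mathfrak{Y}}^{(m)}$-structure into the appropriate accessible form; by the description of $\widehat{\mathcal{D}}_{W(Y),\text{acc}}^{(m)}$ this precisely inserts a $\varphi^{-1}\Phi_2^*\widehat{\mathcal{D}}_{\mathfrak{Y}}^{(m)}$-factor on the outside, recovering exactly the local formula for $\widehat{\mathcal{D}}_{W(Y)\leftarrow W(X),\text{acc}}^{(m)}[-d]$ from \prettyref{prop:local-bimodule-for-push}. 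To globalize, I would appeal to the canonical isomorphisms of \prettyref{thm:Uniqueness-of-bimodule} and \prettyref{cor:Iso-mod-p^n}, whose cocycle condition together with the corresponding naturality of the classical transfer bimodules $\widehat{\mathcal{D}}_{\mathfrak{Y}\leftarrow\mathfrak{X}}^{(m)}$ ensures the local isomorphisms patch to a canonical global one.

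The main obstacle I expect is bimodule bookkeeping rather than any single hard idea: both sides carry $(\widehat{\mathcal{D}}_{W(Y)}^{(m)},\widehat{\mathcal{D}}_{W(X)}^{(m)})$-bimodule structures, but on the left-hand side the left $\widehat{\mathcal{D}}_{W(Y)}^{(m)}$-action is visible only \emph{after} applying $()_{Y-\text{acc}}$, so one must track carefully how each manipulation respects this action as it emerges. A secondary technical point is compatibility of the adjunction used to strip $\Phi_1^*$ with cohomological completion; here the projectivity statement in \prettyref{thm:Projective!} (which says $\Phi_1^*\widehat{\mathcal{D}}_{\mathfrak{X}}^{(m)}$ is a summand of $\widehat{\mathcal{D}}_{W(X)}^{(m)}$) is what makes the $R\mathcal{H}om$ well-behaved with respect to completion, so that the chain of isomorphisms can be performed without losing control at any step.
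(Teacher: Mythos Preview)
Your local argument is essentially the paper's: both reduce, via \prettyref{prop:Pullbback-and-transfer} and \prettyref{prop:local-bimodule-for-push}, to the classical Berthelot isomorphism $R\mathcal{H}om_{\widehat{\mathcal{D}}_{\mathfrak{X}}^{(m)}}(\widehat{\mathcal{D}}_{\mathfrak{X}\to\mathfrak{Y}}^{(m)},\widehat{\mathcal{D}}_{\mathfrak{X}}^{(m)})\tilde{=}\widehat{\mathcal{D}}_{\mathfrak{Y}\leftarrow\mathfrak{X}}^{(m)}[-d]$. The paper organizes the bookkeeping slightly differently---rather than stripping $\Phi_1^*$ via the adjunction $\Phi_1^!=R\mathcal{H}om(\Phi_1^*\widehat{\mathcal{D}}_{\mathfrak{X}}^{(m)},-)$ and then inserting $\Phi_2^*$ at the end via $Y$-accessibalization, it applies $R\mathcal{H}om_{W\varphi^{-1}(\widehat{\mathcal{D}}_{W(Y)}^{(m)})}(W\varphi^{-1}(\Phi_2^*\widehat{\mathcal{D}}_{\mathfrak{Y}}^{(m)}),-)$ to \emph{both} sides from the outset (legitimate since both sides are $Y$-accessible), then unwinds by hom-tensor adjunction and the perfectness of $\widehat{\mathcal{D}}_{\mathfrak{X}\to\mathfrak{Y}}^{(m)}$---but this is the same chain of identifications in a different order.

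The genuine difference is in the globalization. You propose to glue via the cocycle conditions of \prettyref{thm:Uniqueness-of-bimodule} and \prettyref{cor:Iso-mod-p^n}, which would require checking that the classical Berthelot isomorphism is compatible with all the transition maps between different choices of $\Phi_1,\Phi_2,\varphi$; this is doable but tedious, and \prettyref{thm:Uniqueness-of-bimodule} carries a $p>2$ hypothesis at level $m=0$. The paper instead uses a rigidity argument: it computes the bimodule endomorphism sheaf
\[
\mathcal{E}nd_{(\varphi^{-1}(\widehat{\mathcal{D}}_{W(Y)}^{(m)}),\widehat{\mathcal{D}}_{W(X)}^{(m)})}(\widehat{\mathcal{D}}_{W(Y)\leftarrow W(X),\text{acc}}^{(m)})\tilde{\to}\mathcal{E}nd_{(\varphi^{-1}(\widehat{\mathcal{D}}_{\mathfrak{Y}}^{(m)}),\widehat{\mathcal{D}}_{\mathfrak{X}}^{(m)})}(\widehat{\mathcal{D}}_{\mathfrak{Y}\leftarrow\mathfrak{X}}^{(m)})=W(k)_{\mathfrak{X}},
\]
and since this constant sheaf is flasque, the locally constructed isomorphisms are forced to agree on overlaps up to a global scalar, hence glue. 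This bypasses any explicit cocycle verification and any characteristic restriction.
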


Before doing so, we recall that the analogous isomorphism 
\[
R\mathcal{H}om_{\mathcal{\widehat{D}}_{\mathfrak{X}}^{(m)}}(\widehat{\mathcal{D}}_{\mathfrak{X}\to\mathfrak{Y}}^{(m)},\widehat{\mathcal{D}}_{\mathfrak{X}}^{(m)})\tilde{\to}\widehat{\mathcal{D}}_{\mathfrak{Y}\leftarrow\mathfrak{X}}^{(m)}[-d]
\]
is the basic point in proving this adjunction for $\widehat{\mathcal{D}}^{(m)}$-modules;
when $m=0$ this can be proved via the de Rham resolution, and when
$m>0$ it follows via Frobenius descent. 
\begin{proof}
Let us begin in the case where $X$ and $Y$ are affine, with smooth
lifts $\mathfrak{X}$ and $\mathfrak{Y}$. Let $\Phi_{1}$ and $\Phi_{2}$
be lifts of Frobenius on $\mathfrak{X}$ and $\mathfrak{Y}$, respectively,
chosen so that $\Phi_{2}\circ\varphi=\varphi\circ\Phi_{1}$ (this
is possible as $\varphi$ is smooth).This means that, according to
\prettyref{prop:local-bimodule-for-push}
\[
\widehat{\mathcal{D}}_{W(Y)\leftarrow W(X),\text{acc}}^{(m)}\tilde{=}\varphi^{-1}(\Phi_{2}^{*}\widehat{\mathcal{D}}_{\mathfrak{Y}}^{(m)})\otimes_{\varphi^{-1}(\mathcal{\widehat{D}}_{\mathfrak{Y}}^{(m)})}^{L}\widehat{\mathcal{D}}_{\mathfrak{Y}\leftarrow\mathfrak{X}}^{(m)}\widehat{\otimes}_{\mathcal{\widehat{D}}_{\mathfrak{X}}^{(m)}}^{L}\Phi_{1}^{!}\mathcal{\widehat{D}}_{\mathfrak{X}}^{(m)}
\]
As both sides are, by definition, accessible over $\varphi^{-1}(\widehat{\mathcal{D}}_{W(Y)}^{(m)})$,
we may construct the isomorphism by applying the functor $R\mathcal{H}om_{W\varphi^{-1}(\widehat{\mathcal{D}}_{W(Y)}^{(m)})}(W\varphi^{-1}(\Phi_{2}^{*}\widehat{\mathcal{D}}_{\mathfrak{Y}}^{(m)}),?)$
to both sides. On the left hand side, this yields
\[
R\mathcal{H}om_{W\varphi^{-1}(\widehat{\mathcal{D}}_{W(Y)}^{(m)})}(W\varphi^{-1}(\Phi_{2}^{*}\widehat{\mathcal{D}}_{\mathfrak{Y}}^{(m)}),R\mathcal{H}om_{\mathcal{\widehat{D}}_{W(X)}^{(m)}}(\widehat{\mathcal{D}}_{W(X)\to W(Y),\text{acc}}^{(m)},\widehat{\mathcal{D}}_{W(X),\text{acc}}^{(m)}))
\]
\[
\tilde{\to}R\mathcal{H}om_{\mathcal{\widehat{D}}_{W(X)}^{(m)}}(\widehat{\mathcal{D}}_{W(X)\to W(Y),\text{acc}}^{(m)}\widehat{\otimes}_{W\varphi^{-1}(\widehat{\mathcal{D}}_{W(Y)}^{(m)})}^{L}W\varphi^{-1}(\Phi_{2}^{*}\widehat{\mathcal{D}}_{\mathfrak{Y}}^{(m)}),\widehat{\mathcal{D}}_{W(X),\text{acc}}^{(m)}))
\]
\[
\tilde{\to}R\mathcal{H}om_{\mathcal{\widehat{D}}_{W(X)}^{(m)}}(\Phi_{1}^{*}\widehat{\mathcal{D}}_{\mathfrak{X}}^{(m)}\widehat{\otimes}_{\widehat{\mathcal{D}}_{\mathfrak{X}}^{(m)}}^{L}\widehat{\mathcal{D}}_{\mathfrak{X}\to\mathfrak{Y}}^{(m)},\widehat{\mathcal{D}}_{W(X),\text{acc}}^{(m)}))
\]
\[
\tilde{\to}R\mathcal{H}om_{\mathcal{\widehat{D}}_{\mathfrak{X}}^{(m)}}(\widehat{\mathcal{D}}_{\mathfrak{X}\to\mathfrak{Y}}^{(m)},\Phi_{1}^{!}\widehat{\mathcal{D}}_{\mathfrak{X}}^{(m)})
\]
\[
\tilde{\to}R\mathcal{H}om_{\mathcal{\widehat{D}}_{\mathfrak{X}}^{(m)}}(\widehat{\mathcal{D}}_{\mathfrak{X}\to\mathfrak{Y}}^{(m)},\widehat{\mathcal{D}}_{\mathfrak{X}}^{(m)})\otimes_{\widehat{\mathcal{D}}_{\mathfrak{X}}^{(m)}}^{L}\Phi_{1}^{!}\widehat{\mathcal{D}}_{\mathfrak{X}}^{(m)}
\]
\[
\tilde{\to}\widehat{\mathcal{D}}_{\mathfrak{Y}\leftarrow\mathfrak{X}}^{(m)}\otimes_{\widehat{\mathcal{D}}_{\mathfrak{X}}^{(m)}}^{L}\Phi_{1}^{!}\widehat{\mathcal{D}}_{\mathfrak{X}}^{(m)}[-d]
\]
where we used hom tensor adjunction for the first isomorphism, \prettyref{prop:Pullbback-and-transfer}
for the second, the construction of $\widehat{\mathcal{D}}_{W(X),\text{acc}}^{(m)}$
for the third, the fact that $\widehat{\mathcal{D}}_{\mathfrak{X}\to\mathfrak{Y}}^{(m)}$
is isomorphic to as perfect complex of $\widehat{\mathcal{D}}_{\mathfrak{X}}^{(m)}$-modules\footnote{As $\widehat{\mathcal{D}}_{\mathfrak{X}}^{(m)}$ is locally of finite
homological dimension} for the fourth, and isomorphism directly above for the fifth. Thus
we obtain the isomorphism of the proposition when $X$ and $Y$ are
affine by the description of $\widehat{\mathcal{D}}_{W(Y)\leftarrow W(X),\text{acc}}^{(m)}$
given just above. 

For the general case, we note that $\widehat{\mathcal{D}}_{W(Y)\leftarrow W(X),\text{acc}}^{(m)}$
is an accessible $(\varphi^{-1}(\widehat{\mathcal{D}}_{W(Y)}^{(m)}),\widehat{\mathcal{D}}_{W(X)}^{(m)})$
bimodule, and so we have 
\[
\mathcal{E}nd_{(\varphi^{-1}(\widehat{\mathcal{D}}_{W(Y)}^{(m)}),\widehat{\mathcal{D}}_{W(X)}^{(m)})}(\widehat{\mathcal{D}}_{W(Y)\leftarrow W(X),\text{acc}}^{(m)})\tilde{\to}\mathcal{E}nd_{(\varphi^{-1}(\widehat{\mathcal{D}}_{\mathfrak{Y}}^{(m)}),\widehat{\mathcal{D}}_{\mathfrak{X}}^{(m)})}(\widehat{\mathcal{D}}_{\mathfrak{Y}\leftarrow\mathfrak{X}}^{(m)})=W(k)_{\mathfrak{X}}
\]
where $W(k)_{\mathfrak{X}}$ refers to the locally constant sheaf
with sections $W(k)$. As this sheaf is flasque, we see that the the
locally defined isomorphisms constructed above must glue to a globally
defined isomorphism (which is unique up to rescale by an element of
$W(k)$). 
\end{proof}
Given this, let's give the
\begin{proof}
(of \prettyref{cor:smooth-adjunction}) (following \cite{key-4},
Theorem 4.40). We have 
\[
R\mathcal{H}om_{\mathcal{\widehat{D}}_{W(X)}^{(m)}}(LW\varphi^{*}\mathcal{N}^{\cdot},\mathcal{M}^{\cdot})[d]
\]
\[
=R\mathcal{H}om_{\mathcal{\widehat{D}}_{W(X)}^{(m)}}(\widehat{\mathcal{D}}_{W(X)\to W(Y),\text{acc}}^{(m)}\otimes_{W\varphi^{-1}(\widehat{\mathcal{D}}_{W(Y)}^{(m)})}^{L}\varphi^{-1}(\mathcal{N}^{\cdot}),\mathcal{M}^{\cdot})[d]
\]
\[
\tilde{\to}R\mathcal{H}om_{W\varphi^{-1}(\mathcal{\widehat{D}}_{W(Y)}^{(m)})}(W\varphi^{-1}(\mathcal{N}^{\cdot}),R\mathcal{H}om_{\mathcal{\widehat{D}}_{W(X)}^{(m)}}(\widehat{\mathcal{D}}_{W(X)\to W(Y),\text{acc}}^{(m)},\mathcal{M}^{\cdot}))[d]
\]
Now, there is the obvious natural map
\[
R\mathcal{H}om_{\mathcal{\widehat{D}}_{W(X)}^{(m)}}(\widehat{\mathcal{D}}_{W(X)\to W(Y),\text{acc}}^{(m)},\widehat{\mathcal{D}}_{W(X)}^{(m)})\widehat{\otimes}_{\widehat{\mathcal{D}}_{W(X)}^{(m)}}^{L}\mathcal{M}^{\cdot}\to R\mathcal{H}om_{\mathcal{\widehat{D}}_{W(X)}^{(m)}}(\widehat{\mathcal{D}}_{W(X)\to W(Y),\text{acc}}^{(m)},\mathcal{M}^{\cdot})
\]
for any $\mathcal{M}^{\cdot}\in D(\widehat{\mathcal{D}}_{W(X)}^{(m)}-\text{mod})$.
Via the right action of $W\varphi^{-1}(\widehat{\mathcal{D}}_{W(Y)}^{(m)})$
on $\widehat{\mathcal{D}}_{W(X)\to W(Y),\text{acc}}^{(m)}$, this
is a morphism complexes of of left $W\varphi^{-1}(\widehat{\mathcal{D}}_{W(Y)}^{(m)})$-modules.
Thus, applying the functors $R\mathcal{H}om_{W\varphi^{-1}(\mathcal{\widehat{D}}_{W(Y)}^{(m)})}(W\varphi^{-1}(\mathcal{N}^{\cdot}),)$
and $RW\varphi_{*}$ we obtain a morphism 
\[
RW\varphi_{*}R\mathcal{H}om_{W\varphi^{-1}(\mathcal{\widehat{D}}_{W(Y)}^{(m)})}(W\varphi^{-1}(\mathcal{N}^{\cdot}),R\mathcal{H}om_{\mathcal{\widehat{D}}_{W(X)}^{(m)}}(\widehat{\mathcal{D}}_{W(X)\to W(Y),\text{acc}}^{(m)},\widehat{\mathcal{D}}_{W(X)}^{(m)})\widehat{\otimes}_{\widehat{\mathcal{D}}_{W(X)}^{(m)}}^{L}\mathcal{M}^{\cdot})[d]
\]
\[
\to RW\varphi_{*}R\mathcal{H}om_{W\varphi^{-1}(\mathcal{\widehat{D}}_{W(Y)}^{(m)})}(W\varphi^{-1}(\mathcal{N}^{\cdot}),R\mathcal{H}om_{\mathcal{\widehat{D}}_{W(X)}^{(m)}}(\widehat{\mathcal{D}}_{W(X)\to W(Y),\text{acc}}^{(m)},\mathcal{M}^{\cdot}))[d]
\]
\[
\tilde{=}R\mathcal{H}om_{\mathcal{\widehat{D}}_{W(X)}^{(m)}}(LW\varphi^{*}\mathcal{N}^{\cdot},\mathcal{M}^{\cdot})[d]
\]
of complexes of $\widehat{\mathcal{D}}_{W(Y)}^{(m)}$-modules. Now,
as $\mathcal{N}^{\cdot}$ is accessible over $\mathcal{\widehat{D}}_{W(Y)}^{(m)}$,
we have 
\[
R\mathcal{H}om_{W\varphi^{-1}(\mathcal{\widehat{D}}_{W(Y)}^{(m)})}(W\varphi^{-1}(\mathcal{N}^{\cdot}),R\mathcal{H}om_{\mathcal{\widehat{D}}_{W(X)}^{(m)}}(\widehat{\mathcal{D}}_{W(X)\to W(Y),\text{acc}}^{(m)},\widehat{\mathcal{D}}_{W(X)}^{(m)})\widehat{\otimes}_{\widehat{\mathcal{D}}_{W(X)}^{(m)}}^{L}\mathcal{M}^{\cdot})[d]
\]
\[
=R\mathcal{H}om_{W\varphi^{-1}(\mathcal{\widehat{D}}_{W(Y)}^{(m)})}(W\varphi^{-1}(\mathcal{N}^{\cdot}),R\mathcal{H}om_{\mathcal{\widehat{D}}_{W(X)}^{(m)}}(\widehat{\mathcal{D}}_{W(X)\to W(Y),\text{acc}}^{(m)},\widehat{\mathcal{D}}_{W(X)}^{(m)})_{\text{acc}}\widehat{\otimes}_{\widehat{\mathcal{D}}_{W(X)}^{(m)}}^{L}\mathcal{M}^{\cdot})[d]
\]
\[
\tilde{\to}R\mathcal{H}om_{W\varphi^{-1}(\mathcal{\widehat{D}}_{W(Y)}^{(m)})}(W\varphi^{-1}(\mathcal{N}^{\cdot}),\widehat{\mathcal{D}}_{W(Y)\leftarrow W(X),\text{acc}}^{(m)}\widehat{\otimes}_{\widehat{\mathcal{D}}_{W(X)}^{(m)}}^{L}\mathcal{M}^{\cdot})
\]
by the previous proposition. Furthermore 
\[
RW\varphi_{*}R\mathcal{H}om_{W\varphi^{-1}(\mathcal{\widehat{D}}_{W(Y)}^{(m)})}(W\varphi^{-1}(\mathcal{N}^{\cdot}),\widehat{\mathcal{D}}_{W(Y)\leftarrow W(X),\text{acc}}^{(m)}\widehat{\otimes}_{\widehat{\mathcal{D}}_{W(X)}^{(m)}}^{L}\mathcal{M}^{\cdot})
\]
\[
\tilde{\to}R\mathcal{H}om_{\mathcal{\widehat{D}}_{W(Y)}^{(m)}}(\mathcal{N}^{\cdot},RW\varphi_{*}(\widehat{\mathcal{D}}_{W(Y)\leftarrow W(X),\text{acc}}^{(m)}\widehat{\otimes}_{\widehat{\mathcal{D}}_{W(X)}^{(m)}}^{L}\mathcal{M}^{\cdot}))
\]
\[
\tilde{\to}R\mathcal{H}om_{\mathcal{\widehat{D}}_{W(Y)}^{(m)}}(\mathcal{N}^{\cdot},\int_{W\varphi}\mathcal{M}^{\cdot})
\]
where we again use that $\mathcal{N}^{\cdot}$ is accessible in the
last line; this follows from
\[
{\displaystyle \int_{W\varphi}\mathcal{M}^{\cdot}=R\varphi_{*}(\widehat{\mathcal{D}}_{W(Y)\leftarrow W(X),\text{acc}}^{(m)}\widehat{\otimes}_{\widehat{\mathcal{D}}_{W(X)}^{(m)}}^{L}\mathcal{M}^{\cdot})_{\text{acc}}}
\]
 Summing up, we've obtained a functorial map 
\[
R\mathcal{H}om_{\mathcal{\widehat{D}}_{W(Y)}^{(m)}}(\mathcal{N}^{\cdot},\int_{W\varphi}\mathcal{M}^{\cdot})\to R\mathcal{H}om_{\mathcal{\widehat{D}}_{W(X)}^{(m)}}(LW\varphi^{*}\mathcal{N}^{\cdot},\mathcal{M}^{\cdot})[d]
\]
To check it is an isomorphism, we may (by cohomological completeness)
apply $\otimes_{W(k)}^{L}k$, and then use \prettyref{lem:Push-mod-p}
to reduce the statement to the (known) adjunction for $\mathcal{D}^{(m)}$-modules. 
\end{proof}
Combining this with the compatibility of pullback on crystals, we
deduce:
\begin{cor}
\label{cor:Crystalline-Push}Let $\mathcal{M}^{\cdot}\in D^{+}(\text{Qcoh}(\text{Crys}(X,W_{r}(k)))$,
and let $\epsilon(\mathcal{M}^{\cdot})$ denote the associated element
of $D_{\text{qcoh,}\text{acc}}^{+}(\mathcal{\widehat{D}}_{W(X)}^{(0)}/p^{r}-\text{mod})$.

Then there is an isomorphism 
\[
\int_{\varphi}\epsilon(\mathcal{M}^{\cdot})[-d]\tilde{\to}\epsilon(R\varphi_{*,\text{crys}}(\mathcal{M}^{\cdot}))
\]
in $D_{\text{qcoh}}^{+}(\mathcal{\widehat{D}}_{W(Y)}^{(0)}/p^{r}-\text{mod})$. 
\end{cor}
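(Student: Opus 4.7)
The strategy is to deduce this corollary from Theorem~\ref{thm:Smooth-Adunction}(2)---which already establishes the identification at the level of cohomology sheaves for a single quasi-coherent crystal---by a standard devissage to upgrade the statement to the derived category of bounded-below complexes. First I would construct a natural morphism $\alpha_{\mathcal{M}^{\cdot}}:\int_{\varphi}\epsilon(\mathcal{M}^{\cdot})[-d]\to\epsilon(R\varphi_{*,\text{crys}}\mathcal{M}^{\cdot})$ functorial in $\mathcal{M}^{\cdot}\in D^{+}_{\text{qcoh}}(\text{Crys}_{W_{r}(k)}(X))$. Starting from the counit $L\varphi_{\text{crys}}^{*}R\varphi_{*,\text{crys}}\mathcal{M}^{\cdot}\to\mathcal{M}^{\cdot}$ of the crystalline adjunction, applying $\epsilon$ and invoking Proposition~\ref{prop:pullback-and-crystals} to rewrite $\epsilon\circ L\varphi_{\text{crys}}^{*}$ as $LW\varphi^{*}\circ\epsilon$, one obtains a map $LW\varphi^{*}\epsilon(R\varphi_{*,\text{crys}}\mathcal{M}^{\cdot})\to\epsilon(\mathcal{M}^{\cdot})$ in the accessible derived category over $X$. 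The adjunction of Corollary~\ref{cor:smooth-adjunction} (with $W\varphi^{!}=LW\varphi^{*}[d]$ left adjoint to $\int_{\varphi}$) then translates this into $\alpha_{\mathcal{M}^{\cdot}}$, the shift $[-d]$ accounting for the built-in shift in $W\varphi^{!}$.

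Second, I would verify that $\alpha_{\mathcal{M}}$ is a quasi-isomorphism when $\mathcal{M}$ is a quasi-coherent crystal placed in degree zero. Theorem~\ref{thm:Smooth-Adunction}(2) supplies the functorial isomorphisms $\mathcal{H}^{i-d}(\int_{\varphi}\epsilon(\mathcal{M}))\tilde{\to}R^{i}\varphi_{*,\text{crys}}\mathcal{M}$, which (using exactness of $\epsilon$ on the abelian category of quasi-coherent crystals, Theorem~\ref{thm:Embedding-of-crystals}) is exactly the statement that $\mathcal{H}^{i}(\alpha_{\mathcal{M}})$ is an isomorphism for every $i$. To extend from the heart to all of $D^{+}_{\text{qcoh}}$ I would apply a standard way-out-functors argument: both sides of $\alpha$ are exact triangulated functors preserving boundedness below, so applying the five-lemma to the truncation distinguished triangles $\tau^{\le n}\mathcal{M}^{\cdot}\to\mathcal{M}^{\cdot}\to\tau^{>n}\mathcal{M}^{\cdot}$ and inducting on cohomological amplitude reduces the general case to the single-crystal case already handled.

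The main obstacle is the compatibility check in the second step: verifying that the abstract adjunction map $\alpha_{\mathcal{M}}$ induces \emph{precisely} the cohomological identification given by Theorem~\ref{thm:Smooth-Adunction}(2), and not merely an isomorphism that differs by some natural automorphism. This is best carried out locally, in the presence of a compatible lift of Frobenius, where the explicit description of the transfer bimodule from Proposition~\ref{prop:local-bimodule-for-push} becomes available, and one can trace how both the abstract adjunction construction and the direct construction underlying Theorem~\ref{thm:Smooth-Adunction}(2) factor through the identification of the Witt pushforward bimodule with the classical transfer bimodule of $\mathcal{D}$-module theory.
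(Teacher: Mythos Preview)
Your construction of the comparison map $\alpha_{\mathcal{M}^{\cdot}}$ in the first paragraph is exactly what the paper does: use that $\epsilon$ intertwines the crystalline pullback with $LW\varphi^{*}$ (Proposition~\ref{prop:pullback-and-crystals}) together with the smooth adjunction (Corollary~\ref{cor:smooth-adjunction}). So far so good.

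The problem is your second step. You invoke Theorem~\ref{thm:Smooth-Adunction}(2) to verify that $\alpha_{\mathcal{M}}$ is an isomorphism for a single crystal, but in the body of the paper that theorem \emph{is} Corollary~\ref{cor:Crystalline-Push}; the introduction states explicitly that Theorem~\ref{thm:Smooth-Adunction} is the combination of Corollaries~\ref{cor:smooth-adjunction} and~\ref{cor:Crystalline-Push}. There is no independent proof of the single-module case that you can cite, so this is circular.

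The paper's argument avoids any such reduction-to-a-heart followed by devissage. Having built the same map $\alpha$ via the two adjunctions, the paper observes that $\epsilon$ is fully faithful on both $X$ and $Y$ and intertwines the left adjoints; hence, by uniqueness of right adjoints, $\alpha$ is an isomorphism \emph{as soon as} one knows that $\int_{W\varphi}$ sends the essential image of $\epsilon$ on $X$ into the essential image of $\epsilon$ on $Y$. Concretely this means checking that $\int_{W\varphi}$ preserves local nilpotence of accessible quasicoherent modules. That is the only substantive verification, and the paper handles it by noting that nilpotence is a support condition on the spectrum of the center, reducing mod~$p$ via Lemma~\ref{lem:Push-mod-p} and Theorem~\ref{thm:Push-and-transfer}, and then invoking Katz's theorem on the nilpotence of the Gauss--Manin connection for $\mathcal{D}_{X}^{(0)}$-modules. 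Your proposal never mentions this preservation-of-nilpotence step, which is the actual content of the proof; the compatibility check you flag as the ``main obstacle'' is not needed once uniqueness of adjoints is in play.
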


\begin{proof}
By definition, the functors 
\[
L\varphi_{\text{crys}}^{*}:D^{+}(\text{Qcoh}(\text{Crys}(Y,W_{r}(k)))\to D^{+}(\text{Qcoh}(\text{Crys}(X,W_{r}(k)))
\]
and $R\varphi_{*,\text{crys}}:D^{+}(\text{Qcoh}(\text{Crys}(X,W_{r}(k)))\to D^{+}(\text{Qcoh}(\text{Crys}(Y,W_{r}(k)))$
form an adjoint pair. As the functor $\epsilon$ is compatible with
pullback, we simply have to check that ${\displaystyle \int_{\varphi}}$
preserves the image; i.e., that it sends quasicoherent complexes of
nilpotent modules over $\mathcal{\widehat{D}}_{W(X)}^{(0)}/p^{r}$
to quasicoherent complexes of nilpotent modules over $\mathcal{\widehat{D}}_{W(Y)}^{(0)}/p^{r}$.
By the fact that nilpotency (for quasicoherent modules) can be characterized
as being supported on a certain closed subset (of the spectrum of
the center of the sheaf of differential operators), this can be checked
after applying $\otimes_{W(k)}^{L}k$. But then, applying \prettyref{thm:Push-and-transfer},
this follows from the analogous fact for $\mathcal{D}_{X}^{(0)}$-modules,
where it is a well known theorem of Katz (using the language of the
Gauss-Manin connection, it is proved in \cite{key-55}, section $7$,
or, in the language of $\mathcal{D}$-modules, in \cite{key-11},
section 2.5). 
\end{proof}
Finally, we have the following important fact when $\varphi$ is proper: 
\begin{thm}
Let $\varphi:X\to Y$ be a proper morphism. Then ${\displaystyle \int_{\varphi}}$
takes $D_{\text{coh}}^{b}(\mathcal{\widehat{D}}_{W(X)}^{(m)}-\text{mod})$
to $D_{\text{coh}}^{b}(\mathcal{\widehat{D}}_{W(Y)}^{(m)}-\text{mod})$. 
\end{thm}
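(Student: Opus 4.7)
The plan is to reduce to the analogous known result for Berthelot's arithmetic $\mathcal{D}^{(m)}$-modules via the base change $\otimes_{W(k)}^{L}k$, using the characterization of coherence given in \prettyref{cor:D-b-coh}. First, let $\mathcal{M}^{\cdot}\in D_{\text{coh}}^{b}(\mathcal{\widehat{D}}_{W(X)}^{(m)}-\text{mod})$. Before anything else I want to verify that $\int_{W\varphi}\mathcal{M}^{\cdot}$ is cohomologically complete: since $\widehat{\mathcal{D}}_{W(Y)\leftarrow W(X),\text{acc}}^{(m)}\widehat{\otimes}_{\mathcal{\widehat{D}}_{W(X)}^{(m)}}^{L}\mathcal{M}^{\cdot}$ is a completed tensor product and $R(W\varphi)_{*}$ preserves cohomological completeness (it commutes with $R\mathcal{H}om_{W(k)}(W(k)[p^{-1}],-)$), and since the accessibilization functor is itself a right adjoint built from $R\mathcal{H}om$, the object $\int_{W\varphi}\mathcal{M}^{\cdot}$ lies in $D_{cc}(\mathcal{\widehat{D}}_{W(Y)}^{(m)}-\text{mod})$.

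Next I would apply $\otimes_{W(k)}^{L}k$ and use \prettyref{lem:Push-mod-p} to obtain
\[
(\int_{W\varphi}\mathcal{M}^{\cdot})\otimes_{W(k)}^{L}k\;\tilde{=}\;\int_{W\varphi}(\mathcal{M}^{\cdot}\otimes_{W(k)}^{L}k)
\]
inside $D(\mathcal{\widehat{D}}_{W(Y)}^{(m)}/p-\text{mod})$. By the definition of coherence in \prettyref{cor:D-b-coh} we have $\mathcal{M}^{\cdot}\otimes_{W(k)}^{L}k\;\tilde{=}\;\mathcal{B}_{X}^{(m)}\otimes_{\mathcal{D}_{X}^{(m)}}^{L}\mathcal{N}^{\cdot}$ for some $\mathcal{N}^{\cdot}\in D_{\text{coh}}^{b}(\mathcal{D}_{X}^{(m)}-\text{mod})$. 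Then \prettyref{thm:Push-and-transfer}(2) gives
\[
\int_{W\varphi}(\mathcal{B}_{X}^{(m)}\otimes_{\mathcal{D}_{X}^{(m)}}^{L}\mathcal{N}^{\cdot})\;\tilde{\to}\;\mathcal{B}_{Y}^{(m)}\otimes_{\mathcal{D}_{Y}^{(m)}}^{L}\int_{\varphi}\mathcal{N}^{\cdot}.
\]
Now I invoke Berthelot's theorem that the classical pushforward ${\displaystyle \int_{\varphi}}$ preserves $D_{\text{coh}}^{b}$ for proper $\varphi$ in the category of $\mathcal{D}^{(m)}$-modules in characteristic $p$ (c.f. \cite{key-1}, \cite{key-16}). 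This places $\int_{\varphi}\mathcal{N}^{\cdot}$ in $D_{\text{coh}}^{b}(\mathcal{D}_{Y}^{(m)}-\text{mod})$, and so by the very definition of coherence given in \prettyref{cor:D-b-coh}, the object $(\int_{W\varphi}\mathcal{M}^{\cdot})\otimes_{W(k)}^{L}k$ is coherent and bounded as an accessible $\widehat{\mathcal{D}}_{W(Y)}^{(m)}/p$-module.

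Finally I would upgrade this to characteristic zero using the Nakayama property for cohomologically complete complexes (\cite{key-8}, proposition 1.5.8) together with \cite{key-8}, theorem 1.6.4, which lifts a coherent, bounded mod-$p$ reduction of a cohomologically complete complex to a coherent, bounded $\widehat{\mathcal{D}}_{\mathfrak{Y}}^{(m)}$-complex locally; this is exactly what is needed to land in $D_{\text{acc},\text{coh}}^{b}(\mathcal{\widehat{D}}_{W(Y)}^{(m)}-\text{mod})$ as characterized locally in the second half of \prettyref{cor:D-b-coh}. The main obstacle, I expect, will be managing the accessibilization functor $(-)_{\text{acc}}$ in the definition of ${\displaystyle \int_{W\varphi}}$: we must check that applying $\otimes_{W(k)}^{L}k$ really does commute with it (it does, since $\mathcal{\widehat{D}}_{W(X),\text{acc}}^{(m)}$ is $W(k)$-flat and the adjoint property is preserved by flat base change), and that the resulting pushforward is accessible in the coherent sense rather than merely cohomologically complete — which is exactly where \prettyref{cor:D-b-coh} is designed to do the work, provided we have verified cohomological completeness at the start.
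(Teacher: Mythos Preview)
Your proposal is correct and follows essentially the same route as the paper: reduce mod $p$ via \prettyref{lem:Push-mod-p}, identify with the classical $\mathcal{D}^{(m)}$-module pushforward via \prettyref{thm:Push-and-transfer}, invoke the known preservation of $D^{b}_{\text{coh}}$ under proper pushforward for $\mathcal{D}^{(m)}$-modules, and then lift back using the Nakayama-type criterion for cohomologically complete complexes. The paper is terser, citing \cite{key-8}, theorem 1.6.3 directly for the ``coherent iff coherent mod $p$'' step, whereas you route through \prettyref{cor:D-b-coh} and \cite{key-8}, theorem 1.6.4; your added remarks on cohomological completeness and on the commutation of $(-)_{\text{acc}}$ with $\otimes_{W(k)}^{L}k$ are correct but already implicit in the paper's treatment.
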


\begin{proof}
Applying \cite{key-8}, theorem 1.6.3, we see that, a complex $\mathcal{N}^{\cdot}\in D(\mathcal{\widehat{D}}_{W(Y)}^{(m)}-\text{mod})$
is contained in $D_{\text{coh}}^{b}(\mathcal{\widehat{D}}_{W(Y)}^{(m)}-\text{mod})$
iff $\mathcal{N}^{\cdot}\otimes_{W(k)}^{L}k$ is contained in $D_{\text{coh}}^{b}(\mathcal{\widehat{D}}_{W(Y)}^{(m)}/p-\text{mod})$.
So it suffices to show that 
\[
\int_{\varphi}:D(\mathcal{\widehat{D}}_{W(X)}^{(m)}/p-\text{mod})\to D(\mathcal{\widehat{D}}_{W(Y)}^{(m)}/p-\text{mod})
\]
takes $D_{\text{coh }}^{b}$ to $D_{\text{coh}}^{b}$; this, in turn,
follows via \prettyref{thm:Push-and-transfer} from the analogous
fact for $\mathcal{D}^{(m)}$-modules. 
\end{proof}

\subsection{\label{subsec:The-de-Rham-Witt}The de Rham-Witt resolution}

In this section we'll recall the (relative) de Rham-Witt resolution
and explain the connection between the pushforward functor introduced
above and the (relative) de Rham-Witt complex. 

Throughout this section, fix a number $r\geq1$; we shall work mod
$p^{r}$. According to \cite{key-6}, and \cite{key-27} for the relative
case, there is attached to any smooth morphism $\varphi:X\to Y$ the
relative de Rham-Witt complex $W\Omega_{X/Y}^{\cdot}$. It is a dg
algebra inside quasicoherent sheaves on $W(X)$, it is complete with
respect to a canonical filtration which we shall denote $G(W\Omega_{X/Y}^{\cdot})$,
and the quotient $W\Omega_{X/Y}^{\cdot}/G^{1}\tilde{\to}\Omega_{X/Y}^{\cdot}$.
In particular zeroth term of $W\Omega_{X/Y}^{\cdot}$ is $W(X)$,
equipped with its usual $V^{i}$-filtration. 

Suppose $Y=\text{Spec}(k)$, and that $X$ is affine and admits local
coordinates, and we fix a coordinatized lift of $F$. Then there is
an inclusion $\Omega_{\mathfrak{X}_{r}}^{i}\to W\Omega_{X}^{i}/p^{r}$
which is compatible with the differential, and with the action of
$\mathcal{O}_{\mathfrak{X}_{r}}$ (via $\Phi:\mathcal{O}_{\mathfrak{X}_{r}}\to\mathcal{O}_{W(X)}/p^{r}$). 

We begin by reviewing a basic result of Etesse, in a form which is
useful for us: 
\begin{thm}
(\cite{key-56}) Let $\mathcal{M}$ be a quasicoherent crystal on
$X$, over $W_{r}(k)$, and let $\tilde{\mathcal{M}}$ be the associated
sheaf on $W(X)_{p^{r}=0}$. This sheaf admits an integrable de Rham-Witt
connection 
\[
\nabla:\tilde{\mathcal{M}}\to\tilde{\mathcal{M}}\widehat{\otimes}_{\mathcal{O}_{W(X)}/p^{r}}W\Omega_{X}^{1}/p^{r}
\]
which is continuous with respect to the natural topologies on both
sides (the completion on the right is with respect to the tensor product
filtration $V^{i}(\mathcal{O}_{W(X)}/p^{r})\cdot\tilde{\mathcal{M}}\otimes G^{j}(W\Omega_{X}^{1}/p^{r})$). 

Suppose $X$ is affine with local coordinates. Let $\mathfrak{X}$
be a smooth lift of $X$ with coordinatized lift of Frobenius $F$.
Then $\mathcal{M}$ defines a unique sheaf with integrable nilpotent
connection $\mathcal{M}'$ on $\mathfrak{X}_{n}$, and we have 
\[
\tilde{\mathcal{M}}\tilde{=}\widehat{\Phi}^{*}\mathcal{M}'
\]
with the connection given by 
\[
\nabla(p^{m}T^{I/p^{m}}\cdot m)=d(p^{m}T^{I/p^{m}})m+p^{m}T^{I/p^{m}}dm
\]
for $m\in\mathcal{M}'$; here, we are regarding $dm\in\mathcal{M}'\otimes\Omega_{\mathfrak{X}_{r}}^{1}\subset\mathcal{M}'\otimes_{\mathcal{O}_{\mathfrak{X}_{r}}}W\Omega_{X}^{1}/p^{r}\subset\widehat{\Phi}^{*}\mathcal{M}'\widehat{\otimes}_{\mathcal{O}_{W(X)}/p^{r}}W\Omega_{X}^{1}/p^{r}$.
The same results also hold for pro-objects in the category $\text{Qcoh(Crys}(X,W_{r}(k))$. 
\end{thm}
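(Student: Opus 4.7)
The plan is to deduce this from the classical crystalline correspondence (between crystals and modules with topologically nilpotent integrable connection on a lift), combined with the fact that $\Phi \colon \mathcal{A}_r \to W(A)/p^r$ is canonically a projective system of divided power thickenings of $A$. I will first work locally on an affine $X = \mathrm{Spec}(A)$ equipped with local coordinates, a smooth lift $\mathfrak{X}$, and a coordinatized Frobenius lift $F$ yielding $\Phi$. The standard equivalence produces a unique $(\mathcal{M}', \nabla')$ on $\mathfrak{X}_r$ whose associated crystal is $\mathcal{M}$; moreover, evaluating the crystal $\mathcal{M}$ on the pro-PD thickening $\Phi$ yields exactly $\widehat{\Phi}^*\mathcal{M}' = \lim_i (\mathcal{O}_{W(X)}/p^r)/V^i \otimes_{\mathcal{O}_{\mathfrak{X}_r}} \mathcal{M}'$, which we take as the definition of $\tilde{\mathcal{M}}$.

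Next I would construct the connection locally by the stated formula. The term $d(p^m T^{I/p^m})$ is interpreted using the universal continuous derivation $d \colon \mathcal{O}_{W(X)}/p^r \to W\Omega_X^1/p^r$ coming from de Rham--Witt theory, and the term $p^m T^{I/p^m}\, dm$ uses the embedding $\mathcal{M}' \otimes_{\mathcal{O}_{\mathfrak{X}_r}} \Omega^1_{\mathfrak{X}_r} \hookrightarrow \widehat{\Phi}^*\mathcal{M}' \,\widehat{\otimes}_{\mathcal{O}_{W(X)}/p^r}\, W\Omega_X^1/p^r$. Since $\widehat{\Phi}^*\mathcal{M}'$ is (by \prettyref{prop:completion}) the $V$-adic completion of $\Phi^*\mathcal{M}'$, and $d(p^m T^{I/p^m}) \in G^{m}(W\Omega_X^1/p^r)$, one checks directly that the formula is $V$-adically continuous and converges in the completed tensor product. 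The Leibniz rule with respect to $\mathcal{O}_{W(X)}/p^r$ and the integrability $\nabla \circ \nabla = 0$ then reduce, after passing to the associated graded with respect to $V^i$, to the corresponding identities for $\nabla'$ on $\mathcal{M}'$, which hold by hypothesis.

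To globalize, I would show that the locally defined $\nabla$ is independent of the choice of lift and Frobenius. Given two choices $(\mathfrak{X}_1, F_1)$ and $(\mathfrak{X}_2, F_2)$, the crystal formalism provides a canonical isomorphism between the two resulting pullbacks (this is essentially \prettyref{cor:Crystalline-Iso} combined with the HPD thickening description). Because the formula for $\nabla$ is intrinsically determined by the universal property of the de Rham-Witt complex together with the connection on the lift, this canonical isomorphism will intertwine the two connections; hence the locally defined $\nabla$ glues to a global continuous integrable dRW connection. The extension to pro-objects in $\mathrm{Qcoh}(\mathrm{Crys}(X, W_r(k)))$ is formal: take the inverse limit of the connections, which is well defined because each $\nabla$ is continuous with respect to the natural topologies.

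The main obstacle I anticipate is the verification that the formula is well defined on the completion $\widehat{\Phi}^*\mathcal{M}'$ rather than just on $\Phi^*\mathcal{M}'$, and that the resulting map is continuous and integrable in the completed sense. The issue is that $d(p^m T^{I/p^m})$ is nontrivial even in high $V$-filtration, so one must track carefully that the Leibniz-type formula extends across limits; this amounts to showing the bi-filtration $\{V^i \otimes G^j\}$ is well-behaved, which follows from \prettyref{lem:two-filtrations}. The remaining subtlety is checking that the nilpotence hypothesis on $\nabla'$ (coming from the crystal structure) is exactly what is needed to make the connection topologically nilpotent after completion, so that integrability $\nabla^2 = 0$ passes to the limit.
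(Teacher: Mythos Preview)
The paper does not supply its own proof of this theorem; it is quoted as a result of Etesse (\cite{key-56}) and then immediately applied. Indeed, the sentence following the statement makes explicit that the \emph{content} of Etesse's theorem, in the form the paper needs, is precisely that the locally defined de Rham--Witt connection on $\widehat{\Phi}^{*}\mathcal{M}'$ is independent of the choice of $\Phi$.

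Your outline is sound up to that point: the local construction of $\nabla$ via the displayed formula, the continuity, the Leibniz rule, and integrability all reduce to routine checks using the $V$-adic/$G$-adic bi-filtration as you indicate. The genuine gap is in your globalization step. You assert that the crystalline isomorphism between $\widehat{\Phi}_1^{*}\mathcal{M}'$ and $\widehat{\Phi}_2^{*}\mathcal{M}'$ ``will intertwine the two connections'' because the formula is ``intrinsically determined by the universal property of the de Rham--Witt complex together with the connection on the lift.'' But that intertwining is exactly the nontrivial statement. The crystalline isomorphism comes from the HPD stratification on $\mathcal{M}'$, i.e.\ from the PD envelope of the diagonal in $\mathfrak{X}_r\times\mathfrak{X}_r$; the de Rham--Witt connection is defined via $d:\mathcal{O}_{W(X)}/p^r\to W\Omega_X^1/p^r$. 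Showing these are compatible requires an explicit comparison between the PD envelope and the de Rham--Witt $1$-forms (this is what Etesse carries out). Invoking \prettyref{cor:Crystalline-Iso} does not help here: that corollary compares two $\widehat{\mathcal{D}}$-module structures, not two de Rham--Witt connections, and the passage from one to the other is again the point at issue. So your sketch identifies the correct architecture but defers the one step that carries the actual weight.
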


In other words, the theorem tells us that the explicitly defined de
Rham-Witt connection on $\widehat{\Phi}^{*}\mathcal{F}'$ is in fact
independent of the choice of $\Phi$. Let's apply this to $\mathcal{F}=(\widehat{\mathcal{D}}_{W(X),\text{crys}}^{(0)}/p^{r})_{\text{c-acc}}$
the completion (along the left action of $V^{i}(\mathcal{O}_{W(X)}/p^{r})$)
of $\widehat{\mathcal{D}}_{W(X),\text{crys},\text{acc}}/p^{r}$. This
is as inverse limit of quasicoherent crystals on $X$ over $W_{r}(k)$,
and so the theorem applies to it. From this we conclude: 
\begin{cor}
\label{cor:DRW-on-c-acc} Let $(\widehat{\mathcal{D}}_{W(X)}/p^{r})_{c-\text{acc}}$
be the completion (along the left action of $V^{i}(\mathcal{O}_{W(X)}/p^{r})$)
of $\widehat{\mathcal{D}}_{W(X),\text{acc}}/p^{r}$ There is a unique
continuous, integrable de Rham-Witt connection $\nabla$ on $(\widehat{\mathcal{D}}_{W(X)}/p^{r})_{c-\text{acc}}$
satisfying the following: suppose $U\subset X$ is open affine with
local coordinates. Let $\mathfrak{U}$ be a smooth lift of $U$ with
coordinatized lift of Frobenius $\Phi$. Then we have the isomorphism
\[
(\widehat{\mathcal{D}}_{W(U)}^{(0)}/p^{r})_{c-\text{acc}}\tilde{=}\widehat{\Phi}^{*}\Phi^{!}\mathcal{D}_{\mathfrak{U}_{r}}^{(0)}
\]
and therefore a continuous integrable de Rham-Witt connection 
\[
\nabla:\widehat{\Phi}^{*}\Phi^{!}\mathcal{D}_{\mathfrak{U}_{r}}^{(0)}\to\widehat{\Phi}^{*}\Phi^{!}\mathcal{D}_{\mathfrak{U}_{r}}^{(0)}\widehat{\otimes}_{\mathcal{O}_{W(U)/p^{r}}}W\Omega_{U}^{1}/p^{r}
\]
 (where the completion with respect to the tensor product filtration
as above) which is induced from the natural connection on $\Phi^{!}\mathcal{D}_{\mathfrak{U}_{r}}^{(0)}$
(it is a left $\mathcal{D}_{\mathfrak{U}_{r}}^{(0)}$-module by definition).
Then the connection $\nabla$ agrees with this induced connection
under the isomorphism. 
\end{cor}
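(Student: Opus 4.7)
The plan is to construct $\nabla$ first locally by reducing to Etesse's theorem, then glue using the independence of $\Phi$ that Etesse provides; uniqueness will be extracted from continuity and the density of sections coming from local lifts.

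First I would fix an affine open $U = \text{Spec}(A)$ with local coordinates, a smooth lift $\mathfrak{U}$, and a coordinatized Frobenius lift $\Phi : \mathcal{O}_{\mathfrak{U}} \to \mathcal{O}_{W(U)}$. By the definition of $\widehat{\mathcal{D}}_{W(X), \text{acc}}^{(0)}$ (Definition~\ref{def:D-acc}) together with Proposition~\ref{prop:L-R-bimodule}, one has the identification $\widehat{\mathcal{D}}_{W(U), \text{acc}}^{(0)}/p^r \cong \Phi^*\widehat{\mathcal{D}}_{\mathfrak{U}}^{(0)}/p^r \otimes_{\mathcal{D}_{\mathfrak{U}_r}^{(0)}} \Phi^!\mathcal{D}_{\mathfrak{U}_r}^{(0)}$; completing in the $V$-adic topology on the left then gives $(\widehat{\mathcal{D}}_{W(U)}^{(0)}/p^r)_{c-\text{acc}} \cong \widehat{\Phi}^*\Phi^!\mathcal{D}_{\mathfrak{U}_r}^{(0)}$, which is the isomorphism asserted in the statement.

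Next I would filter $\Phi^!\mathcal{D}_{\mathfrak{U}_r}^{(0)}$ by its natural order filtration, so as to express it as an inverse limit of coherent left $\mathcal{D}_{\mathfrak{U}_r}^{(0)}$-submodules. Each finite-order truncation is a coherent $\mathcal{O}_{\mathfrak{U}_r}$-module equipped with an integrable connection coming from the $\mathcal{D}_{\mathfrak{U}_r}^{(0)}$-structure, and hence corresponds to a quasicoherent crystal on $U$ over $W_r(k)$; the system as a whole is a pro-object in $\text{Qcoh}(\text{Crys}(U, W_r(k)))$. Etesse's theorem (in the pro-version recalled above) then gives a canonical continuous integrable de Rham-Witt connection on $\widehat{\Phi}^*\Phi^!\mathcal{D}_{\mathfrak{U}_r}^{(0)}$, and by the explicit formula stated in that theorem this connection agrees with the one induced from $d : \Phi^!\mathcal{D}_{\mathfrak{U}_r}^{(0)} \to \Phi^!\mathcal{D}_{\mathfrak{U}_r}^{(0)} \otimes \Omega_{\mathfrak{U}_r}^1$ via the rule $\nabla(p^m T^{I/p^m} \cdot s) = d(p^m T^{I/p^m})\cdot s + p^m T^{I/p^m}\cdot ds$.

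For gluing, if $\Phi_1, \Phi_2$ are two choices of coordinatized lifts (possibly on different smooth lifts $\mathfrak{U}$), Theorem~\ref{thm:Uniqueness-of-bimodule} together with Corollary~\ref{cor:Iso-mod-p^n} supplies a canonical bimodule isomorphism between the two resulting objects. The essential point provided by Etesse is that the de Rham-Witt connection on the associated sheaf on $W(U)_{p^r=0}$ depends only on the underlying crystal structure and not on $\Phi$, so these canonical isomorphisms respect the connections. Consequently the locally defined $\nabla$ descend to a well-defined global integrable continuous de Rham-Witt connection on $(\widehat{\mathcal{D}}_{W(X)}^{(0)}/p^r)_{c-\text{acc}}$. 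Uniqueness follows because any two continuous $\nabla$ satisfying the stated local description agree on the dense subsheaf generated by the images of $\Phi$ (equivalently by $\pi \cdot P$ for $P \in \mathcal{D}_{\mathfrak{U}_r}^{(0)}$), and continuity extends the agreement to the completion.

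The main obstacle will be verifying that the truncations of $\Phi^!\mathcal{D}_{\mathfrak{U}_r}^{(0)}$ are genuine crystals (not merely $\mathcal{D}_{\mathfrak{U}_r}^{(0)}$-modules with possibly non-nilpotent connection), so that Etesse's theorem applies. Here one uses that the Grothendieck-dual description $\Phi^!\mathcal{D}_{\mathfrak{U}_r}^{(0)} \cong \tilde{\mathcal{H}om}_{\mathcal{A}}(W(A), \mathcal{\widehat{D}}_{\mathcal{A}}^{(0)})$ from Proposition~\ref{prop:Construction-of-Phi-!} together with the $V$-adic completion forces the action of the augmentation ideal to be topologically nilpotent on each finite stage, so one actually lands inside the essential image of $\epsilon$ from Theorem~\ref{thm:Embedding-of-crystals}; alternatively, one can first perform the entire construction with $\mathcal{\widehat{D}}_{W(X), \text{crys}}^{(0)}/p^r$ in place of $\mathcal{\widehat{D}}_{W(X)}^{(0)}/p^r$ (as already indicated in the paragraph preceding the corollary) and then argue that $V$-adic completion identifies the two accessible completions.
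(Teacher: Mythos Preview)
Your primary approach runs into a genuine obstruction at exactly the point you flag: the finite-order truncations of $\Phi^{!}\mathcal{D}_{\mathfrak{U}_{r}}^{(0)}$ are \emph{not} crystals, because the left $\mathcal{D}_{\mathfrak{U}_{r}}^{(0)}$-module structure on $\mathcal{D}_{\mathfrak{U}_{r}}^{(0)}$ itself is not nilpotent (e.g.\ $\partial_{i}$ acts by left multiplication, which is far from locally nilpotent). Your first proposed fix does not repair this: the filtration $F^{m}$ appearing in the description of $\Phi^{!}\mathcal{D}_{\mathfrak{U}_{r}}^{(0)}$ and the $V$-adic topology reflect the \emph{right} $\widehat{\mathcal{D}}_{W(X)}^{(0)}$-module structure (equivalently the $\mathcal{O}_{W(X)}$-structure), not the left connection; completing in that direction does nothing to make the left action of the augmentation ideal $\mathcal{I}_{r}$ topologically nilpotent. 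So Etesse's theorem simply does not apply directly to $(\widehat{\mathcal{D}}_{W(X)}^{(0)}/p^{r})_{c-\text{acc}}$.

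Your second alternative is the paper's actual route, but you have misstated the final step. The $V$-adic completions of the two objects are \emph{not} identified: $(\widehat{\mathcal{D}}_{W(X),\text{crys}}^{(0)}/p^{r})_{c-\text{acc}}$ is a genuine further completion of $(\widehat{\mathcal{D}}_{W(X)}^{(0)}/p^{r})_{c-\text{acc}}$, namely along the central ideal cutting out nilpotent support. What the paper uses is that the resulting completion map is \emph{injective}. One first applies Etesse to the crystalline version, which \emph{is} an inverse limit of quasicoherent crystals, obtaining a global, $\Phi$-independent de Rham-Witt connection there. The locally defined connection on $(\widehat{\mathcal{D}}_{W(U)}^{(0)}/p^{r})_{c-\text{acc}}\cong\widehat{\Phi}^{*}\Phi^{!}\mathcal{D}_{\mathfrak{U}_{r}}^{(0)}$ maps, under this injection, to the restriction of the global connection on the crystalline side; injectivity then forces the local connections for different $\Phi$ to already coincide on the non-crystalline side, so they glue. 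This bypasses any need to verify nilpotence for $\Phi^{!}\mathcal{D}_{\mathfrak{U}_{r}}^{(0)}$ itself.
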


\begin{proof}
By the previous theorem this is true after completing along the central
ideal defining the nilpotent support condition. As the natural completion
map 
\[
(\widehat{\mathcal{D}}_{W(X)}^{(0)}/p^{r})_{c-\text{acc}}\to(\widehat{\mathcal{D}}_{W(X),\text{crys}}^{(0)}/p^{r})_{\text{c-acc}}
\]
is injective, we see that the de Rham-Witt connection on $(\widehat{\mathcal{D}}_{W(X)}/p^{r})_{c-\text{acc}}$
which is locally defined by the isomorphism $(\widehat{\mathcal{D}}_{W(U)}^{(0)}/p^{r})_{c-\text{acc}}\tilde{=}\widehat{\Phi}^{*}\Phi^{!}\mathcal{D}_{\mathfrak{U}_{r}}^{(0)}$
is in fact independent of the choice of $\Phi$. Therefore this connection
glues to a globally defined connection on $(\widehat{\mathcal{D}}_{W(X)}/p^{r})_{c-\text{acc}}$
as required. 
\end{proof}
From this follows 
\begin{cor}
\label{cor:drW-connection-on-M} Let $\mathcal{M}\in\mathcal{\widehat{D}}_{W(X)}^{(0)}/p^{r}-\text{mod}_{\text{acc}}$.
Then there is an unique, continuous, integrable de Rham-Witt connection
on $\mathcal{\widehat{M}}$ defined via 
\[
\mathcal{\widehat{M}}\tilde{\to}(\widehat{\mathcal{D}}_{W(X)}^{(0)}/p^{r})_{c-\text{acc}}\widehat{\otimes}_{\widehat{\mathcal{D}}_{W(X)}/p^{r}}\mathcal{\widehat{M}}
\]
\[
\xrightarrow{\nabla}((\widehat{\mathcal{D}}_{W(X)}^{(0)}/p^{r})_{c-\text{acc}}\widehat{\otimes}_{\mathcal{O}_{W(X)}}W\Omega_{X}^{1}/p^{r})\widehat{\otimes}_{\widehat{\mathcal{D}}_{W(X)}/p^{r}}\mathcal{\widehat{M}}\tilde{\to}\mathcal{\widehat{M}}\widehat{\otimes}_{\mathcal{O}_{W(X)}}W\Omega_{X}^{1}/p^{r}
\]
where the second arrow is induced by the de Rham-Witt connection of
the previous corollary, and the right action of $\widehat{\mathcal{D}}_{W(X)}^{(0)}/p^{r}$
on $(\widehat{\mathcal{D}}_{W(X)}^{(0)}/p^{r})_{c-\text{acc}}\widehat{\otimes}_{\mathcal{O}_{W(X)}}W\Omega_{X}^{1}/p^{r}$
is via the right action of $\widehat{\mathcal{D}}_{W(X)}/p^{r}$ on
$(\widehat{\mathcal{D}}_{W(X)}/p^{r})_{c-\text{acc}}$. This connection
agrees with the ``obvious'' connection on $\mathcal{\widehat{M}}=\widehat{\Phi}^{*}(\mathcal{N})$
for any affine open with lift of Frobenius $\Phi$. 
\end{cor}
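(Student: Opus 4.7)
The plan is to use the de Rham-Witt connection on $(\widehat{\mathcal{D}}_{W(X)}^{(0)}/p^{r})_{c-\text{acc}}$ constructed in \prettyref{cor:DRW-on-c-acc} and bootstrap it to arbitrary accessible $\mathcal{M}$ via the three-arrow composition in the statement. First I would verify that the initial arrow $\widehat{\mathcal{M}}\tilde{\to}(\widehat{\mathcal{D}}_{W(X)}^{(0)}/p^{r})_{c-\text{acc}}\widehat{\otimes}_{\widehat{\mathcal{D}}_{W(X)}^{(0)}/p^{r}}\widehat{\mathcal{M}}$ is an isomorphism. Since $\mathcal{M}$ is accessible, the uncompleted identity $\widehat{\mathcal{D}}_{W(X),\text{acc}}^{(0)}/p^{r}\otimes_{\widehat{\mathcal{D}}_{W(X)}^{(0)}/p^{r}}\mathcal{M}\tilde{\to}\mathcal{M}$ is tautological; passing to the $V$-adic completion and applying \prettyref{prop:completion} identifies the completed tensor with $\widehat{\mathcal{M}}$ (the claim being clear locally via $\widehat{\mathcal{M}}\cong\widehat{\Phi}^{*}\mathcal{N}$). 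The third arrow is the multiplication map on the $\widehat{\mathcal{D}}$-factor, which is an isomorphism by the same reasoning applied to $\widehat{\mathcal{M}}\widehat{\otimes}_{\mathcal{O}_{W(X)}/p^{r}}W\Omega_{X}^{1}/p^{r}$.

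The crucial technical point is the middle arrow. For $\nabla\otimes\text{id}$ to descend to the tensor product over $\widehat{\mathcal{D}}_{W(X)}^{(0)}/p^{r}$, the connection on $(\widehat{\mathcal{D}}_{W(X)}^{(0)}/p^{r})_{c-\text{acc}}$ must be right $\widehat{\mathcal{D}}_{W(X)}^{(0)}/p^{r}$-linear, i.e.\ $\nabla(P\cdot Q)=\nabla(P)\cdot Q$ for $Q\in\widehat{\mathcal{D}}_{W(X)}^{(0)}/p^{r}$. I would verify this locally using the description $(\widehat{\mathcal{D}}_{W(U)}^{(0)}/p^{r})_{c-\text{acc}}\tilde{=}\widehat{\Phi}^{*}\Phi^{!}\mathcal{D}_{\mathfrak{U}_{r}}^{(0)}$: the connection is built from the left $\mathcal{D}_{\mathfrak{U}_{r}}^{(0)}$-action on $\Phi^{!}\mathcal{D}_{\mathfrak{U}_{r}}^{(0)}$, while the right $\widehat{\mathcal{D}}_{W(U)}^{(0)}/p^{r}$-action is the one inherited from $\Phi^{!}\mathcal{D}_{\mathfrak{U}_{r}}^{(0)}=\pi\cdot\widehat{\mathcal{D}}_{W(U)}^{(0)}$. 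Since the two bimodule actions commute and the de Rham-Witt differential is built purely from the left structure, the required linearity is automatic on each local patch, and the $\Phi$-independence established in \prettyref{cor:DRW-on-c-acc} promotes this to a global statement.

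Once the composition is well-defined, the Leibniz rule and integrability of $\nabla$ on $\widehat{\mathcal{M}}$ transfer directly from the corresponding properties on $(\widehat{\mathcal{D}}_{W(X)}^{(0)}/p^{r})_{c-\text{acc}}$, and continuity is automatic as every arrow in the construction is continuous with respect to the filtrations in play. To verify the final claim, in the presence of a coordinatized Frobenius lift $\Phi$ one has $\widehat{\mathcal{M}}\tilde{=}\widehat{\Phi}^{*}\mathcal{N}$; unwinding the tensor construction and applying the explicit description of $\nabla$ on $\widehat{\Phi}^{*}\Phi^{!}\mathcal{D}_{\mathfrak{U}_{r}}^{(0)}$ identifies the resulting connection with the Leibniz formula $\nabla(\alpha n)=d\alpha\otimes n+\alpha\cdot dn$ of Etesse's theorem, which simultaneously yields both the local compatibility statement and the uniqueness assertion (any continuous de Rham-Witt connection is determined by its restriction to a generating set of local sections). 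The hard part will be ensuring that right-multiplication by $\widehat{\mathcal{D}}_{W(X)}^{(0)}/p^{r}$ in the bimodule structure on $(\widehat{\mathcal{D}}_{W(X)}^{(0)}/p^{r})_{c-\text{acc}}$ genuinely corresponds, under the local identification with $\widehat{\Phi}^{*}\Phi^{!}\mathcal{D}_{\mathfrak{U}_{r}}^{(0)}$, to the right action coming from $\Phi^{!}\mathcal{D}_{\mathfrak{U}_{r}}^{(0)}$ as a sub-bimodule of $\widehat{\mathcal{D}}_{W(U)}^{(0)}/p^{r}$; only this compatibility allows the right-linearity argument for $\nabla$ to be run intrinsically on the global object, and it needs to be checked carefully against the constructions of \prettyref{def:D-acc} and \prettyref{prop:Construction-of-Phi-!}.
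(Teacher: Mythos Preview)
Your approach is correct and matches the paper's: the paper states this corollary with the preface ``From this follows'' and gives no separate proof, treating the construction in the statement as self-evidently well-defined once \prettyref{cor:DRW-on-c-acc} is in hand. The details you supply---the right $\widehat{\mathcal{D}}_{W(X)}^{(0)}/p^{r}$-linearity of $\nabla$ on the bimodule, the identification of the outer arrows via accessibility and \prettyref{prop:completion}, and the local check against Etesse's formula---are exactly what the paper leaves implicit, and your flagged ``hard part'' about the compatibility of the right action under the local identification $\widehat{\Phi}^{*}\Phi^{!}\mathcal{D}_{\mathfrak{U}_{r}}^{(0)}$ is a genuine verification the paper absorbs into the bimodule construction of \prettyref{def:D-acc} without comment.
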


In order to show that the (relative) de Rham-Witt complex computes
the pushforward, we need to define the (relative) de-Rham-Witt resolution.
Let $\varphi:X\to Y$ be a smooth morphism of relative dimension $d$.
Via the natural quotient map $W\Omega_{X}^{1}\to W\Omega_{X/Y}^{1}$,
the de Rham-Witt connection on $(\widehat{\mathcal{D}}_{W(X)}/p^{r})_{c-\text{acc}}$
defines an integrable connection 
\[
\nabla:(\widehat{\mathcal{D}}_{W(X)}^{(0)}/p^{r})_{c-\text{acc}}\to(\widehat{\mathcal{D}}_{W(X)}^{(0)}/p^{r})_{c-\text{acc}}\widehat{\otimes}_{\mathcal{O}_{W(X)}}W\Omega_{X/Y}^{1}/p^{r}
\]
and so we have the associated relative de Rham-Witt complex 
\[
((\widehat{\mathcal{D}}_{W(X)}^{(0)}/p^{r})_{c-\text{acc}}\widehat{\otimes}_{\mathcal{O}_{W(X)}}W\Omega_{X/Y}^{\cdot}/p^{r},\nabla)
\]
concentrated in degrees $\{0,\dots,d\}$; the meaning of $\widehat{\otimes}$
is that, in degree $i$ we take the completion along the tensor product
filtration 
\[
V^{k}(\mathcal{O}_{W(X)}/p^{r})\cdot((\widehat{\mathcal{D}}_{W(X)}^{(0)}/p^{r})_{c-\text{acc}}\otimes G^{j}(W\Omega_{X/Y}^{i}/p^{r})
\]
If $X=\text{Spec}(A)$ admits local coordinates, then from the isomorphism
$(\widehat{\mathcal{D}}_{W(X)}^{(0)}/p^{r})_{c-\text{acc}}\tilde{=}\widehat{\Phi}^{*}\Phi^{!}\mathcal{D}_{\mathfrak{X}}^{(0)}/p^{r}$
one sees that 
\[
(\widehat{\mathcal{D}}_{W(X)}^{(0)}/p^{r})_{c-\text{acc}}\widehat{\otimes}_{\mathcal{O}_{W(X)}}W\Omega_{X/Y}^{i}/p^{r}\tilde{=}\Phi^{!}\mathcal{D}_{\mathfrak{X}}^{(0)}/p^{r}\widehat{\otimes}_{\mathcal{O}_{\mathfrak{X}}/p^{r}}W\Omega_{X/Y}^{i}/p^{r}
\]
where the completion on the right is along $\Phi^{!}\mathcal{D}_{\mathfrak{X}}^{(0)}/p^{r}\otimes_{\mathcal{O}_{\mathfrak{X}}/p^{r}}G^{j}(W\Omega_{X/Y}^{i}/p^{r})$. 

We also define the object $(\widehat{\mathcal{D}}_{W(Y)\leftarrow W(X)}^{(0)}/p^{r})_{\text{c-acc}}$
to be the completion of $(\widehat{\mathcal{D}}_{W(Y)\leftarrow W(X),\text{acc}}^{(0)}/p^{r})$
along the filtration induced by $\{W\varphi^{-1}(V^{i}(\mathcal{O}_{W(Y)}/p^{r})\}$.
We note that since $\varphi$ is smooth, the right $\mathcal{D}_{X}^{(0)}$-module
$\mathcal{D}_{Y\leftarrow X}^{(0)}$ is coherent. This implies (via
the description of \prettyref{prop:local-bimodule-for-push}) that
$(\widehat{\mathcal{D}}_{W(Y)\leftarrow W(X)}^{(0)}/p^{r})_{\text{c-acc}}$
is already complete along the filtration induced from $\{V^{i}(\mathcal{O}_{W(X)}/p^{r})\}$.
When $Y$ is a point, we have 
\[
(\widehat{\mathcal{D}}_{W(Y)\leftarrow W(X)}^{(0)}/p^{r})_{\text{c-acc}}=W\omega_{X}/p^{r}
\]

Then we have 
\begin{thm}
\label{thm:drW-resolution}There is a quasi-isomorphism 
\[
(\widehat{\mathcal{D}}_{W(X)}^{(0)}/p^{r})_{c-\text{acc}}\widehat{\otimes}_{\mathcal{O}_{W(X)/p^{r}}}W\Omega_{X/Y}^{\cdot}/p^{r}[-d]\tilde{\to}(\widehat{\mathcal{D}}_{W(Y)\leftarrow W(X)}^{(0)}/p^{r})_{\text{c-acc}}
\]
\end{thm}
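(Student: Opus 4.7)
The plan is to construct a natural map between the two sides and verify it is a quasi-isomorphism by reducing to a classical statement about the relative de Rham resolution of $\mathcal{D}^{(0)}$-modules. First, I would construct the map as follows. The de Rham-Witt connection on $(\widehat{\mathcal{D}}_{W(X)}^{(0)}/p^{r})_{c-\text{acc}}$ (from \prettyref{cor:drW-connection-on-M}) induces, by the Leibniz rule applied termwise, a differential on the complex $(\widehat{\mathcal{D}}_{W(X)}^{(0)}/p^{r})_{c-\text{acc}}\widehat{\otimes}_{\mathcal{O}_{W(X)}/p^{r}}W\Omega_{X/Y}^{\cdot}/p^{r}$. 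Composing the highest differential with the right-action-induced map should produce a map $(\widehat{\mathcal{D}}_{W(X)}^{(0)}/p^{r})_{c-\text{acc}}\widehat{\otimes} W\Omega_{X/Y}^{d}/p^{r}\to(\widehat{\mathcal{D}}_{W(Y)\leftarrow W(X)}^{(0)}/p^{r})_{c-\text{acc}}$ annihilating the image of the previous differential; this gives the claimed map after the shift by $-d$.

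Second, the quasi-isomorphism assertion is local on $X$ and $Y$. I would work on affine opens $X=\operatorname{Spec}(A)$, $Y=\operatorname{Spec}(B)$ possessing local coordinates, with a smooth lift $\varphi:\mathfrak{X}\to\mathfrak{Y}$ and coordinatized Frobenius lifts $\Phi_{1},\Phi_{2}$ satisfying $\varphi\circ\Phi_{1}=\Phi_{2}\circ\varphi$ (possible by the smoothness of $\varphi$). Using \prettyref{prop:local-bimodule-for-push} and the analog for $(\widehat{\mathcal{D}}_{W(X)}^{(0)}/p^{r})_{c-\text{acc}}$, one has
\[
(\widehat{\mathcal{D}}_{W(X)}^{(0)}/p^{r})_{c-\text{acc}}\tilde{=}\widehat{\Phi}_{1}^{*}\Phi_{1}^{!}\mathcal{D}_{\mathfrak{X}_{r}}^{(0)},\qquad (\widehat{\mathcal{D}}_{W(Y)\leftarrow W(X)}^{(0)}/p^{r})_{c-\text{acc}}\tilde{=}\widehat{\Phi}_{1}^{*}(\mathcal{D}_{\mathfrak{Y}_{r}\leftarrow\mathfrak{X}_{r}}^{(0)}\otimes_{\mathcal{D}_{\mathfrak{X}_{r}}^{(0)}}\Phi_{1}^{!}\mathcal{D}_{\mathfrak{X}_{r}}^{(0)}),
\]
and by \prettyref{cor:drW-connection-on-M} the de Rham-Witt connection in this local setting is the $\widehat{\Phi}_{1}^{*}$-pullback of the canonical connection on $\Phi_{1}^{!}\mathcal{D}_{\mathfrak{X}_{r}}^{(0)}$.

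Third, I would invoke the Nakayama lemma for cohomologically complete objects together with \prettyref{lem:two-filtrations} to reduce the quasi-isomorphism statement to the reduction mod $p$. After reducing mod $p$, $W\Omega_{X/Y}^{\cdot}/p$ is linked (via the standard filtration and the Cartier isomorphism, cf.\ \cite{key-6}) to the ordinary relative de Rham complex $\Omega_{X/Y}^{\cdot}$, and the accessible bimodule reduces to $\mathcal{B}_{X}^{(0)}\otimes_{\mathcal{D}_{X}^{(0)}}\mathcal{D}_{X}^{(0)}$. Thus the claim reduces to the classical fact that $\mathcal{D}_{\mathfrak{X}_{r}}^{(0)}\otimes_{\mathcal{O}_{\mathfrak{X}_{r}}}\Omega_{\mathfrak{X}_{r}/\mathfrak{Y}_{r}}^{\cdot}[-d]$ is a locally free resolution of $\mathcal{D}_{\mathfrak{Y}_{r}\leftarrow\mathfrak{X}_{r}}^{(0)}$ as a right $\mathcal{D}_{\mathfrak{X}_{r}}^{(0)}$-module (cf.\ \cite{key-4}, section 4), combined with the exactness of $\widehat{\Phi}_{1}^{*}$ established in \prettyref{prop:completion}.

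The main obstacle will be the compatibility step: showing that the de Rham-Witt complex $W\Omega_{X/Y}^{\cdot}/p^{r}$, when tensored with the completed accessible object and viewed through the identification with $\widehat{\Phi}^{*}\Phi^{!}\mathcal{D}_{\mathfrak{X}_{r}}^{(0)}$, is genuinely computing the same derived pullback as the naive $\widehat{\Phi}^{*}$ of the ordinary relative de Rham complex. The two complexes are very different as graded objects, but the point is that the $V$-adic completion (which equals the $F$-adic completion of \prettyref{lem:two-filtrations}) collapses this difference: one constructs a filtration whose associated graded identifies both sides with $\widehat{\Phi}^{*}$ of the classical de Rham complex of $\mathcal{D}_{\mathfrak{X}_{r}}^{(0)}$, and then a standard spectral sequence argument together with completeness yields the required quasi-isomorphism.
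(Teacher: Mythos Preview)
Your overall architecture (construct the map, localize, reduce mod $p$, compare to the classical de Rham resolution) matches the paper's, and your first two steps are fine. The gap is in the third and fourth steps. The comparison between $W\Omega_{X/Y}^{\cdot}/p$ and $\Omega_{X/Y}^{\cdot}$ with coefficients is only available for \emph{nilpotent} connections: via Cartier descent one reduces to the case $\mathcal{O}_X$, where it is a basic computation (c.f.\ \cite{key-27}, section 3.2). But $\Phi^{!}\mathcal{D}_{X}^{(0)}$ is not nilpotent; as a module over the center $\mathcal{Z}(\mathcal{D}_{X}^{(0)})\tilde{=}\mathcal{O}_{T^{*}X^{(1)}}$ it has full support, not support on the zero section. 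Your proposed ``filtration whose associated graded identifies both sides with $\widehat{\Phi}^{*}$ of the classical de Rham complex'' does not exist: the de Rham-Witt differential mixes $d$, $V$, and $dV$, and on the associated graded of the $V$-adic filtration one does \emph{not} recover the de Rham differential for arbitrary coefficients. So the spectral sequence you sketch does not degenerate in the way you want.

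The paper closes this gap in two moves you are missing. First, it applies the nilpotent comparison to $\mathcal{D}_{X}^{(0)}/\mathcal{I}^{s}$ for each power of the central ideal $\mathcal{I}$ defining the nilpotence condition, then passes to the inverse limit to get the result for $\mathcal{D}_{X,\text{crys}}^{(0)}$. This shows the kernel and cokernel of the comparison map for $\mathcal{D}_{X}^{(0)}$ itself are supported \emph{away from the zero section} in $T^{*}X^{(1)}$. Second, and this is the idea you would not have guessed, it observes that $\Gamma(\mathcal{D}_{X}^{(0)})$ carries automorphisms $\chi$ sending $\partial_{i}\mapsto\partial_{i}-a_{i}$, which act on the center by translating the zero section to the subscheme $\{\partial_{i}=a_{i}^{p}\}$. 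Running the same argument with $(\mathcal{D}_{X}^{(0)})^{\chi}$ shows the kernel and cokernel are supported away from \emph{every} closed point, hence vanish. You need both of these ingredients; the direct filtration approach will not get you there.
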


Before proving this, let us recall some notation. Suppose $X=\text{Spec}(A)$
and we have fixed a lift $\mathcal{A}$ as well as a coordinatized
lift of Frobenius $\Phi$. Then in \prettyref{prop:Construction-of-Phi-!}
we constructed an isomorphism 
\[
\tilde{\text{Hom}}_{\mathcal{A}}(W(A),\mathcal{\widehat{D}}_{\mathcal{A}}^{(0)})\tilde{\to}\Phi^{!}(\mathcal{\widehat{D}}_{\mathcal{A}}^{(0)})
\]
where $\tilde{\text{Hom}}_{\mathcal{A}}(W(A),\mathcal{\widehat{D}}_{\mathcal{A}}^{(0)})$
is a suitable subset of $\text{Hom}_{\mathcal{A}}(W(A),\mathcal{\widehat{D}}_{\mathcal{A}}^{(0)})$.
This allows us to put a filtration on $\Phi^{!}(\mathcal{\widehat{D}}_{\mathcal{A}}^{(0)})$
via 
\[
F^{m}(\Phi^{!}(\mathcal{\widehat{D}}_{\mathcal{A}}^{(0)}))=\{\epsilon\in\Phi^{!}(\mathcal{\widehat{D}}_{\mathcal{A}}^{(0)})|\epsilon(W(A))\subset p^{m}\cdot\mathcal{\widehat{D}}_{\mathcal{A}}^{(m)}\}
\]
Note that $\Phi^{!}(\mathcal{\widehat{D}}_{\mathcal{A}}^{(0)})/F^{m}$
is annihilated by $V^{m}(W(A))$. Choosing coordinates on $A$, there
is also the filtration 
\[
\tilde{F}^{m}(\Phi^{!}(\mathcal{\widehat{D}}_{\mathcal{A}}^{(0)}))=\{\epsilon(p^{r}T^{I/p^{r}})=0|r\geq m\}
\]
and, exactly as in \prettyref{lem:two-filtrations}, one has that,
for each $r\geq1$, $\tilde{F}^{m}(\Phi^{!}(\mathcal{\widehat{D}}_{\mathcal{A}}^{(0)})/p^{r})$
and $F^{m}(\Phi^{!}(\mathcal{\widehat{D}}_{\mathcal{A}}^{(0)})/p^{r})$
intertwine one another. 
\begin{proof}
First, we'll construct a map
\[
(\widehat{\mathcal{D}}_{W(X)}^{(0)}/p^{r})_{c-\text{acc}}\widehat{\otimes}_{\mathcal{O}_{W(X)/p^{r}}}W\omega_{X/Y}/p^{r}\to(\widehat{\mathcal{D}}_{W(Y)\leftarrow W(X)}^{(0)}/p^{r})_{\text{c-acc}}
\]
and show that the resulting augmented complex is exact. This map is
constructed as follows: via the composition of morphisms there is
a map 
\[
\mathcal{E}nd_{W(k)}(W\omega_{X})\times\mathcal{H}om_{\mathcal{O}_{W(X)}}(\varphi^{*}(W\omega_{Y}),W\omega_{X})\to\mathcal{H}om_{W(k)}(\varphi^{*}(W\omega_{Y}),W\omega_{X})
\]
which, via the embeddings $W\omega_{X/Y}\to\mathcal{H}om_{W(k)}(\varphi^{-1}(W\omega_{Y}),W\omega_{X})$
and $\widehat{\mathcal{D}}_{W(X)}^{(0)}\to\mathcal{E}nd_{W(k)}(W\omega_{X})$
yields a map 
\[
\widehat{\mathcal{D}}_{W(X)}^{(0)}\otimes_{\mathcal{O}_{W(X)/p^{r}}}W\omega_{X/Y}\to\mathcal{H}om_{W(k)}(\varphi^{*}(W\omega_{Y}),W\omega_{X})
\]
and, after completion and accessibalization, one sees by looking in
local coordinates that the image of this map is contained in $\text{\ensuremath{\lim}}_{r}\widehat{\mathcal{D}}_{W(Y)\leftarrow W(X),\text{c-acc}}^{(0)}/p^{r}$
(when $Y$ is a point this map is just the action map coming from
the right action of $\widehat{\mathcal{D}}_{W(X)}^{(0)}$ on $W\omega_{X}$). 

Now, to show the required exactness it suffices to work after applying
$\otimes_{W_{r}(k)}^{L}k$. Working locally, we have an isomorphism
\[
(\widehat{\mathcal{D}}_{W(X)}^{(0)}/p^{r})_{c-\text{acc}}\widehat{\otimes}_{\mathcal{O}_{W(X)/p^{r}}}W\Omega_{X/Y}^{i}/p^{r}\tilde{=}\Phi^{!}(\widehat{\mathcal{D}}_{\mathfrak{X}}^{(0)}/p^{r})\widehat{\otimes}_{\mathcal{O}_{\mathfrak{X}}/p^{r}}W\Omega_{X/Y}^{i}/p^{r}
\]
where $\widehat{\otimes}$ denotes completion with respect to the
tensor product filtration 
\[
\{F^{s}(\Phi^{!}(\widehat{\mathcal{D}}_{\mathfrak{X}}^{(0)}/p^{r}))\otimes{}_{\mathcal{O}_{\mathfrak{X}}/p^{r}}G^{t}(W\Omega_{X/Y}^{i}/p^{r})\}_{s+t\geq m}
\]
As remarked above, it is equivalent to work with the filtration on
$\tilde{F}$ on $\Phi^{!}(\mathcal{\widehat{D}}_{\mathfrak{X}}^{(0)})/p^{r}$.
Since $(\Phi^{!}(\widehat{\mathcal{D}}_{\mathfrak{X}}^{(0)}/p^{r}))/\tilde{F}^{m}$
is locally free over $\mathcal{O}_{\mathfrak{X}}/p^{r}$, we deduce
that $(\widehat{\mathcal{D}}_{W(X)}^{(0)}/p^{r})_{c-\text{acc}}\widehat{\otimes}_{\mathcal{O}_{W(X)/p^{r}}}W\Omega_{X/Y}^{i}/p^{r}$
is flat over $W_{r}(k)$. Therefore it suffices to analyze the complex
\[
(\widehat{\mathcal{D}}_{W(X)}^{(0)}/p)_{c-\text{acc}}\widehat{\otimes}_{\mathcal{O}_{W(X)/p}}W\Omega_{X/Y}^{\cdot}/p
\]
Again working locally, we have the isomorphism 
\[
(\widehat{\mathcal{D}}_{W(X)}^{(0)}/p)_{c-\text{acc}}\widehat{\otimes}_{\mathcal{O}_{W(X)/p}}W\Omega_{X/Y}^{\cdot}/p
\]
\[
\tilde{\to}\Phi^{!}(\mathcal{D}_{X}^{(0)})\widehat{\otimes}_{\mathcal{O}_{X}}W\Omega_{X/Y}^{\cdot}/p
\]
where the $\widehat{\otimes}$ denotes completion with respect to
the filtration $G^{j}$ on $W\Omega_{X/Y}^{\cdot}/p$. Letting $\Phi_{m}^{!}(\mathcal{D}_{X}^{(0)})=\Phi^{!}(\mathcal{D}_{X}^{(0)})/F^{m}(\mathcal{O}_{W(X)}/p)$
and $W_{j}\Omega_{X/Y}^{\cdot}/p=W(\Omega_{X/Y}^{\cdot}/p)/G^{j}(W\Omega_{X/Y}^{\cdot}/p)$
we shall in fact show that, 
\[
\mathcal{H}^{i}(\Phi_{m}^{!}(\mathcal{D}_{X}^{(0)}))\widehat{\otimes}_{\mathcal{O}_{\mathfrak{X}/p}}W_{j}\Omega_{X/Y}^{\cdot}/p)=0
\]
for $i<d$ and 
\[
\mathcal{H}^{d}(\Phi_{m}^{!}(\mathcal{D}_{X}^{(0)}))\widehat{\otimes}_{\mathcal{O}_{\mathfrak{X}/p}}W_{j}\Omega_{X/Y}^{\cdot}/p)\tilde{\to}\Phi_{m}^{!}(W\varphi^{-1}(\mathcal{O}_{W_{j}(Y)}/p)\widehat{\otimes}_{\varphi^{-1}(\mathcal{O}_{Y})}\mathcal{D}_{Y\leftarrow X}^{(0)})
\]
where, in the right hand side, $\Phi_{m}^{!}$ refers to $\otimes_{\mathcal{D}_{X}^{(0)}}\Phi_{m}^{!}(\mathcal{D}_{X}^{(0)})$
via the right $\mathcal{D}_{X}^{(0)}$-module structure on $\mathcal{D}_{Y\leftarrow X}^{(0)}$,
and the completion is with respect to the $V$-adic completion of
$\mathcal{O}_{W(Y)}$. 

As each tower $\{\Phi_{m}^{!}(\mathcal{D}_{X}^{(0)}))\widehat{\otimes}_{\mathcal{O}_{\mathfrak{X}/p}}W_{j}\Omega_{X/Y}^{\cdot}/p\}_{m}$
is a tower of sheaves satisfying the Mittag-Leffler condition; each
sheaf in the tower is itself an inverse limit of quasicoherent sheaves;
and so we may take the inverse limit over $m$ (c.f. \cite{key-8},
lemma 1.1.6), followed by the inverse limit over $j$, so the result
then follows directly from \prettyref{prop:local-bimodule-for-push}. 

To prove the required isomorphisms, let $(\mathcal{M},\nabla)$ be
any integrable connection on $X$. Then, as proved above, there is
an induced de Rham-Witt connection on $\widehat{\Phi}^{*}\mathcal{M}$,
and therefore a de Rham-Witt complex whose terms are
\[
\widehat{\Phi}^{*}\mathcal{M}\widehat{\otimes}_{\mathcal{O}_{W(X)/p}}W\Omega_{X/Y}^{i}/p\tilde{\to}\mathcal{M}\widehat{\otimes}_{\mathcal{O}_{X}}W\Omega_{X/Y}^{i}/p
\]
(where the completion on the right is with respect to the standard
filtration on $W\Omega_{X/Y}^{\cdot}/p$; this is unnecessary if $\mathcal{M}$
is coherent over $\mathcal{O}_{X}$). We note that the quotient by
$\mathcal{M}\otimes_{\mathcal{O}_{X}}V(W\Omega_{X/Y}^{\cdot}/p)$
induces a map 
\[
\mathcal{M}\otimes_{\mathcal{O}_{X}}W\Omega_{X/Y}^{\cdot}/p\to\mathcal{M}\otimes_{\mathcal{O}_{X}}(\Omega_{X/Y}^{\cdot}\widehat{\otimes}_{\varphi^{-1}(\mathcal{O}_{Y})}W\varphi^{-1}(\mathcal{O}_{W(Y)}/p))
\]
If $\mathcal{M}$ is a vector bundle with nilpotent connection, this
map induces an isomorphism 
\[
\mathcal{H}^{i}(\mathcal{M}\otimes_{\mathcal{O}_{X}}W\Omega_{X/Y}^{\cdot}/p)\tilde{\to}\mathcal{H}^{i}(\mathcal{M}\otimes_{\mathcal{O}_{X}}\Omega_{X/Y}^{\cdot})\widehat{\otimes}_{\varphi^{-1}(\mathcal{O}_{Y})}W\varphi^{-1}(\mathcal{O}_{W(Y)}/p)
\]
(via Cartier descent, this follows formally from the case $\mathcal{O}_{X}$,
where it a basic computation, c.f., \cite{key-27}, section 3.2);
more precisely it is induced from 
\[
\mathcal{H}^{i}(\mathcal{M}\otimes_{\mathcal{O}_{X}}W_{j}\Omega_{X/Y}^{\cdot}/p)\tilde{\to}\mathcal{H}^{i}(\mathcal{M}\otimes_{\mathcal{O}_{X}}\Omega_{X/Y}^{\cdot})\otimes_{\varphi^{-1}(\mathcal{O}_{Y})}W\varphi^{-1}(\mathcal{O}_{W_{j}(Y)}/p)
\]
by taking the inverse limit over $j$. 

Now, the quotient of $\mathcal{D}_{X}^{(0)}$ along any power of the
central ideal $\mathcal{I}_{1}$ defining the nilpotence condition,
is such a vector bundle with nilpotent connection. Thus we obtain
for each $m,j,s$
\[
\mathcal{H}^{i}(\Phi_{m}^{!}(\mathcal{D}_{X}^{(0)}/\mathcal{I}^{s})\otimes_{\mathcal{O}_{X}}W_{j}\Omega_{X/Y}^{\cdot}/p)\tilde{\to}\mathcal{H}^{i}(\Phi_{m}^{!}(\mathcal{D}_{X}^{(0)}/\mathcal{I}^{s})\otimes_{\mathcal{O}_{X}}\Omega_{X/Y}^{\cdot})\otimes_{\varphi^{-1}(\mathcal{O}_{Y})}W\varphi^{-1}(\mathcal{O}_{W_{j}(Y)}/p)
\]
and, as everything in question is a finite right $\mathcal{D}_{X}^{(0)}$-module
we may take the inverse limit over $s$ and obtain 
\[
\mathcal{H}^{i}(\Phi_{m}^{!}(\mathcal{D}_{X,\text{crys}}^{(0)})\otimes_{\mathcal{O}_{X}}W_{j}\Omega_{X/Y}^{\cdot}/p)\tilde{\to}\mathcal{H}^{i}(\Phi_{m}^{!}(\mathcal{D}_{X,\text{crys}}^{(0)})\otimes_{\mathcal{O}_{X}}\Omega_{X/Y}^{\cdot})\otimes_{\varphi^{-1}(\mathcal{O}_{Y})}W\varphi^{-1}(\mathcal{O}_{W_{j}(Y)}/p)
\]
this vanishes for $i<d$ and is isomorphic to $\Phi_{m}^{!}(W\varphi^{-1}(\mathcal{O}_{W_{j}(Y)}/p)\otimes_{\varphi^{-1}(\mathcal{O}_{Y})}\mathcal{D}_{Y\leftarrow X,\text{crys}}^{(0)})$
when $i=d$. This shows that both the kernel and the cokernel of 
\[
\mathcal{H}^{i}(\Phi_{m}^{!}\mathcal{D}_{X}^{(0)}\otimes_{\mathcal{O}_{X}}W_{j}\Omega_{X/Y}^{\cdot}/p)\tilde{\to}W\varphi^{-1}(\mathcal{O}_{W_{j}(Y)}/p)\otimes_{\varphi^{-1}(\mathcal{O}_{Y})}\mathcal{H}^{i}(\Phi_{m}^{!}\mathcal{D}_{X}^{(0)}\otimes_{\mathcal{O}_{X}}\Omega_{X/Y}^{\cdot})
\]
are supported, as modules over $\mathcal{Z}(\mathcal{D}_{X}^{(0)})\tilde{=}\mathcal{O}_{T^{*}X^{(1)}}$,
away from the zero section. 

To show that the kernel and cokernel are actually $0$, we argue as
follows: passing to the algebraic closure of $k$, we consider a closed
point $x\in T^{*}X^{(1)}$. As $X$ is affine and has local coordinates,
we have that $x$ is contained in the subscheme defined by the ideal
$(\partial_{1}-a_{1}^{p},\partial_{2}-a_{2}^{p},\dots,\partial_{n}-a_{n}^{p})$
for some $a_{i}\in k$. The algebra of global sections $\Gamma(\mathcal{D}_{X}^{(0)})$
possesses an automorphism $\chi$ which preserves $\Gamma(\mathcal{O}_{X})$
and sends $\partial_{i}$ to $\partial_{i}-a_{i}$ (this is clear
from the defining relations on $\mathcal{D}_{X}^{(0)}$). This automorphism
preserves $\mathcal{Z}(\mathcal{D}_{X}^{(0)})$, and the associated
action on $T^{*}X^{(1)}$ interchanges the zero section $X^{(1)}$
with the subscheme defined by $(\partial_{1}-a_{1}^{p},\partial_{2}-a_{2}^{p},\dots,\partial_{n}-a_{n}^{p})$. 

Now, we may repeat the above argument for $(\mathcal{D}_{X}^{(0)})^{\chi}$,
the $\mathcal{D}_{X}^{(0)}$-module whose left and right module structure
are twisted by the action of $\chi$. This shows that the kernel and
cokernel of 
\[
\mathcal{H}^{i}(\Phi_{m}^{!}\mathcal{D}_{X}^{(0)}\otimes_{\mathcal{O}_{X}}W_{j}\Omega_{X/Y}^{\cdot}/p)\tilde{\to}W\varphi^{-1}(\mathcal{O}_{W_{j}(Y)}/p)\otimes_{\varphi^{-1}(\mathcal{O}_{Y})}\mathcal{H}^{i}(\Phi_{m}^{!}\mathcal{D}_{X}^{(0)}\otimes_{\mathcal{O}_{X}}\Omega_{X/Y}^{\cdot})
\]
are supported away from the subscheme defined by the ideal $(\partial_{1}-a_{1}^{p},\partial_{2}-a_{2}^{p},\dots,\partial_{n}-a_{n}^{p})$.
As this is true for all $(a_{1},\dots,a_{n})$ in $k$, we see that
in fact the support of the kernel and cokernel are empty, as required.
\end{proof}
Now we want to compare the (relative) de Rham-Witt cohomology with
the relative pushforward constructed above. To do so, we need the
following basic fact:
\begin{thm}
\label{thm:complete-iso}Fix $r\geq1$ and let $\mathcal{M}^{\cdot}$
be a bounded accessible complex over $\widehat{\mathcal{D}}_{W(X)}^{(0)}/p^{r}$;
and let $\widehat{\mathcal{M}}^{\cdot}$ be its derived completion
(as discussed above \prettyref{prop:completion}). Consider the functor
\[
\widehat{\int}_{W\varphi}\mathcal{M}^{\cdot}:=R(W\varphi)_{*}(\widehat{\mathcal{D}}_{W(Y)\leftarrow W(X),\text{\ensuremath{c-}acc}}^{(0)}/p^{r}\widehat{\otimes}_{\widehat{\mathcal{D}}_{W(X)}^{(0)}/p^{r}}^{L}\mathcal{\widehat{M}}^{\cdot})
\]
where on the right we define
\[
\widehat{\mathcal{D}}_{W(Y)\leftarrow W(X),\text{\ensuremath{c-}acc}}^{(0)}/p^{r}\widehat{\otimes}_{\widehat{\mathcal{D}}_{W(X)}^{(0)}/p^{r}}^{L}\mathcal{\widehat{M}}^{\cdot}
\]
\[
:=\text{holim}_{i}(W\varphi^{-1}((\mathcal{O}_{W(Y)}/p^{r})/V^{i})\otimes_{W\varphi^{-1}((\mathcal{O}_{W(Y)}/p^{r})}^{L}\widehat{\mathcal{D}}_{W(Y)\leftarrow W(X),\text{\ensuremath{c-}acc}}^{(0)}/p^{r}\otimes_{\widehat{\mathcal{D}}_{W(X)}^{(0)}/p^{r}}^{L}\mathcal{\widehat{M}}^{\cdot})
\]
Then there is a canonical isomorphism 
\[
\text{holim}_{i}((\mathcal{O}_{W(Y)}/p^{r})/V^{i})\otimes_{\mathcal{O}_{W(Y)}/p^{r}}^{L}\int_{W\varphi}\mathcal{M}^{\cdot}\tilde{\to}\widehat{\int}_{W\varphi}\mathcal{M}^{\cdot}
\]
\end{thm}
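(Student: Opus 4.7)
The plan is to exhibit both sides as homotopy limits of the same tower of objects on $W(Y)_{p^r=0}$ and match the two towers term-by-term via a derived projection formula.

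First, I would unfold the LHS as
$$\mathrm{holim}_i \Bigl((\mathcal{O}_{W(Y)}/p^r)/V^i \otimes^L_{\mathcal{O}_{W(Y)}/p^r} \int_{W\varphi}\mathcal{M}^\cdot\Bigr),$$
observing that each $(\mathcal{O}_{W(Y)}/p^r)/V^i$ is the structure sheaf of the finite-type closed subscheme $W_i(Y)_{p^r=0} \subset W(Y)_{p^r=0}$. For each fixed $i$ I would invoke the derived projection formula along $W\varphi$ to rewrite a single term as
$$R(W\varphi)_* \Bigl( W\varphi^{-1}\bigl((\mathcal{O}_{W(Y)}/p^r)/V^i\bigr) \otimes^L \bigl(\widehat{\mathcal{D}}_{W(Y)\leftarrow W(X),\mathrm{acc}}^{(0)}/p^r \otimes^L_{\widehat{\mathcal{D}}_{W(X)}^{(0)}/p^r} \mathcal{M}^\cdot\bigr)_{\mathrm{acc}} \Bigr).$$
This step is legitimate because the sheaves $(\mathcal{O}_{W(Y)}/p^r)/V^i$ sit in a finite filtration whose successive quotients are quasicoherent on $W_i(Y) \setminus W_{i-1}(Y)$; inducting on $i$ reduces the projection formula to the standard finite-thickening case.

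Second, I would argue that the inner accessibilization $(-)_{\mathrm{acc}}$ becomes superfluous once the tensor with $W\varphi^{-1}((\mathcal{O}_{W(Y)}/p^r)/V^i)$ is imposed and, eventually, the homotopy limit is taken. The functor $(-)_{\mathrm{acc}}$ is given locally by smashing with $\widehat{\mathcal{D}}_{W(Y),\mathrm{acc}}^{(0)}/p^r$, which by Corollary~\ref{cor:Projector!} is cut out inside $\widehat{\mathcal{D}}_{W(Y)}^{(0)}/p^r$ by the idempotent $\pi$. Since $\pi$ preserves the filtration $V^i(\mathcal{O}_{W(Y)}/p^r)$, the functor $(-)_{\mathrm{acc}}$ commutes with reduction mod $V^i$ up to coherent isomorphism, and the discrepancy between the uncompleted and $c$-completed forms of the transfer bimodule is absorbed after the homotopy limit step. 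Then, since $R(W\varphi)_*$ is a right adjoint, I commute $\mathrm{holim}_i$ past it to obtain
$$R(W\varphi)_*\, \mathrm{holim}_i \Bigl( W\varphi^{-1}(\mathcal{O}_{W_i(Y)_{p^r=0}}) \otimes^L \widehat{\mathcal{D}}_{W(Y)\leftarrow W(X),\mathrm{acc}}^{(0)}/p^r \otimes^L \mathcal{M}^\cdot \Bigr).$$

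Third, I would identify the inner homotopy limit with $\widehat{\mathcal{D}}_{W(Y)\leftarrow W(X),c\text{-acc}}^{(0)}/p^r \,\widehat{\otimes}^L_{\widehat{\mathcal{D}}_{W(X)}^{(0)}/p^r}\, \widehat{\mathcal{M}}^\cdot$, directly from the definitions: the $c$-completion of the transfer bimodule is constructed as the completion along precisely the filtration $\{W\varphi^{-1}(V^i(\mathcal{O}_{W(Y)}/p^r))\}$, while the induced filtration on $\mathcal{M}^\cdot$ via $W\varphi^{-1}$ is cofinal with the $V^\bullet(\mathcal{O}_{W(X)}/p^r)$-filtration by Lemma~\ref{lem:two-filtrations} applied in the relative setting, yielding $\widehat{\mathcal{M}}^\cdot$ in the limit via Proposition~\ref{prop:completion}.

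The main obstacle will be the second step: controlling the interaction of accessibilization with both the $V^i$-tensor and the passage between $\mathrm{acc}$ and $c$-$\mathrm{acc}$ forms of the transfer bimodule. Verifying that the projector $\pi$ is compatible with the whole tower in a way that persists through $\mathrm{holim}$ will require a careful local computation in coordinates, using the explicit form of $\pi$ from Proposition~\ref{prop:construction-of-projector}. A secondary difficulty lies in the projection formula for the non-perfect modules $(\mathcal{O}_{W(Y)}/p^r)/V^i$; this should be handled by dévissage along the finite steps of the natural filtration on $W_i(Y)_{p^r=0}$.
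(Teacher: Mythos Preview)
Your global strategy via the projection formula is natural, but the paper takes a quite different route, and the obstacles you flag are more serious than your proposed fixes suggest.

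The paper's proof never attempts a projection formula for the non-perfect sheaves $(\mathcal{O}_{W(Y)}/p^r)/V^i$. Instead it works locally on both $Y$ and $X$ from the outset: choosing a Frobenius lift compatible with $\varphi$, it uses the explicit decomposition of \prettyref{prop:local-bimodule-for-push} to rewrite the completed transfer tensor as
\[
\varphi^{-1}(\Phi^{*}\mathcal{D}_{\mathfrak{Y}_{r}}^{(0)})\widehat{\otimes}_{\varphi^{-1}(\mathcal{D}_{\mathfrak{Y}_{r}}^{(0)})}^{L}\mathcal{D}_{\mathfrak{Y}_{r}\leftarrow\mathfrak{X}_{r}}^{(0)}\otimes_{\mathcal{D}_{\mathfrak{X}_{r}}^{(0)}}^{L}\mathcal{N}^{\cdot},
\]
where $\mathcal{N}^{\cdot}$ is bounded. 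Each term of this complex is then an \emph{infinite product} of copies of a fixed $\varphi^{-1}(\mathcal{D}_{\mathfrak{Y}_{r}}^{(0)})$-module $\mathcal{F}^{i}$, and the required commutation of $R\varphi_{*}$ with $\widehat{\Phi}^{*}$ reduces to the elementary fact that $R\varphi_{*}$ commutes with products. Boundedness of $\mathcal{M}^{\cdot}$ is used precisely here, to induct on cohomological length. Globalization is then by Mayer--Vietoris.

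Your d\'evissage argument for the projection formula does not close the gap: the successive quotients $V^{j}/V^{j+1}$ of $(\mathcal{O}_{W(Y)}/p^r)/V^i$ are still not perfect over $\mathcal{O}_{W(Y)}/p^r$ (indeed $\mathcal{O}_{Y}$ has infinite Tor-dimension over $\mathcal{O}_{W(Y)}/p$), so there is no ``standard finite-thickening case'' to land in. This is exactly the difficulty the author alludes to in the introduction when noting that accessibility of the naive pushforward is unknown due to projection-formula issues for large objects. Separately, your Step~3 misapplies \prettyref{lem:two-filtrations}: that lemma compares the $V^{\bullet}$ and $F^{\bullet}$ filtrations on a \emph{single} $W(A)/p^r$; it says nothing about cofinality between the $V^{\bullet}(\mathcal{O}_{W(X)}/p^r)$-filtration and the pulled-back $W\varphi^{-1}(V^{\bullet}(\mathcal{O}_{W(Y)}/p^r))$-filtration, which for $\varphi$ smooth of positive relative dimension are genuinely inequivalent.
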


\begin{proof}
There is a canonical map 
\[
\widehat{\mathcal{D}}_{W(Y)\leftarrow W(X),\text{acc}}^{(0)}/p^{r}\otimes_{\widehat{\mathcal{D}}_{W(X)}^{(0)}/p^{r}}^{L}\mathcal{M}^{\cdot}\to\widehat{\mathcal{D}}_{W(Y)\leftarrow W(X),\text{\ensuremath{c-}acc}}^{(0)}/p^{r}\widehat{\otimes}_{\widehat{\mathcal{D}}_{W(X)}^{(0)}/p^{r}}^{L}\mathcal{\widehat{M}}^{\cdot}
\]
which induces a map 
\[
R(W\varphi)_{*}(\widehat{\mathcal{D}}_{W(Y)\leftarrow W(X),\text{acc}}^{(0)}/p^{r}\otimes_{\widehat{\mathcal{D}}_{W(X)}^{(0)}/p^{r}}^{L}\mathcal{M}^{\cdot})\to\widehat{\int}_{W\varphi}\mathcal{M}^{\cdot}
\]
Work locally on $Y$; choose a coordinatized lift of Frobenius and
an associated $\Phi$. We will show that the complex ${\displaystyle \widehat{\int}_{W\varphi}\mathcal{M}^{\cdot}}$
is a bounded complex which is quasi-isomorphic to a complex of the
form $\widehat{\Phi}\mathcal{K}^{\cdot}$. This shows that the map
above factors through $\int_{W\varphi}\mathcal{M}^{\cdot}$, we then
show that in fact $\Phi^{*}\mathcal{K}^{\cdot}\tilde{\to}{\displaystyle \int_{W\varphi}\mathcal{M}^{\cdot}}$;
this immediately implies the result. 

Begin by assuming that $X$ is also affine. As $\varphi$ is smooth
it is locally compatible with a lift of Frobenius, which we also call
$\Phi$ and so by \prettyref{prop:local-bimodule-for-push} we have
\[
\widehat{\mathcal{D}}_{W(Y)\leftarrow W(X),\text{acc}}^{(m)}/p^{r}\tilde{=}\varphi^{-1}(\Phi^{*}\mathcal{D}_{\mathfrak{Y_{r}}}^{(0)})\widehat{\otimes}_{\varphi^{-1}(\mathcal{D}_{\mathfrak{Y_{r}}}^{(0)})}^{L}\mathcal{D}_{\mathfrak{Y_{r}}\leftarrow\mathfrak{X}_{r}}^{(0)}\otimes_{\mathcal{D}_{\mathfrak{X}_{r}}^{(0)}}^{L}\Phi^{!}\mathcal{D}_{\mathfrak{X}_{r}}^{(0)}
\]
where the $\widehat{\otimes}$ on the left indicates completion with
respect to $W\varphi^{-1}(V^{i}(\mathcal{O}_{W(X)}/p^{r}))$. Writing
$\mathcal{\widehat{M}}^{\cdot}=\widehat{\Phi}^{*}\mathcal{N}^{\cdot}$
we see that 
\[
\widehat{\mathcal{D}}_{W(Y)\leftarrow W(X),\text{\ensuremath{c-}acc}}^{(0)}/p^{r}\widehat{\otimes}_{\widehat{\mathcal{D}}_{W(X)}^{(0)}/p^{r}}^{L}\mathcal{\widehat{M}}^{\cdot}
\]
\[
\tilde{=}\varphi^{-1}(\Phi^{*}\mathcal{D}_{\mathfrak{Y_{r}}}^{(0)})\widehat{\otimes}_{\varphi^{-1}(\mathcal{D}_{\mathfrak{Y_{r}}}^{(0)})}^{L}\mathcal{D}_{\mathfrak{Y_{r}}\leftarrow\mathfrak{X}_{r}}^{(0)}\otimes_{\mathcal{D}_{\mathfrak{X}_{r}}^{(0)}}^{L}\mathcal{N}^{\cdot}
\]
Now, $\mathcal{D}_{\mathfrak{Y_{r}}\leftarrow\mathfrak{X}_{r}}^{(0)}\otimes_{\mathcal{D}_{\mathfrak{X}_{r}}^{(0)}}^{L}\mathcal{N}^{\cdot}$
is a bounded complex of $\varphi^{-1}(\mathcal{D}_{\mathfrak{Y_{r}}}^{(0)})$-modules;
if we represent it by a bounded complex $\mathcal{F}^{i}$, then the
$i$th term of the above is given by 
\[
\varphi^{-1}(\Phi^{*}\mathcal{D}_{\mathfrak{Y_{r}}}^{(0)})\widehat{\otimes}_{\varphi^{-1}(\mathcal{D}_{\mathfrak{Y_{r}}}^{(0)})}\mathcal{F}^{i}
\]
which is simply an infinite product of copies of $\mathcal{F}^{i}$.
So we see easily that 
\[
R\varphi_{*}(\varphi^{-1}(\Phi^{*}\mathcal{D}_{\mathfrak{Y_{r}}}^{(0)})\widehat{\otimes}_{\varphi^{-1}(\mathcal{D}_{\mathfrak{Y_{r}}}^{(0)})}\mathcal{F}^{i})\tilde{\to}\Phi^{*}\mathcal{D}_{\mathfrak{Y_{r}}}^{(0)}\widehat{\otimes}_{\mathcal{D}_{\mathfrak{Y_{r}}}^{(0)}}R\varphi_{*}(\mathcal{F}^{i})
\]
and the analogous result follows easily for a bounded complex (by
induction on the cohomological length), so that we have
\[
R\varphi_{*}(\varphi^{-1}(\Phi^{*}\mathcal{D}_{\mathfrak{Y_{r}}}^{(0)})\widehat{\otimes}_{\varphi^{-1}(\mathcal{D}_{\mathfrak{Y_{r}}}^{(0)})}^{L}\mathcal{D}_{\mathfrak{Y_{r}}\leftarrow\mathfrak{X}_{r}}^{(0)}\otimes_{\mathcal{D}_{\mathfrak{X}_{r}}^{(0)}}^{L}\mathcal{N}^{\cdot})\tilde{\to}\Phi^{*}\mathcal{D}_{\mathfrak{Y_{r}}}^{(0)}\widehat{\otimes}_{\mathcal{D}_{\mathfrak{Y_{r}}}^{(0)}}^{L}R\varphi_{*}(\mathcal{D}_{\mathfrak{Y_{r}}\leftarrow\mathfrak{X}_{r}}^{(0)}\otimes_{\mathcal{D}_{\mathfrak{X}_{r}}^{(0)}}^{L}\mathcal{N}^{\cdot})
\]
This shows that both ${\displaystyle \widehat{\int}_{W\varphi}\mathcal{M}^{\cdot}}$
is quasi-isomorphic to a complex of the form $\widehat{\Phi}\mathcal{K}^{\cdot}$
and that $\Phi^{*}\mathcal{K}^{\cdot}\tilde{\to}{\displaystyle \int_{W\varphi}\mathcal{M}^{\cdot}}$,
which is what we wanted. 

To get the result for a general $X$, simply cover it with affines;
and use the fact that if $X=U\cup V$ then there is a distinguished
triangle 
\[
\mathcal{M}^{\cdot}\to(j_{U})_{*}\mathcal{M}^{\cdot}\oplus(j_{V})_{*}\mathcal{M}^{\cdot}\to(j_{U\cap V})_{*}\mathcal{M}^{\cdot}
\]
where $j_{U},j_{V},$$j_{U\cap V}$ denote the inclusions from those
open subsets, respectively; this allows one to do induction on the
number of open affines and deduce the result. 
\end{proof}
Putting it all together, we conclude:
\begin{cor}
\label{cor:Completed-push-is-DRW}Let $\varphi:X\to Y$ be smooth,
and let $\mathcal{M}\in\mathcal{D}_{W(X)}^{(0)}/p^{r}-\text{mod}_{\text{acc,qcoh}}$.
Let $\mathcal{\widehat{M}}\widehat{\otimes}_{\mathcal{O}_{W(X)}/p}W\Omega_{X/Y}^{\cdot}$
be the associated de Rham-Witt complex. There is an isomorphism of
sheaves on $W(Y)_{p^{r}=0}$
\[
\widehat{\int}_{W\varphi}\mathcal{M}[d]\tilde{\to}RW\varphi_{*}(\mathcal{\widehat{M}}\widehat{\otimes}_{\mathcal{O}_{W(X)}/p}W\Omega_{X/Y}^{\cdot})
\]
In particular, if $\mathcal{M}$ is nilpotent and quasicoherent, and
$\mathcal{\widehat{M}}=\widehat{\epsilon}(\mathcal{N})$ for a quasicoherent
crystal $\mathcal{N}$, then for each $i\geq0$ we have
\[
\widehat{\epsilon}(R^{i}\varphi_{*,\text{crys}}\mathcal{N})\tilde{\to}R^{i}W\varphi_{*}(\mathcal{M}\widehat{\otimes}_{\mathcal{O}_{W(X)}/p}W\Omega_{X/Y}^{\cdot})
\]
\end{cor}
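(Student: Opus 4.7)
The plan is to derive this from the relative de Rham-Witt resolution (\prettyref{thm:drW-resolution}) together with the formula for the completed pushforward (\prettyref{thm:complete-iso}). By definition
\[
\widehat{\int}_{W\varphi}\mathcal{M}=R(W\varphi)_{*}\bigl(\widehat{\mathcal{D}}_{W(Y)\leftarrow W(X),c-\text{acc}}^{(0)}/p^{r}\widehat{\otimes}_{\widehat{\mathcal{D}}_{W(X)}^{(0)}/p^{r}}^{L}\widehat{\mathcal{M}}\bigr),
\]
so the natural move is to substitute the quasi-isomorphism
\[
(\widehat{\mathcal{D}}_{W(X)}^{(0)}/p^{r})_{c-\text{acc}}\widehat{\otimes}_{\mathcal{O}_{W(X)}/p^{r}}W\Omega_{X/Y}^{\cdot}/p^{r}[-d]\;\tilde{\to}\;\widehat{\mathcal{D}}_{W(Y)\leftarrow W(X),c-\text{acc}}^{(0)}/p^{r}
\]
of right $\widehat{\mathcal{D}}_{W(X)}^{(0)}/p^{r}$-modules into this formula. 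Assuming the tensor product can indeed be computed with this resolution, the pushforward rewrites as $RW\varphi_{*}$ applied to
\[
\bigl((\widehat{\mathcal{D}}_{W(X)}^{(0)}/p^{r})_{c-\text{acc}}\widehat{\otimes}_{\mathcal{O}_{W(X)}/p^{r}}W\Omega_{X/Y}^{\cdot}/p^{r}\bigr)\widehat{\otimes}_{\widehat{\mathcal{D}}_{W(X)}^{(0)}/p^{r}}^{L}\widehat{\mathcal{M}}[-d],
\]
and since each $W\Omega_{X/Y}^{i}/p^{r}$ is flat over $\mathcal{O}_{W(X)}/p^{r}$ (being locally free as a quotient of $W\Omega_{X}^{i}/p^{r}$) this should collapse to $\widehat{\mathcal{M}}\widehat{\otimes}_{\mathcal{O}_{W(X)}/p^{r}}W\Omega_{X/Y}^{\cdot}/p^{r}[-d]$, yielding the first displayed isomorphism after a shift by $d$.

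The first step is therefore to make precise the manipulation $\bigl((\widehat{\mathcal{D}}_{W(X)}^{(0)}/p^{r})_{c-\text{acc}}\widehat{\otimes}_{\mathcal{O}_{W(X)}/p^{r}}W\Omega_{X/Y}^{i}/p^{r}\bigr)\widehat{\otimes}_{\widehat{\mathcal{D}}_{W(X)}^{(0)}/p^{r}}^{L}\widehat{\mathcal{M}}\simeq \widehat{\mathcal{M}}\widehat{\otimes}_{\mathcal{O}_{W(X)}/p^{r}}W\Omega_{X/Y}^{i}/p^{r}$. I plan to work locally on an affine open $\text{Spec}(A)\subset X$ with coordinatized Frobenius lift $\Phi$, write $\widehat{\mathcal{M}}=\widehat{\Phi}^{*}\mathcal{N}$ for a $\widehat{\mathcal{D}}_{\mathfrak{X}_{r}}^{(0)}$-module $\mathcal{N}$ (\prettyref{thm:Local-Accessible} and \prettyref{prop:completion}), and observe that $(\widehat{\mathcal{D}}_{W(X)}^{(0)}/p^{r})_{c-\text{acc}}\simeq \widehat{\Phi}^{*}\Phi^{!}\mathcal{D}_{\mathfrak{X}_{r}}^{(0)}$ carries its right $\widehat{\mathcal{D}}_{W(X)}^{(0)}/p^{r}$-module structure through the corresponding structure on $\Phi^{!}\mathcal{D}_{\mathfrak{X}_{r}}^{(0)}$. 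The derived tensor product then reduces to a statement about ordinary $\mathcal{D}_{\mathfrak{X}_{r}}^{(0)}$-modules tensored with $\Omega_{X/Y}^{i}$, which is trivial, and completion is handled by \prettyref{prop:completion}. The gluing across open sets is guaranteed by \prettyref{cor:DRW-on-c-acc}, which shows that the de Rham-Witt connection on $(\widehat{\mathcal{D}}_{W(X)}^{(0)}/p^{r})_{c-\text{acc}}$ is well defined globally and compatible with these local identifications.

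After these identifications, the first isomorphism reads
\[
\widehat{\int}_{W\varphi}\mathcal{M}[d]\;\tilde{\to}\;RW\varphi_{*}\bigl(\widehat{\mathcal{M}}\widehat{\otimes}_{\mathcal{O}_{W(X)}/p^{r}}W\Omega_{X/Y}^{\cdot}/p^{r}\bigr),
\]
as required. For the second assertion, assume $\mathcal{M}$ is nilpotent and quasicoherent and write $\widehat{\mathcal{M}}=\widehat{\epsilon}(\mathcal{N})$ for a quasicoherent crystal $\mathcal{N}$ (via \prettyref{thm:Embedding-of-crystals}). By \prettyref{cor:Crystalline-Push}, ${\displaystyle \int_{W\varphi}\mathcal{M}[-d]}$ corresponds to $R\varphi_{*,\text{crys}}\mathcal{N}$ under $\epsilon$; combining this with \prettyref{thm:complete-iso}, which expresses $\widehat{\int}_{W\varphi}\mathcal{M}$ as the completion of $\int_{W\varphi}\mathcal{M}$ along the $V^{i}$-filtration on $\mathcal{O}_{W(Y)}/p^{r}$, and using that $\widehat{\epsilon}$ is precisely this completion of $\epsilon$, gives the identification of $R^{i}W\varphi_{*}(\mathcal{M}\widehat{\otimes}W\Omega_{X/Y}^{\cdot})$ with $\widehat{\epsilon}(R^{i}\varphi_{*,\text{crys}}\mathcal{N})$ in each degree (the higher $R\lim$ terms vanishing since we are dealing with accessible quasicoherent complexes whose cohomology is already complete, as reflected in the Mittag-Leffler argument used in the proof of \prettyref{thm:drW-resolution}).

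The main obstacle is the flatness/compatibility check in the first step: one must be careful that the $\widehat{\otimes}^{L}$ over $\widehat{\mathcal{D}}_{W(X)}^{(0)}/p^{r}$ really is computed term-by-term in $i$ by the de Rham-Witt resolution, which requires either a K-flatness argument for the individual terms $(\widehat{\mathcal{D}}_{W(X)}^{(0)}/p^{r})_{c-\text{acc}}\widehat{\otimes}_{\mathcal{O}_{W(X)}/p^{r}}W\Omega_{X/Y}^{i}/p^{r}$ or a reduction via \prettyref{rem:=00005CPhi-pull-for-bounded-torsion} to the ordinary derived tensor product over $\mathcal{D}_{\mathfrak{X}_{r}}^{(0)}$ after pullback by $\widehat{\Phi}^{*}$, together with verifying that the resulting isomorphism is independent of the local lift $\Phi$ and glues correctly. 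Everything else is essentially formal given the theorems already established.
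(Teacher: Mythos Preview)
Your overall strategy matches the paper's: substitute the de Rham--Witt resolution of \prettyref{thm:drW-resolution} into the formula for $\widehat{\int}_{W\varphi}$, identify the resulting complex with $\widehat{\mathcal{M}}\widehat{\otimes}W\Omega_{X/Y}^{\cdot}/p^{r}$, and deduce the crystalline statement from \prettyref{cor:Crystalline-Push}. However, two of your steps are genuinely incomplete.

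First, the flatness claim. You assert that $W\Omega_{X/Y}^{i}/p^{r}$ is flat over $\mathcal{O}_{W(X)}/p^{r}$ ``being locally free as a quotient of $W\Omega_{X}^{i}/p^{r}$''. This is not a valid argument (quotients of locally free sheaves need not be locally free), and in fact what the paper uses is flatness of $W\Omega_{A/B}^{i}$ over $\mathcal{A}$ via $\Phi$, which is proved in a separate lemma with a nontrivial argument: one reduces mod $p$, refines the $V$-filtration so that each graded piece is an $A$-module, and then uses the transitive action of $B$-automorphisms on closed points of fibres of $\mathbb{A}^{n}_{B}$ to show these graded pieces are projective. Your proposal does not supply anything comparable.

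Second, and more seriously, your local identification of
\[
\bigl((\widehat{\mathcal{D}}_{W(X)}^{(0)}/p^{r})_{c-\text{acc}}\widehat{\otimes}_{\mathcal{O}_{W(X)}/p^{r}}W\Omega_{X/Y}^{i}/p^{r}\bigr)\widehat{\otimes}_{\widehat{\mathcal{D}}_{W(X)}^{(0)}/p^{r}}^{L}\widehat{\mathcal{M}}\;\simeq\;\widehat{\mathcal{M}}\widehat{\otimes}_{\mathcal{O}_{W(X)}/p^{r}}W\Omega_{X/Y}^{i}/p^{r}
\]
glosses over a real issue: the $\widehat{\otimes}^{L}$ on the left is, by definition of $\widehat{\int}_{W\varphi}$, the derived completion along $W\varphi^{-1}(V^{i}(\mathcal{O}_{W(Y)}/p^{r}))$, whereas the completion on the right is along $V^{i}(\mathcal{O}_{W(X)}/p^{r})$. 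These are different filtrations, and your ``reduction to ordinary $\mathcal{D}_{\mathfrak{X}_{r}}^{(0)}$-modules tensored with $\Omega_{X/Y}^{i}$'' does not bridge them. The paper does not attempt a direct termwise identification; instead it constructs only a \emph{map} \eqref{eq:key-iso} from the left side to the right (after noting the right side is already derived complete for the $Y$-filtration), and then proves this map is a quasi-isomorphism by an indirect devissage: apply $\otimes_{W(k)}^{L}k$ to reduce to $r=1$, and there argue that the class $\mathcal{S}$ of accessible complexes for which the map is an isomorphism contains $\Phi^{*}\mathcal{D}_{X}^{(0)}$ (this is exactly \prettyref{thm:drW-resolution}) and is closed under cones, arbitrary sums, and summands, hence contains everything since $\Phi^{*}\mathcal{D}_{X}^{(0)}$ compactly generates. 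Your sketch does not contain this step, and without it the argument does not close.
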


\begin{proof}
The second sentence follows from the first by \prettyref{cor:Crystalline-Push}.
To prove the first, we shall invoke the previous two results. First,
by \prettyref{thm:drW-resolution} there is a quasi-isomorphism 
\[
\widehat{\mathcal{D}}_{W(Y)\leftarrow W(X),\text{\ensuremath{c-}acc}}^{(0)}/p^{r}\otimes_{\widehat{\mathcal{D}}_{W(X)}^{(0)}/p^{r}}^{L}\mathcal{\widehat{M}}\tilde{\to}((\widehat{\mathcal{D}}_{W(X)}^{(0)}/p^{r})_{c-\text{acc}}\widehat{\otimes}_{\mathcal{O}_{W(X)/p^{r}}}W\Omega_{X/Y}^{\cdot}/p^{r})\otimes_{\widehat{\mathcal{D}}_{W(X)}^{(0)}/p^{r}}^{L}\mathcal{\widehat{M}}
\]
Now, the lemma below ensures that, for each $i$, $(\widehat{\mathcal{D}}_{W(X)}^{(0)}/p^{r})_{c-\text{acc}}\widehat{\otimes}_{\mathcal{O}_{W(X)/p^{r}}}W\Omega_{X/Y}^{i}/p^{r}$
is flat over $\widehat{\mathcal{D}}_{W(X)}^{(0)}/p^{r}$ (in fact,
locally, it is an inverse limit of a surjective system of projective
modules of the form $\Phi^{*}\mathcal{D}_{\mathfrak{X}_{r}}^{(0)}$).
So on the right hand side we actually have the complex whose terms
are 
\[
(\widehat{\mathcal{D}}_{W(X)}^{(0)}/p^{r})_{c-\text{acc}}\widehat{\otimes}_{\mathcal{O}_{W(X)/p^{r}}}W\Omega_{X/Y}^{i}/p^{r}\otimes_{\widehat{\mathcal{D}}_{W(X)}^{(0)}/p^{r}}\mathcal{\widehat{M}}
\]
which we can complete (with respect to the natural filtration on the
tensor product) and thus get a map to the complex whose terms are
\[
((\widehat{\mathcal{D}}_{W(X)}^{(0)}/p^{r})_{c-\text{acc}}\widehat{\otimes}_{\mathcal{O}_{W(X)/p^{r}}}W\Omega_{X/Y}^{i}/p^{r})\widehat{\otimes}_{\widehat{\mathcal{D}}_{W(X)}^{(0)}/p^{r}}\mathcal{\widehat{M}}\tilde{\to}W\Omega_{X/Y}^{i}/p^{r}\widehat{\otimes}_{\mathcal{O}_{W(X)/p^{r}}}\mathcal{\widehat{\mathcal{M}}}
\]
Here, the last isomorphism uses the accessibility of $\mathcal{M}$. 

Summing up, we've obtained a map 
\[
\widehat{\mathcal{D}}_{W(Y)\leftarrow W(X),\text{\ensuremath{c-}acc}}^{(0)}/p^{r}\otimes_{\widehat{\mathcal{D}}_{W(X)}^{(0)}/p^{r}}^{L}\mathcal{\widehat{M}}\to W\Omega_{X/Y}^{\cdot}/p^{r}\widehat{\otimes}_{\mathcal{O}_{W(X)/p^{r}}}\mathcal{\widehat{\mathcal{M}}}
\]
To apply the previous result, we have to take the derived completion
with respect to $\{W\varphi^{-1}V^{i}(\mathcal{O}_{W(Y)}/p^{r})\}$
(i.e., applying $\text{holim}$ to $W\varphi^{-1}((\mathcal{O}_{W(Y)}/p^{r})/V^{i})\otimes_{W\varphi^{-1}((\mathcal{O}_{W(Y)}/p^{r})}^{L}$).
However, it is not difficult to see that $W\Omega_{X/Y}^{\cdot}/p^{r}\widehat{\otimes}_{\mathcal{O}_{W(X)/p^{r}}}\mathcal{\widehat{\mathcal{M}}}$
is already derived complete with respect to $\{W\varphi^{-1}V^{i}(\mathcal{O}_{W(Y)}/p^{r})\}$
(for instance, one may work locally and relate the $\{V^{i}(\mathcal{O}_{W(Y)}/p^{r})\}$
to the filtration $\{F^{i}(\mathcal{O}_{W(Y)}/p^{r})\}$ as in the
proof of \prettyref{prop:completion}). Thus we actually obtain a
map 
\begin{equation}
\widehat{\mathcal{D}}_{W(Y)\leftarrow W(X),\text{\ensuremath{c-}acc}}^{(0)}/p^{r}\widehat{\otimes}_{\widehat{\mathcal{D}}_{W(X)}^{(0)}/p^{r}}^{L}\mathcal{\widehat{M}}\to W\Omega_{X/Y}^{\cdot}/p^{r}\widehat{\otimes}_{\mathcal{O}_{W(X)/p^{r}}}\mathcal{\widehat{\mathcal{M}}}\label{eq:key-iso}
\end{equation}
The result follows if we can show that it is a quasi-isomorphism.
Working locally, we can assume $\mathcal{M}=\Phi^{*}\mathcal{N}$.
We begin in the case $r=1$. In that case, we remark that if $\mathcal{M}^{\cdot}$
is any complex of accessible quasicoherent modules, by applying the
above construction to each $\mathcal{M}^{i}$ and taking the total
complex, we obtain a morphism 
\begin{equation}
\widehat{\mathcal{D}}_{W(Y)\leftarrow W(X),\text{\ensuremath{c-}acc}}^{(0)}/p\widehat{\otimes}_{\widehat{\mathcal{D}}_{W(X)}^{(0)}/p}^{L}\mathcal{\widehat{M}}^{\cdot}\to W\Omega_{X/Y}^{\cdot}/p\widehat{\otimes}_{\mathcal{O}_{W(X)/p}}^{L}\mathcal{\widehat{\mathcal{M}}}^{\cdot}\label{eq:key-iso-complexes}
\end{equation}
(this works as $\mathcal{M}$ is accessible and $W\Omega_{X/Y}^{\cdot}/p$
is flat over $\mathcal{O}_{X}$ by the lemma below) and so for each
$j$ we obtain an induced map 
\[
\widehat{\mathcal{D}}_{W(Y)\leftarrow W(X),\text{\ensuremath{c-}acc}}^{(0)}/p\widehat{\otimes}_{\widehat{\mathcal{D}}_{W(X)}^{(0)}/p}^{L}\mathcal{\widehat{M}}^{\cdot}\to W_{j}\Omega_{X/Y}^{\cdot}/p\widehat{\otimes}_{\mathcal{O}_{W_{j}(X)/p}}^{L}\mathcal{M}_{j}^{\cdot}
\]
(where $\mathcal{M}_{j}^{\cdot}=\mathcal{M}^{\cdot}\otimes_{\mathcal{O}_{W(X)}/p}^{L}(\mathcal{O}_{W(X)}/p/V^{j})$.
Consider the set $\mathcal{S}$ of complexes $\mathcal{M}^{\cdot}$
for which this map is a quasi-isomorphism for all $j$. By the previous
theorem we have 
\[
\mathcal{H}^{i}(\widehat{\mathcal{D}}_{W(Y)\leftarrow W(X),\text{\ensuremath{c-}acc}}^{(0)}/p\widehat{\otimes}_{\widehat{\mathcal{D}}_{W(X)}^{(0)}/p}^{L}\mathcal{\widehat{M}}^{\cdot})\tilde{\to}W\varphi^{-1}(\mathcal{O}_{W(Y)}/p)\widehat{\otimes}_{\varphi^{-1}(\mathcal{O}_{Y})}\mathcal{H}^{i}(\Omega_{X/Y}^{\cdot}\otimes\mathcal{N},\nabla)
\]
where $\mathcal{O}_{Y}\to\mathcal{O}_{W(Y)}/p$ is the map coming
from a given lift of Frobenius, and the completion is with respect
to $V^{i}(\mathcal{O}_{W(Y)}/p)$. 

So, by \prettyref{thm:drW-resolution} $\mathcal{M}=\Phi^{*}\mathcal{D}_{X}^{(0)}\in\mathcal{S}$.
It is also clear that $\mathcal{S}$ is closed under cones, arbitrary
sums, and summands. But since it contains $\Phi^{*}\mathcal{D}_{X}^{(0)}$,
it must therefore\footnote{By the fact that $\Phi^{*}\mathcal{D}_{X}^{(0)}$, along with all
of its shifts, are set of compact generators for $D_{\text{acc,qcoh}}(\widehat{\mathcal{D}}_{W(Y)}^{(0)}/p^{r}-\text{mod})$} contain any quasi-coherent accessible complex; taking the complex
to be in a single degree proves the result when $r=1$. It then follows,
by induction on the cohomological length, that \prettyref{eq:key-iso-complexes}
is an isomorphism for bounded complexes. 

Now let $r\geq1$. To prove that \prettyref{eq:key-iso} is an isomorphism,
we regard both sides as complexes of $W(k)$-modules and apply $\otimes_{W(k)}^{L}k$
to both sides. By the argument of \prettyref{rem:Pushforwards-agree}
(where the role of $\widehat{\mathcal{D}}_{W(Y)\leftarrow W(X)}^{(m)}$
is played by ${\displaystyle \lim_{r}\widehat{\mathcal{D}}_{W(Y)\leftarrow W(X),\text{\ensuremath{c-}acc}}^{(0)}/p^{r}}$),
we have 
\[
(\widehat{\mathcal{D}}_{W(Y)\leftarrow W(X),\text{\ensuremath{c-}acc}}^{(0)}/p^{r}\widehat{\otimes}_{\widehat{\mathcal{D}}_{W(X)}^{(0)}/p^{r}}^{L}\mathcal{\widehat{M}})\otimes_{W(k)}^{L}k\tilde{\to}\widehat{\mathcal{D}}_{W(Y)\leftarrow W(X),\text{\ensuremath{c-}acc}}^{(0)}/p\widehat{\otimes}_{\widehat{\mathcal{D}}_{W(X)}^{(0)}/p}^{L}(\mathcal{\widehat{M}}\otimes_{W(k)}^{L}k)
\]
(here, we are using that the functor $\otimes_{W(k)}^{L}k$ commutes
with the derived completion functor; this follows directly from the
fact that $k\tilde{\to}W(k)\xrightarrow{p}W(k)$) and similarly for
$W\Omega_{X/Y}^{\cdot}/p^{r}\widehat{\otimes}_{\mathcal{O}_{W(X)/p^{r}}}\mathcal{\widehat{\mathcal{M}}}$.
As the complex $(\mathcal{\widehat{M}}\otimes_{W(k)}^{L}k)$ is bounded,
the result follows from \prettyref{eq:key-iso-complexes} for bounded
complexes.
\end{proof}
In the proof, we needed the
\begin{lem}
\label{lem:Flatness-of-Omega-i}Suppose $A$ and $B$ are smooth $k$-algebras
so that $B\to A$ is smooth. Let $A$ posses local coordinates, and
let $\Phi:\mathcal{A}\to W(A)$. For each $i\geq0$, the module $W\Omega_{A/B}^{i}$
is (faithfully) flat over $\mathcal{A}$. Further, for each $j\geq0$
the module $W_{j}\Omega_{A/B}^{i}/p$ is finite flat over $A$. 
\end{lem}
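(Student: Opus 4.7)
\medskip

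The plan is to deduce both statements from the explicit structure theory of the relative de Rham--Witt complex developed by Langer--Zink \cite{key-27}, keeping careful track of the $\mathcal{A}$-module structure coming from $\Phi$ (which reduces mod $p$ to Frobenius on $A$).

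First I would reduce to the case where $A$ is (etale-)locally a polynomial algebra on some set of coordinates. Because the relative de Rham--Witt complex satisfies etale base change in the source (sheaves $W_j\Omega^i_{X/B}$ are quasicoherent on $W_j(X)$ and pull back along $W_j$ of etale maps), and $\Phi$ also extends along etale maps by the Hensel property, it is enough to treat the case $A = k[T_1,\dots,T_n]$ equipped with the coordinate lift of Frobenius, and compute relative to the subalgebra $B$. In this situation, Langer--Zink's theorem provides an explicit direct-sum decomposition of $W_j\Omega^i_{A/B}$ indexed by weights $\xi \in \mathbb{Z}[1/p]_{\geq 0}^d$ and additional combinatorial data, where each summand is a free module of rank one over a truncated Witt ring $W_{\ell(\xi)}(B)$ generated by a basic Witt differential $e(\xi,k,\varepsilon)$.

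From this decomposition I would deduce the mod-$p$ statement. Modulo $p$, each ring $W_\ell(B)/p$ is free of finite rank over $B/p$ via the iterated Frobenius (this is the content of the standard filtration of $W_\ell(B)/p$ by $V$-powers, whose graded pieces are copies of $B$ viewed through Frobenius twists), so each basic-Witt-differential summand of $W_j\Omega^i_{A/B}/p$ is finite free over $A$ once we account for the $\Phi$-action. Since for fixed $i$ and $j$ only finitely many weights $\xi$ contribute nonzero summands to $W_j\Omega^i_{A/B}$ (the bound $\ell(\xi)\leq j$ together with the degree bound $i$ forces finiteness of the indexing set up to Frobenius-compatible reindexing), we conclude that $W_j\Omega^i_{A/B}/p$ is a finite direct sum of finite free $A$-modules, hence finite flat over $A$.

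For the first statement, I would then bootstrap from the mod-$p$ case using the $p$-adic structure. Langer--Zink show that $W\Omega^i_{A/B}$ is $p$-torsion-free and $p$-adically complete (indeed it is a limit of the $W_j\Omega^i_{A/B}$ along surjective transition maps respecting $p$). By the standard criterion for flatness of $p$-adically complete modules over $p$-adically complete rings (cf.\ the analog of \cite{key-8}, Cor.~1.6.7, with $\mathcal{A}$ in place of $\widehat{\mathcal{D}}$), it suffices to show that $W\Omega^i_{A/B}/p$ is flat over $A$. This last module carries a decreasing $V$-adic filtration (induced from the canonical filtration on $W\Omega^i_{A/B}$), is complete and separated for it, and has associated graded pieces which, by the mod-$p$ analysis above, are direct sums of finite free $A$-modules. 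The faithfulness statement follows from the fact that the weight $\xi = 0$ contributes a copy of $\Omega^i_{A/B}$ as a direct summand, which is faithfully flat over $A$ in the chosen coordinates.

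The main obstacle will be keeping the $\mathcal{A}$-action straight: the $\mathcal{A}$-module structure on $W\Omega^i_{A/B}$ is via the Frobenius-like map $\Phi$, not the ``tautological'' $W(A)$-structure, and the Langer--Zink basic Witt differentials are most naturally described with respect to the $W(A)$-action. Concretely, the coefficient ring $W_{\ell(\xi)}(B)$ of each summand must be analyzed as a module over $\mathcal{A} = W(k)[\![T_1,\dots,T_n]\!]$ via $\Phi$, and the identification with a finite flat $A$-module (mod $p$) requires matching the $\Phi$-twist with the Frobenius shift implicit in the $V$-filtration of the Witt vectors. Once this bookkeeping is carried out on the summands $e(\xi,k,\varepsilon)\cdot W_{\ell(\xi)}(B)/p$, the rest of the argument is straightforward.
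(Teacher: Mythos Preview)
Your proposal is correct in outline and shares the initial reductions with the paper (etale localization to $A=B[T_{1},\dots,T_{n}]$, and reducing the $\mathcal{A}$-flatness to the mod-$p$ statement via $p$-torsion-freeness and $p$-adic completeness), but the core argument for finite flatness of $W_{j}\Omega_{A/B}^{i}/p$ over $A$ is genuinely different. You invoke the Langer--Zink basic-Witt-differential decomposition and then organize the summands into finitely many $\Phi$-orbits, each of which you argue is finite free over $A$; as you correctly flag, the real work here is the bookkeeping of how the $\Phi$-action (which shifts weights by integers) interacts with the $W_{\ell}(B)$-coefficient structure of the summands. The paper bypasses this combinatorics entirely: it filters $W_{j}\Omega_{A/B}^{i}/p$ by the $V$-adic filtration, refines by $V^{t}(W(A)/p)\cdot\text{gr}_{j}$ so that each subquotient is a finite $A$-module via $\Phi$, and then proves projectivity by a homogeneity trick---any $B$-algebra automorphism of $A=B[T_{1},\dots,T_{n}]$ induces an isomorphism $\sigma^{*}\text{gr}\tilde{\to}\text{gr}$, and since such automorphisms act transitively on closed points of each fibre over $\text{Spec}(B)$ (after passing to $\bar{k}$), the geometric fibre ranks are constant. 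Your approach is more direct and gives an explicit basis; the paper's approach is softer and avoids the weight bookkeeping at the cost of invoking a symmetry argument.
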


\begin{proof}
It is enough to show that $W\Omega_{A/B}^{i}/p$ is faithfully flat
over $A$, where $A$ acts via the embedding $\Phi:A\to W(A)/p$;
and therefore it is enough to show that second statement of the lemma.
As $W\Omega_{A/B}^{i}/p$ is local for the etale topology, we can
assume that $A=B[T_{1},\dots T_{n}]$; and, by extending the ground
field, that $k$ is algebraically closed. 

The module $W\Omega_{A/B}^{i}/p$ is, by definition, the inverse limit
of $W_{j}\Omega_{A/B}^{i}/p$, the quotient of $W\Omega_{A/B}^{i}/p$
by the module $V^{j}(W\Omega_{A/B}^{i}/p):=V^{j}(W\Omega_{A/B}^{i})/p$.
Each $\text{gr}_{j}(W\Omega_{A}^{i}/p)=V^{j}(W\Omega_{A/B}^{i}/p)/V^{j+1}(W\Omega_{A/B}^{i}/p)$
is a module over 
\[
(W(A)/p)/V^{j+1}(W(A)/p)=W_{j+1}(A)/p
\]
 In general this action does not factor through the quotient $W_{j+1}(A)/p/V(W_{j+1}(A)/p)=A$.
We therefore refine this filtration by defining 
\[
\text{gr}_{j,t}(W\Omega_{A}^{i}/p)=V^{t}(W(A)/p)\cdot\text{gr}_{j}(W\Omega_{A}^{i}/p)
\]
for $0\leq t\leq j+1$. Then the action of $W_{j+1}(A)/p$ on each
$\text{gr}_{j,t}(W\Omega_{A}^{i}/p)/\text{gr}_{j,t+1}(W\Omega_{A}^{i}/p)$
factors through $A$; and this action of $A$ agrees with the action
of $A$ on \\
$\text{gr}_{j,t}(W\Omega_{A}^{i}/p)/\text{gr}_{j,t+1}(W\Omega_{A}^{i}/p)$
defined via $\Phi_{1}$. Thus $(W\Omega_{A/B}^{i}/p)/V^{j}(W\Omega_{A/B}^{i}/p)$
is filtered by finitely many $A$-modules $\text{gr}_{j,t}(W\Omega_{A/B}^{i}/p)/\text{gr}_{j,t+1}(W\Omega_{A/B}^{i}/p)$
and so it suffices to show each one is projective over $A$. 

To do that, note that any automorphism $\sigma$ of $A$ which preserves
$B$ yields an isomorphism of $A$-modules
\[
\sigma^{*}\text{gr}_{j,t}(W\Omega_{A/B}^{i}/p)/\text{gr}_{j,t+1}(W\Omega_{A/B}^{i}/p)\tilde{\to}\text{gr}_{j,t}(W\Omega_{A/B}^{i}/p)/\text{gr}_{j,t+1}(W\Omega_{A/B}^{i}/p)
\]
 Indeed, let $\sigma:W_{n+1}(A)/p\to W_{n+1}(A)/p$ denote the induced
isomorphism. Then we have $\sigma^{*}\text{gr}_{j}(W\Omega_{A/B}^{i}/p)\tilde{\to}\text{gr}_{j}(W\Omega_{A/B}^{i}/p)$
by the functoriality of the de Rham Witt construction. The result
then follows for $\text{gr}_{j,t}(W\Omega_{A}^{i}/p)$ from the fact
that the automorphism $\sigma$ preserves $V^{t}(W(A)/p)$. As the
action of automorphisms of $A=B[T_{1},\dots T_{n}]$ is transitive
on closed points in each fibre over $\text{Spec}(B)$, we see that
the finite $A$-module $\text{gr}_{j,t}(W\Omega_{A/B}^{i}/p)/\text{gr}_{j,t+1}(W\Omega_{A/B}^{i}/p)$
is necessarily projective (the geometric fibres all have the same
rank) and so the result follows.  
\end{proof}

\section{The algebra $\widehat{\mathcal{U}}$, and applications}

In this chapter we give an alternate construction of accessibility
on the scheme $W(X)/p$ via an algebra $\widehat{\mathcal{U}}$, which
is a kind of enveloping algebra for derivations of $\mathcal{O}_{W(X)}/p$.
This allows us to prove \prettyref{prop:Properties-of-B_W}in all
positive characteristics, in particular avoiding \prettyref{thm:Uniqueness-of-bimodule}.
As mentioned above, the key to the proof is the fact that $W(A)/p\to A$
is a square zero extension; the argument is a really as rephrasing
of Grothendieck's fundamental result which says that a flat connection
extends uniquely over a square zero extension (c.f. \cite{key-10},
the introduction to chapter 2 for a nice discussion of this). Once
this is done, we turn to a deeper study of the relationship between
de Rham-Witt connections and accessible $\mathcal{D}_{W(X)}^{(0)}$-modules.
Working mod $p$ we will use $\widehat{\mathcal{U}}$ to show that
there is (essentially) an equivalence between the two. Then we will
lift this mod $p^{r}$ and in particular prove \prettyref{thm:Accessible-to-DRW}.

\subsection{The algebra $\widehat{\mathcal{U}}$. }

Until further notice $A$ is a smooth $k$-algebra which possesses
local coordinates. We begin by constructing an algebra of differential
operators over $W(A)/p$, which is closely related to $\mathcal{\widehat{D}}_{W(A)}^{(0)}/p$,
and which we will use to analyze the pull-back from $\mathcal{D}_{A}^{(0)}$-modules.
Let us set some notation:
\begin{defn}
For each $j\geq1$ let $\mathcal{I}_{j}=\text{ker}(W(A)/p\to W_{j}(A)/p)=V^{j}(W(A)/p)$;
and set $\mathcal{I}_{0}=W(A)/p$. Define $\mathcal{T}'_{W(X)/p}:=\{\partial\in\text{Der}_{\text{cont}}(W(A)/p)|\partial(\mathcal{I}_{j})\subset\mathcal{I}_{j}\phantom{i}\text{for all}\phantom{i}j\}$. 
\end{defn}

Clearly $\mathcal{T}'_{W(A)/p}$ is a Lie algebras under the natural
bracket of derivations. However, we have more:
\begin{lem}
\label{lem:composition-is-a-derivation}Let $\mathcal{T}'_{W(A)/p}(1)$
be the sub-sheaf of Lie-algebras of $\mathcal{T}'_{W(A)/p}$ consisting
of derivations whose image lies in $\mathcal{I}_{1}$. 

1) There is a natural map $\mathcal{T}'_{W(A)/p}\to\mathcal{T}_{A}$.
The kernel of this map is $\mathcal{T}'_{W(A)/p}(1)$. 

2) For $\partial_{1},\partial_{2}\in\mathcal{T}'_{W(A)/p}(1)$, the
composition $\partial_{1}\circ\partial_{2}\in\mathcal{T}'_{W(A)/p}(1)$
as well. Thus $\mathcal{T}'_{W(A)/p}(1)$ carries the structure of
a sheaf of (non-unital) algebras. 
\end{lem}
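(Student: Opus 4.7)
The plan is to reduce both parts to the basic structural fact that $W(A)/p \to A$ is a square-zero thickening, with square-zero ideal $\mathcal{I}_1 = V(W(A)/p)$. The essential input is the Witt vector identity $V(x)\cdot V(y) = pV(xy)$, which modulo $p$ gives $\mathcal{I}_1^2 = 0$ inside $W(A)/p$.

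For part (1), I would observe that any $\partial \in \mathcal{T}'_{W(A)/p}$ preserves $\mathcal{I}_1$ by definition, so it descends to a $k$-linear endomorphism of the quotient $W(A)/p \,/\, \mathcal{I}_1 \cong A$; the Leibniz rule passes to the quotient, so the induced map lies in $\mathcal{T}_A$. This is manifestly functorial and $W(A)/p$-linear. By construction, $\partial$ lies in the kernel if and only if its image lands in $\mathcal{I}_1$, which is exactly the defining condition of $\mathcal{T}'_{W(A)/p}(1)$.

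For part (2), the heart is the verification of the Leibniz rule for $\partial_1 \circ \partial_2$. Expanding
\[
\partial_1\partial_2(ab) = \partial_1\partial_2(a)\cdot b + \partial_2(a)\partial_1(b) + \partial_1(a)\partial_2(b) + a\cdot\partial_1\partial_2(b),
\]
the two cross terms are products of elements of $\mathcal{I}_1$ and so vanish by the square-zero observation $\mathcal{I}_1^2 = 0$. What remains is precisely the Leibniz rule, so $\partial_1 \circ \partial_2$ is a derivation. Continuity is automatic from continuity of the factors; preservation of each $\mathcal{I}_j$ follows from the same property for $\partial_1$ and $\partial_2$; and $(\partial_1\circ\partial_2)(W(A)/p) \subseteq \partial_1(W(A)/p) \subseteq \mathcal{I}_1$, so the composition indeed lies in $\mathcal{T}'_{W(A)/p}(1)$. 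Associativity of composition is tautological, giving the non-unital algebra structure.

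The only real point to check is the Witt identity $V(x)V(y) = pV(xy)$ and its consequence $\mathcal{I}_1^2 = 0 \pmod p$, so there is no genuine obstacle; the calculation is essentially Grothendieck's observation that a flat connection extends canonically across any square-zero thickening, rephrased at the level of derivations.
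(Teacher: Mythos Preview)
Your proof is correct and follows essentially the same approach as the paper: both parts rest on the observation that $\mathcal{I}_1$ is a square-zero ideal in $W(A)/p$, with part (1) being descent to the quotient and part (2) the Leibniz expansion in which the cross terms $\partial_1(a)\partial_2(b)$ and $\partial_2(a)\partial_1(b)$ vanish. You spell out a few details (the Witt identity $V(x)V(y)=pV(xy)$ behind $\mathcal{I}_1^2=0$, continuity, preservation of the $\mathcal{I}_j$) that the paper leaves implicit, but the argument is the same.
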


\begin{proof}
1) Since any $\delta\in\mathcal{T}'_{W(A)/p}$ takes $\mathcal{I}_{1}$
to itself, it induces a derivation $\bar{\delta}:(W(A)/p)/\mathcal{I}_{1}\to(W(A)/p)/\mathcal{I}_{1}$.
But since $(W(A)/p)/\mathcal{I}_{1}=A$ we obtain the result. 

2) Since $\mathcal{I}_{1}$ is a square zero ideal, if $\partial_{i}\in\mathcal{T}'_{W(A)/p}(1)$
for $i=1,2$, then $\partial_{1}(a)\partial_{2}(b)=0$ for all $a$
and $b$. Therefore, we have 
\[
\partial_{1}\partial_{2}(ab)=\partial_{1}(b\partial_{2}(a)+a\partial_{2}(b))=b\partial_{1}\partial_{2}(a)+a\partial_{1}\partial_{2}(b)
\]
as claimed. 
\end{proof}
Now we recall the definition of a Lie-Rinehart enveloping algebra.
Let $S$ be an algebra over a commutative ring $R$. Denote by $\mathcal{T}_{S}$
the module of $R$-derivations on $S$, and suppose that $\mathfrak{a}$is
a Lie algebra which is also an $S$-module, equipped with a morphism
$\rho:\mathfrak{a}\to\mathcal{T}_{Y}$ which is both $S$-linear and
a morphism of Lie algebras, and such that 
\[
[a,fb]=f[a,b]+\rho(a)(f)\cdot b
\]
for all $a,b$ in $\mathfrak{a}$ and $f\in S$. In this situation
we have the sheaf of universal enveloping algebras $\mathcal{U}(\mathfrak{a},\rho)$
(or simply $\mathcal{U}(\mathfrak{a})$ if $\rho$ is understood)
as constructed in \cite{key-46}, c.f. also {[}BB{]} section 1.4.
Namely, $\mathcal{U}(\mathfrak{a},\rho)$ is the $S$-algebra generated
by $\mathfrak{a}$ and satisfying the relations:
\[
f\cdot a=fa
\]
for $f\in S$ and $a\in\mathfrak{a}$; here, the left hand side denotes
the multiplication in $\mathcal{U}(\mathfrak{a},\rho)$ and the right
hand side denotes the action of $S$ on $\mathfrak{a}$; next we demand
\[
a_{1}a_{2}-a_{2}a_{1}=[a_{1},a_{2}]
\]
for $a_{1},a_{2}\in\mathfrak{a}$, where the bracket on the right
denotes the Lie-algebra bracket, and finally 
\[
a\cdot f-f\cdot a=\rho(a)(f)
\]
for $f\in S$ and $a\in\mathfrak{a}$. 

To get control of this object, one typically makes the assumption
that $\mathfrak{a}$ is a projective $S$-module (c.f., e.g. \cite{key-45}
for examples of this), which allows one to give an explicit description
of $\mathcal{U}(\mathfrak{a},\rho)$. However, this assumption does
not hold in the situations of interest in this paper; instead, we
make the following:
\begin{defn}
The $W(A)/p$-module $\mathcal{T}'_{W(A)/p}$ is a sub-Lie-algebra
of the Lie algebra of all derivations $\mathcal{T}_{W(A)/p}$; it
is therefore a Lie algebroid via the inclusion map $\rho:\mathcal{T}'_{W(A)/p}\to\mathcal{T}{}_{W(A)/p}$.
Define the algebra $\mathcal{U}'_{W(A)/p}$ to be quotient of $\mathcal{U}(\mathcal{T}'_{W(A)/p})$
by the two-sided ideal generated by 
\[
\partial_{1}\cdot\partial_{2}-\partial_{1}\circ\partial_{2}
\]
for all $\partial_{1},\partial_{2}\in\mathcal{T}'_{W(X)/p}(1)$ (here
we are using that $\partial_{1}\circ\partial_{2}$ is a derivation
by \prettyref{lem:composition-is-a-derivation}). This algebra is
filtered by the two sided ideals $\{\mathcal{K}^{(i)}\}$, where $\mathcal{K}^{(i)}$
is generated by $V^{i}(W(A)/p)$ and $\{\partial\in\mathcal{T}'_{W(X)/p}|\partial(W(A)/p)\subset\mathcal{I}_{i}\}$. 
\end{defn}

In order to discuss local coordinates on this algebra, we choose a
lift of Frobenius $F:\mathcal{A}\to\mathcal{A}$, along with the corresponding
$\Phi:\mathcal{A}\to W(A)$. The reduction mod $p$ of $\Phi$ is
a splitting of the natural reduction map $W(A)/p\to A$. Until further
notice, we fix such a map and we regard $A\subset W(A)/p$; in fact
this puts $A\subset W_{r+1}(A)/p$ for each $r$, and we have 
\begin{lem}
The inclusion $A\subset W_{r+1}(A)/p$ makes $W_{r+1}(A)/p$ a free
$A$-module. Furthermore, there is an isomorphism of $A$-modules
\[
W_{r+1}(A)/p\tilde{=}A\oplus\bigoplus_{i=1}^{r}K_{i}/K_{i+1}
\]
\end{lem}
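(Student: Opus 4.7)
The plan is to construct an explicit $A$-linear splitting of the $V$-adic filtration on $W_{r+1}(A)/p$ using the coordinatized map $\Phi$, and then identify the graded pieces with Frobenius pushforwards of $A$, which are free over $A$ by the local coordinates hypothesis.

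First I would record the filtration and compute the graded pieces abstractly in $W(A)$. The filtration $\mathcal{I}_j = V^j(W(A))/p$ on $W(A)/p$ descends to a filtration $K_j := \mathcal{I}_j/\mathcal{I}_{r+1}$ on $W_{r+1}(A)/p$. The key preliminary identity is that $p\cdot V^j(W(A)) \subset V^{j+1}(W(A))$: since $A$ has characteristic $p$ the image of $p\in\mathbb{Z}_p=W(\mathbb{F}_p)$ in $W(A)$ equals $V(1)$, and iterating $xV(y)=V(F(x)y)$ together with $FV=p$ gives
\[
p\,V^j(w) \;=\; V(1)\cdot V^j(w) \;=\; V\bigl(F(V^j(w))\bigr) \;=\; V^{j+1}(F(w)).
\]
Consequently passing to $W(A)/p$ does not collapse the associated graded, and
\[
K_j/K_{j+1} \;\cong\; V^j(W(A))/V^{j+1}(W(A)).
\]
On the latter, the usual argument (using that $V^i$ is injective on $W(A)$) shows $V^j$ induces an isomorphism of abelian groups $A = W(A)/V(W(A)) \xrightarrow{\sim} V^j(W(A))/V^{j+1}(W(A))$.

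Next I would pin down the $A$-module structure via $\Phi$. The same iterated semilinearity yields $\Phi(a)\cdot V^j(\Phi(b)) = V^j(\Phi(a^{p^j} b))$ in $W(A)/p$, so the composite $V^j\circ \Phi : A \to K_j/K_{j+1}$ is $F^j$-semilinear and promotes to an $A$-linear isomorphism $F^j_* A \xrightarrow{\sim} K_j/K_{j+1}$. Here $F^j_*A$ denotes $A$ with $A$-module structure pulled back along the $j$-th iterate of Frobenius. Since $A$ is etale over a polynomial ring over the perfect field $k$, $F^j_*A$ is free over $A$ of rank $p^{jn}$, with explicit basis given by the appropriate monomials in the coordinates (the statement for the polynomial ring is elementary, and etale base change transports freeness).

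Finally I would assemble everything into a single $A$-linear map
\[
A \oplus \bigoplus_{j=1}^r F^j_* A \;\longrightarrow\; W_{r+1}(A)/p, \qquad (a_0,\dots,a_r) \mapsto \sum_{j=0}^r V^j\!\bigl(\Phi(a_j)\bigr),
\]
filter both sides (source by $\bigoplus_{i\ge j}F^i_*A$, target by $K_j$), and note that this map respects the filtrations and, by the previous paragraph, induces isomorphisms on each graded piece. Since both filtrations have finite length $r+1$, the map itself is an isomorphism, and the right-hand side is free as a direct sum of free $A$-modules. The decomposition in the lemma is then obtained by identifying $F^j_*A$ with its image $V^j(\Phi(A))$, i.e.\ with $K_j/K_{j+1}$. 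The only non-trivial step is the commutation identity $pV^j = V^{j+1}F$; once this is in hand everything else is bookkeeping of the Frobenius-twisted $A$-action.
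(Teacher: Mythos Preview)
Your argument has a genuine gap at the step where you conclude that ``passing to $W(A)/p$ does not collapse the associated graded.'' The identity $p\,V^j(w)=V^{j+1}(F(w))$ is correct, but it only gives $p\cdot V^j(W(A))\subset V^{j+1}(W(A))$. For the associated graded to be preserved you would need the stronger inclusion $pW(A)\cap V^j(W(A))\subset V^{j+1}(W(A))$, and this is false. Indeed, for any $a\in A$ one has
\[
V\bigl(\Phi(a^p)\bigr)=V\bigl(\Phi(a)^p\bigr)=V\bigl(F(\Phi(a))\bigr)=p\,\Phi(a),
\]
so $V(\Phi(a^p))\in pW(A)\cap V^1$ while it does not lie in $V^2$ unless $a=0$. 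In particular your proposed map
\[
(a_0,\dots,a_r)\longmapsto\sum_{j}V^j(\Phi(a_j))
\]
is not injective: the element $(0,a^p,0,\dots,0)$ is sent to $p\,\Phi(a)=0$ in $W_{r+1}(A)/p$. Equivalently, the graded piece $K_j/K_{j+1}$ is not $F_*^jA$ but rather the cokernel of the $A$-linear inclusion $F_*^{j-1}A\hookrightarrow F_*^jA$ given by $x\mapsto x^p$; as a sanity check, for $A=k$ perfect one has $W_{r+1}(k)/p\cong k$ and all $K_j/K_{j+1}$ vanish for $j\ge1$, whereas your formula would predict them to be $k$.

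The paper's proof avoids this by writing down an explicit $A$-basis $\{1\}\cup\{V^i(T^I)\}$ where the multi-index $I$ is required to have at least one entry coprime to $p$; this coprimality condition is exactly what excludes the elements $V^i(T^{pJ})=p\,V^{i-1}(T^J)$ that vanish mod $p$. Your strategy can be repaired by replacing each $F_*^jA$ by $F_*^jA/F_*^{j-1}A$ (which is still free over $A$, of rank $p^{jn}-p^{(j-1)n}$, since $F_*^{j-1}A$ is a direct summand of $F_*^jA$), but as written the filtration argument does not go through.
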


\begin{proof}
In fact, one may check easily that a basis of $W_{r+1}(A)/p$ over
$A$ is given by 
\[
1\cup\{V^{i}(T^{I})\}_{1\leq i\leq r}
\]
where, for a fixed $i$, the multi-index $I$ ranges over $\{(i_{1},\dots,i_{n})\}$
such that each $i_{j}$ is contained in $\{1,\dots,p^{i}\}$ and at
least one $i_{j}$ is not divisible by $p$. This directly implies
both statements.
\end{proof}
Using the isomorphism of the previous lemma we have
\begin{lem}
\label{lem:local-descrip-of-T'}There is an isomorphism of Lie algebras
\[
\mathcal{T}'_{W(A)/p}\tilde{=}\text{Der}_{A}(A,W(A)/p)\oplus\tilde{\text{Hom}}_{A}(\mathcal{I}_{1})
\]
where on the right we have filtration-preserving $A$-linear maps
on $\mathcal{I}_{1}$. Equivalently, 
\[
\mathcal{T}'_{W(A)/p}\tilde{=}\text{Der}_{A}(A,W_{1}(A)/p)\oplus\mathcal{T}'_{1}
\]
where $\mathcal{T}'_{1}$ consists of derivations in $\mathcal{T}'_{W(A)/p}$
which vanish on $A$. 
\end{lem}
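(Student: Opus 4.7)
The plan is to use the splitting $\Phi \bmod p\colon A \hookrightarrow W(A)/p$ to decompose $W(A)/p = A \oplus \mathcal{I}_1$ as $A$-modules, and then analyze $\mathcal{T}'_{W(A)/p}$ by restricting derivations to each summand. The decisive algebraic fact throughout is that $\mathcal{I}_1^2 = 0$ inside $W(A)/p$, which follows at once from $V(x)\cdot V(y) = p\cdot V(xy)$; this will repeatedly kill cross-terms and produce clean formulas.

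First I would build the section $\mathcal{T}_A \to \mathcal{T}'_{W(A)/p}$ underlying the second formulation. By \prettyref{lem:composition-is-a-derivation}, the natural map $\mathcal{T}'_{W(A)/p} \to \mathcal{T}_A$ obtained by passage to the quotient has kernel the derivations whose induced action on $A$ vanishes, and one checks directly that this kernel equals $\mathcal{T}'_1$ in the sense of the statement. To split this surjection I would use the canonical Hasse-Schmidt lifts $\{\partial_i\}_1 \in \mathcal{E}W^{(0)}(A)$ constructed in Section~2; by \prettyref{prop:Move-Down} each $\{\partial_i\}_1$ preserves every $V^j(W(A))$, and by construction it lifts $\partial_i$ on $A$. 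Then $\sum a_i\partial_i \mapsto \sum \Phi(a_i)\{\partial_i\}_1 \bmod p$ is a well-defined $k$-linear section, and combining it with the kernel description yields $\mathcal{T}'_{W(A)/p} = \mathcal{T}_A \oplus \mathcal{T}'_1$.

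Next I would identify $\mathcal{T}'_1$ concretely. Any $\delta \in \mathcal{T}'_1$ restricts to a $k$-derivation $\delta|_A\colon A \to \mathcal{I}_1$ and to a filtration-preserving map $\delta|_{\mathcal{I}_1}\colon \mathcal{I}_1 \to \mathcal{I}_1$. Applying Leibniz to $a\cdot x$ with $a \in A$, $x \in \mathcal{I}_1$, the term $x\cdot\delta(a)$ sits in $\mathcal{I}_1\cdot\mathcal{I}_1 = 0$, so $\delta(ax) = a\delta(x)$, forcing $\delta|_{\mathcal{I}_1}$ to be $A$-linear, i.e.\ an element of $\tilde{\text{Hom}}_A(\mathcal{I}_1)$. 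Conversely, given any pair $(\beta,h) \in \text{Der}(A,\mathcal{I}_1) \oplus \tilde{\text{Hom}}_A(\mathcal{I}_1)$, the recipe $\delta(a+x) := \beta(a) + h(x)$ defines a derivation: the only nontrivial Leibniz check produces cross-terms $x_1\beta(a_2) + x_2\beta(a_1)$ that vanish since $\beta$ lands in $\mathcal{I}_1$. Hence $\mathcal{T}'_1 \cong \text{Der}(A,\mathcal{I}_1) \oplus \tilde{\text{Hom}}_A(\mathcal{I}_1)$.

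Combining with the decomposition $\text{Der}(A,W(A)/p) = \text{Der}(A,A) \oplus \text{Der}(A,\mathcal{I}_1) = \mathcal{T}_A \oplus \text{Der}(A,\mathcal{I}_1)$ induced by the splitting, one obtains
\[
\mathcal{T}'_{W(A)/p} \;=\; \mathcal{T}_A \oplus \text{Der}(A,\mathcal{I}_1) \oplus \tilde{\text{Hom}}_A(\mathcal{I}_1) \;=\; \text{Der}(A, W(A)/p) \oplus \tilde{\text{Hom}}_A(\mathcal{I}_1),
\]
which is the first formulation. The main delicate point will be checking that the decomposition is Lie, not merely $k$-linear: the Hasse-Schmidt section is not a Lie-algebra map on the nose, so one must verify that $[\sum a_i\{\partial_i\}_1,\sum b_j\{\partial_j\}_1]$ agrees with the canonical lift of $[\sum a_i\partial_i,\sum b_j\partial_j]$ modulo a controlled correction in $\mathcal{T}'_1$. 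This reduces to the commutation relations for the Hasse-Schmidt operators assembled in Section~2 (in the spirit of \prettyref{lem:big-products}), where $\mathcal{I}_1^2 = 0$ again eliminates the higher-order corrections and leaves a manageable semidirect-product bracket on the right-hand side.
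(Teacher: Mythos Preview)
Your argument ultimately reaches the first formulation, but it contains a genuine confusion that prevents you from establishing the second formulation as stated. You identify the kernel of $\mathcal{T}'_{W(A)/p}\to\mathcal{T}_A$ with $\mathcal{T}'_1$, but these are not the same object: the kernel is $\mathcal{T}'_{W(A)/p}(1)$ (derivations whose \emph{image} lies in $\mathcal{I}_1$), whereas the statement defines $\mathcal{T}'_1$ as the derivations that \emph{vanish on} $A$. Your own step~3 betrays this, since you write ``any $\delta\in\mathcal{T}'_1$ restricts to a $k$-derivation $\delta|_A\colon A\to\mathcal{I}_1$,'' which is only possible if $\delta(A)\ne 0$. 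Thus your intermediate splitting $\mathcal{T}'_{W(A)/p}=\mathcal{T}_A\oplus\mathcal{T}'_1$ is not the second formulation in the lemma; it is the (correct but different) decomposition $\mathcal{T}_A\oplus\mathcal{T}'_{W(A)/p}(1)$.

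The paper takes a more direct and elementary route that avoids all of this. Rather than splitting the surjection to $\mathcal{T}_A$, it splits the restriction map $\mathcal{T}'_{W(A)/p}\to\text{Der}_A(A,W(A)/p)$: given any derivation $\delta\colon A\to W(A)/p$, one extends it to all of $W(A)/p$ by declaring it to be zero on the explicit $A$-basis elements $V^i(T^I)$ supplied by the preceding lemma. This gives $\mathcal{T}'_{W(A)/p}=\text{Der}_A(A,W(A)/p)\oplus\mathcal{T}'_1$ immediately, with the correct $\mathcal{T}'_1$. The identification $\mathcal{T}'_1\cong\tilde{\text{Hom}}_A(\mathcal{I}_1)$ is then exactly your step~3 (specialized to $\beta=0$). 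No Hasse--Schmidt machinery is invoked, and the paper does not attempt to verify any Lie-bracket compatibility for the section, so your final paragraph addresses a concern that does not arise in this argument.
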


\begin{proof}
Let $\partial\in\mathcal{T}'_{W(A)/p}$. Then the restriction of $\partial$
to $A$ is a derivation from $A$ into $W(A)/p$. Further, if $\delta:A\to W(A)/p$
is any derivation, we may extend it to an element of $\mathcal{T}'_{W(A)/p}$
by setting $\delta$ to be zero on each term of the form $V^{i}(T^{I})$
where, as above, each index $i_{j}$ in $I$ is contained in $\{1,\dots,p^{i}\}$
and at least one $i_{j}$ is not divisible by $p$. Thus we have 
\[
\mathcal{T}'_{W(A)/p}\tilde{=}\text{Der}_{A}(A,W_{1}(A)/p)\oplus\mathcal{T}'_{1}
\]
where $\mathcal{T}'_{1}$ consists of derivations in $\mathcal{T}'_{W(A)/p}$
which vanish on $A$ which proves the second claim of the lemma.

To show that this implies the first claim, let $\phi\in\mathcal{T}_{1}$.
Then for any $a\in A$ and $b\in\mathcal{I}_{1}$ we have 
\[
\phi(ab)=a\phi(b)+\phi(a)b=a\phi(b)
\]
so that $\phi:\mathcal{I}_{1}\to\mathcal{I}_{1}$ is an $A$-linear
map. Conversely, since $\mathcal{I}_{1}$ is a square-zero ideal,
any $A$-linear map $\phi:\mathcal{I}_{1}\to\mathcal{I}_{1}$, extended
to all of $W(A)/p$ by setting $\phi(A)=0$, is a derivation of $W(A)/p$.
Since derivations in $\mathcal{T}'_{W(A)/p}$ are required to preserve
the filtration given by the $\{\mathcal{I}_{j}\}$, one deduces the
first claim of the lemma. 
\end{proof}
From this we deduce 
\begin{cor}
\label{cor:right-ideal-genned-by-T}Let $\{\partial_{1},\dots,\partial_{n}\}$
be a set of coordinate derivations on $A$. Then every element of
$\mathcal{U}'_{W(A)/p}$ may be written as 
\[
\sum_{J}b_{J}\partial^{J}+\phi
\]
where $b_{J}\in W(A)/p$, $\phi$ is contained in the left ideal generated
by $\mathcal{T}_{1}$, and the sum is finite. The analogous statement
holds in $\mathcal{\widehat{U}}'_{W(A)/p}$; namely, any element thereof
may be written as 
\[
\sum_{J}b_{J}\partial^{J}+\phi
\]
where $b_{J}\in W(A)/p$, $\phi$ is contained in the completed left
ideal generated by $\mathcal{T}_{1}$, and $b_{J}\to0$ in $W(A)/p$
as $|J|\to\infty$. 
\end{cor}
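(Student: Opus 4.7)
The plan is to prove this by an enveloping-algebra normal form argument, showing that the set
\[
R := \Bigl\{\,\sum_{J\text{ finite}} b_J\,\partial^{J} + \phi : b_J\in W(A)/p,\ \phi\in\mathcal{U}'_{W(A)/p}\cdot\mathcal{T}'_1\,\Bigr\}
\]
is closed under left multiplication by the generators of $\mathcal{U}'_{W(A)/p}$, contains $1$, and therefore equals all of $\mathcal{U}'_{W(A)/p}$. The generators are $W(A)/p$ and $\mathcal{T}'_{W(A)/p}$. Closure under left multiplication by $W(A)/p$ is clear. For $\delta\in\mathcal{T}'_{W(A)/p}$, \prettyref{lem:local-descrip-of-T'} gives the decomposition $\delta=\sum_j c^j\tilde\partial_j+\xi$ with $c^j\in W(A)/p$ and $\xi\in\mathcal{T}'_1$; so it suffices to handle the two cases $\tilde\partial_i$ and $\xi\in\mathcal{T}'_1$ separately.

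For $\tilde\partial_i$ acting on $\sum_J b_J\partial^J$, the defining relations $[\tilde\partial_i,b_J]=\tilde\partial_i(b_J)\in W(A)/p$ give $\tilde\partial_i\sum_J b_J\partial^J=\sum_J b_J\,\partial^{J+e_i}+\sum_J\tilde\partial_i(b_J)\partial^J$, still of the first form, and $\tilde\partial_i\cdot(u\psi)=(\tilde\partial_i u)\psi\in\mathcal{U}'\cdot\mathcal{T}'_1$. For $\xi\in\mathcal{T}'_1$, the key point is an induction on $|J|$ showing $\xi\,\partial^J\in\mathcal{U}'\cdot\mathcal{T}'_1$, whose inductive step hinges on the following \emph{closure lemma}: for any $\xi\in\mathcal{T}'_1$ and coordinate $\tilde\partial_i$, the commutator $[\xi,\tilde\partial_i]$ lies again in $\mathcal{T}'_1$. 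To see this, note that $[\xi,\tilde\partial_i]$ is a derivation preserving the filtration (since both $\xi$ and $\tilde\partial_i$ do), and on the generators $T_j\in A$ we have $[\xi,\tilde\partial_i](T_j)=\xi(\delta_{ij})-\tilde\partial_i\xi(T_j)=0-0=0$ because $\xi$ vanishes on $A$; since any derivation $A\to W(A)/p$ that kills the coordinates kills all of $A$, one gets $[\xi,\tilde\partial_i]\in\mathcal{T}'_1$. Combined with $\xi b_J = b_J\xi+\xi(b_J)$ in $\mathcal{U}'$, this proves closure and hence the first statement.

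For the completed version, define
\[
\widehat{R} := \Bigl\{\,\sum_{J} b_J\,\partial^{J} + \phi : b_J\in W(A)/p,\ b_J\to 0,\ \phi\in\overline{\mathcal{U}'_{W(A)/p}\cdot\mathcal{T}'_1}\,\Bigr\}\ \subseteq\ \widehat{\mathcal{U}}'_{W(A)/p},
\]
where the bar denotes closure in the filtration topology. Convergence of the formal sum inside $\widehat{\mathcal{U}}'_{W(A)/p}$ is automatic: if $b_J\in V^{i}(W(A)/p)$ then $b_J\,\partial^J\in\mathcal{K}^{(i)}$ since $V^i(W(A)/p)\subset\mathcal{K}^{(i)}$ (and $\mathcal{K}^{(i)}$ is a left ideal), so $b_J\to 0$ in $W(A)/p$ forces $b_J\partial^J\to 0$ in $\widehat{\mathcal{U}}'$. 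The commutation computations of the uncompleted case extend verbatim to show that $\widehat{R}$ is closed under left multiplication by $W(A)/p$ and $\mathcal{T}'_{W(A)/p}$; since $1\in\widehat{R}$, the set $\widehat{R}$ is a closed left $\mathcal{U}'$-submodule of $\widehat{\mathcal{U}}'_{W(A)/p}$ containing $\mathcal{U}'_{W(A)/p}$ (by the first part of the theorem, applied inside the completion). As $\mathcal{U}'_{W(A)/p}$ is dense in $\widehat{\mathcal{U}}'_{W(A)/p}$ and $\widehat{R}$ is closed, $\widehat{R}=\widehat{\mathcal{U}}'_{W(A)/p}$.

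The main obstacle is verifying that $\widehat{R}$ really is topologically closed; concretely, that given a Cauchy sequence $u_i=\sum_J b_J^{(i)}\partial^J+\phi_i$ in $\widehat{R}$, one can arrange the $b_J^{(i)}$ to stabilize in $W(A)/p$ (so that their limits $b_J$ still satisfy $b_J\to 0$) and the residual $\phi_i$ to stabilize in the closed left ideal generated by $\mathcal{T}'_1$. This requires choosing the representations compatibly, which in turn requires a mild uniqueness/filtration-compatibility statement: an element $\sum_J b_J\partial^J+\phi$ of $\mathcal{U}'_{W(A)/p}$ that lies in $\mathcal{K}^{(i)}$ admits a representation with $b_J\in V^i(W(A)/p)$ and $\phi\in\mathcal{K}^{(i)}\cap(\mathcal{U}'\cdot\mathcal{T}'_1)$. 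This is the analog of \prettyref{lem:Injectivity-Lemma} in the uncompleted algebra setting, and I expect it to be provable by a careful examination of the generators of $\mathcal{K}^{(i)}$ — namely $V^i(W(A)/p)$ and $\mathcal{T}'_{W(X)/p}\cap\{\partial\mapsto\mathcal{I}_i\}$ — using that products in $\mathcal{T}'_{W(X)/p}(1)$ collapse to composition in $\mathcal{U}'_{W(A)/p}$.
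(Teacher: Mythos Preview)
Your approach is essentially the same as the paper's: write elements as monomials in generators from $W(A)/p$, the lifted coordinate derivations $\tilde\partial_i$, and $\mathcal{T}'_1$ (via \prettyref{lem:local-descrip-of-T'}), then commute to push $\mathcal{T}'_1$ to the right. The paper compresses this into one sentence (``commuting the $\alpha_i$ past one another \ldots and noting that $\mathcal{T}_1$ is closed under products''); your closure lemma $[\xi,\tilde\partial_i]\in\mathcal{T}'_1$ is exactly the ingredient that makes this commutation work.

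For the completed case the paper says only ``follows by taking completion,'' so it glosses over precisely the point you flag. Your proposed resolution is the right one and is not difficult: the commutation identities you already use respect the filtration, because every derivation in $\mathcal{T}'_{W(A)/p}$ preserves $\{\mathcal{I}_j\}$ by definition. Concretely, $[\tilde\partial_l,b]\in\mathcal{I}_i$ if $b\in\mathcal{I}_i$; $[\xi,b]=\xi(b)\in\mathcal{I}_i$ likewise; and if $\psi\in\mathcal{T}'_1$ has image in $\mathcal{I}_i$ then so does $[\psi,\tilde\partial_l]$ (both terms land in $\mathcal{I}_i$). Thus rerunning the normal-form procedure on a monomial in $\mathcal{K}^{(i)}$ produces $\sum_J b_J\partial^J+\phi$ with each $b_J\in V^i(W(A)/p)$ and $\phi$ a sum of terms $u\psi$ with $\psi\in\mathcal{T}'_1$ having image in $\mathcal{I}_i$, hence $\phi\in\mathcal{K}^{(i)}$. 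This gives the filtration-compatible representations you need, so $\widehat R$ is closed and the argument goes through. There is no genuine gap.
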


\begin{proof}
Let $x\in\mathcal{U}'_{W(A)/p}$. By definition $x$ can be written
as a sum of terms of the form $\alpha_{1}\cdots\alpha_{m}$ where
each $\alpha_{i}$ is either an element of $W(A)/p$, one of the $\partial_{i}$,
or $\phi\in\mathcal{T}_{1}$ (here we use the fact that any element
of $\text{Der}_{A}(A,W_{1}(A)/p)$ is a sum of elements of the form
$a_{i}\partial_{i}$ with $a_{i}\in W(A)/p$). Commuting the $\alpha_{i}$
past one another and using \prettyref{lem:local-descrip-of-T'}, and
then noting that $\mathcal{T}_{1}$ is closed under products, we deduce
the existence of a form as above. The case of $\mathcal{\widehat{U}}'_{W(A)/p}$
follows by taking completion. 
\end{proof}
We now construct the fundamental bimodule $W(A)/p\otimes_{A}\mathcal{D}_{A}^{(0)}$: 
\begin{prop}
\label{prop:Action-of-U-on-bimodule}There is a left action of $\mathcal{U}'_{W(A)/p}$
on $W(A)/p\otimes_{A}\mathcal{D}_{A}^{(0)}$ which makes $W(A)/p\otimes_{A}\mathcal{D}_{A}^{(0)}$
into a $(\mathcal{U}'_{W(A)/p},\mathcal{D}_{A}^{(0)})$ bimodule. 

Let $\Phi^{*}\mathcal{D}_{A}^{(0)}$ be the completion of $W(A)/p\otimes_{A}\mathcal{D}_{A}^{(0)}$
along the filtration $\mathcal{I}_{j}\otimes_{A}\mathcal{D}_{A}^{(0)}$.
There is a left action of $\widehat{\mathcal{U}}'_{W(A)/p}$ on $\Phi^{*}\mathcal{D}_{A}^{(0)}$
making it into a $(\mathcal{\widehat{U}}'_{W(A)/p},\mathcal{D}_{A}^{(0)})$
bimodule. 
\end{prop}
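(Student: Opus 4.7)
The plan is to invoke the universal property of the Lie–Rinehart enveloping algebra. Giving a left $\mathcal{U}'_{W(A)/p}$-module structure on $M := W(A)/p \otimes_{A} \mathcal{D}_{A}^{(0)}$ is equivalent to specifying (i) a left $W(A)/p$-module structure, (ii) a $k$-linear action of $\mathcal{T}'_{W(A)/p}$ satisfying the Lie–Rinehart compatibilities -- Lie bracket, the $W(A)/p$-linearity $(b\partial)\cdot m = b\cdot(\partial\cdot m)$, and the Leibniz rule $\partial\cdot(b\cdot m) = b\cdot(\partial\cdot m) + \partial(b)\cdot m$ -- together with (iii) the extra relation $\partial_1\cdot\partial_2\cdot m = (\partial_1\circ\partial_2)\cdot m$ for $\partial_1,\partial_2\in \mathcal{T}'_{W(A)/p}(1)$. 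The $W(A)/p$-action on $M$ is left multiplication on the first factor; the right $\mathcal{D}_A^{(0)}$-action is evident.

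For the action of $\partial\in \mathcal{T}'_{W(A)/p}$ I would use the formula
\[
\partial\cdot(b\otimes P) := \partial(b)\otimes P + \sum_{i=1}^{n} b\cdot\partial(\Phi(T_i))\otimes\partial_i P,
\]
where $T_1,\ldots,T_n$ are the fixed local coordinates, $\partial_i\in\mathcal{D}_A^{(0)}$ the dual derivations, and $\Phi\colon A \hookrightarrow W(A)/p$ the splitting coming from the chosen Frobenius lift. The naive Grothendieck-connection formula $\partial(b)\otimes P + b\otimes\bar\partial P$ fails to respect the tensor relation over $A$, because $\partial(\Phi(a))-\Phi(\bar\partial(a))\in\mathcal{I}_1$ is in general nonzero; the sum term is precisely the correction this forces. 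Well-definedness reduces to the identity $\partial(\Phi(a))=\sum_i\partial(\Phi(T_i))\cdot\partial_i(a)$ for $a\in A$, which follows by applying the Leibniz rule to $a=f(T_1,\ldots,T_n)$ and using that $\Phi$ is a $k$-algebra homomorphism.

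The $W(A)/p$-linearity in $\partial$ and the Leibniz rule in the module variable are immediate from the formula, and compatibility with the bracket is a longer but entirely mechanical computation (apply Leibniz twice to the correction term). For the extra relation, note that if $\partial_1,\partial_2\in\mathcal{T}'_{W(A)/p}(1)$ then each of $\partial_j(b)$ and $\partial_j(\Phi(T_i))$ lies in $\mathcal{I}_1$, and in $W(A)/p$ one has $\mathcal{I}_1^{2}=0$ since $V(x)V(y)=pV(xy)=0$ mod $p$. Expanding $\partial_1\cdot\partial_2\cdot(b\otimes P)$ using the formula, every cross term involves a product of two elements of $\mathcal{I}_1$ and hence vanishes, leaving exactly $\partial_1(\partial_2(b))\otimes P + \sum_i b\cdot\partial_1(\partial_2(\Phi(T_i)))\otimes\partial_i P$, which is $(\partial_1\circ\partial_2)\cdot(b\otimes P)$ as required.

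Finally, to pass to the completion I would check compatibility of the two filtrations: the formula shows that any $\partial\in\mathcal{T}'_{W(A)/p}$ with $\partial(W(A)/p)\subset\mathcal{I}_i$, and any $b\in V^i(W(A)/p)$ acting on the left, sends $M$ into $\mathcal{I}_i\otimes_A\mathcal{D}_A^{(0)}$; hence $\mathcal{K}^{(i)}\cdot M\subset\mathcal{I}_i\otimes_A\mathcal{D}_A^{(0)}$, and the action extends by continuity to a $\widehat{\mathcal{U}}'_{W(A)/p}$-action on $\Phi^{*}\mathcal{D}_A^{(0)}$. The main obstacle is the first step: finding the right formula for the action. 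The naive analog of the classical Grothendieck connection formula is not well defined on the tensor product because $\Phi$ does not intertwine $\partial$ with $\bar\partial$, and the discrepancy is an element of $\mathcal{I}_1\otimes_A\mathcal{T}_A$; the correction sum records this element in coordinates and is what makes the definition consistent.
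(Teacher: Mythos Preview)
Your proposal is correct and follows essentially the same approach as the paper. Your formula
\[
\partial\cdot(b\otimes P) = \partial(b)\otimes P + \sum_{i} b\cdot\partial(\Phi(T_i))\otimes\partial_i P
\]
is equivalent to the paper's: the paper first introduces the quotient map $Q:\mathcal{U}'_{W(A)/p}\to\mathcal{D}_A^{(0)}$, writes $\partial|_{\Phi(A)} = Q(\partial) + \sum_i \epsilon_i\partial_i$ with $\epsilon_i\in\mathcal{I}_1$, and sets
\[
\partial\cdot(a\otimes P)=\partial(a)\otimes P + a\otimes Q(\partial)P + \sum_i a\epsilon_i\otimes\partial_i P.
\]
Since $\partial(\Phi(T_j)) = Q(\partial)(T_j)+\epsilon_j$ and $Q(\partial)=\sum_j Q(\partial)(T_j)\partial_j$, moving $Q(\partial)(T_j)\in A$ across the tensor shows the two formulas coincide. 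One small imprecision: your justification of the identity $\partial(\Phi(a))=\sum_i \partial(\Phi(T_i))\cdot\Phi(\partial_i(a))$ via ``$a=f(T_1,\dots,T_n)$'' is not quite right, since $A$ is only \'etale over $k[T_1,\dots,T_n]$; the correct argument is that both sides are $k$-derivations $A\to W(A)/p$ (for the $A$-module structure via $\Phi$) agreeing on the $T_i$, hence equal because $\Omega_A^1$ is free on the $dT_i$.
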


\begin{proof}
In \prettyref{lem:composition-is-a-derivation} above, we constructed
a surjection $\mathcal{T}_{W(A)/p}'\to\mathcal{T}_{A}$. By the functoriality
of the Lie-Rinehart construction, we obtain a map $\mathcal{U}(\mathcal{T}'_{W(A)/p})\to\mathcal{U}(\mathcal{T}_{A})=\mathcal{D}_{A}^{(0)}$.
As the ideal $\mathcal{T}'_{W(A)/p}(1)$ is contained in the kernel
of this map, it necessarily factors through $\mathcal{U}'_{W(A)/p}$;
so we obtain $Q:\mathcal{U}'_{W(A)/p}\to\mathcal{D}_{A}^{(0)}$. 

Now consider a derivation $\partial\in\mathcal{T}_{W(A)/p}'$. If
we restrict $\partial$ to $A$ we obtain an element of $\text{Der}_{A}(A,W(A)/p)$;
so we may write 
\[
\partial|_{A}=Q(\partial)+\sum_{i=1}^{n}\epsilon_{i}\partial_{i}
\]
where $\epsilon_{i}\in\mathcal{I}_{1}$ and $Q(\partial)\in\text{Der}_{A}(A,A)$
is the application of the algebra map $Q$ to $\partial$. (Here $\partial_{i}$
stands for the extension of the canonical coordinate derivation to
$W(A)/p$ which vanishes on terms of the form $p^{r}T^{I/p^{r}}$
as above). 

Then for $a\in W(A)/p$ and $P\in\mathcal{D}_{A}^{(0)}$ we set 
\[
\partial\cdot(a\otimes P)=\partial(a)\otimes P+a\otimes Q(\partial)P+\sum_{i=1}^{n}a\epsilon_{i}\otimes\partial_{i}P
\]
We claim that this defines an action of $\mathcal{U}'_{W(A)/p}$ on
$W(A)/p\otimes_{A}\mathcal{D}_{A}^{(0)}$. To see this, let $b\in A$,
and $a$ and $P$ as above. Then we have 
\[
\partial(ab\otimes P)=\partial(ab)\otimes P+ab\otimes Q(\partial)P+\sum_{i=1}^{n}ab\epsilon_{i}\otimes\partial_{i}P
\]
\[
=\partial(a)b\otimes P+a\partial(b)\otimes P+ab\otimes Q(\partial)P+\sum_{i=1}^{n}ab\epsilon_{i}\otimes\partial_{i}P
\]
\[
=\partial(a)\otimes bP+a\otimes Q(\partial)(b)P+\sum_{i=1}^{n}a\epsilon_{i}\otimes\partial_{i}(b)P+a\otimes bQ(\partial)P+\sum_{i=1}^{n}a\epsilon_{i}\otimes b\partial_{i}P
\]
where we used ${\displaystyle \partial(b)=Q(\partial)(b)+\sum_{i=1}^{n}\epsilon_{i}\partial_{i}(b)}$.
Further, using the equalities $Q(\partial)(b)P+bQ(\partial)P=Q(\partial)bP$
and $\partial_{i}(b)P+b\partial_{i}P=\partial_{i}bP$, we see that
the previous line is equal to 
\[
\partial(a)\otimes bP+a\otimes Q(\partial)bP+\sum_{i=1}^{n}a\epsilon_{i}\otimes\partial_{i}bP=\partial(a\otimes bP)
\]
Therefore each derivation defines an endomorphism of $W(A)/p\otimes_{A}\mathcal{D}_{A}^{(0)}$;
one sees directly that this defines an action of a left action of
$\mathcal{U}'_{W(A)/p}$ on $W(A)/p\otimes_{A}\mathcal{D}_{A}^{(0)}$.
Furthermore, let $b\in A$ and $\partial_{i}$ a coordinate derivation
on $A$. Then 
\[
\partial\cdot(a\otimes Pb)=\partial(a)\otimes Pb+a\otimes Q(\partial)Pb+\sum_{i=1}^{n}a\epsilon_{i}\otimes\partial_{i}Pb
\]
\[
=\partial\cdot(a\otimes P)b
\]
and similarly $\partial\cdot(a\otimes P\partial_{i})=\partial\cdot(a\otimes P)\partial_{i}$.
So the left action of $\mathcal{U}'_{W(A)/p}$ commutes with the natural
right action of $\mathcal{D}_{A}^{(0)}$, as required. The second
sentence follows from the first by simply completing both objects
along their natural filtrations.
\end{proof}
Then we have
\begin{cor}
There is an isomorphism 
\[
\Phi^{*}\mathcal{D}_{A}^{(0)}\tilde{\leftarrow}\widehat{\mathcal{U}}'_{W(A)/p}/\widehat{\mathcal{U}}'_{W(A)/p}\cdot\mathcal{T}'_{1}
\]
where $\widehat{\mathcal{U}}'_{W(A)/p}\cdot\mathcal{T}'_{1}$ denotes
the completion of the left ideal generated by $\mathcal{T}'_{1}$. 
\end{cor}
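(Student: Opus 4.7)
The plan is to exhibit the isomorphism as the evaluation-at-$(1\otimes 1)$ map, and then to combine \prettyref{cor:right-ideal-genned-by-T} with the freeness of $\mathcal{D}_{A}^{(0)}$ over $A$ to get bijectivity.

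First, I would define the map. By \prettyref{prop:Action-of-U-on-bimodule}, the left action of $\widehat{\mathcal{U}}'_{W(A)/p}$ on $\Phi^{*}\mathcal{D}_{A}^{(0)}$ gives a continuous $\widehat{\mathcal{U}}'_{W(A)/p}$-linear map
\[
\Psi:\widehat{\mathcal{U}}'_{W(A)/p}\longrightarrow \Phi^{*}\mathcal{D}_{A}^{(0)},\qquad x\longmapsto x\cdot(1\otimes 1).
\]
Continuity here means that $\mathcal{K}^{(i)}$ lands in $\mathcal{I}_{i}\otimes_{A}\mathcal{D}_{A}^{(0)}$: the generators $\alpha\in\mathcal{I}_{i}$ act as $\alpha\otimes 1$, while any derivation $\partial$ with $\partial(W(A)/p)\subset\mathcal{I}_{i}$ (for $i\geq 1$) must have $Q(\partial)=0$ and, by testing on the coordinates $T_k$, the error terms $\epsilon_k$ lie in $\mathcal{I}_i$, so $\partial\cdot(1\otimes 1)=\sum_k\epsilon_k\otimes\partial_k\in\mathcal{I}_i\otimes_A\mathcal{D}_A^{(0)}$.

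Second, I would check that $\mathcal{T}'_{1}$ is in the kernel. If $\phi\in\mathcal{T}'_{1}$, then by \prettyref{lem:local-descrip-of-T'} $\phi$ vanishes on $A$, so in the formula defining the action the term $Q(\phi)$ and all the $\epsilon_{i}$'s are zero; together with $\phi(1)=0$ this gives $\phi\cdot(1\otimes 1)=0$. Combined with the continuity of $\Psi$, this says $\Psi$ kills the completed left ideal $\widehat{\mathcal{U}}'_{W(A)/p}\cdot\mathcal{T}'_{1}$, and we obtain
\[
\overline{\Psi}:\widehat{\mathcal{U}}'_{W(A)/p}/\widehat{\mathcal{U}}'_{W(A)/p}\cdot\mathcal{T}'_{1}\longrightarrow \Phi^{*}\mathcal{D}_{A}^{(0)}.
\]

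Third, I would prove surjectivity and injectivity together using the normal form from \prettyref{cor:right-ideal-genned-by-T}. Every element of $\widehat{\mathcal{U}}'_{W(A)/p}$ has a representative $\sum_{J}b_{J}\partial^{J}+\phi$ with $b_{J}\in W(A)/p$ (with $b_J\to 0$ as $|J|\to\infty$) and $\phi$ in the completed left ideal; this maps under $\Psi$ to $\sum_{J}b_{J}\otimes\partial^{J}$. Surjectivity is then immediate since every element of the completion $\Phi^{*}\mathcal{D}_{A}^{(0)}$ has the shape $\sum_{J}b_{J}\otimes\partial^{J}$ (because $\mathcal{D}_{A}^{(0)}$ is free over $A$ on the basis $\{\partial^{J}\}$ and the completion is along $\mathcal{I}_{j}\otimes_{A}\mathcal{D}_{A}^{(0)}$). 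For injectivity, suppose $[x]$ maps to zero; write $x=\sum_{J}b_{J}\partial^{J}+\phi$ as above, so $\sum_{J}b_{J}\otimes\partial^{J}=0$. Since $W(A)/p$ is free (hence flat) over $A$ and $\{1\otimes\partial^{J}\}$ is a topological basis of $\Phi^{*}\mathcal{D}_{A}^{(0)}$ over $W(A)/p$, each $b_{J}=0$, so $x=\phi\in\widehat{\mathcal{U}}'_{W(A)/p}\cdot\mathcal{T}'_{1}$, i.e.\ $[x]=0$.

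The potential obstacle is bookkeeping around the completions rather than any deep step: one must verify that the completed left ideal as defined in the statement is preserved by $\Psi$ (i.e.\ that convergent sums of elements of the uncompleted left ideal still map into zero), and one must check that the ``topological basis'' statement used for injectivity is valid for the specific completion $\Phi^{*}\mathcal{D}_{A}^{(0)}=\varprojlim_{j}W_{j}(A)/p\otimes_{A}\mathcal{D}_{A}^{(0)}$. Both are straightforward once one notes that $W_{j}(A)/p\otimes_{A}\mathcal{D}_{A}^{(0)}$ is $A$-free on $\{V^{i}(T^{I})\otimes\partial^{J}\}\cup\{1\otimes\partial^{J}\}$ and that the transition maps are projections of free modules.
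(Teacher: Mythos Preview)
Your proof is correct and follows essentially the same approach as the paper: define the map as evaluation at $1\otimes 1$, use the normal form from \prettyref{cor:right-ideal-genned-by-T} to write $x=\sum_J b_J\partial^J+\phi$, and observe that $x\cdot(1\otimes 1)=\sum_J b_J\otimes\partial^J$, which vanishes iff all $b_J=0$. You are simply more explicit about continuity and about why $\mathcal{T}'_1$ kills $1\otimes 1$, points the paper leaves implicit in the phrase ``the last equality follows from the definition of the action.''
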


\begin{proof}
The map is given by sending an element $x\in\widehat{\mathcal{U}}'_{W(A)/p}$
to $x\cdot(1\otimes1)$. Writing 
\[
x=\sum_{J}b_{J}\partial^{J}+\phi
\]
as in \prettyref{cor:right-ideal-genned-by-T}, we see 
\[
x\cdot(1\otimes1)=(\sum_{J}b_{J}\partial^{J}+\phi)\cdot(1\otimes1)
\]
\[
=\sum_{J}b_{j}\otimes\partial^{J}
\]
(the last equality follows from the definition of the action in \prettyref{prop:Action-of-U-on-bimodule}).
As the last sum is zero iff all $b_{J}$ are $0$, the result follows
directly. 
\end{proof}
Now suppose we are in the presence of two distinct coordinatized lifts
of Frobenius $\Phi_{1},\Phi_{2}$ on $\mathcal{A}$, with two associated
embeddings which we now label $\iota_{1},\iota_{2}:A\to W(A)/p$.
Then we have
\begin{cor}
\label{cor:Bimodule-for-affine-in-positive-char}There is a canonical
isomorphism $\Phi_{1}^{*}\mathcal{D}_{A}^{(0)}\tilde{\to}\Phi_{2}^{*}\mathcal{D}_{A}^{(0)}$
of bimodules. If given a third lift $\Phi_{3}$, then the cocycle
condition on isomorphisms is satisfied. 
\end{cor}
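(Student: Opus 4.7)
The proof is a characteristic-$p$ rendition of Grothendieck's observation that a flat connection extends uniquely over a square-zero thickening, realized by an explicit ``first-order Taylor expansion'' with respect to the derivation $\iota_1-\iota_2$.

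\textbf{Step 1: The difference is an $A$-linear derivation.} Both $\iota_1,\iota_2$ split the natural projection $W(A)/p\to A$, so $\delta:=\iota_1-\iota_2$ lands in $\mathcal{I}_1=V(W(A)/p)$. Since $\mathcal{I}_1^2=0$ the standard expansion
\[
\iota_1(a)\iota_1(b)-\iota_2(a)\iota_2(b)=\iota_2(a)\delta(b)+\delta(a)\iota_2(b)+\delta(a)\delta(b)
\]
collapses (the last term dies) and shows $\delta$ is an $A$-linear derivation $A\to\mathcal{I}_1$, where the $A$-module structure on $\mathcal{I}_1$ is intrinsic (the two structures induced by $\iota_1,\iota_2$ differ by elements of $\mathcal{I}_1^2=0$).

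\textbf{Step 2: Define $\epsilon_{12}$.} Write $M_j:=W(A)/p\otimes_{A,\iota_j}\mathcal{D}_A^{(0)}$, so $\Phi_j^{*}\mathcal{D}_A^{(0)}$ is its completion along $\{\mathcal{I}_n\otimes_A\mathcal{D}_A^{(0)}\}$. Choose local coordinates $T_1,\dots,T_n$ with dual derivations $\partial_1,\dots,\partial_n$ and set $\epsilon_i:=\delta(T_i)\in\mathcal{I}_1$. Define
\[
\epsilon_{12}:M_1\to M_2,\qquad b\otimes_{\iota_1}P\;\longmapsto\; b\otimes_{\iota_2}P+\sum_i b\epsilon_i\otimes_{\iota_2}\partial_iP.
\]
To check compatibility with the tensor relation $1\otimes_{\iota_1}(aP)=\iota_1(a)\otimes_{\iota_1}P$ one expands both sides; all cross terms $\delta(a)\epsilon_i$ vanish by $\mathcal{I}_1^2=0$ and the remaining identity is precisely the derivation formula $\delta(a)=\sum_i\epsilon_i\,\partial_i(a)$, which holds because $\delta$ is an $A$-linear derivation and $\{\partial_i\}$ is dual to $\{dT_i\}$. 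The expression $\sum_i\epsilon_i\otimes\partial_i$ is coordinate-free, being the image of $\delta\in\mathrm{Der}_A(A,\mathcal{I}_1)\cong\mathcal{I}_1\otimes_A\mathcal{T}_A$ in $M_2$. By construction $\epsilon_{12}$ is $(W(A)/p,\mathcal{D}_A^{(0)})$-bilinear and respects the $\mathcal{I}_n$-filtrations, so it extends continuously to the required map $\Phi_1^{*}\mathcal{D}_A^{(0)}\to\Phi_2^{*}\mathcal{D}_A^{(0)}$.

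\textbf{Step 3: Cocycle and inversion.} For a third lift $\Phi_3$, applying $\epsilon_{23}$ to $\epsilon_{12}(1\otimes 1)=1\otimes_{\iota_2}1+\sum_i\epsilon_i^{12}\otimes\partial_i$ and discarding cross-terms $\epsilon_i^{12}\epsilon_j^{23}\in\mathcal{I}_1^2=0$ gives
\[
\epsilon_{23}\!\circ\!\epsilon_{12}(1\otimes 1)=1\otimes_{\iota_3}1+\sum_i(\epsilon_i^{12}+\epsilon_i^{23})\otimes\partial_i=1\otimes_{\iota_3}1+\sum_i\epsilon_i^{13}\otimes\partial_i=\epsilon_{13}(1\otimes 1),
\]
since $\delta_{13}=\delta_{12}+\delta_{23}$. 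Both sides are bimodule maps generated by the image of $1\otimes 1$, so they agree. Taking $\Phi_3=\Phi_1$ yields $\epsilon_{21}\circ\epsilon_{12}=\mathrm{id}$, so each $\epsilon_{jk}$ is an isomorphism.

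\textbf{Step 4: Compatibility with the $\widehat{\mathcal{U}}'$-action (main check).} The $\widehat{\mathcal{U}}'_{W(A)/p}$-action on each $\Phi_j^{*}\mathcal{D}_A^{(0)}$ of \prettyref{prop:Action-of-U-on-bimodule} uses an $\iota_j$-dependent decomposition $\partial|_A^{(j)}=Q_j(\partial)+\sum_i\epsilon_i^{(j)}\partial_i$. The two decompositions are related by $Q_1(\partial)=Q_2(\partial)$ (both compute the induced derivation on $A=(W(A)/p)/\mathcal{I}_1$) and $\sum_i(\epsilon_i^{(1)}-\epsilon_i^{(2)})\partial_i(a)=\tilde\partial(\delta(a))$, where $\tilde\partial$ is the canonical restriction of $\partial$ to $\mathcal{I}_1$. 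A direct check—again ruthlessly eliminating products of two elements of $\mathcal{I}_1$—shows that $\epsilon_{12}$ intertwines the two actions on the generator $1\otimes 1$, and hence everywhere. This bookkeeping is the principal technical obstacle; once done, the sheafification is immediate (the construction is coordinate-independent, hence Zariski-local), yielding the canonical bimodule isomorphism with the cocycle property as asserted.
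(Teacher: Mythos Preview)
Your approach is correct and morally identical to the paper's, but the two are organized dually. The paper uses the preceding corollary to identify $\Phi_j^{*}\mathcal{D}_A^{(0)}\cong\widehat{\mathcal{U}}'_{W(A)/p}/\widehat{\mathcal{U}}'_{W(A)/p}\cdot\mathcal{T}'_{1,\iota_j}$ and then realizes the isomorphism as \emph{right multiplication} by the unit $1+\tilde{\iota}_{12}\in\widehat{\mathcal{U}}'_{W(A)/p}$; this makes the left $\widehat{\mathcal{U}}'$-linearity automatic, and the content is checking the right $\mathcal{D}_A^{(0)}$-structure on the quotient. You instead write down the explicit Taylor formula on the tensor product, making the right $\mathcal{D}_A^{(0)}$-linearity automatic and leaving the left $\widehat{\mathcal{U}}'$-compatibility as the content (your Step~4). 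The two checks are literally the same computation viewed from opposite sides; the paper's packaging is slicker because the hard direction comes for free, while yours is more self-contained (it does not need the quotient description).

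One small correction to your Step~4: the relation you state, $\sum_i(\epsilon_i^{(1)}-\epsilon_i^{(2)})\partial_i(a)=\tilde\partial(\delta(a))$, is missing a term. Since $Q(\partial)(a)$ must be re-embedded via $\iota_j$ on each side, subtracting the two decompositions gives
\[
\partial(\delta(a))=\delta\bigl(Q(\partial)(a)\bigr)+\sum_i(\epsilon_i^{(1)}-\epsilon_i^{(2)})\,\partial_i(a).
\]
This extra term $\delta(Q(\partial)(a))$ does not break your ``direct check'': when you compare $\epsilon_{12}(\partial\cdot_1(1\otimes1))$ with $\partial\cdot_2\epsilon_{12}(1\otimes1)$, it exactly cancels against the commutator contribution $\sum_j\epsilon_j^{12}\otimes[Q(\partial),\partial_j]$ on the other side (use $\delta(b)=\sum_j\epsilon_j^{12}\partial_j(b)$). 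So the argument goes through, but the bookkeeping is a bit more involved than your sketch suggests.
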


\begin{proof}
Slightly modifying the above notation, we let $\mathcal{T}'_{1,\iota_{1}}$
denote the set of derivations in $\mathcal{T}'$ which vanish on $\iota_{1}(A)$,
and similarly for $\mathcal{T}'_{1,\iota_{2}}$. We need to provide
a canonical isomorphism 
\begin{equation}
\widehat{\mathcal{U}}'_{W(A)/p}/\widehat{\mathcal{U}}'_{W(A)/p}\cdot\mathcal{T}'_{1,\iota_{1}}\tilde{\to}\widehat{\mathcal{U}}'_{W(A)/p}/\widehat{\mathcal{U}}'_{W(A)/p}\cdot\mathcal{T}'_{1,\iota_{2}}\label{eq:can-iso}
\end{equation}
Now, the fact that the kernel of $W(A)/p\to A$ is a square zero ideal
implies that $\iota_{12}=i_{1}-i_{2}:A\to W(A)/p$ is a derivation.
We can therefore consider $\iota_{12}$ as a derivation $\iota_{2}(A)\to W(A)/p$,
and then we extend this to a derivation $\tilde{\iota}_{12}:W(A)/p\to W(A)/p$
as in the proof of \prettyref{prop:Action-of-U-on-bimodule}, so that
$\tilde{\iota}_{12}(p^{r}T^{I/p^{r}})=0$, where $T$ refers to the
coordinates for $\Phi_{2}$. Then we can consider $\tilde{\iota}_{12}$
as an element of $\mathcal{T}'$ and therefore as an element of $\widehat{\mathcal{U}}'_{W(A)/p}$.
The element $1+\tilde{\iota}_{12}$ is a unit in $\widehat{\mathcal{U}}'_{W(A)/p}$,
with inverse $1-\tilde{\iota}_{12}$ (this follows from the relation
$\partial_{1}\cdot\partial_{2}-\partial_{1}\circ\partial_{2}$ for
all $\partial_{1},\partial_{2}\in\mathcal{T}'_{W(A)/p}(1)$). Therefore
there is an isomorphism
\[
\widehat{\mathcal{U}}'_{W(A)/p}\xrightarrow{\cdot(1+\tilde{\iota}_{12})}\widehat{\mathcal{U}}'_{W(A)/p}
\]
of left $\widehat{\mathcal{U}}'_{W(A)/p}$-modules. It clearly takes
$\widehat{\mathcal{U}}'_{W(A)/p}\cdot\mathcal{T}'_{1,\iota_{1}}$
to $\widehat{\mathcal{U}}'_{W(A)/p}\cdot\mathcal{T}'_{1,\iota_{2}}$
(and the inverse map $\cdot(1-\tilde{\iota}_{12}$ takes $\widehat{\mathcal{U}}'_{W(A)/p}\cdot\mathcal{T}'_{1,\iota_{2}}$
to $\widehat{\mathcal{U}}'_{W(A)/p}\cdot\mathcal{T}'_{1,\iota_{1}}$).
Therefore we deduce an isomorphism \prettyref{eq:can-iso}, and we
need to check that this map intertwines the associated right $\mathcal{D}_{A}^{((0)}$-module
structures; the cocycle condition is clear from the definition of
the isomorphism.

To see this, we shall show 
\[
x\cdot\iota_{1}(a)(1+\tilde{\iota}_{12})=x\cdot(1+\tilde{\iota}_{12})\iota_{2}(a)
\]
 for all $x\in\widehat{\mathcal{U}}'_{W(A)/p}$ and $a\in A$ and
\[
x\cdot\delta_{1}(1+\tilde{\iota}_{12})=x\cdot(1+\tilde{\iota}_{12})\delta_{2}
\]
for all $x\in\widehat{\mathcal{U}}'_{W(A)/p}$ ; here the notation
is as follows let $\delta$ be a derivation of $A$. Then $\delta_{1}$
and $\delta_{2}$ refer to elements of $\mathcal{T}'_{W(A)/p}$ ,
so that $\delta_{1}$ preserves $\iota_{1}(A)$, and $\delta_{1}|_{\iota_{1}(A)}=\delta$,
and $\delta_{2}$ preserves $\iota_{2}(A)$ and $\delta_{2}|_{\iota_{2}(A)}=\delta$
and so that $\delta_{1}|_{\mathcal{I}_{1}}=\delta_{2}|_{\mathcal{I}_{1}}$
(it is easy to see that it is always possible to choose such $\delta_{1},\delta_{2}$,
for a given $\delta$). As $\mathcal{D}_{A}^{((0)}$ is generated
by $A$ and derivations, this implies the result by the previous corollary. 

Now let us verify these equations; for the first, we have 
\[
x\cdot(1+\tilde{\iota}_{12})\iota_{2}(a)=x\iota_{2}(a)+x\tilde{\iota}_{12}\cdot\iota_{2}(a)
\]
\[
=x\iota_{2}(a)+x\tilde{\iota}_{12}\cdot\iota_{2}(a)
\]
and, as $\tilde{\iota}_{12}\cdot\iota_{2}(a)=\iota_{2}(a)\cdot\tilde{\iota}_{12}+\tilde{\iota}_{12}(\iota_{2}(a))=\iota_{2}(a)\cdot\tilde{\iota}_{12}+i_{1}(a)-\iota_{2}(a)$
we see that the above term equals 
\[
x\iota_{2}(a)\cdot\tilde{\iota}_{12}+x\cdot i_{1}(a)
\]
Further, $\iota_{2}(a)\cdot\tilde{\iota}_{12}=\iota_{1}(a)\cdot\tilde{\iota}_{12}$,
as the difference $\iota_{1}(a)-\iota_{2}(a)\in\mathcal{I}_{1}$ and
therefore annihilates $\tilde{\iota}_{12}$ (as $\tilde{\iota}_{12}$
takes values in $\mathcal{I}_{1}$ and this ideal has square zero).
Thus this term is equal to 
\[
x\iota_{1}(a)\cdot\tilde{\iota}_{12}+x\cdot i_{1}(a)
\]
as required. 

For the second equation, we set $\epsilon=[\tilde{\iota}_{12},\delta_{2}]\in\mathcal{T}_{A}'$
and note that this derivation takes values in $\mathcal{I}_{1}$ and
vanishes on $\mathcal{I}_{1}$; indeed, one easily sees that in fact
$\epsilon=\delta_{1}-\delta_{2}$. Therefore 
\[
x\cdot(1+\tilde{\iota}_{12})\delta_{2}=x\delta_{1}+x\delta_{2}\tilde{\iota}_{12}
\]
and since $(\delta_{1}-\delta_{2})\cdot\tilde{\iota}_{12}=0$ (as
both of these derivations take values in $\mathcal{I}_{1}$) we see
that this equals $x\delta_{1}+x\delta_{1}\tilde{\iota}_{12}$ as required. 
\end{proof}
Now we wish to globalize the whole situation. In order to do that,
we need one more basic fact about the structure of $\widehat{\mathcal{U}}'_{W(A)/p}$.
Let us assume again that we are in the presence of local coordinates
on $A$. Choose a set of elements $\{\phi_{i}\}_{i\in\mathbb{N}}$
which form a topological basis for $\mathcal{T}'_{1}$ as an $A$-module,
and such that $\phi_{i}\to0$ as $i\to\infty$. Then we have
\begin{cor}
\label{cor:basis-of-U}Let $\{\partial_{1},\dots,\partial_{n}\}$
be a set of coordinate derivations on $A$. Then every element of
$\mathcal{U}'_{W(A)/p}$ may be written uniquely as 
\[
\sum_{J}(b_{J}\partial^{J}+\sum_{i_{J}=1}^{\infty}a_{i_{J}}\phi_{i_{J}}\partial^{J})
\]
where $J$ ranges over multi-indices in $\mathbb{N}^{n}$, $i_{J}\in\mathbb{N}$,
$a_{i_{J}},b_{J}\in W(A)/p$, and all but finitely many $b_{J}$ are
$0$. Similarly, every element of $\widehat{\mathcal{U}}'_{W(A)/p}$
can be written uniquely as 
\[
\sum_{J}(b_{J}\partial^{J}+\sum_{i_{J}=1}^{\infty}a_{i_{J}}\phi_{i_{J}}\partial^{J})
\]
with the notation as above, but we allow infinitely many $b_{J}$,
where $b_{J}\to0$ as $|J|\to\infty$. 
\end{cor}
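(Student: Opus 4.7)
The plan is to prove this PBW-style basis theorem first for $\mathcal{U}'_{W(A)/p}$ and then pass to the completion. The crucial special feature making such a statement possible is that $\mathcal{T}'_1 \subset \mathcal{T}'_{W(A)/p}(1)$ is closed under composition (\prettyref{lem:composition-is-a-derivation}), and that by \prettyref{lem:local-descrip-of-T'} it equals $\tilde{\mathrm{Hom}}_A(\mathcal{I}_1,\mathcal{I}_1)$ as an $A$-module. Since $\mathcal{I}_1^2=0$ the $W(A)/p$-action on $\mathcal{T}'_1$ factors through $A$, so the coefficients $a_{i_J}$ can a priori only be determined modulo $\mathcal{I}_1$; the statement (allowing $a_{i_J}\in W(A)/p$) is consistent with this because the product $a_{i_J}\phi_{i_J}$ only depends on $a_{i_J}$ mod $\mathcal{I}_1$.

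For existence, I would start from \prettyref{cor:right-ideal-genned-by-T}, which already writes every element as $\sum_J b_J\partial^J+\phi$ with $\phi$ in the (completed) left ideal generated by $\mathcal{T}'_1$. To refine $\phi$ into the desired form, two moves suffice. First, for any $\psi\in\mathcal{T}'_1$ and any coordinate derivation $\partial_i$, the bracket $[\partial_i,\psi]$ again lies in $\mathcal{T}'_1$ (it is a derivation preserving the filtration and, since $\psi|_A=0$ and $\partial_i(A)\subset A$, it vanishes on $A$), so $\psi$ can be migrated past the $\partial_i$'s at the cost of generating further $\mathcal{T}'_1$-terms. Second, the defining relation $\psi_1\psi_2=\psi_1\circ\psi_2$ for $\psi_1,\psi_2\in\mathcal{T}'(1)$ collapses any product of elements of $\mathcal{T}'_1$ into a single element of $\mathcal{T}'_1$. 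Iterating these two reductions rewrites $\phi$ in the form $\sum_J\tau_J\partial^J$ with $\tau_J\in\mathcal{T}'_1$, and expanding each $\tau_J$ in the topological $A$-basis $\{\phi_i\}$ yields the desired expression.

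For uniqueness, the coefficients $b_J$ are read off by acting on $1\otimes 1\in\Phi^*\mathcal{D}_A^{(0)}$ via \prettyref{prop:Action-of-U-on-bimodule}: for $\psi\in\mathcal{T}'_1$ one has $Q(\psi)=0$ and $\psi(1)=0$, so all $\phi$-terms annihilate $1\otimes 1$, leaving $x\cdot(1\otimes 1)=\sum_J b_J\otimes\partial^J$, whose coefficients in the evident $W(A)/p$-basis of the free module $W(A)/p\otimes_A\mathcal{D}_A^{(0)}$ determine the $b_J$ uniquely. The $a_{i_J}$ (mod $\mathcal{I}_1$) are then determined by acting on $f\otimes 1$ for suitably chosen $f\in\mathcal{I}_1$: since the $\{\phi_i\}$ form a topological $A$-basis of $\tilde{\mathrm{Hom}}_A(\mathcal{I}_1,\mathcal{I}_1)$, one can construct a dual family $\{f_j\}\subset\mathcal{I}_1$ so that the images $\{\phi_i(f_j)\}$ pair non-degenerately modulo higher filtration, and evaluating the action on $f_j\otimes 1$ then separates the $\tau_J$ slot-by-slot using the $\partial^J$-degree.

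The passage to $\widehat{\mathcal{U}}'_{W(A)/p}$ is essentially formal: the filtration $\{\mathcal{K}^{(i)}\}$ is transparent in the basis (a monomial $b_J\partial^J$ lies in $\mathcal{K}^{(i)}$ iff $b_J\in\mathcal{I}_i$, and similarly for $\phi$-terms), so the completion adds exactly the convergence condition $b_J\to 0$ as $|J|\to\infty$. The main obstacle is the uniqueness of the $\phi$-coefficients: unlike the $b_J$'s, these are not detected by the action on $1\otimes 1$, so one must construct enough test elements in $\mathcal{I}_1$ pairing non-degenerately with $\{\phi_i\}$ while remaining compatible with the $\partial^J$-action. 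Organizing this duality inside the non-finitely generated $A$-module $\mathcal{I}_1$ is the technical heart of the proof.
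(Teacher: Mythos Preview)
Your approach is essentially the paper's, and your argument for existence and for the uniqueness of the $b_J$ (by acting on $1\otimes 1$) matches the paper exactly.

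You overcomplicate the uniqueness of the $\phi$-coefficients. There is no need to construct a dual family in $\mathcal{I}_1$ or to organize a non-degenerate pairing. The point you are missing is that the coordinate derivations $\partial_i$, as elements of $\mathcal{T}'_{W(A)/p}$, are \emph{chosen} (via the splitting in \prettyref{lem:local-descrip-of-T'}) to vanish on every basis element $p^rT^{I/p^r}$ of $\mathcal{I}_1$. Hence, using the action formula of \prettyref{prop:Action-of-U-on-bimodule}, one finds $\partial^J\cdot(p^rT^{I/p^r}\otimes 1)=p^rT^{I/p^r}\otimes\partial^J$ with no correction terms, and since $\phi_{i_J}\in\mathcal{T}'_1$ has $Q(\phi_{i_J})=0$ and $\epsilon_\ell=0$, one gets
\[
\bigl(\textstyle\sum_J\sum_{i_J}a_{i_J}\phi_{i_J}\partial^J\bigr)\cdot(p^rT^{I/p^r}\otimes 1)=\sum_J\bigl(\textstyle\sum_{i_J}a_{i_J}\phi_{i_J}(p^rT^{I/p^r})\bigr)\otimes\partial^J.
\]
Reading off the $\partial^J$-coefficient and varying $p^rT^{I/p^r}$ over all basis elements of $\mathcal{I}_1$ shows $\sum_{i_J}a_{i_J}\phi_{i_J}=0$ in $\mathcal{T}'_1$ for each $J$; since $\{\phi_i\}$ is a topological $A$-basis this forces each $a_{i_J}\phi_{i_J}=0$. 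So the ``technical heart'' you anticipate dissolves once you remember how $\partial_i$ was extended.
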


\begin{proof}
The existence is proved exactly as in \prettyref{cor:right-ideal-genned-by-T}.
For the uniqueness, we use the action of $\mathcal{U}'_{W(A)/p}$
on $W(A)/p\otimes_{A}\mathcal{D}_{A}^{(0)}$ (respectively, the action
of $\widehat{\mathcal{U}}'_{W(A)/p}$ on $\Phi^{*}\mathcal{D}_{A}^{(0)}$);
we'll just do the case of $\mathcal{U}'_{W(A)/p}$, the case of the
completion being entirely analogous. Suppose that 
\[
\sum_{J}(b_{J}\partial^{J}+\sum_{i_{J}=1}^{\infty}a_{i_{J}}\phi_{i_{J}}\partial^{J})=0
\]
for some $\{a_{i_{J}},b_{J}\}$ as above. Then 
\[
0=\sum_{J}(b_{J}\partial^{J}+\sum_{i_{J}=1}^{\infty}a_{i_{J}}\phi_{i_{J}}\partial^{J})(1\otimes1)=\sum_{J}b_{J}\otimes\partial^{J}
\]
so that each $b_{J}=0$. Now, let $p^{r}T^{I/p^{r}}\in\mathcal{I}_{1}$.
then each $\partial_{i}(p^{r}T^{I/p^{r}})=0$, so that 
\[
0=(\sum_{i_{J}=1}^{\infty}a_{i_{J}}\phi_{i_{J}}\partial^{J})(p^{r}T^{I/p^{r}}\otimes1)=\sum_{i_{J}=1}^{\infty}a_{i_{J}}\phi_{i_{J}}(p^{r}T^{I/p^{r}})\otimes\partial^{J}
\]
as this is true for all basis elements $p^{r}T^{I/p^{r}}$, we see
that ${\displaystyle a_{i_{J}}\phi_{i_{J}}=0}$ inside $\mathcal{T}_{1}$.
So $a_{i_{J}}=0=b_{J}$ for all $J$ as required. 
\end{proof}
This implies 
\begin{cor}
Let $A$ be a smooth $k$-algebra admitting local coordinates. The
assignment $B\to\widehat{\mathcal{U}}'_{W(B)/p}$ is a sheaf on the
etale site of $A$. 
\end{cor}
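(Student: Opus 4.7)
The plan is to use the explicit local description of $\widehat{\mathcal{U}}'_{W(B)/p}$ furnished by \prettyref{cor:basis-of-U}, together with the known étale base change behavior of the ingredients (Witt vectors, Hasse-Schmidt derivations, and the splitting coming from a coordinatized Frobenius lift). First I would observe that since we are working over $A$ which has local coordinates, any étale $B'$ over $A$ automatically inherits local coordinates and a coordinatized Frobenius lift, which pulls back to give the embedding $B'\hookrightarrow W(B')/p$ used in \prettyref{lem:local-descrip-of-T'} and \prettyref{cor:basis-of-U}. Thus the basis representation
\[
\sum_{J}\Bigl(b_{J}\partial^{J}+\sum_{i_{J}}a_{i_{J}}\phi_{i_{J}}\partial^{J}\Bigr)
\]
makes sense functorially in $B'$ once we have fixed such structure on $A$.

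Next I would verify that each of the three basic pieces in this description behaves as a sheaf on the small étale site. The Witt vector coefficients $W(B')/p$ form a sheaf: this is the standard fact that étale morphisms lift uniquely to Witt vectors (cf.\ the étale extension of Hasse-Schmidt derivations used in \prettyref{prop:construction-of-transfer}, part $1$, which in particular gives $W(B')/p\tilde{\to}W(B)/p\otimes_{W(B)/p}W(B')/p$ after reduction). The coordinate derivations $\partial_i$ extend uniquely and functorially across étale morphisms by the remark right after \prettyref{def:H-S}. Finally, the module $\mathcal{T}'_1$ of derivations of $W(B)/p$ which vanish on $B$ and preserve the filtration $\{\mathcal{I}_j\}$ forms a sheaf: by \prettyref{lem:local-descrip-of-T'} it is identified with $\tilde{\mathrm{Hom}}_{B}(\mathcal{I}_1,\mathcal{I}_1)$, which is manifestly a sheaf in $B$ since both source and target sheafify to quasicoherent modules on $W(X)$.

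Having checked these pieces, I would combine them to check the sheaf condition. The local description shows that $\widehat{\mathcal{U}}'_{W(B)/p}$ is, as a $W(B)/p$-module, a completion of $W(B)/p\otimes_{B}\mathcal{D}_{B}^{(0)}$ together with the image of $\widehat{\mathcal{U}}'_{W(B)/p}\cdot\mathcal{T}'_1$; both summands are compatible with étale base change by the above, and the algebra structure (captured by the defining relations of the Lie-Rinehart enveloping algebra modulo the ideal $\partial_1\partial_2-\partial_1\circ\partial_2$ for $\partial_i\in\mathcal{T}'_{W(B)/p}(1)$) is preserved because the relations are defined entirely in terms of the derivation action, which is functorial. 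Concretely, for étale $B\to B'$ one obtains a map
\[
W(B')/p\,\widehat{\otimes}_{W(B)/p}\widehat{\mathcal{U}}'_{W(B)/p}\;\longrightarrow\;\widehat{\mathcal{U}}'_{W(B')/p}
\]
which is an isomorphism because it sends basis to basis under the unique expression in \prettyref{cor:basis-of-U}; the sheaf property then follows by faithfully-flat-descent for $W(?)/p$ and for the coefficient modules.

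The main obstacle I expect is the interaction between the completion and the infinite-dimensional module $\mathcal{T}'_1$: unlike $\mathcal{T}_B$, it is not finitely generated, and one must ensure that étale base change commutes with the completion defining $\widehat{\mathcal{U}}'$. The likely way to handle this is to reduce everything modulo $V^r(W(B)/p)$ first and then take the inverse limit, using that each $\mathcal{K}^{(r)}/\mathcal{K}^{(r+1)}$ is built from finite-type pieces over $W_r(B)/p$ to which standard étale descent applies, and then invoking Mittag-Leffler to pass to the limit.
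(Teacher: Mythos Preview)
Your proposal is correct and follows essentially the same route as the paper: both arguments rest on the unique basis representation of \prettyref{cor:basis-of-U} to obtain the \'etale localization isomorphism, from which the sheaf property is immediate. The paper's proof is much terser---it simply records the isomorphism $(B\otimes_{A}\widehat{\mathcal{U}}'_{W(A)/p})^{\widehat{}}\tilde{\to}\widehat{\mathcal{U}}'_{W(B)/p}$ (tensoring over $A$ rather than $W(A)/p$, which is equivalent) and notes that the presheaf structure comes from functoriality of the Lie--Rinehart construction---whereas you unpack the verification piece by piece; but the substance is the same.
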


\begin{proof}
The fact that this assignment is a presheaf follows immediately from
the functoriality of the Lie-Rinehart construction. To see that it
is actually a sheaf, we note that that previous corollary yields an
isomorphism 
\[
(B\otimes_{A}\widehat{\mathcal{U}}'_{W(A)/p})^{\widehat{}}\tilde{\to}\mathcal{U}'_{W(B)/p}
\]
where the completion on the left is with respect to $B\otimes_{A}\mathcal{I}_{j}$
(where $\mathcal{I}_{j}$ is the two sided ideal in $\widehat{\mathcal{U}}'_{W(A)/p}$). 
\end{proof}
So we may now define for any smooth $X$ over $k$, $\widehat{\mathcal{U}}'_{W(X)_{p=0}}$
as the Zariski sheaf associated to the functor $U\to\widehat{\mathcal{U}}'_{W(A)/p}$
where $U=\text{Spec}(A)$ is an open subset of $X$ admitting local
coordinates (these form a base for the topology). From this definition
and \prettyref{cor:Bimodule-for-affine-in-positive-char}, we obtain 
\begin{cor}
\label{cor:Global-bimodule-for-U}There is a well-defined $(\widehat{\mathcal{U}}'_{W(X)_{p=0}},\mathcal{D}_{A}^{(0)})$-bimodule
$\mathcal{B}_{X}^{(0)}$, which for every open affine $U=\text{Spec}(A)$
(admitting local coordinates), and every lift of Frobenius $\Phi$
to $\mathcal{A}$, is isomorphic to $\Phi^{*}\mathcal{D}_{A}^{(0)}$. 
\end{cor}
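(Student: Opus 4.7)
The strategy is a standard descent/gluing argument that leverages the canonical isomorphisms provided by the preceding corollary. Fix the basis $\mathcal{T}$ for the Zariski topology of $X$ consisting of open affines $U = \operatorname{Spec}(A) \subset X$ which admit local coordinates. For each $U \in \mathcal{T}$, choose a smooth $W(k)$-lift $\mathcal{A}$ of $A$ together with a coordinatized lift of Frobenius $\Phi_U : \mathcal{A} \to W(A)$; then by \prettyref{prop:Action-of-U-on-bimodule} we obtain the local $(\widehat{\mathcal{U}}'_{W(A)/p}, \mathcal{D}_A^{(0)})$-bimodule $\Phi_U^{*}\mathcal{D}_A^{(0)}$. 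These are the pieces to be glued into $\mathcal{B}_X^{(0)}$.

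The main construction I would carry out is the transition data on overlaps. Given $V = \operatorname{Spec}(B) \subset U = \operatorname{Spec}(A)$ with $V \in \mathcal{T}$, the etale extension $A \to B$ admits a unique compatible lift $\mathcal{A} \to \mathcal{B}$, and the chosen Frobenius lift $\Phi_U$ extends uniquely to a coordinatized Frobenius lift $\Phi_{U}|_V$ on $\mathcal{B}$ (using \prettyref{prop:HS-and-F} and the uniqueness of etale extensions of Hasse-Schmidt derivations). The restriction of the local bimodule is, by \prettyref{prop:construction-of-transfer}, isomorphic to the bimodule built on $V$ from $\Phi_U|_V$. Then I apply \prettyref{cor:Bimodule-for-affine-in-positive-char} to obtain the canonical isomorphism
\[
\epsilon_{V,U} : (\Phi_U^{*}\mathcal{D}_A^{(0)})|_V \;\widetilde{\to}\; \Phi_V^{*}\mathcal{D}_B^{(0)},
\]
which by construction respects the $(\widehat{\mathcal{U}}'_{W(B)/p}, \mathcal{D}_B^{(0)})$-bimodule structures. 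On triple nested opens $W \subset V \subset U$ the cocycle relation $\epsilon_{W,V} \circ \epsilon_{V,U}|_W = \epsilon_{W,U}$ is exactly the cocycle condition provided in the previous corollary, since all three maps are the canonical comparison isomorphisms between different Frobenius lifts on $W$.

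Having established the transition data together with the cocycle identity, it remains to verify descent on arbitrary covers of opens in $\mathcal{T}$ (not just nested inclusions), then to sheafify. For this I would observe that, by \prettyref{cor:basis-of-U}, each $\Phi_U^{*}\mathcal{D}_A^{(0)}$ is, as a sheaf on $U$, the completion of a free $\mathcal{O}_U$-module with respect to a filtration whose graded pieces are coherent; in particular, it satisfies the sheaf condition for covers by opens in $\mathcal{T}$. Combined with the cocycle data $\{\epsilon_{V,U}\}$, standard gluing yields a sheaf $\mathcal{B}_X^{(0)}$ on $X$ equipped with a right $\mathcal{D}_X^{(0)}$-action (the local right $\mathcal{D}_A^{(0)}$-actions glue because $\epsilon_{V,U}$ is right $\mathcal{D}_B^{(0)}$-linear) and a left $\widehat{\mathcal{U}}'_{W(X)_{p=0}}$-action (the local left $\widehat{\mathcal{U}}'_{W(A)/p}$-actions glue because $\epsilon_{V,U}$ is $\widehat{\mathcal{U}}'_{W(B)/p}$-linear and because $\widehat{\mathcal{U}}'_{W(X)_{p=0}}$ was defined as the sheafification of precisely this presheaf).

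The one place requiring genuine care, and the main technical obstacle, is verifying that the gluing isomorphism $\epsilon_{V,U}$ is truly canonical in the sense needed — that is, that it does not depend on the auxiliary choice of lift $\mathcal{B}$ of $B$ or on the extension of $\Phi_U$ used to compare against the independently chosen $\Phi_V$. This amounts to showing that two different chains of canonical isomorphisms from $\Phi_U^{*}\mathcal{D}_A^{(0)}|_V$ to $\Phi_V^{*}\mathcal{D}_B^{(0)}$ coincide, which follows from the full cocycle statement of \prettyref{cor:Bimodule-for-affine-in-positive-char} applied to all three (or more) relevant Frobenius lifts on $\mathcal{B}$ simultaneously. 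Once this coherence is in hand, the gluing is forced and $\mathcal{B}_X^{(0)}$ is characterized up to unique isomorphism by the required local description.
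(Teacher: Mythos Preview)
Your proposal is correct and follows essentially the same approach as the paper, which simply states that the corollary follows from the definition of $\widehat{\mathcal{U}}'_{W(X)_{p=0}}$ as a sheaf together with \prettyref{cor:Bimodule-for-affine-in-positive-char}; you have spelled out the standard gluing argument that the paper leaves implicit. One minor point: your citation of \prettyref{prop:HS-and-F} for the unique extension of a coordinatized Frobenius lift along an \'etale map is not the right reference---that proposition concerns the relation $\tilde{D}_j F = F\tilde{D}_{j/p}$, whereas the extension you need follows from the infinitesimal lifting property for \'etale morphisms.
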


Finally, we need to relate all this to Witt-differential operators:
\begin{thm}
\label{thm:onto-mod-p}There is a surjective morphism of sheaves of
algebras 
\[
\widehat{\mathcal{D}}_{W(X)}^{(0)}/p\to\mathcal{\widehat{U}}'_{W(X)_{p=0}}
\]
This map is continuous with respect to the inverse limit topologies
on these algebras. 
\end{thm}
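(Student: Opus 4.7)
The plan is to work locally on an open affine $U = \text{Spec}(A) \subset X$ with local coordinates, fix a coordinatized lift of Frobenius giving $\Phi: \mathcal{A} \to W(A)$, and construct a continuous algebra morphism $\widehat{\mathcal{D}}_{W(A)}^{(0)}/p \to \widehat{\mathcal{U}}'_{W(A)/p}$, which I will then argue is independent of $\Phi$ and glues to a morphism of sheaves. The technical heart is the observation that each elementary Witt-differential operator $\phi = F^{\text{val}(i)-r}(a) \cdot \tilde{D}_i$ (with $0 < i \leq p^r$ and $\tilde{D}$ a canonical Hasse--Schmidt derivation) reduces mod $p$ to a continuous derivation of $W(A)/p$ preserving the $V$-filtration, hence defines an element of $\mathcal{T}'_{W(A)/p}$. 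Indeed, in the HS product formula $\tilde{D}_i(xy) = \sum_{a+b=i} \tilde{D}_a(x)\tilde{D}_b(y)$, the cross terms $(a, b \geq 1)$ satisfy $\tilde{D}_a(x) \in V^{r-\text{val}(a)}(W(A)/p)$ and $\tilde{D}_b(y) \in V^{r-\text{val}(b)}(W(A)/p)$ by \prettyref{prop:Move-Down}; the constraints $a+b=i\leq p^r$ and $a, b \geq 1$ force $\text{val}(a), \text{val}(b) < r$, so both factors lie in $V^{\geq 1}$, and the identity $V(x)V(y) = pV(xy)$ kills their product in $W(A)/p$. The scaling $F^{\text{val}(i)-r}(a) \cdot$ preserves this derivation property, as one checks using $F^{-k}(a) \cdot V^k(\gamma) = V^k(a\gamma)$.

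Using the basis of \prettyref{thm:Basis}, which writes each element of $\widehat{\mathcal{D}}_{W(A)}^{(0)}/p$ uniquely as $\sum_I \bigl(\sum_{r \geq 1, J_I} F^{-r}(\alpha_{J_I}) \{\partial\}_{J_I/p^r}\bigr) \partial^I$, I define the map $\psi$ by sending each factor $\{\partial_l\}_{j/p^r}$ (and each $\partial_l$) to the derivation it induces on $W(A)/p$, and extending $W(A)/p$-linearly, multiplicatively, and continuously. That $\psi$ is an algebra homomorphism reduces to two checks: (i) the commutator identity $[\phi, \alpha] = \phi(\alpha)$ for $\alpha \in W(A)/p$ and $\phi$ a derivation -- which governs the commutation relations in \prettyref{lem:big-products} -- holds in both algebras; and (ii) the extra relation $\partial_1 \partial_2 = \partial_1 \circ \partial_2$ in $\widehat{\mathcal{U}}'$ for $\partial_1, \partial_2 \in \mathcal{T}'_1$ is respected, which is automatic since in $\widehat{\mathcal{D}}$ multiplication \emph{is} composition. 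Continuity is immediate: the ideal $I^{(r)} \subset \widehat{\mathcal{D}}$ is generated by $V^r(W(A))$ and by operators whose image lies in $V^r$, each of which maps into $\mathcal{K}^{(r)} \subset \widehat{\mathcal{U}}'$.

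For surjectivity it suffices to hit the generators of $\widehat{\mathcal{U}}'$: multiplication by $W(A)/p$ is tautological, and by the local decomposition $\mathcal{T}' = \text{Der}_A(A, W(A)/p) \oplus \mathcal{T}'_1$ of \prettyref{lem:local-descrip-of-T'}, the first summand is spanned by $a\partial_l$ with $a \in W(A)/p$. For $\mathcal{T}'_1$, consisting of $A$-linear filtration-preserving endomorphisms of $\mathcal{I}_1$, I would use the explicit $A$-module basis $\{V^r(T^I)\}$ of $\mathcal{I}_1$ (as in the proof of \prettyref{thm:Projective!}) together with the explicit action of the operators $F^{-r}(\alpha)\{\partial_l\}_{j/p^r}$ on this basis to show that every such endomorphism lies in the completed $W(A)/p$-linear span of these Witt-differential operators. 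Globalization is then straightforward: the construction is compatible with etale localization via \prettyref{prop:construction-of-transfer} and the analogous etale-local behavior of $\mathcal{T}'$, and the map is independent of $\Phi$ because it is defined via the action on $W(A)/p$. The main obstacle is the last surjectivity step -- showing that the Witt-differential operators span (topologically) all of $\mathcal{T}'_1$ -- which requires a careful calculation of their action on the explicit basis of $\mathcal{I}_1$ and a convergence argument in the inverse-limit topology.
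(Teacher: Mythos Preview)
Your proposal is correct and follows essentially the same approach as the paper. The paper first isolates the derivation-mod-$p$ property of elementary Witt differential operators as a separate proposition (\prettyref{prop:TW-mod-p-is-derivations}), then observes that uniqueness of an algebra map sending $\varphi\in\mathcal{E}W^{(0)}/p$ to its induced derivation $\overline{\varphi}$ reduces globalization to the local case, and finally checks (via \prettyref{thm:Basis}) that commutator relations are preserved---exactly your checks (i) and (ii).

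One point worth noting: you correctly flag the surjectivity onto $\mathcal{T}'_1$ as the ``main obstacle,'' and indeed the paper's own proof does not address surjectivity at all. The paper's argument only establishes existence and uniqueness of the algebra morphism; the claim that the image contains every filtration-preserving $A$-linear endomorphism of $\mathcal{I}_1$ is left unproved. Your plan---using the explicit $A$-basis $\{V^r(T^I)\}$ of $\mathcal{I}_1$ and computing the action of $F^{-r}(\alpha)\{\partial_l\}_{j/p^r}$ on it---is the natural approach, and the constructions in the proof of \prettyref{prop:construction-of-projector} and the lemma following \prettyref{prop:Construction-of-Phi-!} (which build, inside $\widehat{\mathcal{D}}_{W(A)}^{(0)}$, operators realizing the individual coordinate projections $\phi_{I/p^r}$ on $W(A)$) give exactly the ingredients needed to carry this out. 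So your proposal is, if anything, more complete than the paper's terse treatment.
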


To prove this, we first need the following fact about the action of
the Witt-differential operators on $W(X)_{1}$; we let $X=\text{Spec}(A)$
as above.
\begin{prop}
\label{prop:TW-mod-p-is-derivations}Let $\varphi\in\mathcal{E}W_{A}^{(0)}$.
Then the induced map $\overline{\varphi}:W(A)/p\to W(A)/p$ is a derivation
of the algebra $W(A)/p$; furthermore, $\overline{\varphi}$ preserves
the filtration $\{\mathcal{I}_{i}\}$ of $W(A)/p$. 
\end{prop}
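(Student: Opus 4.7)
The filtration-preservation statement is immediate from Definition~\ref{def:Elementary-Witt-Ring}: every element of $\mathcal{E}W^{(0)}(A)$ lies in $\text{Hom}^V_{W(k)}(W(A),W(A))$, hence preserves $V^i(W(A))$, and passing to the reduction mod $p$ preserves $\mathcal{I}_i = V^i(W(A)/p)$. So the substantive claim is the Leibniz rule for $\overline{\varphi}$. Since being a derivation mod $p$ is $W(A)$-linear in $\varphi$ and closed under the inverse limit $\mathcal{E}W^{(0)}(A) = \lim_r \mathcal{E}W^{(0)}_{r+1}(A)$, it is enough to verify the claim for a single elementary operator $\varphi = F^{\text{val}(i)-r}(\alpha) \cdot \tilde{D}_i$ with $0 < i \leq p^r$, acting on $W_{r+1}(A)/p$.

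The plan is to apply the Hasse--Schmidt product formula
\[
\tilde{D}_i(xy) = x\tilde{D}_i(y) + \tilde{D}_i(x)y + \sum_{0<j<i} \tilde{D}_j(x)\tilde{D}_{i-j}(y),
\]
set $s := r - \text{val}(i) \geq 0$, and use \prettyref{prop:Move-Down}(b) to note $\tilde{D}_i(W_{r+1}(A)) \subset V^s(W_{r+1}(A))$, so that the semilinear product $F^{-s}(\alpha) \cdot (-)$ is defined on the image. The elementary Witt-vector identities $x \cdot V^s(c) = V^s(F^s(x)c)$ and $V^s(c) \cdot y = V^s(c F^s(y))$ then yield $F^{-s}(\alpha)(x\tilde{D}_i(y)) = x\varphi(y)$ and $F^{-s}(\alpha)(\tilde{D}_i(x)y) = \varphi(x)y$, reducing the question to showing each cross term $F^{-s}(\alpha)\cdot\tilde{D}_j(x)\tilde{D}_{i-j}(y)$ (for $0 < j < i$) lies in $pW_{r+1}(A)$.

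To handle the cross terms I would again invoke \prettyref{prop:Move-Down}(b), which gives $\tilde{D}_j(x) \in V^{r-\text{val}(j)}(W_{r+1}(A))$ and $\tilde{D}_{i-j}(y) \in V^{r-\text{val}(i-j)}(W_{r+1}(A))$; since $0 < j, i-j < p^r$ forces both valuations to be strictly less than $r$, both Verschiebung exponents are at least $1$. The Witt-vector identity $V^{s'}(a)V^{t'}(b) = p^{\min(s',t')}V^{\max(s',t')}(F^{|s'-t'|}(a)\cdot b)$ (which follows inductively from $V(a)V(b)=pV(ab)$ and $V(a)\cdot b = V(a F(b))$) then places the cross term in $pW_{r+1}(A) \cap V^{\max(r-\text{val}(j),\,r-\text{val}(i-j))}(W_{r+1}(A))$; the latter ideal is contained in $V^s(W_{r+1}(A))$ because $\text{val}(i) \geq \min(\text{val}(j),\text{val}(i-j))$, so the semilinear multiplication by $F^{-s}(\alpha)$ is defined on this element and preserves $p$-divisibility. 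The main obstacle is not conceptual but lies in the careful bookkeeping of the semilinear Witt-vector multiplication: verifying the two compatibility identities above, and tracking how products of Verschiebungs produce the extra factor of $p$. Once these identities are in hand, no ingredient beyond the Hasse--Schmidt formula and the filtration estimates of \prettyref{prop:Move-Down} is required.
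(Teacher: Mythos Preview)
Your proposal is correct and follows essentially the same approach as the paper: reduce to a single elementary operator $F^{\text{val}(i)-r}(\alpha)\tilde D_i$ on $W_{r+1}(A)$, apply the Hasse--Schmidt product formula, use \prettyref{prop:Move-Down}(b) to place each $\tilde D_j$ in the appropriate $V^{r-\text{val}(j)}$, and then use the Witt-vector identity $V^{s'}(a)V^{t'}(b)=p^{\min(s',t')}V^{\max(s',t')}(\cdots)$ together with $\text{val}(i)\geq\min(\text{val}(j),\text{val}(i-j))$ to see that each cross term is divisible by $p$ and lies in the domain of $F^{-s}(\alpha)\cdot$. The paper leaves the verification that the two boundary terms $j=0,i$ give exactly $x\varphi(y)+\varphi(x)y$ implicit, whereas you spell it out; otherwise the arguments are the same.
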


\begin{proof}
For each $r\geq0$, we have the natural surjection $W(A)/p\to W_{r+1}(A)/p$,
and there is an isomorphism ${\displaystyle W(A)/p\tilde{=}\lim_{r}W_{r+1}(A)/p}$.
By construction $\overline{\varphi}$ is an inverse limit of operators
on $W_{r+1}(A)/p$, and hence preserves each $K_{r+1}$, and it suffices
to show that the restriction of $\overline{\varphi}$ to $W_{r+1}(A)/p$
is a derivation for each $r\geq0$. 

Let $D=(D_{0},\dots,D_{i})$ be a Hasse-Schmidt derivation on $A$,
with $i\leq p^{r}$, and $\tilde{D}_{i}$ is the canonical lift to
$W_{r+1}(A)$, then we must show that for any $a\in W_{r+1}(A)$,
$F^{\text{val}_{p}(i)-r}(a)\cdot\tilde{D}_{i}$ acts as a derivation
on $W_{r+1}(A)/p$. By definition, we have 
\[
F^{\text{val}_{p}(i)-r}(a)\tilde{D}_{i}(\alpha\beta)=F^{\text{val}_{p}(i)-r}(a)\sum_{j+l=i}\tilde{D}_{j}(\alpha)\tilde{D}_{l}(\beta)
\]
and we must show that $F^{\text{val}_{p}(i)-r}(a)D_{j}(\alpha)D_{l}(\beta)\in pW_{r+1}(A)$
whenever $0<j,l<i$. To that end, we may, by \prettyref{prop:Move-Down},
write $\tilde{D}_{j}(\alpha)=V^{r-\text{val}_{p}(j)}(x)$ and $\tilde{D}_{l}(\beta)=V^{r-\text{val}_{p}(l)}(y)$.
Suppose $\text{val}_{p}(j)\geq\text{val}_{p}(l)$ (the other case
having an identical proof). Then 
\[
V^{r-\text{val}_{p}(j)}(x)V^{r-\text{val}_{p}(l)}(y)=p^{r-\text{val}_{p}(j)}V^{r-\text{val}_{p}(l)}(F^{\text{val}_{p}(j)-\text{val}_{p}(l)}(x)\cdot y)
\]
so that 
\[
F^{\text{val}_{p}(i)-r}(a)\cdot V^{r-\text{val}_{p}(j)}(x)V^{r-\text{val}_{p}(l)}(y)
\]
\[
=p^{r-\text{val}_{p}(j)}F^{\text{val}_{p}(l)-r}(F^{\text{val}_{p}(i)-\text{val}_{p}(l)}(a))V^{r-\text{val}_{p}(l)}(F^{\text{val}_{p}(j)-\text{val}_{p}(l)}(x)\cdot y)
\]
where we have used $\text{val}_{p}(i)\geq\text{min}\{\text{val}_{p}(j),\text{val}_{p}(l)\}=\text{val}_{p}(l)$.
Thus we conclude that 
\[
F^{\text{val}_{p}(i)-r}(a)\tilde{D}_{j}(\alpha)\tilde{D}_{l}(\beta)=p^{r-\text{val}_{p}(j)}F^{\text{val}_{p}(l)-r}(A)V^{r-\text{val}_{p}(l)}(B)=p^{r-\text{val}_{p}(j)}V^{r-\text{val}_{p}(l)}(AB)
\]
where $A=F^{\text{val}_{p}(i)-\text{val}_{p}(l)}(a)$ and $B=F^{\text{val}_{p}(j)-\text{val}_{p}(l)}(x)\cdot y$;
so, since $\text{val}_{p}(j)<r$ we deduce that $F^{\text{val}_{p}(i)-r}(a)D_{j}(\alpha)D_{l}(\beta)\in pW_{r+1}(A)$
as required. 
\end{proof}
Now we can give the 
\begin{proof}
(of \prettyref{thm:onto-mod-p}) Since the image of $\mathcal{E}W_{X}^{(0)}$
in $\widehat{\mathcal{D}}_{W(X)}^{(0)}/p$ topologically generates
this sheaf of algebras over $\mathcal{O}_{W(X)_{p=0}}$, we see that
there \emph{at most one }morphism $\widehat{\mathcal{D}}_{W(X)}^{(0)}/p\to\mathcal{\widehat{U}}'_{W(X)_{p=0}}$
which is the identity on $\mathcal{O}_{W(X)_{p=0}}$ and which sends
the image of a section $\varphi\in\mathcal{E}W_{X}^{(0)}/p$ to the
derivation $\overline{\varphi}$. It therefore suffices to prove that
such a morphism exists locally, i.e. for $X=\text{Spec}(A)$ possessing
local coordinates.

In that case, invoking \prettyref{thm:Basis}, we see that it suffices
to compute the commutator of any pair of operators of the form $F^{\text{min}\{0,-r\}}(\alpha_{J_{I}})\{\partial\}_{J_{I}/p^{r}}$
and show that it maps to the commutator in $\mathcal{\widehat{U}}'_{W(X)_{p=0}}$,
but this is straightforward.
\end{proof}
Combining this result with \prettyref{cor:Global-bimodule-for-U},
we obtain 
\begin{cor}
\label{cor:Global-bimodule-mod-p}There is a well-defined $(\widehat{\mathcal{D}}_{W(A)}^{(0)}/p,\mathcal{D}_{A}^{(0)})$-bimodule
$\mathcal{B}_{X}^{(0)}$, which for every open affine $U=\text{Spec}(A)$
(admitting local coordinates), and every lift of Frobenius $\Phi$
to $\mathcal{A}$, is isomorphic to $\Phi^{*}\mathcal{D}_{\mathcal{A}}^{(0)}/p$. 
\end{cor}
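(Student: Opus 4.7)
The plan is to combine the two results immediately preceding the statement. Corollary \ref{cor:Global-bimodule-for-U} already provides a sheaf $\mathcal{B}_X^{(0)}$ of $(\widehat{\mathcal{U}}'_{W(X)_{p=0}},\mathcal{D}_A^{(0)})$-bimodules that, on every open affine $U=\mathrm{Spec}(A)$ with local coordinates and for every coordinatized lift of Frobenius $\Phi$, is isomorphic to $\Phi^{*}\mathcal{D}_{\mathcal{A}}^{(0)}/p$. Theorem \ref{thm:onto-mod-p} supplies a surjective, continuous morphism of sheaves of algebras $\widehat{\mathcal{D}}_{W(X)}^{(0)}/p\to\widehat{\mathcal{U}}'_{W(X)_{p=0}}$. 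Therefore, simply restricting scalars along this morphism endows $\mathcal{B}_X^{(0)}$ with the structure of a $(\widehat{\mathcal{D}}_{W(A)}^{(0)}/p,\mathcal{D}_{A}^{(0)})$-bimodule; this produces the required global object, and the only thing left to check is that this pulled-back action recovers, locally, the original left action on $\Phi^{*}\mathcal{D}_{\mathcal{A}}^{(0)}/p$ constructed in \prettyref{prop:Construction-of-bimodule}.

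To do this, I would work on an open affine $U = \mathrm{Spec}(A)$ with local coordinates and a chosen $\Phi$. By \prettyref{thm:Basis}, $\widehat{\mathcal{D}}_{W(A)}^{(0)}/p$ is topologically generated over $\mathcal{O}_{W(A)}/p$ by the operators $\{\partial\}_{J/p^r}$ (and their products with Witt vectors). The key commutation is that both the original action on $\Phi^{*}\mathcal{D}_{\mathcal{A}}^{(0)}/p$ (defined in the proof of \prettyref{prop:Construction-of-bimodule} via $\{\partial_i\}\cdot(1\otimes 1) = 1\otimes\partial_i$) and the pulled-back action through $\widehat{\mathcal{U}}'_{W(A)/p}$ act on the canonical generator $1\otimes 1$ in the same way. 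More precisely, tracing through the construction in \prettyref{prop:Action-of-U-on-bimodule}, a derivation $\partial\in\mathcal{T}'_{W(A)/p}$ acts on $a\otimes P$ by $\partial(a)\otimes P + a\otimes Q(\partial)P + \sum a\epsilon_i\otimes \partial_i P$, and the image of $\{\partial_i\}$ under \prettyref{thm:onto-mod-p} is the derivation of $W(A)/p$ whose restriction to $\Phi(\mathcal{A})$ equals $\partial_i$ and which annihilates the $V^r(T^{I/p^r})$. Both actions therefore agree on the generator, and since both actions commute with the right $\mathcal{D}_A^{(0)}$-action, they agree everywhere by $\mathcal{O}_{W(A)}/p$-linear and topological extension.

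For the globalization, I would use that the isomorphism in \prettyref{cor:Bimodule-for-affine-in-positive-char} of two local bimodules $\Phi_1^{*}\mathcal{D}_A^{(0)}\tilde{\to}\Phi_2^{*}\mathcal{D}_A^{(0)}$ coming from two coordinatized Frobenius lifts is defined by the element $1+\tilde{\iota}_{12}$ of $\widehat{\mathcal{U}}'_{W(A)/p}$; pulling back by the algebra surjection from \prettyref{thm:onto-mod-p}, this isomorphism is simultaneously an isomorphism of $(\widehat{\mathcal{D}}_{W(A)}^{(0)}/p,\mathcal{D}_A^{(0)})$-bimodules, satisfies the cocycle condition, and is therefore compatible with gluing to form a global sheaf. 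Quasi-coherence as a sheaf on $W(X)_{p=0}$ is automatic from the construction, since each local piece is an inverse limit of quasicoherent pieces on the finite level (compare \prettyref{lem:Bimodule-is-a-sheaf}).

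The main obstacle I anticipate is precisely the compatibility check in the second paragraph: one must verify that the action of $\{\partial\}_{J/p^r}$ through the composition $\widehat{\mathcal{D}}_{W(A)}^{(0)}/p\twoheadrightarrow\widehat{\mathcal{U}}'_{W(A)/p}\to \mathrm{End}(\Phi^{*}\mathcal{D}_{\mathcal{A}}^{(0)}/p)$ agrees with the action defined directly in \prettyref{prop:Construction-of-bimodule} through the embedding $\Phi^{*}\widehat{\mathcal{D}}_{\mathcal{A}}^{(m)}\hookrightarrow \mathcal{H}om_{W(k)}(\mathcal{A},W(A))$. Since both are defined by extending from the identical action on $1\otimes 1$ (both send it into $1\otimes \partial^{[J/p^r]}$-type combinations) using Leibniz and the commutation relations, the verification reduces to the uniqueness clause of \prettyref{cor:basis-of-U} together with the computation in \prettyref{prop:TW-mod-p-is-derivations} showing that elementary Witt-differential operators act as derivations preserving the $V$-filtration on $W(A)/p$; I expect no surprises, but some care is needed to match conventions.
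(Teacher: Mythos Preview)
Your approach is essentially identical to the paper's: it too pulls back the $(\widehat{\mathcal{U}}'_{W(X)_{p=0}},\mathcal{D}_A^{(0)})$-bimodule of \prettyref{cor:Global-bimodule-for-U} along the surjection of \prettyref{thm:onto-mod-p}, and reduces everything to checking that the resulting left $\widehat{\mathcal{D}}_{W(A)}^{(0)}/p$-action on $\Phi^{*}\mathcal{D}_{A}^{(0)}$ agrees with the one from \prettyref{prop:Construction-of-bimodule}. The paper dismisses that compatibility as ``a straightforward check in local coordinates,'' whereas you spell out in more detail how the check goes; your extra paragraph on globalization is unnecessary since that is already handled by \prettyref{cor:Global-bimodule-for-U}.
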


\begin{proof}
The only thing to do is to check that the action of $\widehat{\mathcal{D}}_{W(A)}^{(0)}/p$
on $\Phi^{*}\mathcal{D}_{A}^{(0)}$ via the surjection $\widehat{\mathcal{D}}_{W(A)}^{(0)}/p\to\widehat{\mathcal{U}}'_{W(A)/p}$
agrees with the action of $\widehat{\mathcal{D}}_{W(A)}^{(0)}/p$
on $\Phi^{*}\mathcal{D}_{\mathcal{A}}^{(0)}/p$ obtained from \prettyref{prop:Construction-of-bimodule}.
But this is a straightforward check in local coordinates. 
\end{proof}

\subsection{De Rham-Witt connections mod $p^{r}$}

In this subsection we study the de Rham-Witt connections over $W(X)_{p^{r}=0}$.
. To state the main result, we need some notation. We define the category
$\text{MIC}_{W(X)_{p^{r}=0}}$ to be the category of quasicoherent
sheaves $\mathcal{M}$ on $W(X)_{p^{r}=0}$, equipped with an integrable
de Rham-Witt connection 
\[
\nabla:\mathcal{M}\to\mathcal{M}\widehat{\otimes}_{\mathcal{O}_{W(X)/p^{r}}}W\Omega_{X}^{1}/p^{r}
\]
where the $\widehat{?}$ stands for completion with respect to the
filtration 
\[
\{V^{i}(\mathcal{O}_{W(X)}/p^{r})\cdot\mathcal{M}\otimes G^{j}(W\Omega_{X}^{1}/p^{r})\}_{i+j\geq l}
\]
where $G^{j}$ is the standard filtration on the de Rham-Witt complex,
defined by $V^{i}+dV^{i-1}$. We also demand that $\nabla$ is continuous,
in the sense that it is the inverse limit of the connections
\[
\nabla_{j}:\mathcal{M}/V^{j}(\mathcal{O}_{W(X)}/p^{r})\to(\mathcal{M}\otimes_{\mathcal{O}_{W(X)/p^{r}}}W\Omega_{X}^{1}/p^{r})/V^{j}(\mathcal{O}_{W(X)}/p^{r})
\]
The morphisms in this category are the continuous maps of sheaves
which respect the connections. 

In \prettyref{cor:drW-connection-on-M} we constructed a functor $\mathcal{C}$
from $\widehat{\mathcal{D}}_{W(X)}^{(0)}/p^{r}-\text{mod}_{\text{acc,qcoh}}$
to integrable de Rham-Witt connection on $W(X)_{p^{r}=0}$. Passing
to the completions, we obtain a functor $\widehat{\mathcal{C}}$ from
$\widehat{\mathcal{D}}_{W(X)}^{(0)}/p^{r}-\text{mod}_{\text{c-qcoh}}$
to integrable de Rham-Witt connection on $W(X)_{p^{r}=0}$; this functor
clearly takes values in $\text{MIC}_{W(X)_{p^{r}=0}}$. Then the main
result is:
\begin{thm}
\label{thm:Equivalence-mod-p^r}The functor $\widehat{\mathcal{C}}$
from $\widehat{\mathcal{D}}_{W(X)}^{(0)}/p^{r}-\text{mod}_{\text{c-qcoh}}$
to $\text{MIC}_{W(X)_{p^{r}=0}}$ is an equivalence of categories. 
\end{thm}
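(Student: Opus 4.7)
The plan is to first establish the equivalence after reduction mod $p$ via the auxiliary algebra $\widehat{\mathcal{U}}'_{W(X)_{p=0}}$, and then to propagate the result to mod $p^{r}$ by induction on $r$, with the essential input in the inductive step coming from a Bloch-type lemma. For the base case $r=1$, I would observe that contracting a dRW-connection $\nabla$ against a derivation $\partial \in \mathcal{T}'_{W(X)/p}$ produces an endomorphism of $\mathcal{M}$. The Leibniz rule and integrability, combined with the fact that $V(\mathcal{O}_{W(X)}/p)$ is square-zero (so that compositions of elements of $\mathcal{T}'_{W(X)/p}(1)$ are again derivations, cf.\ \prettyref{lem:composition-is-a-derivation}), match precisely the defining relations of $\widehat{\mathcal{U}}'_{W(X)_{p=0}}$; this identifies $\text{MIC}_{W(X)_{p=0}}$ with the category of continuous $\widehat{\mathcal{U}}'_{W(X)_{p=0}}$-modules. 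Via the surjection of \prettyref{thm:onto-mod-p}, such a module is automatically a $\widehat{\mathcal{D}}_{W(X)}^{(0)}/p$-module, and accessibility is checked locally: a Frobenius lift $\Phi$ gives a splitting $A \hookrightarrow W(A)/p$, and using \prettyref{cor:basis-of-U} together with \prettyref{cor:Global-bimodule-mod-p}, the invariants of $\mathcal{M}$ under the derivations $\phi \in \mathcal{T}'_{1}$ produce a $\mathcal{D}_{A}^{(0)}$-module $\mathcal{N}$ with $\mathcal{M} \cong \widehat{\Phi}^{*}\mathcal{N}$.

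For the inductive step, given $(\mathcal{M},\nabla)\in\text{MIC}_{W(X)_{p^{r}=0}}$, I would consider the short exact sequence
\[
0 \to p^{r-1}\mathcal{M} \to \mathcal{M} \to \mathcal{M}/p^{r-1}\mathcal{M} \to 0
\]
in $\text{MIC}_{W(X)_{p^{r}=0}}$, whose outer terms are accessible by induction. Working locally with a Frobenius lift $\Phi$, the task is to show that the extension class, a priori living in $\operatorname{Ext}^{1}_{\text{MIC}}$, comes from an extension class of $\mathcal{D}_{\mathfrak{X}_{r}}^{(0)}$-modules under $\widehat{\Phi}^{*}$, so that $\mathcal{M}\cong\widehat{\Phi}^{*}\mathcal{N}$ for some $\mathcal{N}$. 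This is the point at which Bloch's central lemma (\prettyref{lem:Basic-Bloch-lemma}) enters: it provides the lifting of the mod-$p^{r-1}$ $\mathcal{D}_{\mathfrak{X}_{r-1}}^{(0)}$-module structure to a $\mathcal{D}_{\mathfrak{X}_{r}}^{(0)}$-module structure compatible with the given dRW-connection. Full faithfulness is then straightforward: locally, a morphism of accessible modules corresponds to a morphism of $\mathcal{D}_{\mathfrak{X}_{r}}^{(0)}$-modules under $\widehat{\Phi}^{*}$, and a morphism in $\text{MIC}$ respects $\nabla$ iff it commutes with every derivation in $\mathcal{T}'_{W(X)/p^{r}}$, which after reduction mod $p$ and application of \prettyref{thm:onto-mod-p} is the same data.

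The principal obstacle is the extension step in the inductive argument. There is no formal reason why an extension in $\text{MIC}_{W(X)_{p^{r}=0}}$ of two accessible modules should remain accessible: accessibility is the condition that the module arises, locally, from a $\mathcal{D}_{\mathfrak{X}_{r}}^{(0)}$-module via $\widehat{\Phi}^{*}$, and the obstruction to realizing the given extension in this form lives in a cohomology group that must be shown to be captured by the dRW-complex on $\widehat{\mathcal{D}}_{W(X),c-\text{acc}}^{(0)}/p^{r}$. Bloch's argument handles this by comparing the two obstruction computations explicitly in local coordinates, using the quasi-isomorphism of \prettyref{thm:drW-resolution} to identify the relevant Ext-groups; this is the technical core of the proof and is where all the work lies.
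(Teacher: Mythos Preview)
Your $r=1$ sketch is in the right spirit but imprecise on one point: contracting $\nabla$ against an arbitrary $\partial\in\mathcal{T}'_{W(X)/p}$ is not justified, because $W\Omega_{X}^{1}/p$ is not the K\"ahler module of $\mathcal{O}_{W(X)}/p$ and so derivations of the latter do not dualize to maps out of the former. The paper only obtains an action of the subalgebra $\widehat{\mathcal{U}}'_{W(X)_{p=0}}(1)$, by first passing to $(W\Omega_{X}^{1}/p)/\mathcal{I}$ and invoking Bloch's splitting to identify its continuous dual with $\mathcal{T}'_{W(X)/p}(1)$. From there the paper uses a single idempotent $\pi=1-\delta$ (with $\delta$ the derivation which is zero on $A$ and the identity on $\mathcal{I}$) to split $\mathcal{M}=\mathcal{M}^{\pi}\oplus\mathcal{I}\mathcal{M}^{\pi}$, and Bloch's lemma again to check injectivity of $\widehat{\Phi}^{*}(\mathcal{M}/\mathcal{I})\to\mathcal{M}$.

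The real divergence is at $r>1$. The paper does \emph{not} run an induction along the $p$-adic filtration and therefore never confronts your extension problem at all. Instead it uses the filtration $F^{\bullet}$ of \prettyref{lem:two-filtrations}, for which $(\mathcal{O}_{W(X)}/p^{r})/F^{1}\cong\mathcal{O}_{\mathfrak{X}_{r}}$, and observes that Bloch's lemma lifts directly to a splitting $(W\Omega_{X}^{1}/p^{r})/F^{1}\cong d(F^{1})\oplus\Omega_{\mathfrak{X}_{r}}^{1}$. This lets one construct, for any $\mathcal{M}\in\text{MIC}_{W(X)_{p^{r}=0}}$, a canonical $W_{r}(k)$-linear splitting $\iota:\mathcal{M}/F^{1}\to\mathcal{M}$ characterized by $\tilde{\nabla}(\iota(\bar m))\in(\mathcal{M}/F^{1})\otimes\Omega_{\mathfrak{X}_{r}}^{1}$; the image of $\iota$ then carries a $\mathcal{D}_{\mathfrak{X}_{r}}^{(0)}$-module structure $\mathcal{N}$, and one gets a map $\widehat{\Phi}^{*}\mathcal{N}\to\mathcal{M}$ of $\tilde\nabla$-modules. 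That this map is an isomorphism is checked by applying $\otimes_{W_{r}(k)}^{L}k$ and invoking the $r=1$ case; a separate argument (again using Bloch) shows it respects the full $\nabla$, not just $\tilde\nabla$.

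Your proposed route through $\operatorname{Ext}^{1}$ in $\text{MIC}$ and the de Rham--Witt resolution is therefore not what the paper does, and as you yourself note, it is exactly the step whose details are missing. Invoking \prettyref{thm:drW-resolution} here is circular in flavor: that theorem computes pushforwards of accessible modules via the dRW complex, whereas what you need is a comparison of extension classes \emph{before} knowing $\mathcal{M}$ is accessible. The paper's direct construction via the $F^{1}$-splitting is both simpler and avoids the issue.
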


The result, and the proof, are inspired by \cite{key-24}, where the
case of a vector bundle with connection is handled. We should mention
that he also takes the inverse limit over $r$ to obtain the following
equivalence for locally free crystals on $X$: 
\begin{thm}
(Bloch) There is an equivalence of categories between locally free
crystals over $W(k)$ (on $X$) and vector bundles on $W(X)$ equipped
with integrable de Rham-Witt connections, which are locally nilpotent. 
\end{thm}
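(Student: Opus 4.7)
The plan is to bootstrap from the mod-$p^r$ equivalence Theorem \ref{thm:Equivalence-mod-p^r} together with the embedding of crystals Theorem \ref{thm:Embedding-of-crystals}, then pass to the inverse limit in $r$. A locally free crystal over $W(k)$ is the same data as a compatible system of locally free crystals over $W_r(k)$ for $r\geq 1$, and vector bundles on $W(X)$ with their de Rham--Witt connections should, analogously, be the $p$-adic limits of their reductions on each $W(X)_{p^r=0}$.

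First I would handle each finite level $r$. Starting from a locally free crystal $\mathcal{M}_r$ over $W_r(k)$, Theorem \ref{thm:Embedding-of-crystals} produces an accessible, quasicoherent, locally nilpotent $\widehat{\mathcal{D}}^{(0)}_{W(X)}/p^r$-module $\epsilon(\mathcal{M}_r)$; after $V$-adic completion (using Lemma \ref{lem:two-filtrations} and Proposition \ref{prop:completion}) one lands in the category of complete accessible quasicoherent modules, and applying $\widehat{\mathcal{C}}$ yields an object of $\mathrm{MIC}_{W(X)_{p^r=0}}$. Locally, with a lift $\mathfrak{X}_r$ and a coordinatized Frobenius lift $\Phi$, the crystal is a vector bundle $\mathcal{N}_r$ with integrable nilpotent connection, and the associated object is $\widehat{\Phi}^{*}\mathcal{N}_r$ with its canonical de Rham--Witt connection (Corollary \ref{cor:drW-connection-on-M}); since $\widehat{\Phi}^{*}$ preserves local freeness and the local nilpotence of $\mathcal{N}_r$'s connection transfers to local nilpotence of the de Rham--Witt connection, the output is indeed a vector bundle on $W(X)_{p^r=0}$ with locally nilpotent integrable connection. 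Conversely, starting from such an object, invert $\widehat{\mathcal{C}}$ to land in complete accessible quasicoherent $\widehat{\mathcal{D}}^{(0)}_{W(X)}/p^r$-modules; local nilpotence of the connection forces the underlying module to be locally nilpotent (and the action to extend to $\widehat{\mathcal{D}}^{(0)}_{W(X),\mathrm{crys}}/p^r$), hence to lie in the essential image of $\epsilon$, and local freeness of the crystal is recovered from local freeness of the vector bundle by the faithful flatness of $\widehat{\Phi}^{*}$.

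Second, I would assemble these level-$r$ equivalences into the desired statement by inverse limit. A locally free crystal over $W(k)$ yields a compatible system of vector bundles $\{\mathcal{V}_r\}$ with compatible locally nilpotent de Rham--Witt connections on the $W(X)_{p^r=0}$, and $\mathcal{V}:=\varprojlim_r \mathcal{V}_r$ is a vector bundle on $W(X)$ (surjectivity of the transition maps gives the required Mittag--Leffler condition, and local freeness is inherited from the local description $\mathcal{V}_r \cong \widehat{\Phi}^{*}\mathcal{N}_r$) carrying an integrable locally nilpotent de Rham--Witt connection. The opposite direction is handled symmetrically: reduce mod $p^r$, apply the level-$r$ correspondence to obtain a compatible system of locally free crystals over $W_r(k)$, and take their limit.

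The main obstacle will be making precise the notion of vector bundle on the formal scheme $W(X)$ (whose topology, as noted in the conventions, is defined by ideals $V^i(\mathcal{O}_{W(X)})$ none of which are locally finitely generated) and verifying that local freeness and local nilpotence interact well with the two completions at play---the $V$-adic completion used in defining $\widehat{\mathcal{C}}$ at each level, and the $p$-adic completion used to pass from the $W(X)_{p^r=0}$ to $W(X)$. A secondary technical point is to verify that the level-$r$ equivalences are compatible with the transition maps $W(X)_{p^{r+1}=0}\to W(X)_{p^r=0}$, so that the inverse-limit functors are genuinely quasi-inverse equivalences; this should follow from naturality of $\widehat{\mathcal{C}}$ and $\epsilon$, but requires bookkeeping.
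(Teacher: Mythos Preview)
The paper does not give its own proof of this statement: it is recorded as a theorem of Bloch and attributed to \cite{key-24}. The surrounding text explains that Bloch's argument handles the vector-bundle case at each finite level and then takes the inverse limit over $r$; the paper's Theorem \ref{thm:Equivalence-mod-p^r} is its own (more general, quasicoherent) analogue at each fixed $r$, and immediately after stating Bloch's theorem the paper explicitly says ``One can take the inverse limit over $r$ in \prettyref{thm:Equivalence-mod-p^r}, but I don't know any reasonable characterization of the image.'' So there is nothing in the paper to compare your argument against; the theorem is simply cited.

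That said, your plan is broadly the shape of Bloch's own proof as the paper summarizes it: establish the correspondence mod $p^r$ for each $r$ and take a limit. The main substantive difference is that you route the level-$r$ correspondence through the paper's accessible $\widehat{\mathcal{D}}^{(0)}_{W(X)}/p^r$-module machinery (via $\epsilon$ and $\widehat{\mathcal{C}}$), whereas Bloch in \cite{key-24} works directly with vector bundles carrying de Rham--Witt connections, using (what the paper records as) Lemma \ref{lem:Basic-Bloch-lemma} to split off the ``$\mathcal{I}$-part'' and descend. Your detour is harmless in principle, but it does not buy anything here: the accessible-module formalism is designed to handle the general quasicoherent case, and it is precisely in that generality that the inverse-limit step becomes opaque (as the paper notes). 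Restricting to locally free objects is exactly what makes the limit tractable, and that restriction is already available to Bloch's direct argument. The obstacles you flag (making ``vector bundle on $W(X)$'' precise, checking compatibility of the level-$r$ equivalences with transition maps, and controlling the interaction of local freeness with the $V$-adic and $p$-adic completions) are real and are where the work in \cite{key-24} lies; your outline does not yet address them beyond naming them.
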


One can take the inverse limit over $r$ in \prettyref{thm:Equivalence-mod-p^r},
but I don't know any reasonable characterization of the image.

As in Bloch's paper, we start in the case $r=1$. Let us assume this
until further notice. We shall also suppose, to start with, that $X=\text{Spec}(A)$
for a smooth $A$ which admits local coordinates; as usual we fix
also a lift $\Phi:\mathcal{A}\to\mathcal{A}$ of Frobenius. Therefore
any element of $\widehat{\mathcal{D}}_{W(X)}^{(0)}/p^{r}-\text{mod}_{\text{qcoh}}$
may be expressed uniquely as $\widehat{\Phi}^{*}\mathcal{M}$ for
a quasicoherent $\mathcal{D}_{X}^{(0)}$-module $\mathcal{M}$. As
above, we let $\mathcal{I}=\mathcal{I}_{1}:=\text{ker}(W(A)/p\to A)$. 

Then the full faithfulness of the functor is straightforward to check,
it follows directly from the fact that $(\widehat{\Phi}^{*}\mathcal{M})/\mathcal{I}=\mathcal{M}$
(as flat connections). To show the essential surjectivity, we start
by recalling the following lemma of Bloch (\cite{key-24}, lemma 3.2): 
\begin{lem}
\label{lem:Basic-Bloch-lemma}Suppose $A$ is a smooth $k$-algebra
which admits local coordinates. Let $d:W(A)/p\to(W\Omega_{A}^{1}/p)/\mathcal{I}$
denote the map induced by the differential. Then there is an isomorphism
of $A$-modules
\[
(W\Omega_{A}^{1}/p)/\mathcal{I}\tilde{=}d(\mathcal{I})\oplus\Omega_{A}^{1}
\]
and $d:\mathcal{I}\to d(\mathcal{I})$ is an isomorphism of $A$-modules.
Further, the induced filtration on $d(\mathcal{I})$ (coming from
$V^{i}(W\Omega_{A}^{1}/p)$) coincides with the $V$-adic filtration
on $\mathcal{I}$. The analogue
\[
(W\Omega_{X}^{1}/p)/\mathcal{I}\tilde{=}d(\mathcal{I})\oplus\Omega_{X}^{1}
\]
for the sheaf $(W\Omega_{X}^{1}/p)/\mathcal{I}$ also holds.
\end{lem}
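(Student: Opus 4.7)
The plan is to reduce to the affine case and work with explicit generators. Assume $X = \operatorname{Spec}(A)$ with local coordinates $T_1,\dots,T_n$ and fix a coordinatized Frobenius lift $\Phi:\mathcal{A}\to W(A)$, so that $A \hookrightarrow W(A)/p$ splits the surjection $W(A)/p \to A$ and $\mathcal{I}$ becomes an $A$-module complement. The $A$-module structure on everything in sight (including $d(\mathcal{I})$, which makes sense because $d(\Phi(a)x)\equiv\Phi(a)dx\pmod{\mathcal{I}}$ whenever $x\in\mathcal{I}$) comes from this splitting.

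The heart of the argument is an explicit normal form for $W\Omega_A^1/p$. First I would use Illusie's standard description together with the de Rham--Witt relations ($Vd=pdV$, $FdV=d$, $dF=pFd$, Leibniz) to show that, modulo $p$, every element of $W\Omega_A^1/p$ can be written uniquely as a convergent sum
\[
\omega_0 \;+\; \sum_{(i,I)} \Phi(a_{i,I})\, d\bigl(V^i(T^I)\bigr) \;+\; \sum_{(j,J)} V^j(T^J)\cdot \eta_{j,J},
\]
where $\omega_0 \in d\Phi(\mathcal{A}) \cdot \Phi(\mathcal{A}) \cong \Omega_A^1$, the indices $(I,p)=1$ and $i\geq 1$, and $\eta_{j,J} \in W\Omega_A^1/p$; the third sum lies in $\mathcal{I}\cdot (W\Omega_A^1/p)$. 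The uniqueness of this presentation is the crucial input: it follows from the fact that the analogous uniqueness statement holds for $W\Omega_A^\cdot$ itself (Illusie, \cite{key-6}, I.2) and is preserved by reduction mod $p$ since $W\Omega_A^1$ is $p$-torsion free in the relevant weight pieces.

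Granting this normal form, the lemma is a direct readout. Modulo $\mathcal{I}$ the third sum vanishes, so the image of $W\Omega_A^1/p\to(W\Omega_A^1/p)/\mathcal{I}$ splits as $\Omega_A^1\oplus d(\mathcal{I})$, with the $\Omega_A^1$ summand being the image of $d\Phi(\mathcal{A})$ and the $d(\mathcal{I})$ summand the $A$-span of $\{d(V^i(T^I))\}$. Injectivity of $d\colon\mathcal{I}\to d(\mathcal{I})$ is immediate: $\mathcal{I}$ is topologically generated over $A$ by the $V^i(T^I)$ with $(I,p)=1$ and $i\geq 1$, and the uniqueness in the normal form tells us these map to linearly independent elements of $(W\Omega_A^1/p)/\mathcal{I}$. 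The filtration statement is equally transparent: under the normal form, both the $V$-adic filtration on $\mathcal{I}$ and the induced filtration from $V^j(W\Omega_A^1/p)$ on $d(\mathcal{I})$ are given by the minimum weight $i$ appearing in a sum $\sum \Phi(a_{i,I})d(V^i(T^I))$, so $d$ is a filtered isomorphism.

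The main obstacle is the normal-form computation: one must carefully track the de Rham--Witt relations after reduction mod $p$, where pathologies like $p\cdot dV = Vd$ collapse and one loses some rigidity. Concretely, I would verify the uniqueness by pairing against a judiciously chosen family of $A$-linear functionals on $W\Omega_A^1/p$ (or alternatively by a direct filtered argument using $W_r\Omega_A^1/p$ and induction on $r$, where each graded piece is explicitly computed from the standard description of $\operatorname{gr}^{V}W\Omega_A^\cdot$). Once the affine statement is established, sheafification is automatic: the isomorphism $(W\Omega_X^1/p)/\mathcal{I}\cong d(\mathcal{I})\oplus\Omega_X^1$ is intrinsic (independent of the chosen $\Phi$) because both summands have coordinate-free descriptions, and the construction commutes with etale localization, so the local splittings glue to the desired global statement.
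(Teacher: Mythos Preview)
The paper does not give its own proof of this lemma; it simply cites Bloch \cite{key-24}, Lemma 3.2, and uses the result as a black box. So there is no ``paper's proof'' to compare against in the strict sense.

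Your approach via an explicit normal form for $W\Omega_A^1/p$ is the natural one, and is essentially how Bloch argues (he works with Illusie's basic Witt differentials for a polynomial ring and then passes to the \'etale case). A couple of remarks. First, your claim that ``$W\Omega_A^1$ is $p$-torsion free in the relevant weight pieces'' is a bit loose: the de Rham--Witt complex does have $p$-torsion in general, so the uniqueness of the normal form after reduction mod $p$ should be justified directly from Illusie's explicit basis for $W_r\Omega^1$ of a polynomial ring (\cite{key-6}, I.2), not by a torsion-freeness argument. Second, your final sentence about sheafification slightly overstates the case: the direct sum \emph{splitting} does depend on the chosen $\Phi$, but what is canonical is the short exact sequence $0\to d(\mathcal{I})\to (W\Omega_X^1/p)/\mathcal{I}\to\Omega_X^1\to 0$, and the affine computation shows it splits locally, which is all the lemma asserts. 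Neither point is a genuine gap; the outline is sound.
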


This implies 
\begin{prop}
\label{prop:T-Omega-Dual}Let $\tilde{\text{Hom}}{}_{W(A)/p}((W\Omega_{A}^{1}/p)/\mathcal{I},W(A)/p)$
denote the set of $W(A)/p$-module homomorphisms which preserve the
$V$-adic filtrations on both sides. Then 
\[
\tilde{\text{Hom}}{}_{W(A)/p}((W\Omega_{A}^{1}/p)/\mathcal{I},W(A)/p)\tilde{=}\mathcal{T}'_{W(A)/p}(1)
\]
Similarly let $\tilde{\mathcal{H}om}_{\mathcal{O}_{W(X)/p}}((W\Omega_{X}^{1}/p)/\mathcal{I},\mathcal{O}_{W(X)/p})$
denote the sheaf of $\mathcal{O}_{W(X)/p}$-module homomorphisms which
preserve the $V$-adic filtrations on both sides. Then 
\[
\tilde{\mathcal{H}om}_{\mathcal{O}_{W(X)/p}}((W\Omega_{X}^{1}/p)/\mathcal{I},\mathcal{O}_{W(X)/p})\tilde{=}\mathcal{T}'_{W(X)/p}(1)
\]
\end{prop}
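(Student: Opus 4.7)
The plan is to construct the isomorphism directly, exploiting Bloch's decomposition from the preceding lemma together with the observation that $(W\Omega_{A}^{1}/p)/\mathcal{I}$ is an $A$-module. First, I would define the forward map $\Psi:\tilde{\text{Hom}}_{W(A)/p}((W\Omega_{A}^{1}/p)/\mathcal{I},W(A)/p)\to\mathcal{T}'_{W(A)/p}(1)$ by $\phi\mapsto\phi\circ\pi\circ d$, where $\pi:W\Omega_{A}^{1}/p\to(W\Omega_{A}^{1}/p)/\mathcal{I}$ is the projection. This is manifestly a $k$-linear derivation. To verify its image lies in $\mathcal{I}$, note that $\mathcal{I}^{2}=0$ in $W(A)/p$ (since $V(x)V(y)=pV(xy)\equiv 0$), so $\mathcal{I}\subseteq\text{Ann}_{W(A)/p}(\mathcal{I})$; conversely, using a splitting $\iota:A\hookrightarrow W(A)/p$ arising from a chosen coordinatized lift of Frobenius and the fact that $A$ is reduced, one checks $\text{Ann}_{W(A)/p}(\mathcal{I})=\mathcal{I}$. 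Since $(W\Omega_{A}^{1}/p)/\mathcal{I}$ is $\mathcal{I}$-annihilated, the image of $\phi$ lies in $\mathcal{I}$. Filtration preservation of $\Psi(\phi)$ is then immediate from $d(\mathcal{I}_{j})\subset G^{j}(W\Omega_{A}^{1}/p)$ combined with the filtration-preserving property of $\phi$.

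For the inverse construction, given $\partial\in\mathcal{T}'_{W(A)/p}(1)$, the restriction $\alpha:=\partial|_{\mathcal{I}}:\mathcal{I}\to\mathcal{I}$ is automatically $A$-linear and filtration-preserving (since $\partial(ai)=a\partial(i)+i\partial(a)=a\partial(i)$ for $a\in A$, $i\in\mathcal{I}$, using $\mathcal{I}^{2}=0$), while $\delta:=\partial\circ\iota:A\to\mathcal{I}$ is a $k$-derivation. Using Bloch's decomposition $(W\Omega_{A}^{1}/p)/\mathcal{I}\cong d(\mathcal{I})\oplus\Omega_{A}^{1}$, I would build the inverse $\phi$ by defining it as $\alpha\circ d^{-1}$ on the first summand (well-defined since $d:\mathcal{I}\to d(\mathcal{I})$ is an isomorphism) and as the $A$-linear extension of $\delta$ on the second. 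Since Bloch's lemma matches the induced filtration on $d(\mathcal{I})$ with the $V$-adic filtration on $\mathcal{I}$, filtration-preservation of $\phi$ is equivalent to that of $\alpha$; and $\phi$ takes values in $\mathcal{I}$, on which the $A$- and $W(A)/p$-module structures agree, so $\phi$ is automatically $W(A)/p$-linear.

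That the two constructions are mutually inverse reduces to checking $\Psi(\phi)=\partial$: under Bloch's decomposition, $\pi d(\iota(a))$ lies in the $\Omega_{A}^{1}$ summand while $\pi d(i)=d(i)\in d(\mathcal{I})$ for $i\in\mathcal{I}$, so $\Psi(\phi)(\iota(a)+i)=\delta(a)+\alpha(i)=\partial(\iota(a)+i)$. Conversely, any filtration-preserving $\phi$ with $\Psi(\phi)=\partial$ is forced to restrict to $\alpha\circ d^{-1}$ on $d(\mathcal{I})$ and to $\delta$ on $\Omega_{A}^{1}$, yielding uniqueness and incidentally showing the resulting bijection is independent of the auxiliary choice of $\iota$.

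The sheafified statement follows by carrying out this construction Zariski-locally on $X$, using an affine cover by opens with local coordinates and local Frobenius lifts; the local isomorphisms glue canonically because the forward map $\Psi$ was defined without reference to any splitting. The main technical obstacle is the careful bookkeeping with the two filtrations, specifically verifying that Bloch's identification of the $V$-filtration on $d(\mathcal{I})$ with the $V$-adic filtration on $\mathcal{I}$ really does translate the filtration-preservation condition on $\phi$ into the corresponding one for $\partial$; but this is a direct consequence of the preceding lemma.
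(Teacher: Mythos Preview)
Your proposal is correct and follows essentially the same approach as the paper: define the map via precomposition with $d$, observe the image lands in $\mathcal{I}$ because the source is $\mathcal{I}$-annihilated, and then use Bloch's decomposition $(W\Omega_{A}^{1}/p)/\mathcal{I}\cong d(\mathcal{I})\oplus\Omega_{A}^{1}$ together with the decomposition of $\mathcal{T}'_{W(A)/p}(1)$ from \prettyref{lem:local-descrip-of-T'} to identify both sides with $\text{Hom}_{A}(\Omega_{A}^{1},\mathcal{I})\oplus\tilde{\text{Hom}}_{A}(\mathcal{I},\mathcal{I})$. The only cosmetic difference is that you spell out the inverse explicitly and recheck the compatibility, whereas the paper simply cites \prettyref{lem:local-descrip-of-T'} to identify the target side and leaves the matching implicit.
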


\begin{proof}
By precomposing with $d:W(A)/p\to(W\Omega_{A}^{1}/p)/\mathcal{I}$
we obtain a map
\[
\tilde{\text{Hom }}((W\Omega_{A}^{1}/p)/\mathcal{I},W(A)/p)\to\text{Der}_{W(A)/p}
\]
and the fact that each element of the left hand side preserves the
relevant filtrations implies that the image lives in $\mathcal{T}'_{W(A)/p}$.
As $\mathcal{I}$ annihilates $(W\Omega_{A}^{1}/p)/\mathcal{I}$,
the image of any map $\phi\in\text{Hom}_{W(A)/p}((W\Omega_{A}^{1}/p)/\mathcal{I},W(A)/p)$
must be contained in $\mathcal{I}$. Using the above lemma, we see
that
\[
\tilde{\text{Hom }}((W\Omega_{A}^{1}/p)/\mathcal{I},W(A)/p)\tilde{=}
\]
\[
\text{Hom}_{A}(\Omega_{A}^{1},\mathcal{I})\oplus\tilde{\text{Hom}}_{A}(\mathcal{I},\mathcal{I})
\]
where $\tilde{\text{Hom}}_{A}(\mathcal{I},\mathcal{I})$ consists
of $A$-linear endomorphisms of $\mathcal{I}$ which preserve the
$V$-adic filtration. Thus the result follows from \prettyref{lem:local-descrip-of-T'}.
The sheaf variant follows directly.
\end{proof}
If $\mathcal{M}$ is any sheaf of $\mathcal{O}_{W(X)}/p$ modules
with an integrable connection, denote by $\tilde{\nabla}$ the induced
map 
\[
\mathcal{M}\xrightarrow{\nabla\otimes1}\mathcal{M}\widehat{\otimes}_{\mathcal{O}_{W(X)/p}}W\Omega_{X}^{1}/p\to\mathcal{M}\widehat{\otimes}_{\mathcal{O}_{W(X)/p}}(W\Omega_{X}^{1}/p)/\mathcal{I}
\]
Therefore 
\begin{cor}
Let $\mathcal{M}\in\text{MIC}_{W(X)_{p=0}}$. Then $\mathcal{M}$
inherits the structure of a module over $\mathcal{\widehat{U}}'_{W(X)_{p=0}}(1)$,
the sub-sheaf of algebras of $\mathcal{\widehat{U}}'_{W(X)_{p=0}}$
topologically generated by $\mathcal{T}'_{W(X)/p}(1)$. 
\end{cor}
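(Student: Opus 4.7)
For $\partial \in \mathcal{T}'_{W(X)/p}(1)$, the preceding proposition supplies an $\mathcal{O}_{W(X)/p}$-linear map $\phi_\partial : (W\Omega_X^1/p)/\mathcal{I} \to \mathcal{O}_{W(X)/p}$, and (as noted in that proof) the image of $\phi_\partial$ lies in $\mathcal{I}$. I would define the action of $\partial$ on $\mathcal{M}$ by
\[
D(\partial)(m) := (\mathrm{id}_{\mathcal{M}} \otimes \phi_\partial)\bigl(\tilde\nabla(m)\bigr) \in \mathcal{I}\mathcal{M}.
\]
Since $\mathcal{T}'(1)$ topologically generates $\widehat{\mathcal{U}}'_{W(X)_{p=0}}(1)$, it suffices to verify the defining relations of the subalgebra $\mathcal{U}'_{W(X)_{p=0}}(1)$ that involve only $\mathcal{T}'(1)$, namely Leibniz, the bracket-commutator relation, and the multiplicative relation $\partial_1 \cdot \partial_2 = \partial_1 \circ \partial_2$ for $\partial_1,\partial_2 \in \mathcal{T}'(1)$; continuity with respect to the $V$-adic filtration (immediate from the continuity of $\nabla$) then passes everything to the completion.

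The Leibniz rule $D(\partial)(am) = \partial(a)m + aD(\partial)m$ is a direct consequence of the Leibniz rule for $\nabla$ and the identification $\phi_\partial \circ d = \partial$ underlying the preceding proposition. The core step is the multiplicative relation. Writing $\tilde\nabla(m) = \sum_j m_j \otimes \omega_j$, we have $D(\partial_2)(m) = \sum_j \phi_{\partial_2}(\omega_j)\, m_j$ with all coefficients $\phi_{\partial_2}(\omega_j) \in \mathcal{I}$. Applying $D(\partial_1)$ and using the Leibniz rule just established,
\[
D(\partial_1)D(\partial_2)(m) = \sum_j \partial_1\bigl(\phi_{\partial_2}(\omega_j)\bigr)\, m_j + \sum_j \phi_{\partial_2}(\omega_j)\, D(\partial_1)(m_j),
\]
and the second sum vanishes, each term lying in $\mathcal{I} \cdot \mathcal{I}\mathcal{M} = \mathcal{I}^2\mathcal{M} = 0$. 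So the multiplicative relation $D(\partial_1)D(\partial_2) = D(\partial_1 \circ \partial_2)$ reduces to the pointwise identity
\[
\partial_1\bigl(\phi_{\partial_2}(\omega)\bigr) = \phi_{\partial_1 \circ \partial_2}(\omega), \qquad \omega \in (W\Omega_X^1/p)/\mathcal{I}.
\]
By Bloch's decomposition $(W\Omega_X^1/p)/\mathcal{I} \cong d(\mathcal{I}) \oplus \Omega_X^1$, this module is generated as an $\mathcal{O}_X$-module by exact differentials, so it suffices to check the identity on $\omega = a\,d\beta$ with $a \in \mathcal{O}_X$ and $\beta \in \mathcal{O}_{W(X)/p}$. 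There it becomes
\[
\partial_1\bigl(a\,\partial_2(\beta)\bigr) = a\,(\partial_1 \circ \partial_2)(\beta),
\]
which follows from the Leibniz rule for $\partial_1$ once one observes that the cross term $\partial_1(a)\,\partial_2(\beta)$ lies in $\mathcal{I}^2 = 0$. The bracket-commutator relation $[D(\partial_1),D(\partial_2)] = D([\partial_1,\partial_2])$ is then an immediate corollary via the analogous identity $[\partial_1,\partial_2] = \partial_1 \circ \partial_2 - \partial_2 \circ \partial_1$ in $\mathcal{T}'(1)$.

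The main (and really only) delicate point is the multiplicative relation in the previous paragraph, but it is not actually hard: the square-zero nature of $\mathcal{I}$ annihilates every unwanted cross term, reducing what might appear to require integrability of $\nabla$ to a purely algebraic check on exact differentials. Integrability of $\nabla$ in fact plays no role in this corollary; it will instead enter in the subsequent construction of the inverse equivalence toward \prettyref{thm:Equivalence-mod-p^r}, where the higher de Rham--Witt forms must be matched with the higher commutators in $\widehat{\mathcal{D}}^{(0)}_{W(X)}/p$ via the surjection of \prettyref{thm:onto-mod-p}.
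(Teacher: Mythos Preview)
Your construction of the action via $D(\partial)(m) = (\mathrm{id}_{\mathcal{M}} \otimes \phi_\partial)(\tilde\nabla(m))$ is exactly the paper's construction, just written out pointwise rather than as the composite $\mathcal{M}\otimes\mathcal{T}'(1)\xrightarrow{\tilde\nabla\otimes 1}\mathcal{M}\otimes(W\Omega^1_X/p)/\mathcal{I}\otimes\mathcal{T}'(1)\xrightarrow{1\otimes\mathrm{ev}}\mathcal{M}$. Where you diverge is in the justification that this action extends to $\widehat{\mathcal{U}}'_{W(X)_{p=0}}(1)$: the paper simply writes ``From the flatness of the connection, one sees that this extends\dots'', whereas you verify the defining relation $\partial_1\cdot\partial_2=\partial_1\circ\partial_2$ directly and observe that, thanks to $\mathcal{I}^2=0$, all cross terms die and integrability is never invoked. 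Your computation is correct: the identity $\partial_1(\phi_{\partial_2}(\omega))=\phi_{\partial_1\circ\partial_2}(\omega)$ on exact generators $\omega=a\,d\beta$ really does follow from Leibniz plus $\mathcal{I}^2=0$, and the bracket relation is then a formal consequence. So your argument is sharper on this point---the paper's appeal to flatness appears to be reflexive rather than strictly necessary for this corollary. That said, your closing speculation about where integrability enters later (matching higher de Rham--Witt forms to higher commutators via \prettyref{thm:onto-mod-p}) does not match the paper's actual argument for \prettyref{thm:Equivalence-mod-p^r}, which proceeds instead via the idempotent $\pi=1-\delta$ and the splitting $\mathcal{M}=\mathcal{M}^\pi\oplus\mathcal{I}\mathcal{M}^\pi$; you may want to drop or rephrase that remark.
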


\begin{proof}
By the previous result there is an evaluation map 
\[
(W\Omega_{X}^{1}/p)/\mathcal{I}\otimes_{\mathcal{O}_{W(X)/p}}\mathcal{T}'_{W(X)/p}(1)\xrightarrow{\text{ev}}\mathcal{O}_{W(X)/p}
\]
So, we obtain a map 
\[
\mathcal{M}\otimes_{\mathcal{O}_{W(X)/p}}\mathcal{T}'_{W(X)/p}(1)\xrightarrow{\tilde{\nabla}\otimes1}\mathcal{M}\otimes_{\mathcal{O}_{W(X)/p}}(W\Omega_{X}^{1}/p)/\mathcal{I}\otimes_{\mathcal{O}_{W(X)/p}}\mathcal{T}'_{W(X)/p}(1)
\]
\[
\xrightarrow{1\otimes\text{ev}}\mathcal{M}
\]
This defines a continuous action of $\mathcal{T}'_{W(X)/p}(1)\subset\mathcal{\widehat{U}}'_{W(X)_{p=0}}(1)$
on $\mathcal{M}$. From the flatness of the connection, one sees that
this extends to an associated action of $\mathcal{\widehat{U}}'_{W(X)_{p=0}}(1)$
on $\mathcal{M}$ as required. 
\end{proof}
This result already yields some powerful restrictions on the structure
of $\mathcal{M}$. Namely, we have 
\begin{prop}
\label{prop:Splitting-over-nabla-tilde}Regarding $\mathcal{M}$ as
a sheaf of $\mathcal{O}_{X}$ modules via $\Phi:\mathcal{O}_{X}\to\mathcal{O}_{W(X)}/p$,
there is a splitting of $\mathcal{O}_{X}$ modules 
\[
\mathcal{M}=\mathcal{M}^{\pi}\oplus\widehat{\mathcal{I}\mathcal{M}^{\pi}}
\]
where $\mathcal{M}^{\pi}$ is a quasicoherent $\mathcal{O}_{X}$-submodule
of $\mathcal{M}$ satisfying $\mathcal{M}^{\pi}\tilde{\to}\mathcal{M}/\mathcal{I}$;
and $\widehat{\mathcal{I}\mathcal{M}^{\pi}}$ is the completion of
$\mathcal{I}\mathcal{M}^{\pi}$ along the filtration $V^{i}(\mathcal{I})\cdot\mathcal{M}^{\pi}$.
In fact there is an isomorphism of $\mathcal{O}_{W(X)}$-modules $\mathcal{M}\tilde{\to}\widehat{\Phi}^{*}\mathcal{M}^{\pi}$.
Furthermore, under the isomorphism 
\[
(W\Omega_{X}^{1}/p)/\mathcal{I}\tilde{=}d(\mathcal{I})\oplus\Omega_{X}^{1}
\]
 the connection $\tilde{\nabla}$ takes $\mathcal{M}^{\pi}$ to $\mathcal{M}^{\pi}\otimes_{\mathcal{O}_{X}}\Omega_{X}^{1}$;
the action of $\tilde{\nabla}$ on $\mathcal{M}^{\pi}$ is simply
the quotient connection on $\mathcal{M}/\widehat{\mathcal{I}\mathcal{M}}$. 
\end{prop}

\begin{proof}
Inside $\mathcal{\widehat{U}}'_{W(X)_{p=0}}(1)$ we have the derivation
$\delta$ which is zero on $A$ and the identity on $\mathcal{I}$;
and therefore also the element $\pi=1-\delta$. This is an idempotent
in the global sections of $\mathcal{\widehat{U}}'_{W(X)_{p=0}}(1)$,
and, as $\pi$ annihilates $\mathcal{I}$ it follows that there is
a splitting
\[
\mathcal{M}=\mathcal{M}^{\pi}\oplus\mathcal{M}^{\delta}
\]
Let us check that $\widehat{\mathcal{I}\mathcal{M}}=\widehat{\mathcal{I}\mathcal{M}^{\pi}}=\mathcal{M}^{\delta}$.
Consider an element ${\displaystyle \sum_{i}a_{i}m_{i}}\in\widehat{\mathcal{I}\mathcal{M}}$
where $a_{i}\in\mathcal{I}$ converge to zero as $i\to\infty$. Then
\[
\tilde{\nabla}(\sum_{i}a_{i}m_{i})=\sum_{i}\tilde{\nabla}(a_{i})\otimes m_{i}\in(W\Omega_{X}^{1}/p)/\mathcal{I}\widehat{\otimes}_{\mathcal{O}_{W(X)/p}}\mathcal{M}
\]
Abusing notation slightly, let $\delta:(W\Omega_{X}^{1}/p)/\mathcal{I}\to\mathcal{O}_{W(X)}/p$
denote the map associated to $\delta$ via \prettyref{prop:T-Omega-Dual}.
Then 
\[
\delta(\sum_{i}a_{i}m_{i})=\sum_{i}\delta(\tilde{\nabla}(a_{i}))\cdot m_{i}=\sum_{i}a_{i}m_{i}
\]
since $\delta$ is the identity on $\mathcal{I}$ by definition. Therefore
$\widehat{\mathcal{I}\mathcal{M}}\subset\mathcal{M}^{\delta}$. For
the other direction, we have the isomorphism 
\[
(W\Omega_{X}^{1}/p)/\mathcal{I}\tilde{=}d(\mathcal{I})\oplus\Omega_{X}^{1}
\]
as $\mathcal{O}_{X}$-modules. Therefore we can write
\[
\tilde{\nabla}(m)=\sum_{i}da_{i}\otimes m_{i}+\sum_{j=1}^{n}dT_{j}\otimes m_{j}
\]
and so $m=\delta m$ implies ${\displaystyle m=\sum_{i}a_{i}m_{i}}$,
i.e., $\widehat{\mathcal{I}\mathcal{M}}=\mathcal{M}^{\delta}$, and
since $\mathcal{I}$ is square $0$ we see that $\widehat{\mathcal{I}\mathcal{M}}=\widehat{\mathcal{I}\mathcal{M}^{\pi}}$.
By the completeness of $\mathcal{M}$ with respect to $V^{i}(\mathcal{I})\cdot\mathcal{M}$,
this yields a morphism of $\mathcal{O}_{W(X)}$-modules 
\begin{equation}
\widehat{\Phi}^{*}\mathcal{M}^{\pi}\to\mathcal{M}\label{eq:structure}
\end{equation}
which is clearly surjective. To see that it is injective, consider
a term of the form 
\[
\sum_{r,I}p^{r}T^{I/p^{r}}\otimes m_{r,I}\in\widehat{\Phi}^{*}\mathcal{M}^{\pi}
\]
where $r\in\mathbb{N}$ and $I$ is a multi-index with at least one
term not divisible by $p$. Then
\[
\tilde{\nabla}(\sum_{r,I}p^{r}T^{I/p^{r}}\cdot m_{r,I})=\sum_{r,I}d(p^{r}T^{I/p^{r}})\otimes m_{r,I}\in d(\mathcal{I})\widehat{\otimes}_{\mathcal{O}_{X}}\mathcal{M}^{\pi}
\]
(here the $\widehat{\otimes}$ refers to completion with respect to
the natural filtration on $d(\mathcal{I})$). However, this term does
not vanish unless all of the $m_{r,I}$ do by \prettyref{lem:Basic-Bloch-lemma}.
So the map \prettyref{eq:structure} is an isomorphism. Finally, if
we have $m\in\mathcal{M}^{\pi}$, then $\delta\cdot m=0$ and so,
by writing out the connection as above, we see that $\tilde{\nabla}(m)\in\mathcal{M}^{\pi}\otimes_{\mathcal{O}_{X}}\Omega_{X}^{1}$
and that this connection agrees with the quotient connection. 
\end{proof}
This proposition is very close to proving the case $r=1$ of \prettyref{thm:Equivalence-mod-p^r}.
To finish the argument, we need one additional fact: 
\begin{lem}
\label{lem:required-flatness}Consider the map 
\[
\overline{d}:\widehat{\mathcal{I}\cdot(W\Omega_{X}^{1}/p)}\to W\Omega_{X}^{2}/p\to(W\Omega_{X}^{2}/p)/\widehat{(\mathcal{I}\cdot W\Omega_{X}^{2}/p)}
\]
then $\overline{d}$ is an injective $\mathcal{O}_{X}$-linear (via
$\Phi:\mathcal{O}_{X}\to\mathcal{O}_{W(X)}/p$) morphism, whose cokernel
is flat over $\mathcal{O}_{X}$ (in fact, the cokernel is an inverse
limit of a surjective system of locally free sheaves). 
\end{lem}

\begin{proof}
As everything in sight is etale local, we can assume that $X=\mathbb{A}_{k}^{n}$.
The injectivity of the map $\overline{d}$ then follows from \cite{key-24},
lemma 2.3. For the flatness statement we argue as in the proof of
\prettyref{lem:Flatness-of-Omega-i}. Indeed, by the Mittag-Leffler
condition we see that the cokernel of $\overline{d}$ is the inverse
limit of the cokernels of 
\[
\overline{d}_{r}:\mathcal{I}\cdot(W_{r}\Omega_{X}^{1}/p)\to(W_{r}\Omega_{X}^{2}/p)/\mathcal{I}\cdot W_{r}\Omega_{X}^{2}/p
\]
But by the universality of the constructions involved, one sees that
$\text{coker}(\overline{d}_{r})$ is invariant under any automorphism
of $\mathbb{A}_{k}^{n}$. Therefore $\text{coker}(\overline{d})$
is an inverse limit of a surjective system of free modules, and is
therefore flat as claimed. 
\end{proof}
Now we can proceed to the 
\begin{proof}
(of \prettyref{thm:Equivalence-mod-p^r}, when $r=1$). Assume for
now that $X$ is affine and admits local coordinates. To prove the
essential surjectivity of $\widehat{\Phi}^{*}$, we shall check that,
for an object $\mathcal{M}\in\text{MIC}_{W(X)_{p^{r}=0}}$, the isomorphism
\[
\widehat{\Phi}^{*}\mathcal{M}^{\pi}\tilde{\to}\mathcal{M}
\]
constructed above in \prettyref{prop:Splitting-over-nabla-tilde}
is actually an isomorphism of de Rham-Witt connections (and not merely
for the maps $\tilde{\nabla}$ as proved above). This is equivalent
to checking that, under the isomorphism 
\[
W\Omega_{X}^{1}=\Omega_{X}^{1}\oplus G^{1}(W\Omega_{X}^{1}/p)
\]
induced by the lift $\Phi$, we have $\nabla(m)\in\Omega_{X}^{1}\otimes_{\mathcal{O}_{X}}\mathcal{M}^{\pi}$
for any $m\in\mathcal{M}^{\pi}$. By \prettyref{prop:Splitting-over-nabla-tilde},
we have for such an $m$
\[
\nabla(m)=\nabla_{0}(m)+\nabla_{1}(m)
\]
where $\nabla_{0}(m)\in\Omega_{X}^{1}\otimes_{\mathcal{O}_{X}}\mathcal{M}^{\pi}$
and $\nabla_{1}(m)\in\widehat{\mathcal{I}\cdot(W\Omega_{X}^{1}/p)}\otimes\mathcal{M}$.
Write $\nabla_{0}(m)=\sum_{i=1}^{n}dT_{i}\otimes m_{i}$. Then 
\[
0=\nabla(\nabla(m))=\sum_{i=1}^{n}dT_{i}\otimes\nabla_{0}(m_{i})+\sum_{i=1}^{n}dT_{i}\otimes\nabla_{1}(m_{i})+\nabla(\nabla_{1}(m))
\]
Now, the first term is zero because $\nabla_{0}$ is flat, and the
second term goes to zero in $(\mathcal{M}\widehat{\otimes}_{\mathcal{O}_{W(X)/p}}W\Omega_{X}^{2}/p)/\mathcal{M}\widehat{\otimes}_{\mathcal{O}_{W(X)/p}}\mathcal{I}\cdot W\Omega_{X}^{2}/p)$.
Thus we are left with the third term. But note that, by the Leibniz
rule, the map 
\[
\overline{\nabla}:\mathcal{M}\widehat{\otimes}_{\mathcal{O}_{W(X)/p}}\mathcal{I}\cdot(W\Omega_{X}^{1}/p)\to(\mathcal{M}\widehat{\otimes}_{\mathcal{O}_{W(X)/p}}W\Omega_{X}^{2}/p)/\mathcal{M}\widehat{\otimes}_{\mathcal{O}_{W(X)/p}}\mathcal{I}\cdot W\Omega_{X}^{2}/p)
\]
is in fact simply (the completion of) $\overline{d}\otimes1$, where
$\overline{d}$ is the map of the previous lemma. So, by that lemma,
this map is injective. Thus if $\nabla_{1}(m)\neq0$ then $\nabla(\nabla_{1}(m))$
has nonzero image in $(\mathcal{M}\widehat{\otimes}_{\mathcal{O}_{W(X)/p}}W\Omega_{X}^{2}/p)/\mathcal{M}\widehat{\otimes}_{\mathcal{O}_{W(X)/p}}\mathcal{I}\cdot W\Omega_{X}^{2}/p)$;
however, its image must be zero by the flatness of the connection.
Therefore $\nabla_{1}(m)=0$ so that $\widehat{\Phi}^{*}\mathcal{M}^{\pi}\tilde{\to}\mathcal{M}$
is an isomorphism of de Rham-Witt connections. Thus $\mathcal{M}$
is in the essential image of the functor $\widehat{\mathcal{C}}$. 

This proves the theorem (for $r=1$) in the local case. To prove it
for an arbitrary $X$, we note that the full faithfulness in the local
case implies that 
\[
\mathcal{H}om_{\widehat{\mathcal{D}}_{W(X)}^{(0)}/p}(\mathcal{N}_{1},\mathcal{N}_{2})\to\mathcal{H}om_{\nabla}(\widehat{\mathcal{C}}\mathcal{N}_{1},\widehat{\mathcal{C}}\mathcal{N}_{2})
\]
is an isomorphism of sheaves. Applying global sections shows that
$\widehat{\mathcal{C}}$ is fully faithful. But this implies, using
the cocycle definition of sheaves, that the essential surjectivity
can be checked locally, which is what we have just done. 
\end{proof}
Now we want to prove the result for $r>1$. Again we start by supposing
that $X=\text{Spec}(A)$ and that a coordinatized lift of Frobenius
is fixed. In this case we have the morphism $\mathcal{A}_{r}\to W(A)/p^{r}$
and the splitting 
\[
W(A)/p^{r}=\mathcal{A}_{r}\oplus F^{1}(W(A)/p^{r})
\]
as $\mathcal{A}_{r}$-modules (c.f. \prettyref{lem:two-filtrations});
the same holds at the level of sheaves. This means that for sheaf
of $\mathcal{O}_{W(X)}/p^{r}$-modules $\mathcal{M}$ we can consider
the $\mathcal{O}_{\mathfrak{X}_{r}}$-submodule $F^{1}(\mathcal{O}_{W(X)}/p^{r})\cdot\mathcal{M}$.
To save space, we denote the resulting quotient $\mathcal{O}_{\mathfrak{X}_{r}}$-module
by $\mathcal{M}/F^{1}$. If $\mathcal{M}=\widehat{\Phi}^{*}(\mathcal{N})$
then we have 
\[
\mathcal{M}/F^{1}\tilde{\to}\mathcal{N}
\]
Further, if $\nabla:\mathcal{M}\to\mathcal{M}\otimes_{\mathcal{O}_{W(X)/p^{r}}}W\Omega_{X}^{1}/p^{r}$
is the induced flat connection, we can consider the induced map 
\[
\tilde{\nabla}:\mathcal{M}\to(\mathcal{M}\widehat{\otimes}_{\mathcal{O}_{W(X)/p^{r}}}W\Omega_{X}^{1}/p^{r})/F^{1}
\]
Now, using Nakayama's lemma, it follows from \prettyref{lem:Basic-Bloch-lemma}
that we have an isomorphism 
\[
W\Omega_{X}^{1}/p^{r}/F^{1}\tilde{=}d(F^{1}(\mathcal{O}_{W(X)}/p^{r}))\oplus\Omega_{\mathfrak{X}_{r}}^{1}
\]
of $\mathcal{O}_{\mathfrak{X}_{r}}$-modules. We have 
\begin{lem}
For any $\mathcal{M}\in\text{MIC}_{W(X)_{p^{r}=0}}$ , we have an
isomorphism 
\[
(\mathcal{M}\widehat{\otimes}_{\mathcal{O}_{W(X)/p^{r}}}W\Omega_{X}^{1}/p^{r})/F^{1}
\]
\[
\tilde{\to}(\mathcal{M}/F^{1})\widehat{\otimes}_{\mathcal{O}_{\mathfrak{X}_{r}}}(W\Omega_{X}^{1}/p^{r}/F^{1})
\]
\[
\tilde{\to}\mathcal{M}/F^{1}\widehat{\otimes}_{\mathcal{O}_{\mathfrak{X}_{r}}}d(F^{1}(\mathcal{O}_{W(X)}/p^{r}))\oplus(\mathcal{M}/F^{1}\otimes_{\mathcal{O}_{\mathfrak{X}_{r}}}\Omega_{\mathfrak{X}_{r}}^{1})
\]
Here, the symbol $\widehat{\otimes}_{\mathcal{O}_{\mathfrak{X}_{r}}}$
denotes taking the tensor product followed by the completion with
respect to the $V^{i}$- filtration on $W\Omega_{X}^{1}/p^{r}/F^{1}$,
respectively $d(F^{1}(\mathcal{O}_{W(X)}/p^{r}))$. 
\end{lem}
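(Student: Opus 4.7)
The lemma bundles two isomorphisms, and only the first requires real work; the second is immediate from distributing the completed tensor product $(\mathcal{M}/F^{1}) \widehat{\otimes}_{\mathcal{O}_{\mathfrak{X}_{r}}} (-)$ over the direct sum decomposition $W\Omega_{X}^{1}/p^{r}/F^{1} \tilde{=} d(F^{1}(\mathcal{O}_{W(X)}/p^{r})) \oplus \Omega_{\mathfrak{X}_{r}}^{1}$ recalled just above. I therefore focus on the first isomorphism.

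First I would establish the purely algebraic (uncompleted) identity. Set $R = \mathcal{O}_{W(X)}/p^{r}$ and $S = \mathcal{O}_{\mathfrak{X}_{r}}$, so that $R = S \oplus F^{1}$ as $S$-modules. For any $R$-modules $\mathcal{M}$ and $N$, both $(\mathcal{M} \otimes_{R} N)/F^{1}$ and $(\mathcal{M}/F^{1}) \otimes_{S} (N/F^{1})$ arise as the same quotient of $\mathcal{M} \otimes_{S} N$: on the left, the $\otimes_{R}$-relation $rm \otimes n = m \otimes rn$ decomposes (writing $r = s + f$ with $s \in S$, $f \in F^{1}$) into the trivially satisfied $S$-linear relation together with $fm \otimes n = m \otimes fn$; combining this with the further quotient by $F^{1} \cdot (\mathcal{M} \otimes_{R} N)$, which kills $fm \otimes n$, forces $m \otimes fn$ to vanish as well, producing exactly the quotient defining the right-hand side.

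To upgrade this to the completed statement, I would use the cofinality of the $V$- and $F$-filtrations from \prettyref{lem:two-filtrations}: the sandwich $V^{m+r-1} \subseteq F^{m} \subseteq V^{m}$ implies that the $V$-adic and $F$-adic topologies coincide on $R$ and on any $R$-module. Applying the algebraic identity modulo $V^{j}$ at each finite level and then taking inverse limits commutes with the quotient by $F^{1}$, since $F^{1}$ is closed in the $V$-adic topology and the quotient is exact on inverse systems satisfying Mittag--Leffler. This yields the completed version of the identity.

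The main obstacle is managing the fact that $F^{1}$ is in general only an $\mathcal{O}_{\mathfrak{X}_{r}}$-submodule of $R$ and not an ideal, so every quotient construction has to be performed carefully as an $\mathcal{O}_{\mathfrak{X}_{r}}$-quotient and the residual $R$-structure on $\mathcal{M}/F^{1}$ must be tracked through each step of the argument. \prettyref{lem:two-filtrations} is precisely what allows one to swap the $V$-adic completion implicit in $\widehat{\otimes}$ for the more convenient $F$-adic completion, so that the $F^{1}$-quotient interacts transparently with the filtration indexing the inverse limit.
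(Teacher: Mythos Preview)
Your approach is correct and essentially the same as the paper's: both arguments rest on the splitting $R = S \oplus F^{1}$, and your identification of both sides as a common quotient of $\mathcal{M}\otimes_{S}N$ is just a repackaging of the paper's construction of explicit maps $(m,\phi)\mapsto \bar m\otimes\bar\phi$ in one direction and the canonical $\otimes_{S}\to\otimes_{R}$ map in the other, together with the observation that they are mutually inverse. The paper handles the completed version by building the maps directly at the completed level rather than invoking \prettyref{lem:two-filtrations} and Mittag--Leffler, but this is a cosmetic difference.
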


\begin{proof}
The last isomorphism follows from the above discussion. For the first,
we start by noting that there is a natural map 
\[
\mathcal{M}\times W\Omega_{X}^{1}/p^{r}\to(\mathcal{M}/F^{1})\widehat{\otimes}_{\mathcal{O}_{\mathfrak{X}_{r}}}(W\Omega_{X}^{1}/p^{r}/F^{1})
\]
sending a pair of local sections $(m,\phi)$ to $\overline{m}\otimes\overline{\phi}$.
By using the decomposition $\mathcal{O}_{W(X)}/p^{r}=\mathcal{O}_{\mathfrak{X}_{r}}\oplus F^{1}(\mathcal{O}_{W(X)}/p^{r})$
we see that this map is bilinear over $\mathcal{O}_{W(X)}/p^{r}$.
Thus we obtain a map 
\[
(\mathcal{M}\widehat{\otimes}_{\mathcal{O}_{W(X)/p^{r}}}W\Omega_{X}^{1}/p^{r})\to(\mathcal{M}/F^{1})\widehat{\otimes}_{\mathcal{O}_{\mathfrak{X}_{r}}}(W\Omega_{X}^{1}/p^{r}/F^{1})
\]
which clearly factors through $(\mathcal{M}\widehat{\otimes}_{\mathcal{O}_{W(X)/p^{r}}}W\Omega_{X}^{1}/p^{r})/F^{1}$.
For the map in the other direction, note that there is a canonical
map 
\[
\mathcal{M}\widehat{\otimes}_{\mathcal{O}_{\mathfrak{X}_{r}}}W\Omega_{X}^{1}/p^{r}\to\mathcal{M}\widehat{\otimes}_{\mathcal{O}_{W(X)/p^{r}}}W\Omega_{X}^{1}/p^{r}
\]
\[
\to(\mathcal{M}\widehat{\otimes}_{\mathcal{O}_{W(X)/p^{r}}}W\Omega_{X}^{1}/p^{r})/F^{1}
\]
which factors through $(\mathcal{M}/F^{1})\widehat{\otimes}_{\mathcal{O}_{\mathfrak{X}_{r}}}(W\Omega_{X}^{1}/p^{r}/F^{1})$.
It is straightforward to see that these maps are inverse to one another. 
\end{proof}
This implies
\begin{cor}
Let $\mathcal{M}\in\text{MIC}_{W(X)_{p^{r}=0}}$. Then the surjection
$\mathcal{M}\to\mathcal{M}/F^{1}$ admits a unique $W_{r}(k)$-linear
splitting, $\iota$, satisfying the following: for $\overline{m}\in\mathcal{M}/F^{1}$,
$\iota(\overline{m})$ satisfies 
\[
\tilde{\nabla}(\iota(\overline{m}))\in\mathcal{M}/F^{1}\otimes_{\mathcal{O}_{\mathfrak{X}_{r}}}\Omega_{\mathfrak{X}_{r}}^{1}
\]
where the latter is regarded as a summand of $(\mathcal{M}\widehat{\otimes}_{\mathcal{O}_{W(X)/p^{r}}}W\Omega_{X}^{1}/p^{r})/F^{1}$
by the previous lemma. 
\end{cor}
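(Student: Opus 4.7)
The plan is to reduce the corollary to two pieces: a uniqueness assertion about horizontal splittings, and a local existence construction carried out by successive approximation along the filtration $F^{\bullet}$.

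\emph{Uniqueness.} Suppose $\iota_1,\iota_2$ are two $W_r(k)$-linear splittings satisfying the condition. Their difference $\epsilon:=\iota_1-\iota_2$ has image in $F^1\mathcal{M}$, and by linearity $\tilde{\nabla}\circ\epsilon$ still lies in $\mathcal{M}/F^1\otimes_{\mathcal{O}_{\mathfrak{X}_r}}\Omega^1_{\mathfrak{X}_r}$. I would show that any $m\in F^1\mathcal{M}$ with $\tilde{\nabla}(m)\in\mathcal{M}/F^1\otimes\Omega^1_{\mathfrak{X}_r}$ must vanish, by induction on the largest $j$ with $m\in F^j\mathcal{M}$. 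The key computation is that for $m\in F^j\mathcal{M}$, writing $m=\sum\eta_i m'_i$ locally with $\eta_i\in F^j$, the Leibniz rule gives $\nabla(m)=\sum (d\eta_i)m'_i+\eta_i\nabla(m'_i)$; since the second summand lies in $F^j(\mathcal{M}\widehat{\otimes} W\Omega^1)\subset F^1$, the class of $\tilde{\nabla}(m)$ modulo $d(F^{j+1})$ is identified—via the filtered analogue of Bloch's lemma (\prettyref{lem:Basic-Bloch-lemma})—with the image of $m$ in $F^j\mathcal{M}/F^{j+1}\mathcal{M}$. Hence the horizontal condition forces $m\in F^{j+1}\mathcal{M}$, and completeness of $\mathcal{M}$ along the $V$-adic filtration (together with the inclusion $F^j\subset V^j$ from \prettyref{lem:two-filtrations}) yields $m=0$.

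\emph{Existence.} Locally on $X$, I would pick any $W_r(k)$-linear splitting $s^{(0)}:\mathcal{M}/F^1\to\mathcal{M}$ and then successively correct it. Given $s^{(j)}$, decompose $\tilde{\nabla}(s^{(j)}(\overline{m}))=\alpha+\beta$ with $\alpha\in\mathcal{M}/F^1\otimes\Omega^1_{\mathfrak{X}_r}$ and $\beta\in\mathcal{M}/F^1\widehat{\otimes}d(F^1)$, and split the error into graded pieces $\beta_k\in\mathcal{M}/F^1\otimes d(F^k)/d(F^{k+1})$. Using the filtered Bloch isomorphism, write $\beta_{j+1}=\sum_i\overline{m'_i}\otimes\overline{d\eta_i}$ with $\eta_i\in F^{j+1}$, lift $\overline{m'_i}$ to $\tilde m'_i\in\mathcal{M}$, and set $c_{j+1}:=\sum_i\tilde m'_i\eta_i\in F^{j+1}\mathcal{M}$. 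The same Leibniz computation shows $\tilde{\nabla}(c_{j+1})\in\mathcal{M}/F^1\widehat{\otimes}d(F^{j+1})$, with class modulo $d(F^{j+2})$ equal to $\beta_{j+1}$; so $s^{(j+1)}:=s^{(j)}-c_{j+1}$ kills the $d(F^{j+1})/d(F^{j+2})$-component of the error, does not disturb $\alpha$, and does not reintroduce lower-order $d(F^k)/d(F^{k+1})$-components (these have been killed in previous steps and the new correction only contributes to $d(F^{j+1})$ and higher). Since $c_{j+1}\in F^{j+1}\mathcal{M}\subset V^{j+1}\mathcal{M}$ and $\mathcal{M}$ is $V$-adically complete, the sequence $s^{(j)}$ converges to a splitting $\iota^{\mathrm{loc}}$ with $\tilde{\nabla}(\iota^{\mathrm{loc}}(\overline{m}))=\alpha\in\mathcal{M}/F^1\otimes\Omega^1_{\mathfrak{X}_r}$.

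\emph{Linearity and gluing.} Uniqueness forces $\iota^{\mathrm{loc}}$ to be $W_r(k)$-linear: any $W_r(k)$-linear combination of horizontal lifts is again a horizontal lift of the corresponding combination in $\mathcal{M}/F^1$, hence agrees with the unique such lift. The same reasoning shows that different local constructions agree on overlaps, so the $\iota^{\mathrm{loc}}$ glue to a global $\iota$.

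\emph{Main obstacle.} The delicate point is the filtered analogue of Bloch's lemma: one needs that $d$ induces isomorphisms $F^j(\mathcal{O}_{W(X)}/p^r)/F^{j+1}\xrightarrow{\sim}d(F^j)/d(F^{j+1})$, and, more subtly, that the induced map $F^j\mathcal{M}/F^{j+1}\mathcal{M}\to(\mathcal{M}/F^1)\otimes d(F^j)/d(F^{j+1})$ is injective. The $r=1$, $j=1$ case is precisely \prettyref{lem:Basic-Bloch-lemma}; the general case should reduce to it by an induction on $r$ via the short exact sequence $0\to p\mathcal{O}_{W(X)}/p^r\to\mathcal{O}_{W(X)}/p^r\to\mathcal{O}_{W(X)}/p\to 0$ together with a careful bookkeeping of the graded pieces of $F^{\bullet}$ under the $V$-adic filtration of \prettyref{lem:two-filtrations}. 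This is the step that will require the most care.
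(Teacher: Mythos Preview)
Your approach is correct in outline but is considerably more elaborate than what the situation demands, and the ``main obstacle'' you identify is not actually needed. The key observation you are missing is that $\tilde{\nabla}$ already involves reduction modulo $F^{1}$: for $\eta\in F^{1}(\mathcal{O}_{W(X)}/p^{r})$ and any $m\in\mathcal{M}$, the Leibniz rule gives $\nabla(\eta m)=(d\eta)m+\eta\nabla(m)$, and the second term lies in $F^{1}\cdot(\mathcal{M}\widehat{\otimes}W\Omega_{X}^{1}/p^{r})$, hence dies in $\tilde{\nabla}$. Thus $\tilde{\nabla}(\eta m)=\overline{m}\otimes\overline{d\eta}$ \emph{exactly}, with no higher-order error. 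This is what the paper exploits: given any lift $m$ of $\overline{m}$, write $\tilde{\nabla}(m)=\sum_{(I,s)}m_{I}\,d(p^{s}T^{I/p^{s}})+\sum_{i}m_{i}\,dT_{i}$ using the splitting of the previous lemma; then $m-\sum_{(I,s)}\tilde{m}_{I}\cdot p^{s}T^{I/p^{s}}$ (for arbitrary lifts $\tilde{m}_{I}$ of $m_{I}$) is already a horizontal lift, in a single step. No successive approximation, and no filtered refinement of Bloch's lemma, is required.

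The same observation streamlines uniqueness. If $n\in F^{1}\mathcal{M}$ satisfies $\tilde{\nabla}(n)=0$, write $n=\sum p^{s}T^{I/p^{s}}m_{I}$; the computation above gives $\sum\overline{m_{I}}\,d(p^{s}T^{I/p^{s}})=0$, so each $\overline{m_{I}}=0$, i.e.\ each $m_{I}\in F^{1}\mathcal{M}$, hence $n\in (F^{1})^{2}\mathcal{M}$. Iterating and using that $(F^{1})^{2}\subset p\cdot\mathcal{O}_{W(X)}/p^{r}$ (so $(F^{1})^{2r}=0$) gives $n=0$ after finitely many steps. Your inductive scheme along the $F^{j}$-filtration runs into the difficulty that $F^{j}\cdot F^{1}$ is not contained in $F^{j+1}$ in general (products like $p^{s}T^{I/p^{s}}\cdot p^{s}T^{J/p^{s}}$ with $p\mid I+J$ can drop filtration level while gaining a power of $p$), so the injectivity of $F^{j}\mathcal{M}/F^{j+1}\mathcal{M}\to(\mathcal{M}/F^{1})\otimes d(F^{j})/d(F^{j+1})$ that you need is not obvious; working with powers of $F^{1}$ instead avoids this entirely.
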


\begin{proof}
Let $U\subset X$ be open affine. Then $\Gamma(U,\mathcal{M}/F^{1})=\Gamma(U,\mathcal{M})/\Gamma(F^{1})$.
This follows from the fact that $\mathcal{M}$ is quasicoherent on
$W(X)_{p^{r}=0}$. 

By the previous lemma, we have, for any $m\in\Gamma(U,\mathcal{M})$
we have 
\[
\tilde{\nabla}(m)=\sum_{(I,r)}m_{I}d(p^{r}T^{I/p^{r}})+\sum_{i=1}^{n}m_{i}dT_{i}
\]
So replacing $m$ with ${\displaystyle m-\sum_{(I,r)}m_{I}(p^{r}T^{I/p^{r}})}$
gives the existence of a lift of $\overline{m}$ with the required
property. If there are two such lifts, called $m_{1}$ and $m_{2}$,
then $m_{1}-m_{2}\in F^{1}(\mathcal{O}_{W(U)}/p^{r})\cdot\mathcal{M}$.
Therefore 
\[
\tilde{\nabla}(m_{1}-m_{2})\in\mathcal{M}/F^{1}\widehat{\otimes}_{\mathcal{O}_{\mathfrak{U}_{r}}}d(F^{1}(\mathcal{O}_{W(U)}/p^{r}))
\]
As both $\tilde{\nabla}(m_{1})$ and $\tilde{\nabla}(m_{2})$ take
values in $\mathcal{M}/F^{1}\otimes_{\mathcal{O}_{\mathfrak{U}_{r}}}\Omega_{\mathfrak{U}_{r}}^{1}$,
we see that $\tilde{\nabla}(m_{1}-m_{2})=0$. But then, writing 
\[
m_{1}-m_{2}=\sum_{(I,r)}p^{r}T^{I/p^{r}}m_{I}
\]
we see that 
\[
\tilde{\nabla}(m_{1}-m_{2})=\sum_{(I,r)}d(p^{r}T^{I/p^{r}})m_{I}=0
\]
which forces $m_{I}=0$ for all $I$, so that $m_{1}=m_{2}$ as required. 
\end{proof}
Note that the value of $\tilde{\nabla}$ on $\iota(\overline{m})$
is completely determined by the flat connection on $\mathcal{M}/F^{1}$.
Now we can give the
\begin{proof}
(of \prettyref{thm:Equivalence-mod-p^r} in the general case) As above
it suffices to consider the case $X=\text{Spec}(A)$. Then we may
consider
\[
\mathcal{H}om_{\tilde{\nabla}}(\widehat{\Phi}^{*}\mathcal{D}_{\mathfrak{X}_{r}}^{(0)},\mathcal{M})=\mathcal{N}
\]
where $\mathcal{H}om_{\tilde{\nabla}}$ refers to those continuous
morphisms which respect the induced map $\tilde{\nabla}$ on both
sides. As $\widehat{\Phi}^{*}\mathcal{D}_{\mathfrak{X}_{r}}^{(0)}/F^{1}=\mathcal{D}_{\mathfrak{X}_{r}}^{(0)}$
with its canonical connection, we see that a local section of $\mathcal{N}$
is determined by the value of the map at $1$, and by the remark directly
above we see that that we may send $1$ to any local section in the
image of the map $\iota$ constructed in the previous corollary. Thus
$\mathcal{N}$ is a summand of $\mathcal{M}$, and in fact we see
\[
\mathcal{N}\tilde{\to}\mathcal{M}/F^{1}
\]
as flat connections (the connection on $\mathcal{N}$ coming from
the right action of $\mathcal{D}_{\mathfrak{X}_{r}}^{(0)}$ on $\widehat{\Phi}^{*}\mathcal{D}_{\mathfrak{X}_{r}}^{(0)}$).
Therefore there is an induced morphism of $\tilde{\nabla}$-modules
\[
\widehat{\Phi}^{*}\mathcal{N}\to\mathcal{M}
\]
and as $\mathcal{N}$ is a summand of $\mathcal{M}$ when we apply
$\otimes_{W_{r}(k)}^{L}k$ we obtain the analogous maps 
\[
\widehat{\Phi}^{*}\mathcal{H}^{i}(\mathcal{N}\otimes_{W_{r}(k)}^{L}k)\to\mathcal{H}^{i}(\mathcal{M}\otimes_{W_{r}(k)}^{L}k)
\]
for $i\in\{0,-1,-2,\dots\}$. But these maps are isomorphisms by the
$r=1$ case of the theorem. Thus the original map $\widehat{\Phi}^{*}\mathcal{N}\to\mathcal{M}$
is an isomorphism as well. 

To finish the proof, we need to show that this map is an isomorphism
of de Rham-Witt connections (not just modules over $\tilde{\nabla}$).
This follows exactly as in the $r=1$ case above if we have the analogue
of \prettyref{lem:required-flatness}; namely, the statement that
the map 
\[
\overline{d}:\widehat{F^{1}\cdot(W\Omega_{X}^{1}/p^{r})}\to W\Omega_{X}^{2}/p^{r}\to(W\Omega_{X}^{2}/p^{r})/\widehat{(F^{1}\cdot W\Omega_{X}^{2}/p^{r})}
\]
is injective, with cokernel which is flat over $\mathcal{O}_{\mathfrak{X}_{r}}$.
But this follows from \prettyref{lem:required-flatness} and the fact
that $(W\Omega_{X}^{2}/p^{r})/\widehat{(F^{1}\cdot W\Omega_{X}^{2}/p^{r})}$
is flat over $W_{r}(k)$. 
\end{proof}

The University of Illinois at Urbana-Champaign, csdodd2@illinois.edu

\begin{thebibliography}{10}
\bibitem[1]{key-45}K.Ardakov, S. Wadsley, \emph{$\widehat{D}$-modules
on rigid analytic spaces I}. J. Reine Angew. Math. 747 (2019), 221\textendash 275.

\bibitem[2]{key-44}A. Beilinson, B. Bernstien, \emph{A proof of Jantzen
conjectures.} I. M. Gel\textasciiacute fand Seminar, 1\textendash 50,
Adv. Soviet Math., 16, Part 1, Amer. Math. Soc., Providence, RI, 1993.

\bibitem[3]{key-60}P. Berthelot, Pierre, \emph{Cohomologie cristalline
des schémas de caractéristique $p>0$,} Lecture Notes in Mathematics,
Vol. 407, vol. 407, Berlin, New York: Springer-Verlag. 

\bibitem[4]{key-1}P. Berthelot. \emph{D-modules arithmetiques I.
Operateurs dif\'{f}erentiels de niveau fini}. Ann. scient. Ec. Norm.
Sup., 4e serie, t. 29, p.185\textendash 272, 1996.

\bibitem[5]{key-2}P. Berthelot. \emph{D-modules arithmetiques II.
Descente par Frobenius}. Bull. Soc. Math. France, Memoire 81, p. 1\textendash 135,
2000.

\bibitem[6]{key-26}P. Berthelot, \emph{Rigid Cohomology and De Rham-Witt
Complexes}, Rend. Sem. Mat. Univ. Padova 128 (2012), pp. 287\textendash 344

\bibitem[7]{key-10}P. Berthelot, A. Ogus, \emph{Notes on crystalline
cohomology.} Princeton University Press, Princeton, N.J.; University
of Tokyo Press, Tokyo, 1978. vi+243 pp.

\bibitem[8]{key-3}Bezrukavnikov, Roman; Mirkovi\'{c}, Ivan; Rumynin,
Dmitriy.\emph{ Localization of modules for a semisimple Lie algebra
in prime characteristic}. With an appendix by Bezrukavnikov and Simon
Riche. Ann. of Math. (2) 167 (2008), no. 3, 945\textendash 991.

\bibitem[9]{key-50} J.E. Björk, \emph{Analytic $\mathcal{D}$-modules
and applications. Mathematics and its Applications}, 247. Kluwer Academic
Publishers Group, Dordrecht, 1993. xiv+581 pp.

\bibitem[10]{key-61}S. Bloch, \emph{Algebraic K-Theory and Crystalline
Cohomology}, Pub. Math. I.H.E.S., vol. 47, 1978, p. 187-268. 

\bibitem[11]{key-24}S. Bloch, \emph{Crystals and de Rham-Witt connections},
Journal of the Institute of Mathematics of Jussieu, Volume 3, Issue
3, July 2004, pp. 315 - 326

\bibitem[12]{key-19}M. Bokstedt and A. Neeman, \emph{Homotopy limits
in triangulated categories}, Compositio Math. 86 (1993), 209-234. 

\bibitem[13]{key-66}J. Borger, \emph{Basic geometry of Witt vectors.
I: the affine case}. Algebra Number Theory. arXiv:0801. 1691v5

\bibitem[14]{key-67}J. Borger, \emph{Basic geometry of Witt vectors.
II: Spaces. }Mathematische Annalen volume 351, pages 877\textendash 933
(2011)

\bibitem[15]{key-15} A. Bosch, W. Lütkebohmert, M. Raynaud, \emph{Néron
models.} Ergebnisse der Mathematik und ihrer Grenzgebiete (3) {[}Results
in Mathematics and Related Areas (3){]}, 21. Springer-Verlag, Berlin,
1990. x+325 pp.

\bibitem[16]{key-63}A. Chatzistamatiou and K. Rulling, \emph{Hodge-Witt
Cohomology and Witt-rational singularities}, Documenta Mathematica,
vol 17 (2012), 663-781.

\bibitem[17]{key-16}C. Dodd, \emph{Differential Operators, Gauges,
and Mixed Hodge Modules, }arxiv:2210.12611

\bibitem[18]{key-41}W. Dwyer, J. Greenlees, \emph{Complete modules
and torsion modules.} Amer. J. Math. 124 (2002), no. 1, 199\textendash 220. 

\bibitem[19]{key-21} M. Emerton, M. Kisin, \emph{The Riemann-Hilbert
correspondence for unit F-crystals}. Astérisque No. 293 (2004), vi+257
pp. 

\bibitem[20]{key-56}J.Y. \'{E}tesse, \emph{Complexe de de Rham-Witt
`a coefficients dans un cristal}, Compositio Math. 66 (1988), 57\textendash 120.

\bibitem[21]{key-48}L. Garnier, \emph{Descente par Frobenius explicite
pour les $\mathcal{D}^{\dagger}$-modules}. J. Algebra 205 (1998),
no. 2, 542\textendash 577. 

\bibitem[22]{key-57}A. Grothendieck, \emph{Crystals and the de Rham
cohomology of schemes,} in Giraud, Jean; Grothendieck, Alexander;
Kleiman, Steven L.; et al. (eds.), Dix Exposés sur la Cohomologie
des Schémas, Advanced studies in pure mathematics, vol. 3, Amsterdam:
North-Holland, 

\bibitem[23]{key-54}R. Hartshorne, \emph{Residues and duality.} Lecture
notes of a seminar on the work of A. Grothendieck, given at Harvard
1963/64. With an appendix by P. Deligne. Lecture Notes in Mathematics,
No. 20 Springer-Verlag, Berlin-New York 1966 vii+423 pp. 

\bibitem[24]{key-49} R. Hotta, K. Takeuchi, T. Tanisaki, \emph{$\mathcal{D}$
-modules, perverse sheaves, and representation theory.} Translated
from the 1995 Japanese edition by Takeuchi. Progress in Mathematics,
236. Birkhäuser Boston, Inc., Boston, MA, 2008. xii+407 pp.

\bibitem[25]{key-6}L. Illusie, \emph{Complexe de de Rham-Witt et
cohomologie cristalline.} (French) Ann. Sci. École Norm. Sup. (4)
12 (1979), no. 4, 501\textendash 661.

\bibitem[26]{key-64}L. Illusie, \emph{Finiteness, duality, and Kunneth
theorems in the cohomology of the de Rham-Witt complex}. In Algebraic
geometry (Tokyo/Kyoto, 1982), volume 1016 of Lecture Notes in Math.,
pages 20\textendash 72. Springer, Berlin, 1983.

\bibitem[27]{key-4}M. Kashiwara, \emph{D -modules and microlocal
calculus}. Translated from the 2000 Japanese original by Mutsumi Saito.
Translations of Mathematical Monographs, 217. Iwanami Series in Modern
Mathematics. American Mathematical Society, Providence, RI, 2003.
xvi+254 pp

\bibitem[28]{key-8}M. Kashiwara, P. Schapira, \emph{Deformation quantization
modules}. Astérisque No. 345 (2012), xii+147 pp. ISBN: 978-2-85629-345-4

\bibitem[29]{key-9}M. Kashiwara, P. Schapira, \emph{Categories and
sheaves}. Grundlehren der mathematischen Wissenschaften {[}Fundamental
Principles of Mathematical Sciences{]}, 332. Springer-Verlag, Berlin,
2006. x+497 pp.

\bibitem[30]{key-55}Katz, Nicholas M. \emph{Nilpotent connections
and the monodromy theorem: Applications of a result of Turrittin.}
Inst. Hautes Études Sci. Publ. Math. No. 39 (1970), 175\textendash 232. 

\bibitem[31]{key-12}N. Katz, \emph{Algebraic solutions of differential
equations (p-curvature and the Hodge filtration)}. Invent. Math. 18
(1972), 1\textendash 118. 

\bibitem[32]{key-27}A. Langer, T. Zink, \emph{De Rham-Witt Cohomology
for a Proper and Smooth Morphism, }Journal of the Institute of Mathematics
of Jussieu , Volume 3 , Issue 2 , April 2004 , pp. 231 - 314

\bibitem[33]{key-23}H. Matsumura, \emph{Commutative Ring Theory},
Cambridge Universtiy Press, 1989

\bibitem[34]{key-11}Ogus, A.; Vologodsky, V. \emph{Nonabelian Hodge
theory in characteristic p}. Publ. Math. Inst. Hautes Études Sci.
No. 106 (2007), 1\textendash 138.

\bibitem[35]{key-46}G. Rinehart, \emph{Differential forms on general
commutative algebras.} Trans. Amer. Math. Soc. 108 (1963), 195\textendash 222.

\bibitem[36]{key-62}J.-P. Serre, \emph{Sur la topologie des variétés
algébriques en caractéristique $p$ }(Symposium Internacional de Topologia
Algebraica, Mexico, 1958, p. 24-53. 

\bibitem[37]{key-17}P. Schneider, \emph{Galois representations and
$(\phi,\Gamma)$-modules.} Cambridge Stud. Adv. Math., 164 Cambridge
University Press, Cambridge, 2017. vii+148 pp. 

\bibitem[38]{key-65}A. Shiho, \emph{Notes on Generalizations of Local
Ogus-Vologodsky Correspondence, }J. Math. Sci. Univ. Tokyo Vol. 22
(2015), No. 3, Page 793\textendash 875.

\bibitem[39]{key-43}The Stacks Project 

\bibitem[40]{key-22}P. Vojta, \emph{Jets Via Hasse-Schmidt Derivations},
Diophantine Geometry, Proceedings, Edizioni della Normale, Pisa, 2007,
pp. 335-361
\end{thebibliography}
\end{document}